 \numberwithin{equation}{section}
\newenvironment{proofc}[2][\indent\emph{P\,r\,o\,o\,f}]{\par
  \pushQED{\qed}%
  \normalfont \topsep0\p@\@plus6\p@\relax
  \trivlist
  \item[\hskip\labelsep
        \itshape
    #1\ {\rm#2}\@addpunct{.}]\ignorespaces
}{%
  \popQED\endtrivlist\@endpefalse
}
 \def\BlackBoxes{\global\overfullrule5\p@}
  \def\@oddfoot{%
    \if@finallayout%
      \normalfont\fontsize{8}{9\p@}\normalfont%
      \bfseries\journal@url \hfil
      \normalfont\fontsize{6}{7\p@}\normalfont%
      \copyright\ \copyrightyear\ \copyrightholder
    \else
      \normalfont\fontsize{6}{7\p@}\normalfont%
%      \hfil Copyright line will be provided by the publisher
      \hfil \phantom{Copyright line will be provided by the publisher} %AMANN
    \fi}%
  \def\@evenfoot{%
    \if@finallayout%
      \normalfont\fontsize{6}{7\p@}\normalfont%
      \copyright\ \copyrightyear\ \copyrightholder\hfil
      \normalfont\fontsize{8}{9\p@}\normalfont%
      \bfseries\journal@url
    \else
      \normalfont\fontsize{6}{7\p@}\normalfont%
%      \hfil Copyright line will be provided by the publisher
      \hfil \phantom{Copyright line will be provided by the publisher} %AMANN
    \fi}%
\def\polhk#1{\setbox0=\hbox{#1}{\ooalign{\hidewidth
  \lower1.5ex\hbox{`}\hidewidth\crcr\unhbox0}}}
 \def\BlackBoxes{\global\overfullrule5\p@}
 \newcommand*{\beq}{$$\refstepcounter{equation}}
 \newcommand*{\eeq}{\eqno(\theequation)$$}
 \newcommand*{\bspezeq}{$$}
 \newcommand*{\espezeq}{$$}
 \newcommand*{\bspezeqn}{$$\refstepcounter{equation}}
 \newcommand*{\espezeqn}{\eqno(\theequation)$$}
 \newcommand*{\bal}{\begin{aligned}}
 \newcommand*{\eal}{\end{aligned}}
 \newcommand*{\qa}{,\qquad}
 \newcommand*{\qb}{,\quad}
 \newcommand*{\mf}[1]{\boldsymbol{#1}} 
 \newcommand*{\ci}{\mathaccent"7017 }                    %NEW for \ci and \idci (no arguments any more!)
 \newcommand*{\cqqncel}[2]{\ooalign{$\hfil#1\mkern1mu/\hfil$\crcr$#1#2$}}
 \newcommand*{\notsim}{\mathrel{\mathpalette\cqqncel\sim}}
 \newcommand*{\hb}[1]{\hbox{$#1$}} %do not remove where next 10 commands are used
 \newcommand*{\sdot}{\!\cdot\!}
 \newcommand*{\sco}{\kern2pt\colon\kern2pt}
 \newcommand*{\slt}{\hbox{$\ltimes$}}
 \newcommand*{\sn}{\kern1pt|\kern1pt}
 \newcommand*{\bsn}{\kern1pt\big|\kern1pt}
 \newcommand*{\ssm}{\!\setminus\!}
 \newcommand*{\esdot}{\hbox{$[{}\sdot{}]$}}
 \newcommand*{\vsdot}{\hbox{$|\sdot|$}}
 \newcommand*{\Vsdot}{\hbox{$\|\sdot\|$}}
 \newcommand*{\Vvsdot}{\hbox{$\vd\sdot\vd$}}
 \newcommand*{\hh}[1]{{\textbf{#1}}}
 \newcommand*{\npb}{\postdisplaypenalty=10000}
 \newcommand*{\po}{\pagebreak[2] }
 \newcommand*{\pe}{\hbox{$[{}\sdot{},{}\sdot{}]$}}
 \newcommand*{\pg}{\hbox{$\{\cdot,\cdot\}$}}
 \newcommand*{\pr}{\hbox{$(\cdot,\cdot)$}}
 \newcommand*{\prsn}{\hbox{$(\cdot\sn\cdot)$}}
 \newcommand*{\pw}{\hbox{$\dl{}\sdot{},{}\sdot{}\dr$}}
 \newcommand*{\mfpw}{\hbox{$\mf{\dl}\!\!\!\mf{\dl}{}\sdot{},{}\sdot{}\mf{\dr}\!\!\!\mf{\dr}$}}
 \newcommand*{\mfdl}{\hbox{$\mf{\dl}\!\!\!\mf{\dl}$}}
 \newcommand*{\mfdr}{\hbox{$\mf{\dr}\!\!\!\mf{\dr}$}}
 \newcommand*{\dlim}[2]{\substack{{#1}\\{#2}}}
 \newcommand*{\bid}[2]{\Bigl({{#1}\atop{#2}}\Bigr)}
 \newcommand*{\ol}{\overline}
 \newcommand*{\ora}{\overrightarrow}     
 \newcommand*{\wh}{\widehat}
 \newcommand*{\wt}{\widetilde}
 \newsavebox{\Ptop}
 \sbox{\Ptop}{\begin{picture}(2,2)(-1,-1)\put(0,0){\circle*{2}}\end{picture}}
 \newcommand*{\thdot}[3]{\vbox{\ialign{##\crcr    %for AMSTeX
                    {\kern#2ex\raise#3ex\hbox{$\scriptscriptstyle\bt$}}\crcr\noalign{\kern-1pt\nointerlineskip}
                    $\hfil\displaystyle{#1}\hfil$\crcr}}}
 \newcommand*{\thB}{\thdot{B}{.7}{.4}\vph{B}}
 \newcommand*{\thmfB}{\thdot{\mf{B}}{.8}{.4}\vph{B}}
 \newcommand*{\thgK}{\thdot{\gK}{.5}{.4}}
 \newcommand*{\thV}{\thdot{V}{.5}{.4}}
 \newcommand*{\ithV}{{\thdot{\scriptstyle V}{.5}{.4}}}
 \newcommand*{\thW}{\thdot{W}{.9}{.4}}
 \newcommand*{\ithW}{{\thdot{\scriptstyle W}{.65}{.4}}}
 \newcommand*{\thg}{\thdot{g}{.5}{.4}}
 \newcommand*{\ithg}{{\thdot{\scriptstyle g}{.5}{.4}}}
 \newcommand*{\thh}{\thdot{h}{.5}{.4}}
 \newcommand*{\thchi}{\thdot{\chi}{.5}{.4}}
 \newcommand*{\thia}{\rlap{\thdot{\ia}{.5}{.4}}\ph{\ia}}
 \newcommand*{\thka}{\rlap{\thdot{\ka}{.5}{.4}}{\ph{\ka}}}                           
 \newcommand*{\ithka}{{\thdot{\scriptstyle\ka}{.5}{.4}}}
 \newcommand*{\ithwtka}{{\thdot{\scriptstyle\wt{\ka}}{.5}{.4}}}
 \newcommand*{\thna}{\rlap{\thdot{\na}{.5}{.4}}{\ph{\na}}}
 \newcommand*{\thpi}{\thdot{\pi}{.5}{.4}}
 \newcommand*{\thvp}{\rlap{\thdot{\vp}{.5}{.4}}\ph{\vp}}
 \newcommand*{\thpsi}{\rlap{\thdot{\psi}{.5}{.4}}\ph{\psi}}
 \newcommand*{\thrho}{\thdot{\rho}{.5}{.4}}
 \newcommand*{\ph}{\phantom}
 \newcommand*{\vph}{\vphantom}
 \newcommand*{\spcheckf}{\slap{\mbox{$f$}}\mbox{\kern.4ex\raisebox{1.6ex}{$\checkp$}}}
 \newcommand*{\starB}{\slap{\slap{\mbox{$B$}}\mbox{\kern.6ex\raisebox{1.6ex}{$*$}}}{\mbox{$\phantom B$}}} 
 \newcommand*{\hr}{\hookrightarrow}
 \newcommand*{\lllllora}{\relbar\joinrel\relbar\joinrel\relbar\joinrel\relbar\joinrel\longrightarrow}
 \newcommand*{\Ra}{\mathrel{\smash=}\mathrel{\mkern-11mu}\Rightarrow}
 \newcommand*{\ra}{\rightarrow}
 \newcommand*{\dl}{\langle}
 \newcommand*{\dr}{\rangle}
 \newcommand*{\sdh}{\stackrel{d}{\hookrightarrow}}
 \newcommand{\vd}{\hbox{$\vert\kern-.2ex\vert\kern-.2ex\vert$}}
 \newcommand*{\slap}[1]{\hspace{0pt}\hbox to 0pt{#1\hss}}
 \newcommand*{\bdry}{\mathop{\rm bdry}\nolimits}
 \newcommand*{\card}{{\rm card}}
 \newcommand*{\dom}{\mathop{\rm dom}\nolimits}
 \newcommand*{\ev}{\mathop{\rm ev}\nolimits}
 \newcommand*{\id}{{\rm id}}
 \newcommand*{\im}{\mathop{\rm im}\nolimits}
 \newcommand*{\inv}{{\rm inv}}
 \newcommand*{\norm}{{\rm norm}}
 \newcommand*{\pro}{{\rm pr}}
 \newcommand*{\surj}{{\rm surj}}
 \newcommand*{\supp}{\mathop{\rm supp}\nolimits}
 \newcommand*{\tr}{{\gst\gsr}}
 \newcommand*{\unif}{{\rm unif}}
 \newcommand*{\Hom}{{\rm Hom}}
 \newcommand*{\Laut}{{\mathcal L}{\rm aut}}
 \newcommand*{\Lis}{{\mathcal L}{\rm is}}
 \newcommand*{\loc}{{\rm loc}}
 \renewcommand*{\Im}{\mathop{\rm Im}\nolimits}
 \renewcommand*{\Re}{\mathop{\rm Re}\nolimits}
 \newcommand*{\imi}{{\mit i\kern1pt}}
 \newcommand*{\is}{\subset}
 \newcommand*{\isis}{\Subset} 
 \newcommand*{\Chrz}{\Ga_{\kern-1pt i\mu}^\nu}
 \newcommand*{\bt}{\bullet}
 \newcommand*{\es}{\emptyset}
 \newcommand*{\iy}{\infty}
 \newcommand*{\mt}{\mapsto}
 \newcommand*{\nat}{\natural}
 \newcommand*{\naX}{\na_{\cona X}}
 \newcommand*{\pl}{\partial}
 \newcommand*{\pa}{\partial^\alpha}
 \newcommand*{\sh}{\sharp}
 \newcommand*{\tri}{\triangle}
 \newcommand*{\cof}{\kern-2pt}
 \newcommand*{\coLda}{\kern-1pt}
 \newcommand*{\cona}{\kern-1pt}
 \newcommand*{\coU}{\kern-1pt}
 \newcommand*{\coV}{\kern-1pt}
 \newcommand*{\coW}{\kern-1pt}
 \newcommand*{\coX}{\kern-1pt}
 \newcommand*{\cimfgF}{\rlap{${\ci{\mf{\gF}}}$}{\ph{\mf{\gF}}}} 
 \newcommand*{\cimfW}{\rlap{${\ci{\mf{W}}}$}{\ph{\mf{W}}}} 
 \newcommand*{\wtmfW}{\rlap{${\wt{\mf{W}}}$}{\ph{\mf{W}}}} 
 \newcommand*{\al}{\alpha}
 \newcommand*{\ba}{\beta}
 \newcommand*{\da}{\delta}
 \newcommand*{\Ga}{\Gamma}
 \newcommand*{\ga}{\gamma}
 \newcommand*{\mfga}{\rlap{$\mf{\ga}$}{\kern.3pt\mf{\ga}}} 
 \newcommand*{\ia}{\iota}
 \newcommand*{\ka}{\kappa}
 \newcommand*{\Lda}{\Lambda}
 \newcommand*{\lda}{\lambda}
 \newcommand*{\na}{\nabla}
 \newcommand*{\Om}{\Omega}
 \newcommand*{\om}{\omega}
 \newcommand*{\Sa}{\Sigma}
 \newcommand*{\sa}{\sigma}
 \newcommand*{\Ta}{\Theta}
 \newcommand*{\ta}{\theta}
 \newcommand*{\za}{\zeta}
 \newcommand*{\ve}{\varepsilon}
 \newcommand*{\vp}{\varphi}
 \newcommand*{\vt}{\vartheta}
 \newcommand*{\DiDj}{(D_i,D_j)}
 \newcommand*{\EeEz}{(E_1,E_2)}
 \newcommand*{\GaG}{(\Ga,G)}
 \newcommand*{\GaGi}{(\Ga,G_i)}
 \newcommand*{\GanGi}{(\Ga_0,G_i)}
 \newcommand*{\GaJ}{(\Ga,J)}
 \newcommand*{\GathV}{(\Ga,\thV)}
 \newcommand*{\GathW}{(\Ga,\thW)}
 \newcommand*{\Hmgm}{(\BH^m,g_m)}
 \newcommand*{\HeHz}{(H_1,H_2)}
 \newcommand*{\HzHe}{(H_2,H_1)}
 \newcommand*{\HzsHes}{(H_2',H_1')}
 \newcommand*{\HK}{(H,\BK)}
 \newcommand*{\IV}{(I,V)}
 \newcommand*{\IthV}{(I,\thV)}
 \newcommand*{\IcX}{(I,\cX)}
 \newcommand*{\JB}{(J,B)}
 \newcommand*{\JcD}{(J,\cD)}
 \newcommand*{\mJcD}{(-J,\cD)}
 \newcommand*{\JTcD}{(J_T,\cD)}
 \newcommand*{\JcDka}{(J,\cD_\ka)}
 \newcommand*{\ciJcicD}{(\ci J,\ci\cD)}
 \newcommand*{\ciJcicDka}{(\ci J,\ci\cD_\ka)}
 \newcommand*{\ciJciM}{(\ci J,\ci M)}
 \newcommand*{\JV}{(J,V)}
 \newcommand*{\ciJV}{(\ci J,V)}
 \newcommand*{\ciJVs}{(\ci J,V')}
 \newcommand*{\JthV}{(J,\thV)}
 \newcommand*{\JVs}{(J,V')}
 \newcommand*{\JVe}{(J,V_1)}
 \newcommand*{\JVee}{(J,V_{11})}
 \newcommand*{\JVes}{(J,V_1')}
 \newcommand*{\JVez}{(J,V_{12})}
 \newcommand*{\JVn}{(J,V_0)}
 \newcommand*{\JVnn}{(J,V_0^0)}
 \newcommand*{\JVz}{(J,V_2)}
 \newcommand*{\JVze}{(J,V_{21})}
 \newcommand*{\JVzz}{(J,V_{22})}
 \newcommand*{\JVzs}{(J,V_2')}
 \newcommand*{\JVzd}{(J,V_{23})}
 \newcommand*{\JVd}{(J,V_3)}
 \newcommand*{\JVis}{(J,V_{\coV i}')}
 \newcommand*{\JX}{(J,X)}
 \newcommand*{\JXs}{(J,X')}
 \newcommand*{\JcX}{(J,\cX)}
 \newcommand*{\JTcX}{(J_T,\cX)}
 \newcommand*{\ME}{(M,E)}
 \newcommand*{\Mg}{(M,g)}
 \newcommand*{\MBK}{(M,\BK)}
 \newcommand*{\MTM}{(M,TM)}
 \newcommand*{\MTsM}{(M,T^*M)}
 \newcommand*{\MV}{(M,V)}
 \newcommand*{\MVn}{(M,V_0)}
 \newcommand*{\MVe}{(M,V_1)}
 \newcommand*{\MVee}{(M,V_{11})}
 \newcommand*{\MVez}{(M,V_{12})}
 \newcommand*{\MVz}{(M,V_2)}
 \newcommand*{\MVzs}{(M,V_2')}
 \newcommand*{\MVze}{(M,V_{21})}
 \newcommand*{\MVd}{(M,V_3)}
 \newcommand*{\MVi}{(M,V_{\coV i})}
 \newcommand*{\MVj}{(M,V_{\coV j})}
 \newcommand*{\MVs}{(M,V^*)}
 \newcommand*{\ciMV}{(\ci M,V)}
 \newcommand*{\ciMVs}{(\ci M,V')}
 \newcommand*{\MW}{(M,W)}
 \newcommand*{\MWed}{(M,W_{13})}
 \newcommand*{\MWzs}{(M,W_2')}
 \newcommand*{\MWi}{(M,W_{\coW i})}
 \newcommand*{\MWij}{(M,W_{\coW ij})}
 \newcommand*{\MWa}{(M,W')}
 \newcommand*{\MWs}{(M,W^*)}
 \newcommand*{\BRcD}{(\BR,\cD)}
 \newcommand*{\BRpcD}{(\BR^+,\cD)}
 \newcommand*{\BRdcX}{(\BR^d,\cX)}
 \newcommand*{\Rmgm}{(\BR^m,g_m)}
 \newcommand*{\RmE}{(\BR^m,E)}
 \newcommand*{\Rp}{(\BR^+)}
 \newcommand*{\BRcX}{(\BR,\cX)}
 \newcommand*{\BRpcX}{(\BR^+,\cX)}
 \newcommand*{\mBRpcX}{(-\BR^+,\cX)}
 \newcommand*{\BRV}{(\BR,V)}
 \newcommand*{\BRpV}{(\BR^+,V)}
 \newcommand*{\mBRpV}{(-\BR^+,V)}
 \newcommand*{\BRpVi}{(\BR^+,V_{\coV i})}
 \newcommand*{\rgK}{(\rho,\gK)}
 \newcommand*{\wrgK}{(\wt{\rho},\wt{\gK})}
 \newcommand*{\SE}{(S,E)}
 \newcommand*{\SK}{(S,\BK)}
 \newcommand*{\SV}{(S,V)}
 \newcommand*{\UKmm}{(U,\BK^{m\times m})}
 \newcommand*{\UV}{(U,V)}
 \newcommand*{\uv}{(u,v)}
 \newcommand*{\UVi}{(U,V_{\coV i})}
 \newcommand*{\Vh}{(V,h)}
 \newcommand*{\Vihi}{(V_{\coV i}, h_i)}
 \newcommand*{\VV}{(V,V)}
 \newcommand*{\vv}{(v,v)}
 \newcommand*{\vsv}{(v^*,v)}
 \newcommand*{\vvs}{(v,v^*)}
 \newcommand*{\VeVz}{(V_1,V_2)}
 \newcommand*{\VzVe}{(V_2,V_1)}
 \newcommand*{\vw}{(v,w)}
 \newcommand*{\vsws}{(v^*,w^*)}
 \newcommand*{\thWGi}{(\thW,G_i)} 
 \newcommand*{\wv}{(w,v)}
 \newcommand*{\WiWj}{(W_{\coW i},W_{\coW j})}
 \newcommand*{\BXE}{(\BX,E)}
 \newcommand*{\BXEe}{(\BX,E_1)}
 \newcommand*{\BXeBEe}{(\BX_1,\BE_1)}
 \newcommand*{\BXEn}{(\BX,E_0)}
 \newcommand*{\BXnBEn}{(\BX_0,\BE_0)}
 \newcommand*{\BXEz}{(\BX,E_2)}
 \newcommand*{\BXiBEi}{(\BX_i,\BE_i)}
 \newcommand*{\BXiEka}{(\BX_i,E_\ka)}
 \newcommand*{\BXkE}{(\BX_\ka,E)}
 \newcommand*{\ciBXE}{(\ci\BX,E)}
 \newcommand*{\ciBXkE}{(\ci\BX_\ka,E)}
 \newcommand*{\BXBF}{(\BX,\BF)}
 \newcommand*{\BXcX}{(\BX,\cX)}
 \newcommand*{\XE}{(X,E)}
 \newcommand*{\XnXe}{(X_0,X_1)}
 \newcommand*{\Xv}{(X,v)}
 \newcommand*{\Xw}{(X,w)}
 \newcommand*{\XcX}{(X,\cX)}
 \newcommand*{\cXBK}{(\cX,\BK)}
 \newcommand*{\cXcX}{(\cX,\cX)}
 \newcommand*{\cXncXe}{(\cX_0,\cX_1)}
 \newcommand*{\cXcY}{(\cX,\cY)}
 \newcommand*{\YE}{(Y,E)}
 \newcommand*{\YkE}{(Y_\ka,E)}
 \newcommand*{\BYE}{(\BY,E)}
 \newcommand*{\BYEe}{(\BY,E_1)}
 \newcommand*{\BYEn}{(\BY,E_0)}
 \newcommand*{\BYEz}{(\BY,E_2)}
 \newcommand*{\BYEs}{(\BY,E')}
 \newcommand*{\BYkE}{(\BY_\ka,E)}
 \newcommand*{\BYkEe}{(\BY_\ka,E_1)}
 \newcommand*{\BYkEn}{(\BY_\ka,E_0)}
 \newcommand*{\BYkEz}{(\BY_\ka,E_2)}
 \newcommand*{\ciBYE}{(\ci\BY,E)}
 \newcommand*{\ciBYkE}{(\ci\BY_\ka,E)}
 \newcommand*{\YBK}{(Y,\BK)}
 \newcommand*{\cYcX}{(\cY,\cX)}
 \newcommand*{\BUC}{BU\kern-.3ex C}
 \newcommand*{\BC}{{\mathbb C}}
 \newcommand*{\BE}{{\mathbb E}}
 \newcommand*{\BF}{{\mathbb F}}
 \newcommand*{\BH}{{\mathbb H}}
 \newcommand*{\BJ}{{\mathbb J}}
 \newcommand*{\BK}{{\mathbb K}}
 \newcommand*{\BM}{{\mathbb M}}
 \newcommand*{\BN}{{\mathbb N}}
 \newcommand*{\BR}{{\mathbb R}}
 \newcommand*{\BX}{{\mathbb X}}
 \newcommand*{\BY}{{\mathbb Y}}
 \newcommand*{\BZ}{{\mathbb Z}}
 \newcommand*{\cA}{{\mathcal A}}
 \newcommand*{\cB}{{\mathcal B}}
 \newcommand*{\cC}{{\mathcal C}}
 \newcommand*{\cD}{{\mathcal D}}
 \newcommand*{\cF}{{\mathcal F}}
 \newcommand*{\cG}{{\mathcal G}}
 \newcommand*{\cL}{{\mathcal L}}
 \newcommand*{\cQ}{{\mathcal Q}}
 \newcommand*{\cT}{{\mathcal T}}
 \newcommand*{\cX}{{\mathcal X}}
 \newcommand*{\cY}{{\mathcal Y}}
 \newcommand*{\gB}{{\mathfrak B}}
 \newcommand*{\gF}{{\mathfrak F}}
 \newcommand*{\gG}{{\mathfrak G}}
 \newcommand*{\gK}{{\mathfrak K}}
 \newcommand*{\gsm}{{\mathfrak m}} 
 \newcommand*{\gN}{{\mathfrak N}}
 \newcommand*{\gsr}{{\mathfrak r}}
 \newcommand*{\gS}{{\mathfrak S}}
 \newcommand*{\gT}{{\mathfrak T}}
 \newcommand*{\gst}{{\mathfrak t}}
 \newcommand*{\sA}{{\mathsf A}}
 \newcommand*{\sfa}{{\mathsf a}}
 \newcommand*{\sB}{{\mathsf B}}
 \newcommand*{\sfb}{{\mathsf b}}
 \newcommand*{\sC}{{\mathsf C}}
 \newcommand*{\sfm}{{\mathsf m}}
 \newcommand*{\sft}{{\mathsf t}}
 \newcommand*{\sfw}{{\mathsf w}}
 \renewcommand*{\ldots}{\mathinner{\ldotp\ldotp\ldotp}}
 \newif\ifinany@
 \def\column@plus{%
    \global\advance\column@\@ne
 }
 \def\add@amps#1{%
    \begingroup
        \count@#1
        \DN@{}%
        \loop
            \ifnum\count@>\column@
                \edef\next@{&\next@}%
                \advance\count@\m@ne
        \repeat
    \@xp\endgroup
    \next@
 }
 \def\Let@{\let\\\math@cr}
 \def\restore@math@cr{\def\math@cr@@@{\cr}}
 \def\default@tag{\let\tag\dft@tag}
 \newbox\strutbox@
 \def\strut@{\copy\strutbox@}
 \addto@hook\every@math@size{%
  \global\setbox\strutbox@\hbox{\lower.5\normallineskiplimit
         \vbox{\kern-\normallineskiplimit\copy\strutbox}}}
 \renewcommand{\start@aligned}[2]{%        
    \RIfM@\else
        \nonmatherr@{\begin{\@currenvir}}%
    \fi
    \null\,%
    \if #1t\vtop \else \if#1b \vbox \else \vcenter \fi \fi \bgroup
        \maxfields@#2\relax
        \ifnum\maxfields@>\m@ne
            \multiply\maxfields@\tw@
            \let\math@cr@@@\math@cr@@@alignedat
        \else
            \restore@math@cr
        \fi
        \Let@
        \default@tag
        \ifinany@\else\openup\jot\fi
        \column@\z@
        \ialign\bgroup
           &\column@plus
            \hfil
            \strut@
            $\m@th\displaystyle{##}$%
           &\column@plus
            $\m@th\displaystyle{{}##}$%
            \hfil
            \crcr
 }
 \renewenvironment{aligned}[1][c]{%        
    \start@aligned{#1}\m@ne
 }{%
    \crcr\egroup\egroup
 }
\begin{document}
%%    The information for the title page will be placed between
%%    \begin{document} and \maketitle. The order of most entries
%%    is determined by the class file and can not be changed by
%%    rearranging them. The maketitle command follows after the
%%    abstract.
%%
%%    Most of the following commands will be completed by the publisher.
%%
%%    The copyrightyear is defined in the .clo file as the first argument
%%    of the copyrightinfo command. If the copyrightyear differs from that
%%    value it might be adjusted by the following definition:
%%
%% \renewcommand{\copyrightyear}{2005}% uncomment to change the copyrightyear.
%%
\DOIsuffix{mana.DOIsuffix}
%%
%% issueinfo for the header line
\Volume{248}
\Month{01}
\Year{2005}
%%
%%    First and last pagenumber of the article. If the option
%%    'autolastpage' is set (default) the second argument may be left empty.
\pagespan{1}{}
%%
%%    Dates will be filled in by the publisher. The 'reviseddate' and
%%    'dateposted' (Published online) entry may be left empty.
    \Receiveddate{15 November 2005}
    \Reviseddate{30 November 2005}
    \Accepteddate{2 December 2005}
    \Dateposted{3 December 2005}
%%%%%%%%%%%%%%%%%%%%%%%%%%%%%% AMANN START
    \vspace*{3\baselineskip}    %AMANN Nur fuer Preprint
%%%%%%%%%%%%%%%%%%%%%%%%%%%%%% AMANN END
%%
\keywords{Anisotropic weighted Sobolev spaces, Bessel potential spaces, 
Besov spaces, H\"older spaces, 
noncomplete Riemannian manifolds with boundary, 
embedding theorems, traces, interpolation, boundary operators} 
\subjclass {46E35, 54C35, 58A99, 58D99, 58J99}

%% \pretitle{Editor's Choice}

%% We have a short and a long form for the title. The short form
%% (optional argument) goes into the running head.

\title[Anisotropic Function Spaces on Singular Manifolds]{Anisotropic 
Function Spaces on Singular Manifolds}

%% Please do not enter footnotes or \inst{}-notes into the optional
%% argument of the author command. The optional argument will go into
%% the header.  If there is only one address the marker \inst{x} may be
%% omitted.

%% Information for the first author.
\author[H. Amann]{H. Amann\footnote{e-mail: {\sf herbert.amann@math.uzh.ch}}}
\address{Math.\ Institut, Universit\"at Z\"urich,
Winterthurerstr.~190, CH--8057 Z\"urich, Switz\-er\-land}
%%
%%    Information for the second author
%\author[Sh. Second Author]{L. Second Author\footnote{Second author
%footnote.}\inst{1,2}}
%\address[\inst{2}]{Second address}
%%
%%    Information for the third author
%\author[Sh. Third Author]{L. Third Author\footnote{Third author
%footnote.}\inst{2}}
%%
%%    \dedicatory{This is a dedicatory.}
\begin{abstract} 
A~rather complete investigation of anisotropic 
Bessel potential, Besov, and H\"older spaces on cylinders over (possibly) 
noncompact Riemannian manifolds with boundary is carried out. The geometry 
of the 
underlying manifold near its `ends' is determined by a singularity 
function which leads naturally to the study of weighted function spaces. 
Besides of the derivation of Sobolev-type embedding results, sharp trace 
theorems, point-wise multiplier properties, and interpolation 
characterizations particular emphasize is put on spaces distinguished by 
boundary conditions. This work is the fundament for the analysis of 
time-dependent partial differential equations on singular manifolds. 
\end{abstract} 
%% maketitle must follow the abstract.
\maketitle                   % Produces the title.

%% If there is not enough space inside the running head
%% for all authors including the title you may provide
%% the leftmark in one of the following three forms:

%% \renewcommand{\leftmark}
%% {First Author: A Short Title}

%% \renewcommand{\leftmark}
%% {First Author and Second Author: A Short Title}

%% \renewcommand{\leftmark}
%% {First Author et al.: A Short Title}

%% \tableofcontents  % Produces the table of contents.
%------------------------------ AMANN START
 \def\cprime{$'$}
 \newif \ifcomments
 \commentsfalse
 %\commentstrue
 \ifcomments \advance\hoffset by-15truemm\fi
 \def\slComm#1#2{\ifcomments%
     {}\vadjust{\kern#1%
      \vtop to 0pt{\vss\hbox to\hsize{\hfill\strut\rlap{ \rm#2}}\null}
 %      \vtop to 0pt{\baselineskip 0pt \vss\llap{#2 }\null}% if TagsOnLeft
      \kern-#1}\fi%
 }
 \def\Comm#1{\slComm{0mm}{$\bullet$ #1}}
 \def\Label#1{\label{#1}\slComm{0mm}{\hbox{\rm #1}}}  %original 
 \def\LabelT#1{\label{#1}\strut\slComm{1mm}{\hbox{\rm #1}}}
 \newcommand*{\Eqref}[1]{\hbox{\rm(\ref{#1})}}
%------------------------------ AMANN END
%---------------------------------------------------------------------
%---------------------------------------------------------------------
\section{Introduction}\LabelT{sec-I}% 
In~\cite{Ama12b} we have performed an in-depth study of Sobolev, Bessel 
potential, and Besov spaces of functions and tensor fields on Riemannian 
manifolds which may have a boundary and may be noncompact and 
noncomplete.  That as well as the present research is motivated 
by~---~and provides the basis for~---~the study of elliptic and 
parabolic boundary value problems on piece-wise smooth manifolds, on 
domains in~$\BR^m$ with a piece-wise smooth boundary in particular. 

\smallskip 
A singular manifold~$M$ is to a large extent 
determined by a `singularity function'
\hb{\rho\in C^\iy\bigl(M,(0,\iy)\bigr)}. The behavior of~$\rho$ at the
`singular ends' of~$M$, that is, near that parts of~$M$ at which $\rho$~gets
either arbitrarily small or arbitrarily large, reflects the singular 
structure of~$M$. 

\smallskip 
The basic building blocks for a useful theory of function 
spaces on singular manifolds are weighted Sobolev spaces based on the 
singularity function~$\rho$. More precisely, we denote by~$\BK$ 
either $\BR$ or~$\BC$. Then, given
\hb{k\in\BN},
\ \hb{\lda\in\BR}, and
\hb{p\in(1,\iy)}, the weighted Sobolev space
\hb{W_{\coW p}^{k,\lda}(M)=W_{\coW p}^{k,\lda}\MBK} is the completion
of~$\cD(M)$, the space of smooth functions with compact support in~$M$,
in~$L_{1,\loc}(M)$ with respect to the norm
\beq\Label{I.n}
u\mt 
\Bigl(\sum_{i=0}^k\big\|\rho^{\lda+i}\,|\na^iu|_g\big\|_p^p\Bigr)^{1/p}.
\eeq 
Here $\na$~denotes the Levi-Civita covariant derivative and
$|\na^iu|_g$~is the `length' of the covariant tensor field~$\na^iu$
naturally derived from the Riemannian metric $g$ of~$M$. Of course,
integration is carried out with respect to the volume measure of~$M$.
It turns out that $W_{\coW p}^{k,\lda}(M)$ is well-defined,
independently~---~in the sense of equivalent norms~---~of the
representation of the singularity structure of~$M$ by means of the
specific singularity function.

\smallskip
A~very special and simple example of a singular manifold is provided by a
bounded smooth domain whose boundary possesses a conical point. More
precisely, suppose $\Om$~is a bounded domain in~$\BR^m$ whose topological
boundary, $\bdry(\Om)$, contains the origin, and
\hb{\Ga:=\bdry(\Om)\ssm\{0\}} is a smooth
\hb{(m-1)}-dimensional submanifold of~$\BR^m$ lying locally on one side
of~$\Om$. Also suppose that
\hb{\Om\cup\Ga} is near~$0$ diffeomorphic to a cone
\hb{\{\,ry\ ;\ 0<r<1,\ y\in B\,\}}, where $B$~is a smooth compact
submanifold of the unit sphere in~$\BR^m$. Then, endowing
\hb{M:=\Om\cup\Ga} with the Euclidean metric, we get a singular
manifold with a single conical singularity, as considered in
\cite{NaP94a} and~\cite{KMR97a}, for example. In this case the weighted
norm~\Eqref{I.n} is equivalent to
$$
u\mt\Bigl(\sum_{|\al|\leq k}
\|r^{\lda+|\al|}\pa u\|_{L_p(\Om)}^p\Bigr)^{1/p},
$$
where $r(x)$~is the Euclidean distance from
\hb{x\in M} to the origin. Moreover,
$W_{\coW p}^{k,\lda}(M)$ coincides with the
space $V_{\coV p,\lda+k}^k(\Om)$ employed by
S.A. Nazarov and B.A. Plamenevsky \cite[p.~319]{NaP94a} and, in the case
\hb{p=2}, by 
V.A. Kozlov, V.G. Maz{\cprime}ya, and J.~Rossmann (see Section~6.2 
of~\cite{KMR97a}, for instance). 

\smallskip 
In~\cite{Ama12b} we have exhibited a number of examples of singular 
manifolds. For more general classes, comprising notably manifolds 
with corners and non-smooth cusps, we refer to H.~Amann~\cite{Ama12c}. 
It is worthwhile to point out that our concept of singular manifolds 
encompasses, as a very particular case, manifolds with 
bounded geometry (that is, Riemannian manifolds without boundary 
possessing a positive injectivity radius and having all covariant 
derivatives of the curvature tensor bounded). In this case we can set 
\hb{\rho=\mf{1}}, the function constantly equal to~$1$, so that 
$W_{\coW p}^{k,\lda}(M)$~is independent of~$\lda$ and equal to the 
standard Sobolev space~$W_{\coW p}^k(M)$. 

\smallskip 
The weighted Sobolev spaces~$W_{\coW p}^{k,\lda}(M)$ and their 
fractional order relatives, that is, Bessel potential and Besov spaces, 
come up naturally in, and are especially useful for, the study of elliptic 
boundary value problems for differential and pseudodifferential operators 
in non-smooth settings. This is known since the 
seminal work of V.A. Kondrat{\cprime}ev~\cite{Kon67a} 
and has since been exploited and amplified by numerous 
authors in various levels of generality, predominantly however in the 
Hilbertian case 
\hb{p=2} (see~\cite{Ama12b} for further bibliographical remarks). 

\smallskip 
For an efficient study of evolution equations on singular manifolds we 
have to have a good understanding of function spaces on space-time 
cylinders 
\hb{M\times J} with 
\hb{J\in\{\BR,\BR^+\}}, where 
\hb{\BR^+=[0,\iy)}. Then, in general, the functions (or distributions) 
under consideration posses different regularity properties with 
respect to the space and time variables. Thus we are led to study 
anisotropic Sobolev spaces and their fractional order relatives. 

\smallskip 
Anisotropic weighted Sobolev spaces depend on two additional parameters, 
namely 
\hb{r\in\BN^\times:=\BN\ssm\{0\}} and 
\hb{\mu\in\BR}. More precisely, we denote throughout by 
\hb{\pl=\pl_t} the vector-valued distributional 
`time' derivative. Then, given 
\hb{k\in\BN^\times}, 
\beq\Label{I.Nkr} 
\bal 
W_{\coW p}^{(kr,k),(\lda,\mu)}(M\times J) 
&\text{ is the linear subspace of $L_{1,\loc}(M\times J)$ 
 consisting of all $u$ satisfying}\\
&\rho^{\lda+i+j\mu}\,|\na^i\pl^ju|_g\in L_p(M\times J) 
\quad\text{for}\quad
i+jr\leq kr\text,\\ 
&\text{endowed with its natural norm.}  
\eal 
\npb 
\eeq  
It is a Banach space, a~Hilbert space if 
\hb{p=2}. 

\smallskip 
Spaces of this type, as well as fractional order versions thereof, provide 
the natural domain for an \hbox{$L_p$-theory} of linear differential 
operators of the form 
$$ 
\sum_{i+jr\leq kr}a_{ij}\cdot\na^i\pl^j, 
$$ 
where $a_{ij}$~is a time-dependent contravariant tensor field of order~$i$ 
and 
\hb{{}\cdot{}} indicates complete contraction. In this connection the values 
\hb{\mu=0}, 
\ \hb{\mu=1}, and 
\hb{\mu=r} are of particular importance. If 
\hb{\mu=1}, then space and time derivatives carry the same weight. If also 
\hb{r=1}, then we get isotropic weighted Sobolev spaces on 
\hb{M\times J}. 

\smallskip 
If 
\hb{\mu=0}, then the intersection space characterization 
$$ 
W_{\coW p}^{(kr,k),(\lda,0)}(M\times J) 
\doteq L_p\bigl(J,W_{\coW p}^{kr,\lda}(M)\bigr) 
\cap W_{\coW p}^k\bigl(J,L_p^\lda(M)\bigr) 
$$ 
is valid, where 
\hb{{}\doteq{}}~means: equal except for equivalent norms. Spaces of this 
type (with 
\hb{k=1}) have been used by 
S.~Coriasco, E.~Schrohe, and J.~Seiler \cite{CSS03a},~\cite{CSS07a} for 
studying parabolic equations on manifolds with conical points. In this case 
$\rho$~is (equivalent to) the distance from the singular points. 
Anisotropic spaces with 
\hb{\mu=0} are also important for certain classes of degenerate parabolic 
boundary value problems (see~\cite{Ama12c}). 

\smallskip 
The spaces $W_{\coW p}^{(kr,k),(\lda,r)}(M\times J)$ constitute, perhaps, 
the most natural extension of the `stationary' 
spaces $W_{\coW p}^{k,\lda}(M)$ to the space-time cylinder 
\hb{M\times J}. They have been employed by V.A. Kozlov 
\hbox{\cite{Koz88b}\nocite{Koz88a}\nocite{Koz89a}--\cite{Koz91a}} ---~in 
the Hilbertian setting 
\hb{p=2}~---~for the study of general parabolic boundary value problems 
on a cone~$M$. (Kozlov, as well as the authors mentioned below, write 
$W_{\lda+kr}^{(kr,k)}$ for~$W_2^{(kr,k),(\lda,r)}$.) The space 
$W_{\coW p}^{(2,1),(\lda,2)}(M\times J)$ occurs in the works on second 
order parabolic equations on smooth infinite wedges by 
V.A. Solonnikov~\cite{Sol01a} and 
A.I. Nazarov~\cite{Naz01a} (also see 
V.A. Solonnikov and E.V. Frolova \cite{SoF90a},~\cite{SoF91a}), as well as 
in the studies of W.M. 
Zaj{\polhk{a}}czkowski 
\hbox{\cite{Zaj07a}\nocite{Zaj11a}\nocite{Zaj11b}--\cite{Zaj11c}}, 
A.~Kubica and W.M. Zaj{\polhk{a}}czkowski \cite{KuZ07a},~\cite{KuZ07b}, and 
K.~Pileckas \hbox{\cite{Pil05a}\nocite{Pil07a}--\cite{Pil08a}} (see 
the references in these papers for earlier work) on Stokes and 
Navier-Stokes equations. In all these papers, except the ones of Pileckas, 
$\rho$~is the distance to the singularity set, where in 
Zaj{\polhk{a}}czkowski's publications $M$~is obtained from a smooth 
subdomain of~$\BR^m$ by eliminating a line segment. Pileckas considers 
subdomains of~$\BR^m$ with outlets to infinity and~$\rho$ having possibly  
polynomial or exponential growth. 

\smallskip 
In this work we carry out a detailed study of anisotropic Sobolev, 
Bessel potential, Besov, and H\"older spaces on singular manifolds and 
their interrelations. Besides of this introduction, the paper is structured 
by the following sections on whose principal content we comment below. 

\medskip 
\noindent 
\begin{tabular}{rlrl} 
\ref{sec-V}     &Vector Bundles 
    &\quad
     \ref{sec-P}     &Point-Wise Multipliers\\ 
\ref{sec-U}     &Uniform Regularity 
    &\ref{sec-C}     &Contractions\\ 
\ref{sec-S}     &Singular Manifolds 
    &\ref{sec-E}     &Embeddings\\ 
\ref{sec-L}     &Local Representations 
    &\ref{sec-M}     &Differential Operators\\ 
\ref{sec-J}     &Isotropic Bessel Potential and Besov Spaces 
    &\ref{sec-ER}    &Extensions and Restrictions\\ 
\ref{sec-R}     &The Isotropic Retraction Theorem 
    &\ref{sec-T}     &Trace Theorems\\ 
\ref{sec-A}     &Anisotropic Bessel Potential and Besov Spaces 
    &\ref{sec-VT}    &Spaces With Vanishing Traces\\ 
\ref{sec-RA}    &The Anisotropic Retraction Theorem 
    &\ref{sec-B}     &Boundary Operators\\ 
\ref{sec-N}     &Renorming of Besov Spaces 
    &\ref{sec-IP}    &Interpolation\\ 
\ref{sec-PH}    &H\"older Spaces in Euclidean Settings 
    &\ref{sec-BC}    &Bounded Cylinders\\  
\ref{sec-H}     &Weighted H\"older Spaces 
\end{tabular} 

\medskip   
\noindent 
We have already pointed out in~\cite{Ama12b} that it is not sufficient to 
study function spaces on singular manifolds since spaces of tensor 
fields occur naturally in applications. In order to pave the way for a 
study of \emph{systems} of differential and pseudodifferential operators it 
is even necessary to deal with tensor fields taking their values in general 
vector bundles. This framework is adopted here. 

\smallskip 
Sections \ref{sec-V} and~\ref{sec-U} are of preparatory character. In the 
former, besides of fixing notation and introducing conventions used 
throughout, we present the background material on vector bundles on which 
this paper is based. We emphasize, in particular, duality properties and 
local representations which are fundamental for our approach. 

\smallskip 
Since we are primarily interested in noncompact manifolds we have to impose 
suitable regularity conditions `at infinity'. This is done in 
Section~\ref{sec-U} where we introduce the class of `fully uniformly 
regular' vector bundles. They constitute the `image bundles' for the 
tensor fields on the singular manifolds which we consider here. 

\smallskip 
After these preparations, singular manifolds are introduced in 
Section~\ref{sec-S}. There we also install the geometrical frame which 
we use from thereon without further mention. 

\smallskip 
Although we study spaces of tensor fields taking their values in uniformly 
regular vector bundles, the vector bundles generated by these tensor fields 
are \emph{not} uniformly regular themselves, in general. In fact, their 
metric and their covariant derivative depend on the metric~$g$ of the 
underlying 
singular Riemannian manifold. Since the singularity behavior of~$g$ is 
controlled by the singularity function~$\rho$, due to our very definition 
of a singular manifold, we have to study carefully the dependence of all 
relevant parameters on~$\rho$ as well. This is done in Section~\ref{sec-L}. 
On its basis we can show in later sections that the various function spaces 
are independent of particular representations; they depend on the underlying 
geometric structure only. 

\smallskip 
Having settled these preparatory problems we can then turn to the main 
subject of this paper, the study of function spaces (more precisely, spaces 
of vector-bundle-valued tensor fields) on singular manifolds. We begin in 
Sections \ref{sec-J} and~\ref{sec-R} by recalling and amplifying some 
results from our previous paper~\cite{Ama12b} on isotropic spaces. On the 
one hand this allows us to introduce some basic concepts and on the other 
hand we can point out the changes which have to be made to cover the more 
general setting of of vector-bundle-valued tensor fields. 

\smallskip 
The actual study of anisotropic weighted function spaces begins in 
Section~\ref{sec-A}. First we introduce Sobolev spaces which can be easily 
described invariantly. They form the building blocks for the theory of 
anisotropic weighted Bessel potential and Besov spaces. The latter are 
invariantly defined by interpolation between Sobolev spaces and by duality. 

\smallskip 
This being done, it has to be shown that these spaces coincide in the most 
simple situation in which $M$~is either the Euclidean space~$\BR^m$ or a 
closed half-space~$\BH^m$ thereof with the `usual' anisotropic Bessel 
potential and Besov spaces, respectively. In the Euclidean model setting a 
thorough investigation has been carried out in H.~Amann~\cite{Ama09a} by 
means of Fourier analytic techniques. That work is the fundament upon which 
the present research is built. The basic result which settles this 
identification and is fundamental for the whole theory as well as for the 
study of evolution equations is Theorem~\ref{thm-RA.R}. In particular, it 
establishes isomorphisms between the function spaces on 
\hb{M\times J} and certain countable products of corresponding spaces on 
model manifolds. By these isomorphisms we can transfer the known properties 
of the `elementary' spaces on 
\hb{\BR^m\times J} and 
\hb{\BH^m\times J} to 
\hb{M\times J}. With this method we establish the most fundamental 
properties of anisotropic Bessel potential and Besov spaces which are 
already stated in Section~\ref{sec-A}. 

\smallskip 
In Section~\ref{sec-N} we take advantage of the fact that the anisotropic 
spaces we consider live in cylinders over~$M$ so that the `time variable' 
plays a distinguished role. This allows us to introduce some useful 
semi-explicit equivalent norms for Besov spaces. 

\smallskip 
It is well-known that spaces of H\"older continuous functions are 
intimately related to the theory of partial differential equations on 
Euclidean spaces. They occur naturally, even in the \hbox{$L_p$-theory}, 
as point-wise multiplier spaces, in particular as coefficient spaces for 
differential operators. Although it is fairly easy to study H\"older 
continuous functions on subsets of~$\BR^m$, it is surprisingly difficult to 
do this on manifolds. Our approach to this problem is similar to the way in 
which we defined Bessel potential and (\hbox{$L_p$-based}) Besov spaces on 
manifolds. Namely, first we introduce spaces of bounded and continuously 
differentiable functions. Then we define H\"older spaces, more generally 
Besov-H\"older spaces, by interpolation. This is not straightforward since 
we can only interpolate between spaces of bounded \hbox{$C^k$-functions} 
whose derivatives are uniformly continuous. Due to the fact that we are 
mainly interested in noncompact manifolds, the concept of uniform continuity 
is not a~priori clear and has to be clarified first. Then the next problem 
is to show that H\"older spaces introduced in this invariant way can be 
described locally by their standard anisotropic counterparts on 
\hb{\BR^m\times J} and 
\hb{\BH^m\times J}. Such representations in local coordinates are, 
of course, fundamental for the study of concrete equations, for example. 

\smallskip 
In order to achieve these goals we set up the preliminary 
Section~\ref{sec-PH} in which we establish the needed properties of 
(vector-valued) H\"older and Bessel-H\"older spaces in Euclidean settings.  
In Section~\ref{sec-H} we can then settle the problems alluded to above. 
It should be mentioned that in these two sections we consider 
time-independent isotropic as well as time-dependent anisotropic spaces, 
thus complementing the somewhat ad hoc results on H\"older spaces 
in~\cite{Ama12b}.

\smallskip 
Having introduced all these spaces and established their basic properties 
we proceed now to more refined features. In Section~\ref{sec-P} we show 
that, similarly as in the Euclidean setting, H\"older spaces are universal 
point-wise multiplier spaces for Bessel potential and Besov spaces 
modeled on~$L_p$. For this we establish the rather general (almost) 
optimal Theorem~\ref{thm-P.M}. 

\smallskip 
In practice point-wise multiplications occur, as a rule, through 
contractions of tensor fields. For this reason we carry out in 
Section~\ref{sec-C} a~detailed study of mapping properties of contractions 
of tensor fields, one factor belonging to a H\"older space and the other 
one to a Bessel potential or a Besov space, in particular. It should be 
noted that we impose minimal regularity assumptions for the multiplier 
space. The larger part of Section~\ref{sec-C} is, however, devoted to the 
problem of the existence of a continuous right inverse for a multiplier 
operator induced by a complete contraction. The main result of this section 
thus is Theorem~\ref{thm-C.R}. It is basic for the theory of boundary value 
problems. 

\smallskip 
Section~\ref{sec-E} contains general Sobolev-type embedding theorems for 
parameter-dependent weighted Bessel potential and Besov spaces. They are 
natural extensions of the corresponding classical results in the Euclidean 
setting. 

\smallskip 
Making use of our point-wise multiplier and Sobolev-type embedding theorems 
we study in Section~\ref{sec-M} mapping properties of differential 
operators in anisotropic spaces. In view of applications to quasilinear 
equations we strive for minimal regularity requirements for the coefficient 
tensors.  

\smallskip 
All results established up to this point hold both for 
\hb{J=\BR} and 
\hb{J=\BR^+}. In contrast, Section~\ref{sec-ER} is specifically concerned 
with anisotropic spaces on the half-line~$\BR^+$. It is shown that in many 
cases properties of function spaces on~$\BR^+$ can be derived from the 
corresponding results on the whole line~$\BR$. This can simplify the 
situation since 
\hb{M\times\BR} is a usual manifold (with boundary), whereas 
\hb{M\times\BR^+} has corners if 
\hb{\pl M\neq\es}. 

\smallskip 
In Section~\ref{sec-T} we consider the important case where $M$~has a 
nonempty boundary and establish the fundamental trace theorem for 
anisotropic weighted Bessel potential and Besov spaces, both on the 
`lateral boundary' 
\hb{\pl M\times J} and on the `initial boundary' 
\hb{M\times\{0\}} if 
\hb{J=\BR^+}. 

\smallskip 
In the next section we characterize spaces of functions having vanishing 
initial traces. Section~\ref{sec-B} is devoted to  extending the 
boundary values. Here we rely, besides the trace theorem, in particular on 
the `right inverse theorem' established in Section~\ref{sec-C}. The 
results of this section are of great importance in the theory of 
boundary value problems. 

\smallskip 
Section~\ref{sec-IP} describes the behavior of anisotropic weighted 
Bessel potential, Besov, and H\"older spaces under interpolation. In 
addition to this, we also derive interpolation theorems for `spaces with 
vanishing boundary conditions'. These results are needed for~a `weak 
\hbox{$L_p$-theory}' of parabolic evolution equations. 

\smallskip 
Our investigation of weighted anisotropic function spaces is greatly 
simplified by the fact that we consider full and half-cylinders over~$M$. 
In this case we can take advantage of the dilation invariance of~$J$. In 
practice, cylinders of finite height come up naturally and are of 
considerable importance. For this 
reason it is shown, in the last section, that all embedding, interpolation, 
trace theorems, etc.\ are equally valid if $J$~is replaced by $[0,T]$ for 
some 
\hb{T\in(0,\iy)}. 

\smallskip 
In order to cover the many possibilities due to the (unavoidably) large set 
of parameters our spaces depend upon, and to eliminate repetitive 
arguments, we use rather condensed notation in which we exhibit the locally 
relevant information only. This requires a great deal of concentration on 
the part of the reader. However, everything simplifies drastically in the 
important special case of Riemannian manifolds with bounded geometry. In 
that case there are no singularities and all spaces are 
parameter-independent. Readers interested in this situation only can simply 
ignore all mention of the parameters $\lda$, $\mu$, and~$\vec\om$ and set  
\hb{\rho=\mf{1}}. Needless to say that even in this `simple' situation the 
results of this paper are new. 
%---------------------------------------------------------------------
%---------------------------------------------------------------------
\section{Vector Bundles}\LabelT{sec-V}% 
First we introduce some notation and conventions from functional analysis. 
Then we recall some relevant facts from the theory of vector bundles. 
It is the main purpose of this preparatory section to create a firm basis 
for the following. We emphasize in particular duality properties and local 
representations, for which we cannot refer to the literature. 
Background material on manifolds and vector bundles is found in  
J.~Dieudonn\'e~\cite{Die69b} or J.~Jost~\cite{Jos08a}, for example. 

\smallskip 
Given locally convex (Hausdorff topological vector) spaces $\cX$ and~$\cY$, 
we denote by~$\cL\cXcY$ the
space of continuous linear maps from~$\cX$ into~$\cY$, and
\hb{\cL(\cX):=\cL\cXcX}. By $\Lis\cXcY$ we mean the set of all 
isomorphisms in~$\cL\cXcY$, and 
\hb{\Laut(\cX):=\Lis\cXcX} is the automorphism group in~$\cL(X)$. If 
$\cX$ and~$\cY$ are Banach spaces, then
$\cL\cXcY$~is endowed with the uniform operator norm. In this situation 
$\Lis\cXcY$ is open in~$\cL\cXcY$. We write
\hb{\pw_\cX} for the duality pairing between 
\hb{\cX':=\cL\cXBK} and~$\cX$, that is,
$\dl x',x\dr_\cX$ is the value of
\hb{x'\in\cX'} at
\hb{x\in\cX}.

\smallskip 
Let 
\hb{H=\bigl(H,\prsn\bigr)} be a Hilbert space. Then the \emph{Riesz 
isomorphism} is the conjugate linear isometric isomorphism 
\hb{\vt=\vt_H\sco H\ra H'} defined by 
\beq\Label{V.R}
\dl\vt x,y\dr=(y\sn x) 
\qa x,y\in H, 
\eeq 
where 
\hb{\pw=\pw_H}. Then 
\beq\Label{V.ad} 
(x'\sn y')^*:=(\vt^{-1}y'\sn\vt^{-1}x') 
\qa x',y'\in H',
\eeq 
defines the \emph{adjoint} inner product on~$H'$, and 
\hb{H^*:=\bigl(H',\prsn^*\bigr)} is the adjoint Hilbert space. Denoting by 
\hb{\Vsdot} and~% 
\hb{\Vsdot^*} the inner product norms associated with 
\hb{\prsn} and~% 
\hb{\prsn^*}, respectively, we obtain from \Eqref{V.R} and \Eqref{V.ad} 
\beq\Label{V.CS1}
|\dl x',x\dr|\leq\|x'\|^*\,\|x\| 
\qa x'\in H' 
\qb x\in H. 
\eeq 
It follows from \hbox{\Eqref{V.R}--\Eqref{V.CS1}} 
%\Eqref{V.ad} 
and the fact that 
$\vt$~is an isometry that 
\hb{\|x'\|^*=\sup\bigl\{\,|\dl x',x\dr|,\ \|x\|\leq1\,\bigr\}} for 
\hb{x'\in H'}. Thus 
\hb{\Vsdot^*}~is the norm in 
\hb{H'=\cL\HK}, the dual norm. In other words, 
\hb{H'=H^*} as Banach spaces. For this and historical reasons we use the 
`star notation' for the dual space in the finite-dimensional setting and 
in connection with vector bundles, 
whereas the `prime notation' is more appropriate in functional analytical 
considerations. 

\smallskip 
If $H_1$ and~$H_2$ are Hilbert spaces and 
\hb{A\in\cL\HeHz}, then it has to be carefully distinguished between 
the dual 
\hb{A'\in\cL\HzsHes}, defined by 
\hb{\dl A'x_2',x_1\dr_{H_1}=\dl x_2',Ax_1\dr_{H_2}}, and the adjoint 
\hb{A^*\in\cL\HzHe}, given by 
\hb{(A^*x_2\sn x_1)_{H_1}=(x_2\sn Ax_1)_{H_2}} for 
\hb{x_i\in H_i} and 
\hb{x_2'\in H_2'}. 

\smallskip 
Suppose $H_1$ and~$H_2$ are finite-dimensional. Then $\cL\HeHz$ is a 
Hilbert space with the Hilbert-Schmidt inner product~% 
\hb{\prsn_{HS}} defined by 
\hb{(A\sn B)_{HS}:=\tr(B^*A)} for 
\hb{A,B\in\cL\HeHz}, where $\tr$~denotes the trace. The corresponding 
norm~% 
\hb{\vsdot_{HS}} is the Hilbert-Schmidt norm 
\hb{A\mt\sqrt{\tr(A^*A)}}. 

\smallskip 
Throughout this paper, we use the summation convention for indices 
labeling coordinates or bases. This means that such a repeated index, 
which appears once as a superscript and once as a subscript, implies 
summation over its whole range. 

\smallskip 
By~a \emph{manifold} we always mean a smooth, that is, $C^\iy$~manifold
with (possibly empty) boundary such that its underlying topological space
is separable and metrizable. Thus, in the context of manifolds,  we work in
the smooth category. A~manifold need not be connected, but all connected
components are of the same dimension.

\smallskip 
Let $M$ be an \hbox{$m$-dimensional} manifold and 
\hb{V=(V,\pi,M)} a \hbox{$\BK$~vector} bundle of rank~$n$ over~$M$. For 
a nonempty subset~$S$ of~$M$ we denote by $V_S$, or~$V_{|S}$, the 
restriction~$\pi^{-1}(S)$ of~$V$ to~$S$. If $S$~is a submanifold, or 
\hb{S=\pl M}, then $V_S$~is a vector bundle of rank~$n$ over~$S$. As usual, 
\hb{V_p:=V_{\{p\}}} is the fibre~$\pi^{-1}(p)$ of~$V$ over~$p$. 
Occasionally, we use the symbolic notation 
\hb{V=\bigcup_{p\in M}V_{\coV p}}. 

\smallskip 
By $\Ga\SV$ we mean the \hbox{$\BK^S$~module} of all sections of~$V$ 
over~$S$ (no smoothness). If $S$~is a submanifold, or 
\hb{S=\pl M}, then $C^k\SV$ is for 
\hb{k\in\BN\cup\{\iy\}} the Fr\'echet space of \hbox{$C^k$~sections} 
over~$S$. It is a 
\hb{C^k(S):=C^k\SK} module. In the case of a trivial bundle 
\hb{M\times E=(M\times E,\pro_1,M)} for some \hbox{$n$-dimensional} 
Banach space~$E$, a~section over~$S$ is a map from~$S$ into~$E$, that is, 
\hb{\Ga(S,M\times E)=E^S}. Accordingly 
\hb{C^k(S,M\times E)=C^k\SE} is the Fr\'echet space of all \hbox{$C^k$ maps} 
from~$S$ into~$E$. As usual, $\pro_i$~denotes the natural projection onto 
the \hbox{$i$-th} factor of a Cartesian product (of~sets). 

\smallskip  
Let 
\hb{\wt{V}=(\wt{V},\wt{\pi},\wt{M})} be a vector bundle over a 
manifold~$\wt{M}$. A~\hbox{$C^k$ map} 
\hb{(f_0,f)\sco\MV\ra(\wt{M},\wt{V})}, that is, 
\hb{f_0\in C^k(M,\wt{M})} and 
\hb{f\in C^k(V,\wt{V})}, is a \emph{\hbox{$C^k$~bundle} 
morphism} if the diagram 
 \bspezeq 
 \bal 
 \begin{picture}(82,63)(-41,-5)        %teil2 1
% \put(-41,58){\line(1,0){82}}
% \put(-41,-5){\line(1,0){82}}
% \put(-41,-5){\line(0,1){63}}
% \put(41,-5){\line(0,1){63}}
 \put(0,50){\makebox(0,0)[b]{\small{$f$}}}
 \put(0,5){\makebox(0,0)[b]{\small{$f_0$}}}
 \put(-30,45){\makebox(0,0)[c]{\small{$V$}}}
 \put(-30,0){\makebox(0,0)[c]{\small{$M$}}}
 \put(30,45){\makebox(0,0)[c]{\small{$\wt{V}$}}}
 \put(30,0){\makebox(0,0)[c]{\small{$\wt{M}$}}}
 \put(-35,22.5){\makebox(0,0)[r]{\small{$\pi$}}}
 \put(35,22.5){\makebox(0,0)[l]{\small{$\wt{\pi}$}}}
 \put(-20,45){\vector(1,0){40}}
 \put(-20,0){\vector(1,0){40}}
 \put(-30,35){\vector(0,-1){25}}
 \put(30,35){\vector(0,-1){25}}
 \end{picture}
 \eal 
 \espezeq 
is commuting, and 
\hb{f\sn V_p\in\cL(V_{\coV p},V_{f_0(p)})} for 
\hb{p\in M}. It is a \emph{conjugate linear} bundle morphism if 
\hb{f\sn V_{\coV p}} is a conjugate linear map. By defining 
compositions of bundle morphisms in the obvious way one gets, 
in particular, the category of smooth, that is~$C^\iy$, bundles 
in which we work.  Thus a \emph{bundle isomorphism} is an isomorphism 
in the category of smooth vector bundles. If 
\hb{M=\wt{M}}, then 
$f$~is called \emph{bundle morphism} if $(\id_M,f)$ is one. 

\smallskip 
A~\emph{bundle metric} on~$V$ is a smooth section~$h$ of the tensor product 
\hb{V^*\otimes V^*} such that $h(p)$~is an inner product on~$V_{\coV p}$ 
for  
\hb{p\in M}. Then the continuous map 
$$ 
\vsdot_h\sco V\ra C(M) 
\qb v\mt\sqrt{h\vv} 
\npb 
$$ 
is the \emph{bundle norm} derived from~$h$. 

\smallskip 
Suppose 
\hb{V=\Vh} is~a \emph{metric vector bundle}, that is, $V$~is endowed 
with a bundle metric~$h$. Then $V_{\coV p}$~is an \hbox{$n$-dimensional} 
Hilbert space with inner product~$h(p)$. Hence 
\hb{V_{\coV p}^*=(V_{\coV p}',h^*{(p)})}, where $h^*(p)$~is 
the adjoint inner product on~$V_{\coV p}'$, equals~$V_{\coV p}'$ as a 
Banach space. The dual bundle 
\hb{V^*=\bigcup_{p\in M}V_{\coV p}^*} is endowed with the adjoint 
bundle metric~$h^*$ satisfying 
\hb{h^*\sn(V^*\oplus V^*)_p=h^*(p)} for  
\hb{p\in M}, where 
\hb{{}\oplus{}}~is the Whitney sum. 

\smallskip 
The (\emph{bundle}) \emph{duality pairing} 
\hb{\pw_V} is the smooth section of 
\hb{V\otimes V^*} defined by 
\hb{\pw_V(p)=\pw_{V_{\coV p}}} for 
\hb{p\in M}. It follows 
$$  
|\dl v^*,v\dr_V|\leq|v^*|_{h^*}\,|v|_h 
\qb \vsv\in\Ga(M,V^*\oplus V). 
$$ 
We denote by 
\hb{h_\flat(p)\sco V_{\coV p}\ra V_{\coV p}^*} the Riesz isomorphism for 
$\bigl(V_{\coV p},h(p)\bigr)$ and by~$h^\sh(p)$ its inverse. 
This defines the $C^\iy(M)$-conjugate linear (\emph{bundle}) 
\emph{Riesz isomorphism} 
\hb{h_\flat\sco V\ra V^*} and its inverse 
\hb{h^\sh\sco V^*\ra V}, given by 
\hb{h_\flat\sn V_{\coV p}=h_\flat(p)} and 
\hb{h^\sh\sn V_{\coV p}^*=h^\sh(p)}, respectively, for 
\hb{p\in M}. Thus 
$$ 
\dl h_\flat v,w\dr_V=h\wv 
\qb \vw\in\Ga(M,V\oplus V). 
$$
The canonical identification of~$V_{\coV p}^{**}$ with~$V_{\coV p}$ implies 
$$ 
V^{**}=V 
\qb \dl v,v^*\dr_{V^*}=\dl v^*,v\dr_V 
\qa \vvs\in\Ga(M,V\oplus V^*). 
$$ 

\smallskip 
We fix an \hbox{$n$-dimensional} Hilbert space 
\hb{E=\bigl(E,\prsn_E\bigr)}, a~\emph{model fiber for}~$V$. We also fix a 
basis $(e_1,\ldots,e_n)$ of~$E$ and denote by $(\ve^1,\ldots,\ve^n)$ the 
dual basis. Of course, without loss of generality we could set 
\hb{E=\BK^n}. However, for notational simplicity it is more convenient to 
use coordinate-free settings. 

\smallskip 
Let $U$ be open in~$M$. A~\emph{local chart for~$V$ over}~$U$ is a map 
$$ 
\ka\slt\vp\sco V_U\ra\ka(U)\times E 
\qb v_p\mt\bigl(\ka(p),\vp(p)v_p\bigr) 
\qa v_p\in V_{\coV p} 
\qb p\in U, 
$$  
such that 
\hb{(\ka,\ka\slt\vp)\sco(U,V_U)\ra\bigl(\ka(U),\ka(U)\times E\bigr)} 
is a bundle isomorphism, where $\ka(U)$~is open in the closed half-space 
\hb{\BH^m:=\BR^+\times\BR^{m-1}} of~$\BR^m$ (and 
\hb{\BR^0:=\{0\}}). In particular, $\ka$~is a local chart for~$M$. 

\smallskip 
Suppose 
\hb{\ka\slt\vp} and 
\hb{\wt{\ka}\slt\wt{\vp}} are local charts of $V$ over $U$ and~$\wt{U}$, 
respectively. Then the \emph{coordinate change} 
$$ 
(\wt{\ka}\slt\wt{\vp})\circ(\ka\circ\vp)^{-1}
\sco \ka(U\cap\wt{U})\times E\ra\wt{\ka}(U\cap\wt{U})\times E 
$$ 
is given by 
\hb{(x,\xi)\mt\bigl(\wt{\ka}\circ\ka^{-1}(x),\vp_{\ka\wt{\ka}}(x)\xi\bigl)}, 
where 
$$ 
\vp_{\ka\wt{\ka}}\in C^\iy\bigl(\ka(U\cap\wt{U}),\Laut(E)\bigr) 
$$ 
is the corresponding \emph{bundle transition map}. It follows 
\beq\Label{V.php} 
\vp_{\wt{\ka}\wh{\ka}}\vp_{\ka\wt{\ka}}=\vp_{\ka\wh{\ka}} 
\qb \vp_{\ka\ka}=1_E, 
\eeq 
$1_E$~being the identity in~$\cL(E)$. We set 
$$ 
\vp^{-\top}(p):=\bigl(\vp^{-1}(p)\bigr)'\in\Lis(V_{\coV p}^*,E^*) 
\qa p\in U. 
\npb 
$$ 
Then 
\hb{\ka\slt\vp^{-\top}\sco V_U^*\ra\ka(U)\times E^*} 
is the local chart for~$V^*$ over~$U$ \emph{dual} to 
\hb{\ka\slt\vp}.

\smallskip 
In the following, we use standard notation for the 
pull-back and push-forward of functions, that is, 
\hb{\ka^*f=f\circ\ka} and 
\hb{\ka_*f=f\circ\ka^{-1}}. The \emph{push-forward by} 
\hb{\ka\slt\vp} is the vector space isomorphism 
$$ 
(\ka\slt\vp)_*\sco\Ga\UV \ra E^{\ka(U)} 
\qb v\mt\bigl(x\mt\vp\bigl(\ka^{-1}(x)\bigr)v\bigl(\ka^{-1}(x)\bigr)\bigr). 
$$ 
Its inverse is the \emph{pull-back}, defined by 
$$ 
(\ka\slt\vp)^*\sco E^{\ka(U)}\ra\Ga\UV 
\qb \xi\mt\bigl(p\mt\bigl(\vp(p)\bigr)^{-1}\xi\bigl(\ka(p)\bigr)\bigr). 
$$ 
It follows that 
\hb{(\ka\slt\vp)_*} is a vector space isomorphism from $C^\iy\UV$ onto 
$C^\iy\bigl(\ka(U),E\bigr)$, and 
\beq\Label{V.pp} 
(\wt{\ka}\slt\wt{\vp})_*(\ka\slt\vp)^*\xi 
=\vp_{\ka\wt{\ka}}\bigl(\xi\circ(\wt{\ka}\circ\ka^{-1})\bigr) 
\qa \xi\in E^{\wt{\ka}(U_{\coU\ka}\cap U_{\coU\wt{\ka}})}. 
\eeq 
Furthermore, 
\beq\Label{V.pdu} 
\ka_*\bigl(\dl v^*,v\dr_V\bigr) 
=\big\dl(\ka\slt\vp^{-\top})_*v^*,(\ka\slt\vp)_*v\big\dr_E 
\qa (v^*,v)\in\Ga(U,V^*\oplus V). 
\eeq 
In addition, 
\beq\Label{V.pfv} 
(\ka\slt\vp)_*(fv)=(\ka_*f)(\ka\slt\vp)_*v 
\qa f\in\BK^U 
\qb v\in\Ga\UV. 
\eeq 

\smallskip 
We define the \emph{coordinate frame} $(b_1,\ldots,b_n)$ for~$V$ over~$U$ 
\emph{associated with} 
\hb{\ka\slt\vp} by 
$$ 
b_\nu:=(\ka\slt\vp)^*e_\nu 
\qa 1\leq\nu\leq n. 
$$ 
Then 
$$ 
\ba^\nu:=(\ka\slt\vp^{-\top})^*\ve^\nu 
\qa 1\leq\nu\leq n, 
$$ 
defines the \emph{dual coordinate frame} for~$V^*$ over~$U$. 
In fact, it follows from \Eqref{V.pdu} that 
$$ 
\dl\ba^\mu,b_\nu\dr_V=\ka^*\bigl(\dl\ve^\mu,e_\nu\dr_E\bigr)=\da_\nu^\mu 
\qa 1\leq\mu,\nu\leq n. 
$$ 
Let $(\wt{b}_1,\ldots,\wt{b}_n)$ be the coordinate frame for~$V$ 
over~$\wt{U}$ associated with 
\hb{\wt{\ka}\slt\wt{\vp}}. Then \Eqref{V.pp} and \Eqref{V.pdu} imply 
$$ 
\ka_*\dl\ba^\mu,\wt{b}_\nu\dr_V 
=\big\dl\ve^\mu,(\ka\slt\vp)_*(\wt{\ka}\slt\wt{\vp})^*e_\nu\big\dr_E 
=\dl\ve^\mu,\vp_{\wt{\ka}\ka}e_\nu\dr_E 
=:(\vp_{\wt{\ka}\ka})_\nu^\mu 
\in C^\iy\bigl(\ka(U\cap\wt{U})\bigr). 
$$ 
Hence we infer from 
\hb{\wt{b}_\nu=\dl\ba^\mu,\wt{b}_\nu\dr_Vb_\mu} on 
\hb{U\cap\wt{U}} and \Eqref{V.pfv} that 
\beq\Label{V.pbb} 
(\ka\slt\vp)_*\wt{b}_\nu=(\vp_{\wt{\ka}\ka})_\nu^\mu e_\mu 
\qa 1\leq\nu\leq n. 
\eeq 

\smallskip 
The push-forward of the bundle metric~$h$ is the bundle metric 
\hb{(\ka\slt\vp)_*h} on 
\hb{\ka(U)\times E} defined by 
\beq\Label{V.puh} 
(\ka\slt\vp)_*h(\xi,\eta) 
:=\ka_*\bigl(h\bigl((\ka\slt\vp)^*\xi,(\ka\slt\vp)^*\eta\bigr)\bigr) 
\qa \xi,\eta\in E^{\ka(U)}.
\eeq 
Since $h$~is a smooth section of 
\hb{V^*\otimes V^*} it has a local representation with respect to the 
dual coordinate frame:
\beq\Label{V.hbb} 
h=h_{\mu\nu}\ba^\mu\otimes\ba^\nu 
\qa h_{\mu\nu}=h(b_\mu,b_\nu)\in C^\iy(U). 
\eeq 
In the following, we endow~$\BK^{r\times s}$ with the Hilbert-Schmidt norm 
by identifying it with~$\cL(\BK^s,\BK^r)$ by means of the standard bases. 
Then we call 
\hb{[h]:=[h_{\mu\nu}]\in C^\iy(U,\BK^{n\times n})} 
\emph{representation matrix} of~$h$ with respect to the local coordinate 
frame $(b_1,\ldots,b_n)$. Let $\wt{[h]}$ be the representation matrix 
of~$h$ with respect to the local coordinate frame associated with 
\hb{\wt{\ka}\slt\wt{\vp}}. It follows from \Eqref{V.pbb} that 
\beq\Label{V.hlc} 
\ka_*\wt{[h]} 
=[\vp_{\wt{\ka}\ka}]^\top\ka_*[h]\ol{[\vp_{\wt{\ka}\ka}]} 
\text{ on }\ka(U\cap\wt{U}),  
\eeq 
where $[\vp_{\wt{\ka}\ka}]$ is the representation matrix of 
\hb{\vp_{\wt{\ka}\ka}\in C^\iy\bigl(U,\cL(E)\bigr)} with respect to 
$(e_1,\ldots,e_n)$ and $a^\top$~is the transposed of the matrix~$a$. 

\smallskip 
It should also be noted that \Eqref{V.puh} implies 
\beq\Label{V.pN} 
\ka_*(|v|_h)=|(\ka\slt\vp)_*v|_{(\ka\slt\vp)_*h} 
\qa v\in\Ga\UV. 
\eeq 
Let $[h^*]$ be the representation matrix of~$h^*$ with respect to the dual 
coordinate frame on~$U$. Denote by~$[h^{\mu\nu}]$ the inverse of~$[h]$. 
It is a consequence of 
\hb{\dl b_\nu,h_\flat b_\mu\dr_{V^*}=\dl h_\flat b_\mu,b_\nu\dr_V 
   =h(b_\nu,b_\mu)=h_{\nu\mu}} that 
$$ 
h_\flat b_\mu 
=\dl b_\nu,h_\flat b_\mu\dr_{V^*}\ba^\nu 
=h_{\nu\mu}\ba^\nu 
=\ol{h_{\mu\nu}}\ba^\nu. 
$$ 
Hence 
\hb{h^\sh\ba^\nu=h^{\nu\rho}b_\rho}. This implies 
\hb{h^{*\mu\nu}=h^*(\ba^\mu,\ba^\nu)=h(h^\sh\ba^\nu,h^\sh\ba^\mu) 
   =h^{\nu\rho}\ol{h^{\mu\sa}}h_{\rho\sa}=\ol{h^{\mu\nu}}}, that is, 
\beq\Label{V.hM} 
[h]^{-1}=\ol{[h^*]}. 
\eeq 

\smallskip 
Let 
\hb{V_{\coV i}=\Vihi} be a metric vector bundle of rank~$n_i$ over~$M$, 
where 
\hb{i=1,2}. Assume $U$~is open in~$M$ and 
\hb{\ka\slt\vp_i} is a local chart for~$V_{\coV i}$ over~$U$. Denote by 
$(b_1^i,\ldots,b_{n_i}^i)$  the 
coordinate frame for $V_{\coV i}$ over~$U$ associated with 
\hb{\ka\slt\vp_i} and by 
$(\ba_i^1,\ldots,\ba_i^{n_i})$ its dual frame. Suppose 
\hb{a\in\Ga\bigl(U,\Hom\VeVz\bigr)}. Then 
\beq\Label{V.A} 
a=a_{\nu_1}^{\nu_2}b_{\nu_2}^2\otimes\ba_1^{\nu_1} 
\qa a_{\nu_1}^{\nu_2}=\dl\ba_2^{\nu_2},ab_{\nu_1}^1\dr_{V_2}\in\BK^U. 
\eeq 
Hence, given 
\hb{u_i=u_i^{\nu_i}b_{\nu_i}^i\in\Ga\UVi}, it follows from \Eqref{V.hbb} 
that 
$$ 
h_2(au_1,u_2) 
=a_{\nu_1}^{\nu_2}h_{2,\nu_2\wt{\nu}_2}u_1^{\nu_1}\ol{u_2^{\wt{\nu}_2}}. 
$$ 
For the adjoint section 
\hb{a^*=a_{\nu_2}^{*\nu_1}b_{\nu_1}^1\otimes\ba_2^{\nu_2}
   \in\Ga\bigl(U,\Hom\VzVe\bigr)} we find analogously 
$$ 
h_1(u_1,a^*u_2)=\ol{a_{\wt{\nu}_2}^{*\wt{\nu}_1}}h_{1,\nu_1\wt{\nu}_1} 
u_1^{\nu_1}\ol{u_2^{\wt{\nu}_2}}. 
$$ 
From 
\hb{h_2(au_1,u_2)=h_1(u_1,a^*u_2)} for all $u_i$ in $\Ga\UVi$ we thus get 
\hb{a_{\nu_1}^{\nu_2}h_{2,\nu_2\wt{\nu}_2} 
   =\ol{a_{\wt{\nu}_2}^{*\wt{\nu}_1}}h_{1,\nu_1\wt{\nu}_1}}. Hence it 
follows from \Eqref{V.hM} 
\beq\Label{V.Aad} 
a_{\nu_2}^{*\nu_1} 
=h_1^{*\nu_1\wt{\nu}_1} 
\,\ol{a_{\wt{\nu}_1}^{\wt{\nu}_2}}\,\ol{h_{2,\wt{\nu}_2\nu_2}} 
\qa 1\leq\nu_i\leq n_i.  
\eeq 

\smallskip 
The following well-known basic examples of vector bundles are included for 
later reference and to fix notation.
%---------------------------------------------------------------------
\begin{examples}\LabelT{exa-V.ex}
\hh{(a)}
(Trivial bundles)\quad  
Consider the trivial vector bundle 
\hb{V=\Vh:=\bigl(M\times E,\prsn_E\bigr)} with the usual identification 
of the inner product of~$E$ with the bundle metric 
\hb{M\times E}. For any local chart~$\ka$ of~$M$, the 
\emph{trivial bundle chart over}~$\ka$ is given by 
\hb{\ka\slt 1_E}. Thus 
\hb{(\ka\slt 1_E)_*v=\ka_*v} for 
\hb{v\in\Ga\bigl(\dom(\ka),M\times E)=E^{\dom(\ka)}}. 

\smallskip
\hh{(b)} 
(Tangent bundles)\quad 
Let 
\hb{M=\Mg} be an \hbox{$m$-dimensional} Riemannian manifold. 
Throughout this paper we denote by~$TM$ the tangent bundle if 
\hb{\BK=\BR} and the complexified tangent  bundle if 
\hb{\BK=\BC}. Then~$g$, respectively its complexification, 
is a bundle metric on~$TM$ (also denoted by~$g$ if 
\hb{\BK=\BC}). Thus 
$$ 
T^*M:=(TM)^*=(T^*M,g^*) 
\npb 
$$ 
is the (complexified, if 
\hb{\BK=\BC}) cotangent bundle of~$M$. 

\smallskip 
We use~$\BK^m$ as the model fiber for~$TM$ and choose for 
$(e_1,\ldots,e_m)$ the standard basis 
\hb{e_j^i=\da_j^i}, 
\ \hb{1\leq i,j\leq m}. Furthermore, 
\hb{\prsn=\prsn_{\BK^m}} is the Euclidean (Hermitean) 
inner product on~$\BK^m$ and 
\hb{\vsdot=\vsdot_{\BK^m}} the corresponding norm. We identify~$(\BK^m)^*$ 
with~$\BK^m$ by means of the duality pairing  
\beq\Label{V.Kdu} 
\dl \eta,\xi\dr=\dl\eta,\xi\dr_{\BK^m}:=\eta_i\xi^j 
\qa \eta=\eta_i\ve^i 
\qb \xi=\xi^je_j, 
\npb 
\eeq 
so that 
\hb{\ve^i=e_i} for 
\hb{1\leq i\leq m}. 

\smallskip 
Suppose $\ka$~is a local chart for~$M$ and set 
\hb{U:=\dom(\ka)}. Denote by 
\hb{T\ka\sco T_UM=(TM)_U\ra\ka(U)\times\BK^m} the (complexified, if 
\hb{\BK=\BC}) tangent map of~$\ka$. Then 
\hb{\ka\slt T\ka} is a local chart for~$TM$ over~$U$, the 
\emph{canonical chart} for~$TM$ over~$\ka$. It is completely determined 
by~$\ka$. For this reason 
\hb{(\ka\slt T\ka)_*v} is denoted, as usual, by~$\ka_*v$ for 
\hb{v\in\Ga(U,TM)}. Then the push-forward   
\hb{\bigl(\ka\slt(T\ka)^{-\top}\bigr)_*w} of a covector field 
\hb{w\in\Ga(U,T^*M)} is the usual push-forward of~$w$, denoted 
by~$\ka_*w$ also. 

\smallskip 
Note that the bundle transition map for the coordinate change 
\hb{(\wt{\ka}\slt T\wt{\ka})\circ(\ka\slt T\ka)^{-1}} equals 
\hb{\pl_x(\wt{\ka}\circ\ka^{-1})}, where $\pl_x$~denotes the (Fr\'echet) 
derivative (on~$\BR^m$). 

\smallskip 
The coordinate frame for~$TM$ on~$U$ associated with~$\ka$, that is, with 
\hb{\ka\slt T\ka}, equals 
\hb{(\pl/\pl x^1,\ldots,\pl/\pl x^m)}. Its dual frame is 
\hb{(dx^1,\ldots,dx^m)}. The representation matrix of~$g$ with respect 
to this frame is the \emph{fundamental matrix} 
\hb{[g_{ij}]\in C^\iy\UKmm} of~$M$ on~$U$. 

\smallskip 
For abbreviation, we set 
\hb{\cT M:=C^\iy\MTM} and 
\hb{\cT^*M:=C^\iy\MTsM}. Then~$\cT M$, respectively~$\cT^*M$, is 
the $C^\iy(M)$ module of all (complexified, if 
\hb{\BK=\BC}) smooth vector, respectively covector, fields 
on~$M$.\hfill$\Box$\vskip.5\baselineskip 
\end{examples} 
Let 
\hb{V_{\coV i}=\Vihi} be a metric vector bundle over~$M$ for 
\hb{i=1,2}. Then the dual 
\hb{(V_1\otimes V_2)^*} of the tensor product 
\hb{V_1\otimes V_2} is identified with 
\hb{V_1^*\otimes V_2^*} by means of the duality pairing 
\hb{\pw_{V_1\otimes V_2}} defined by 
\beq\Label{V.Tdu}
\dl v_1^*\otimes v_2^*,v_1\otimes v_2\dr_{V_1\otimes V_2} 
:=\dl v_1^*,v_1\dr_{V_1}\dl v_2^*,v_2\dr_{V_2} 
\qa (v_i^*,v_i)\in\Ga(M,V_{\coV i}^* \oplus V_{\coV i}). 
\eeq 
By 
\hb{h_1\otimes h_2} we denote the bundle metric for  
\hb{V_1\otimes V_2}, given by 
\beq\Label{V.Tbp} 
h_1\otimes h_2(v_1\otimes v_2,w_1\otimes w_2) 
:=h_1(v_1,w_1)h_2(v_2,w_2) 
\qa (v_i,w_i)\in\Ga(M,V_{\coV i}\oplus V_{\coV i}).  
\npb 
\eeq 
We always equip 
\hb{V_1\otimes V_2} with this metric. 

\smallskip 
Suppose that $\ka$~is a local chart for~$M$ and 
\hb{\ka\slt\vp_i} is a local chart for~$V_{\coV i}$ over~$\dom(\ka)$. Then 
\hb{\ka\slt(\vp_1\otimes\vp_2)} denotes the local chart for 
\hb{V_1\otimes V_2} over~$\dom(\ka)$ induced by 
\hb{\ka\slt\vp_i}, 
\ \hb{i=1,2}, that is, 
\beq\Label{V.Tpf} 
\bigl(\ka\slt(\vp_1\otimes\vp_2)\bigr)_*(v_1\otimes v_2) 
=(\ka\slt\vp_1)_*v_1\otimes(\ka\slt\vp_2)_*v_2 
\qa (v_1,v_2)\in\Ga(M,V_1\oplus V_2). 
\npb 
\eeq 
It is obvious how these concepts generalize to tensor products of more than 
two vector bundles over~$M$. 

\smallskip 
A~\emph{connection} on~$V$ is a map 
$$ 
\na\sco\cT M\times C^\iy\MV\ra C^\iy\MV 
\qa \Xv\mt\naX v 
$$ 
which is $C^\iy(M)$ linear in the first argument, additive in its second, 
and satisfies the `product rule' 
\beq\Label{V.Nfv} 
\naX(fv)=(Xf)v+f\naX v 
\qa X\in\cT M 
\qb v\in C^\iy\MV 
\qb f\in C^\iy(M), 
\eeq 
where 
\hb{Xf:=df(X)=\dl df,X\dr:=\dl df,X\dr_{TM}}. Equivalently, $\na$~is 
considered as a \hbox{$\BK$ linear} map,  
$$ 
\na\sco C^\iy\MV\ra\cT^*M\otimes C^\iy\MV, 
$$ 
called \emph{covariant derivative}, defined by 
\beq\Label{V.NXv} 
\dl\na v,X\otimes v^*\dr_{TM\otimes V^*} 
=\dl v^*,\naX v\dr_V 
\qa v^*\in C^\iy\MVs 
\qb v\in C^\iy\MV 
\qb X\in\cT M, 
\eeq 
and satisfying the product rule. Here and in similar situations, $TM$~is 
identified with the `real' subbundle of the complexification 
\hb{TM+\imi TM} if 
\hb{\BK=\BC}. (In other words: We consider `real derivatives' of 
complex-valued sections.)

\smallskip 
A~connection is \emph{metric} if it satisfies 
\beq\Label{V.Xh} 
Xh\vw=h(\naX v,w)+h(v,\naX w) 
\qa X\in\cT M 
\qb v,w\in C^\iy\MV. 
\eeq 
Let $\na$  be a metric connection on~$V$. Then we define a connection 
on~$V^*$, again denoted by~$\na$, by 
\beq\Label{V.Ddu}
\dl\naX v^*,v\dr_V:=X\dl v^*,v\dr_V-\dl v^*,\naX v\dr_V 
\eeq 
for 
\hb{v^*\in C^\iy\MVs}, 
\hb{v\in C^\iy\MV}, and  
\hb{X\in\cT M}. It follows for 
\hb{v,w\in C^\iy\MV} and 
\hb{X\in\cT M} that, due to \Eqref{V.Xh}, 
$$ 
\bal 
Xh\vw 
&=X\dl h_\flat w,v\dr_V 
 =\bigl\dl\naX(h_\flat w),v\bigr\dr_V 
 +\dl h_\flat w,\naX v\dr_V\\
&=\bigl\dl\naX(h_\flat w),v\bigr\dr_V 
 +h(\naX v,w) 
 =\bigl\dl\naX(h_\flat w),v\bigr\dr_V 
 +Xh\vw-h(v,\naX w)\\ 
&=\bigl\dl\naX(h_\flat w),v\bigr\dr_V 
 +Xh\vw
 -\bigl\dl h_\flat(\naX w),v\bigr\dr_V. 
\eal 
$$ 
This and 
\hb{h^\sh=(h_\flat)^{-1}} imply 
$$ 
\na\circ h_\flat=h_\flat\circ\na 
\qb h^\sh\circ\na=\na\circ h^\sh. 
$$ 
Consequently, 
$$ 
\bal 
Xh^*\vsws 
&=Xh(h^\sh w^*,h^\sh v^*) 
 =h(h^\sh\naX w^*,h^\sh v^*)+h(h^\sh w^*,h^\sh\naX v^*)\\ 
&=h^*(\naX v^*,w^*)+h^*(v^*,\naX w^*) 
\eal 
\npb 
$$ 
for 
\hb{v^*,w^*\in C^\iy\MVs}. This shows that $\na$~is a 
metric connection on~$(V^*,h^*)$. 

\smallskip 
Let $\Vihi$ be a metric vector bundle over~$M$ for 
\hb{i=1,2}. Suppose $\na_{\cona i}$~is a metric connection 
on~$V_{\coV i}$. Then 
\beq\Label{V.Ntp} 
\naX(v_1\otimes v_2) 
:=\na_{\cona1X}v_1\otimes v_2+v_1\otimes\na_{\cona2X}v_2 
\qa v_i\in C^\iy(M,V_{\coV i}) 
\qb X\in\cT M, 
\eeq 
defines a  metric connection 
\hb{\na=\na(\na_{\cona1},\na_{\cona2})} on 
\hb{V_1\otimes V_2}, the connection \emph{induced} by $\na_{\cona1}$ 
and~$\na_{\cona2}$. In the particular case where either 
\hb{V_2=V_1} or 
\hb{V_2=V_1^*} and 
\hb{\na_{\cona2}=\na_{\cona1}}, we write again~$\na_{\cona1}$ for 
$\na(\na_{\cona1},\na_{\cona1})$. 
 
\smallskip 
Let $\na$ be a connection on~$V$. Suppose 
\hb{\ka\slt\vp} is a local chart for~$V$ over~$U$. The 
\emph{Christoffel symbols}~$\Chrz$, 
\ \hb{1\leq i\leq m}, 
\ \hb{1\leq\mu,\nu\leq n}, of~$\na$ with respect to 
\hb{\ka\slt\vp} are defined by 
\beq\Label{V.Nb} 
\na_{\pl/\pl x^i}b_\mu=\Chrz b_\nu. 
\eeq 
Here and in similar situations, it is understood that 
Latin indices run from~$1$ to~$m$ and Greek ones from~$1$ to~$n$. 
It follows 
\beq\Label{V.Nv} 
\na v=\Bigl(\frac{\pl v^\nu}{\pl x^i}+\Chrz v^\mu\Bigr)\,dx^i\otimes b_\nu 
\qa v=v^\nu b_\nu\in C^\iy\UV. 
\eeq 
Let $V_1$ and~$V_2$ be metric vector bundles over~$M$ with metric 
connections $\na_{\cona1}$ and~$\na_{\cona2}$, respectively. For a smooth 
section~$a$ of $\Hom\VeVz$ we define 
\beq\Label{V.VaV}  
(\na_{\cona12}a)u:=\na_{\cona2}(au)-a\na_{\cona1}u 
\qa u\in C^\iy\MVe. 
\eeq 
Then $\na_{\cona12}$~is a metric connection on $\Hom\VeVz$, the one 
\emph{induced} by $\na_{\cona1}$ and~$\na_{\cona2}$, where $\Hom\VeVz$ is 
endowed with the (fiber-wise defined) Hilbert-Schmidt inner product. It is 
verified that this definition is consistent with \Eqref{V.A} and 
\Eqref{V.Ntp}. Hence we also write $\na(\na_{\cona1},\na_{\cona2})$ 
for~$\na_{\cona12}$. 
%---------------------------------------------------------------------
%---------------------------------------------------------------------
\section{Uniform Regularity}\LabelT{sec-U}% 
Let $M$ be an \hbox{$m$-dimensional} manifold. We set 
\hb{Q:=(-1,1)\is\BR}. If $\ka$~is a local chart for~$M$, then we 
write~$U_{\coU\ka}$ for the corresponding coordinate patch~$\dom(\ka)$.
A~local chart~$\ka$ is \emph{normalized} if
\hb{\ka(U_{\coU\ka})=Q^m} whenever
\hb{U_{\coU\ka}\is\ci{M}}, the interior of~$M$, whereas
\hb{\ka(U_{\coU\ka})=Q^m\cap\BH^m} if
\hb{U_{\coU\ka}\cap\pl M\neq\es}. We put
\hb{Q_\ka^m:=\ka(U_{\coU\ka})} if $\ka$~is normalized.

\smallskip
An atlas~$\gK$ for~$M$ has \emph{finite multiplicity} if there exists
\hb{k\in\BN} such that any intersection of more than $k$ coordinate
patches is empty. In this case 
$$ 
\gN(\ka):=\{\,\wt{\ka}\in\gK 
\ ;\ U_{\coU\wt{\ka}}\cap U_{\coU\ka}\neq\es\,\}
$$
has cardinality~% 
\hb{\leq k} for each 
\hb{\ka\in\gK}. An atlas is \emph{uniformly shrinkable} if it consists of
normalized charts and there exists
\hb{r\in(0,1)} such that
\hb{\big\{\,\ka^{-1}(rQ_\ka^m)\ ;\ \ka\in\gK\,\big\}} is a cover of~$M$.

\smallskip
Given an open subset~$X$ of $\BR^m$ or~$\BH^m$ and a Banach space~$\cX$
over~$\BK$, we write
\hb{\Vsdot_{k,\iy}} for the usual norm of $BC^k\XcX$, the Banach space of
all
\hb{u\in C^k\XcX} such that $|\pa u|_\cX$~is uniformly bounded for
\hb{\al\in\BN^m} with
\hb{|\al|\leq k} (see Section~\ref{sec-PH}). 

\smallskip
By~$c$ we denote constants
\hb{\geq1} whose numerical value may vary from occurrence to occurrence;
but $c$~is always independent of the free variables in a given formula,
unless an explicit dependence is indicated.

\smallskip
Let $S$ be a nonempty set. On~$\BR^S$ we introduce an equivalence relation~%
\hb{{}\sim{}} by setting
\hb{f\sim g} iff there exists
\hb{c\geq1} such that
\hb{f/c\leq g\leq cf}. Inequalities between bundle metrics have to be 
understood in the sense of quadratic forms. 

\smallskip 
An atlas~$\gK$ for~$M$ is \emph{uniformly regular} if 
\begin{equation}\Label{U.K} 
\bal
\rm{(i)}  \qquad    &\gK\text{ is uniformly shrinkable and has finite
                     multiplicity.}\\
\rm{(ii)} \qquad    &\|\wt{\ka}\circ\ka^{-1}\|_{k,\iy}\leq c(k),
                     \ \ \ka,\wt{\ka}\in\gK,
                     \ \ k\in\BN.\\
\eal 
\npb 
\end{equation} 
In (ii) and in similar situations it is understood that only 
\hb{\ka,\wt{\ka}\in\gK} with 
\hb{U_{\coU\ka}\cap U_{\coU\wt{\ka}}\neq\es} are being considered. 
Two uniformly regular atlases $\gK$ and~$\wt{\gK}$ are \emph{equivalent}, 
\hb{\gK\approx\wt{\gK}}, if 
\begin{equation}\Label{U.Keq} 
\bal
\rm{(i)}  \qquad    &\card\{\,\wt{\ka}\in\wt{\gK}
                     \ ;\ U_{\coU\wt{\ka}}\cap U_{\coU\ka}\neq\es\,\}\leq c,
                     \ \ \ka\in\gK.\\
\rm{(ii)}\qquad    &\|\wt{\ka}\circ\ka^{-1}\|_{k,\iy}\leq c(k),
                     \ \ \ka\in\gK,
                     \ \ \wt{\ka}\in\wt{\gK},
                     \ \ k\in\BN.
\eal
\end{equation} 

\smallskip 
Let $V$ be a vector bundle of rank~$n$ over~$M$ with model fiber~$E$. 
Suppose $\gK$~is an atlas for~$M$ and 
\hb{\ka\slt\vp} is for each 
\hb{\ka\in\gK} a~local chart for~$V$ over~$U_{\coU\ka}$. Then 
\hb{\gK\slt\Phi:=\{\,\ka\slt\vp\ ;\ \ka\in\gK\,\}} is an 
\emph{atlas for}~$V$ \emph{over}~$\gK$. It is \emph{uniformly regular} if 
\begin{equation}\Label{U.Ph}
\bal
\rm{(i)} \qquad    &\gK\text{ is uniformly regular;}\\
\rm{(ii)}\qquad    &\|\vp_{\ka\wt{\ka}}\|_{k,\iy}\leq c(k),
                     \ \ \ \ka\slt\vp,  
                     \wt{\ka}\slt\wt{\vp}\in\gK\slt\Phi,
                     \ \ k\in\BN, 
\eal
\end{equation} 
where $\vp_{\ka\wt{\ka}}$~is the bundle transition map corresponding to the 
coordinate change 
\hb{(\wt{\ka}\slt\wt{\vp})\circ(\ka\slt\vp)^{-1}}. Two atlases 
\hb{\gK\slt\Phi} and 
\hb{\wt{\gK}\slt\wt{\Phi}} for~$V$ over $\gK$ and~$\wt{\gK}$, respectively, 
are equivalent, 
\hb{\gK\slt\Phi\approx\wt{\gK}\slt\wt{\Phi}}, if 
\begin{equation}\Label{U.peq}
\bal
\rm{(i)} \qquad    &\gK\approx\wt{\gK};\\
\rm{(ii)}\qquad    &\|\vp_{\ka\wt{\ka}}\|_{k,\iy}\leq c(k),
                     \ \ \ka\slt\vp\in\gK\slt\Phi,
                     \ \ \wt{\ka}\slt\wt{\vp}\in\wt{\gK}\slt\wt{\Phi},
                     \ \ k\in\BN. 
\eal
\end{equation} 

\smallskip 
Suppose $h$~is a bundle metric for~$V$. Let 
\hb{\gK\slt\Phi} be a uniformly regular atlas for~$V$ over~$\gK$. Then 
$h$~is \emph{uniformly regular over} 
\hb{\gK\slt\Phi} if 
\begin{equation}\Label{U.h}
\bal
\rm{(i)} \qquad    &(\ka\slt\vp)_*h\sim\prsn_E, 
                    \ \ \ka\slt\vp\in\gK\slt\Phi;\\ 
\rm{(ii)}\qquad    &\|(\ka\slt\vp)_*h\|_{k,\iy}\leq c(k),
                     \ \ \ka\slt\vp\in\gK\slt\Phi,
                     \ \ k\in\BN. 
\eal 
\end{equation} 
Let 
\hb{[h]_{\ka\slt\vp}=[h_{\mu\nu}]_{\ka\slt\vp}} be the 
representation matrix of~$h$ with respect to the local coordinate frame 
associated with 
\hb{\ka\slt\vp}. Then it follows from \Eqref{V.hbb} that 
\beq\Label{U.kh} 
\ka_*\bigl([h]_{\ka\slt\vp}\bigr)=[\ka_*h_{\mu\nu}] 
=\bigl[(\ka\slt\vp)_*h\bigr]. 
\eeq 
Hence \Eqref{U.h}(i) is equivalent to 
$$ 
|\za|^2/c\leq\ka_*h_{\mu\nu}(x)\za^\mu\ol{\za^\nu}\leq c\,|\za|^2, 
\qa x\in Q_\ka^m 
\qb \za\in\BK^n 
\qb \ka\slt\vp\in\gK\slt\Phi. 
$$ 
If 
\hb{\gK\slt\Phi\approx\wt{\gK}\slt\wt{\Phi}} 
and $h$~is uniformly regular over~$\gK$, then we see from \Eqref{V.php} and 
\Eqref{V.hlc} that $h$~is uniformly regular over~$\wt{\gK}$. 

\smallskip 
Assume $\na$~is a connection on~$V$. Let 
\hb{\gK\slt\Phi} be an atlas for~$V$ over~$\gK$. For 
\hb{\ka\slt\vp\in\gK\slt\Phi} we denote by 
\hb{\Chrz[\ka\slt\vp]} the Christoffel symbols of~$\na$ with respect 
to the coordinate frame for~$V$ over~$U_{\coU\ka}$ induced by 
\hb{\ka\slt\vp}. Then $\na$~is \emph{uniformly regular over} 
\hb{\gK\slt\Phi} if 
\[
\bal
\rm{(i)} \qquad    &\gK\slt\Phi\text{ is uniformly regular;}\\
\rm{(ii)}\qquad    &\big\|\ka_*\bigl(\Chrz[\ka\slt\vp]\bigr)\big\|_{k,\iy}
                    \leq c(k),
                    \ \ 1\leq i\leq m, 
                    \ \ 1\leq\mu,\nu\leq n, 
                    \ \ \ka\slt\vp\in\gK\slt\Phi,
                    \ \ k\in\BN.  
\eal 
\] 
Suppose $\na$~is uniformly regular over 
\hb{\gK\slt\Phi} and 
\hb{\wt{\gK}\slt\wt{\Phi}\approx\gK\slt\Phi}. Then it follows from 
\Eqref{V.pbb}, \,\Eqref{V.Nv}, \,\Eqref{U.Keq}, and \Eqref{U.peq} 
that $\na$~is uniformly regular over 
\hb{\wt{\gK}\slt\wt{\Phi}}. 

\smallskip 
A~\emph{uniformly regular structure} for~$M$ is a maximal family of 
equivalent uniformly regular atlases for it. We say $M$~is a 
\hh{uniformly regular manifold} if it is endowed with a uniformly 
regular structure. In this case it is understood that 
each uniformly regular atlas under consideration belongs to this 
uniformly regular structure. 

\smallskip 
Let $M$ be uniformly regular and $V$ a~vector bundle over~$M$. 
A~\emph{uniformly regular bundle structure} for~$V$ is a 
maximal family of equivalent uniformly regular atlases for~$V$. Then $V$~is 
a \emph{uniformly regular vector bundle over}~$M$, if it is equipped 
with a uniformly regular bundle structure. Again it is understood 
that in this case each atlas for~$V$ belongs to the given uniformly regular 
bundle structure. A~\emph{uniformly regular metric vector bundle} is a 
uniformly regular vector bundle endowed with a uniformly regular 
bundle metric. By~a \hh{fully uniformly regular vector bundle} 
\hb{V=(V,h_V,\na_{\cona V})} over~$M$ we mean a uniformly regular vector 
bundle~$V$ over~$M$ equipped with a uniformly regular bundle metric~$h_V$ 
and a uniformly regular metric connection~$\na_{\cona V}$. 

\smallskip 
As earlier, it is the main purpose of the following examples to fix 
notation and to prepare the setting for further investigations. 
%---------------------------------------------------------------------
\begin{examples}\LabelT{exa-U.ex}
\hh{(a)}
(Trivial bundles)\quad  
Let 
\hb{E=\bigl(E,\prsn_E\bigr)} be an \hbox{$n$--dimensional} Hilbert space. 
Suppose $M$~is a uniformly regular manifold. It is obvious from 
Example~\ref{exa-V.ex}(a) that the trivial bundle 
\hb{M\times E} is uniformly regular over~$M$ and 
\hb{\prsn_E} is a uniformly regular bundle metric. 

\smallskip 
We consider~$E$ as a manifold of dimension~$n$ if 
\hb{\BK=\BR}, and of dimension~$2n$ if 
\hb{\BK=\BC} (using the standard identification of 
\hb{\BC=\BR+\imi\BR} with~$\BR^2$) whose 
smooth structure is induced by the trivial chart~$1_E$. We identify~$TE$ 
canonically with 
\hb{E\times E}. Then 
\hb{Tv\sco TM\ra TE=E\times E}, the tangential of 
\hb{v\in C^\iy\ME}, is well-defined. We set 
$$ 
d_X\,v:=\pro_2\circ Tv(X) 
\qa X\in\cT M 
\qb v\in C^\iy\ME. 
$$ 
Then 
$$ 
d\sco\cT M\times C^\iy\ME\ra C^\iy\ME 
\qb \Xv\mt d_X\,v 
\npb 
$$ 
is a connection on 
\hb{M\times E}, the \hbox{$E$-\emph{valued}} \emph{differential} on~$M$. 

\smallskip 
Let $(e_1,\ldots,e_n)$ be a basis for~$E$ and use the same symbol for 
the constant frame 
\hb{p\mt(e_1,\ldots,e_n)} of 
\hb{M\times E}. Then it follows that 
$$ 
df=df^\nu e_\nu 
\qa f=f^\nu e_\nu\in C^\iy\ME. 
\npb 
$$ 
Thus, since all Christoffel symbols are identically zero,  
$d$~is trivially uniformly regular. 

\smallskip
\hh{(b)} 
(Subbundles)\quad 
Let $V$ be a vector bundle of rank~$n$ over a manifold~$M$, 
endowed with a bundle metric~$h$ and a metric connection~$\na$. 
Suppose $W$~is a subbundle of rank~$\ell$. Denote by 
\hb{\ia\sco W\hr V} the canonical injection. Let 
\hb{h_W:=\ia^*h} be the pull-back metric on~$W$.  We write~$P$ for the  
the orthogonal projection onto~$W$ in~$V$. Then 
\hb{P\in C^\iy\bigl(M,\Hom\VV\bigr)} and it is verified that 
$$ 
\na_{\cona W}\sco\cT M\times C^\iy\MW\ra C^\iy\MW 
\qb \Xw\mt P\naX\bigl(\ia(w)\bigr) 
\npb 
$$ 
is a metric connection on~$(W,h_W)$, the one induced by~$\na$. 

\smallskip 
Let $E$ be a model fiber of~$V$ and $(e_1,\ldots,e_n)$ a~basis for it. 
Suppose $V$~is uniformly regular and there exists an atlas 
\hb{\gK\slt\Phi} for~$V$ such that 
\hb{(\ka\slt\vp)^*(e_1,\ldots,e_\ell)} is for each 
\hb{\ka\in\gK} a~frame for~$W$ over~$U_{\coU\ka}$. Then it is checked that 
\hb{W=(W,h_W,\na_{\cona W})} is a fully uniformly regular vector bundle 
over~$M$. 

\smallskip 
Suppose 
\hb{V_{\coV i}=(V_{\coV i},h_i,\na_{\cona i})}, 
\ \hb{i=1,2}, are fully uniformly regular vector bundles over~$M$. Set 
$$ 
(h_1\oplus h_2)(v_1\oplus v_2,\ \wt{v}_1\oplus\wt{v}_2) 
:=h_1(v_1,\wt{v}_1)+h_2(v_2,\wt{v}_2) 
\qa (v_i,\wt{v}_i)\in\Ga(M,V_{\coV i}\oplus V_{\coV i}), 
$$ 
and 
$$ 
(\na_{\cona1}\oplus\na_{\cona2})(v_1\oplus v_2) 
:=\na_{\cona1}v_1\oplus\na_{\cona2}v_2 
\qa (v_1,v_2)\in C^\iy(M,V_1\oplus V_2). 
$$ 
Then 
\hb{(V_1\oplus V_2,\ h_1\oplus h_2,\ \na_{\cona1}\oplus\na_{\cona2})} is a 
fully uniformly regular vector bundle over~$M$. Furthermore, 
$V_{\coV i}$~is for 
\hb{i=1,2} a~fully uniformly regular subbundle of~$V$. 

\smallskip
\hh{(c)} 
(Riemannian manifolds)\quad 
Let 
\hb{M=\Mg} be an \hbox{$m$-dimensional} Riemannian manifold. We denote by 
\hb{g_m=(dx^1)^2+\cdots+(dx^m)^2} the Euclidean metric on~$\BR^m$ and use 
the same symbol for its complexification 
as well as for the restriction thereof to open subsets of $\BR^m$ 
and~$\BH^m$. Then $M$~is a \hh{uniformly regular Riemannian manifold}, 
if $TM$~is uniformly regular and $g$~is a uniformly regular bundle metric 
on~$TM$. It follows from Example~\ref{exa-V.ex}(b) that $M$~is a uniformly 
regular Riemannian manifold iff 
\begin{equation}\Label{U.Rr}
\bal
\rm{(i)}  \qquad    &M\text{ is uniformly regular};\\
\rm{(ii)} \qquad    &\ka_*g\sim g_m,
                     \ \ \ka\in\gK;\\
\rm{(iii)}\qquad    &\|\ka_*g\|_{k,\iy}\leq c(k),
                     \ \ \ka\in\gK,
                     \ \ k\in\BN,  
\eal 
\npb 
\end{equation} 
for some uniformly regular atlas~$\gK$ for~$M$. Of course, 
\hb{\ka_*g:=(\ka\slt T\ka)_*g} in conformity with standard usage. 

\smallskip 
We denote by~$\na_{\cona g}$ the (complexified, if 
\hb{\BK=\BC}) Levi-Civita connection for~$M$, that is, for~$TM$. Its 
Christoffel symbols with respect to the coordinate frame 
\hb{(\pl/\pl x^1,\ldots,\pl/\pl x^m)} over~$U_{\coU\ka}$ admit the 
representation 
\beq\Label{U.Ch} 
2\Ga_{\kern-1pt ij}^k 
=g^{k\ell}(\pl_ig_{\ell j}+\pl_jg_{\ell i}-2\pl_\ell g_{ij}), 
\eeq 
where 
\hb{\pl_i:=\pl/\pl x^i}. From this and \Eqref{U.Rr}(ii) and~(iii) 
it follows that $\na_{\cona g}$~is uniformly regular if $\Mg$~is a 
uniformly regular Riemannian manifold. In addition, $\na_{\cona g}$~is 
metric and 
\hb{\Ga_{\kern-1pt ij}^k=\Ga_{\kern-1pt ji}^k}. 

\smallskip
\hh{(d)}\quad 
Every compact Riemannian manifold is a uniformly regular 
Riemannian manifold. 

\smallskip
\hh{(e)}\quad 
It has been shown in Example~2.1(c) of~\cite{Ama12b} that  
\hb{\BR^m=\Rmgm} and 
\hb{\BH^m=\Hmgm} are uniformly regular Riemannian manifolds. 

\smallskip
\hh{(f)} 
(Homomorphism bundles)\quad 
For 
\hb{i=1,2} let $\Vihi$ be a uniformly regular metric vector bundle of 
rank~$n_i$ over~$M$. We denote by $(V_{12},h_{12})$ the homomorphism bundle 
\hb{V_{12}:=\Hom\VeVz} endowed with the Hilbert-Schmidt bundle metric 
\hb{h_{12}=\prsn_{HS}}. 

\smallskip 
Assume 
\hb{\gK\slt\Phi_i} is a uniformly regular atlas for~$V_{\coV i}$, and 
$E_i$~is a model fiber for~$V_{\coV i}$ with basis 
$(e_1^i,\ldots,e_{n_i}^i)$ and dual basis 
$(\ve_i^1,\ldots,\ve_i^{n_i})$. For 
\hb{\ka\slt\vp_i\in\gK\slt\Phi_i} we define a bundle isomorphism 
$$ 
(\ka,\ka\slt\vp_i)\sco\bigl(U_{\coU\ka},(V_{12})_{U_{\coU\ka}}\bigr) 
\ra \bigl(\ka(U_{\coU\ka}),\ka(U_{\coU\ka})\times\cL\EeEz\bigr) 
$$ 
by setting 
\hb{(\ka\slt\vp_{12})a_p:=\bigl(\ka(p),\vp_{12}(p)a_p\bigr)} for 
\hb{p\in U_{\coU\ka}} and 
\hb{a_p\in(V_{12})_p}, where 
$$ 
\vp_{12}(p)a_p(x):=\vp_2(p)a_p\vp_1^{-1}(x) 
\qa x=\ka(p). 
$$ 
It follows 
$$ 
(\wt{\ka}\slt\wt{\vp}_{12})_*(\ka\slt\vp_{12})^*b 
=(\wt{\ka}\slt\wt{\vp}_2)_*(\ka\slt\vp_2)^*b(\ka\slt\vp_1)_* 
 (\wt{\ka}\slt\wt{\vp}_1)^* 
\qa b\in\cL\EeEz, 
$$ 
if 
\hb{\wt{\ka}\slt\wt{\vp}_i} belongs to a uniformly regular atlas 
for~$V_{\coV i}$. From this we deduce that 
$$ 
\gK_{12}:=\{\,\ka\slt\vp_{12} 
\ ;\ \ka\slt\vp_i\in\gK\slt\Phi_i,\ i=1,2\,\} 
$$ 
is a uniformly regular atlas for~$V_{12}$ and that any two such 
atlases are equivalent. Hence $V_{12}$~is a uniformly regular 
vector bundle over~$M$. 

\smallskip 
The coordinate frame of $V_{12}$ over~$U_{\coU\ka}$ associated with 
\hb{\ka\slt\vp_{12}} is given by 
\beq\Label{U.f12} 
\{\,b_{\nu_2}^2\otimes\ba_1^{\nu_1} 
\ ;\ 1\leq\nu_i\leq n_i,\ i=1,2\,\}, 
\eeq 
where 
$(b_1^i,\ldots,b_{n_i}^i)$ is the coordinate frame of~$V_{\coV i}$ 
over~$U_{\coU\ka}$ associated with 
\hb{\ka\slt\vp_i} and 
$(\ba_i^1,\ldots,\ba_i^{n_i})$ is its dual frame. By \Eqref{V.Aad} and 
\Eqref{U.f12} we find 
\beq\Label{U.h12} 
[h_{12}]=\bigl[h_1^{*\nu_1\wt{\nu}_1}\,\ol{h_{2,\wt{\nu}_2\nu_2}}\,\bigr]. 
\eeq 
From this, \Eqref{V.hbb}, \,\Eqref{U.h}, and \Eqref{U.kh} we deduce 
$$ 
(\ka\slt\vp_{12})_*h_{12}(a,a) 
=\ka_*h_1^{*\nu_1\wt{\nu}_1}\ka_*h_{2,\nu_2\wt{\nu}_2}a_{\nu_1}^{\nu_2} 
 \ol{a_{\wt{\nu}_1}^{\wt{\nu}_2}} 
\sim\sum_{\nu_2}\ka_*h_1^{*\nu_1\wt{\nu}_1}a_{\nu_1}^{\nu_2} 
 \ol{a_{\wt{\nu}_1}^{\nu_2}} 
\sim\sum_{\nu_1,\nu_2}a_{\nu_1}^{\nu_2} 
 \ol{a_{\nu_1}^{\nu_2}}=(a,a)_{HS}  
$$ 
for 
\hb{a\in\cL\EeEz}, as well as 
\hb{\|(\ka\slt\vp_{12})_*h_{12}\|_{k,\iy}\leq c(k)} for 
\hb{\ka\slt\vp_{12}\in\gK\slt\Phi_{12}} and 
\hb{k\in\BN}. Hence $(V_{12},h_{12})$ is a uniformly regular metric 
vector bundle over~$M$. 

\smallskip 
Suppose $\na_{\cona i}$~is a uniformly regular metric connection 
on~$V_{\coV i}$. Then it is a consequence of the consistency of 
\Eqref{V.VaV} with \Eqref{V.Ntp} that $\na_{\cona12}$~is a uniformly regular 
metric connection on~$V_{12}$. 

\smallskip
\hh{(g)} 
(Tensor products)\quad 
Let $\Vihi$, 
\ \hb{i=1,2}, be uniformly regular metric vector bundles over~$M$. Then 
it follows from \hbox{\Eqref{V.Tdu}--\Eqref{V.Tpf}}  
%\Eqref{V.Tbp} 
that 
\hb{(V_1\otimes V_2,\ h_1\otimes h_2)} is 
a uniformly regular metric vector bundle over~$M$. If $\na_{\cona i}$~is a 
uniformly regular metric connection on~$V_{\coV i}$, then we see from 
\Eqref{V.Ntp} that $\na(\na_{\cona1},\na_{\cona2})$ is a 
uniformly regular metric connection on 
\hb{V_1\otimes V_2}.\hfill$\Box$
\end{examples} 
%---------------------------------------------------------------------
%---------------------------------------------------------------------
\section{Singular Manifolds}\LabelT{sec-S}% 
Let 
\hb{M=\Mg} be an \hbox{$m$-dimensional} Riemannian manifold. Suppose 
\hb{\rho\in C^\iy\bigl(M,(0,\iy)\bigr)}. Then $\rgK$ is~a \emph{singularity 
datum} for~$M$ if 
\begin{equation}\Label{S.sd} 
\kern-4pt                                  %do not remove! 
            %this way the equation number stays in the middle 
\bal
\rm{(i)}   \qquad    &(M,g/\rho^2)\text{ is a uniformly regular Riemannian 
                      manifold.}\\
\rm{(ii)}  \qquad    &\gK\text{ is a uniformly regular atlas for $M$ 
                      which is orientation preserving if $M$ is oriented.}\\
\rm{(iii)} \qquad    &\|\ka_*\rho\|_{k,\iy}\leq c(k)\rho_\ka, 
                      \ \ \ka\in\gK,
                      \ \ k\in\BN,
                      \text{\ where }
                      \rho_\ka:=\ka_*\rho(0)=\rho\bigl(\ka^{-1}(0)\bigr).\\
\rm{(iv)}  \qquad    &\rho_\ka/c\leq\rho(p)\leq c\rho_\ka, 
                      \ \ p\in U_{\coU\ka},
                      \ \ \ka\in\gK.
\eal
\end{equation}  
Two singularity data $\rgK$ and $\wrgK$ are \emph{equivalent}, 
\hb{\rgK\approx\wrgK}, if 
\beq\Label{S.eq} 
\rho\sim\wt{\rho} 
\quad\text{and}\quad 
\gK\approx\wt{\gK}. 
\eeq 
Note that \Eqref{S.sd}(iv) and \Eqref{S.eq} imply 
\beq\Label{S.err} 
1/c\leq\rho_\ka/\rho_{\wt{\ka}}\leq c 
\qa \ka\in\gK 
\qb \wt{\ka}\in\wt{\gK} 
\qb U_{\coU\ka}\cap U_{\coU\wt{\ka}}\neq\es. 
\eeq 

\smallskip 
A~\emph{singularity structure},~$\gS(M)$, for~$M$ is a maximal family of
equivalent singularity data. A~\emph{singularity function} for~$M$ is a
function
\hb{\rho\in C^\iy\bigl(M,(0,\iy)\bigr)} such that there exists an
atlas~$\gK$ with
\hb{\rgK\in\gS(M)}. The set of all singularity functions is the
\emph{singularity type},~$\gT(M)$, of~$M$. By~a \hh{singular manifold} we
mean a Riemannian manifold~$M$ endowed with a singularity
structure~$\gS(M)$. Then $M$~is said to be \emph{singular of
type}~$\gT(M)$. If
\hb{\rho\in\gT(M)}, then it is convenient to set
\hb{[\![\rho]\!]:=\gT(M)}. 

\smallskip 
Let $M$ be singular of type~% 
\hb{[\![\rho]\!]}. Then $M$~is a uniformly regular Riemannian manifold iff 
\hb{\rho\sim\mf{1}}. If 
\hb{\rho\notsim\mf{1}}, then either 
\hb{\inf\rho=0} or 
\hb{\sup\rho=\iy}, or both. Hence $M$~is not compact but has singular ends. 
It follows from~\Eqref{S.sd} that the diameter of the coordinate 
patches converges either to zero or to infinity near the singular ends in 
a manner controlled by the singularity type~$\gT(M)$.

\smallskip 
We refer to \cite{Ama12b} and~\cite{Ama12c} for examples 
of singular manifolds which are not uniformly regular Riemannian manifolds. 

\smallskip 
Throughout the rest of this paper we assume 
\bspezeqn\Label{S.ass} 
\bal 
\frame{
\begin{minipage}{336pt}
\bspezeq
\bal 
{}      %do not remove!
&M=\Mg\text{ is an $m$-dimensional singular manifold}.\\
&W=(W,h_W,D)\text{ is a fully uniformly regular vector bundle 
 of rank $n$ over }M.\\
&\sa,\tau\in\BN.\\
\noalign{\vskip2.5\jot}
\eal
\espezeq 
\end{minipage}} 
\eal
\espezeqn 
It follows from the preceding section that the uniform regularity of $W$, 
$h_W$, and~$D$ is independent of the particular choice of the singularity 
datum~$(\rho,\gK)$.  

\smallskip 
Henceforth, $TM$ and~$T^*M$ have to be interpreted as the complexified 
tangent and cotangent bundles, respectively, if 
\hb{\BK=\BC}. Accordingly, $\pw_{TM}$, $g$, and~$\na_{\cona g}$ are then 
the complexified duality pairing, Riemannian metric, and 
Levi-Civita connection, respectively. 

\smallskip 
As usual, 
\hb{T_\tau^\sa M=TM^{\otimes\sa}\otimes T^*M^{\otimes\tau}} is the 
$(\sa,\tau)$-tensor bundle, that is, the vector bundle of all 
\hbox{$\BK$-valued} tensors on~$M$ being contravariant of order~$\sa$ and 
covariant of order~$\tau$. In particular, 
\hb{T_0^1M=TM}, 
\ \hb{T_1^0M=T^*M}, and 
\hb{T_0^0M=M\times\BK}. Then 
$$  
V=V_\tau^\sa(W)=T_\tau^\sa\MW:=T_\tau^\sa M\otimes W 
\npb 
$$ 
is the vector bundle of \hbox{$W$-\emph{valued}} 
$(\sa,\tau)$-\emph{tensors on}~$M$. 

\smallskip 
If 
\hb{W=M\times E} with an \hbox{$n$-dimensional} Hilbert space~$E$, then we 
write~$T_\tau^\sa\ME$ for 
\hb{T_\tau^\sa(M,M\times E)} and call its elements 
\hbox{$E$-valued} $(\sa,\tau)$-tensors. Furthermore, $T_\tau^\sa\MBK$ is 
naturally identified with~$T_\tau^\sa M$. For abbreviation, we set 
$$ 
\cT_\tau^\sa\MW:=C^\iy\bigl(M,T_\tau^\sa\MW\bigr). 
\npb 
$$ 
It is the $C^\iy(M)$ module of smooth \hbox{$W$-\emph{valued}} 
$(\sa,\tau)$-\emph{tensor fields on}~$M$. 

\smallskip 
The canonical identification of~$(T_\tau^\sa M)^*$ 
with~$T_\sa^\tau M$ leads to  
\hb{T_\tau^\sa\MW^* =T_\sa^\tau\MWs} with respect to the (bundle) duality 
pairing 
$$  
\pw_V:=\pw_{T_\tau^\sa M}\otimes\pw_W. 
$$ 
We endow~$V$ with the bundle metric 
\beq\Label{S.h} 
h:=\prsn_\sa^\tau\otimes h_W, 
\npb 
\eeq 
where 
\hb{\prsn_\sa^\tau:=g^{\otimes\sa}\otimes g^{*\otimes\tau}} 
is the bundle metric on~$T_\tau^\sa M$ induced by~$g$ (denoted by 
\hb{\prsn_g} in Section~3 of~\cite{Ama12b}). 

\smallskip 
Finally, we equip~$V$ with the metric connection 
$$ 
\na:=\na(\na_{\cona g},D) 
$$ 
induced by the Levi-Civita connection of~$M$ and connection~$D$ of~$W$. 
In summary, in addition to \Eqref{S.ass}, 
\bspezeq 
\frame{
\begin{minipage}{255pt}
\bspezeq 
\bal 
V=(V,h,\na):=\bigl(T_\tau^\sa\MW, 
\ \prsn_\sa^\tau\otimes h_W,\ \na(\na_{\cona g},D)\bigr)\\
\noalign{\vskip2\jot} 
\eal 
\espezeq 
\end{minipage}} 
\espezeq 
is a standing assumption. In particular, $\na$~is a \hbox{$\BK$-linear} map 
from $\cT_\tau^\sa\MW$ into $\cT_{\tau+1}^\sa\MW$. We set 
\hb{\na^0:=\id} and 
\hb{\na^{k+1}:=\na\circ\na^k} for 
\hb{k\in\BN}. Note 
\hb{\na u=Du} for  
\hb{u\in\cT_0^0\MW= C^\iy\MW}. 
%---------------------------------------------------------------------
%--------------------------------------------------------------------- 
\section{Local Representations}\LabelT{sec-L}% 
Although $W$~is a fully uniformly regular vector bundle over~$M$ this is 
not true for~$V$, due to the fact that $h$~involves the singular Riemannian 
metric~$g$. For this reason we have to study carefully the dependence of 
various local representations on the singularity datum. This is done in 
the present section. 

\smallskip 
For a subset~$S$ of~$M$ and a normalized atlas~$\gK$ we let 
\hb{\gK_S:=\{\,\ka\in\gK\ ;\ U_{\coU\ka}\cap S\neq\es\;\}}; hence 
\hb{\gK_\es=\es}. Then, given 
\hb{\ka\in\gK}, 
\beq\Label{L.Xk} 
\BX_\ka:=
\left\{
\bal
{}      %do not remove!
&\BR^m  &&\quad \text{if }\ka\in\gK\ssm\gK_{\pl M},\\
&\BH^m  &&\quad \text{otherwise},
\eal
\right.
\eeq 
considered as an \hbox{$m$-dimensional} uniformly regular Riemannian 
manifold with the Euclidean metric. Furthermore, $Q_\ka^m$~is an open 
Riemannian submanifold of~$\BX_\ka$. 

\smallskip 
Let $F$ be a finite-dimensional Hilbert space. Then, using standard 
identifications, 
$$ 
T_\tau^\sa(Q_\ka^m,F) 
=(\BK^m)^{\otimes\sa}\otimes\bigl((\BK^m)^*\bigr)^{\otimes\tau}\otimes F. 
$$ 
Of course, we identify~$(\BK^m)^*$ with~$\BK^m$ by means of \Eqref{V.Kdu}, 
but continue to denote it by~$(\BK^m)^*$ for clarity. We endow 
$T_\tau^\sa(Q_\ka^m,F)$ with the inner product 
\beq\Label{L.Tp} 
\prsn_{T_\tau^\sa(Q_\ka^m,F)}
:=\prsn_{\BK^m}^{\otimes\sa}\otimes\prsn_{(\BK^m)^*}^{\otimes\tau} 
\otimes\prsn_F. 
\eeq 
For 
\hb{\nu\in\BN^\times} we set 
\hb{\BJ_\nu:=\{1,\ldots,m\}^\nu} and denote its general point by 
\hb{(i)=(i_1,\ldots,i_\nu)}. The standard basis 
\hb{(\breve{e}_1,\ldots,\breve{e}_m)} of~$\BK^m$, that is, 
\hb{\breve{e}_j^i=\da_j^i}, and its dual basis 
\hb{(\breve{\ve}^1,\ldots,\breve{\ve}^m)} induce the \emph{standard basis} 
$$ 
\bigl\{\,\breve{e}_{(i)}\otimes\breve{\ve}^{(j)}
\ ;\ (i)\in\BJ_\sa,\ (j)\in\BJ_\tau\,\bigr\} 
$$ 
\emph{of}~$T_\tau^\sa Q_\ka^m$, where 
\hb{\breve{e}_{(i)}=\breve{e}_{i_1}\otimes\cdots\otimes\breve{e}_{i_\sa}} 
and 
\hb{\breve{\ve}^{(j)} 
   =\breve{\ve}^{j_1}\otimes\cdots\otimes\breve{\ve}^{j_\tau}}. 
Then 
$$ 
a\in T_\tau^\sa(Q_\ka^m,F) 
=\cL\bigl( \bigl((\BK^m)^*\bigr)^{\otimes\sa} 
\otimes(\BK^m)^{\otimes\tau},F\bigr) 
$$ 
has the representation matrix 
\hb{\big[a_{(j)}^{(i)}\big]\in F^{m^\sa\times m^\tau}}. We 
endow~$F^{m^\sa\times m^\tau}$ with the inner product 
$$ 
\bigl(\big[a_{(j)}^{(i)}\big] 
\bsn\big[b_{(\wt{\jmath})}^{(\wt{\imath})}\big]\bigr)_{HS,F} 
:=\sum_{(i)\in\BJ_\sa,\ (j)\in\BJ_\tau} 
\big(a_{(j)}^{(i)}\bsn b_{(j)}^{(i)}\big)_F 
$$ 
which coincides with the Hilbert-Schmidt inner product if 
\hb{F=\BK}. For abbreviation, we set 
\beq\Label{L.EstF} 
E=E_\tau^\sa=E_\tau^\sa(F):=F^{m^\sa\times m^\tau} 
\qb \prsn_E:=\prsn_{HS,F}. 
\eeq 
It follows from \Eqref{L.Tp} that 
\hb{a\mt\big[a_{(j)}^{(i)}\big]} defines an isometric isomorphism by which 
$$  
\text{we identify } 
\bigl(T_\tau^\sa(Q_\ka^m,F),\prsn_{T_\tau^\sa(Q_\ka^m,F)}\bigr) 
\text{ with } 
\bigl(E,\prsn_E\bigr). 
$$ 

\smallskip 
We assume 
\beq\Label{L.K} 
\bal
\bt\quad
&\rgK\text{ is a singularity datum for }M;\\
\bt\quad
&\gK\slt\Phi\text{ is a uniformly regular atlas for $W$ over }\gK;\\
\bt\quad
&F=\bigl(F,\prsn_F\bigr)\text{ is a model fiber for $W$ 
 with basis }(e_1,\ldots,e_n).\\
\eal
\eeq 
Suppose 
\hb{\ka\slt\vp\in\gK\slt\Phi} and 
\hb{\ka=(x^1,\ldots,x^m)}. Then 
$$ 
\ka\slt\vp_\tau^\sa\sco V_{U_{\coU\ka}}\ra Q_\ka^m\times E 
\qb v_p\mt\bigl(\ka(p),\vp_\tau^\sa(p)v_p\bigr) 
\qb v_p\in V_{\coV p} 
\qb p
\in U_{\coU\ka}, 
$$ 
the \emph{local chart for~$V$ over~$U_{\coU\ka}$ induced by} 
\hb{\ka\slt\vp}, is defined by 
$$ 
\vp_\tau^\sa(p)v_p 
:=(T_p\ka)X_p^1\otimes\cdots\otimes(T_p\ka)X_p^\sa 
\otimes(T_p\ka)^{-\top}\al_{1,p} 
\otimes\cdots\otimes(T_p\ka)^{-\top}\al_{\tau,p} 
\otimes\vp(p)w_p 
$$  
for 
\hb{v_p=X_p^1\otimes\cdots\otimes X_p^\sa\otimes\al_{1,p} 
   \otimes\cdots\otimes\al_{\tau,p}\otimes w_p\in T_\tau^\sa\MW_p} with 
\hb{X_p^i\in T_pM}, 
\ \hb{\al_{j,p}\in T_p^*M}, and $w_p$ belonging to~$W_{\coW p}$. 

\smallskip 
Set 
$$ 
\frac\pl{\pl x^{(i)}} 
:=\frac\pl{\pl x^{i_1}}\otimes\cdots\otimes\frac\pl{\pl x^{i_\sa}} 
\qb dx^{(j)}:=dx^{j_1}\otimes\cdots\otimes dx^{j_\tau} 
\qa (i)\in\BJ_\sa 
\qb (j)\in\BJ_\tau. 
$$ 
Furthermore, let 
$(b_1,\ldots,b_n)$ be the coordinate frame for~$W$ over~$U_{\coU\ka}$ 
associated with 
\hb{\ka\slt\vp} and $(\ba^1,\ldots,\ba^n)$ its dual frame. Then 
\beq\Label{L.bas} 
\Bigl\{\,
\frac\pl{\pl x^{(i)}}\otimes dx^{(j)}\otimes b_\nu 
\ ;\ (i)\in\BJ_\sa,\ (j)\in\BJ_\tau,\ 1\leq\nu\leq n\,\Bigr\} 
\eeq 
is the coordinate frame 
for~$V$ over~$U_{\coU\ka}$ associated with 
\hb{\ka\slt\vp_\tau^\sa}. Hence 
\hb{v\in\Ga(U_{\coU\ka},V)} has the local representation 
$$ 
v=v_{(j)}^{(i),\nu}\frac\pl{\pl x^{(i)}}\otimes dx^{(j)}\otimes b_\nu 
$$ 
and 
$$ 
\vp_\tau^\sa v(x)=\big[v_{(j)}^{(i),\nu}\bigl(\ka^{-1}(x)\bigr)e_\nu\big] 
\in F^{m^\sa\times m^\tau}=E 
\qa x\in Q_\ka^m. 
$$ 
Assume 
\hb{\wt{\ka}\slt\wt{\vp}\in\gK\slt\Phi}. Then 
\hb{(\wt{\ka}\slt\wt{\vp}_\tau^\sa)\circ(\ka\slt\vp_\tau^\sa)^{-1} 
   =\bigl(\wt{\ka}\circ\ka^{-1},\ (\vp_\tau^\sa)_{\ka\wt{\ka}}\bigr)}, 
where 
\beq\Label{L.st} 
\bigl((\vp_\tau^\sa)_{\ka\wt{\ka}}\xi\bigr)_{(j)}^{(i),\nu} 
=A_{(\wt{\imath})}^{(i)}B_{(j)}^{(\wt{\jmath})} 
(\vp_{\ka\wt{\ka}})_{\wt{\nu}}^\nu 
\xi_{(\wt{\jmath})}^{(\wt{\imath}),\wt{\nu}} 
\qa \xi\in E, 
\npb 
\eeq 
with 
\hb{A_{(\wt{\imath})}^{(i)} 
   =A_{\wt{\imath}_1}^{i_1}\cdots A_{\wt{\imath}_\sa}^{i_\sa}} and 
\hb{B_{(j)}^{(\wt{\jmath})} 
   =B_{j_1}^{\wt{\jmath}_1}\cdots B_{j_\tau}^{\wt{\jmath}_\tau}}, and  
\beq\Label{L.AB} 
A_{\wt{\imath}}^i 
=\frac{\pl(\wt{\ka}\circ\ka^{-1})^i}{\pl x^{\wt{\imath}}} 
\qb B_j^{\wt{\jmath}} 
=\frac{\pl(\ka\circ\wt{\ka}^{-1})^{\wt{\jmath}} }{\pl y^j} 
\circ(\wt{\ka}\circ\ka^{-1})  
\eeq   
for 
\hb{1\leq i,\wt{\imath},j,\wt{\jmath}\leq n} and 
\hb{y=\wt{\ka}\circ\ka^{-1}(x)}. Hence \Eqref{U.K}, \,\Eqref{U.Ph}, and 
assumption~\Eqref{S.ass} imply that 
$$ 
\gK\slt\Phi_\tau^\sa:=\{\,\ka\slt\vp_\tau^\sa 
\ ;\ \ka\slt\vp\in\gK\slt\Phi\,\} 
$$ 
is a uniformly regular atlas for~$V$ over~$\gK$. From \Eqref{U.Keq} and 
\Eqref{U.peq} we also infer that 
$$  
\gK\slt\Phi\approx\wt{\gK}\slt\wt{\Phi} 
\quad\Ra\quad 
\gK\slt\Phi_\tau^\sa\approx\wt{\gK}\slt\wt{\Phi}_\tau^\sa.  
$$  

\smallskip 
The local chart 
\hb{\ka\slt\vp_\tau^\sa} is completely determined by 
\hb{\ka\slt\vp}. For this reason, and to simplify notation, we denote the 
push-forward and pull-back by 
\hb{\ka\slt\vp_\tau^\sa} simply by 
\hb{(\ka\slt\vp)_*} and 
\hb{(\ka\slt\vp)^*}, respectively. This is consistent with the  
use of~$\ka_*$ for the push-forward of vector fields by 
\hb{\ka\slt\vp} (see Example~\ref{exa-V.ex}(b)).  

\smallskip 
We set 
$$ 
g_{(i)(k)}^{(j)(\ell)}:=g_{i_1k_1}\cdots g_{i_\sa k_\sa} 
g^{j_1\ell_1}\cdots g^{j_\tau\ell_\tau} 
$$ 
with $(i)$,~$(k)$ running through~$\BJ_\sa$  and 
$(j)$,~$(\ell)$ through~$\BJ_\tau$ . Then \Eqref{S.h} 
and \Eqref{V.hM} imply 
$$  
h\uv=g_{(i)(k)}^{(j)(\ell)}u_{(j)}^{(i),\nu}\ol{v_{(\ell)}^{(k),\mu}} 
h_W(b_\nu,b_\mu) 
\qa u,v\in\Ga(U_{\coU\ka},V). 
$$  
Hence, setting 
\hb{u_\ka:=(\ka\slt\vp)_*u} etc., we get from \Eqref{V.puh} 
\beq\Label{L.hl} 
(\ka\slt\vp)_*h(u_\ka,v_\ka) 
=\ka_*g_{(i)(k)}^{(j)(\ell)} 
\ka_*u_{(j)}^{(i),\nu}\ka_*\ol{v_{(\ell)}^{(k),\mu}} 
\ka_*h_{W_{\nu\mu}}. 
\eeq 
Lemma~3.1 of~\cite{Ama12b} guarantees 
\beq\Label{L.kgg} 
\ka_*g\sim\rho_\ka^2g_m
\qb \ka_*g^*\sim\rho_\ka^{-2}g_m 
\qa \ka\in\gK, 
\eeq  
and 
\beq\Label{L.kgk} 
\rho_\ka^{-2}\,\|\ka_*g\|_{k,\iy}+\rho_\ka^2\,\|\ka_*g^*\|_{k,\iy} 
\leq c(k) 
\qa \ka\in\gK 
\qb k\in\BN. 
\eeq 
From \Eqref{V.pN}, the uniform regularity of~$h_W$ over 
\hb{\gK\slt\Phi}, \ \Eqref{L.hl},  and \Eqref{L.kgg} we deduce 
\beq\Label{L.Nh} 
\ka_*(|u|_h)=|(\ka\slt\vp)_*u|_{(\ka\slt\vp)_*h} 
\sim\rho_\ka^{\sa-\tau}\,|(\ka\slt\vp)_*u|_{E_\tau^\sa} 
\qa \ka\slt\vp\in\gK\slt\Phi 
\qb u\in\Ga\MV. 
\eeq 

\smallskip 
Suppose 
\hb{u\in\cT_\tau^\sa\MV} has the local representation 
$$ 
u=u_{(j)}^{(i),\nu}\frac\pl{\pl x^{(i)}}\otimes dx^{(j)}\otimes b_\nu. 
$$ 
Then it follows from \Eqref{V.Nfv}, \,\Eqref{V.NXv}, \,\Eqref{V.Ddu}, 
\,\Eqref{V.Ntp}, and \Eqref{V.Nb}, denoting by~$D_{k\mu}^\nu$ the 
Christoffel symbols of~$D$, that 
\beq\Label{L.Nu} 
\bal 
\na u 
&=\frac{\pl u_{(j)}^{(i),\nu}}{\pl x^k}\frac\pl{\pl x^{(i)}}
 \otimes dx^{(j)}\otimes dx^k\otimes b_\nu\\ 
&\ph{{}={}} 
 +\sum_{s=1}^\sa u_{(j)}^{(i_1,\ldots,i_s,\ldots,i_\sa),\nu} 
 \,\Ga_{ki_s}^\ell\frac\pl{\pl x^{(i_1,\ldots,\ell,\ldots,i_\sa)}} 
 \otimes dx^{(j)}\otimes dx^k\otimes b_\nu\\ 
&\ph{{}={}} 
 -\sum_{t=1}^\tau u_{(j_1,\ldots,j_t,\ldots,j_\tau)}^{(i),\nu}
 \,\Ga_{k\ell}^{j_t}\frac\pl{\pl x^{(i)}} 
 \otimes dx^{(j_1,\ldots,\ell,\ldots,j_\tau)}\otimes dx^k\otimes b_\nu\\ 
&\ph{{}={}} 
 +u_{(j)}^{(i),\mu}D_{k\mu}^\nu \frac\pl{\pl x^{(i)}}
 \otimes dx^{(j)}\otimes dx^k\otimes b_\nu, 
\eal 
\npb 
\eeq 
with $\ell$~being at position~$s$ in 
$(i_1,\ldots,\ell,\ldots,i_\sa)$ and position~$t$ in 
$(j_1,\ldots,\ell,\ldots,j_\tau)$. 

\smallskip 
We endow the trivial bundle 
\hb{Q_\ka^m\times E_\tau^\sa} with the Euclidean connection, denoted 
by~$\pl_x$ and being naturally identified with the Fr\'echet derivative. 
Thus, given 
\hb{v\in C^\iy(Q_\ka^m,E_\tau^\sa)}, 
$$ 
\pl_x^\ell v\in C^\iy\bigl(Q_\ka^m,\cL^\ell(\BR^m;E_\tau^\sa)\bigr) 
\qa \ell\in\BN^\times, 
$$ 
where $\cL^\ell(\BR^m;E_\tau^\sa)$ is the space of \hbox{$\ell$-linear} 
maps from~$\BR^m$ into~$E_\tau^\sa$. If 
\hb{v=\big[v_{(j)}^{(i)}\big]\sco Q_\ka^m\ra F^{m^\sa\times m^\tau}}, 
then, setting 
\hb{\pl_{(k)}:=\pl_{k_\ell}\circ\cdots\circ\pl_{k_1}} for 
\hb{(k)\in\BJ_\ell} with 
\hb{\pl_i=\pl/\pl x^i}, 
\beq\Label{L.Dlv} 
\pl_x^\ell v 
=\big[\pl_{(k)}v_{(j)}^{(i)}\big] 
\sco Q_\ka^m\ra F^{m^\sa\times m^{\tau+\ell}}. 
\eeq 
Hence, using the latter interpretation, 
\beq\Label{L.Dl} 
\pl_x^\ell\in 
\cL^\ell\bigl(C^\iy(Q_\ka^m,E_\tau^\sa),
C^\iy(Q_\ka^m,E_{\tau+\ell}^\sa)\bigr) 
\qa \ell\in\BN, 
\npb 
\eeq 
where 
\hb{\pl_x^0:=\id}. 

\smallskip 
We define the push-forward 
$$ 
(\ka\slt\vp)_*\na^\ell\sco C^\iy(Q_\ka^m,E_\tau^\sa) 
\ra C^\iy(Q_\ka^m,E_{\tau+\ell}^\sa) 
$$ 
of~$\na^\ell$ by 
\hb{\ka\slt\vp} by 
$$ 
(\ka\slt\vp)_*\na^\ell 
:=(\ka\slt\vp)_*\circ\na^\ell\circ(\ka\slt\vp)^* 
$$ 
for 
\hb{\ell\in\BN}. Then 
\hb{(\ka\slt\vp)_*\na} is a metric connection on 
\hb{\bigl(T_\tau^\sa(Q_\ka^m,F),(\ka\slt\vp)_*h\bigr)} and 
$$ 
(\ka\slt\vp)_*\na^{\ell+1} 
=\bigl((\ka\slt\vp)_*\na\bigr)\circ(\ka\slt\vp)_*\na^\ell 
\qa \ell\in\BN.  
$$ 

\smallskip 
Suppose 
\hb{r\in\BN^\times} and
\hb{u\in C^r\MV}. Set 
\hb{v:=(\ka\slt\vp)_*u\in C^r(Q_\ka^m,E_\tau^\sa)}. Then we infer 
from \Eqref{L.Nu} by induction, and from \Eqref{L.Dlv} and \Eqref{L.Dl} 
that there exist 
$$ 
a_\ell 
\in C^\iy\bigl(Q_\ka^m,\cL(E_{\tau+\ell}^\sa,E_{\tau+r}^\sa)\bigr) 
\qa 0\leq\ell\leq r-1, 
$$ 
such that 
\beq\Label{L.Dr} 
(\ka\slt\vp)_*\na^rv=\pl_x^rv+\sum_{\ell=0}^{r-1}a_\ell\pl_x^\ell v. 
\eeq 
More precisely, the entries of the matrix representation of~$a_\ell$ 
are polynomials in the derivatives of order at most 
\hb{r-\ell-1} of the Christoffel symbols of $\na_{\cona g}$ and~$D$. 
Hence assumption~\Eqref{S.ass} implies 
\beq\Label{L.abd} 
\|a_\ell\|_{k,\iy}\leq c(k) 
\qa 0\leq\ell\leq r-1 
\qb \ka\slt\vp\in\gK\slt\Phi,  
\eeq 
due to \Eqref{U.Ch}, \,\Eqref{L.kgg}, and \Eqref{L.kgk}. By solving 
system~\Eqref{L.Dr} for 
\hb{0\leq\ell\leq r} `from the bottom' we find 
\beq\Label{L.dr} 
\pl_x^rv 
=(\ka\slt\vp)_*\na^rv
+\sum_{\ell=0}^{r-1}\wt{a}_\ell(\ka\slt\vp)_*\na^\ell v, 
\eeq 
where 
\hb{\wt{a}_\ell\in 
   C^\iy\bigl(Q_\ka^m,\cL(E_{\tau+\ell}^\sa,E_{\tau+r}^\sa)\bigr)} satisfy 
\beq\Label{L.adb} 
\|\wt{a}_\ell\|_{k,\iy}\leq c(k) 
\qa 0\leq\ell\leq r-1 
\qb \ka\slt\vp\in\gK\slt\Phi. 
\eeq 
From \hbox{\Eqref{L.Dr}--\Eqref{L.adb}} 
%\Eqref{L.abd}\Eqref{L.dr}  
we infer that, given 
\hb{r\in\BN^\times}, 
\beq\Label{L.Nd} 
\sum_{i=0}^r\big|(\ka\slt\vp)_*\na^i\bigl((\ka\slt\vp)_*u\bigr)
 \big|_{E_{\tau+i}^\sa} 
\sim\sum_{|\al|\leq r}\big|\pa_x\bigl((\ka\slt\vp)_*u 
\bigr)\big|_{E_\tau^\sa} 
\npb 
\eeq 
for 
\hb{\ka\slt\vp\in\gK\slt\Phi}  and 
\hb{u\in C^r\MV}. 
%---------------------------------------------------------------------
%---------------------------------------------------------------------
\section{Isotropic Bessel Potential and Besov Spaces}\LabelT{sec-J}% 
Weighted (isotropic) function spaces on singular manifolds have been studied 
in detail in~\cite{Ama12b}, where, however, only scalar-valued tensor 
fields are considered. In this and the next section we recall the basic 
definitions and notation on which we shall build in the anisotropic case, 
and describe the needed extensions to the case of vector-bundle-valued 
tensor fields. 

\smallskip 
We denote by 
\hb{\ci\cD:=\ci\cD(V):=\cD\ciMV}, respectively 
\hb{\cD:=\cD(V):=\cD\MV}, the \hbox{LF-space} of smooth sections of~$V$ 
which are compactly supported in~$\ci M$, respectively~$M$. Then 
\hb{\ci\cD'=\ci\cD'(V):=\ci\cD(V')_{w^*}'} is the dual of~$\ci\cD(V')$ 
endowed with the \hbox{$w^*$-topology}, the space of \emph{distribution 
sections on}~$\ci M$, whereby 
\hb{V'=T_\sa^\tau\MWa}. As usual, we identify 
\hb{v\in L_{1,\loc}\ciMV} with the distribution section 
\hb{\bigl(u\mt\dl u,v\dr_M\bigr)\in\ci\cD'}, where 
$$ 
\dl u,v\dr_M:=\int_M\dl u,v\dr_V\,dV_g 
\qa u\in\cD\ciMVs 
\qb v\in L_{1,\loc}\ciMV, 
$$ 
and $dV_g$~is the volume measure of~$M$. Hence 
$$  
\ci\cD\hr\cD\sdh L_{1,\loc}\MV 
\overset{u\mt u\sn\ci M}{\lllllora}L_{1,\loc}\ciMV\hr\ci\cD', 
\npb 
$$ 
where 
\hb{{}\hr{}}~means `continuous' and 
\hb{{}\sdh{}}~`continuous and dense' embedding. 

\smallskip 
In addition to \Eqref{S.ass} we suppose throughout 
\bspezeqn\Label{J.rpl} 
\bal 
\frame{ 
\begin{minipage}{168pt}
\bspezeq 
\bal 
{}      %do not remove!
&\rho\in\gT(M)  
\qb 1<p<\iy  
\qb \lda\in\BR.\\ 
\noalign{\vskip2.5\jot}
\eal 
\espezeq 
\end{minipage}} 
\eal 
\espezeqn 
Assume 
\hb{k\in\BN}. The \hh{weighted Sobolev space} 
$$ 
W_{\coW p}^{k,\lda}=W_{\coW p}^{k,\lda}(V)=W_{\coW p}^{k,\lda}(V;\rho) 
$$ 
of \hbox{$W$-valued} $(\sa,\tau)$-tensor fields on~$M$ is the 
completion of~$\cD$ in~$L_{1,\loc}(V)$ with respect to the norm 
$$ 
u\mt\|u\|_{k,p;\lda} 
:=\Bigl(\sum_{i=0}^k\big\|\rho^{\lda+\tau-\sa+i} 
\,|\na^iu|_h\,\big\|_p^p\Bigr)^{1/p}. 
$$ 
It is independent of the particular choice of~$\rho$ in the sense that 
\hb{W_{\coW p}^{k,\lda}(V;\rho')\doteq W_{\coW p}^{k,\lda}(V;\rho)} for 
\hb{\rho'\in[\![\rho]\!]}, where 
\hb{{}\doteq{}}~means `equal except for equivalent norms'. 

\smallskip 
For simplicity, we do not indicate the dependence of these norms, and of 
related ones to be introduced below, on~$(\sa,\tau)$. 
This has to be kept in mind.  

\smallskip 
Note that 
$$ 
W_{\coW p}^{0,\lda}=L_p^\lda=L_p^\lda(V) 
:=\bigl(\bigl\{\,u\in L_{p,\loc} 
\ ;\ \|u\|_{p;\lda}<\iy\,\bigr\},\ \Vsdot_{p;\lda}\bigr),  
\npb 
$$ 
where 
\hb{\Vsdot_{p;\lda}:=\Vsdot_{0,p;\lda}}. Also observe 
\hb{W_{\coW p}^{k,\lda}\sdh W_{\coW p}^{\ell,\lda}} for 
\hb{k>\ell}. 

\smallskip 
Given 
\hb{0<\ta<1}, we write~% 
\hb{\pe_\ta} for the complex, and 
\hb{\pr_{\ta,q}}, 
\ \hb{1\leq q\leq\iy}, for the real interpolation functor 
of exponent~$\ta$ (see \cite[Section~I.2]{Ama95a} for definitions and a 
summary of the basic facts of interpolation theory of which we make free 
use). Then, given 
\hb{k\in\BN}, 
$$ 
H_p^{s,\lda}=H_p^{s,\lda}(V):=
\left\{
\bal
{}      %do not remove!
[ &W_{\coW p}^{k,\lda},W_{\coW p}^{k+1,\lda}]_{s-k},  
        &\quad k<s  &<k+1,\\
    &W_{\coW p}^{k,\lda}, 
        &\quad   s  &=k, 
\eal
\right.
$$ 
and 
$$ 
B_p^{s,\lda}=B_p^{s,\lda}(V):=
\left\{
\bal 
{}      %do not remove!
&(W_{\coW p}^{k,\lda},W_{\coW p}^{k+1,\lda})_{s-k,p}, 
        &\quad k<s&<k+1,\\
&(W_{\coW p}^{k,\lda},W_{\coW p}^{k+2,\lda})_{1/2,p}, 
        &\quad   s&=k. 
\eal
\right.
$$ 
In favor of a unified treatment, throughout the rest of this paper 
\bspezeq 
\bal 
\frame{ 
\begin{minipage}{154pt} 
\bspezeq 
\bal 
{}      %do not remove!
&\gF\in\{H,B\}  
\qb \gF_p^{s,\lda}:=\gF_p^{s,\lda}(V).\\ 
\noalign{\vskip2.5\jot}
\eal 
\espezeq 
\end{minipage}} 
\eal  
\espezeq 

\smallskip 
We denote by~$\ci\gF_p^{s,\lda}$ the closure of~$\ci\cD$ 
in~$\gF_p^{s,\lda}$ for 
\hb{s>0} and set  
$$ 
\gF_p^{-s,\lda}(V):=\bigl(\ci\gF_{p'}^{s,-\lda}(V')\bigr)' 
\qa s>0, 
$$ 
with respect to the duality pairing induced by~% 
\hb{\pw_M}. We also set 
$$ 
B_p^{0,\lda} 
:=(W_{\coW p}^{-1,\lda},W_{\coW p}^{1,\lda})_{1/2,p}. 
$$ 
This defines the \hh{weighted Bessel potential space scale} 
\hb{[\,H_p^{s,\lda}\ ;\ s\in\BR\,]} and the 
\hh{weighted Besov space scale} 
\hb{[\,B_p^{s,\lda}\ ;\ s\in\BR\,]}. 

\smallskip 
It follows (see the next section) that $\gF_p^{s,\lda}$~is for 
\hb{s\in\BR} a~reflexive Banach space, and  
$$ 
\cD\sdh\gF_p^{s,\lda}\sdh\gF_p^{t,\lda}\sdh\cD' 
\qa -\iy<t<s<\iy. 
$$ 
Denoting, for \emph{any} 
\hb{s\in\BR}, by~$\ci\gF_p^{s,\lda}$ the closure of~$\cD$ 
in~$\gF_p^{s,\lda}$, 
$$  
\ci\gF_p^{s,\lda}=\gF_p^{s,\lda} 
\qa s<1/p. 
$$ 
Thus, by reflexivity, 
$$  
\ci\gF_p^{s,\lda}(V)=\bigl(\ci\gF_{p'}^{-s,-\lda}(V')\bigr)' 
\qa s\in\BR, 
\npb 
$$ 
with respect to~% 
\hb{\pw_M}. 

\smallskip 
If 
\hb{\rho\sim\mf{1}}, then all these spaces are independent of~$\lda$. 
Furthermore, $\gF_p^{s,\lda}$~reduces to the non-weighted (standard) Bessel 
potential space~$H_p^s(V)$ and Besov space~$B_p^s(V)$, respectively. 
Assume, in addition, 
\hb{M=\BX\in\{\BR^m,\BH^m\}} with 
\hb{g=g_m}, 
\ \hb{V=\BX\times E}, and 
\hb{D=d_F}. Then $H_p^s\BXE$ is the classical (\hbox{$E$-valued}) 
Bessel potential space and $B_p^s\BXE$ the standard (\hbox{$E$-valued}) 
Besov space~$B_{p,p}^s\BXE$. In the scalar case these spaces are well 
investigated (cf.\ H.~Triebel~\cite{Tri78a}, for example). Thus noting 
\hb{\gF_p^s\BXE\simeq(\gF_p^s)^d} with 
\hb{d=\dim(E)}, we can make free use of their properties which 
we shall do without further reference. 
%---------------------------------------------------------------------
%---------------------------------------------------------------------
\section{The Isotropic Retraction Theorem}\LabelT{sec-R}%
Let $E_\al$ be a locally convex space for each~$\al$ in a countable index 
set. Then
\hb{\mf{E}:=\prod_\al E_\al} is endowed with the product topology. Now 
suppose that each~$E_\al$ is a Banach space. Then we denote for 
\hb{1\leq q\leq\iy} by~$\ell_q(\mf{E})$ the linear subspace
of~$\mf{E}$ consisting of all
\hb{\mf{x}=(x_\al)} such that
$$
\|\mf{x}\|_{\ell_q(\mf{E})}:=
\left\{
\bal
{\textstyle \bigl(\sum_\al} &\|x_\al\|_{E_\al}^q\bigr)^{1/q},
                            &\quad 1\leq q  &<\iy,\\
{\textstyle \sup_\al}       &\|x_\al\|_{E_\al},
                            &\quad       q  &=\iy,
\eal
\right.
$$
is finite. Then $\ell_q(\mf{E})$ is a Banach space with norm~%
\hb{\Vsdot_{\ell_q(\mf{E})}}, and
\beq\LabelT{R.ll}
\ell_p(\mf{E})\hr\ell_q(\mf{E})
\qa 1\leq p<q\leq\iy.
\eeq 
We also set
\hb{c_c(\mf{E}):=\bigoplus_\al E_\al}, where 
\hb{{}\bigoplus{}}~denotes the locally convex direct sum. Thus 
\hb{\bigoplus_\al E_\al} consists of all finitely supported sequences 
in~$\mf{E}$ equipped with the finest locally convex topology for which all 
injections 
\hb{E_\ba\ra\bigoplus_\al E_\al} are continuous. It follows 
\beq\LabelT{R.cl}
c_c(\mf{E})\hr\ell_q(\mf{E})
\qb 1\leq q\leq\iy
\qa  c_c(\mf{E})\sdh\ell_q(\mf{E})
\qb q<\iy. 
\npb 
\eeq 
Furthermore, $c_0(\mf{E})$~is the closure of~$c_c(\mf{E})$
in~$\ell_\iy(\mf{E})$.

\smallskip
If each $E_\al$ is reflexive, then $\ell_p(\mf{E})$ is reflexive as well,
and
\hb{\ell_p(\mf{E})'=\ell_{p'}(\mf{E}')} with respect to the duality pairing
\hb{\mfpw:=\sum_\al\pw_\al}. Of course,
\hb{\mf{E}':=\prod_\al E_\al'}, and
\hb{\pw_\al}~is the \hbox{$E_\al$-duality} pairing. 

\smallskip 
Let  assumption~\Eqref{L.K} be satisfied. A~\emph{localization 
system subordinate to}~$\gK$ is a family 
\hb{\bigl\{\,(\pi_\ka,\chi_\ka)\ ;\ \ka\in\gK\,\bigr\}} such that 
\begin{equation}\Label{R.LS} 
\bal
\rm{(i)}  \qquad    &\pi_\ka\in\cD\bigl(U_{\coU\ka},[0,1]\bigr)\text{ and }
                     \{\,\pi_\ka^2\ ;\ \ka\in\gK\,\}
                     \text{ is a partition of unity on }M\\
                    &\text{subordinate to the covering }
                     \{\,U_{\coU\ka}\ ;\ \ka\in\gK\,\};\\
\rm{(ii)} \qquad    &\chi_\ka=\ka^*\chi\text{ with }
                     \chi\in\cD\bigl(Q^m,[0,1]\bigr)
                     \text{ and $\chi\sn\supp(\ka_*\pi_\ka)=\mf{1}$ 
                     for }\ka\in\gK;\\
\rm{(iii)}\qquad    &\|\ka_*\pi_\ka\|_{k,\iy}+\|\ka_*\chi_\ka\|_{k,\iy}
                     \leq c(k),
                     \ \ \ka\in\gK,
                     \ \ k\in\BN.
\eal 
\npb 
\end{equation} 
Lemma~3.2 of~\cite{Ama12b} guarantees the existence of such a 
localization system. 

\smallskip 
In addition to \Eqref{L.K} we assume 
$$ 
\bigl\{\,(\pi_\ka,\chi_\ka)\ ;\ \ka\in\gK\,\bigr\} 
\text{ is a localization system subordinate to }\gK. 
$$ 
For abbreviation, we put 
for 
\hb{s\in\BR} 
$$ 
W_{\coW p,\ka}^s:=W_{\coW p}^s\BXkE 
\qb \gF_{p,\ka}^s:=\gF_p^s\BXkE 
\qa \ka\in\gK, 
$$ 
where 
\hb{E=E_\tau^\sa(F)}. Hence 
\hb{\mf{W}_{\coW p}^s=\prod_\ka W_{\coW p,\ka}^s} is well-defined, as 
is~$\mf{\gF}_p^s$. We set 
$$ 
\cD_\ka:=\cD\BXkE 
\qb \ci\cD_\ka:=\cD\ciBXkE, 
$$ 
as well as 
$$ 
\mf{\cD}=\mf{\cD}\BXE:=\bigoplus_\ka\cD_\ka 
\qb \ci{\mf{\cD}}=\mf{\cD}\ciBXE:=\bigoplus_\ka\ci\cD_\ka. 
\npb 
$$ 
It should be noted that, due to \Eqref{L.Xk}, in $\mf{W}_{\coW p}^s$, 
$\mf{\gF}_p^s$, $\mf{\cD}$, and~$\ci{\mf{\cD}}$ there occur at most two 
distinct function spaces. 

\smallskip 
Given 
\hb{\ka\slt\vp\in\gK\slt\Phi}, we put for 
\hb{1\leq q\leq\iy} 
$$ 
\vp_{q,\ka}^\lda u:=\rho_\ka^{\lda+m/q}(\ka\slt\vp)_*(\pi_\ka u) 
\qa u\in C(V), 
$$ 
and 
$$ 
\psi_{q,\ka}^\lda v:=\rho_\ka^{-\lda-m/q}\pi_\ka(\ka\slt\vp)^*v 
\qa v\in C\BXkE.  
$$ 
Here and in similar situations it is understood that a
partially defined and compactly supported section of a vector bundle is
extended over the whole base manifold by identifying it with the zero
section outside its original domain. In addition, 
$$ 
\vp_q^\lda u:=(\vp_{q,\ka}^\lda u)\in\prod_\ka C\BXkE
\qa u\in C(V), 
$$ 
and 
$$ 
\psi_q^\lda\mf{v}:=\sum_\ka\psi_{q,\ka}^\lda v_\ka 
\qa \mf{v}=(v_\ka)\in\prod_\ka C\BXkE. 
$$ 

\smallskip 
A~\emph{retraction} from a locally convex space~$\cX$ onto a locally convex 
space~$\cY$ is a map 
\hb{R\in\cL\cXcY} possessing a right inverse 
\hb{R^c\in\cL\cYcX}, a~coretraction. 

\smallskip 
If no confusion seems likely, we use the same symbol for a 
continuous linear map and its restriction to a linear subspace of its domain, 
respectively for a unique continuous linear extension of it. Furthermore, 
in a diagram arrows always represent continuous linear maps. 

\smallskip 
The following theorem shows that $\psi_p^\lda$~is a retraction 
from~$\mf{\cD}$ onto~$\cD$, and that $\vp_p^\lda$~is a coretraction. 
Moreover, $\psi_p^\lda$~has a unique continuous linear extension to a 
retraction from~$\ell_p(\mf{\gF}_p^s)$ onto~$\gF_p^{s,\lda}$, and  
$\vp_p^\lda$~extends uniquely to a coretraction. This holds for any 
choice of 
\hb{s\in\BR} and 
\hb{p\in(1,\iy)}. Thus $\psi_p^\lda$~is a \emph{universal} retraction 
from~$\ell_p(\gF_p^s)$ onto~$\gF_p^{s,\lda}$ in the sense that it is 
completely determined by its restriction to~$\mf{\cD}$. The same holds if 
$\cD$ and~$\gF_p^{s,\lda}$ are replaced by  $\ci\cD$ 
and~$\ci\gF_p^{s,\lda}$, respectively. 
%---------------------------------------------------------------------
\begin{theorem}\LabelT{thm-R.R} 
Suppose 
\hb{s\in\BR}. Then the diagrams 
 \bspezeq 
 \bal
 \begin{picture}(177,88)(-80,-41)        %teil2 2
% \put(-80,47){\line(1,0){177}}
% \put(-80,-41){\line(1,0){177}}
% \put(-80,-41){\line(0,1){88}}
% \put(97,-41){\line(0,1){88}}
 \put(0,40){\makebox(0,0)[b]{\small{$d$}}}
 \put(0,5){\makebox(0,0)[b]{\small{$d$}}}
 \put(0,-30){\makebox(0,0)[b]{\small{$d$}}}
 \put(-66,35){\makebox(0,0)[c]{\small{$\cD$}}}
 \put(-65,-35){\makebox(0,0)[c]{\small{$\cD$}}}
 \put(-28,0){\makebox(0,0)[c]{\small{$\mf{\cD}$}}}
 \put(40,0){\makebox(0,0)[c]{\small{$\ell_p(\mf{\gF}_p^s)$}}}
 \put(90,35){\makebox(0,0)[c]{\small{$\gF_p^{s,\lda}$}}}
 \put(90,-35){\makebox(0,0)[c]{\small{$\gF_p^{s,\lda}$}}}
 \put(-29,17.5){\makebox(0,0)[r]{\small{$\vp_p^\lda$}}}
 \put(50,17.5){\makebox(0,0)[l]{\small{$\vp_p^\lda$}}}
 \put(-29,-17.5){\makebox(0,0)[r]{\small{$\psi_p^\lda$}}}
 \put(50,-17.5){\makebox(0,0)[l]{\small{$\psi_p^\lda$}}}
 \put(-71,0){\makebox(0,0)[r]{\small{$\id$}}}
 \put(90,0){\makebox(0,0)[l]{\small{$\id$}}}
 \put(-54,35){\vector(1,0){130}}
 \put(-54,37){\oval(4,4)[l]}
 \put(-18,0){\vector(1,0){40}}
 \put(-18,2){\oval(4,4)[l]}
 \put(-54,-35){\vector(1,0){130}}
 \put(-54,-33){\oval(4,4)[l]}
 \put(-66,25){\vector(0,-1){50}}
 \put(85,25){\vector(0,-1){50}}
 \put(-56,25){\vector(1,-1){20}}
 \put(-36,-5){\vector(-1,-1){20}}
 \put(75,25){\vector(-1,-1){20}}
 \put(55,-5){\vector(1,-1){20}}
 \end{picture}
\qquad\qquad 
 \begin{picture}(177,88)(-80,-41)        %teil2 3
% \put(-80,47){\line(1,0){177}}
% \put(-80,-41){\line(1,0){177}}
% \put(-80,-41){\line(0,1){88}}
% \put(97,-41){\line(0,1){88}}
 \put(0,40){\makebox(0,0)[b]{\small{$d$}}}
 \put(0,5){\makebox(0,0)[b]{\small{$d$}}}
 \put(0,-30){\makebox(0,0)[b]{\small{$d$}}}
 \put(-66,35){\makebox(0,0)[c]{\small{$\ci\cD$}}}
 \put(-65,-35){\makebox(0,0)[c]{\small{$\ci\cD$}}}
 \put(-28,0){\makebox(0,0)[c]{\small{$\ci{\mf{\cD}}$}}}
 \put(40,0){\makebox(0,0)[c]{\small{$\ell_p(\cimfgF_p^s)$}}}
 \put(90,35){\makebox(0,0)[c]{\small{$\ci\gF_p^{s,\lda}$}}}
 \put(90,-35){\makebox(0,0)[c]{\small{$\ci\gF_p^{s,\lda}$}}}
 \put(-29,17.5){\makebox(0,0)[r]{\small{$\vp_p^\lda$}}}
 \put(50,17.5){\makebox(0,0)[l]{\small{$\vp_p^\lda$}}}
 \put(-29,-17.5){\makebox(0,0)[r]{\small{$\psi_p^\lda$}}}
 \put(50,-17.5){\makebox(0,0)[l]{\small{$\psi_p^\lda$}}}
 \put(-71,0){\makebox(0,0)[r]{\small{$\id$}}}
 \put(90,0){\makebox(0,0)[l]{\small{$\id$}}}
 \put(-54,35){\vector(1,0){130}}
 \put(-54,37){\oval(4,4)[l]}
 \put(-18,0){\vector(1,0){40}}
 \put(-18,2){\oval(4,4)[l]}
 \put(-54,-35){\vector(1,0){130}}
 \put(-54,-33){\oval(4,4)[l]}
 \put(-66,25){\vector(0,-1){50}}
 \put(85,25){\vector(0,-1){50}}
 \put(-56,25){\vector(1,-1){20}}
 \put(-36,-5){\vector(-1,-1){20}}
 \put(75,25){\vector(-1,-1){20}}
 \put(55,-5){\vector(1,-1){20}}
 \end{picture} 
 \eal 
 \npb 
 \espezeq 
are commuting, where 
\hb{s>0} in the second case. 
\end{theorem}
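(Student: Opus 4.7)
The plan is to handle the integer Sobolev case $s=k\in\BN$ by direct computation in coordinate charts using the local representation machinery of Section~\ref{sec-L}, and then to lift to the full scales by interpolation (for non-integer $s>0$) and duality (for $s\leq0$). The algebraic retraction identity $\psi_p^\lda\circ\vp_p^\lda=\id$ on $\cD$ is immediate: within a chart $U_{\coU\ka}$ the singularity factor $\rho_\ka$ is a scalar constant, so the weights $\rho_\ka^{\pm(\lda+m/p)}$ cancel under composition; since $(\ka\slt\vp)^*(\ka\slt\vp)_*=\id$ on $\Ga(U_{\coU\ka},V)$ and $\sum_\ka\pi_\ka^2=\mf{1}$ by \Eqref{R.LS}(i), one obtains $\psi_p^\lda\vp_p^\lda u=\sum_\ka\pi_\ka^2u=u$. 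The same calculation covers $\ci\cD$.

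For the integer Sobolev norm equivalence, the starting identity is
$$
\|\vp_{p,\ka}^\lda u\|_{W_p^k(\BX_\ka,E)}^p
=\rho_\ka^{p\lda+m}\sum_{|\al|\leq k}\int_{Q_\ka^m}|\pa(\ka\slt\vp)_*(\pi_\ka u)|_E^p\,dx,
$$
valid because $\rho_\ka$ is constant on the chart. Now apply, in order, \Eqref{L.Nd} to replace Euclidean derivatives by pushed-forward covariant ones, \Eqref{L.Nh} to transfer between $|\cdot|_E$ and $|\cdot|_h$, and \Eqref{L.kgg} to convert $dx$ to $dV_g$; each step contributes a precisely computable power of $\rho_\ka$, and combined with $\rho\sim\rho_\ka$ on $U_{\coU\ka}$ from \Eqref{S.sd}(iv), the exponents balance to give equivalence with $\sum_{i=0}^k\int_{U_{\coU\ka}}\rho^{p(\lda+\tau-\sa+i)}|\na^i(\pi_\ka u)|_h^p\,dV_g$. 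Summation over $\ka\in\gK$, combined with finite multiplicity of the atlas and $\sum_\ka\pi_\ka^2=\mf{1}$, then yields $\|\vp_p^\lda u\|_{\ell_p(\mf{W}_p^k)}\sim\|u\|_{k,p;\lda}$. The parallel calculation gives continuity of $\psi_p^\lda\sco\ell_p(\mf{W}_p^k)\ra W_p^{k,\lda}$.

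The remaining cases are standard. For non-integer $s>0$, both $\vp_p^\lda$ and $\psi_p^\lda$ lift through the complex or real interpolation functor, which preserves retraction--coretraction pairs and commutes with countable $\ell_p$-products. For $s<0$, duality handles everything: the transpose of $\vp_p^\lda$ with respect to $\pw_M$ is, modulo the switch $m/p\leftrightarrow m/p'$ (absorbed exactly by the volume-element rescaling), the map $\psi_{p'}^{-\lda}$ acting on $\ell_{p'}(\mf{\gF}_{p'}^{-s})$, and vice versa. Density of $\mf{\cD}$ in $\ell_p(\mf{\gF}_p^s)$ follows from the Euclidean density $\cD_\ka\sdh\gF_{p,\ka}^s$ combined with \Eqref{R.cl}; the $\ci\gF$ versions follow verbatim from $\pi_\ka u\in\ci\cD$ whenever $u\in\ci\cD$, so that $\vp_{p,\ka}^\lda u\in\ci\cD_\ka$.

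The principal technical difficulty lies in the integer-$k$ step: the product-rule expansion of $|\na^i(\pi_\ka u)|_h$ generates error terms $|\na^j\pi_\ka|\,|\na^{i-j}u|_h$ whose covariant derivatives of $\pi_\ka$ scale as $\rho^{-j}$ in $h$-norm---a reflection of the uniform regularity of $(M,g/\rho^2)$ combined with \Eqref{R.LS}(iii)---and must be absorbed into lower-order $W_p^{k-1,\lda}$ contributions. This is the exact bookkeeping carried out for scalar-valued sections in Theorem~3.7 of~\cite{Ama12b}; the extension to $W$-valued $(\sa,\tau)$-tensors requires no new ideas beyond Section~\ref{sec-L}, only the replacement of the scalar fiber by the model fiber $E=E_\tau^\sa(F)$.
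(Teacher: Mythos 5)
Your plan---establish the retraction/coretraction pair at the integer Sobolev level by direct local computation, then propagate to noninteger $s>0$ by interpolation and to $s\leq0$ by duality, with the $\ci$-spaces handled through density---is precisely the structure of the paper's proof. The paper compresses the integer step by citing Theorems~6.1 and~11.1 of~\cite{Ama12b} for the scalar case $W=M\times\BK$ and observing that \Eqref{L.Nd}, \Eqref{L.Nh} supply the vector-bundle replacements for the inputs used there; you instead sketch that local computation directly, and your scaling analysis of the error terms $\rho^{-j}|\na^{i-j}u|_h$ coming from the product rule applied to $\pi_\ka u$ is consistent with \Eqref{L.kgg}, \Eqref{L.Nh}, and \Eqref{R.LS}(iii). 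That part is fine.

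The duality step, however, is not correct as stated. You assert that the transpose of $\vp_p^\lda$ under $\pw_M$ is $\psi_{p'}^{-\lda}$ with the exponent shift $m/p\leftrightarrow m/p'$ ``absorbed exactly by the volume-element rescaling.'' The exponents do balance, since $m/p+m/p'=m$; but the Riemannian volume element in a chart is $dV_g=\sqrt{\ka_*g}\,dx$, and $\sqrt{\ka_*g}$ is only \emph{equivalent} to $\rho_\ka^m$ by \Eqref{L.kgg}, not equal to it. Consequently $(\vp_{p,\ka}^\lda)'$ differs from $\psi_{p',\ka}^{-\lda}$ by the nonconstant bounded multiplier $\rho_\ka^m(\sqrt{\ka_*g})^{-1}$, so the operators that transpose cleanly under $\pw_M$ are not $\vp_{p,\ka}^\lda$, $\psi_{p,\ka}^\lda$ but the modified pair $\ci\vp_{p,\ka}^\lda=\rho_\ka^{-m}\sqrt{\ka_*g}\,\vp_{p,\ka}^\lda$ and $\ci\psi_{p,\ka}^\lda=\rho_\ka^m(\sqrt{\ka_*g})^{-1}\psi_{p,\ka}^\lda$ introduced in \Eqref{R.ph0}. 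This is exactly where the paper's step~(1) does real work: Theorem~11.1 of~\cite{Ama12b} delivers the retraction property for the $\ci$-operators, and the uniform bound \Eqref{R.rho} on $\rho_\ka^{-m}\sqrt{\ka_*g}$ and its derivatives is what lets one pass back to $\vp_p^\lda$, $\psi_p^\lda$. Your argument, executed literally, produces \emph{some} retraction/coretraction pair for the negative-order spaces, but not the claimed one, and the bounded-multiplier correction needed to fix it is a genuine extra step you must make explicit.
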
 
%=====================================================================
\begin{proof} 
(1) Suppose 
\hb{W=M\times\BK} so that 
\hb{V=T_\tau^\sa\MW=T_\tau^\sa M}. Also suppose 
\hb{k\in\BN}. Then Theorem~6.1 of~\cite{Ama12b} guarantees that 
\beq\Label{R.K} 
\psi_p^\lda\text{ is a retraction from $\mf{\cD}$ onto $\cD$ 
and from $\ell_p(\mf{W}_{\coW p}^k)$ onto $W_{\coW p}^{k,\lda}$, 
and $\vp_p^\lda$ is a coretraction.}
\eeq 
Furthermore, set 
\beq\Label{R.ph0} 
\ci\vp_{p,\ka}^\lda:=\rho_\ka^{-m}\sqrt{\ka_*g}\kern1pt \vp_{p,\ka}^\lda 
\qb \ci\psi_{p,\ka}^\lda:=\rho_\ka^m(\sqrt{\ka_*g})^{-1}\psi_{p,\ka}^\lda 
\eeq  
and 
$$ 
\ci\vp_p^\lda u:=(\ci\vp_{p,\ka}^\lda u) 
\qb \ci\psi_p^\lda\mf{v}:=\sum_\ka\ci\psi_{p,\ka}^\lda v_\ka 
\qa u\in\ci\cD 
\qb \mf{v}\in\ci{\mf{\cD}}. 
$$ 
Then it follows from Theorem~11.1 of~\cite{Ama12b} that $\ci\psi_p^\lda$~is 
a retraction from from~$\ci{\mf{\cD}}$ onto~$\ci\cD$ and 
from~$\ell_p(\cimfW_{\coW p}^k)$ onto~$\ci W_{\coW p}^{k,\lda}$, and 
$\ci\vp_p^\lda$~is a coretraction. 

\smallskip 
From step~(2) of the proof of the latter theorem we know 
\beq\Label{R.rho} 
\rho_\ka^{-m}\sqrt{\ka_*g}\sim\mf{1} 
\qb \|\rho_\ka^{-m}\sqrt{\ka_*g}\|_{k,\iy} 
+\|\rho_\ka^m(\sqrt{\ka_*g})^{-1}\|_{k,\iy}\leq c(k)  
\qa \ka\in\gK 
\qb k\in\BN. 
\eeq 
This implies that we can replace $\ci\vp_{p,\ka}$ and~$\ci\psi_{p,\ka}$ 
in \cite[Theorem~11.1]{Ama12b} by $\vp_{p,\ka}$ and~$\psi_{p,\ka}$, 
respectively. Consequently, 
\beq\Label{R.K0} 
\psi_p^\lda
\text{ is a retraction from $\ci{\mf{\cD}}$ onto $\ci\cD$ 
and from $\ell_p(\cimfW_{\coW p}^k)$ onto $\ci W_{\coW p}^{k,\lda}$, 
and $\vp_p^\lda$ is a coretraction.} 
\eeq 

\smallskip
(2) Let now 
\hb{W=(W,h_W,D)} be an arbitrary fully uniformly regular vector bundle 
over~$M$. Then \Eqref{L.Nd} is the analogue of Lemma~3.1(iv) 
of~\cite{Ama12b}. Furthermore, \Eqref{L.Nh} implies the analogue of 
\cite[part~(v) of Lemma~3.1]{Ama12b}. If 
\hb{W=M\times\BK}, then the proofs of \Eqref{R.K} and \Eqref{R.K0} are 
solely based on Lemma~3.1 of~\cite{Ama12b}. Hence, due to the preceding 
observations, they apply without change to the general case as well. Thus 
\Eqref{R.K} and \Eqref{R.K0} hold if $W$~is an arbitrary fully uniformly 
regular vector bundle over~$M$. 

\smallskip
(3) The assertions of the theorem are now deduced from \Eqref{R.K} and 
\Eqref{R.K0} by interpolation and duality as in~\cite{Ama12b}. 
\end{proof} 
Let $X$ and~$Y$ be Banach spaces, 
\hb{R\sco X\ra Y} a~retraction, and 
\hb{R^c\sco Y\ra X} a~coretraction. Then 
$$ 
\|y\|_Y=\|RR^cy\|_Y\leq\|R\|\,\|R^cy\|_X\leq\|R\|\,\|R^c\|\,\|y\|_Y 
\qa y\in Y. 
$$ 
Hence 
\beq\Label{R.rN} 
\Vsdot_Y\sim\|R^c\sdot\|_X. 
\eeq 
From this and Theorem~\ref{thm-R.R} it follows that 
\beq\Label{R.peq} 
u\mt\|\vp_p^\lda u\|_{\ell_p(\mf{\gF}_p^s)} 
\npb 
\eeq 
is a norm for~$\gF_p^{s,\lda}$. Furthermore, another choice of 
\hb{\gK\slt\Phi} and the localization  system leads to an equivalent norm. 

\smallskip 
For 
\hb{\ka\in\gK} and 
\hb{\wt{\ka}\in\gN(\ka)} we define a linear map 
\beq\Label{R.S} 
S_{\wt{\ka}\ka}\sco E^{\BX_{\wt{\ka}}}\ra E^{\BX_{\ka}} 
\qb v\mt(\ka\slt\vp)_*(\wt{\ka}\slt\wt{\vp})^*(\chi v). 
\eeq 

\smallskip 
The following lemma will be repeatedly useful. 
%---------------------------------------------------------------------
\begin{lemma}\LabelT{lem-R.S} 
Suppose 
\hb{s\in\BR^+} with 
\hb{s>0} if\/ 
\hb{\gF=B}. Then 
$$ 
S_{\wt{\ka}\ka}\in\cL(\gF_{p,\wt{\ka}}^s,\gF_{p,\ka}^s) 
\qb \|S_{\wt{\ka}\ka}\|\leq c 
\qa \wt{\ka}\in\gN(\ka)
\qb \ka\in\gK. 
$$ 
\end{lemma}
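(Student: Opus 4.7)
Write $\Theta := \wt{\ka}\circ\ka^{-1}$ for the coordinate change and $T_{\wt{\ka}\ka}(x) := (\vp_\tau^\sa)_{\wt{\ka}\ka}(x)$ for the corresponding tensor bundle transition matrix. By the definitions of push-forward and pull-back together with \Eqref{V.pp} and \Eqref{L.st}, for smooth $v$ the operator takes the explicit form
\[
(S_{\wt{\ka}\ka}v)(x) = T_{\wt{\ka}\ka}(x)\,\bigl(\chi(\Theta(x))\,v(\Theta(x))\bigr), \qquad x\in\ka(U_{\coU\ka}\cap U_{\coU\wt{\ka}}),
\]
extended by zero to the rest of $\BX_\ka$. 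I would first establish the estimate on $\cD(\BX_{\wt{\ka}},E)$ and then invoke density of $\cD$ in $\gF_{p,\wt{\ka}}^s$ (valid because $s>0$, resp.\ $s\geq 0$ and $\gF=H$).

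The second step is to collect uniform bounds on the coefficients appearing in this formula. From \Eqref{L.AB} and \Eqref{L.st}, $T_{\wt{\ka}\ka}$ is a polynomial in entries of the Jacobians of $\Theta$ and $\Theta^{-1}$ and in the bundle transition $\vp_{\wt{\ka}\ka}$. Hence the uniform regularity assumptions \Eqref{U.K}(ii) and \Eqref{U.Ph}(ii) yield, independently of the pair $(\ka,\wt{\ka})$ (subject to $U_{\coU\ka}\cap U_{\coU\wt{\ka}}\neq\es$),
\[
\|\Theta\|_{k,\iy}+\|\Theta^{-1}\|_{k,\iy}+\|T_{\wt{\ka}\ka}\|_{k,\iy}\leq c(k), \qquad k\in\BN.
\]
Combined with $\|\chi\|_{k,\iy}\leq c(k)$ from \Eqref{R.LS}(iii), every ingredient of $S_{\wt{\ka}\ka}$ has uniformly bounded $C^k$-norms.

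Third, I would decompose $S_{\wt{\ka}\ka}$ into three standard operations on the flat model spaces: multiplication by the compactly supported cut-off $\chi$, pull-back by the smooth diffeomorphism $\Theta$, and multiplication by the smooth endomorphism-valued field $T_{\wt{\ka}\ka}$. Each is a classical bounded operation on $\gF_p^s(\BX_*,E)$ in the required range of $s$, with operator norm controlled by a finite $C^k$-norm (with $k=k(s)$) of the associated coefficient or diffeomorphism; this is the standard diffeomorphism/point-wise multiplier calculus on Bessel potential and Besov spaces (for the $E$-valued versions one reduces to the scalar case via $\gF_p^s(\BX,E)\simeq(\gF_p^s(\BX))^{\dim E}$, as noted at the end of Section~\ref{sec-J}). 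The uniform bounds of the previous paragraph then give the required uniform estimate on $\|S_{\wt{\ka}\ka}\|$.

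The main technical obstacle is the \emph{mixed} case where one of $\BX_\ka,\BX_{\wt{\ka}}$ equals $\BR^m$ and the other equals $\BH^m$, so that $\Theta$ is a diffeomorphism between an open subset of $\BR^m$ and an open subset of $\BH^m$, and the classical composition and multiplier theorems are not directly applicable. To handle this, I would exploit that $\supp(\chi)$ is a compact subset of the open cube $Q^m$, so the support of $S_{\wt{\ka}\ka}v$ is contained in a fixed compact set $K\subset Q_\ka^m$ (uniformly in $\ka,\wt{\ka}$, by the uniform $C^1$-bounds on $\Theta^{-1}$). On such a set, $\Theta$ and $T_{\wt{\ka}\ka}$ can be extended smoothly to maps defined on the whole of $\BR^m$ with $C^k$-norms still controlled by $c(k)$; the estimate is then reduced to the classical full-space case and transferred back to $\BH^m$ via the standard definition of $\gF_p^s(\BH^m)$ by restriction from $\gF_p^s(\BR^m)$.
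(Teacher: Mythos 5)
Your proposal is correct and the setup (explicit formula for $S_{\wt{\ka}\ka}$, uniform bounds on $\Theta$, its inverse, and the bundle transition coming from \Eqref{U.K}, \Eqref{U.Ph}, \Eqref{L.st}, \Eqref{L.AB}, \Eqref{R.LS}) is exactly the same as the paper's. The difference lies in the final step. The paper does not invoke the general diffeomorphism/pointwise-multiplier calculus for Bessel potential and Besov spaces. Instead it verifies boundedness directly by the product rule and Leibniz' formula in the case $s\in\BN$, $\gF=H$, using $H_{p,\ka}^s\doteq W_{\coW p,\ka}^s$, and then obtains the full range $s\geq 0$ (and $\gF=B$ for $s>0$) by real and complex interpolation. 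That route is shorter and more elementary, and it entirely avoids what you flag as the main technical obstacle, the ``mixed'' $\BR^m/\BH^m$ case: an integer-order Sobolev estimate with derivatives up to order $k$ is literally the same computation whether the underlying domains are full or half spaces, so no extension--restriction step is needed, and interpolation then applies to the relevant couples without further ado. Your extension argument in the mixed case is workable but should be spelled out a bit more: after extending $\Theta$ smoothly off the compact set $K$, the composition operator $v\mt v\circ\Theta$ is only meaningful on that part of the domain where the extended map agrees with the original diffeomorphism, and one must check that multiplication by the cut-off $\chi\circ\Theta$ restores the right support before applying the full-space composition theorem. (Minor remark: your $(\chi v)\circ(\wt{\ka}\circ\ka^{-1})$ is in fact the correct composition; the formula $(\chi v)\circ(\ka\circ\wt{\ka}^{-1})$ as printed in the paper's proof has the coordinate change reversed.) Both approaches are valid; the paper's is lighter on prerequisites given the machinery already in place.
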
 
%=====================================================================
\begin{proof} 
Note that, by~\Eqref{V.pp} and our convention on 
\hb{(\ka\slt\vp)_*}, 
$$ 
S_{\wt{\ka}\ka}v 
=(\vp_\tau^\sa)_{\wt{\ka}\ka} 
\bigl((\chi v)\circ(\ka\circ\wt{\ka}^{-1})\bigr).  
$$ 
Hence it follows from \Eqref{U.K}, \,\Eqref{U.Ph}, \,\Eqref{L.st}, 
\,\Eqref{L.AB}, \,\Eqref{R.LS}, and the product rule and Leibniz' formula 
that the assertion is true if 
\hb{s\in\BN} and 
\hb{\gF=H}, since 
\hb{H_{p,\ka}^s\doteq W_{\coW p,\ka}^s} for 
\hb{s\in\BN}. Now we obtain the statement for general~$s$ by interpolation. 
\end{proof} 
It follows from Theorem~\ref{thm-R.R} and the preceding consideration that 
\beq\Label{R.ext} 
\bal  
{}      %do not remove!
&\text{\emph{all results proved in \cite{Ama12b} for the 
 Banach space scales }}[\,\gF_p^{s,\lda}\ ;\ s\in\BR\,]\\ 
&\text{\emph{of scalar-valued $(\sa,\tau)$-tensor fields are likewise true 
for $W$-valued $(\sa,\tau)$-tensor fields,}}
\eal 
\eeq 
using obvious adaptions. Thus, in 
particular, the properties of~$\gF_p^{s,\lda}$ listed in 
Section~\ref{sec-J} are valid. Henceforth, we use \Eqref{R.ext} without 
further ado and simply refer to~\cite{Ama12b}. 
%---------------------------------------------------------------------
%---------------------------------------------------------------------
\section{Anisotropic Bessel Potential and Besov Spaces}\LabelT{sec-A}% 
%---------------------------------------------------------------------
Given subsets $X$ and~$Y$ of a Hausdorff topological space, we write 
\hb{X\isis Y} if $\ol{X}$~is compact and contained in the interior of~$Y$. 

\smallskip 
Let $I$ be an interval with nonempty interior and $\cX$~a locally 
convex space. Suppose $\cQ$~is a family of seminorms for~$\cX$ 
generating its topology. Then $C^\iy\IcX$ is a locally convex space 
with respect to the topology induced by the family of seminorms 
$$ 
u\mt\sup_{t\in K}q\bigl(\pl^ku(t)\bigr) 
\qa k\in\BN 
\qb K\isis I 
\qb q\in\cQ. 
\npb 
$$ 
This topology is independent of the particular choice of~$\cQ$. 

\smallskip 
For 
\hb{K\isis I} we denote by $\cD_K\IcX$ the linear subspace of 
$C^\iy\IcX$ consisting of those functions which are supported in~$K$. We 
provide $\cD_K\IcX$ with the topology induced by~$C^\iy\IcX$. Then 
$\cD\IcX$, the vector space of smooth compactly supported 
\hbox{$\cX$-valued} functions, is endowed with the inductive topology with 
respect to the spaces $\cD_K\IcX$ with 
\hb{K\isis I}. If 
\hb{K\isis K'\isis I}, then $\cD_{K'}\IcX$ induces on $\cD_K\IcX$ its 
original topology. Note, however, that in general $\cD\IcX$ is not an 
\hbox{LF-space} since $\cD_K\IcX$ may not be a Fr\'echet space. 
Given a locally convex space~$\cY$, a~linear map 
\hb{T\sco\cD\IcX\ra\cY} is continuous iff its restriction to every 
subspace $\cD_K\IcX$ is continuous (e.g.,~Section~6 of 
H.H.~Schaefer~\cite{Schae71a}). 

\smallskip 
From now on it is assumed, in addition to \Eqref{S.ass} and \Eqref{J.rpl}, 
that 
\bspezeq  
\frame{
\begin{minipage}{165pt}
\bspezeq 
\bal 
r\in\BN^\times 
\qb \mu\in\BR  
\qb J\in\{\BR,\BR^+\}.\\
\noalign{\vskip2\jot} 
\eal 
\espezeq 
\end{minipage}} 
\espezeq  
We set 
$$ 
\bt\quad 
1/\vec r:=(1,1/r)\in\BR^2 
\qb \vec\om:=(\lda,\mu), 
\npb 
$$ 
so that 
\hb{s/\vec r=(s,s/r)} for 
\hb{s\in\BR}. 

\smallskip 
Suppose 
\hb{k\in\BN}. The \hh{anisotropic weighted Sobolev space} of time-dependent 
\hbox{$W$-valued} $(\sa,\tau)$-tensor fields on~$M$, 
\beq\Label{A.W} 
\bal 
{}      %do not remove!
&W_{\coW p}^{kr/\vec r,\vec\om}=W_{\coW p}^{kr/\vec r,\vec\om}\JV\text{, is 
 the linear subspace of $L_p(J,W_{\coW p}^{kr,\lda})$}\\ 
&\text{consisting of all $u$ satisfying $\pl^ku\in L_p(J,L_p^{\lda+k\mu})$, 
 endowed with the norm}\\ 
&\qquad\quad  
 \|u\|_{kr/\vec r,p;\vec\om} 
 :=\bigl(\|u\|_{L_p(J,W_{\coW p}^{kr,\lda})}^p 
 +\|\pl^ku\|_{L_p(J,L_p^{\lda+k\mu})}\bigr)^{1/p}.
\eal 
\npb 
\eeq
Thus 
\hb{W_{\coW p}^{0/\vec r,\vec\om}\doteq L_p(J,L_p^\lda)}.\po  
%---------------------------------------------------------------------
{\samepage 
\begin{theorem}\LabelT{thm-A.B}
\begin{itemize} 
\item[{\rm(i)}] 
$W_{\coW p}^{kr/\vec r,\vec\om}$ is a reflexive Banach space.\po 
\item[{\rm(ii)}] 
${}$%                   do not remove!!!! 
\hb{\|u\|_{kr/\vec r,p;\vec\om}^\sim 
 :=\bigl(\|u\|_{L_p(J,W_{\coW p}^{kr,\lda})}^p 
 +\sum_{j=0}^k\|\pl^ju\|_{L_p(J,W_{\coW p}^{(k-j)r,\lda+j\mu})}^p 
 \bigr)^{1/p}} 
is an equivalent norm. 
\item[{\rm(iii)}] 
${}$%                   do not remove!!!! 
\hb{\cD\JcD\sdh W_{\coW p}^{kr/\vec r,\vec\om}}.\po 
\end{itemize} 
\end{theorem}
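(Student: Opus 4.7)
For part~(i), I observe that $W_{\coW p}^{kr/\vec r,\vec\om}$ is, by~\Eqref{A.W}, the preimage under the distributional operator $\pl^k$ of $L_p(J,L_p^{\lda+k\mu})$ inside $L_p(J,W_{\coW p}^{kr,\lda})$, endowed with the corresponding graph norm. Completeness follows from the standard Cauchy-sequence argument, using that convergence in the two ambient $L_p$-spaces implies convergence in $\cD'(J\times\ci M)$ and that $\pl^k$ is continuous on distributions, so the two limits match. Reflexivity follows by realising $W_{\coW p}^{kr/\vec r,\vec\om}$, via the isometric embedding $u\mapsto(u,\pl^k u)$, as a closed subspace of the $\ell_p$-direct sum $L_p(J,W_{\coW p}^{kr,\lda})\oplus L_p(J,L_p^{\lda+k\mu})$; both factors are reflexive because the weighted spaces of Section~\ref{sec-J} are reflexive (cf.~\Eqref{R.ext}) and $L_p(J,X)$ is reflexive when $X$ is reflexive and $1<p<\iy$.

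For part~(ii), the only nontrivial point is to bound the intermediate norms $\|\pl^ju\|_{L_p(J,W_{\coW p}^{(k-j)r,\lda+j\mu})}$ with $0<j<k$ in terms of $\|u\|_{kr/\vec r,p;\vec\om}$. My plan is to invoke the Grisvard--Sobolevski\u{\i}{} mixed-derivative theorem applied to the Banach pair $(X_0,X_1):=(L_p^{\lda+k\mu},W_{\coW p}^{kr,\lda})$, which yields
$$
L_p(J,X_1)\cap W_{\coW p}^k(J,X_0)\hr W_{\coW p}^{k-j}\bigl(J,(X_0,X_1)_{1-j/k,p}\bigr),\qquad 0\le j\le k,
$$
and then to identify the real-interpolation space above with one embedding continuously into $W_{\coW p}^{(k-j)r,\lda+j\mu}$. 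Interpolating the weight powers $\rho^{\lda+k\mu}$ and $\rho^\lda$ at parameter $1-j/k$ produces $\rho^{\lda+j\mu}$, while the regularity order interpolates from $0$ to $kr$ giving $(k-j)r$; the corresponding identification is the isotropic interpolation result from~\cite{Ama12b}, available here by~\Eqref{R.ext}. The main obstacle is precisely this interpolation identity, since the shift of weight must be matched simultaneously with the shift of differentiability; indeed, the choice of $\mu$ and the placement of the weights $\lda+j\mu$ in the statement are dictated by exactly this matching, which is what makes the anisotropic scale close up.

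For part~(iii), I would proceed by a two-stage approximation. First, a temporal cut-off $\chi_n\in\cD(J,[0,1])$ equal to $1$ on exhausting compacts of $J$, with $\pl^\ell\chi_n$ uniformly bounded, reduces matters to compactly time-supported $u$; the Leibniz expansion of $\pl^k(\chi_n u)$ is controlled termwise using the intermediate-derivative bounds from~(ii). Second, after extending compactly supported $u$ from $\BR^+$ to $\BR$ by a Seeley-type reflection when $J=\BR^+$, a mollification in time with a smooth compactly supported kernel produces approximants in $\cD(J,W_{\coW p}^{kr,\lda})$, and the $L_p(J,L_p^{\lda+k\mu})$-constraint on $\pl^k$ is preserved because mollification commutes with $\pl^k$. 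Finally, a fibre-wise application of the density $\cD\sdh W_{\coW p}^{kr,\lda}$ from Section~\ref{sec-J}, combined with a Riemann-sum approximation of the time integral (legitimate because the time support is compact and the integrand is continuous with values in $W_{\coW p}^{kr,\lda}$), yields approximants in $\cD(J,\cD)$ converging in the full anisotropic norm.
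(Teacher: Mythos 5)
Your approach diverges genuinely from the paper's, which proves all three parts by first establishing the retraction--coretraction pair $\psi_p^{\vec\om}$, $\vp_p^{\vec\om}$ (Theorem~\ref{thm-RA.R}), reducing each claim to the Euclidean model spaces $\gF_{p,\ka}^{s/\vec r}$ on $\BX_\ka\times J$, and then invoking the Fourier-multiplier results from~\cite{Ama09a} via Lemma~\ref{lem-RA.WW}. Your proof of~(i) is correct and is in fact more elementary: realizing the space as a closed subspace of $L_p(J,W_{\coW p}^{kr,\lda})\oplus L_p(J,L_p^{\lda+k\mu})$ via $u\mapsto(u,\pl^k u)$ gives completeness and reflexivity directly, without the retraction machinery. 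This is a small simplification.

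For~(ii), however, there is a genuine gap. The Grisvard intermediate-derivative theorem in the form you state — with the real interpolation space $(X_0,X_1)_{1-j/k,p}$ as target — produces, for $(X_0,X_1)=(L_p^{\lda+k\mu},W_{\coW p}^{kr,\lda})$, the weighted \emph{Besov} space $B_p^{(k-j)r,\lda+j\mu}$, and you then need the embedding $B_p^{(k-j)r,\lda+j\mu}\hr W_{\coW p}^{(k-j)r,\lda+j\mu}=H_p^{(k-j)r,\lda+j\mu}$. For $p>2$ and integer order, $B_{p,p}^s\hr H_p^s$ is \emph{false}; one only has $H_p^s\hr B_{p,p}^s$ in that range. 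So your interpolation matching of weight and regularity exponents is the right arithmetic, but the functor comes out on the wrong side. Closing this requires either the complex-interpolation (Sobolevski\u{\i}/Da Prato--Grisvard BIP) version — which needs hypotheses on the underlying operators that you have not verified in the weighted manifold setting — or the Fourier-multiplier proof in the local model, which is exactly what the paper uses (Lemma~\ref{lem-RA.WW} citing \cite[Theorem~2.3.8]{Ama09a} establishes the equivalence $W_{\coW p,\ka}^{kr/\vec r}\doteq\wt W_{\coW p,\ka}^{kr/\vec r}$ locally, and the retraction transfers it globally).

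For~(iii) there is an additional soft spot: the Riemann-sum step does not give what is needed. Approximating $u$ in $L_p(J,W_{\coW p}^{kr,\lda})$ by time-piecewise-constant functions destroys the requirement $\pl^k u\in L_p(J,L_p^{\lda+k\mu})$, since the distributional time derivative of a step function is a measure, not an $L_p$ function. You would need to approximate simultaneously in both components of the graph norm, which essentially forces a genuine tensor-product/localization argument. In the paper this is handled cleanly: the density $\cD\BYkE\sdh W_{\coW p,\ka}^{kr/\vec r}$ in the local model (\cite[Theorems 2.3.2(i), 4.4.1]{Ama09a}) gives $\mf{\cD}\BYE\sdh\ell_p(\mf{W}_{\coW p}^{kr/\vec r})$, and the retraction theorem then transports density across. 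Also note that this step of your proof invokes~(ii), so the gap there propagates. Finally, a minor inconsistency: you write the Grisvard conclusion as landing in $W_{\coW p}^{k-j}(J,\cdot)$, but the quantity you need to control is $\pl^j u\in L_p(J,\cdot)$, so the exponent should be $j$, not $k-j$.
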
 } 
%=====================================================================
\begin{proof} 
It follows from Theorem~\ref{thm-RA.R} below that 
$W_{\coW p}^{kr/\vec r,\vec\om}$~is isomorphic to a closed linear subspace 
of a reflexive Banach space, hence it is complete and reflexive. 
Proofs for parts (ii) and~(iii) are given in the next section. 
\end{proof} 
Observe 
\beq\Label{A.Nkr} 
\|u\|_{kr/\vec r,p;\vec\om}^\sim 
=\Bigl(\int_J\sum_{i+jr\leq kr} 
\big\|\rho^{\lda+i+j\mu+\tau-\sa} 
\,|\na^i\pl^ju|_h\,\big\|_p^p\,dt\Bigr)^{1/p}
\eeq  
and 
$$  
W_{\coW p}^{kr/\vec r,(\lda,0)} 
\doteq L_p(J,W_{\coW p}^{kr,\lda})\cap W_{\coW p}^k(J,L_p^\lda). 
$$ 
Note that Theorem~\ref{thm-A.B}(ii) and \Eqref{A.Nkr} show that 
definition~\Eqref{A.W} coincides, except for equivalent norms, with 
\Eqref{I.Nkr}. Also note that the reflexivity of~$L_p^\lda$ implies 
$$ 
W_{\coW p}^{0/\vec r,\vec\om} 
=L_p(J,L_p^\lda)=\bigl(L_{p'}(J,L_{p'}^{-\lda}(V'))\bigr)' 
$$ 
with respect to the duality pairing defined by 
$$ 
\dl u,v\dr_{M\times J}:=\int_J\bigl\dl u(t),v(t)\bigr\dr_M\,dt. 
$$ 

\smallskip 
Given 
\hb{0<\ta<1}, we set 
$$ 
\pr_\ta:=
\left\{
\bal
{}      %do not remove!
&\pe_\ta        &&\quad \text{if }\gF=H,\\
&\pr_{\ta,p}    &&\quad \text{if }\gF=B. 
\eal
\right.
$$ 
For 
\hb{s>0} we define `fractional order' spaces by 
\beq\Label{A.def} 
\gF_p^{s/\vec r,\vec\om}=\gF_p^{s/\vec r,\vec\om}\JV:=
\left\{
\bal
{}      %do not remove!
&(W_{\coW p}^{kr/\vec r,\vec\om},W_{\coW p}^{(k+1)r/\vec r,\vec\om} 
    )_{(s-kr)/r},
    &\quad kr<{} &s<(k+1)r,\\
&(W_{\coW p}^{kr/\vec r,\vec\om},W_{\coW p}^{(k+2)r/\vec r,\vec\om} 
    )_{1/2},
    &\quad       &s=(k+1)r.
\eal
\right.
\eeq 
We denote by 
\beq\Label{A.def0} 
\ci\gF_p^{s/\vec r,\vec\om}=\ci\gF_p^{s/\vec r,\vec\om}\JV 
\text{ the closure of $\cD(\ci J,\ci\cD)$ in }\gF_p^{s/\vec r,\vec\om}. 
\eeq 
Then negative order spaces are introduced by duality, that is, 
\beq\Label{A.def1} 
\gF_p^{-s/\vec r,\vec\om}=\gF_p^{-s/\vec r,\vec\om}\JV 
:=\bigl(\ci\gF_{p'}^{s/\vec r,-\vec\om}\JVs\bigr)' 
\qa s>0, 
\eeq 
with respect to the duality pairing induced by~% 
\hb{\pw_{M\times J}}. We also set 
\hb{s(p):=1/2p} and 
\beq\Label{A.Bdef} 
H_p^{0/\vec r,\vec\om}:=L_p(J,L_p^\lda) 
\qb B_p^{0/\vec r,\vec\om} 
 :=(H_p^{-s(p)/\vec r,\vec\om},H_p^{s(p)/\vec r,\vec\om})_{1/2,p}. 
\eeq 
This defines the \hh{weighted anisotropic Bessel potential space scale} 
\hb{[\,H_p^{s/\vec r,\vec\om}\ ;\ s\in\BR\,]} and the 
\hh{weighted an\-i\-so\-trop\-ic Besov space scale} 
\hb{[\,B_p^{s/\vec r,\vec\om}\ ;\ s\in\BR\,]}. 

\smallskip 
The proof of the following theorem, which describes the interrelations 
between these two scales and gives first interpolation results, is given in 
the next section. Henceforth, 
\hb{\xi_\ta:=(1-\ta)\xi_0+\ta\xi_1} for 
\hb{\xi_0,\xi_1\in\BR} and 
\hb{0\leq\ta\leq1}.\po  
%---------------------------------------------------------------------
{\samepage 
\begin{theorem}\LabelT{thm-A.WHB} 
\begin{itemize} 
\item[{\rm(i)}] 
${}$%                   do not remove!!!! 
\hb{H_p^{kr/\vec r,\vec\om}\doteq W_{\coW p}^{kr/\vec r,\vec\om}}, 
\ \hb{k\in\BN}.\po 
\item[{\rm(ii)}] 
${}$%                   do not remove!!!! 
\hb{B_2^{s/\vec r,\vec\om}\doteq H_2^{s/\vec r,\vec\om}}, 
\ \hb{s\in\BR}.\po 
\item[{\rm(iii)}] 
${}$%                   do not remove!!!! 
\hb{(B_p^{s_0/\vec r,\vec\om},B_p^{s_1/\vec r,\vec\om})_{\ta,p} 
   \doteq B_p^{s_\ta/\vec r,\vec\om}}, 
\ \hb{0\leq s_0<s_1}, 
\ \hb{0<\ta<1}. 
\item[{\rm(iv)}] 
${}$%                   do not remove!!!! 
\hb{[\gF_p^{s_0/\vec r,\vec\om},\gF_p^{s_1/\vec r,\vec\om}]_\ta 
   \doteq\gF_p^{s_\ta/\vec r,\vec\om}}, 
\ \hb{0\leq s_0<s_1}, 
\ \hb{0<\ta<1}.\po 
\end{itemize} 
\end{theorem}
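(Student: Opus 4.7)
The plan is to reduce each of the four assertions to its Euclidean (and half-space) counterpart via the anisotropic retraction theorem (Theorem~\ref{thm-RA.R}), which will provide, for every $s\in\BR$, a universal retraction $\psi_p^{\vec\om}$ from an $\ell_p$-product of the corresponding spaces on $\BX_\ka\times J$ onto $\gF_p^{s/\vec r,\vec\om}(J,V)$, with $\vp_p^{\vec\om}$ as coretraction. The model-space identities themselves are supplied by the Fourier-analytic results of~\cite{Ama09a}. The guiding principle is that complex and real interpolation functors commute with retractions and with countable $\ell_p$-sums, and the same holds for duality, once one checks that $\pw_{M\times J}$ is intertwined, via the retraction pair, with the $\ell_p$-sum of the corresponding model pairings on $\BX_\ka\times J$.

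For~(i), the case $k=0$ is immediate from~\Eqref{A.W} and~\Eqref{A.Bdef}. For $k\geq 1$, the definition~\Eqref{A.def} reads
\[
H_p^{kr/\vec r,\vec\om}
\doteq [W_{\coW p}^{(k-1)r/\vec r,\vec\om},\,W_{\coW p}^{(k+1)r/\vec r,\vec\om}]_{1/2}.
\]
Pulling both sides back through $\vp_p^{\vec\om}$ and invoking the commutation of complex interpolation with $\ell_p$-products reduces the claim to the model statement
\[
[W_{\coW p}^{(k-1)r/\vec r},\,W_{\coW p}^{(k+1)r/\vec r}]_{1/2}
\doteq W_{\coW p}^{kr/\vec r}
\]
on $\BX_\ka\times J$, which is proved in~\cite{Ama09a}. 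Assertion~(ii) is handled analogously: on each model space, Plancherel's theorem yields $B_2^{s/\vec r}\doteq H_2^{s/\vec r}$, and this identification propagates through the $\ell_2$-product and the retraction back to $M\times J$.

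For~(iii) and~(iv), I would write both sides in the form obtained from~\Eqref{A.def} (or, for $\gF=B$ at the endpoint $s_0=0$, from~\Eqref{A.Bdef}), transfer to $\ell_p$-products of the corresponding model spaces using~(i), and appeal to the reiteration theorem for real, respectively complex, interpolation. Since both real and complex interpolation commute with $\ell_p$-sums and with retractions, and since the sought identities hold on each factor $\BX_\ka\times J$ by~\cite{Ama09a}, applying $\psi_p^{\vec\om}$ returns the statements on $M\times J$.

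The main technical obstacle is the uniform behavior of the retraction across the whole scale $[\,\gF_p^{s/\vec r,\vec\om}\ ;\ s\in\BR\,]$. One has to verify that $\psi_p^{\vec\om}$ from Theorem~\ref{thm-RA.R} restricts to a retraction onto the closure subspace $\ci\gF_p^{s/\vec r,\vec\om}$ from $\ell_p$ of the analogous model closure spaces; only then does dualization, consistently with~\Eqref{A.def1}, yield the corresponding retraction for the negative-order spaces entering~\Eqref{A.Bdef}, so that in particular the delicate definition of $B_p^{0/\vec r,\vec\om}$ is seen to transfer correctly. Once this compatibility is in hand, the model-space interpolation and duality identities transfer without obstruction, and all four claims follow.
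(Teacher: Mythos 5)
Your proposal is correct and follows essentially the same route as the paper: transfer to the model spaces $\gF_{p,\ka}^{s/\vec r}$ via Theorem~\ref{thm-RA.R}, cite the model-space identities from~\cite{Ama09a}, and push back using the commutation of interpolation functors with $\ell_p$-products (Triebel's Theorem~1.18.1) and with the retraction pair. The only noteworthy divergence is in~(ii): rather than invoking an anisotropic Plancherel identity on $\BR^{m+1}$ directly, the paper reduces via the intersection characterizations $H_2^{s/\mf{\nu}}\doteq L_2(\BR,H_2^s)\cap H_2^{s/\nu}(\BR,L_2)$ and $B_2^{s/\mf{\nu}}\doteq L_2(\BR,B_2^s)\cap B_2^{s/\nu}(\BR,L_2)$ to the isotropic identity $H_2^s\doteq B_2^s$ plus the vector-valued identity from~\cite{SchmS01a}, and then handles $s<0$ by duality and $s=0$ by complex interpolation of the defining pairs in \Eqref{A.Bdef} — the cases $s\le0$ are not entirely automatic and deserve the explicit treatment the paper gives them; the underlying $L_2$-Fourier reasoning is the same, so this is a difference in bookkeeping rather than method.
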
 }
Next we prove, among other things, an elementary embedding theorem for 
anisotropic weighted Bessel potential and Besov spaces.\po  
%---------------------------------------------------------------------
{\samepage 
\begin{theorem}\LabelT{thm-A.HBH} 
\begin{itemize} 
\item[{\rm(i)}] 
${}$%                   do not remove!!!! 
Suppose 
\hb{-\iy<s_0<s<s_1<\iy}. Then 
\beq\Label{A.HBH} 
\cD\JcD\sdh H_p^{s_1/\vec r,\vec\om}\sdh B_p^{s/\vec r,\vec\om} 
\sdh H_p^{s_0/\vec r,\vec\om}. 
\eeq 
\item[{\rm(ii)}] 
${}$%                   do not remove!!!! 
Assume 
\hb{s<1/p} if\/ 
\hb{\pl M\neq\es}, and 
\hb{s<r(1+1/p)} if\/ 
\hb{\pl M=\es} and 
\hb{J=\BR^+}. Then 
\hb{\ci\gF_p^{s/\vec r,\vec\om}=\gF_p^{s/\vec r,\vec\om}}.\po 
\end{itemize} 
\end{theorem}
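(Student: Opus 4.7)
The plan is to prove both parts by reducing to the interpolation identities of Theorem~\ref{thm-A.WHB} and, for part~(ii), to model-space density statements via the anisotropic retraction theorem (Theorem~\ref{thm-RA.R}).

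For part~(i), I first treat the case $0\le s_0<s<s_1$. Fix $k\in\BN$ with $(k+1)r\ge s_1$. The embedding $W_{\coW p}^{(k+1)r/\vec r,\vec\om}\sdh W_{\coW p}^{0/\vec r,\vec\om}$ is a dense embedding of reflexive Banach spaces, and $\cD(J,\cD)$ is dense in both endpoints by Theorem~\ref{thm-A.B}(iii). Invoking Theorem~\ref{thm-A.WHB}(iv) together with reiteration for complex interpolation identifies each $H_p^{s_i/\vec r,\vec\om}$ as a complex interpolation space between those two endpoints; similarly, combining Theorem~\ref{thm-A.WHB}(iii) with the Besov definition~\Eqref{A.def} and reiteration identifies $B_p^{s/\vec r,\vec\om}$ as the corresponding real interpolation space. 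The standard embedding for reflexive couples
\[
X_1\sdh[X_0,X_1]_{\theta_1}\sdh(X_0,X_1)_{\theta_\star,p}\sdh[X_0,X_1]_{\theta_0}\sdh X_0, \qquad 0<\theta_0<\theta_\star<\theta_1<1,
\]
then delivers the chain~\Eqref{A.HBH} for non-negative parameters, with density of $\cD(J,\cD)$ inherited from the endpoints. For negative parameters $s_0<s<s_1<0$, I would dualize through~\Eqref{A.def1}, using the interpolation-duality identities $[X_0,X_1]_\theta'=[X_0',X_1']_\theta$ and $(X_0,X_1)_{\theta,p}'=(X_0',X_1')_{\theta,p'}$, valid because the spaces $W_{\coW p}^{kr/\vec r,\vec\om}$ are reflexive. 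The remaining mixed case $s_0<0<s_1$ is obtained by concatenating the two chains through $H_p^{0/\vec r,\vec\om}=L_p(J,L_p^\lda)$.

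For part~(ii), the argument is a reduction to the Euclidean model. By Theorem~\ref{thm-RA.R}, the coretraction $\vp_p^\lda$ (in its anisotropic form) realizes both $\gF_p^{s/\vec r,\vec\om}(J,V)$ and $\ci\gF_p^{s/\vec r,\vec\om}(J,V)$ as complemented subspaces of $\ell_p(\mf{\gF}_p^{s/\vec r})$ and $\ell_p(\cimfgF_p^{s/\vec r})$, respectively, and intertwines $\cD(\ci J,\ci\cD)$ with $\bigoplus_\ka\cD(\ci J,\cD(\ci\BX_\ka,E))$. It therefore suffices to establish the density
\[
\cD(\ci J,\cD(\ci\BX_\ka,E))\sdh\gF_p^{s/\vec r}(J,\BX_\ka\times E)\qquad\text{for every }\ka\in\gK.
\]
Under the stated hypotheses the spatial trace on $\pl\BH^m$ vanishes identically on $\gF_p^{s/\vec r}$ (this forces $s<1/p$), and in the case $\pl M=\es$, $J=\BR^+$ the first possible temporal trace at $t=0$ is also annihilated (giving the sharp bound $s<r(1+1/p)$). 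With no trace obstruction present, a combination of mollification in both the spatial and the temporal variables together with a double cut-off yields the required $\cD$-approximations, the needed Euclidean-model density theorems being available from the Fourier-analytic development of~\cite{Ama09a}.

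The principal technical obstacle lies in part~(i), in handling the transition values $s\in r\BN$ in the reiteration argument: at those levels the couple entering the Besov definition~\Eqref{A.def} shifts from $(W_{\coW p}^{kr/\vec r,\vec\om},W_{\coW p}^{(k+1)r/\vec r,\vec\om})$ to $(W_{\coW p}^{kr/\vec r,\vec\om},W_{\coW p}^{(k+2)r/\vec r,\vec\om})$, and bridging the two presentations requires a separate explicit reiteration identity before the generic non-integer argument applies. Once this is dealt with, the remainder of both parts follows routinely from Theorems~\ref{thm-A.WHB} and \ref{thm-RA.R}.
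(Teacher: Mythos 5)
Your overall strategy for part~(i) — positive-order interpolation from a Sobolev couple, density propagated from the endpoints, duality through~\Eqref{A.def1} for negative orders, concatenation through $H_p^{0/\vec r,\vec\om}$ for the mixed sign case — is exactly the route the paper takes for $s\neq 0$, and part~(ii) correctly reduces to model-space density via the retraction theorem. However, there is a genuine gap at $s=0$, and your final paragraph shows you have misdiagnosed it. For $s\in r\BN^\times$ the ``transition'' you worry about is not really an obstacle: once Theorem~\ref{thm-A.WHB}(iii) is in hand, reiteration collapses the two presentations in~\Eqref{A.def}, and your generic argument goes through. The real difficulty is $s=0$ alone. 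The space $B_p^{0/\vec r,\vec\om}$ is not defined from a couple of nonnegative-order Sobolev spaces; by~\Eqref{A.Bdef} it is $(H_p^{-s(p)/\vec r,\vec\om},H_p^{s(p)/\vec r,\vec\om})_{1/2,p}$, a couple that straddles zero. Your concatenation argument for the mixed case passes through $H_p^{0/\vec r,\vec\om}=L_p(J,L_p^\lda)$, which is not $B_p^{0/\vec r,\vec\om}$ unless $p=2$. Abstract interpolation gives $H_p^{s(p)/\vec r,\vec\om}\sdh B_p^{0/\vec r,\vec\om}\sdh H_p^{-s(p)/\vec r,\vec\om}$, but the theorem demands $H_p^{s_1/\vec r,\vec\om}\sdh B_p^{0/\vec r,\vec\om}\sdh H_p^{s_0/\vec r,\vec\om}$ for arbitrary $s_0<0<s_1$, and in particular for $0<s_1<s(p)=1/2p$; there the naive chaining through $H_p^{s(p)/\vec r,\vec\om}$ points the wrong way. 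The paper handles this by a separate model-space argument (in the section following the theorem): it invokes Lemma~\ref{lem-RA.WW} and the Euclidean results of~\cite{Ama09a} to get the chain $H_{p,\ka}^{s_1/\mf{\nu}}\sdh B_{p,\ka}^{0/\mf{\nu}}\sdh H_{p,\ka}^{s_0/\mf{\nu}}$ for each chart, passes to $\ell_p$ products, and transfers back through Theorem~\ref{thm-RA.R}. Your proposal would need this additional step; calling it ``a separate explicit reiteration identity'' understates it, because no reiteration identity internal to the nonnegative-order scale reaches a space built from a couple spanning negative orders.

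A secondary, lesser point: in part~(ii) your reduction is correct, but the closing claim that mollification plus cut-offs in both variables ``yields the required $\cD$-approximations'' is a plausibility argument rather than a proof. The paper leans on specific theorems of~\cite{Ama09a} (Theorem~4.7.1 and Corollary~4.11.2) that encode exactly where the half-space and half-line trace obstructions vanish, and it also needs the auxiliary density $\cD\JcDka\sdh\gF_\ka^{0/\vec r}\sdh\gF_\ka^{-t/\vec r}$ to cover negative $s$. Citing those model-space results, or proving them, is necessary; the heuristic by itself does not verify the sharp thresholds $s<1/p$ and $s<r(1+1/p)$.
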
 }
%=====================================================================
\begin{proofc}{of (i) for $s\neq0$} 
Using reiteration theorems, well-known density properties, and 
relations between the real and complex interpolation functor 
(e.g.,~\cite[formula~(I.2.5.2)]{Ama95a} and Theorem~\ref{thm-A.B}(iii)), 
we see that \Eqref{A.HBH} is true if 
\hb{s_0\geq0}. 

\smallskip 
Since $\cD(\ci J,\ci\cD)$ is dense in 
\hb{H_p^{0/\vec r,\vec\om}=L_p(J,L_p^\lda)} it follows 
$$ 
\ci H_p^{s_1/\vec r,\vec\om}\sdh\ci B_p^{s/\vec r,\vec\om} 
\sdh\ci H_p^{s_0/\vec r,\vec\om}\sdh L_p(J,L_p^\lda) 
\qa s_0\geq0. 
$$ 
Hence the definition of the negative order spaces implies that 
\Eqref{A.HBH} holds if \hb{s_1\leq0}, where the density of these embeddings 
follows by reflexivity. This implies assertion~(i) if 
\hb{s\neq0}. 
The proofs for the case 
\hb{s=0} and for assertion~(ii) are given in the next section. 
\end{proofc}\po 
%---------------------------------------------------------------------
{\samepage 
\begin{corollary}\LabelT{cor-A.FF} 
Suppose 
\hb{s\in\BR}. 
\begin{itemize} 
\item[{\rm(i)}] 
$\gF_p^{s/\vec r,\vec\om}$ is a reflexive Banach space.\po 
\item[{\rm(ii)}] 
If 
\hb{s>0}, then 
\hb{\ci\gF_p^{s/\vec r,\vec\om} 
   =\bigl(\gF_{p'}^{-s/\vec r,-\vec\om}\JVs\bigr)'} with respect to~% 
\hb{\pw_{M\times J}}. 
\item[{\rm(iii)}] 
${}$%                   do not remove!!!! 
${}$%                   do not remove!!!! 
\hb{\gF_p^{s_1/\vec r,\vec\om}\sdh\gF_p^{s_0/\vec r,\vec\om}} if 
\hb{s_1>s_0}.\po 
\end{itemize} 
\end{corollary}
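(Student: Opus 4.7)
The plan is to derive all three parts from results already established in this section: (i) by splitting on the sign of $s$ and combining integer-level reflexivity with interpolation and duality, (ii) by using (i) together with the definition \Eqref{A.def1} of the negative-order spaces, and (iii) by chaining the embeddings in Theorem~\ref{thm-A.HBH}(i). For (i), when $s=kr$ with $k\in\BN$, Theorem~\ref{thm-A.B}(i) yields reflexivity of $W_{\coW p}^{kr/\vec r,\vec\om}$, and Theorem~\ref{thm-A.WHB}(i) identifies this space with $H_p^{kr/\vec r,\vec\om}$ up to equivalence of norms. For $s>0$ not of this form, \Eqref{A.def} realises $\gF_p^{s/\vec r,\vec\om}$ as a real or complex interpolation space between two reflexive Sobolev spaces, and since $1<p<\iy$ both $\pe_\ta$ and $\pr_{\ta,p}$ preserve reflexivity (cf.\ \cite[Section~I.2]{Ama95a}). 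For $s=0$, the space $H_p^{0/\vec r,\vec\om}=L_p(J,L_p^\lda)$ is a Bochner space with reflexive target and hence reflexive, while $B_p^{0/\vec r,\vec\om}$ is by \Eqref{A.Bdef} a real interpolation space between reflexive Bessel potential spaces. Finally, for $s<0$, \Eqref{A.def1} realises $\gF_p^{s/\vec r,\vec\om}$ as the dual of the closed subspace $\ci\gF_{p'}^{-s/\vec r,-\vec\om}\JVs$ of the reflexive space $\gF_{p'}^{-s/\vec r,-\vec\om}\JVs$; the dual of a reflexive space is reflexive.

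For (ii), I fix $s>0$ and apply \Eqref{A.def1} with $(p,\vec\om,V)$ replaced by $(p',-\vec\om,V')$, which yields
$$
\gF_{p'}^{-s/\vec r,-\vec\om}\JVs
=\bigl(\ci\gF_p^{s/\vec r,\vec\om}\JV\bigr)'
$$
with respect to $\hb{\pw_{M\times J}}$. By part (i), the space $\gF_p^{s/\vec r,\vec\om}\JV$ is reflexive, hence so is its closed subspace $\ci\gF_p^{s/\vec r,\vec\om}\JV$. Dualising once more and invoking reflexivity then gives
$$
\bigl(\gF_{p'}^{-s/\vec r,-\vec\om}\JVs\bigr)'
=\bigl(\ci\gF_p^{s/\vec r,\vec\om}\JV\bigr)''
=\ci\gF_p^{s/\vec r,\vec\om}\JV,
$$
and the identification is with respect to $\hb{\pw_{M\times J}}$ by the very construction of the negative-order spaces.

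For (iii), given $s_0<s_1$ I would pick an auxiliary $t\in(s_0,s_1)$ and establish the chain $\gF_p^{s_1/\vec r,\vec\om}\sdh H_p^{t/\vec r,\vec\om}\sdh\gF_p^{s_0/\vec r,\vec\om}$. When $\gF=H$, the first link is the portion $H_p^{s_1}\sdh B_p^{s'}\sdh H_p^{t}$ of \Eqref{A.HBH} for any $s'\in(t,s_1)$; when $\gF=B$, it comes from applying Theorem~\ref{thm-A.HBH}(i) with the triple $(t,s_1,s')$ for some $s'>s_1$. The second link $H_p^{t/\vec r,\vec\om}\sdh\gF_p^{s_0/\vec r,\vec\om}$ is obtained symmetrically by choosing $s''<s_0$ and applying the theorem with the triple $(s'',s_0,t)$ when $\gF=B$, or directly via \Eqref{A.HBH} when $\gF=H$. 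Composing the two dense continuous embeddings yields $\gF_p^{s_1/\vec r,\vec\om}\sdh\gF_p^{s_0/\vec r,\vec\om}$. The main obstacle therefore lies not in this corollary itself but in the underlying Theorem~\ref{thm-A.HBH}(i), whose $s=0$ case is deferred to the next section; once that link in the chain is in place, the present statement is essentially bookkeeping.
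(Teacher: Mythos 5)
Your proposal is correct and follows essentially the same route as the paper: reflexivity of the Sobolev spaces plus duality properties of the interpolation functors for $s>0$, duality for $s<0$, Bochner reflexivity and interpolation for $s=0$, then (ii) from (i) together with \Eqref{A.def1}, and (iii) by chaining the embeddings of \Eqref{A.HBH}. The only difference is one of exposition — you spell out the chaining in (iii) and the double dualization in (ii), whereas the paper states both as immediate.
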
 }
%=====================================================================
\begin{proof} 
Assume  
\hb{s>0}. Then assertion~(i) follows from the reflexivity 
of~$W_{\coW p}^{kr/\vec r,\vec\om}$ for 
\hb{k\in\BN} and the duality properties of the real and complex 
interpolation functors. Hence $\ci\gF_p^{s/\vec r,\vec\om}\JVs$, being a 
closed linear subspace of a reflexive Banach space, is reflexive. Thus 
$\gF_p^{-s/\vec r,\vec\om}$ is reflexive since it is the dual of a 
reflexive Banach space. We have already seen that 
$H_p^{0/\vec r,\vec\om}$~is reflexive. The reflexivity 
of~$B_p^{0/\vec r,\vec\om}$ follows by interpolation as well. 
This proves~(i) for every 
\hb{s\in\BR}. 

\smallskip 
Assertion~(ii) is a consequence of (i) and \Eqref{A.def1}. Claim~(iii) is 
immediate by \Eqref{A.HBH}. 
\end{proof} 
If $M$~is uniformly regular, that is,  
\hb{\gT(M)=[\![1]\!]}, then $\gF_p^{s/\vec r,\vec\om}$~is independent 
of~$\vec\om$. These non-weighted spaces are denoted 
by~$\gF_p^{s/\vec r}$, of course. If 
\hb{W=M\times\BK}, then we write 
\hb{\gF_p^{s/\vec r,\vec\om}(M\times J)} for 
$\gF_p^{s/\vec r,\vec\om}\JVnn$. Since 
\hb{V_0^0=T_0^0M} is in this case the trivial vector bundle 
\hb{M\times\BK}, whose sections are the \hbox{$\BK$-valued} functions 
on~$M$, this notation is consistent with usual identification 
of~$L_p\bigl(J,L_p^\lda(M)\bigr)$ with 
\hb{L_p^\lda(M\times J)} via the identification of~$u(t)$ 
with~$u(\cdot,t)$. 
%---------------------------------------------------------------------
%---------------------------------------------------------------------
\section{The Anisotropic Retraction Theorem}\LabelT{sec-RA}% 
Let 
\hb{\{\,E_\al\ ;\ \al\in\sA\,\}} be a countable family of Banach spaces. 
We set 
\hb{L_p(J,\mf{E}):=\prod_\al L_p(J,E_\al)}. Fubini's theorem implies 
\beq\Label{RA.lL} 
\ell_p\bigl(L_p(J,\mf{E})\bigr)=L_p\bigl(J,\ell_p(\mf{E})\bigr), 
\eeq 
using obvious identifications. We also set 
\hb{(\mf{E},\mf{F})_\ta:=\prod_\al(E_\al,F_\al)_\ta} for 
\hb{0<\ta<1} if each $(E_\al,F_\al)$ is an interpolation couple. 

\smallskip 
We presuppose as standing hypothesis 
\bspezeq   
\bal 
\frame{
\begin{minipage}{276pt}
\bspezeq 
\bal 
{}      %do not remove!
&\rgK\text{ is a singularity datum for }M.\\
&\gK\slt\Phi\text{ is a uniformly regular atlas for $W$ over }\gK.\\
&F=\bigl(F,\prsn_F\bigr)\text{ is a model fiber for $W$ 
 with basis }(e_1,\ldots,e_n).\\
&\bigl\{\,(\pi_\ka,\chi_\ka)\ ;\ \ka\in\gK\,\bigr\}\text{ is a 
 localization system subordinate to }\gK.\\
\noalign{\vskip2.5\jot}
\eal
\espezeq 
\end{minipage}} 
\eal 
\npb 
\espezeq  
On the basis of \Eqref{R.peq} we can provide localized versions of the norms 
\hb{\Vsdot_{kr/\vec r,p;\vec\om}} and~% 
\hb{\Vsdot_{kr/\vec r,p;\vec\om}^\sim}. 
%---------------------------------------------------------------------
\begin{theorem}\LabelT{thm-RA.N} 
Suppose 
\hb{k\in\BN}. Set 
$$ 
\Vvsdot_{kr/\vec r,p;\vec\om} 
:=\bigl(\|\vp_p^\lda u\|_{\ell_p(L_p(J,\mf{W}_{\coW p}^{kr}))}^p 
 +\|\vp_p^{\lda+k\mu}(\pl^k u)\|_{\ell_p(L_p(J,\mf{L}_p))}^p\bigr)^{1/p} 
$$ 
and 
$$ 
\Vvsdot_{kr/\vec r,p;\vec\om}^\sim 
:=\Bigl(\|\vp_p^\lda u\|_{\ell_p(L_p(J,\mf{W}_{\coW p}^{kr}))}^p 
 +\sum_{j=0}^k\|\vp_p^{\lda+j\mu}(\pl^j u)\|
  _{\ell_p(L_p(J,\mf{W}_{\coW p}^{(k-j)r}))}^p\Bigr)^{1/p}. 
\npb 
$$ 
Then 
\hb{\Vvsdot_{kr/\vec r,p;\vec\om}\sim\Vsdot_{kr/\vec r,p;\vec\om}} and 
\hb{\Vvsdot_{kr/\vec r,p;\vec\om}^\sim\sim\Vsdot_{kr/\vec r,p;\vec\om}^\sim}. 
\end{theorem}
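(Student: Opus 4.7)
The plan is to reduce Theorem~\ref{thm-RA.N} to the isotropic retraction Theorem~\ref{thm-R.R} by Fubini's identity \Eqref{RA.lL}. The key observation is that the coretraction $\vp_p^\lda$ is built entirely from spatial objects: the spatial cutoff~$\pi_\ka$, the push-forward $(\ka\slt\vp)_*$, and the constant-in-$t$ weight $\rho_\ka^{\lda+m/p}$. Hence $\vp_p^\lda$ acts on a time-dependent section pointwise in~$t$ via $(\vp_p^\lda u)(t)=\vp_p^\lda\bigl(u(t)\bigr)$, and in particular commutes with $\pl=\pl_t$ in the distributional sense. Theorem~\ref{thm-R.R} together with \Eqref{R.peq}, applied to the isotropic space $W_{\coW p}^{\ell r,\lda+j\mu}$, yields for every fixed $t$
\bspezeq
\|v\|_{W_{\coW p}^{\ell r,\lda+j\mu}}\sim\|\vp_p^{\lda+j\mu}v\|_{\ell_p(\mf{W}_{\coW p}^{\ell r})}
\qa v\in W_{\coW p}^{\ell r,\lda+j\mu},
\espezeq
with constants depending only on $\lda$, $\mu$, $j$, $\ell$, $r$, $p$ and on the fixed singularity datum, atlas, and localization system --- but independent of~$t$.

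Applying this equivalence to $v=\pl^j u(t)$, raising to the \hbox{$p$-th} power, integrating over~$J$, and invoking \Eqref{RA.lL} to interchange the $\ell_p$-summation over~$\ka$ with the $L_p(J,\sdot)$-integration, one obtains
\bspezeq
\|\vp_p^{\lda+j\mu}(\pl^j u)\|_{\ell_p(L_p(J,\mf{W}_{\coW p}^{\ell r}))}^p
=\int_J\|\vp_p^{\lda+j\mu}\bigl(\pl^j u(t)\bigr)\|_{\ell_p(\mf{W}_{\coW p}^{\ell r})}^p\,dt
\sim\|\pl^j u\|_{L_p(J,W_{\coW p}^{\ell r,\lda+j\mu})}^p
\espezeq
for every $j,\ell\in\BN$. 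Specialising to $(j,\ell)\in\{(0,k),(k,0)\}$ (with the convention $W_{\coW p}^{0,\lda+k\mu}=L_p^{\lda+k\mu}$) and summing the \hbox{$p$-th} powers yields $\Vvsdot_{kr/\vec r,p;\vec\om}\sim\Vsdot_{kr/\vec r,p;\vec\om}$. Taking instead $(j,\ell)=(j,k-j)$ for $j=0,\ldots,k$ and summing produces the tilded equivalence $\Vvsdot_{kr/\vec r,p;\vec\om}^\sim\sim\Vsdot_{kr/\vec r,p;\vec\om}^\sim$.

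I expect no serious obstacle. The entire argument rests on three elementary ingredients: the pointwise-in-$t$ character of~$\vp_p^\lda$ (immediate from its definition), the uniformity in~$t$ of the constants in \Eqref{R.peq} (automatic, since those constants see only the singularity datum $\rgK$, the atlas $\gK\slt\Phi$ for~$W$, and the localization system), and Fubini's identity~\Eqref{RA.lL}. The only care point is that both $\Vvsdot$ and $\Vsdot$ (in either variant) should be interpreted as extended-real-valued quantities on a common class of measurable tensor fields: the derivatives $\pl^j u$ being understood distributionally and the norm equivalence then showing, in particular, that one side is finite exactly when the other is.
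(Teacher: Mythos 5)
Your argument is correct and is exactly the content of the paper's own one-line proof, which simply cites \Eqref{R.peq} (the isotropic norm equivalence, applied pointwise in $t$ with constants uniform in $t$) together with Fubini's identity \Eqref{RA.lL}. You have merely written out the calculation that the paper leaves implicit, so the two approaches coincide.
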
 
%=====================================================================
\begin{proof} 
This follows from \Eqref{R.peq} and \Eqref{RA.lL}. 
\end{proof} 
It is worthwhile to note 
$$ 
\vd u\vd_{kr/\vec r,p;\vec\om}^\sim 
=\Bigl(\sum_\ka\int_J\sum_{|\al|+jr\leq kr} 
\bigl(\rho_\ka^{\lda+|\al|+j\mu+m/q} 
\,\|\pa_x\pl^j(\ka\slt\vp)_*(\pi_\ka u)\|_{p;E}\bigr)^p\,dt\Bigr)^{1/p}. 
$$ 
Together with Theorems \ref{thm-A.B}(ii) and~\ref{thm-RA.N} this gives a 
rather explicit and practically useful local characterization of 
anisotropic Sobolev spaces. 

\smallskip 
For abbreviation, we set 
$$ 
\bt\quad 
\BY_\ka:=\BX_\ka\times J 
\qb \ka\in\gK. 
$$ 
Hence 
\hb{\ci\BY_\ka=\ci\BX_\ka\times\ci J} is the interior of~$\BY_\ka$ in 
\hb{\BR^{m+1}=\BR^m\times\BR}. We also put 
$$ 
\mf{\cD}\BYE:=\bigoplus_\ka\cD\BYkE 
\qb \mf{\cD}\ciBYE:=\bigoplus_\ka\cD\ciBYE 
$$ 
and 
$$ 
W_{\coW p,\ka}^{kr/\vec r}:=W_{\coW p}^{kr/\vec r}\BYkE  
\qb \gF_{p,\ka}^{s/\vec r}:=\gF_p^{s/\vec r}\BYkE  
\qa s\in\BR 
\qb k\in\BN. 
$$ 
More precisely, the `local' spaces $W_{\coW p,\ka}^{kr/\vec r}$ 
and~$\gF_{p,\ka}^{s/\vec r}$ are special instances of 
$W_{\coW p}^{kr/\vec r,\vec\om}$ 
and~$\gF_p^{s/\vec r,\vec\om}$, respectively, namely with 
\hb{M=(\BX_\ka,g_m)}, 
\ \hb{\rho=\mf{1}}, 
\ \hb{W=\BX_\ka\times F}, and 
\hb{D=d_F}. 

\smallskip 
It is of fundamental importance that these spaces 
coincide with the anisotropic Sobolev, Bessel potential, and Besov spaces 
studied by means of Fourier analytical techniques in detail in 
H.~Amann~\cite{Ama09a}, therein denoted by 
$W_{\coW p}^{k\nu/\mf{\nu}}\BYkE$, 
\ $H_p^{s/\mf{\nu}}\BYkE$, and 
$B_p^{s/\mf{\nu}}\BYkE$, respectively, where 
\hb{\nu:=r} and 
\hb{\mf{\nu}:=(1,r)}. For abbreviation, we set 
\hb{W_{\coW p,\ka}^{k\nu/\mf{\nu}}:=W_{\coW p}^{k\nu/\mf{\nu}}\BYkE} 
and 
\hb{\gF_{p,\ka}^{s/\mf{\nu}}:=\gF_p^{s/\mf{\nu}}\BYkE}. 
Furthermore, we write $\wt{W}_{\coW p,\ka}^{kr/\vec r}$ for 
\hb{L_p(J,W_{\coW p,\ka}^{kr})\cap W_{\coW p}^k(J,L_{p,\ka})} endowed with 
the norm~% 
\hb{\Vsdot_{kr/\vec r,p}^\sim}.\po  
%---------------------------------------------------------------------
{\samepage 
\begin{lemma}\LabelT{lem-RA.WW} 
\begin{itemize} 
\item[{\rm(i)}] 
If\/ 
\hb{k\in\BN}, then 
\hb{W_{\coW p,\ka}^{kr/\vec r}\doteq W_{\coW p,\ka}^{k\nu/\mf{\nu}} 
   \doteq\wt{W}_{\coW p,\ka}^{kr/\vec r}} for 
\hb{\ka\in\gK}. 
\item[{\rm(ii)}] 
If 
\hb{s\in\BR}, then 
\hb{\gF_{p,\ka}^{s/\vec r}\doteq\gF_{p,\ka}^{s/\mf{\nu}}} for 
\hb{\ka\in\gK}.\po 
\end{itemize} 
\end{lemma}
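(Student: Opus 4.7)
The crux of Lemma~\ref{lem-RA.WW}(i) is an equivalence of three Banach norms on the same underlying vector space on the model cylinder $\BY_\ka = \BX_\ka \times J$ (trivial weight, trivial bundle $\BX_\ka \times F$). I first observe that after unfolding the spatial Sobolev norms,
\[
\|u\|_{kr/\vec r,p}^\sim
\sim\Bigl(\sum_{\substack{0\leq j\leq k\\ |\al|\leq (k-j)r}}
\|\pa\pl^j u\|_{L_p(\BY_\ka,F)}^p\Bigr)^{1/p},
\]
and since the conditions $j\leq k$ and $|\al|\leq (k-j)r$ together are equivalent to the single anisotropic inequality $|\al|+jr\leq kr$, this is verbatim the norm defining $W_p^{k\nu/\mf{\nu}}\BYkE$ in \cite{Ama09a}. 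Hence $\wt{W}_{p,\ka}^{kr/\vec r}\doteq W_{p,\ka}^{k\nu/\mf{\nu}}$ amounts to a purely notational rewriting.

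The nontrivial half of~(i) is $W_{p,\ka}^{kr/\vec r}\doteq\wt{W}_{p,\ka}^{kr/\vec r}$. The embedding $\wt{W}_{p,\ka}^{kr/\vec r}\hookrightarrow W_{p,\ka}^{kr/\vec r}$ is immediate from the definitions, since $\|\cdot\|_{kr/\vec r,p}$ controls only the two extremal contributions $(j,|\al|)=(0,|\al|\leq kr)$ and $(j,|\al|)=(k,0)$ of the $\|\cdot\|^\sim$-sum. For the reverse embedding I need the anisotropic interpolation inequality bounding every mixed derivative $\pa\pl^j u$ with $|\al|+jr\leq kr$ in $L_p$ by these two extremes. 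On $\BX_\ka = \BR^m$ this is a standard consequence of the anisotropic Mikhlin--H\"ormander multiplier theorem, since the symbols $\xi^\al(i\tau)^j/(1+|\xi|^{kr}+|\tau|^k)$ satisfy the multiplier condition compatible with the parabolic dilation $(\xi,\tau)\mapsto(\lda\xi,\lda^r\tau)$ whenever $|\al|+jr\leq kr$, and therefore define bounded $L_p(\BR^{m+1},F)$-multipliers for $1<p<\iy$. For the half-space cases $\BX_\ka=\BH^m$ or $J=\BR^+$ I will invoke an anisotropic Seeley-type higher-order reflection into $\BR^m\times\BR$ that is simultaneously bounded on all the relevant anisotropic Sobolev spaces; such an extension is constructed in \cite{Ama09a}, from which the full identification $W_{p,\ka}^{kr/\vec r}\doteq W_{p,\ka}^{k\nu/\mf{\nu}}$ can be read off directly, so this step reduces to a careful matching of conventions.

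For part~(ii) I bootstrap from~(i) by interpolation and duality. For $s=kr$ with $k\in\BN$ the claim is (i). For non-integer $s>0$, definition \Eqref{A.def} realizes $\gF_{p,\ka}^{s/\vec r}$ as a real ($\gF=B$) or complex ($\gF=H$) interpolate of two consecutive integer-order Sobolev spaces, while \cite{Ama09a} defines $\gF_{p,\ka}^{s/\mf{\nu}}$ by the same functor applied to the same endpoints (by~(i)); hence the two spaces coincide. For $s<0$, definition \Eqref{A.def1} gives $\gF_{p,\ka}^{s/\vec r}$ as the dual, via $\pw_{\BY_\ka}$, of $\ci\gF_{p',\ka}^{-s/\vec r}$ (Corollary \ref{cor-A.FF}(ii) in its local, unweighted specialization), and this matches the dual pairing and dual space used in \cite{Ama09a}; the identification therefore propagates. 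Finally $B_{p,\ka}^{0/\vec r}$ is handled via \Eqref{A.Bdef} as an interpolate of already-identified spaces.

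The main obstacle is the mixed-derivative multiplier estimate on the half-space cylinders, where there is no intrinsic Fourier transform. This obstacle has already been resolved in \cite{Ama09a} through the higher-order reflection mentioned above; I invoke that extension as a black box, so what remains is bookkeeping: aligning the definitions of the Sobolev norms used here (in terms of $\na$ and $\pl$, via \Eqref{A.Nkr}) with the multi-index derivative norms in \cite{Ama09a}, and checking that the interpolation and duality conventions agree at every scale.
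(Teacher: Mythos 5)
Your proposal is correct and follows essentially the same route as the paper, which proves Lemma~\ref{lem-RA.WW} by directly citing the relevant identification, extension, and interpolation theorems of~\cite{Ama09a} (Theorems 2.3.8, 3.7.1, 4.4.1, 4.4.3, 4.4.4, 4.7.1(ii), Corollary~4.11.2), organized by the geometric type of~$\BY_\ka$; your multiplier and reflection remarks are an accurate gloss on what those theorems contain. One small imprecision worth tightening: when $J=\BR^+$ \emph{and} $\ka\in\gK_{\pl M}$, the model cylinder $\BY_\ka$ is the closed $2$-corner $\BR^+\times\BR^+\times\BR^{m-1}$ rather than a single half-space, so the extension operator must be the iterated corner extension of \cite[\S4.3--4.4]{Ama09a}, not just one Seeley reflection; since you black-box the extension this is not a gap, but lumping it under ``half-space cases'' obscures exactly the case distinction the paper makes explicit in step~(1) of its proof.
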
 }
%=====================================================================
\begin{proof} 
(1) If 
\hb{J=\BR^+} and 
\hb{\ka\in\gK_{\pl M}}, then $\BY_\ka$~is isomorphic 
to the closed \hbox{$2$-corner} 
\hb{\BR^+\times\BR^+\times\BR^{m-1}} (in the sense of Section~4.3 
of~\cite{Ama09a}) by a permutation isomorphism. Otherwise, $\BY_\ka$~equals 
either the half-space~$\BH^{m+1}$ (except for a possible permutation) 
or~$\BR^{m+1}$. 

\smallskip 
(2) If  
\hb{\BY_\ka=\BR^{m+1}}, then (i)~follows from Theorem~2.3.8 
of~\cite{Ama09a} and the definition of~$W_{\coW p,\ka}^{k\nu/\mf{\nu}}$ 
in the first paragraph of \cite[Section~3.5]{Ama09a}. If 
\hb{\BY_\ka\neq\BR^{m+1}}, then we obtain claim~(i) by invoking 
\cite[Theorem~4.4.3(i)]{Ama09a}. 

\smallskip 
(3) Suppose 
\hb{\BY_\ka=\BR^{m+1}}. Then statement~(ii) follows from 
\cite[Theorem~3.7.1]{Ama09a}. Let 
\hb{\BY_\ka\neq\BR^{m+1}} and \hb{s\neq0} if 
\hb{\gF=B}. Then we get this claim by employing, in addition, 
\cite[Theorems 4.4.1 and~4.4.4]{Ama09a}. If 
\hb{\BY_\ka\neq\BR^{m+1}}, 
\ \hb{\gF=B}, and 
\hb{s=0}, then we have to use \cite[Theorem~4.7.1(ii) and 
Corollary~4.11.2]{Ama09a} in addition. 
\end{proof} 
Due to this lemma we can apply the results of~\cite{Ama09a} to the local 
spaces~$\gF_{p,\ka}^{s/\vec r}$. This will be done in the following usually 
without referring to Lemma~\ref{lem-RA.WW}. 

\smallskip
Let $\cX$ be a locally convex space and 
\hb{1\leq q\leq\iy}. 
For 
\hb{\ka\in\gK} we consider the linear map 
\hb{\Ta_{q,\ka}^\mu\sco\cX^J\ra\cX^J} 
defined by 
\beq\Label{RA.tdef} 
\Ta_{q,\ka}^\mu u(t):=\rho_\ka^{\mu/q}u(\rho_\ka^\mu t) 
\qa u\in\cX^J 
\qb t\in J. 
\eeq 
Note 
\beq\Label{RA.tab} 
\Ta_{q,\ka}^\mu\circ \Ta_{q,\ka}^{-\mu}=\Ta_{q,\ka}^0=\id 
\eeq 
and 
\beq\Label{RA.tc} 
\Ta_{q,\ka}^\mu\bigl(C\JcX\bigr)\is C\JcX. 
\eeq 
Moreover, 
\beq\Label{RA.tk} 
\pl^k\circ\Ta_{q,\ka}^\mu=\rho_\ka^{k\mu}\Ta_{q,\ka}^\mu\circ\pl 
\qa k\in\BN, 
\eeq 
and, if $\cX$~is a Banach space, 
\beq\Label{RA.tLq} 
\|\Ta_{q,\ka}^\mu u\|_{L_q\JcX}=\|u\|_{L_q\JcX}. 
\eeq 

\smallskip 
We put 
\beq\Label{RA.ph} 
\vp_{q,\ka}^{\vec\om}u:=\Ta_{q,\ka}^\mu\circ\vp_{q,\ka}^\lda u 
\qb \vp_q^{\vec\om}u:=(\vp_{q,\ka}^{\vec\om}u) 
\qa u\in C\bigl(J,C(V)\bigr), 
\eeq 
and 
\beq\Label{RA.ps} 
\psi_{q,\ka}^{\vec\om}v_\ka:=\Ta_{q,\ka}^{-\mu}\circ\psi_{q,\ka}^\lda v_\ka  
\qb \psi_q^{\vec\om}\mf{v}:=\sum_\ka\psi_{q,\ka}^{\vec\om}v_\ka 
\qa \mf{v}=(v_\ka)\in\bigoplus_\ka C\BYkE.  
\eeq 

\smallskip 
After these preparations we can prove the following analogue to 
Theorem~\ref{thm-R.R}. Not only will it play a fundamental role in this 
paper but also be decisive for the study of parabolic equations on singular 
manifolds. 
%---------------------------------------------------------------------
\begin{theorem}\LabelT{thm-RA.R} 
Suppose  
\hb{s\in\BR}. Then the diagrams 
 \bspezeq 
 \bal 
 \begin{picture}(210,88)(-105,-41)        %teil2 4
% \put(-105,47){\line(1,0){210}}
% \put(-105,-41){\line(1,0){210}}
% \put(-105,-41){\line(0,1){88}}
% \put(105,-41){\line(0,1){88}}
 \put(0,40){\makebox(0,0)[b]{\small{$d$}}}
 \put(0,5){\makebox(0,0)[b]{\small{$d$}}}
 \put(0,-30){\makebox(0,0)[b]{\small{$d$}}}
 \put(-90,35){\makebox(0,0)[c]{\small{$\cD\JcD$}}}
 \put(-90,-35){\makebox(0,0)[c]{\small{$\cD\JcD$}}}
 \put(-40,0){\makebox(0,0)[c]{\small{$\mf{\cD}\BYE$}}}
 \put(44,0){\makebox(0,0)[c]{\small{$\ell_p(\mf{\gF}_p^{s/\vec r})$}}}
 \put(92,35){\makebox(0,0)[c]{\small{$\gF_p^{s/\vec r,\vec\om}$}}}
 \put(92,-35){\makebox(0,0)[c]{\small{$\gF_p^{s/\vec r,\vec\om}$}}}
 \put(-53,17.5){\makebox(0,0)[r]{\small{$\vp_p^{\vec\om}$}}}
 \put(58,17.5){\makebox(0,0)[l]{\small{$\vp_p^{\vec\om}$}}}
 \put(-54,-17.5){\makebox(0,0)[r]{\small{$\psi_p^{\vec\om}$}}}
 \put(55,-17.5){\makebox(0,0)[l]{\small{$\psi_p^{\vec\om}$}}}
 \put(-95,0){\makebox(0,0)[r]{\small{$\id$}}}
 \put(97,0){\makebox(0,0)[l]{\small{$\id$}}}
 \put(-68,35){\vector(1,0){140}}
 \put(-68,37){\oval(4,4)[l]}
 \put(-18,0){\vector(1,0){40}}
 \put(-18,2){\oval(4,4)[l]}
 \put(-68,-35){\vector(1,0){140}}
 \put(-68,-33){\oval(4,4)[l]}
 \put(-90,25){\vector(0,-1){50}}
 \put(92,25){\vector(0,-1){50}}
 \put(-80,25){\vector(1,-1){20}}
 \put(-60,-5){\vector(-1,-1){20}}
 \put(82,25){\vector(-1,-1){20}}
 \put(62,-5){\vector(1,-1){20}}
 \end{picture} 
 \qquad 
 \begin{picture}(210,88)(-105,-41)        %teil2 5
 %\put(-105,47){\line(1,0){210}}
 %\put(-105,-41){\line(1,0){210}}
 %\put(-105,-41){\line(0,1){88}}
 %\put(105,-41){\line(0,1){88}}
 \put(0,40){\makebox(0,0)[b]{\small{$d$}}}
 \put(0,5){\makebox(0,0)[b]{\small{$d$}}}
 \put(0,-30){\makebox(0,0)[b]{\small{$d$}}}
 \put(-90,35){\makebox(0,0)[c]{\small{$\cD\ciJcicD$}}}
 \put(-90,-35){\makebox(0,0)[c]{\small{$\cD\ciJcicD$}}}
 \put(-40,0){\makebox(0,0)[c]{\small{$\mf{\cD}\ciBYE$}}}
 \put(44,0){\makebox(0,0)[c]{\small{$\ell_p(\cimfgF_p^{s/\vec r})$}}}
 \put(92,35){\makebox(0,0)[c]{\small{$\ci\gF_p^{s/\vec r,\vec\om}$}}}
 \put(92,-35){\makebox(0,0)[c]{\small{$\ci\gF_p^{s/\vec r,\vec\om}$}}}
 \put(-53,17.5){\makebox(0,0)[r]{\small{$\vp_p^{\vec\om}$}}}
 \put(58,17.5){\makebox(0,0)[l]{\small{$\vp_p^{\vec\om}$}}}
 \put(-54,-17.5){\makebox(0,0)[r]{\small{$\psi_p^{\vec\om}$}}}
 \put(55,-17.5){\makebox(0,0)[l]{\small{$\psi_p^{\vec\om}$}}}
 \put(-95,0){\makebox(0,0)[r]{\small{$\id$}}}
 \put(97,0){\makebox(0,0)[l]{\small{$\id$}}}
 \put(-68,35){\vector(1,0){140}}
 \put(-68,37){\oval(4,4)[l]}
 \put(-18,0){\vector(1,0){40}}
 \put(-18,2){\oval(4,4)[l]}
 \put(-68,-35){\vector(1,0){140}}
 \put(-68,-33){\oval(4,4)[l]}
 \put(-90,25){\vector(0,-1){50}}
 \put(92,25){\vector(0,-1){50}}
 \put(-80,25){\vector(1,-1){20}}
 \put(-60,-5){\vector(-1,-1){20}}
 \put(82,25){\vector(-1,-1){20}}
 \put(62,-5){\vector(1,-1){20}}
 \end{picture} 
 \eal 
 \npb 
 \espezeq    
are commuting, where 
\hb{s>0} in the second case.  
\end{theorem}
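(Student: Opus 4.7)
\begin{proofc}{Proposal for Theorem~\ref{thm-RA.R}}
The strategy is to reduce the statement to the isotropic Theorem~\ref{thm-R.R} by exploiting the factorization $\vp_{p,\ka}^{\vec\om}=\Ta_{p,\ka}^\mu\circ\vp_{p,\ka}^\lda$ and $\psi_{p,\ka}^{\vec\om}=\Ta_{p,\ka}^{-\mu}\circ\psi_{p,\ka}^\lda$. The crucial observation is twofold: $\Ta_{p,\ka}^\mu$ is, by \Eqref{RA.tLq}, an isometry of $L_p\bigl(J,\cX\bigr)$ for every Banach space~$\cX$, while $\pl^k\circ\Ta_{p,\ka}^\mu=\rho_\ka^{k\mu}\,\Ta_{p,\ka}^\mu\circ\pl^k$ by \Eqref{RA.tk}. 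Together these identities mean that $\Ta_{p,\ka}^{\pm\mu}$ absorbs all of the anisotropic weight $\mu$ into spatial weights that are already handled by Theorem~\ref{thm-R.R}.

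I would begin with the Sobolev case~$s=kr$, $k\in\BN$. Because $\psi_{p,\ka}^\lda$ and $\vp_{p,\ka}^\lda$ act pointwise in~$t$, they commute with $\Ta_{p,\ka}^{\pm\mu}$. Combined with $\Ta_{p,\ka}^{-\mu}\circ\Ta_{p,\ka}^\mu=\id$ from~\Eqref{RA.tab}, this reduces the retraction identity $\psi_p^{\vec\om}\vp_p^{\vec\om}u=u$ to $\psi_p^\lda\vp_p^\lda u=u$, which is provided by Theorem~\ref{thm-R.R}. For the quantitative bounds, fix $u\in W_{\coW p}^{kr/\vec r,\vec\om}$ and $j\leq k$. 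Then $\pl^j\vp_{p,\ka}^{\vec\om}u=\Ta_{p,\ka}^\mu\,\vp_{p,\ka}^{\lda+j\mu}\pl^j u$, whence the isometry property yields
\[
\sum_\ka\|\pl^j\vp_{p,\ka}^{\vec\om}u\|_{L_p(J,W_{\coW p,\ka}^{(k-j)r})}^p
=\sum_\ka\|\vp_{p,\ka}^{\lda+j\mu}\pl^j u\|_{L_p(J,W_{\coW p,\ka}^{(k-j)r})}^p;
\]
applying Theorem~\ref{thm-R.R} for each fixed~$t$ and Fubini~\Eqref{RA.lL} shows that this sum is equivalent to $\|\pl^j u\|_{L_p(J,W_{\coW p}^{(k-j)r,\lda+j\mu})}^p$. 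Summing over $j$ and invoking Theorem~\ref{thm-A.B}(ii) (once it is established, which itself rests on the present reduction, so one first works with the norm~$\Vsdot_{kr/\vec r,p;\vec\om}^\sim$) gives norm-equivalence, hence the boundedness of $\vp_p^{\vec\om}$ and $\psi_p^{\vec\om}$ between $W_{\coW p}^{kr/\vec r,\vec\om}(J,V)$ and $\ell_p\bigl(\mf{W}_{\coW p}^{kr/\vec r}\bigr)$. Density of $\cD(J,\cD)$ in $W_{\coW p}^{kr/\vec r,\vec\om}$ and of $\mf{\cD}(\BY,E)$ in $\ell_p(\mf{W}_{\coW p}^{kr/\vec r})$ then follows from standard density of test functions in the local spaces (which is part of Lemma~\ref{lem-RA.WW}), using that a bounded retraction preserves density.

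To pass to general $s>0$ I would apply the definition~\Eqref{A.def} of the fractional order spaces as interpolation spaces of the Sobolev couples, together with the identity
\[
\bigl(\ell_p(\mf{E}_0),\ell_p(\mf{E}_1)\bigr)_\ta
\doteq\ell_p\bigl((\mf{E}_0,\mf{E}_1)_\ta\bigr)
\]
(valid for both the complex and the real $(\cdot,\cdot)_{\ta,p}$ functor, as both couple factors equal~$p$). Since $\vp_p^{\vec\om}$ and $\psi_p^{\vec\om}$ have already been shown to be bounded for $s=kr$ and $s=(k+1)r$ (or $(k+2)r$), they extend by interpolation to the full range $s>0$ and the retraction identity is preserved. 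For $s<0$ one uses~\Eqref{A.def1}: taking adjoints with respect to $\pw_{M\times J}$ and the $\mf{\dl}\cdot,\cdot\mf{\dr}$-pairing, the pair $(\vp_p^{\vec\om},\psi_p^{\vec\om})$ dualizes to the pair $(\psi_{p'}^{-\vec\om},\vp_{p'}^{-\vec\om})$ applied on the dual spaces; here the sign reversal in $\vec\om$ reflects the exact matching of weights needed for the duality between $L_p^\lda$ and $L_{p'}^{-\lda}$, and the dilation $\Ta_{p,\ka}^\mu$ is adjoint to $\Ta_{p',\ka}^{-\mu}$ because the normalization $\rho_\ka^{\mu/q}$ in~\Eqref{RA.tdef} is precisely what makes $\Ta$ isometric on $L_q$. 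Finally the Besov case $s=0$ is reached from~\Eqref{A.Bdef} by one further real interpolation between the cases $s=\pm s(p)$. The \emph{open-interior} version follows by the identical argument, noting that each $\psi_{p,\ka}^{\vec\om}$ sends $\cD(\ci\BY_\ka,E)$ into $\cD(\ci J,\ci\cD)$ thanks to $\pi_\ka\in\cD(U_{\coU\ka})$ and the fact that $\Ta_{p,\ka}^{\pm\mu}$ preserves supports in~$\ci J$.

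The main obstacle I anticipate is the duality step. One must verify that the bilinear pairing coming from $\pw_{M\times J}$ indeed intertwines $\vp_p^{\vec\om}$ with $\psi_{p'}^{-\vec\om}$ after weighting with $\rho_\ka^{m/p}$ versus $\rho_\ka^{m/p'}$; the arithmetic $m/p+m/p'=m$ and $\mu/p+\mu/p'=\mu$ together with the scaling exponents in~\Eqref{RA.tdef} make this work, but the bookkeeping must be carried out explicitly to ensure that the coretraction on the dual side is indeed $\vp_{p'}^{-\vec\om}$ (and not merely equivalent to it). Once this is pinned down, the remaining assertions for $s<0$ propagate mechanically from the positive-order case via reflexivity (Corollary~\ref{cor-A.FF}(i), which itself is consumed by the theorem under proof; the circularity is avoided by establishing reflexivity separately at the Sobolev level first).
\end{proofc}
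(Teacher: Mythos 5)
The high-level strategy—factor $\vp_{p,\ka}^{\vec\om}=\Ta_{p,\ka}^\mu\circ\vp_{p,\ka}^\lda$, use that $\Ta_{p,\ka}^{\pm\mu}$ is an $L_p(J)$ isometry, and reduce to Theorem~\ref{thm-R.R}—is exactly the one the paper uses, and your treatment of the coretraction $\vp_p^{\vec\om}$ is essentially the paper's step~(2). But there are two genuine gaps.

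First, the boundedness of $\psi_p^{\vec\om}\sco\ell_p(\mf{W}_{\coW p}^{kr/\vec r})\to W_{\coW p}^{kr/\vec r,\vec\om}$ does \emph{not} ``follow from norm-equivalence.'' Knowing that $u\mt\|\vp_p^\lda u\|_{\ell_p}$ is an equivalent norm gives you continuity of $\vp$, not of $\psi$: since $\vp\psi\neq\id$ on all of $\ell_p$, you must separately estimate $\|\vp_p^\lda(\psi_p^{\vec\om}\mf{v})\|_{\ell_p(L_p(J,\mf{W}_{\coW p}^{kr}))}$ by $\|\mf{v}\|_{\ell_p(\mf{W}_{\coW p}^{kr/\vec r})}$. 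This is the real anisotropic content of the theorem: because $\Ta_{p,\ka}^{-\mu}$ carries a different dilation rate for each~$\ka$, one cannot write $\psi_p^{\vec\om}\mf{v}=\psi_p^\lda(\text{single dilation of }\mf{v})$ and apply the isotropic theorem directly. The paper's step~(3) handles this by computing $\vp_{p,\ka}^\lda\psi_{p,\wt\ka}^{\vec\om}v$ explicitly: composing with the isotropic $\vp_{p,\ka}^\lda$ extracts a \emph{single} $\Ta_{p,\wt\ka}^{-\mu}$ which the $L_p(J)$ isometry then removes, leaving $\vp_{p,\ka}^\lda\psi_{p,\wt\ka}^\lda=a_{\wt\ka\ka}S_{\wt\ka\ka}$, which must then be controlled uniformly via Lemma~\ref{lem-R.S}, the estimate~\Eqref{S.err}, the localization bounds~\Eqref{R.LS}(iii), and the finite multiplicity of~$\gK$ to sum over $\wt\ka\in\gN(\ka)$. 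None of that is in your argument, and it cannot be omitted.

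Second, you flag the duality step for $s<0$ as a potential obstacle but do not resolve it, and the resolution is not a matter of ``bookkeeping $m/p+m/p'=m$.'' The duality pairing $\pw_{M\times J}$ integrates against $dV_g$, not against the flat $dV_{g_m}$ used in the local model spaces, so the adjoint of $\vp_p^{\vec\om}$ with respect to $\mfpw$ is \emph{not} $\psi_{p'}^{-\vec\om}$ but the modified map $\ci\psi_{p'}^{-\vec\om}$ built from $\ci\vp_{p,\ka}^\lda:=\rho_\ka^{-m}\sqrt{\ka_*g}\,\vp_{p,\ka}^\lda$, as in~\Eqref{R.ph0} and~\Eqref{RA.psdu}--\Eqref{RA.phdu}. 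That these modified operators have the same mapping properties as the unmodified ones rests on~\Eqref{R.rho}, a nontrivial input from the singular-manifold geometry. Without introducing these corrected maps, the duality identity you are hoping for is simply false, so the $s<0$ part of the argument would not close. Finally, you should also invoke Lemma~\ref{lem-RA.WW} explicitly; the density of $\cD(\BY_\ka,E)$ in $W^{kr/\vec r}_{\coW p,\ka}$ and the interpolation identity $(W^{kr/\vec r}_{\coW p,\ka},W^{\ell r/\vec r}_{\coW p,\ka})_\ta\doteq\gF^{s/\vec r}_{p,\ka}$ are pulled from the Fourier-analytic theory of~\cite{Ama09a} precisely through that lemma, and both are load-bearing in the interpolation step you sketch.
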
 
%=====================================================================
\begin{proof} 
(1) It is not difficult to see that 
\hb{\cD\JcDka=\cD\BYkE} by means of the identification 
\hb{u(t)=u(\cdot,t)} for 
\hb{t\in J} (see Corollary~1 in Section~40 of F.~Treves~\cite{Tre67a}, 
for example). Consequently, 
$$  
\mf{\cD}\BYE=\bigoplus_\ka\cD\JcDka. 
$$ 
Similarly, 
\hb{\cD\ciJcicDka=\cD\ciBYkE}, and thus 
$$ 
\mf{\cD}\ciBYE=\bigoplus_\ka\cD\ciJcicDka. 
$$  
Using this, \Eqref{RA.tc}, and \Eqref{RA.tk}, obvious modifications of the 
proof of Theorem~5.1 in~\cite{Ama12b} show that the assertions encoded in 
the respective left triangles of the diagrams are true. 

\smallskip 
(2) Suppose 
\hb{k\in\BN}. From \Eqref{RA.tLq} we get 
$$ 
\|\vp_{p,\ka}^{\vec\om}u\|_{L_p(J,W_{\coW p,\ka}^{kr})} 
=\|\vp_{p,\ka}^\lda u\|_{L_p(J,W_{\coW p,\ka}^{kr})}. 
$$ 
Hence, using \Eqref{RA.lL} 
$$ 
\|\vp_p^{\vec\om}u\|_{\ell_p(L_p(J,\mf{W}_{\coW p}^{kr}))} 
=\|\vp_p^\lda u\|_{L_p(J,\ell_p(\mf{W}_{\coW p}^{kr}))}. 
$$ 
From this and Theorem~\ref{thm-R.R} we deduce 
$$ 
\|\vp_p^{\vec\om}u\|_{\ell_p(L_p(J,\mf{W}_{\coW p}^{kr}))} 
\leq c\,\|u\|_{L_p(J,W_{\coW p}^{kr,\lda})},  
$$ 
that is, 
\beq\Label{RA.phc0} 
\vp_p^{\vec\om}\in\cL\bigl(L_p(J,W_{\coW p}^{kr,\lda}), 
\ell_p(L_p(J,\mf{W}_{\coW p}^{kr}))\bigr). 
\eeq 
By means of \Eqref{RA.tk} and~\Eqref{RA.tLq} we obtain 
\beq\Label{RA.phj} 
\|\pl^j\vp_{p,\ka}^{\vec\om}u\|_{L_p(J,W_{\coW p,\ka}^{k-j})} 
=\|\vp_{p,\ka}^{\lda+j\mu}(\pl^ju)\|_{L_p} 
\qa 0\leq j\leq k. 
\eeq 
Consequently, invoking \Eqref{RA.lL} and Theorem~\ref{thm-R.R} once more, 
$$ 
\|\pl^k\vp_p^{\vec\om}u\|_{\ell_p(L_p(J,\mf{L}_p))} 
\leq c\,\|\pl^ku\|_{L_p(J,L_p^{\lda+k\mu})}. 
$$ 
This, together with \Eqref{RA.phc0}, implies 
\beq\Label{RA.phc} 
\vp_p^{\vec\om} 
\in\cL\bigl(W_{\coW p}^{kr/\vec r,\vec\om}, 
\ell_p(\mf{W}_{\coW p}^{kr/\vec r})\bigr). 
\eeq 

\smallskip 
(3) Note that 
\beq\Label{RA.as} 
\vp_{p,\ka}^\lda\psi_{p,\wt{\ka}}^\lda=a_{\wt{\ka}\ka}S_{\wt{\ka}\ka}, 
\eeq 
where 
$$ 
a_{\wt{\ka}\ka}:=(\rho_\ka/\rho_{\wt{\ka}})^{\lda+m/p}(\ka_*\pi_\ka) 
S_{\wt{\ka}\ka}(\wt{\ka}_*\pi_{\wt{\ka}}). 
$$ 
Lemma~\ref{lem-R.S}, estimate~\Eqref{S.err}, and \Eqref{R.LS}(iii) imply 
$$ 
a_{\wt{\ka}\ka}\in BC^k(\BX_\ka) 
\qb \|a_{\wt{\ka}\ka}\|_{k,\iy}\leq c(k) 
\qa \wt{\ka}\in\gN(\ka) 
\qb \ka\in\gK 
\qb k\in\BN. 
$$ 
Hence we infer from \Eqref{RA.as} and Lemma~\ref{lem-R.S} 
$$  
\vp_{p,\ka}^\lda\psi_{p,\wt{\ka}}^\lda 
\in\cL(W_{\coW p,\wt{\ka}}^k,W_{\coW p,\ka}^k) 
\qb \|\vp_{p,\ka}^\lda\psi_{p,\wt{\ka}}^\lda\|\leq c(k) 
$$ 
for 
\hb{\wt{\ka}\in\gN(\ka)}, 
\ \hb{\ka\in\gK}, and  
\hb{k\in\BN}. By this and \Eqref{RA.tLq} we find 
\beq\Label{RA.phpt} 
\|\vp_{p,\ka}^\lda 
\psi_{p,\wt{\ka}}^{\vec\om}v\|_{L_p(J,W_{\coW p,\ka}^k)}  
=\|\vp_{p,\ka}^\lda\psi_{p,\wt{\ka}}^\lda v\|_{L_p(J,W_{\coW p,\ka}^k)}  
\leq c\,\|v\|_{L_p(J,W_{\coW p,\wt{\ka}}^k)}  
\eeq 
for 
\hb{\wt{\ka}\in\gN(\ka)}, 
\ \hb{\ka\in\gK}, and  
\hb{k\in\BN}. Similarly, using \Eqref{RA.tk}, 
\beq\Label{RA.phptk} 
\big\|\vp_{p,\ka}^{\lda+k\mu} 
\bigl(\pl^k(\psi_{p,\wt{\ka}}^{\vec\om}v)\big)\big\|_{L_p(J,L_{p,\ka})}  
=\|\vp_{p,\ka}^{\lda+k\mu}\psi_{p,\wt{\ka}}^{(\lda+k\mu,\mu)}
(\pl^kv)\|_{L_p(J,L_{p,\ka})}  
\leq c\,\|\pl^kv\|_{L_p(J,L_{p,\wt{\ka}})} 
\npb 
\eeq 
for 
\hb{\wt{\ka}\in\gN(\ka)}, 
\ \hb{\ka\in\gK}, and  
\hb{k\in\BN}. 

\smallskip 
Observe 
\beq\Label{RA.ppN} 
\vp_{p,\ka}^\lda\psi_p^{\vec\om}\mf{v} 
=\sum_{\wt{\ka}\in\gN(\ka)}\vp_{p,\ka}^\lda 
\psi_{p,\wt{\ka}}^{\vec\om}v_{\wt{\ka}}. 
\eeq 
From \hbox{\Eqref{RA.phpt}--\Eqref{RA.ppN}} 
%\Eqref{RA.phptk} 
and the finite multiplicity of~$\gK$ we infer  
\beq\Label{RA.phpv} 
\|\vp_p^\lda(\psi_p^{\vec\om}\mf{v})\|_{\ell_p(L_p(J,\mf{W}_{\coW p}^{kr}))} 
\leq c\,\|\mf{v}\|_{\ell_p(L_p(J,\mf{W}_{\coW p}^{kr}))} 
\eeq 
and 
$$ 
\big\|\vp_p^{\lda+k\mu} 
\bigl(\pl^k(\psi_p^{\vec\om}\mf{v})\big)\big\|_{\ell_p(L_p(J,\mf{L}_p))}  
\leq c\,\|\pl^k\mf{v}\|_{\ell_p(L_p(J,\mf{L}_p))}.  
$$ 
Hence Theorem~\ref{thm-RA.N} implies 
$$ 
\|\psi_p^{\vec\om}\mf{v}\|_{kr/\vec r,p:\vec\om}  
\leq c\,\|\mf{v}\|_{\ell_p(\mf{W}_{\coW p}^{kr/\vec r})},  
$$ 
that is, 
\beq\Label{RA.psc} 
\psi_p^{\vec\om} 
\in\cL\bigl(\ell_p(\mf{W}_{\coW p}^{kr/\vec r}), 
W_{\coW p}^{kr/\vec r,\vec\om}\bigr). 
\eeq 
It follows from 
\hb{\psi_p^\lda\vp_p^\lda=\id} that 
\hb{\psi_p^{\vec\om}\vp_p^{\vec\om}=\id}. Thus we see from \Eqref{RA.phc} 
and \Eqref{RA.psc} that the diagram 
\bspezeqn\Label{RA.Wre} 
\bal
 \begin{picture}(143,59)(-67,-6)        %teil2 6
% \put(-67,53){\line(1,0){143}}
% \put(-67,-6){\line(1,0){143}}
% \put(-67,53){\line(0,-1){59}}
% \put(76,53){\line(0,-1){59}}
\put(15,10){\vector(2,1){40}}
\put(-45,30){\vector(2,-1){40}}
\put(-30,40){\vector(1,0){60}}
\put(5,0){\makebox(0,0){\small$\ell_p(\mf{W}_{\coW p}^{kr/\vec r})$}}
\put(-50,40){\makebox(0,0){\small$W_{\coW p}^{kr/\vec r,\vec\om}$}}
\put(60,40){\makebox(0,0){\small$W_{\coW p}^{kr/\vec r,\vec\om}$}}
\put(0,45){\makebox(0,0)[b]{\small$\id$}}
\put(-30,15){\makebox(0,0)[r]{\small$\vp_p^{\vec\om}$}}
\put(37.5,15){\makebox(0,0)[l]{\small$\psi_p^{\vec\om}$}}
\end{picture}
\eal 
\npb 
\espezeqn 
is commuting. 

\smallskip 
(4) It is a consequence of Lemma~\ref{lem-RA.WW}(i) 
and \cite[Theorems 2.3.2(i) and~4.4.1]{Ama09a} that 
\hb{\cD\JcDka=\cD\BYkE} is dense in~$W_{\coW p,\ka}^{kr/\vec r}$. 
This implies 
$$ 
\mf{\cD}\BYE\sdh\bigoplus_\ka W_{\coW p,\ka}^{kr/\vec r} 
=c_c(\mf{W}_{\coW p}^{kr/\vec r}). 
$$ 
Hence, by \Eqref{R.cl}, 
\beq\Label{RA.DlW} 
\mf{\cD}\BYE\sdh\ell_p(\mf{W}_{\coW p}^{kr/\vec r}). 
\eeq 
Thus we deduce from step~(1) and \Eqref{RA.Wre} that 
 \bspezeq 
  \bal 
 \begin{picture}(221,88)(-105,-41)        %teil2 7
% \put(-105,47){\line(1,0){221}}
% \put(-105,-41){\line(1,0){221}}
% \put(-105,-41){\line(0,1){88}}
% \put(116,-41){\line(0,1){88}}
 \put(0,5){\makebox(0,0)[b]{\small{$d$}}}
 \put(-90,35){\makebox(0,0)[c]{\small{$\cD\JcD$}}}
 \put(-90,-35){\makebox(0,0)[c]{\small{$\cD\JcD$}}}
 \put(-40,0){\makebox(0,0)[c]{\small{$\mf{\cD}\BYE$}}}
 \put(48,0){\makebox(0,0)[c]{\small{$\ell_p(\mf{W}_{\coW p}^{kr/\vec r})$}}}
 \put(101,35){\makebox(0,0)[c]{\small{$W_{\coW p}^{kr/\vec r,\vec\om}$}}}
 \put(101,-35){\makebox(0,0)[c]{\small{$W_{\coW p}^{kr/\vec r,\vec\om}$}}}
 \put(-53,17.5){\makebox(0,0)[r]{\small{$\vp_p^{\vec\om}$}}}
 \put(67,17.5){\makebox(0,0)[l]{\small{$\vp_p^{\vec\om}$}}}
 \put(-54,-17.5){\makebox(0,0)[r]{\small{$\psi_p^{\vec\om}$}}}
 \put(65,-17.5){\makebox(0,0)[l]{\small{$\psi_p^{\vec\om}$}}}
 \put(-95,0){\makebox(0,0)[r]{\small{$\id$}}}
 \put(106,0){\makebox(0,0)[l]{\small{$\id$}}}
 \put(-68,35){\vector(1,0){149}}
 \put(-68,37){\oval(4,4)[l]}
 \put(-18,0){\vector(1,0){40}}
 \put(-18,2){\oval(4,4)[l]}
 \put(-68,-35){\vector(1,0){149}}
 \put(-68,-33){\oval(4,4)[l]}
 \put(-90,25){\vector(0,-1){50}}
 \put(101,25){\vector(0,-1){50}}
 \put(-80,25){\vector(1,-1){20}}
 \put(-60,-5){\vector(-1,-1){20}}
 \put(91,25){\vector(-1,-1){20}}
 \put(71,-5){\vector(1,-1){20}}
 \end{picture} 
 \eal 
 \espezeq 
is a commuting diagram. From this and \cite[Lemma~4.1.6]{Ama09a} we obtain 
\beq\Label{RA.DW} 
\cD\JcD\sdh W_{\coW p}^{kr/\vec r,\vec\om}.  
\eeq 

\smallskip 
(5) Suppose 
\hb{k\in\BN} and 
\hb{kr<s\leq(k+1)r}. If 
\hb{s<(k+1)r}, set 
\hb{\ta:=(s-kr)/r} and 
\hb{\ell:=k+1}. Otherwise, 
\hb{\ta:=1/2} and 
\hb{\ell:=k+2}. Then we infer from \Eqref{RA.Wre} and \Eqref{A.def} by 
interpolation that $\psi_p^{\vec\om}$~is a retraction from 
\beq\Label{RA.llt} 
\bigl(\ell_p(\mf{W}_{\coW p}^{kr/\vec r}), 
\ell_p(\mf{W}_{\coW p}^{\ell r/\vec r})\bigr)_\ta 
\eeq 
onto~$\gF_p^{s/\vec r,\vec\om}$. By Theorem~1.18.1 in 
H.~Triebel~\cite{Tri78a}, \ \Eqref{RA.llt} equals
\hb{\ell_p\bigl((\mf{W}_p^{kr/\vec r},
   \mf{W}_{\coW p}^{\ell r/\vec r})_\ta\bigr)}, 
except for equivalent norms. 
   
\smallskip 
It follows from Lemma~\ref{lem-RA.WW} and \cite[Theorem~3.7.1(iv), 
formula~(3.3.12), and Theorems 3.5.2 and~4.4.1]{Ama09a} that 
\hb{(W_{p,\ka}^{kr/\vec r},W_{\coW p,\ka}^{\ell r/\vec r})_\ta
   \doteq\gF_{p,\ka}^{s/\vec r}}. This shows that the right triangle of the 
first diagram is commuting if 
\hb{s>0}. Furthermore, the density properties of the interpolation 
functor~% 
\hb{\pr_\ta}, \ \Eqref{RA.DlW}, and \Eqref{RA.DW} imply that the `horizontal 
embeddings' of the first diagram of the assertion are dense if 
\hb{s>0}. This proves the first assertion for 
\hb{s>0}. 
  
\smallskip 
(6) 
It is a consequence of what has just been shown and step~(1) that 
the second part of the statement is true. 

\smallskip 
(7) 
Let $X$ be a reflexive Banach space. Then 
$$ 
\dl v,\Ta_{p,\ka}^\mu u\dr_{L_p\JX}=\dl\Ta_{p',\ka}^{-\mu}v,u\dr_{L_p\JX} 
\qa u\in L_p\JX 
\qb v\in L_{p'}\JXs=\bigl(L_p\JX\bigr)'. 
$$ 
We define $\ci\vp_p^{\vec\om}$ 
and~$\ci\psi_p^{\vec\om}$ by replacing $\vp_{p,\ka}^\lda$ 
and~$\psi_{p,\ka}^\lda$ in \Eqref{RA.ph} and \Eqref{RA.ps} 
by $\ci\vp_{p,\ka}^\lda$ and~$\ci\psi_{p,\ka}^\lda$, defined in 
\Eqref{R.ph0}, respectively. From 
this we infer (cf.~the proof of Theorem~5.1 in~\cite{Ama12b}) 
\beq\Label{RA.psdu} 
\dl\ci\psi_{p'}^{-\vec\om}\mf{v},u\dr_{M\times J} 
=\mfdl\mf{v},\vp_p^{\vec\om}u\mfdr 
\qa \mf{v}\in\mf{\cD}\ciBYE 
\qb u\in\cD\JcD, 
\eeq 
and 
\beq\Label{RA.phdu} 
\mfdl\ci\vp_{p'}^{-\vec\om}v,\mf{u}\mfdr 
=\dl v,\psi_p^{\vec\om}\mf{u}\dr_{M\times J} 
\qa v\in\cD\ciJcicD 
\qb \mf{u}\in\mf{\cD}\BYE. 
\eeq 
Moreover, \Eqref{A.def1} implies for 
\hb{s>0} 
$$ 
\ell_p(\mf{\gF}_p^{-s/\vec r}) 
=\bigl(\ell_{p'}\bigl(\mf{\gF}_{p'}^{s/\vec r}\BYEs\bigr)\bigr)'. 
$$ 
It follows from \Eqref{R.rho} that $\ci\vp_p^{\vec\om}$ 
and~$\ci\psi_p^{\vec\om}$ possess the same mapping properties as 
$\vp_p^{\vec\om}$ and~$\psi_p^{\vec\om}$, respectively. 
Hence we deduce from \Eqref{RA.psdu} and \Eqref{RA.phdu} that, given 
\hb{s>0}, 
$$ 
\|\vp_p^{\vec\om}u\|_{\ell_p(\mf{\gF}_p^{-s/\vec r})} 
\leq c\,\|u\|_{\gF_p^{-s/\vec r,\vec\om}} 
\qa u\in\cD\JcD,  
$$ 
and 
\beq\Label{RA.psD} 
\|\psi_p^{\vec\om}\mf{u}\|_{\gF_p^{-s/\vec r,\vec\om}} 
\leq c\,\|\mf{u}\|_{\ell_p(\mf{\gF}_p^{-s/\vec r})} 
\qa \mf{u}\in\mf{\cD}\BYE.  
\eeq 
We infer from \Eqref{RA.DW}, Theorem~\ref{thm-A.HBH}(i), and reflexivity 
that $\cD\JcD$ is dense in~$\gF_p^{-s/\vec r,\vec\om}$. Hence 
\beq\Label{RA.phc1} 
\vp_p^{\vec\om}\in\cL\bigl(\gF_p^{-s/\vec r,\vec\om}, 
\ell_p(\mf{\gF}_p^{-s/\vec r})\bigr). 
\eeq 
Since, as above, 
\hb{\cD\JcDka=\cD\BYkE} is dense in~$\gF_{p,\ka}^{-s/\vec r}$ we see, 
by the arguments used to prove \Eqref{RA.DlW}, 
\beq\Label{RA.Dlp} 
\mf{\cD}\BYE\sdh\ell_p(\mf{\gF}_p^{-s/\vec r}). 
\eeq 
Thus \Eqref{RA.psD} implies 
\beq\Label{RA.psc1} 
\psi_p^{\vec\om} 
\in\cL\bigl(\ell_p(\mf{\gF}_p^{-s/\vec r}),\gF_p^{-s/\vec r,\vec\om}\bigr). 
\npb 
\eeq 
From \hbox{\Eqref{RA.phc1}--\Eqref{RA.psc1}}
%\Eqref{RA.Dlp} 
and step~(1) it now follows that the first statement is true if 
\hb{s<0}. 

\smallskip 
(8) Suppose 
\hb{s=0}. If 
\hb{\gF=H}, then assertion~(i) is contained in \Eqref{RA.Wre} (for 
\hb{k=0}). If 
\hb{\gF=B}, then we deduce from Lemma~\ref{lem-RA.WW}(ii) and 
\cite[Theorems 3.7.1, \ 4.4.1, \ 4.7.1(ii), and Corollary~4.11.2]{Ama09a} 
that 
$$ 
(H_{p,\ka}^{-s(p)/\vec r},H_{p,\ka}^{s(p)/\vec r})_{1/2,p} 
\doteq B_{p,\ka}^{0/\vec r} 
\qa \ka\in\gK. 
$$ 
Thus, as in step~(5), 
$$ 
\bigl(\ell_p(\mf{H}_p^{-s(p)/\vec r}),\ell_p(\mf{H}_p^{s(p)/\vec r})
\bigr)_{1/2,p} 
\doteq\ell_p(\mf{B}_p^{0/\vec r}). 
$$ 
Since we have already shown that $\psi_p^{\vec\om}$~is a retraction 
from~$\ell_p(\mf{H}_p^{\pm s(p)/\vec r})$ onto~$H_p^{\pm s(p)/\vec r}$, 
it follows from definition~\Eqref{A.Bdef} that it is a retraction 
from~$\ell_p(\mf{B}_p^{0/\vec r})$ onto~$B_p^{0/\vec r,\vec\om}$.
This proves the theorem. 
\end{proof}
Now we can supply the proofs left out in Section~\ref{sec-A}. First note 
that assertion~(iii) of Theorem~\ref{thm-A.B} has been shown 
in~\Eqref{RA.DW}.
%===================================================================== 
\begin{proofc}{of part~(ii) of Theorem~\ref{thm-A.B}}   
It is a consequence of Lemma~\ref{lem-RA.WW}(i) that 
$$
\ell_p(\mf{W}_{\coW p}^{kr/\vec r}) 
\doteq\ell_p(\wtmfW_{\coW p}^{kr/\vec r}). 
$$ 
Hence, due to \Eqref{R.rN} and \Eqref{RA.Wre}, 
$$ 
\|\vp_p^{\vec\om}\cdot\|_{\ell_p(\wtmfW_p^{\kern2pt kr/\vec r})} 
\sim\Vsdot_{kr/\vec r,p;\vec\om}.
$$ 
Using \Eqref{RA.phc0} and \Eqref{RA.phj} one verifies 
$$ 
\|\vp_p^{\vec\om}\cdot\|_{\ell_p(\wtmfW_p^{\kern2pt kr/\vec r})} 
\sim\Vvsdot_{kr/\vec r,p;\vec\om}^\sim. 
\npb 
$$ 
Now the assertion follows from Theorem~\ref{thm-RA.N}. 
\end{proofc} 
%=====================================================================
\begin{proofc}{of Theorem~\ref{thm-A.WHB}}   
(1) Lemma~\ref{lem-RA.WW} and \cite[Theorems 3.7.1 and 4.4.3(i)]{Ama09a} 
imply 
\hb{H_{p,\ka}^{kr/\vec r}\doteq W_{\coW p,\ka}^{kr/\vec r}} for 
\hb{\ka\in\gK} and 
\hb{k\in\BN}. Hence 
$$ 
\ell_p(\mf{H}_p^{kr/\vec r})\doteq\ell_p(\mf{W}_{\coW p}^{kr/\vec r}) 
\qa k\in\BN, 
\npb 
$$ 
and assertion~(i) is a consequence of Theorem~\ref{thm-RA.R}.

\smallskip 
(2) In order to prove~(ii) it suffices, due to Theorem~\ref{thm-RA.R} 
and Lemma~\ref{lem-RA.WW}, to show 
\hb{H_2^{s/\mf{\nu}}\BYkE\doteq B_2^{s/\vec r}\BYkE}. By the results 
of Section~4.4 of~\cite{Ama09a} we can assume 
\hb{\BY_\ka=\BR^{m+1}}. 

\smallskip 
Suppose 
\hb{s>0} and write 
\hb{H_2^s:=H_2^s\RmE}, etc. Then \cite[Theorem~3.7.2]{Ama09a} asserts 
$$ 
H_2^{s/\mf{\nu}}=L_p(\BR,H_2^s)\cap H_2^{s/\nu}(\BR,L_2). 
$$ 
From Theorem~3.6.7 of~\cite{Ama09a} we get 
$$ 
B_2^{s/\mf{\nu}}=L_2(\BR,B_2^s)\cap B_2^{s/\nu}(\BR,L_2). 
$$ 
By Theorem~2.12 in~\cite{Tri78a} we know that 
\hb{H_2^s\doteq B_2^s}. Remark~7 and Proposition~2(1) in 
H.-J. Schmei{\ss}er and W.~Sickel~\cite{SchmS01a} guarantee 
\hb{H_2^{s/\nu}(\BR,L_2)\doteq B_2^{s/\nu}(\BR,L_2)}. This proves 
\hb{H_2^{s/\mf{\nu}}\doteq B_2^{s/\mf{\nu}}} for 
\hb{s>0}. The case 
\hb{s<0} follows by duality. 

\smallskip 
From Lemma~\ref{lem-RA.WW}(ii) and \cite[(3.4.1) and Theorem~3.7.1]{Ama09a} 
we get 
\hb{[\gF_2^{-s(p)/\vec r},\gF_2^{s(p)/\vec r}]_{1/2}\doteq\gF_2^{0/\vec r}}. 
Thus, by what we already know, 
$$ 
B_2^{0/\vec r} 
\doteq[B_2^{-s(p)/\vec r},B_2^{s(p)/\vec r}]_{1/2} 
\doteq[H_2^{-s(p)/\vec r},H_2^{s(p)/\vec r}]_2 
\doteq H_2^{0/\vec r}. 
\npb 
$$ 
This settles the case 
\hb{s=0} also. 

\smallskip 
(3) By \cite[\ (3.3.12), \ (3.4.1), and Theorems 3.7.1(iv) 
and 4.4.1]{Ama09a} we know that assertions (iii) and~(iv) hold for the 
local spaces~$\gF_{p,\ka}^{s/\vec r}$. Thus we get (iii) and~(iv) in the 
general case by the arguments of step~(5) of the proof of 
Theorem~\ref{thm-RA.R}. 
\end{proofc} 
%=====================================================================
\begin{proofc}{of Theorem~\ref{thm-A.HBH}(i) for \hb{s=0}}  Since 
\Eqref{A.HBH} has already been established for 
\hb{s\in\BR\ssm\{0\}} it remains to show that 
$$ 
H_p^{s_1/\vec r,\vec\om}\sdh B_p^{0/\vec r,\vec\om} 
\sdh H_p^{s_0/\vec r,\vec\om} 
$$ 
if 
\hb{-1+1/p<s_0<0<s_1<1/p}. By \cite[Theorems 3.7.1(iii), \ 4.4.1, 
\ 4.7.1(ii), and Corollary~4.11.2]{Ama09a} 
$$ 
H_{p,\ka}^{s_1/\mf{\nu}} \sdh B_{p,\ka}^{0/\mf{\nu}} 
\sdh H_{p,\ka}^{s_0/\mf{\nu}}. 
$$ 
From this and Lemma~\ref{lem-RA.WW} we deduce 
$$ 
\ell_p(\mf{H}_p^{s_1/\vec r})\sdh\ell_p(\mf{B}_p^{0/\vec r}) 
\sdh\ell_p(\mf{H}_p^{s_0/\vec r}). 
\npb 
$$ 
Now the claim follows from Theorem~\ref{thm-RA.R}.
\end{proofc} 
%=====================================================================
\begin{proofc}{of Theorem~\ref{thm-A.HBH}(ii)} 
If 
\hb{J=\BR} and 
\hb{\pl M=\es}, then the claim is obvious by \Eqref{A.def0}, 
\ \hb{\cD(\ci J,\ci\cD)=\cD\JcD}, and~(i). Otherwise, we get from 
\cite[Theorem~4.7.1 and Corollary~4.11.2]{Ama09a}, due to the stated 
restrictions for~$s$,  that 
\hb{\ci\gF_{p,\ka}^{s/\vec r}=\gF_{p,\ka}^{s/\vec r}}. Here we also used 
the fact that 
$$ 
\cD\JcDka\sdh\gF_\ka^{0/\vec r}\sdh\gF_\ka^{-t/\vec r} 
\qa t>0 
\qb \ka\in\gK. 
\npb 
$$ 
Hence 
\hb{\ell_p(\cimfgF_p^{s/\vec r})=\ell_p(\mf{\gF}_p^{s/\vec r})} and 
the claim follows from (the right triangles of the diagrams of) 
Theorem~\ref{thm-RA.R}.
\end{proofc} 
%---------------------------------------------------------------------
%---------------------------------------------------------------------
\section{Renorming of Besov Spaces}\LabelT{sec-N}%
Let $\cX$ be a Banach space and 
\hb{\BX\in\{\BR^m,\BH^m\}}. For 
\hb{u\sco\BX\ra \cX} and 
\hb{h\in\BH^m\ssm\{0\}} we put 
$$ 
\tri_h u:=u(\cdot+h)-u 
\qb \tri_h^{k+1}u:=\tri_h\tri_h^ku 
\qa k\in\BN 
\qb \tri_h^0u:=u. 
$$ 

\smallskip 
Given 
\hb{k\leq s<k+1} with 
\hb{s>0}, 
$$ 
[u]_{s,p;\cX} 
:=\Bigl(\int_\BX\Bigl(\frac{\|\tri_h^{k+1}u\|_{p;\cX}}{|h|^s}\Bigr)^p 
\,\frac{dh}{|h|^m}\Bigr)^{1/p},
$$ 
where 
\hb{\Vsdot_{p;\cX}:=\Vsdot_{L_p\BXcX}}. We set for 
\hb{s>0}
$$ 
\Vsdot_{s,p;\cX}^*:=\bigl(\Vsdot_{p;\cX}^p+\esdot_{s,p;\cX}^p\bigr)^{1/p}. 
$$ 
Suppose 
\hb{k\leq s<k+1} with 
\hb{k\in\BN} and 
\hb{s>0}. Then  
$$ 
\|u\|_{k,p;\cX} 
:=\Bigl(\sum_{|\al|\leq k}\|\pa_xu\|_{p;\cX}^p\Bigr)^{1/p}
$$ 
is the norm of the \emph{\hbox{$\cX$-valued} Sobolev 
space}~$W_{\coW p}^k\BXcX$ and 
$$ 
\|u\|_{s,p;\cX}^{**} 
:=\left\{
\bal 
{}      %do not remove!
&\Bigl(\|u\|_{k,p;\cX}^p+\sum_{|\al|=k}[\pa_xu]_{s-k,p;\cX}^p\Bigr)^{1/p}, 
&\quad k<s   &<k+1,\\
&\Bigl(\|u\|_{k-1,p;\cX}^p+\sum_{|\al|=k-1}[\pa_xu]_{1,p;\cX}^p\Bigr)^{1/p}, 
&\quad   s   &=k\in\BN^\times.
\eal
\right.
$$ 
Then, given 
\hb{s>0}, 
$$ 
B_p^s\BXcX:=\bigl(\bigl\{\,u\in L_p\BXcX\ ;\ [u]_{s,p;\cX}<\iy\,\bigr\}, 
\ \Vsdot_{s,p;\cX}^*\bigr) 
$$  
is a Banach space, an \emph{\hbox{$\cX$-valued} Besov space}, 
\beq\Label{N.eq} 
\Vsdot_{s,p;\cX}^*\sim\Vsdot_{s,p;\cX}^{**}, 
\eeq 
and 
\hb{\cD\BXcX\sdh B_p^s\BXcX}. These facts can be derived by modifying the 
corresponding well-known scalar-valued results (e.g., 
H.-J.~Schmei{\ss}er~\cite{Schm87a} or H.~Amann~\cite{Ama97b}). 

\smallskip 
Now we choose 
\hb{\BX=J}. Note that 
$$  
\tri_h^k\circ\Ta_{q,\ka}^\mu=\Ta_{q,\ka}^\mu\circ\tri_{\rho_\ka^\mu h}^k 
\qa 1\leq q\leq\iy. 
$$ 
Hence \Eqref{RA.tLq} implies 
\beq\Label{N.Dts} 
[\Ta_{p,\ka}^\mu u]_{s,p;\cX}=\rho_\ka^{\mu s}\,[u]_{s,p;\cX}. 
\eeq 
Suppose 
\hb{s>0}. Then 
\beq\Label{N.N1} 
\|u\|_{s/\vec r,p;\vec\om}^* 
:=\bigl(\|u\|_{p;B_p^{s,\lda}}^p+[u]_{s/r,p;L_p^{\lda+s\mu/r}}^p\bigr)^{1/p}  
\eeq 
and, if 
\hb{kr<s\leq(k+1)r} with 
\hb{k\in\BN}, 
\beq\Label{N.N2} 
\|u\|_{s/\vec r,p;\vec\om}^{**}  
:=\Bigl(\|u\|_{p;B_p^{s,\lda}}^p 
+\sum_{j\leq k}\|\pl^ju\|_{p;W_{\coW p}^{(k-j)r,\lda+\mu j}}^p 
+[\pl^ku]_{(s-kr)/r,p;L_p^{\lda+s\mu/r}}^p\Bigr)^{1/p}. 
\eeq 
Besides of these norms we introduce localized versions of them by 
\beq\Label{N.N3} 
\vd u\vd_{s/\vec r,p;\vec\om}^* 
:=\bigl(\|\vp_p^\lda u\|_{p;\ell_p(\mf{B}_p^s)}^p 
+[\vp_p^{\lda+s\mu/r}u]_{s/r,p;\ell_p(\mf{L}_p)}^p\bigr)^{1/p} 
\eeq 
and, if 
\hb{kr<s\leq(k+1)r}, 
\beq\Label{N.N4} 
\bal 
\vd u\vd_{s/\vec r,p;\vec\om}^{**} 
:=\Bigl(\|\vp_p^\lda u\|_{p;\ell_p(\mf{B}_p^s)}^p 
&+\sum_{j\leq k}\|\pl^j\vp_p^{\lda+j\mu}u\|_{p;\ell_p(\mf{W}_p^{(k-j)r})}^p 
 \\
&+[\pl^k\vp_p^{\lda+s\mu/r}u]_{(s-kr)/r,p;\ell_p(\mf{L}_p)}^p\Bigr)^{1/p}.  
\eal 
\eeq 
%---------------------------------------------------------------------
\begin{theorem}\LabelT{thm-N.B} 
Suppose 
\hb{s>0}. Then 
\hbox{\Eqref{N.N1}--\Eqref{N.N4}} 
%\Eqref{N.N2}\Eqref{N.N3}  
are equivalent norms for~$B_p^{s/\vec r,\vec\om}$. 
\end{theorem}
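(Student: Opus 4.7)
The strategy is to reduce all four candidate norms to a single localized expression, namely $\|\vp_p^{\vec\om} u\|_{\ell_p(\mf{B}_p^{s/\vec r})}$, which by the Anisotropic Retraction Theorem~\ref{thm-RA.R} together with \Eqref{R.rN} is equivalent to the genuine Besov norm on $B_p^{s/\vec r,\vec\om}$. The key mechanism is that the weight exponents $\lda+s\mu/r$ and $\lda+j\mu$ appearing in \Eqref{N.N1}--\Eqref{N.N4} are precisely what is needed to absorb the time-dilation $\Theta_{p,\ka}^\mu$ built into the definition $\vp_{p,\ka}^{\vec\om}=\Theta_{p,\ka}^\mu\circ\vp_{p,\ka}^\lda$.

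First I would pass from the global weighted norms \Eqref{N.N1}--\Eqref{N.N2} to the localized expressions \Eqref{N.N3}--\Eqref{N.N4}. For each fixed $t$ the isotropic retraction Theorem~\ref{thm-R.R} and \Eqref{R.peq} give $\|u(t)\|_{B_p^{s,\lda}}\sim\|\vp_p^\lda u(t)\|_{\ell_p(\mf{B}_p^s)}$, and integrating in $t$ via Fubini \Eqref{RA.lL} produces $\|u\|_{L_p(J,B_p^{s,\lda})}\sim\|\vp_p^\lda u\|_{L_p(J,\ell_p(\mf{B}_p^s))}$; the spatial Sobolev pieces of \Eqref{N.N2} are handled the same way. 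Since $\vp_{p,\ka}^{\lda+s\mu/r}$ is multiplication by a time-independent constant followed by $(\ka\slt\vp)_*$, it commutes with time differences, so a Fubini argument applied to the seminorm $\esdot_{s/r,p;L_p^{\lda+s\mu/r}}$ (together with the isotropic retraction in the spatial $L_p^{\lda+s\mu/r}$-factor) identifies it with $[\vp_p^{\lda+s\mu/r}u]_{s/r,p;\ell_p(\mf{L}_p)}$. The same reasoning treats the highest-order time-difference term of \Eqref{N.N2}.

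Next I would rewrite \Eqref{N.N3}--\Eqref{N.N4} in terms of $\vp_p^{\vec\om}u$ alone by applying \Eqref{RA.tLq}, \Eqref{RA.tk} and \Eqref{N.Dts}. For example, using $\vp_{p,\ka}^{\lda+s\mu/r}u=\rho_\ka^{s\mu/r}\Theta_{p,\ka}^{-\mu}\vp_{p,\ka}^{\vec\om}u$ one computes
$$
[\vp_{p,\ka}^{\lda+s\mu/r}u]_{s/r,p;L_p}
=\rho_\ka^{s\mu/r}\cdot\rho_\ka^{-\mu s/r}[\vp_{p,\ka}^{\vec\om}u]_{s/r,p;L_p}
=[\vp_{p,\ka}^{\vec\om}u]_{s/r,p;L_p},
$$
and similarly $\|\pl^j\vp_{p,\ka}^{\lda+j\mu}u\|_{L_p(J,W_{\coW p,\ka}^{(k-j)r})}=\|\pl^j\vp_{p,\ka}^{\vec\om}u\|_{L_p(J,W_{\coW p,\ka}^{(k-j)r})}$ and $[\pl^k\vp_{p,\ka}^{\lda+s\mu/r}u]_{(s-kr)/r,p;L_p}=[\pl^k\vp_{p,\ka}^{\vec\om}u]_{(s-kr)/r,p;L_p}$. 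After summing in $\ell_p$ over $\ka\in\gK$, the norms \Eqref{N.N3} and \Eqref{N.N4} become the two standard equivalent norms for $\ell_p$-products of the local anisotropic Besov spaces $B_{p,\ka}^{s/\vec r}$, expressed in the single section $\vp_p^{\vec\om}u$.

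Finally, for each fixed $\ka$ the Euclidean equivalent-norm theorems of \cite{Ama09a} (in the form of the intersection representation $B_p^{s/\mf\nu}(\BY_\ka,E)\doteq L_p(J,B_p^s)\cap B_p^{s/r}(J,L_p)$ from \cite[Theorem~3.6.7]{Ama09a}, together with \cite[\S4]{Ama09a} for the half-space/corner cases and the formula \Eqref{N.eq} for the two forms of the Besov seminorm) show that the localized expressions in step~2 are equivalent to $\|\vp_p^{\vec\om}u\|_{\ell_p(\mf{B}_p^{s/\vec r})}$. Combining this with Theorem~\ref{thm-RA.R} and \Eqref{R.rN} closes the chain of equivalences. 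The main obstacle is the bookkeeping in step~2: one must verify that the somewhat odd-looking mixed weights $\lda+s\mu/r$ on the space side interlock exactly with the time-dilation factors $\rho_\ka^\mu$ coming from $\Theta_{p,\ka}^\mu$, and that the two-parameter scaling respects both the $L_p(J,\cdot)$-norms and the fractional difference seminorms $\esdot_{s/r,p}$ in precisely compatible ways; the remaining Euclidean identifications are then invocations of the results in \cite{Ama09a} already exploited in Lemma~\ref{lem-RA.WW}.
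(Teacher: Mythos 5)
Your proposal is correct and follows essentially the same strategy as the paper: pass between the global weighted norms and the localized ones by the dilation identities \Eqref{RA.tLq}, \Eqref{RA.tk}, \Eqref{N.Dts}, and then reduce to the Euclidean intersection-norm results of \cite{Ama09a} via the anisotropic retraction theorem. The one place where you take a slightly slicker route is in relating \Eqref{N.N1} to \Eqref{N.N3}: you observe that $\vp_p^{\lda+s\mu/r}$ (with the \emph{fixed} weight exponent, no $\Ta$-dilation) commutes with time differences, so that the isotropic retraction equivalence $\|\vp_p^\al w\|_{\ell_p(\mf{L}_p)}\sim\|w\|_{L_p^\al}$, applied pointwise in $(t,h)$ to $w=\tri_h^{k+1}u(t)$ and integrated, yields the two-sided seminorm comparison in one stroke; the paper instead re-derives both inequalities explicitly by partition-of-unity manipulations in steps~(3)--(6), working directly with $\vp_p^{\vec\om}$ and $\psi_p^{\vec\om}$.
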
 
%=====================================================================
\begin{proof} 
(1) 
It follows from \Eqref{RA.tLq} that 
\beq\Label{N.lom} 
\|\vp_p^\lda u\|_{p;\ell_p(\mf{B}_p^s)} 
=\|\vp_p^{\vec\om}u\|_{p;\ell_p(\mf{B}_p^s)}. 
\eeq 
Using \Eqref{N.Dts} we get 
\beq\Label{N.0l} 
[\vp_{p,\ka}^{\vec\om}u]_{s/r,p;L_{p,\ka}} 
=[\vp_{p,\ka}^{\lda+s\mu/r}u]_{s/r,p;L_{p,\ka}}. 
\eeq 
Thus, by Fubini's theorem, 
$$ 
[\vp_p^{\vec\om}u]_{s/r,p;\ell_p(\mf{L}_p)} 
=[\vp_p^{\lda+s\mu/r}u]_{s/r,p;\ell_p(\mf{L}_p)}. 
$$ 
From this and \Eqref{N.lom} we obtain 
\beq\Label{N.ph1} 
\vd u\vd_{s/\vec r,p;\vec\om}^* 
=\bigl(\|\vp_p^{\vec\om}u\|_{p;\ell_p(\mf{B}_p^s)}^p 
+[\vp_p^{\vec\om}u]_{s/r,p;\ell_p(\mf{L}_p)}^p\bigr)^{1/p}. 
\eeq 
Similarly, invoking \Eqref{RA.tk} as well, 
$$ 
\vd u\vd_{s/\vec r,p;\vec\om}^{**} 
=\Bigl(\|\vp_p^{\vec\om}u\|_{p;\ell_p(\mf{B}_p^s)}^p 
+\sum_{j\leq k}\|\pl^j\vp_p^{\vec\om}u\|
 _{p;\ell_p(\mf{W}_{\coW p}^{(k-j)r})}^p 
+[\pl^k\vp_p^{\vec\om}u]_{(s-kr)/r,p;\ell_p(\mf{L}_p)}^p\Bigr)^{1/p} 
\npb 
$$ 
if 
\hb{kr<s\leq(k+1)r}. 

\smallskip 
(2) 
Lemma~\ref{lem-RA.WW} and \cite[Theorems 3.6.3 and~4.4.3]{Ama09a} imply 
$$ 
B_{p,\ka}^{s/\vec r} 
\doteq L_p(J,B_{p,\ka}^s)\cap B_p^{s/r}(J,L_{p,\ka}) 
\qa \ka\in\gK. 
$$ 
Hence 
$$ 
\Vsdot_{B_{p,\ka}^{s/\vec r}} 
\sim\Vsdot_{p;B_{p,\ka}^s}+\esdot_{s/r,p;L_{p,\ka}},   
$$ 
due to 
\hb{B_{p,\ka}^s\hr L_{p,\ka}}. From this, \Eqref{N.ph1}, and Fubini's 
theorem we deduce 
$$ 
\Vvsdot_{s/\vec r,p;\vec\om}^* 
\sim\|\vp_p^{\vec\om}\cdot\|_{\ell_p(\mf{B}_p^{s/\vec r})}. 
$$ 
Thus \Eqref{R.rN} and Theorem~\ref{thm-RA.R} guarantee that \Eqref{N.N3} 
is a norm for~$B_p^{s/\vec r,\vec\om}$. Similarly, using \Eqref{N.eq}, 
we see that \Eqref{N.N4} is a norm for~$B_p^{s/\vec r,\vec\om}$. 

\smallskip 
(3) 
We set 
\hb{\al:=\lda+s\mu/r} and 
\hb{\ba:=\al+\tau-\sa}. Then we deduce from 
\Eqref{S.sd}(iv), \,\Eqref{L.Nh}, \,\Eqref{R.rho}, and 
\cite[Lem\-ma~3.1(iii)]{Ama12b}  
$$ 
\bal
{}%  do not remove! 
[\vp_{p,\ka}^\al u]_{s/r,p;L_{p,\ka}}^p 
&=\int_0^\iy\int_J\int_{\BX_\ka} 
 \bigl(\rho_\ka^{\al+m/p}
 \,\big|\tri_\xi^{k+1}\bigl((\ka\slt\vp)_*(\pi_\ka u)
 \bigr)\big|_E\bigr)^p 
 \,dV_{\coV g_m}\,dt\,\frac{d\xi}{\xi^{1+ps/r}}\\ 
&\sim\int_0^\iy\int_J\int_{\BX_\ka}\ka_* 
 \bigl((\rho^\ba\pi_\ka\,|\tri_\xi^{k+1}u|_h)^p 
 \,dV_{\coV g}\bigr)\,dt\,\frac{d\xi}{\xi^{1+ps/r}}\\ 
&=\int_0^\iy\int_J\int_{U_{\coU\ka}} 
 (\rho^\ba\pi_\ka\,|\tri_\xi^{k+1}u|_h)^p 
 \,dV_{\coV g}\,dt\,\frac{d\xi}{\xi^{1+ps/r}} 
\eal 
$$ 
for 
\hb{u\in\cD\JcD}. We insert 
\hb{\mf{1}=\sum_{\wt{\ka}}\pi_{\wt{\ka}}^2} in the inner integral, sum over 
\hb{\ka\in\gK}, and interchange the order of summation. Then 
$$ 
[\vp_p^\al u]_{s/r,p;\ell_p(\mf{L}_p)}^p 
\sim\sum_{\wt{\ka}}\sum_{\ka\in\gN(\wt{\ka})} 
\int_0^\iy\int_J\int_{U_{\coU\wt{\ka}}}\pi_{\wt{\ka}}^2 
(\rho^\ba\pi_\ka\,|\tri_\xi^{k+1}u|_h)^p 
\,dV_{\coV g}\,dt\,\frac{d\xi}{\xi^{1+ps/r}}.  
$$ 
Using \Eqref{R.LS}(iii) and the finite multiplicity of~$\gK$ 
we see that the last term can be bounded above by 
$$ 
\bal 
c\sum_{\wt{\ka}}
 \int_0^\iy\int_J\int_{U_{\coU\wt{\ka}}}\pi_{\wt{\ka}}^2 
 (\rho^\ba\,|\tri_\xi^{k+1}u|_h)^p 
 \,dV_{\coV g}\,dt\,\frac{d\xi}{\xi^{1+ps/r}}\\
{}= c\int_0^\iy\int_J\int_M 
 (\rho^\ba\,|\tri_\xi^{k+1}u|_h)^p 
 \,dV_{\coV g}\,dt\,\frac{d\xi}{\xi^{1+ps/r}}
&=c\,[u]_{s/r,p;L_p^\al}^p. 
\eal 
$$ 
Hence, recalling \Eqref{N.0l}, 
\beq\Label{N.sLp} 
[\vp_p^{\vec\om}u]_{s/r,p;\ell_p(\mf{L}_p)} 
\leq c\,[u]_{s/r,p;L_p^{\lda+s\mu/r}} 
\qa u\in\cD\JcD. 
\eeq 

\smallskip 
(4) 
It is a consequence of Theorem~\ref{thm-R.R} that 
\hb{\vp_p^\lda\in\cL\bigl(B_p^{s,\lda},\ell_p(\mf{B}_p^s)\bigr)}. 
This implies, due to \Eqref{N.lom}, 
\beq\Label{N.Llp} 
\|\vp_p^{\vec\om}u\|_{p;\ell_p(\mf{B}_p^s)} 
=\|\vp_p^\lda u\|_{p;\ell_p(\mf{B}_p^s)} 
\leq c\,\|u\|_{p;B_p^{s,\lda}} 
\qa u\in\cD\JcD. 
\eeq 
Thus we obtain from \Eqref{N.ph1}, \,\Eqref{N.sLp}, and \Eqref{N.Llp} 
\beq\Label{N.phs} 
\vd u\vd_{s/\vec r,p;\vec\om}^* 
\leq c\,\|u\|_{s/\vec r,p;\vec\om}^* 
\qa u\in\cD\JcD. 
\eeq 
We denote by $\starB_p^{s/\vec r,\vec\om}$ the completion 
of $\cD\JcD$ in~$L_p(J,L_p^\lda)$ with respect to the norm~% 
\hb{\Vsdot_{s/\vec r,p;\vec\om}^*}. Then \Eqref{N.phs} and step~(2) imply 
$$ 
\starB_p^{s/\vec r,\vec\om}\hr B_p^{s/\vec r,\vec\om}. 
$$ 

\smallskip 
(5) 
Observing 
\hb{\psi_{p,\ka}^{\vec\om}=\chi_\ka\psi_{p,\ka}^{\vec\om}} and 
\hb{0\leq\chi_\ka\leq1}, the finite multiplicity of~$\gK$ implies 
$$ 
\bal 
|\tri_\xi^{k+1}\psi_p^{\vec\om}\mf{v}|_h 
&=\Bigl|\sum_\ka\tri_\xi^{k+1}\psi_{p,\ka}^{\vec\om}v_\ka\Bigr|_h 
 \leq\Bigl(\sum_\ka|\tri_\xi^{k+1}\psi_{p,\ka}^{\vec\om}v_\ka|_h^p 
 \Bigr)^{1/p}\Bigl(\sum_\ka\chi_\ka\Bigr)^{1/p'}\\ 
&\leq c\Bigl(\sum_\ka|\tri_\xi^{k+1}\psi_{p,\ka}^{\vec\om}v_\ka|_h^p 
 \Bigr)^{1/p} 
\eal 
$$ 
for 
\hb{\mf{v}\in\mf{\cD}\BYE}. Hence, reasoning as in step~(3), 
$$ 
\bal 
{}% do not remove!!!
[\psi_p^{\vec\om}\mf{v}]_{s/r,p;L_p^{\lda+s\mu/r}}^p 
&\leq c\int_0^\iy\int_J\int_M 
 \rho^{\ba p}\sum_\ka\,|\tri_\xi^{k+1}\psi_{p,\ka}^{\vec\om}v_\ka|_h^p 
 \,dV_{\coV g}\,dt\,\frac{d\xi}{\xi^{1+ps/r}}\\ 
&\leq c\int_0^\iy\int_J\int_{\BX_\ka} 
 \sum_\ka\,|\tri_\xi^{k+1}(\pi_\ka v_\ka)|_{g_m}^p 
 \,dV_{\coV g_m}\,dt\,\frac{d\xi}{\xi^{1+ps/r}}\\
&\leq c\sum_\ka[\pi_\ka v_\ka]_{s/r,p;L_{p,\ka}}^p  
 \leq c\sum_\ka[v_\ka]_{s/r,p;L_{p,\ka}}^p\\  
&\leq c\sum_\ka\|v_\ka\|_{B_p^{s/r}(J,L_{p,\ka})}^p  
\eal 
\npb 
$$ 
for 
\hb{\mf{v}\in\mf{\cD}\BYE}.

\smallskip 
(6) 
Theorem~\ref{thm-R.R} and \Eqref{R.peq} guarantee that 
\hb{\|\vp_p^\lda\cdot\|_{\ell_p(\mf{B}_p^s)}} is an equivalent norm 
for~$B_p^{s,\lda}$. This implies
\beq\Label{N.psv} 
\|\psi_p^{\vec\om}\mf{v}\|_{p;B_p^{s,\lda}} 
\leq c\,\|\vp_p^\lda\psi_p^{\vec\om}\mf{v}\|_{p;\ell_p(\mf{B}_p^s)} 
\qa \mf{v}\in\mf{\cD}\BYE.   
\eeq 
From \Eqref{RA.phpv} we infer by interpolation, using the arguments of 
step~(5) of the proof of Theorem~\ref{thm-RA.R}, that 
$$ 
\|\vp_p^\lda\psi_p^{\vec\om}\mf{v}\|_{\ell_p(L_p(J,\mf{B}_p^s))} 
\leq c\,\|\mf{v}\|_{\ell_p(L_p(J,\mf{B}_p^s))} 
\qa \mf{v}\in\mf{\cD}\BYE.   
$$ 
Hence \Eqref{N.psv} and \Eqref{RA.lL} imply 
$$ 
\|\psi_p^{\vec\om}\mf{v}\|_{p;B_p^{s,\lda}} 
\leq c\,\|\mf{v}\|_{p;\ell_p(\mf{B}_p^s)} 
\qa \mf{v}\in\mf{\cD}\BYE.   
$$ 
By combining this with the result of step~(5) we find, employing 
\Eqref{RA.lL} once more, 
$$ 
\|\psi_p^{\vec\om}\mf{v}\|_{s/\vec r,p;\vec\om}^* 
\leq c\,\|\mf{v}\|_{\ell_p(\mf{B}_p^{s/\vec r})} 
\qa \mf{v}\in\mf{\cD}\BYE.   
$$ 
Thus, by Theorem~\ref{thm-RA.R}, 
$$ 
\|u\|_{s/\vec r,p;\vec\om}^* 
=\|\psi_p^{\vec\om}(\vp_p^{\vec\om}u)\|_{s/\vec r,p;\vec\om}^* 
\leq c\,\|\vp_p^{\vec\om}u\|_{\ell_p(\mf{B}_p^{s/\vec r})}  
=c\,\vd u\vd_{s/\vec r,p;\vec\om}^*   
\qa u\in\cD\JcD,   
$$ 
the last estimate being a consequence of \Eqref{N.ph1}. Since, by 
step~(2), \ \Eqref{N.N3} is a norm for~$B_p^{s/\vec r,\vec\om}$, we get 
$$ 
\|u\|_{s/\vec r,p;\vec\om}^*\leq c\,\|u\|_{B_p^{s/\vec r,\vec\om}} 
\qa u\in\cD\JcD.   
\npb 
$$ 
This implies 
\hb{B_p^{s/\vec r,\vec\om}\hr\starB_p^{s/\vec r,\vec\om}}. From this 
and step~(4) it follows that \Eqref{N.N1} is a norm 
for~$B_p^{s/\vec r,\vec\om}$. 

\smallskip 
(7) 
The proof of the fact that \Eqref{N.N2} is a norm 
for~$B_p^{s/\vec r,\vec\om}$ is similar. 
\end{proof} 
%---------------------------------------------------------------------
\begin{corollary}\LabelT{cor-N.B} 
If 
\hb{s>0}, then 
\hb{B_p^{s/\vec r,(\lda,0)} 
    \doteq L_p(J,B_p^{s,\lda})\cap B_p^{s/r}(J,L_p^\lda)}. 
\end{corollary}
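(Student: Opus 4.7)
The plan is to read off the identification directly from Theorem~\ref{thm-N.B}, specialized to the exponent $\mu=0$. First I will substitute $\mu=0$ into the norm~\Eqref{N.N1}, which then takes the shape
\bspezeq
\|u\|_{s/\vec r,p;(\lda,0)}^*
=\bigl(\|u\|_{p;B_p^{s,\lda}}^p+[u]_{s/r,p;L_p^{\lda}}^p\bigr)^{1/p},
\espezeq
since $\lda+s\mu/r=\lda$. By Theorem~\ref{thm-N.B} this is an equivalent norm on $B_p^{s/\vec r,(\lda,0)}$, and by its very construction it coincides (up to writing $\|\cdot\|_{p;X}$ for $\|\cdot\|_{L_p(J,X)}$) with the sum of the $L_p(J,B_p^{s,\lda})$-norm and the Gagliardo seminorm governing $B_p^{s/r}(J,L_p^\lda)$ in its characterization via differences.

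Next I will compare this with the intersection norm on $L_p(J,B_p^{s,\lda})\cap B_p^{s/r}(J,L_p^\lda)$, which is
\bspezeq
\|u\|_{L_p(J,B_p^{s,\lda})}+\|u\|_{L_p(J,L_p^\lda)}+[u]_{s/r,p;L_p^\lda}.
\espezeq
The embedding $B_p^{s,\lda}\sdh L_p^\lda$ (Corollary~\ref{cor-A.FF}(iii), or simply the definition of $B_p^{s,\lda}$ for $s>0$) gives $\|u\|_{L_p(J,L_p^\lda)}\leq c\,\|u\|_{L_p(J,B_p^{s,\lda})}$, so the middle term is absorbed. Together with the trivial equivalence of the $\ell^p$ and $\ell^1$ norms on $\BR^2$, this identifies the intersection norm with $\|\cdot\|_{s/\vec r,p;(\lda,0)}^*$.

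The only small issue I anticipate is a subtlety in the definition of $B_p^{s/r}(J,L_p^\lda)$ when $s/r$ is an integer: my expression \Eqref{N.N1} uses a single finite-difference seminorm $[\cdot]_{s/r,p;\cX}$ of order $k+1$ with $k=\lfloor s/r\rfloor$, but the Besov space of integer smoothness is likewise characterized by such higher-order differences (this is precisely the content of the equivalence \Eqref{N.eq} recalled at the start of Section~\ref{sec-N}). Hence no case distinction is needed, and the equivalence of norms established above yields
\bspezeq
B_p^{s/\vec r,(\lda,0)}\doteq L_p(J,B_p^{s,\lda})\cap B_p^{s/r}(J,L_p^\lda),
\espezeq
the two spaces being the completions of $\cD\JcD$ under equivalent norms (density in the intersection follows since $\cD\JcD\sdh B_p^{s/\vec r,(\lda,0)}$ by Theorem~\ref{thm-A.HBH}(i)).
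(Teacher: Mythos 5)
Your approach is exactly what the paper intends: Corollary~\ref{cor-N.B} is the $\mu=0$ specialization of Theorem~\ref{thm-N.B}, and once $\mu=0$ is substituted into \Eqref{N.N1} (so that $\lda+s\mu/r=\lda$) the norm reads as the intersection norm of $L_p(J,B_p^{s,\lda})\cap B_p^{s/r}(J,L_p^\lda)$ after absorbing $\|u\|_{p;L_p^\lda}$ into $\|u\|_{p;B_p^{s,\lda}}$ via the embedding $B_p^{s,\lda}\hr L_p^\lda$ and using the equivalence of the $\ell^1$ and $\ell^p$ norms on $\BR^2$. Your remark that no case distinction is needed when $s/r\in\BN$, because of \Eqref{N.eq}, is also right.

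The one place where the argument is logically imprecise is the final parenthesis. You claim density of $\cD\JcD$ in the intersection ``follows since $\cD\JcD\sdh B_p^{s/\vec r,(\lda,0)}$''. But that only gives density in the \emph{smaller} space, which is what you already used; it does not give density in $L_p(J,B_p^{s,\lda})\cap B_p^{s/r}(J,L_p^\lda)$ unless one already knows the two spaces coincide, which is what you are trying to prove. As it stands, the norm-equivalence plus completeness of both sides only yields that $B_p^{s/\vec r,(\lda,0)}$ is a closed subspace of the intersection with equivalent norm. To close the gap, note that the local intersection characterization $B_{p,\ka}^{s/\vec r}\doteq L_p(J,B_{p,\ka}^s)\cap B_p^{s/r}(J,L_{p,\ka})$ is already invoked (from \cite[Theorems~3.6.3 and~4.4.3]{Ama09a}) in step~(2) of the proof of Theorem~\ref{thm-N.B}; pushing the intersection through $\vp_p^{\vec\om}/\psi_p^{\vec\om}$ (Theorem~\ref{thm-RA.R}) then identifies the global intersection with $B_p^{s/\vec r,(\lda,0)}$ without needing a separate density argument. (Also, the citation of Corollary~\ref{cor-A.FF}(iii) for $B_p^{s,\lda}\hr L_p^\lda$ is a mislabel — that corollary concerns the anisotropic scale; the embedding you want is the isotropic one from Section~\ref{sec-J}.)
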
 
%---------------------------------------------------------------------
%---------------------------------------------------------------------
\section{H\"older Spaces in Euclidean Settings}\LabelT{sec-PH}% 
In~\cite{Ama12b} it has been shown that isotropic weighted H\"older spaces 
are important point-wise multiplier spaces for weighted isotropic Bessel 
potential and Besov spaces. In Section~\ref{sec-P} we shall show that 
similar results hold in the anisotropic case. For this reason we introduce 
and study anisotropic weighted H\"older spaces and establish the fundamental 
retraction theorem which allows for local characterizations. 
In order to achieve this we have to have a good understanding of 
H\"older spaces of Banach-space-valued functions on $\BR^m$ and~$\BH^m$. In 
this section we derive those properties of such spaces which are needed to 
study weighted H\"older spaces on~$M$. 

\smallskip 
Let $\cX$ be a Banach space. Suppose 
\hb{\BX\in\{\BR^m,\BH^m\}} and 
\hb{X\in\{\BX,\ \BX\times J\}}. Then 
\hb{B=B\XcX}~is the Banach space of all bounded \hbox{$\cX$-valued} 
functions on~$X$ endowed with the supremum norm 
\hb{\Vsdot_\iy=\Vsdot_{0,\iy}}. 

\smallskip 
Throughout this section, 
\hb{k,k_0,k_1\in\BN}. Then 
$$ 
BC^k=BC^k\XcX 
:=\bigl(\bigl\{\,u\in C^k\XcX\ ;\ \pa_xu\in B\XcX,\ |\al|\leq k\,\bigr\}, 
\ \Vsdot_{k,\iy}\bigr), 
$$ 
where 
$$ 
\|u\|_{k,\iy}:=\max_{|\al|\leq k}\|\pa_xu\|_\iy, 
$$ 
is a Banach space. As usual, 
\hb{BC=BC^0}. We write 
\hb{\Vsdot_{k,\iy;\cX}} for 
\hb{\Vsdot_{k,\iy}} if it seems to be necessary to indicate the image 
space. Similar conventions apply to the other norms and seminorms introduced 
below. 

\smallskip 
Note that 
$$ 
\BUC^k
=\bigl\{\,u\in BC^k\ ;\ \pa_xu\text{ is uniformly continuous for } 
|\al|\leq k\,\bigr\} 
$$ 
is a closed linear subspace of~$BC^k$. The mean value theorem implies the 
first embedding of 
\beq\Label{PH.BUB} 
BC^{k+1}\hr\BUC^k\hr BC^k. 
\eeq 
Hence 
\beq\Label{PH.Binft} 
BC^\iy:={\textstyle\bigcap_k}BC^k={\textstyle\bigcap_k}\BUC^k. 
\eeq 
It is a Fr\'echet space with the natural projective topology. Thus 
$$ 
BC^\iy\hr\BUC^k 
\qa k\in\BN. 
$$ 
In fact, this embedding is dense. For this we recall that~a \emph{mollifier 
on}~$\BR^d$ is a family 
\hb{\{\,w_\eta\ ;\ \eta>0\,\}} of nonnegative compactly supported smooth 
functions on~$\BR^d$ such that 
\hb{w_\eta(x)=\eta^{-d}w_1(x/\eta)} for 
\hb{x\in\BR^d} and 
\hb{\int w_1\,dx=1}. Then, denoting by 
\hb{w_\eta*u} convolution, 
\beq\Label{PH.con} 
w_\eta*u\in BC^\iy\BRdcX 
\qa u\in BC\BRdcX, 
\eeq 
and 
\beq\Label{PH.app} 
\lim_{\eta\ra0}w_\eta*u=u\text{ in }\BUC^k\BRdcX 
\qa u\in\BUC^k\BRdcX, 
\eeq 
(cf.~\cite[Theorem~X.7.11]{AmE08a}, for example, whose proof carries 
literally over to \hbox{$\cX$-valued} spaces). From this we get 
\beq\Label{PH.BBU} 
BC^\iy\sdh\BUC^k 
\eeq 
if 
\hb{\BX=\BR^m} and 
\hb{J=\BR}. In the other cases it follows by an additional extension and 
restriction argument based on the extension map~(4.1.7) of~\cite{Ama09a} 
(also cf.~Section~4.3 therein). 

\smallskip 
From now on 
\hb{X=\BX}. For 
\hb{k\leq s<k+1}, 
\ \hb{0<\da\leq\iy}, and 
\hb{u\sco\BX\ra\cX} we put 
$$ 
[u]_{s,\iy}^\da 
:=\sup_{h\in(0,\da)^m}\frac{\|\tri_h^{k+1}u\|_{\iy;\cX}}{|h|^s} 
\qb \esdot_{s,\iy}:=\esdot_{s,\iy}^\iy. 
$$ 
Furthermore, 
$$ 
\Vsdot_{s,\iy}^*:=\Vsdot_\iy+\esdot_{s,\iy} 
\qa s>0. 
$$ 
Note that 
\hb{h\in(0,\iy)^m\ssm(0,\da)^m} implies 
\hb{\da\leq|h|_\iy\leq|h|\leq\sqrt{m}\,|h|_\iy}. Hence 
\beq\Label{PH.del1} 
\esdot_{\ta,\iy}\leq\esdot_{\ta,\iy}^\da+4\da^{-\ta}\ \Vsdot_\iy 
\qa 0<\ta\leq1 
\qb 0<\da<\iy. 
\eeq 
If 
\hb{0<\ta_0<\ta\leq1}, then 
\beq\Label{PH.lHd} 
\esdot_{\ta_0,\iy}^\da\leq\sqrt{m}\,\da^{\ta-\ta_0}\esdot_{\ta,\iy}^\da  
\qa 0<\da<\iy. 
\eeq 
Consequently, 
$$ 
\esdot_{\ta_0,\iy} 
\leq\sqrt{m}\,\esdot_{\ta,\iy}^1+4\,\Vsdot_\iy 
\leq\sqrt{m}\,\esdot_{\ta,\iy}+4\,\Vsdot_\iy. 
$$ 
This implies 
\beq\Label{PH.s01} 
\Vsdot_{\ta_0,\iy}^*\leq c(m)\,\Vsdot_{\ta,\iy}^* 
\qa 0<\ta_0<\ta\leq1. 
\eeq 

\smallskip 
Suppose 
\hb{u\in BC^k} and denote by~$D$ the Fr\'echet derivative. Then, by the mean 
value theorem, 
$$ 
\tri_h^ku(x) 
=\int_0^1\cdots\int_0^1D^ku\bigl(x+(t_1+\cdots+t_k)h\bigr)[h]^k 
\,dt_1\cdots dt_k, 
$$ 
where 
\hb{[h]^k:=(h,\ldots,h)\in\BX^k}. From this we get 
\beq\Label{PH.BCk} 
[u]_{\ta,\iy}^\da\leq m^{k/2}\da^{k-\ta}\,\|u\|_{k,\iy} 
\qa 0<\ta\leq1 
\qb \ta<k 
\qb \da>0 
\qb u\in BC^k. 
\eeq 
Thus, by \Eqref{PH.del1}, 
\beq\Label{PH.BC1} 
\Vsdot_{\ta,\iy}^*\leq c(m)\,\Vsdot_{1,\iy} 
\qa 0<\ta<1. 
\eeq 

\smallskip 
We also set for 
\hb{k<s\leq k+1} 
$$ 
\|u\|_{s,\iy}^{**} 
:=\|u\|_{k,\iy}+\max_{|\al|=k}[\pa_xu]_{s-k,\iy}. 
$$ 
If 
\hb{k<s<k+1}, then 
\hb{\Vsdot_{s,\iy}:=\Vsdot_{s,\iy}^{**}} and 
$$ 
BC^s=BC^s\BXcX:=\bigl(\bigl\{\,u\in BC^k 
\ ;\ \max_{|\al|=k}[\pa_xu]_{s-k,\iy}<\iy\,\bigr\}, 
\ \Vsdot_{s,k}\bigr) 
\qa k<s<k+1,  
\npb 
$$ 
is a \emph{H\"older space} of order~$s$. 

\smallskip 
Given 
\hb{h=(h^1,\ldots,h^m)\in\BX}, we set 
\hb{h_j:=(0,\ldots,0,h^j,\ldots,h^m)} for 
\hb{1\leq j\leq m}, and 
\hb{h_{m+1}:=0}. Then 
$$ 
\tri_hu(x) 
=\sum_{j=1}^m\bigl(u(x+h_j)-u(x+h_{j+1})\bigr). 
$$ 
From this we infer for 
\hb{0<\ta<1} and 
\hb{h^j\neq0} for 
\hb{1\leq j\leq m} 
$$ 
\bal 
\frac{\|\tri_hu\|_\iy}{|h|^\ta} 
&\leq\sum_{j=1}^m\frac{\|u(\cdot+h^je_j)-u\|_\iy}{|h^j|^\ta}  
 \leq\sum_{j=1}^m\sup_{h^j\neq0}\frac{\|u(\cdot+h^je_j)-u\|_\iy} 
 {|h^j|^\ta}\\   
&=\sum_{j=1}^m\sup_{h^j>0}\frac{\|u(\cdot+h^je_j)-u\|_\iy}{(h^j)^\ta}
 \leq m\,[u]_{\ta,\iy}. 
\eal 
$$ 
Consequently, 
\beq\Label{PH.eqsn} 
[u]_{\ta,\iy}\leq\sup_{h\neq0}\frac{\|\tri_hu\|_\iy}{|h|^\ta} 
\leq m\,[u]_{\ta,\iy} 
\qa 0<\ta<1. 
\eeq 
This shows that $BC^s$~coincides, except for equivalent norms, with the 
usual H\"older space of order~$s$ if 
\hb{s\in\BR^+\ssm\BN}. From \Eqref{PH.eqsn} we read off that the last 
embedding of 
\beq\Label{PH.BCUC} 
BC^{k+1}\hr BC^s\hr BC^{s_0}\hr\BUC^k 
\qa k<s_0<s<k+1,  
\npb 
\eeq 
is valid. The other two follow from \Eqref{PH.s01} and \Eqref{PH.BC1}. 

\smallskip 
We introduce the \hh{Besov-H\"older space scale} 
\hb{[\,B_\iy^s\ ;\ s>0\,]} by 
$$ 
B_\iy^s 
:=\left\{
\bal
{}      %do not remove!
&(\BUC^k,\BUC^{k+1})_{s-k,\iy},
    &\quad  k<s &<k+1,\\ 
&(\BUC^k,\BUC^{k+2})_{1/2,\iy}, 
    &\quad    s &=k+1. 
\eal 
\right. 
\po 
$$ 
%---------------------------------------------------------------------
{\samepage 
\begin{theorem}\LabelT{thm-PH.B} 
\begin{itemize} 
\item[{\rm(i)}] 
${}$%                   do not remove!!!! 
\hb{\Vsdot_{s,\iy}^*} and\/ 
\hb{\Vsdot_{s,\iy}^{**}} are norms for~$B_\iy^s$.\po 
\item[{\rm(ii)}] 
${}$%                   do not remove!!!! 
\hb{B_\iy^s\doteq(\BUC^{k_0},\BUC^{k_1})_{(s-k_0)/(k_1-k_0),\iy}} for 
\hb{k_0<s<k_1}.  
\item[{\rm(iii)}] 
If\/ 
\hb{0<s_0<s_1} and 
\hb{0<\ta<1}, then 
\hb{(B_\iy^{s_0},B_\iy^{s_1})_{\ta,\iy} 
   \doteq B_\iy^{s_\ta}\doteq[B_\iy^{s_0},B_\iy^{s_1}]_\ta}.\po  
\end{itemize} 
\end{theorem}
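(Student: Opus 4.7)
The strategy is to first establish the explicit norm characterizations of (i) by computing the $K$-functional of the defining interpolation couple, and then to deduce (ii) and (iii) from the fact that these norms make no reference to the interpolation endpoints, combined with standard reiteration theorems.

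For (i), I would exploit the mollifier decomposition $u = (u - w_\eta * u) + w_\eta * u$ from \Eqref{PH.con}--\Eqref{PH.BBU}: for $u \in \BUC^k$, the first summand is controlled in $\BUC^k$ by $\eta^{s-k}$ times a finite-difference seminorm of $\pa u$ with $|\al| = k$, while Bernstein-type estimates $\|w_\eta * u\|_{k+j,\iy} \leq c\eta^{-j}\,\|u\|_{k,\iy}$ place the second summand in $\BUC^{k+j}$ at cost $\eta^{-j}$; this yields the upper $K$-functional bound. For the lower bound, one picks $h$ with $|h| \sim \eta$, applies the difference operator $\tri_h^{k_1 - k}$ to $\pa u$ for $|\al| = k$ and for any admissible decomposition $u = u_0 + u_1$, and bounds each piece separately (the $u_0$-piece trivially by $\|\pa u_0\|_\iy$, and the $u_1$-piece by $|h|^{k_1-k}\|u_1\|_{k_1,\iy}$ via the mean value theorem as in \Eqref{PH.BCk}). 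Together these identify the interpolation norm with $\Vsdot_{s,\iy}^{**}$. The equivalence $\Vsdot_{s,\iy}^* \sim \Vsdot_{s,\iy}^{**}$ then follows from the elementary estimates \Eqref{PH.BCk}, \Eqref{PH.del1}, \Eqref{PH.lHd}, which control the passage between higher-order differences of $u$ and lower-order differences of derivatives of~$u$.

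For (ii), I would repeat the $K$-functional computation for the couple $(\BUC^{k_0}, \BUC^{k_1})$ with $k_0 < s < k_1$: the same mollifier/Bernstein argument identifies $(\BUC^{k_0}, \BUC^{k_1})_{(s-k_0)/(k_1-k_0),\iy}$ as the space of $u \in \BUC^{k_0}$ satisfying $\sup_h |h|^{-(s-k_0)}\|\tri_h^{k_1-k_0}\pa u\|_\iy < \iy$ for $|\al| = k_0$. A classical Marchaud-type inequality, valid whenever the order of differences strictly exceeds the smoothness index, shows this condition is equivalent to $\Vsdot_{s,\iy}^{**} < \iy$ irrespective of the choice of $k_0,k_1$ with $k_0 < s < k_1$. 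Claim (iii) is then immediate by reiteration: representing $B_\iy^{s_i} \doteq (\BUC^{k_0},\BUC^{k_1})_{\ta_i,\iy}$ for common endpoints $k_0 < \min(s_0,s_1)$ and $\max(s_0,s_1) < k_1$ via~(ii), the real-interpolation reiteration identity $\bigl((X_0,X_1)_{\ta_0,\iy}, (X_0,X_1)_{\ta_1,\iy}\bigr)_{\ta,\iy} \doteq (X_0,X_1)_{(1-\ta)\ta_0+\ta\ta_1,\iy}$ for $\ta_0 \neq \ta_1$, and its complex-interpolation analogue from H.~Triebel~\cite{Tri78a}, together deliver both asserted identities.

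The main obstacle will be the integer case $s = k+1$ in (i), where the defining couple is $(\BUC^k,\BUC^{k+2})$ and $\Vsdot_{s,\iy}^{**}$ features the Zygmund second-difference seminorm applied to $k$-th derivatives. Here the $K$-functional estimate requires the mollifier to be scaled with $t = \eta^2$ and the lower bound relies on a second-difference identity applied both to the mollified function and to the error; this is the classical Zygmund-class argument, which transfers without change to the $\cX$-valued setting since all bounds are pointwise in norm.
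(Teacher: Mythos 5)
Your $K$-functional computation for (i), (ii), and the real-interpolation identity in (iii) is a sound alternative to the paper's route. The paper instead identifies $B_\iy^s$ with the anisotropic Besov space $B_{\iy,\iy}^s$ from~\cite{Ama09a} (formula~\Eqref{PH.BB}) and then quotes ready-made norm characterizations and interpolation formulas from that reference and from~\cite{Tri78a}; your proposal redoes the underlying mollifier/difference/Marchaud bookkeeping from scratch, which is more self-contained and reaches the same conclusions for the real-method part.

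The complex-interpolation identity in (iii), however, is not justified by your argument. You propose to derive $[B_\iy^{s_0},B_\iy^{s_1}]_\ta\doteq B_\iy^{s_\ta}$ from (ii) by invoking "the complex-interpolation analogue" of the real reiteration formula, quoting Triebel~\cite{Tri78a}. But the reiteration theorem for the complex method between real-interpolation spaces — Theorem~4.7.2 in Bergh--L\"ofstr\"om~\cite{BeL76a}, and the corresponding statement in~\cite{Tri78a} — is only valid when at least one of the two secondary indices $q_0,q_1$ is finite, precisely because its proof proceeds by retraction onto $\ell_q$ sequence spaces and this breaks down when both indices equal $\iy$. Here both are $\iy$, which is the excluded case, and this is not a cosmetic omission: by Calder\'on's density theorem $A_0\cap A_1$ is dense in $[A_0,A_1]_\ta$, whereas $B_\iy^{s_1}$ is not dense in $B_\iy^{s_\ta}$ (its closure is the little space $b_\iy^{s_\ta}$), so the identification requires genuine care. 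The paper does not use reiteration at all for the complex half: it considers the operator $A:=-\mf{\Lda}^{s_1-s_0}$ in $B_\iy^{s_0}$ with domain $B_\iy^{s_1}$, verifies via the Fourier multiplier theorem of~\cite{Ama09a} that $A$ is sectorial with bounded imaginary powers, and then applies Seeley's theorem~\cite{See71a} to identify $[B_\iy^{s_0},B_\iy^{s_1}]_\ta$ with $D(A^\ta)\doteq B_\iy^{s_\ta}$. Without such an operator-theoretic substitute — or some other argument that handles the double-$\iy$ case directly — the complex half of (iii) remains open in your proposal.
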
 }
%=====================================================================
\begin{proof} 
(1) 
For 
\hb{s>0} we denote by 
\hb{B_{\iy,\iy}^s=B_{\iy,\iy}^s\BXcX} the `standard' Besov space modeled 
on~$L_\iy$ for whose precise definition we refer to~\cite{Ama09a} (choosing 
the trivial weight vector therein). 

\smallskip 
It is a consequence of \cite[\ (3.3.12), (3.5.2), and 
Theorem~4.4.1]{Ama09a} that 
$$ 
B_{\iy,\iy}^s 
\doteq(\BUC^{k_0},\BUC^{k_1})_{(s-k_0)/(k_1-k_0),\iy} 
\qa k_0<s<k_1. 
$$ 
This implies 
\beq\Label{PH.BB} 
B_\iy^s\doteq B_{\iy,\iy}^s 
\npb 
\eeq 
and, consequently, statement~(ii). 

\smallskip 
(2) 
The first part of~(iii) follows by reiteration from~(ii).\\  
For 
\hb{\xi\in\BR^m} we set 
\hb{\Lda(\xi):=(1+|\xi|^2)^{1/2}}. Given 
\hb{s\in\BR}, we put 
\hb{\mf{\Lda}^{\coLda s}:=\cF^{-1}\Lda^{\coLda s}\cF}, where 
\hb{\cF=\cF_m} is the Fourier transform on~$\BR^m$. 

\smallskip 
Suppose 
\hb{\BX=\BR^m}. It follows from \cite[Theorem~3.4.1]{Ama09a} and 
\Eqref{PH.BB} that 
\beq\Label{PH.lam} 
\mf{\Lda}^{\coLda s}\in\Lis(B_\iy^{t+s},B_\iy^t) 
\qb (\mf{\Lda}^{\coLda s})^{-1}=\mf{\Lda}^{\coLda -s} 
\qa t,\ s+t>0. 
\eeq 
We set 
\hb{A:=-\mf{\Lda}^{\coLda s_1-s_0}}, considered as a linear operator 
in~$B_\iy^{s_0}$ with domain~$B_\iy^{s_1}$. Then 
\cite[Proposition~1.5.2 and Theorem~3.4.2]{Ama09a} guarantee the existence 
of 
\hb{\vp\in(\pi/2,\pi)} such that the sector 
\hb{S_\vp:=\{\,z\in\BC\ ;\ |\arg z|\leq\vp\,\}\cup\{0\}} belongs to the 
resolvent set of~$A$ and 
\hb{\|(\lda-A)^{-1}\|\leq c/|\lda|} for 
\hb{\lda\in S_\vp}. Furthermore, by 
\cite[Proposition~1.5.4 and Theorem~3.4.2]{Ama09a} we find that 
\hb{A^z\in\cL(B_\iy^{s_0})} and there exists 
\hb{\ga>0} such that 
\hb{\|A^z\|\leq c\kern1pt e^{\ga\,|\Im z|}} for 
\hb{\Re z\leq0}. Now Seeley's theorem, more precisely: the proof in 
R.~Seeley~\cite{See71a}, and \Eqref{PH.lam} imply 
\hb{[B_\iy^{s_0},B_\iy^{s_1}]_\ta\doteq B_\iy^{s_\ta}}. This proves the 
second part of~(iii) if 
\hb{\BX=\BR^m}. The case 
\hb{\BX=\BH^m} is then covered by \cite[Theorem~4.4.1]{Ama09a}. 

\smallskip 
(3) By \cite[Theorems 3.3.2, 3.5.2, and~4.4.1]{Ama09a} we get 
\hb{B_{\iy,\iy}^s\hr\BUC}. Using this and the arguments of the proof of 
\cite[Theorem~4.4.3(i)]{Ama09a} we infer from 
\cite[Theorem~3.6.1]{Ama09a} that 
\hb{\Vsdot_{B_{\iy,\iy}^s}\sim\Vsdot_{s,\iy}^*}. By appealing to 
\cite[Theorem~1.13.1]{Tri78a} in the proof of \cite[Theorem~3.6.1]{Ama09a} 
we obtain similarly 
\hb{\Vsdot_{B_{\iy,\iy}^s}\sim\Vsdot_{s,\iy}^{**}}, making also use of 
\Eqref{PH.BCUC} in the usual extension-restriction argument. Due to 
\Eqref{PH.BB} this proves~(i). 
\end{proof}\po 
%---------------------------------------------------------------------
{\samepage 
\begin{corollary}\LabelT{cor-PH.B} 
\begin{itemize} 
\item[{\rm(i)}] 
${}$%                   do not remove!!!! 
\hb{B_\iy^s\doteq BC^s} for 
\hb{s\in\BR^+\ssm\BN}.  
\item[{\rm(ii)}] 
${}$%                   do not remove!!!! 
\hb{\BUC^k\hr B_\iy^k} and 
\hb{\BUC^k\neq B_\iy^k}.\po 
\end{itemize} 
\end{corollary}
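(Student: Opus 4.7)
\begin{proofc}{plan for Corollary~\ref{cor-PH.B}}
Both assertions are reduced to Theorem~\ref{thm-PH.B}(i), which identifies $\Vsdot_{s,\iy}^{**}$ as an equivalent norm for~$B_\iy^s$. For part~(i), fix $s\in(k,k+1)$. By definition $BC^s$ is the subset of $u\in BC^k$ for which $\max_{|\al|=k}[\pa_xu]_{s-k,\iy}<\iy$, endowed with precisely the norm $\Vsdot_{s,\iy}^{**}$. In view of Theorem~\ref{thm-PH.B}(i) it then suffices to prove the set equality $BC^s=B_\iy^s$, that is, $BC^s\is\BUC^k$. Given $u\in BC^s$ and $|\al|=k$, Hölder continuity of $\pa_xu$ of exponent $s-k>0$ trivially forces uniform continuity of $\pa_xu$. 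For $|\al|<k$, on the other hand, $\pl_i\pa_xu=\pl^{\al+e_i}u\in BC$ for every $i$, so the mean value theorem renders $\pa_xu$ Lipschitz, hence uniformly continuous. Thus $BC^s\is\BUC^k$ and the identification is complete.

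For the embedding in~(ii), assume $k\in\BN^\times$ and apply Theorem~\ref{thm-PH.B}(i) with the integer value $s=k$ (viewed as $k-1<s\leq k$): the equivalent norm for $B_\iy^k$ reads
$$
\|u\|_{k,\iy}^{**}=\|u\|_{k-1,\iy}+\max_{|\al|=k-1}[\pa_xu]_{1,\iy},
\qquad [\pa_xu]_{1,\iy}=\sup_{h\neq 0}\|\tri_h^2\pa_xu\|_\iy/|h|.
$$
For $u\in\BUC^k$ and $|\al|=k-1$, the representation
$$
\tri_h^2(\pa_xu)(x)=\int_0^1\bigl(D\pa_xu(x+h+th)-D\pa_xu(x+th)\bigr)[h]\,dt
$$
together with the bound $\|D\pa_xu\|_\iy\leq\|u\|_{k,\iy}$ yields $\|\tri_h^2\pa_xu\|_\iy\leq 2|h|\,\|u\|_{k,\iy}$, and hence $\|u\|_{k,\iy}^{**}\leq c\,\|u\|_{k,\iy}$. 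This establishes the continuous embedding $\BUC^k\hr B_\iy^k$.

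It remains to show the strict inclusion, for which it is enough to produce a single $u\in B_\iy^k\ssm\BUC^k$. Start with $k=1$ and $\BK=\BR$: choose $\chi\in\cD\bigl(\BR,[0,1]\bigr)$ equal to~$1$ near the origin and set $f(x):=\chi(x)\,x\log|x|$ (with $f(0):=0$). A direct computation using $f''(x)=1/x+\text{bounded}$ on $\supp\chi\ssm\{0\}$ and the formula $\tri_h^2f(x)=\int_0^h\int_y^{y+h}f''(z)\,dz\,dy$ shows $\sup_{x,h}|\tri_h^2f(x)|/|h|<\iy$, so $f\in B_\iy^1$ by Theorem~\ref{thm-PH.B}(i), whereas $f'(x)\sim\log|x|$ is unbounded near~$0$, so $f\notin BC^1\supseteq\BUC^1$. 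For general $m$ and $\cX$, tensoring with a fixed bump in the remaining variables and a unit vector $\mf{x}_0\in\cX$ gives the desired element of $B_\iy^1\ssm\BUC^1$; for $k\geq 2$, $(k-1)$-fold indefinite integration (suitably truncated) transports the example to $B_\iy^k\ssm\BUC^k$. The only step with any real content is the explicit verification of the Zygmund bound for $x\log|x|$, and this is the standard argument. The inclusions, by contrast, follow mechanically from Theorem~\ref{thm-PH.B}(i) and elementary difference estimates.
\end{proofc}
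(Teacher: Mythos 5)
Your proof is essentially correct and tracks the paper's proof closely in substance, though you substitute explicit computations for the paper's references. For part~(i), both you and the paper reduce the claim to Theorem~\ref{thm-PH.B}(i) together with the observation that every $u\in BC^s$ automatically lies in $\BUC^k$ — an inclusion the paper already records as part of \Eqref{PH.BCUC}, whereas you reprove it from scratch via the mean value theorem; the logic is the same. For part~(ii), the paper cites \cite[Theorem~3.5.2]{Ama09a} for the embedding and Stein's Example~IV.4.3.1 \cite{Ste70a} for the strict inclusion, while you give a direct second-difference estimate for the embedding and construct the standard Zygmund example $\chi(x)\,x\log|x|$ by hand. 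Both are legitimate and yield the same content.

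One small technical caution in your verification of the Zygmund bound: the identity $\tri_h^2 f(x)=\int_0^h\int_y^{y+h}f''(x+z)\,dz\,dy$ breaks down when the interval $[x,x+2h]$ straddles the origin, since $f''(z)\sim 1/z$ is not locally integrable there. The correct way to close the gap is the direct computation: writing $x=-th$ for $t\in[0,2]$, the coefficient of $\log h$ in $\tri_h^2(x\log|x|)$ at $x=-th$ is $h\bigl[(2-t)-2(1-t)-t\bigr]=0$, so $|\tri_h^2 f|\leq ch$ uniformly. The conclusion of your example is therefore correct, but the double-integral representation you invoke does not by itself justify the estimate near the singularity.
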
 }
%=====================================================================
\begin{proof} 
(i)~is implied by part~(i) of the theorem.  

\smallskip 
(ii) 
The first claim is a consequence of \cite[Theorem~3.5.2]{Ama09a}. It 
follows from Example~IV.4.3.1 in E.~Stein~\cite{Ste70a} that the `Zygmund 
space'~$B_\iy^1$ contains functions which are not uniformly Lipschitz 
continuous. This proves the second statement. 
\end{proof} 
By \Eqref{PH.BCUC} we see that 
$$  
BC^{s_1}\hr BC^{s_0} 
\qa 0\leq s_0<s_1. 
$$ 
However, these embeddings are not dense. Since dense embeddings are of great 
importance in the theory of elliptic and parabolic differential equations 
we introduce the smaller subscale of `little' H\"older spaces which enjoy 
the desired property. 

\smallskip 
Suppose 
\hb{s\in\BR^+}. The \hh{little H\"older space}  
$$ 
bc^s=bc^s\BXcX\text{ is the closure of $BC^\iy$ in }BC^s. 
$$ 
Similarly, the \hh{little Besov-H\"older space scale}  
\hb{[\,b_\iy\ ;\ s>0\,]} is defined by 
\beq\Label{PH.bdef} 
b_\iy^s\text{ is the closure of $BC^\iy$ in }B_\iy^s. 
\npb 
\eeq 
These spaces possess intrinsic characterizations.\po 
%---------------------------------------------------------------------
{\samepage 
\begin{theorem}\LabelT{thm-PH.bc} 
\begin{itemize} 
\item[{\rm(i)}] 
${}$%                   do not remove!!!! 
\hb{bc^k=\BUC^k}.\po 
\item[{\rm(ii)}] 
${}$%                   do not remove!!!! 
\hb{b_\iy^s\doteq bc^s} for 
\hb{s\in\BR^+\ssm\BN}.\po 
\item[{\rm(iii)}] 
Suppose 
\hb{k<s\leq k+1}. Then 
\hb{u\in B_\iy^s} belongs to~$b_\iy^s$ iff 
\beq\Label{PH.del2} 
\lim_{\da\ra0}[\pa_xu]_{s-k,\iy}^\da=0 
\qa |\al|=k. 
\eeq 
\item[{\rm(iv)}] 
${}$%                   do not remove!!!! 
\hb{BC^s\sdh b_\iy^{s_0}} \,for 
\hb{0<s_0<s}.\po  
\end{itemize} 
\end{theorem}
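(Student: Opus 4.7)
Part~(i) is immediate from the ingredients already prepared: by \eqref{PH.Binft} and \eqref{PH.BBU}, $BC^\iy\subset\BUC^k$ with $BC^\iy$ dense in $\BUC^k$, while $\BUC^k$ is closed in $BC^k$ by \eqref{PH.BUB}; hence the closure of $BC^\iy$ in $BC^k$ equals $\BUC^k$. Part~(ii) follows from Corollary~\ref{cor-PH.B}(i), which supplies $B_\iy^s\doteq BC^s$ for $s\in\BR^+\ssm\BN$, so the two closures coincide. The density half of part~(iv) is likewise trivial, since $BC^\iy\subset BC^s$ and $b_\iy^{s_0}$ is, by definition~\eqref{PH.bdef}, the closure of $BC^\iy$.

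For~(iii) I would work with the equivalent norm $\Vsdot_{s,\iy}^{**}$ from Theorem~\ref{thm-PH.B}(i). \emph{Only if}: given $u\in b_\iy^s$ and $\ve>0$, pick $u_n\in BC^\iy$ with $\|u-u_n\|_{s,\iy}^{**}<\ve$; then for $|\al|=k$,
\[
[\pa u]_{s-k,\iy}^\da\le[\pa u_n]_{s-k,\iy}^\da+[\pa(u-u_n)]_{s-k,\iy}\le[\pa u_n]_{s-k,\iy}^\da+c\ve.
\]
Since $\pa u_n\in BC^\iy$, inequality~\eqref{PH.BCk} (with $k'=1$ if $s<k+1$, respectively $k'=2$ if $s=k+1$) forces the first summand to vanish as $\da\to0$, yielding \eqref{PH.del2}.

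\emph{If}: set $u_\eta:=w_\eta*u\in BC^\iy$ by~\eqref{PH.con}. Theorem~\ref{thm-PH.B}(ii) combined with $\BUC^{k_1}\hr\BUC^k$ for $k_1>k$ gives $B_\iy^s\hr\BUC^k$; thus $u\in\BUC^k$ and \eqref{PH.app} provides $\|u_\eta-u\|_{k,\iy}\to0$. Commuting $\tri_h^{k'+1}$ with convolution and using $\|w_\eta*v\|_\iy\le\|v\|_\iy$, a split of the supremum at $|h|=\da$ yields for $|\al|=k$ the key estimate
\[
[\pa(u_\eta-u)]_{s-k,\iy}\le 2\,[\pa u]_{s-k,\iy}^\da+C\da^{-(s-k)}\|\pa(u_\eta-u)\|_\iy.
\]
Given $\ve>0$, choose $\da$ so small, using~\eqref{PH.del2}, that the first summand is below $\ve$; then let $\eta\to0$ to absorb the second. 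Hence $u_\eta\to u$ in $B_\iy^s$ and $u\in b_\iy^s$.

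For the continuous embedding in~(iv), by~(iii) applied at $s_0$ it suffices to verify $\lim_{\da\to0}[\pa u]_{s_0-k_0,\iy}^\da=0$ for every $u\in BC^s$ with $|\al|=k_0$, where $k_0\le s_0\le k_0+1$. A short case analysis does this: when $s-k_0<1$ and $s>s_0$, inequality~\eqref{PH.lHd} yields the factor $\da^{(s-k_0)-(s_0-k_0)}$; when $s-k_0\ge1$ and $s_0<k_0+1$, \eqref{PH.BCk} with $k'=1$ gives $\da^{1-(s_0-k_0)}$; when $s_0-k_0=1$ and $s-k_0\in(1,2)$, a direct factorization $|h|^{s-k_0-1}\le\da^{s-k_0-1}$ inside the supremum defining $[\pa u]_{1,\iy}^\da$ produces the needed decay; and when $s-k_0\ge2$, \eqref{PH.BCk} with $\ta=1$, $k'=2$ gives $\da$. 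In every case a positive power of $\da$ survives. The principal obstacle throughout is the interplay between first- and second-order differences at the integer values $s=k+1$ in~(iii) and $s_0=k_0+1$ or $s\in\BN$ in~(iv): each such transition must be handled separately with the appropriate order of iterated differences, but the mollifier-plus-splitting template is uniform across all cases.
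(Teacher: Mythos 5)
Your arguments for (i) and (ii) are correct and coincide with the paper's; for (iii) and (iv) you also follow the paper's overall strategy (approximation in the forward direction, a mollifier combined with a seminorm splitting at $|h|=\da$ in the converse, and a case analysis at integer boundaries), and the details you supply are sound. One genuine gap: in the \emph{if} direction of (iii), the mollification $u_\eta=w_\eta*u$ is only defined, and the approximation statements \Eqref{PH.con} and \Eqref{PH.app} that you invoke only hold, for $\BX=\BR^m$; the theorem also covers $\BX=\BH^m$, and on the half-space $w_\eta*u$ does not make sense as written because $x-y$ leaves $\BH^m$. The paper devotes a separate step to precisely this point, transferring the $\BR^m$ result to $\BH^m$ by the standard extension--restriction argument (the extension operator (4.1.7) of Amann's monograph, exactly as in the derivation of \Eqref{PH.BBU}). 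Your proof needs the same step; without it, the characterization in (iii) is established only on $\BR^m$, and then (iv) inherits the restriction. As to (iv), your case analysis, leaning on both \Eqref{PH.lHd} and \Eqref{PH.BCk}, is heavier than the paper's one-line appeal to \Eqref{PH.BCUC} and \Eqref{PH.lHd}, but it is correct and actually more explicit about the delicate transition at $s_0=k_0+1$, where \Eqref{PH.lHd} alone is silent since it requires $\ta\le1$.
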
 }
%=====================================================================
\begin{proof} 
(1) 
Assertion~(i) is a consequence of \Eqref{PH.BBU}. Statement~(ii) follows 
from Corollary~\ref{cor-PH.B}(i). 

\smallskip 
(2) 
Suppose 
\hb{k<s\leq k+1}. We denote by~$\wt{b}_\iy^s$ the linear subspace 
of~$B_\iy^s$ of all~$u$ satisfying \Eqref{PH.del2}. Then we infer from 
\Eqref{PH.BCk} that 
\beq\Label{PH.BUCb} 
BC^\iy\hr\BUC^{k+1}\hr\wt{b}_\iy^s. 
\eeq 
Let 
\hb{u\in b_\iy^s} and 
\hb{\ve>0}. Then \Eqref{PH.BCUC} implies the existence of 
\hb{v\in\BUC^{k+2}} with 
\hb{\|u-v\|_{s,\iy}^{**}<\ve/2}. By \Eqref{PH.BUCb} we can find 
\hb{\da_\ve>0} such that 
\hb{[\pa_xv]_{s-k,\iy}^{\da_\ve}\leq\ve/2} for 
\hb{|\al|=k} and 
\hb{0<\da\leq\da_\ve}. Hence 
$$ 
[\pa_xu]_{s-k,\iy}^\da\leq\bigl[\pa_x(u-v)\bigr]_{s-k,\iy} 
+[\pa_xv]_{s-k,\iy}^{\da_\ve}\leq\|u-v\|_{s,\iy}^{**}+\ve/2<\ve 
\npb 
$$ 
for 
\hb{|\al|=k} and 
\hb{\da\leq\da_\ve}. This proves 
\hb{b_\iy^s\is\wt{b}_\iy^s}. 

\smallskip 
(3) 
Suppose 
\hb{\BX=\BR^m} and 
\hb{u\in\wt{b}_\iy^s}. We claim that 
\hb{w_\eta*u} converges in~$B_\iy^s$ towards~$u$ as 
\hb{\eta\ra0}. Using \Eqref{PH.app} and 
\hb{\pa_x(w_\eta*u)=w_\eta*\pa_xu} we can assume 
\hb{0<s\leq1} and then have to show 
\beq\Label{PH.wet} 
[w_\eta*u-u]_{s,\iy}\ra0 
\quad\text{as }\eta\ra0. 
\eeq 
Note 
$$ 
w_\eta*u(x)-u(x) 
=\int\bigl(u(x-y)-u(x)\bigr)w_\eta(y)\,dy. 
$$ 
From this we infer 
\beq\Label{PH.wdel} 
[w_\eta*u-u]_{s,\iy}^\da\leq2\,[u]_{s,\iy}^\da 
\qa \da>0. 
\eeq 
Fix 
\hb{\da_\ve>0} such that 
\hb{[u]_{s,\iy}^\da<\ve/4}. Then we get from \Eqref{PH.del1} and 
\Eqref{PH.wdel} that there exists 
\hb{\eta_\ve>0} such that 
$$ 
[w_\eta*u-u]_{s,\iy} 
\leq\ve/2+4\da_\ve^{-s}\,\|w_\eta*u-u\|_\iy\leq\ve 
\npb 
$$ 
for 
\hb{\eta\leq\eta_\ve}, due to 
\hb{B_\iy^s\hr\BUC} and \Eqref{PH.app}. This proves \Eqref{PH.wet}. Thus 
\hb{\wt{b}_\iy^s\is b_\iy^s}. 

\smallskip 
(4) 
If 
\hb{\BX=\BH^m}, then we get 
\hb{\wt{b}_\iy^s\is b_\iy^s} from~(3) and a standard extension and 
restriction argument based on the extension operator~(4.1.7) 
of~\cite{Ama09a}. Together with the result of step~(2) this proves 
claim~(iii). The last assertion follows from \Eqref{PH.BCUC} and 
\Eqref{PH.lHd}. 
\end{proof} 
It should be remarked that assertion~(iii) is basically known (see, for 
example, Proposition~0.2.1 in A.~Lunardi~\cite{Lun95a}, where the case 
\hb{m=1} is considered). The proof is included here for further reference. 

\smallskip 
Little Besov-H\"older spaces can be characterized by interpolation as well. 
For this we recall that, given Banach spaces 
\hb{\cX_1\sdh\cX_0}, the \emph{continuous interpolation space} 
$\cXncXe_{\ta,\iy}^0$ of exponent 
\hb{\ta\in(0,1)} is the closure of~$\cX_1$ in $\cXncXe_{\ta,\iy}$. This 
defines an interpolation functor of exponent~$\ta$ in the category of 
densely injected Banach couples, the \emph{continuous interpolation 
functor}. It possesses the reiteration property 
(cf.~\cite[Section~I.2]{Ama95a} for more details and, in particular, 
G.~Dore and A.Favini~\cite{DoF87a}).\po 
%---------------------------------------------------------------------
{\samepage 
\begin{theorem}\LabelT{thm-PH.bcint} 
\begin{itemize} 
\item[{\rm(i)}] 
Suppose 
\hb{k_0<s<k_1} with 
\hb{s\notin\BN}. Then 
\hb{(bc^{k_0},bc^{k_1})_{(s-k_0)/(k_1-k_0),\iy}^0\doteq b_\iy^s}. 
\item[{\rm(ii)}] 
If\/ 
\hb{0<s_0<s_1} and 
\hb{0<\ta<1}, then 
\hb{(b_\iy^{s_0},b_\iy^{s_1})_{\ta,\iy}^0 
   \doteq b_\iy^{s_\ta}\doteq[b_\iy^{s_0},b_\iy^{s_1}]_\ta}.\po 
\end{itemize} 
\end{theorem}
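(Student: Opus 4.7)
The plan is to establish (i) by a direct identification of the continuous interpolation space, deduce (ii)(a) by reiteration, and treat the complex interpolation identity in (ii) by adapting the Seeley-theorem argument from the proof of Theorem~\ref{thm-PH.B}.

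For (i), I fix $k_0<s<k_1$ with $s\notin\BN$ and set $\eta=(s-k_0)/(k_1-k_0)$. Theorem~\ref{thm-PH.bc}(i) gives $bc^{k_i}=\BUC^{k_i}$, and Theorem~\ref{thm-PH.B}(ii) identifies $(\BUC^{k_0},\BUC^{k_1})_{\eta,\iy}\doteq B_\iy^s$; hence, by definition, $(bc^{k_0},bc^{k_1})_{\eta,\iy}^0$ equals the closure of $\BUC^{k_1}$ inside $B_\iy^s$. The inclusion of $b_\iy^s$ into this closure is immediate from $BC^\iy\hookrightarrow\BUC^{k_1}$ together with definition~\Eqref{PH.bdef}. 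For the converse, Theorem~\ref{thm-PH.bc}(iv) applied with $s<k_1$ yields $BC^{k_1}\sdh b_\iy^s$, so $\BUC^{k_1}\subset b_\iy^s$; since $b_\iy^s$ is closed in $B_\iy^s$, the closure of $\BUC^{k_1}$ inside $B_\iy^s$ is trapped in $b_\iy^s$.

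For (ii)(a) I proceed by reiteration of the continuous interpolation functor (Dore--Favini~\cite{DoF87a}). I pick integers $k_0<s_0<s_1<k_1$ and apply (i)---whose proof just given never actually used $s\notin\BN$, so the identification is valid also at integer values---to write $b_\iy^{s_i}\doteq(bc^{k_0},bc^{k_1})_{\eta_i,\iy}^0$ with $\eta_i=(s_i-k_0)/(k_1-k_0)$. Reiteration then gives
\[
(b_\iy^{s_0},b_\iy^{s_1})_{\ta,\iy}^0
\doteq(bc^{k_0},bc^{k_1})_{(1-\ta)\eta_0+\ta\eta_1,\iy}^0
\doteq b_\iy^{s_\ta},
\]
since $(1-\ta)\eta_0+\ta\eta_1=(s_\ta-k_0)/(k_1-k_0)$.

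The main obstacle is the complex interpolation equivalence $b_\iy^{s_\ta}\doteq[b_\iy^{s_0},b_\iy^{s_1}]_\ta$. One embedding is cheap: since $b_\iy^{s_1}$ is dense in $[b_\iy^{s_0},b_\iy^{s_1}]_\ta$ (standard property of complex interpolation on densely injected couples) and the Bergh--L\"ofstr\"om inclusion $[b_\iy^{s_0},b_\iy^{s_1}]_\ta\hookrightarrow(b_\iy^{s_0},b_\iy^{s_1})_{\ta,\iy}$ holds, the complex interpolant sits inside $(b_\iy^{s_0},b_\iy^{s_1})_{\ta,\iy}^0\doteq b_\iy^{s_\ta}$ by (ii)(a). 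The reverse is the delicate step, and my plan is to mimic step~(2) of the proof of Theorem~\ref{thm-PH.B}(iii) with $B_\iy$ replaced by $b_\iy$: the Bessel operator $\mf{\Lda}^{\coLda s}$ preserves $BC^\iy$ (write $\mf{\Lda}^{\coLda s}=\mf{\Lda}^{\coLda s-2N}(1-\Delta)^N$ for $N\in\BN$ with $2N>s$ and use that $\mf{\Lda}^{\coLda s-2N}$ has integrable convolution kernel), so by~\Eqref{PH.lam} and continuity it restricts to an isomorphism $b_\iy^{t+s}\to b_\iy^t$ for $t,s+t>0$. The operator $A=-\mf{\Lda}^{\coLda s_1-s_0}$, viewed on $b_\iy^{s_0}$ with domain $b_\iy^{s_1}$, then inherits from its action on $B_\iy^{s_0}$ both the sectoriality bound and the exponential estimate $\|A^{iy}\|\leq c\kern1pt e^{\ga|y|}$, so Seeley's theorem delivers $[b_\iy^{s_0},b_\iy^{s_1}]_\ta\doteq b_\iy^{s_\ta}$. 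The only nontrivial verification is the preservation of $BC^\iy$ under $\mf{\Lda}^{\coLda s}$, which I expect to be the technical heart of the argument.
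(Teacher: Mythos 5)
Your proposal is correct and follows essentially the same route as the paper: identify the continuous interpolation spaces via Theorems~\ref{thm-PH.B} and~\ref{thm-PH.bc}, then deduce the complex-interpolation identity by running the Seeley argument on the $b_\iy$-scale. Two remarks. First, for showing that $\mf{\Lda}^{\coLda s}$ preserves $BC^\iy$ you decompose $\mf{\Lda}^{\coLda s}=\mf{\Lda}^{\coLda s-2N}(1-\Delta)^N$ and invoke integrability of the Bessel kernel; the paper reaches the same conclusion more economically by first noting $BC^\iy=\bigcap_{s>0}B_\iy^s$ (from~\Eqref{PH.Binft}, \Eqref{PH.BCUC}, Corollary~\ref{cor-PH.B}) and then applying~\Eqref{PH.lam}. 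Both work, but the intersection characterization is the cleaner and more scalable observation. Second, your claim that the sectoriality and imaginary-power bounds are ``inherited'' from $B_\iy^{s_0}$ is not quite automatic: one needs that the resolvents $(\lda-A)^{-1}$ and the fractional powers $A^z$ (which are Fourier multipliers) actually map $b_\iy^{s_0}$ into $b_\iy^{s_1}$ resp.\ $b_\iy^{s_0}$, and this is precisely what the paper secures by pointing out that the operator-valued Fourier multiplier Theorem~3.4.2 of~\cite{Ama09a} also holds on the $b_\iy^s$-scale (citing~\cite{Ama97b}, Theorem~6.2). You correctly identify preservation of $BC^\iy$ as the crux, but you should either deduce the restriction of the multiplier theorem to $b_\iy$ from it (by density of $BC^\iy$ in $b_\iy^{s_0}$), or cite the $b_\iy$-multiplier theorem explicitly; as written the inheritance step hides this verification. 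Your more detailed treatment of part~(i) (via the two-sided closure argument with Theorem~\ref{thm-PH.bc}(iv)) and your reiteration argument for the real-method half of~(ii) are fine and match what the paper compresses into one sentence.
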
 }
%=====================================================================
\begin{proof} 
(1) 
The validity of~(i) and the first part of~(ii) follow from 
Theorem~\ref{thm-PH.B}(ii) and~(iii) and Theorem~\ref{thm-PH.bc}(i). 

\smallskip 
(2) 
We deduce from \Eqref{PH.Binft}, \,\Eqref{PH.BCUC}, and 
Corollary~\ref{cor-PH.B} that 
\hb{BC^\iy=\bigcap_{s>0}B_\iy^s}. From this and \Eqref{PH.lam} we infer 
\hb{\mf{\Lda}^{\coLda s}\in\Laut(BC^\iy)}. Hence, using the definition of 
the little Besov-H\"older spaces and once more \Eqref{PH.lam} and 
Corollary~\ref{cor-PH.B}, we find 
$$ 
\mf{\Lda}^{\coLda s}\in\Lis(b_\iy^{t+s},b_\iy^t) 
\qb (\mf{\Lda}^{\coLda s})^{-1}=\mf{\Lda}^{\coLda-s} 
\qa t,\ t+s>0. 
$$ 
Thus the relevant arguments of part~(2) of the proof of 
Theorem~\ref{thm-PH.B} apply literally to give the second part of~(ii). This 
is due to the fact that the Fourier multiplier Theorem 
\cite[Theorem~3.4.2]{Ama09a} holds for $b_\iy^s$ also (see 
\cite[Theorem~6.2]{Ama97b}). 
\end{proof} 
Now we turn to anisotropic spaces. We set 
$$ 
BC^{kr/\vec r} 
:=\bigl(\bigl\{\,u\in C(\BX\times J,\cX) 
\ ;\ \pa_x\pl^ju\in BC(\BX\times J,\cX), 
\ |\al|+jr\leq kr\,\bigr\}, 
\ \Vsdot_{kr/\vec r}\bigr), 
$$ 
where 
$$ 
\|u\|_{kr/\vec r} 
:=\max_{|\al|+jr\leq kr}\|\pa_x\pl^ju\|_\iy. 
$$  
This space is complete and contains 
$$ 
\BUC^{kr/\vec r} 
:=\bigl\{\,u\in BC^{kr/\vec r}\ ;\ \pa_x\pl^ju\in\BUC(\BX\times J,\cX), 
\ |\al|+jr\leq kr\,\bigr\}   
\npb 
$$ 
as a closed linear subspace. 
%---------------------------------------------------------------------
\begin{proposition}\LabelT{pro-PH.BUC} 
\hb{\BUC^{kr/\vec r}=\bigcap_{j=0}^k\BUC^j(J,\BUC^{(k-j)r})}. 
\end{proposition}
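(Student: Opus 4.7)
The inclusion $(\subseteq)$ unpacks directly from the definitions. First I would observe that if $u\in\BUC^{kr/\vec r}$, then for every $j\in\{0,\dots,k\}$, $j'\leq j$, and multi-index $\al$ with $|\al|\leq(k-j)r$ one has $|\al|+j'r\leq kr$, so $\pa_x\pl^{j'}u\in\BUC(\BX\times J,\cX)$ is bounded and jointly uniformly continuous. Taking suprema over $x$ shows that $\pl^{j'}u(\cdot,t)\in\BUC^{(k-j)r}(\BX,\cX)$ is uniformly bounded in $t$, and joint uniform continuity gives uniform continuity of $t\mapsto\pl^{j'}u(\cdot,t)$ into $\BUC^{(k-j)r}$, since the target norm is the maximum over only finitely many spatial-derivative $L_\iy$-seminorms. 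Hence $u\in\BUC^j(J,\BUC^{(k-j)r})$ for each $j$.

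For the reverse inclusion, let $u$ lie in the intersection. Existence, continuity, and the commutation identity $\pa_x\pl^j u=\pl^j\pa_x u$ for $|\al|+jr\leq kr$ follow by applying Schwarz's theorem on mixed partials in the appropriate intersected space (where all requisite derivatives are continuous by construction), and uniform boundedness is immediate. The main content is joint uniform continuity of each such $\pa_x\pl^j u$ on $\BX\times J$. I would estimate
\[
\bigl\|\pa_x\pl^j u(x_1,t_1)-\pa_x\pl^j u(x_2,t_2)\bigr\|_\cX
\leq\bigl\|\pl^j u(\cdot,t_1)-\pl^j u(\cdot,t_2)\bigr\|_{\BUC^{(k-j)r}}
+\bigl\|\pa_x\pl^j u(x_1,t_2)-\pa_x\pl^j u(x_2,t_2)\bigr\|_\cX.
\]
The first (time-oscillation) term is small for $|t_1-t_2|$ small by the uniform continuity of $\pl^j u\colon J\to\BUC^{(k-j)r}$. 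The second requires a $t_2$-uniform spatial modulus of continuity for the family $\{\pa_x\pl^j u(\cdot,t)\colon t\in J\}$.

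In the generic case $|\al|<(k-j)r$, the uniform bound $\|\pl^j u(\cdot,t)\|_{\BUC^{(k-j)r}}\leq C$ places $\pa_x\pl^j u(\cdot,t)$ in $\BUC^1$ with norm at most $C$ uniformly in $t$, and the mean value theorem then furnishes a uniform Lipschitz constant in $x$, which combined with the time-term gives joint uniform continuity.

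The boundary case $|\al|=(k-j)r$ is where the degenerate target $\BUC^0$ prevents the Lipschitz argument, and this is what I expect to be the main obstacle. I would handle it by trading between time- and space-regularity using a neighbouring piece of the intersection: for $j\geq1$, the auxiliary section $w:=\pa_x\pl^{j-1}u$ lies in $\BUC(J,\BUC^r)$ uniformly via $u\in\BUC^{j-1}(J,\BUC^{(k-j+1)r})$ (so the interior Lipschitz bound applies to $w$ in $x$), while $u\in\BUC^j(J,\BUC^{(k-j)r})$ yields $w\in\BUC^1(J,\BUC)$ with $\pl w=\pa_x\pl^j u$; combining the two via an induction on $k$ (with base case reducing to the $k=1$ situation) together with the difference-quotient representation of $\pl w$ produces the missing joint continuity of $\pa_x\pl^j u$. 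The symmetric corner $(|\al|,j)=(kr,0)$ is treated by exchanging the roles of $x$ and $t$ using the time-integral formula $\pa_x u(\cdot,t+h)-\pa_x u(\cdot,t)=\int_t^{t+h}\pa_x\pl u(\cdot,s)\,ds$, whose integrand is interior-controlled by $u\in\BUC^1(J,\BUC^{(k-1)r})$ after writing $\pa_x=\pa_{x_i}\pa_x'$ with $|\al'|<kr$. Bootstrapping the induction correctly through the boundary corners is the principal technical step.
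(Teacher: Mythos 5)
Your $(\subseteq)$ direction glosses over what is actually the substantive step. That the candidate $j'$-th derivative $t\mapsto\pl^{j'}u(\cdot,t)$ is bounded and uniformly continuous as a map into $\BUC^{(k-j)r}$ does follow easily, as you say; but membership of $u$ in $\BUC^j(J,\BUC^{(k-j)r})$ requires in addition that $t\mapsto u(\cdot,t)$ be $j$ times Fr\'echet differentiable \emph{in the Banach space} $\BUC^{(k-j)r}$, with precisely those candidates as derivatives, and this is strictly more than the pointwise-in-$x$ $t$-differentiability built into $u\in\BUC^{kr/\vec r}$. The whole content of the paper's step~(2) is exactly this passage: from the mean value theorem,
\[
\pa_x\pl^ju(x,t+h)-\pa_x\pl^ju(x,t)-h\,\pa_x\pl^{j+1}u(x,t)
= h\int_0^1\bigl(\pa_x\pl^{j+1}u(x,t+\tau h)-\pa_x\pl^{j+1}u(x,t)\bigr)\,d\tau,
\]
and the joint uniform continuity of $\pa_x\pl^{j+1}u$ makes the right side $o(|h|)$ uniformly over $x\in\BX$, which is what upgrades pointwise differentiability to Fr\'echet differentiability in $B\BXcX$. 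Asserting that ``joint uniform continuity gives uniform continuity of $t\mapsto\pl^{j'}u(\cdot,t)$'' establishes regularity of the would-be derivative, not differentiability of the map; the integral-remainder argument is not dispensable.

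On the $(\supseteq)$ direction your emphasis is the mirror image of the paper's: you pour the bulk of your effort into it, while the paper dismisses it in one line as ``an obvious consequence of step~(1)''. Your instinct that a genuine obstacle sits at the boundary indices $|\al|=(k-j)r$ --- that a $t$-uniform spatial modulus of continuity for $\pa_x\pl^ju(\cdot,t)$ does not fall out of $\pa_x\pl^ju\in\BUC(J,\BUC)$ --- is in fact sound and deserves more scrutiny than the paper gives it. But your proposed repair does not close the gap. In the corner $(|\al|,j)=(kr,0)$ the identity $\pa_xu(\cdot,t+h)-\pa_xu(\cdot,t)=\int_t^{t+h}\pa_x\pl u(\cdot,s)\,ds$ involves $\pa_x\pl u$ of anisotropic order $|\al|+r=(k+1)r>kr$, a derivative not controlled by $\BUC^{kr/\vec r}$ nor by any space in the intersection; factoring $\pa_x=\pa_{x_i}\pa_x'$ does not lower this total order, it merely relabels one spatial factor. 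The base case of your induction is therefore unavailable and the corner stays open.
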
 
%=====================================================================
\begin{proof} 
(1) 
Due to 
\hb{u(x,t)-u(y,s)=u(x,t)-u(y,t)+u(y,t)-u(y,s)} for 
\hb{(x,t),(y,s)\in\BX\times J}, the claim is immediate for 
\hb{k=0}. 

\smallskip 
(2) 
Suppose 
\hb{k\in\BN^\times} and 
\hb{u\in\BUC^{kr/\vec r}}. Suppose also 
\hb{0\leq j\leq k-1} and 
\hb{|\al|\leq(k-j)r}. Then, by the mean value theorem, 
$$ 
\pa_x\pl^ju(x,t+h)-\pa_x\pl^ju(x,t)-h\pa_x\pl^{j+1}u(x,t) 
=h\int_0^1\big(\pa_x\pl^{j+1}u(x,t+\tau h) 
-\pa_x\pl^{j+1}u(x,t)\big)\,d\tau 
$$ 
for 
\hb{x\in\BX} and 
\hb{t,h\in J}. Thus, given 
\hb{\ve>0}, the uniform continuity of~$\pa_x\pl^{j+1}u$ implies the 
existence of 
\hb{\da>0} such that 
$$ 
\bal 
\big\|h^{-1}\big(\pa_x\pl^ju(\cdot,t+h) 
 -\pa_x\pl^ju(\cdot,t)\big)
&-\pa_x\pl^{j+1}u(\cdot,t)\big\|_{\iy;\cX}\\ 
 \leq\max_{0\leq\tau\leq1}\|\pa_x\pl^{j+1}u(\cdot,t+\tau h) 
&-\pa_x\pl^{j+1}u(\cdot,t)\|_\iy 
 \leq\ve 
\eal 
$$ 
for 
\hb{h\in J\ssm\{0\}} with 
\hb{|h|\leq\da}. Hence the map 
\hb{\big(t\mt\pa_x\pl^ju(\cdot,t)\big)\sco J\ra B\BXcX} is differentiable 
and its derivative equals 
\hb{t\mt\pa_x\pl^{j+1}u(\cdot,t)}. From this and step~(1) we infer 
\hb{u\in\BUC^j(J,\BUC^{(k-j)r})} for 
\hb{0\leq j\leq k}. This implies 
\hb{\BUC^{kr/\vec r}\hr\bigcap_{j=0}^k\BUC^j(J,\BUC^{(k-j)r})}. The 
converse embedding is an obvious consequence of step~(1). 
\end{proof} 
It is an immediate consequence of this lemma that 
$$ 
\BUC^{kr/\vec r}\hr\BUC(J,\BUC^{kr})\cap\BUC^k(J,\BUC). 
$$ 
It follows from Remark~1.13.4.2 in~\cite{Tri78a}, for instance, that 
$\BUC^{kr/\vec r}$~is a proper subspace of the intersection space on 
the right hand side. 

\smallskip 
We infer from \Eqref{PH.BUB} that 
\hb{BC^{(k+1)r/\vec r}\hr\BUC^{kr/\vec r}\hr BC^{kr/\vec r}}. 
Consequently, 
\beq\Label{PH.Binf} 
BC^{\iy/\vec r} 
:={\textstyle\bigcap_k}BC^{kr/\vec r} 
={\textstyle\bigcap_k}\BUC^{kr/\vec r} 
=BC^\iy(\BX\times J,\cX). 
\eeq 

\smallskip 
For 
\hb{s>0} we set 
\beq\Label{PH.Nsrst} 
\bal 
\|u\|_{s/\vec r,\iy}^*
&:=\sup_t\|u(\cdot,t)\|_{s,\iy}^* 
 +\sup_x\big[u(x,\cdot)\big]_{s/r,\iy}\\ 
&\ph{:}=\|u\|_\iy 
 +\sup_t\big[u(\cdot,t)\big]_{s,\iy}
 +\sup_x\big[u(x,\cdot)\big]_{s/r,\iy}. 
\eal 
\eeq  
Suppose 
\hb{0<s\leq r}. Then 
$$ 
\|u\|_{s/\vec r,\iy}^{**}
:=\sup_t\|u(\cdot,t)\|_{s,\iy}^{**}+\sup_x\big[u(x,\cdot)\big]_{s/r,\iy}. 
$$ 
If 
\hb{kr<s\leq(k+1)r} with 
\hb{k\in\BN^\times}, then 
\beq\Label{PH.Nsr2s} 
\|u\|_{s/\vec r,\iy}^{**} 
:=\max_{|\al|+jr\leq kr}\|\pa_x\pl^ju\|_{(s-kr)/\vec r,\iy}^{**}. 
\eeq 

\smallskip 
The \hh{anisotropic Besov-H\"older space scale} 
\hb{[\,B_\iy^{s/\vec r}\ ;\ s>0\,]} is defined by 
$$ 
B_\iy^{s/\vec r} 
:=\left\{
\bal
{}      %do not remove!
&(\BUC^{kr/\vec r},\BUC^{(k+1)r/\vec r})_{(s-kr)/r,\iy},
    &\quad  kr<s &<(k+1)r,\\ 
&(\BUC^{kr/\vec r},\BUC^{(k+2)r/\vec r})_{1/2,\iy}, 
    &\quad     s &=(k+1)r. 
\eal 
\right. 
\npb 
$$  
The next theorem is the anisotropic analogue of Theorem~\ref{thm-PH.B}.\po 
%---------------------------------------------------------------------
{\samepage 
\begin{theorem}\LabelT{thm-PH.BJ} 
\begin{itemize} 
\item[{\rm(i)}] 
${}$%                   do not remove!!!! 
\hb{\Vsdot_{s/\vec r,\iy}^*} and 
\hb{\Vsdot_{s/\vec r,\iy}^{**}} are norms for~$B_\iy^{s/\vec r}$.\po 
\item[{\rm(ii)}] 
Suppose 
\hb{k_0r<s<k_1r}. Then 
\hb{(\BUC^{k_0r/\vec r},\BUC^{k_1r/\vec r})_{(s-k_0r)/(k_1-k_0)r,\iy} 
   \doteq B_\iy^{s/\vec r}}.\po  
\item[{\rm(iii)}] 
If\/ 
\hb{0<s_0<s_1} and 
\hb{0<\ta<1}, then 
\hb{(B_\iy^{s_0/\vec r},B_\iy^{s_1/\vec r})_{\ta,\iy} 
   \doteq B_\iy^{s_\ta/\vec r} 
   \doteq[B_\iy^{s_0/\vec r},B_\iy^{s_1/\vec r}]_\ta}. 
\item[{\rm(iv)}] 
${}$%                   do not remove!!!! 
\hb{\pa_x\pl^j\in\cL(B_\iy^{(s+|\al|+jr)/\vec r},B_\iy^{s/\vec r})} for 
\hb{\al\in\BN^m} and 
\hb{j\in\BN}.\po 
\end{itemize} 
\end{theorem}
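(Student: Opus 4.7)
The plan is to mirror, step by step, the strategy of the proof of Theorem~\ref{thm-PH.B}, replacing each isotropic ingredient by its anisotropic counterpart from~\cite{Ama09a} with weight vector $\mf{\nu}=(1,r)$. The pivot of the whole argument is the identification
$$
B_\iy^{s/\vec r}\doteq B_{\iy,\iy}^{s/\mf{\nu}}(\BX\times J,\cX),
$$
where the right-hand side is the Besov space studied by Fourier analytic means in~\cite{Ama09a}. First I would settle the full-space case $\BX\times J=\BR^{m+1}$ via the anisotropic reiteration theorem (the analogues of \cite[(3.3.12), (3.5.2)]{Ama09a} combined with Proposition~\ref{pro-PH.BUC} to identify the base spaces $\BUC^{kr/\vec r}$). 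The half-space case and the corner case $\BH^m\times\BR^+$ are then reduced to the full-space case through the extension operator of \cite[Section 4.1]{Ama09a} and the permutation argument used in step~(1) of the proof of Lemma~\ref{lem-RA.WW}, together with \cite[Theorem 4.4.1]{Ama09a}.

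Once this identification is in hand, assertion (ii) is a direct reiteration statement for the $B_{\iy,\iy}^{s/\mf{\nu}}$-scale. Claim (iii) splits into two parts: the first equivalence is again reiteration; for the second (complex interpolation) equivalence, I would introduce the anisotropic Bessel-type lift operator
$$
\mf{\Lda}^{s/\mf{\nu}}:=\cF^{-1}\bigl(1+|\xi|^{2r}+|\tau|^2\bigr)^{s/2r}\cF,
$$
verify that it is an isomorphism $B_\iy^{(t+s)/\vec r}\ra B_\iy^{t/\vec r}$ for $t,t+s>0$ via the anisotropic Fourier multiplier theorem \cite[Theorem 3.4.2]{Ama09a}, and then transfer the Seeley-type argument of part~(2) of the proof of Theorem~\ref{thm-PH.B} verbatim: sectoriality of $-\mf{\Lda}^{(s_1-s_0)/\mf{\nu}}$ plus the exponential bound $\|(-\mf{\Lda}^{(s_1-s_0)/\mf{\nu}})^z\|\leq c e^{\ga|\Im z|}$ for $\Re z\leq 0$ yields the complex interpolation identity via Seeley's theorem~\cite{See71a}. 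Assertion (iv) follows because $\pa_x\pl^j$ is a Fourier multiplier whose symbol $(i\xi)^\al(i\tau)^j$ is of anisotropic order $|\al|+jr$, so it drops the regularity index by exactly that amount.

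For (i), the argument of part~(3) of the proof of Theorem~\ref{thm-PH.B} transfers: one needs to show that $\Vsdot_{s/\vec r,\iy}^*$ and $\Vsdot_{s/\vec r,\iy}^{**}$ coincide with the anisotropic intrinsic Besov-Hölder norm on $B_{\iy,\iy}^{s/\mf{\nu}}$. For $\Vsdot_{s/\vec r,\iy}^*$ this follows from the anisotropic analogue of \cite[Theorem 3.6.1]{Ama09a}, which decouples spatial and temporal difference seminorms (this is the $L_\iy$-counterpart of the $L_p$-decomposition used in Corollary~\ref{cor-N.B}). For $\Vsdot_{s/\vec r,\iy}^{**}$, use additionally Proposition~\ref{pro-PH.BUC} to identify the lower-order mixed derivative terms, together with \eqref{PH.BCk} to absorb intermediate anisotropic derivatives. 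The standard extension and restriction argument via \cite[(4.1.7)]{Ama09a} and \Eqref{PH.BCUC} handles the passage from $\BR^{m+1}$ to the remaining cases.

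The main obstacle I anticipate is the verification of the sector condition and, in particular, the imaginary-power estimate for the anisotropic operator $\mf{\Lda}^{(s_1-s_0)/\mf{\nu}}$ acting on $\BUC$-based spaces: this requires delicate anisotropic Mikhlin-type estimates on the symbols $(1+|\xi|^{2r}+|\tau|^2)^{z(s_1-s_0)/2r}$ uniformly in $|\Im z|$. In the corner case $\BH^m\times\BR^+$ there is the additional technicality of combining the permutation isomorphism with the reflection-extension procedure of \cite[Section 4.3]{Ama09a} to stay within the framework of Fourier multipliers. Once these reductions are carried out, (i)--(iv) follow mechanically from the corresponding results for $B_{\iy,\iy}^{s/\mf{\nu}}$.
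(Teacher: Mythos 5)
Your proposal matches the paper's proof essentially step for step: the identification $B_\iy^{s/\vec r}\doteq B_{\iy,\iy}^{s/\mf{\nu}}$ via \cite[(3.3.12), (3.5.2), Theorem~4.4.1]{Ama09a}, reiteration for (ii) and the first part of (iii), the anisotropic Bessel lift $\wt{\mf{\Lda}}^s$ with symbol $(1+|\xi|^{2r}+\tau^2)^{s/2r}$ plus the Seeley-type argument for the complex-interpolation half of (iii), the Fourier multiplier theorem for (iv), and the intrinsic characterization from \cite[Theorem~3.6.1]{Ama09a} (decoupling the $(h,0)$ and $(0,h)$ difference directions) for (i). The only minor cosmetic difference is that the paper establishes the $\Vsdot^*\sim\Vsdot^{**}$ equivalence by first proving the norm equivalence \eqref{PH.Bsj} as a by-product of (iv) and then invoking Theorem~\ref{thm-PH.B}(i), whereas you appeal more directly to Proposition~\ref{pro-PH.BUC} and \eqref{PH.BCk} — either route closes the argument.
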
 }
%=====================================================================
\begin{proof} 
(1) 
We infer from \cite[\ (3.3.12), \ (3.5.2), and Theorem~4.4.1]{Ama09a} 
that 
\beq\Label{PH.BBJ} 
B_\iy^{s/\vec r}=B_{\iy,\iy}^{s/\mf{\nu}}  
\npb 
\eeq 
and that (ii)~is true. 

\smallskip 
(2) 
The first part of~(iii) follows from~(ii) by reiteration. 

\smallskip 
(3) 
For 
\hb{(\xi,\tau)\in\BR^m\times\BR} we set 
\hb{\wt{\Lda}(\xi,\tau):=(1+|\xi|^{2r}+\tau^2)^{1/2r}}. Then 
\hb{\wt{\mf{\Lda}}\vph{\Lda}^{\coLda s} 
   :=\cF_{m+1}^{-1}\wt{\Lda}\vph{\Lda}^{\coLda s}\cF_{m+1}} for 
\hb{s\in\BR}. From \cite[Theorem~3.4.1]{Ama09a} and \Eqref{PH.BBJ} we get 
$$ 
\wt{\mf{\Lda}}\vph{\Lda}^{\coLda s} 
\in\Lis(B_\iy^{(t+s)/\vec r},B_\iy^{t/\vec r}) 
\qb (\wt{\mf{\Lda}}\vph{\Lda}^{\coLda s})^{-1} 
    =\wt{\mf{\Lda}}\vph{\Lda}^{\coLda-s}  
\qa t,\ t+s>0, 
$$ 
provided 
\hb{\BX=\BR^m} and 
\hb{J=\BR}. Now we obtain the second part of~(iii) by obvious 
modifications of the relevant sections of part~(2) of the proof of 
Theorem~\ref{thm-PH.B}. 

\smallskip 
(4) 
Taking \cite[Section~4.4]{Ama09a} into account, we get from Theorems 
3.3.2 and~3.5.2 therein that 
\hb{B_\iy^{s/\vec r}\hr\BUC}. Suppose 
\hb{kr<s\leq(k+1)r}. By \cite[Theorem~3.6.1]{Ama09a} 
$$ 
\|u\|_{B_\iy^{s/\vec r}} 
\sim\|u\|_\iy 
+\sup_{h\in(0,\iy)^m}\frac{\|\tri_{(h,0)}^{[s]+1}u\|_\iy}{|h|^s} 
+\sup_{h>0}\frac{\|\tri_{(0,h)}^{[s/r]+1}u\|_\iy}{h^{s/r}}, 
$$ 
where $[t]$~is the largest integer less than or equal to 
\hb{t\in\BR}. Since 
\hb{u\in BC} it follows 
$$ 
\|\tri_{(h,0)}^{[s]+1}u\|_\iy=\sup_t\|\tri_h^{[s]+1}u(\cdot,t)\|_\iy 
\qb \|\tri_{(0,h)}^{[s/r]+1}u\|_\iy 
    =\sup_x\|\tri_h^{[s/r]+1}u(x,\cdot)\|_\iy.  
\npb 
$$ 
Thus 
\hb{\Vsdot_{B_\iy^{s/\vec r}}\sim\Vsdot_{s/\vec r,\iy}^*}. 

\smallskip 
(5) 
Suppose 
\hb{\BX=\BR^m} and 
\hb{J=\BR}. Then (iv)~follows by straightforward modifications of the proof 
of \cite[Lemma~2.3.7]{Ama09a} by invoking the Fourier multiplier 
Theorem~3.4.2 therein. Similarly as in the proof of 
\cite[Theorem~2.3.8]{Ama09a}, we see 
that, given 
\hb{0<s\leq r} and 
\hb{k\in\BN}, 
\beq\Label{PH.Bsj} 
\Vsdot_{B_\iy}^{(s+kr)/\vec r} 
\sim\max_{|\al|+jr\leq kr}\|\pa_x\pl^j\cdot\|_{B_\iy^{s/\vec r}} 
\eeq 
(cf.~\cite[Corollary~2.3.4]{Ama09a}). In the general case we now obtain the 
validity of~(iv) and \Eqref{PH.Bsj} by extension and restriction, taking 
\hb{B_\iy^{s/\vec r}\hr\BUC} into account. 

\smallskip 
(6) 
Suppose 
\hb{0<s\leq r}. Then 
\hb{\Vsdot_{s/\vec r,\iy}^*\sim\Vsdot_{s/\vec r,\iy}^{**}} follows from 
Theorem~\ref{thm-PH.B}(i). By combining this with \Eqref{PH.Bsj} we see that 
the latter equivalence holds for every 
\hb{k\in\BN}. This proves the theorem.\po 
\end{proof} 
%---------------------------------------------------------------------
{\samepage 
\begin{corollary}\LabelT{cor-PH.BJ} 
\begin{itemize} 
\item[{\rm(i)}] 
${}$%                   do not remove!!!! 
\hb{B_\iy^{s/\vec r}\doteq B(J,B_\iy^s)\cap B_\iy^{s/r}\JB}.\po 
\item[{\rm(ii)}] 
Set 
$$ 
\|u\|_{s/\vec r,\iy}^\sim 
:=\sup_t\|u(\cdot,t)\|_{s,\iy}^{**}+\sup_x\|u(x,\cdot)\|_{s/r}^{**} 
\qa s>0. 
$$ 
Then 
\hb{\Vsdot_{s/\vec r,\iy}^\sim} is a norm for~$B_\iy^s$.\po 
\end{itemize} 
\end{corollary}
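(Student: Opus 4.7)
\medskip
\noindent
\textbf{Proof plan for Corollary~\ref{cor-PH.BJ}.}
The strategy is to unpack the norm $\Vsdot_{s/\vec r,\iy}^*$ of Theorem~\ref{thm-PH.BJ}(i) into two pieces that are manifestly the natural norms on the factors of the proposed intersection space. Theorem~\ref{thm-PH.B}(i), which asserts that $\Vsdot_{s,\iy}^*$ and $\Vsdot_{s,\iy}^{**}$ are both norms on $B_\iy^s\BXcX$ for \emph{any} Banach space~$\cX$, will be invoked twice: once in its original form, and once with the roles of base and target exchanged (with $J$ in place of~$\BX$ and $B=B\BXcX$ in place of~$\cX$).

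For part~(i), apply Theorem~\ref{thm-PH.B}(i) pointwise in~$t$ to write
$\|u\|_{B(J,B_\iy^s)}=\sup_t\|u(\cdot,t)\|_{B_\iy^s}\sim\|u\|_\iy+\sup_t[u(\cdot,t)]_{s,\iy}$,
and apply the same theorem in the variable~$t$ with values in~$B$ to get
$\|u\|_{B_\iy^{s/r}\JB}\sim\|u\|_\iy+[u]_{s/r,\iy;B}$.
The key identification is
$[u]_{s/r,\iy;B}=\sup_x[u(x,\cdot)]_{s/r,\iy}$,
which follows because
$\|\tri_h^{k+1}u(t)\|_B=\sup_x|\tri_h^{k+1}u(x,t)|$
and the suprema over $h$, $t$, and~$x$ may be freely interchanged. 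Adding the two norms then reproduces the right-hand side of \Eqref{PH.Nsrst} exactly, so the intersection norm coincides with $\Vsdot_{s/\vec r,\iy}^*$; by Theorem~\ref{thm-PH.BJ}(i) this is a norm for $B_\iy^{s/\vec r}$, which gives~(i).

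For part~(ii)~---~where the target space evidently ought to read $B_\iy^{s/\vec r}$ rather than $B_\iy^s$~---~invoke the second half of Theorem~\ref{thm-PH.B}(i): $\Vsdot_{s,\iy}^{**}\sim\Vsdot_{s,\iy}^*$ on $B_\iy^s$, with equivalence constants that are independent of the target Banach space (since the proof passes through the identification with $B_{\iy,\iy}^s$ and Fourier multiplier results from~\cite{Ama09a} that are insensitive to the image space). Taking $\sup_t$ of the pointwise equivalence of these norms applied to $u(\cdot,t)$, and $\sup_x$ of the equivalence applied to $u(x,\cdot)$, yields respectively $\sup_t\|u(\cdot,t)\|_{s,\iy}^{**}\sim\|u\|_{B(J,B_\iy^s)}$ and $\sup_x\|u(x,\cdot)\|_{s/r,\iy}^{**}\sim\|u\|_{B_\iy^{s/r}\JB}$. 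Summing and applying~(i) identifies $\Vsdot_{s/\vec r,\iy}^\sim$ as an equivalent norm on $B_\iy^{s/\vec r}$.

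The only step that is not pure bookkeeping is the commutation of suprema used to recast the vector-valued Besov--H\"older seminorm over~$J$ as a supremum over~$x$ of scalar seminorms; this is where one must be careful to use the sup-norm realization of~$B$ rather than a norm that mixes the variables nontrivially. Everything else is rearrangement of definitions once Theorems \ref{thm-PH.B}(i) and~\ref{thm-PH.BJ}(i) are in hand.
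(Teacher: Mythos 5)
Your argument is correct and follows essentially the paper's route: both parts rest on the norm $\Vsdot_{s/\vec r,\iy}^*$ provided by Theorem~\ref{thm-PH.BJ}(i), and part~(ii) is deduced from part~(i) together with Theorem~\ref{thm-PH.B}(i), exactly as in the paper. The only thing the paper does that you leave implicit is the invocation of $B_\iy^s\hr\BUC^k$ (for $k<s\leq k+1$) and Proposition~\ref{pro-PH.BUC} to justify the \emph{set-level} identification of $B_\iy^{s/\vec r}$ with the vector-valued intersection $B(J,B_\iy^s)\cap B_\iy^{s/r}\JB$; your norm computation gives the equivalence of norms cleanly, but a word about why finiteness of $\Vsdot_{s/\vec r,\iy}^*$ together with the base $\BUC$-regularity furnished by those two results actually characterizes membership in $B_\iy^{s/\vec r}$ would close the small remaining gap.
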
 }
%=====================================================================
\begin{proof} 
(i) is implied by Theorem~\ref{thm-PH.BJ}(i), 
\ \hb{B_\iy^s\hr\BUC^k} if  
\hb{k<s\leq k+1}, and Proposition~\ref{pro-PH.BUC}. \ (ii)~follows from~(i) 
and Theorem~\ref{thm-PH.B}(i). 
\end{proof} 
We define \hh{anisotropic H\"older spaces} by 
\hb{BC^{s/\vec r}:=B_\iy^{s/\vec r}} for 
\hb{s\in\BR^+\ssm r\BN}. By means of the mean value theorem and using the 
norm~% 
\hb{\Vsdot_{s/\vec r,\iy}^\sim}, for example, we find, similarly as in the 
isotropic case, that 
$$ 
BC^{s/\vec r}\hr BC^{s_0/\vec r} 
\qa 0\leq s_0<s. 
$$ 
In order to obtain scales of spaces enjoying dense 
embeddings we define \hh{anisotropic little H\"older spaces} by 
\beq\Label{PH.defbc} 
bc^{s/\vec r}\text{ is the closure of $BC^{\iy/\vec r}$ in }BC^{s/\vec r} 
\qa s\in\BR^+. 
\eeq 
Similarly, the \hh{anisotropic little Besov-H\"older space} 
$$ 
b_\iy^{s/\vec r}\text{ is the closure of $BC^{\iy/\vec r}$ 
in }B_\iy^{s/\vec r}  
\qa s>0. 
$$ 
These spaces possess intrinsic characterizations as well. To allow 
for a simple formulation we denote by~$[s]_-$ the largest integer strictly 
less than~$s$.\po 
%---------------------------------------------------------------------
{\samepage 
\begin{theorem}\LabelT{thm-PH.bJ} 
\begin{itemize} 
\item[{\rm(i)}] 
${}$%                   do not remove!!!! 
\hb{bc^{kr/\vec r}=\BUC^{kr/\vec r}}.\po 
\item[{\rm(ii)}] 
${}$%                   do not remove!!!! 
\hb{bc^{s/\vec r}=b_\iy^{s/\vec r}} if 
\hb{s\in\BR^+\ssm\BN}.\po 
\item[{\rm(iii)}] 
${}$%                   do not remove!!!! 
\hb{u\in b_\iy^{s/\vec r}} iff 
\hb{u\in B_\iy^{s/\vec r}} and 
\beq\Label{PH.Sdx} 
\sup_t\max_{|\al|=[s]_-}\bigl[\pa_xu(\cdot,t)\bigr]_{s-[s]_-,\iy}^\da 
+\sup_x\bigl[\pl^{[s/r]_-}u(x,\cdot)\bigr]_{s/r-[s/r]_-,\iy}^\da\ra0 
\npb 
\eeq 
as 
\hb{\da\ra0}. 
\item[{\rm(iv)}] 
${}$%                   do not remove!!!! 
\hb{BC^{s/\vec r}\sdh b_\iy^{s_0/\vec r}} for 
\hb{0<s_0<s}.\po  
\end{itemize} 
\end{theorem}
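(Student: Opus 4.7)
\begin{proofc}{Proposal} The plan is to follow closely the strategy used for the isotropic versions in Theorems~\ref{thm-PH.bc} and~\ref{thm-PH.bcint}, carrying the arguments over to the anisotropic setting by exploiting Theorem~\ref{thm-PH.BJ}, Corollary~\ref{cor-PH.BJ}, and Proposition~\ref{pro-PH.BUC}, and treating the space and time variables on their respective (anisotropic) footing.

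For~(i), the embedding $BC^{\iy/\vec r}\hr\BUC^{kr/\vec r}$ follows from \Eqref{PH.Binf}. For density, I would adapt~\Eqref{PH.BBU} by employing an anisotropic mollifier $w_\eta^{\vec r}(x,t):=\eta^{-(m+r)}w_1(x/\eta,t/\eta^r)$ and use Proposition~\ref{pro-PH.BUC} together with the one-variable approximation property~\Eqref{PH.app} applied separately in $x$ (to the maps $t\mapsto u(\cdot,t)\in\BUC^{(k-j)r}$) and in $t$ (to $x\mapsto u(x,\cdot)\in\BUC^j$). If $\BX=\BH^m$ or $J=\BR^+$, the standard extension operator of~\cite[(4.1.7)]{Ama09a} transfers the argument to the whole-space case. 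Assertion~(ii) is then immediate: for $s\in\BR^+\ssm\BN$ one has $s\notin r\BN$, hence $BC^{s/\vec r}=B_\iy^{s/\vec r}$ by definition, so $bc^{s/\vec r}$ and $b_\iy^{s/\vec r}$ are closures of the same dense set $BC^{\iy/\vec r}$ in the same ambient space.

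For~(iii) I would mimic steps~(2)--(4) of the proof of Theorem~\ref{thm-PH.bc}. Let $\wt b_\iy^{s/\vec r}$ denote the subspace of $B_\iy^{s/\vec r}$ whose elements satisfy~\Eqref{PH.Sdx}. Using the anisotropic version of~\Eqref{PH.BCk} separately in $x$ (applied to $\pa_x u(\cdot,t)$ for $|\al|=[s]_-$) and in $t$ (applied to $\pl^{[s/r]_-}u(x,\cdot)$), together with Theorem~\ref{thm-PH.BJ}(iv) and Proposition~\ref{pro-PH.BUC}, one gets $BC^{\iy/\vec r}\hr\wt b_\iy^{s/\vec r}$; the inclusion $b_\iy^{s/\vec r}\is\wt b_\iy^{s/\vec r}$ then follows by an $\ve/2$-approximation argument against the norm $\Vsdot_{s/\vec r,\iy}^{**}$ of Theorem~\ref{thm-PH.BJ}(i), since the seminorms appearing in~\Eqref{PH.Sdx} are dominated by the $\Vsdot_{s/\vec r,\iy}^{**}$-norm. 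For the reverse inclusion, and assuming first $\BX=\BR^m$, $J=\BR$, I would apply the anisotropic mollifier and show, in analogy with~\Eqref{PH.wdel}, that $[w_\eta^{\vec r}*u-u]$ in each of the two seminorms of~\Eqref{PH.Sdx} is bounded by $2\,[u]^\da_{\cdots}$ plus a term controlled by $\da^{-\cdot}\|w_\eta^{\vec r}*u-u\|_\iy$; choosing $\da$ small (using the decay hypothesis) and then $\eta$ small (using~\Eqref{PH.app} and $B_\iy^{s/\vec r}\hr\BUC$) gives convergence in $B_\iy^{s/\vec r}$. The general case is handled by extension-restriction. Finally, (iv) follows from~(iii) plus Theorem~\ref{thm-PH.BJ}(iv) and the anisotropic analogue of~\Eqref{PH.lHd}: for $u\in BC^{s/\vec r}$ and $0<s_0<s$, each of the two $\da$-seminorms in~\Eqref{PH.Sdx} at level $s_0$ is bounded by a positive power of $\da$ times the corresponding seminorm at level $s$, whence $u\in b_\iy^{s_0/\vec r}$; density then follows since $BC^{\iy/\vec r}\is BC^{s/\vec r}$ is already dense in $b_\iy^{s_0/\vec r}$.

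The main obstacle I expect is the intrinsic characterization~(iii): the asymmetry between the $x$- and $t$-derivatives forces a careful bookkeeping to make sure that the anisotropic mollification controls both seminorms simultaneously, and one must verify that~\Eqref{PH.Sdx} really captures the top-order contribution in both variables (via the Zygmund-type choice $[s]_-$ and $[s/r]_-$ that properly handles the integer and non-integer cases uniformly). Once this is settled, (i), (ii) and~(iv) are routine consequences combined with Theorem~\ref{thm-PH.BJ} and Corollary~\ref{cor-PH.BJ}. \end{proofc}
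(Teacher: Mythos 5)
Your proposal is correct and follows essentially the same strategy as the paper's proof: reduce to $\BX=\BR^m$, $J=\BR$, mollify, and use the difference-quotient estimates from Theorem~\ref{thm-PH.BJ}(i), Corollary~\ref{cor-PH.BJ}(ii), Proposition~\ref{pro-PH.BUC}, \Eqref{PH.BCk}, and \Eqref{PH.lHd} applied in each variable. The one place you diverge is the mollifier: you propose an anisotropically scaled family $w_\eta^{\vec r}(x,t)=\eta^{-(m+r)}w_1(x/\eta,t/\eta^r)$ and, in (i), want to apply the approximation property \Eqref{PH.app} separately in the space and time variables. The paper simply takes a standard (isotropic) mollifier on $\BR^{m+1}$ and applies \Eqref{PH.app} with $d=m+1$ directly. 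Since the only properties used are nonnegativity, unit mass, commutation with constant-coefficient derivatives, and uniform approximation of $\BUC$ functions, nothing is gained by the anisotropic scaling, and the isotropic choice avoids the bookkeeping you anticipate; moreover your $w_\eta^{\vec r}$ is not literally a ``mollifier on $\BR^{m+1}$'' in the sense defined before \Eqref{PH.con}, so you would need to reverify \Eqref{PH.con} and \Eqref{PH.app} for it (easy but not free). Aside from that, your reduction of (iii) to $0<s\leq 1$ via commuting $\pa_x\pl^j$ past the mollifier, the $\ve/2$-argument against the $\Vsdot^{**}$-norm, and the derivation of (iv) from \Eqref{PH.lHd}/\Eqref{PH.BCk}/Corollary~\ref{cor-PH.BJ}(ii) all match the paper.
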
 }
%=====================================================================
\begin{proof} 
As in previous proofs it suffices to consider the case 
\hb{\BX=\BR^m} and 
\hb{J=\BR}. 

\smallskip 
(1) 
We know from \Eqref{PH.Binf} that 
\hb{BC^{\iy/\vec r}\hr\BUC^{kr/\vec r}}. Let 
\hb{\{\,w_\eta\ ;\ \eta>0\,\}} be a mollifier on~$\BR^{m+1}$. If 
$u$~belongs to~$\BUC^{kr/\vec r}$, then it follows from \Eqref{PH.app} and 
\hb{\pa_x\pl^j(w_\eta*u)=w_\eta*(\pa_x\pl^ju)} that 
\hb{w_\eta*u\ra u} in~$BC^{kr/\vec r}$ as 
\hb{\eta\ra0}. This proves assertion~(i). Claim~(ii) is trivial. 

\smallskip 
(2) 
Let 
\hb{kr\leq i<s\leq i+1\leq(k+1)r} with 
\hb{i\in\BN}. Suppose 
\hb{u\in b_\iy^{s/\vec r}} and 
\hb{\ve>0}. Then we can find $v$ belonging to 
\hb{\in\BUC^{(k+2)r/\vec r}\hr B_\iy^{s/\vec r}} such that 
\hb{\|u-v\|_{s/\vec r,\iy}^{**}<\ve/2}. By Proposition~\ref{pro-PH.BUC} 
we know 
$$ 
\BUC^{(k+2)r/\vec r}\hr\BUC(J,\BUC^{(k+2)r})\cap\BUC^{k+2}(J,\BUC). 
$$ 
Hence it follows from \Eqref{PH.BCk} that 
$$ 
\sup_t\bigl[\pa_xv(\cdot,t)\bigr]_{s-i,\iy}^\da 
\leq c\kern1pt\da\,\|v\|_{(k+2)r/\vec r,\iy} 
\qa 0<\da\leq1 
\qb |\al|=i. 
$$ 
Similarly, 
$$ 
\sup_x\bigl[\pl^kv(x,\cdot)\bigr]_{s/r-k,\iy}^\da 
\leq c\kern1pt\da\,\|v\|_{(k+2)r/\vec r,\iy} 
\qa 0<\da\leq1. 
$$ 
Thus we find 
\hb{\da_\ve>0} such that 
$$ 
\sup_t\max_{|\al|=i}\bigl[\pa_xv(\cdot,t)\bigr]_{s-i,\iy}^\da 
+\sup_x\bigl[\pl^kv(x,\cdot)\bigr]_{s/r-k,\iy}^\da 
<\ve/2 
\qa 0<\da\leq\da_\ve. 
$$ 
Consequently, 
$$ 
\sup_t\max_{|\al|=i}\bigl[\pa_xu(\cdot,t)\bigr]_{s-i,\iy}^\da 
\leq\sup_t\max_{|\al|=i}\bigl[\pa_x(u-v)(\cdot,t)\bigr]_{s-i,\iy} 
+\sup_t\max_{|\al|=i}\bigl[\pa_xv(\cdot,t)\bigr]_{s-i,\iy}^\da 
\leq\ve  
\npb 
$$ 
for 
\hb{0<\da\leq\da_\ve}. This shows that the first term in \Eqref{PH.Sdx} 
converges to zero. Analogously, we see that this is true for the second 
summand. 

\smallskip 
(3) 
Suppose 
\hb{0<s\leq1} and 
\hb{u\in B_\iy^{s/\vec r}} satisfies \Eqref{PH.Sdx}. By \Eqref{PH.con} it 
suffices to show that 
\beq\Label{PH.wetu} 
\|w_\eta*u-u\|_{s/\vec r,\iy}^\sim\ra0 
\quad\text{as }\eta\ra0. 
\eeq 
It follows from 
\hb{\tri_{(h,0)}^{[s]+1}(w_\eta*u)=w_\eta*(\tri_{(h,0)}^{[s]+1}u)} that 
$$ 
\|\tri_{(h,0)}^{[s]+1}(w_\eta*u)(x,t)\| 
\leq\sup_t\|\tri_h^{[s]+1}u(\cdot,t)\|_\iy 
\qa (x,t)\in\BX\times J. 
$$ 
Consequently, 
$$ 
\sup_t\bigl[w_\eta*u(\cdot,t)\bigr]_{s,\iy}^\da 
\leq\sup_t\bigl[u(\cdot,t)\bigr]_{s,\iy}^\da 
\qa 0<\da<\iy. 
$$ 
Let 
\hb{\ve>0} and fix 
\hb{\da_\ve>0} with 
\hb{\sup_t\bigl[u(\cdot,t)\bigr]_{s,\iy}^{\da_\ve}<\ve/4}. Then 
$$ 
\sup_t\bigl[(w_\eta*u-u)(\cdot,t)\bigr]_{s,\iy}^{\da_\ve} 
\leq2\sup_t\bigl[u(\cdot,t)\bigr]_{s,\iy}^{\da_\ve} 
<\ve/2. 
$$ 
Thus we infer from \Eqref{PH.del1} that 
$$ 
\sup_t\bigl[(w_\eta*u-u)(\cdot,t)\bigr]_{s,\iy} 
\leq\ve/2+4\da_\ve^{-s}\sup_t\|(w_\eta*u-u)(\cdot,t)\|_\iy. 
$$ 
Since 
\hb{u\in\BUC(\BX\times J,\cX)} it follows from \Eqref{PH.app} that 
$$ 
\sup_t\|(w_\eta*u-u)(t)\|_\iy 
=\|w_\eta*u-u\|_{B(\BX\times J,\cX)}\ra0 
\quad\text{as }\eta\ra0. 
$$ 
Hence 
$$ 
\sup_t\bigl[(w_\eta*u-u)(\cdot,t)\bigr]_{s,\iy}\ra0 
\quad\text{as }\eta\ra0. 
$$ 
Similarly, 
$$ 
\sup_x\bigl[(w_\eta*u-u)(x,\cdot)\bigr]_{s/r,\iy}\ra0 
\quad\text{as }\eta\ra0. 
\npb 
$$ 
This proves \Eqref{PH.wetu}, thus, due to step~(2), assertion~(iii) for 
\hb{0<s\leq1}.  

\smallskip 
(4) 
To prove (iv) assume 
\hb{kr\leq i<s\leq i+1\leq(k+1)r} and 
\hb{u\in B_\iy^{s/\vec r}} satisfies \Eqref{PH.Sdx}. Then it follows from 
\hb{\pa_x\pl^j(w_\eta*u)=w_\eta*(\pa_x\pl^ju)} for 
\hb{|\al|+jr<s} and step~(3) that 
\hb{w_\eta*u\ra u} in~$B_\iy^{s/\vec r}$ as 
\hb{\eta\ra0}. Hence 
\hb{u\in b_\iy^{s/\vec r}}, which shows that claim~(iii) is always true. 

\smallskip 
(5) 
The proof of~(iv) is obtained by employing \Eqref{PH.lHd}, \,\Eqref{PH.BCk}, 
and Corollary~\ref{cor-PH.BJ}(ii). 
\end{proof} 
Anisotropic little H\"older spaces can be characterized by interpolation, 
similarly as their isotropic relatives.\po 
%---------------------------------------------------------------------
{\samepage 
\begin{theorem}\LabelT{thm-PH.bcintJ} 
\begin{itemize} 
\item[{\rm(i)}] 
${}$%                   do not remove!!!! 
\hb{b_\iy^{s/\vec r} 
\doteq(bc^{k_0r/\vec r},bc^{k_1r/\vec r})_{(s/r-k_0)/(k_1-k_0),\iy}^0} 
for 
\hb{k_0r<s<k_1r}.\po 
\item[{\rm(ii)}] 
If\/ 
\hb{0<s_0<s_1} and 
\hb{0<\ta<1}, then 
\hb{(b_\iy^{s_0/\vec r},b_\iy^{s_1/\vec r})_{\ta,\iy}^0  
   \doteq b_\iy^{s_\ta/\vec r} 
   \doteq[b_\iy^{s_0/\vec r},b_\iy^{s_1/\vec r}]_\ta}. 
\item[{\rm(iii)}] 
${}$%                   do not remove!!!! 
\hb{\pa_x\pl^j\in\cL(b_\iy^{(s+|\al|+jr)/\vec r},b_\iy^{s/\vec r})} for 
\hb{\al\in\BN^m} and 
\hb{j\in\BN}.\po 
\end{itemize} 
\end{theorem}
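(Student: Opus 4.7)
The plan is to follow the strategy of Theorem~\ref{thm-PH.bcint} closely, but upgrading each ingredient from the isotropic to the anisotropic setting by drawing on Theorems \ref{thm-PH.BJ} and~\ref{thm-PH.bJ}. As in the isotropic case it suffices to treat $\BX=\BR^m$ and $J=\BR$, since the remaining geometries are handled by the standard extension--restriction argument based on \cite[(4.1.7)]{Ama09a}.

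For part~(i), Theorem~\ref{thm-PH.BJ}(ii) identifies $B_\iy^{s/\vec r}$ with the real interpolation space of the $\BUC^{k_jr/\vec r}$, and Theorem~\ref{thm-PH.bJ}(i) identifies $bc^{k_jr/\vec r}$ with $\BUC^{k_jr/\vec r}$. Hence the continuous interpolation space on the right-hand side is, by definition, the closure of $\BUC^{k_1r/\vec r}=bc^{k_1r/\vec r}$ in $B_\iy^{s/\vec r}$. Since $BC^{\iy/\vec r}\hr bc^{k_1r/\vec r}\hr B_\iy^{s/\vec r}$, this closure coincides with the closure of $BC^{\iy/\vec r}$, which by definition is~$b_\iy^{s/\vec r}$. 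The first equivalence in~(ii) then follows from~(i) by the reiteration theorem for the continuous interpolation functor (cf.\ G.\ Dore--A.\ Favini \cite{DoF87a}).

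For the second (complex) equivalence in~(ii), the key is to transfer the Seeley-type argument used in the proof of Theorem~\ref{thm-PH.bcint} to the anisotropic lifting operator $\wt{\mf{\Lda}}\vph{\Lda}^{\coLda s}$ introduced in step~(3) of the proof of Theorem~\ref{thm-PH.BJ}. From \Eqref{PH.Binf}, Theorem~\ref{thm-PH.BJ}(i), and the embeddings $B_\iy^{s/\vec r}\hr\BUC$ one obtains $BC^{\iy/\vec r}=\bigcap_{s>0}B_\iy^{s/\vec r}$; combined with the isomorphism property of $\wt{\mf{\Lda}}\vph{\Lda}^{\coLda s}$ on the $B_\iy^{\cdot/\vec r}$ scale, this shows that $\wt{\mf{\Lda}}\vph{\Lda}^{\coLda s}$ maps $BC^{\iy/\vec r}$ onto itself. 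By definition~\Eqref{PH.defbc} and the restriction of this lifting to closures, one then obtains
\[
\wt{\mf{\Lda}}\vph{\Lda}^{\coLda s}\in\Lis(b_\iy^{(t+s)/\vec r},b_\iy^{t/\vec r})
\qa t,\ t+s>0,
\]
with inverse $\wt{\mf{\Lda}}\vph{\Lda}^{\coLda-s}$. Taking $A:=-\wt{\mf{\Lda}}\vph{\Lda}^{\coLda s_1-s_0}$ in $b_\iy^{s_0/\vec r}$ with domain $b_\iy^{s_1/\vec r}$, the sectoriality and bounded imaginary powers estimates from \cite[Proposition~1.5.2, Proposition~1.5.4, and Theorem~3.4.2]{Ama09a} transfer from $B_\iy^{\cdot/\vec r}$ to the little-space scale (the transfer is essentially the one carried out in \cite[Theorem~6.2]{Ama97b}); Seeley's theorem \cite{See71a} then yields $[b_\iy^{s_0/\vec r},b_\iy^{s_1/\vec r}]_\ta\doteq b_\iy^{s_\ta/\vec r}$.

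For part~(iii) it is enough to observe that $\pa_x\pl^j$ maps $BC^{\iy/\vec r}$ continuously into itself, and, by Theorem~\ref{thm-PH.BJ}(iv), it maps $B_\iy^{(s+|\al|+jr)/\vec r}$ continuously into $B_\iy^{s/\vec r}$. Since $b_\iy^{(s+|\al|+jr)/\vec r}$ is by definition the closure of $BC^{\iy/\vec r}$ in the former space, its image under $\pa_x\pl^j$ lies in the closure of $BC^{\iy/\vec r}$ in $B_\iy^{s/\vec r}$, which is~$b_\iy^{s/\vec r}$. I expect the main technical hurdle to be the verification that the Fourier-multiplier and bounded-imaginary-power estimates for $\wt{\mf{\Lda}}\vph{\Lda}^{\coLda s}$, originally stated on the $B_{\iy,\iy}^{\cdot/\mf{\nu}}$ scale in~\cite{Ama09a}, restrict compatibly to the closures $b_\iy^{\cdot/\vec r}$; once this is settled, Seeley's theorem applies verbatim.
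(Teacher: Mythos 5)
Your proposal is correct and follows essentially the same route as the paper: part (i) and the first equivalence in (ii) via $bc^{kr/\vec r}=\BUC^{kr/\vec r}$ and the interpolation identities of Theorem~\ref{thm-PH.BJ}; the complex-interpolation equivalence via the anisotropic lift $\wt{\mf{\Lda}}\vph{\Lda}^{\coLda s}$ and Seeley's theorem, exactly as in step~(2) of Theorem~\ref{thm-PH.bcint}; and part (iii) via $\pa_x\pl^j\in\cL(BC^{\iy/\vec r})$ together with Theorem~\ref{thm-PH.BJ}(iv). Your closing remark about the Fourier-multiplier and BIP estimates restricting compatibly to the little-space closures identifies precisely the point the paper handles by invoking \cite[Theorem~6.2]{Ama97b}.
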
 }
%=====================================================================
\begin{proof} 
(1) 
The first assertion as well as the first part of~(ii) follow from part~(i) 
of Theorem~\ref{thm-PH.bJ}. Part two of~(ii) and the first claim are 
implied by part~(iii) of Theorem~\ref{thm-PH.BJ}. 

\smallskip 
(2) 
The last part of statement~(ii) is obtained by replacing $BC^\iy$ 
and~$\mf{\Lda}^{\coLda s}$ in step~(2) of the proof of 
Theorem~\ref{thm-PH.bcint} by $BC^{\iy/\vec r}$ 
and~$\wt{\mf{\Lda}}^{\coLda s}$, respectively. 

\smallskip 
(3) 
Theorem~\ref{thm-PH.BJ}(iv) implies 
\hb{\pa_x\pl^j\in\cL(BC^{\iy/\vec r})}. Thus, using the definition 
of~$b_\iy^{s/\vec r}$ and once more the latter theorem, we obtain~(iii). 
\end{proof} 
In the next section we need to employ H\"older spaces with a particular 
choice of~$\cX$ which we discuss now. For this we remind the reader of the 
notations and conventions introduced at the beginning of 
Section~\ref{sec-R}. 

\smallskip 
Let 
\hb{\{\,F_\ba\ ;\ \ba\in\sB\,\}} be a countable family of Banach spaces. 
Then it is obvious that 
\beq\Label{PH.f} 
\mf{f}\sco\mf{F}^\BX\ra\prod_\ba F_\ba^\BX 
\qb u\mt\mf{f}u:=(\pro_\ba\circ u) 
\eeq 
is a linear bijection. Since $\mf{F}$~carries the product topology 
\hb{u\in\mf{F}^\BX} is continuously differentiable iff 
$$ 
u_\ba:=\pro_\ba\circ u\in C^1(\BX,F_\ba) 
\qa \ba\in\sB. 
$$ 
Then 
\hb{\pl_ju=(\pl_ju_\ba)}, that is, 
\beq\Label{PH.fdj} 
\mf{f}\circ\pa_x=\pa_x\circ\mf{f} 
\qa \al\in\BN^m. 
\eeq 
Setting 
\hb{\mf{C}^k\BXBF:=\prod_\ba C^k(\BX,F_\ba)} etc., it follows 
\beq\Label{PH.fLis} 
\mf{f}\in\Lis\bigl(C^k(\BX,\mf{F}),\mf{C}^k\BXBF\bigr). 
\eeq 
Furthermore, 
\beq\Label{PH.fBC1} 
\mf{f}\in\cL\bigl(BC^k\bigl(\BX,\ell_\iy(\mf{F})\bigr), 
\ell_\iy\bigl(\mf{BC}^k\BXBF\bigr)\bigr). 
\eeq 
Suppose 
\hb{u\in BC^1\bigl(\BX,\ell_\iy(\mf{F})\bigr)}. Then, given 
\hb{x\in\BX}, 
$$ 
\sup_{\ba\in\sB}\big\|t^{-1}\bigl(u_\ba(x+te_j)-u_\ba(x)\bigr)-\pl_ju_\ba(x) 
\big\|_{F_\ba} 
=\big\|t^{-1}\bigl(u(x+te_j)-u(x)\bigr)-\pl_ju(x) 
\big\|_{\ell_\iy(\mf{F})} 
\ra0 
$$ 
as 
\hb{t\ra0}, with 
\hb{t>0} if 
\hb{\BX=\BH^m} and 
\hb{j=1}. From this we see that $\mf{f}$~maps 
\hb{u\in BC^k\bigl(\BX,\ell_\iy(\mf{F})\bigr)} into the linear subspace of 
$\ell_\iy\bigl(\mf{BC}^k\BXBF\bigr)$ consisting of all 
\hb{\mf{v}=(v_\ba)} for which $v_\ba$~is \hbox{$k$-times} continuously 
differentiable, uniformly with respect to 
\hb{\ba\in\sB}. Thus \Eqref{PH.fBC1} is not surjective if 
\hb{k\geq1}. 

\smallskip 
We denote by 
$$ 
\ell_{\iy,\unif}\bigl(\mf{bc}^k\BXBF\bigr)
$$ 
the linear subspace of $\ell_\iy\bigl(\mf{BC}^k\BXBF\bigr)$ of all 
\hb{\mf{v}=(v_\ba)} such that $\pa v_\ba$~is uniformly continuous on~$\BX$ 
for 
\hb{|\al|\leq k}, uniformly with respect to 
\hb{\ba\in\sB}. 
%---------------------------------------------------------------------
\begin{lemma}\LabelT{lem-PH.f} 
$\mf{f}$~is an isomorphism 
\beq\Label{PH.fBUC} 
\text{from } 
bc^k\bigl(\BX,\ell_\iy(\mf{F})\bigr) 
\text{ onto } 
\ell_{\iy,\unif}\bigl(\mf{bc}^k\BXBF\bigr) 
\eeq 
and 
\beq\Label{PH.fBC2} 
\text{from } 
B_\iy^s\bigl(\BX,\ell_\iy(\mf{F})\bigr) 
\text{ onto } 
\ell_\iy\bigl(\mf{B}_\iy^s\BXBF\bigr) 
\qa s>0. 
\eeq 
\end{lemma}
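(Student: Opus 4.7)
The strategy exploits that the $\ell_\iy(\mf{F})$-norm is itself a supremum, so the intrinsic norms defining the left-hand sides of \eqref{PH.fBUC} and \eqref{PH.fBC2} decouple over $\beta\in\sB$ and match the $\ell_\iy$-sum norms on the right.

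For \eqref{PH.fBUC}, Theorem~\ref{thm-PH.bc}(i) gives $bc^k(\BX,\ell_\iy(\mf{F}))=\BUC^k(\BX,\ell_\iy(\mf{F}))$. The paragraph preceding the lemma already places $\mf{f}(BC^k(\BX,\ell_\iy(\mf{F})))$ inside the uniform-in-$\beta$ differentiable subspace of $\ell_\iy(\mf{BC}^k\BXBF)$; the additional identity
$$\|\pa u(x)-\pa u(y)\|_{\ell_\iy(\mf{F})}=\sup_{\beta\in\sB}\|\pa u_\beta(x)-\pa u_\beta(y)\|_{F_\beta}$$
then translates uniform continuity of $\pa u$ on the left into uniform-in-$\beta$ uniform continuity of the $\pa u_\beta$ on the right, showing $\mf{f}$ maps $bc^k(\BX,\ell_\iy(\mf{F}))$ isometrically into $\ell_{\iy,\unif}(\mf{bc}^k\BXBF)$. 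For the inverse direction, I would take $(v_\beta)\in\ell_{\iy,\unif}(\mf{bc}^k\BXBF)$, set $u(x):=(v_\beta(x))_\beta$, and verify inductively that the Fr\'echet derivatives $\pa u$ exist in $\ell_\iy(\mf{F})$ and equal $(\pa v_\beta)_\beta$; the first-order step uses
$$\bigl\|t^{-1}(u(x+te_j)-u(x))-(\pl_jv_\beta(x))_\beta\bigr\|_{\ell_\iy(\mf{F})}\leq\sup_{\beta,\,0\leq\tau\leq 1}\|\pl_jv_\beta(x+\tau te_j)-\pl_jv_\beta(x)\|_{F_\beta},$$
which tends to $0$ as $t\to0$ by the uniform-in-$\beta$ modulus of continuity of the $\pl_jv_\beta$.

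For \eqref{PH.fBC2}, I would use the intrinsic norm $\Vsdot_{s,\iy}^*=\Vsdot_\iy+\esdot_{s,\iy}$ supplied by Theorem~\ref{thm-PH.B}(i) and fix $k\in\BN$ with $k\leq s<k+1$. Since $\|u\|_{\iy;\ell_\iy(\mf{F})}=\sup_\beta\|u_\beta\|_{\iy;F_\beta}$ and $\|\tri_h^{k+1}u\|_{\iy;\ell_\iy(\mf{F})}=\sup_\beta\|\tri_h^{k+1}u_\beta\|_{\iy;F_\beta}$, interchanging the supremum in $h$ with that in $\beta$ gives $[u]_{s,\iy;\ell_\iy(\mf{F})}=\sup_\beta[u_\beta]_{s,\iy;F_\beta}$, hence
$$\|u\|_{s,\iy;\ell_\iy(\mf{F})}^*=\sup_\beta\|u_\beta\|_{s,\iy;F_\beta}^*=\|\mf{f}u\|_{\ell_\iy(\mf{B}_\iy^s\BXBF)}.$$
This identity exhibits $\mf{f}$ as an isometric injection with the asserted target. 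Surjectivity is the only remaining point: given $(v_\beta)$ with $M:=\sup_\beta\|v_\beta\|_{B_\iy^s}<\iy$, I would use the equivalent norm $\Vsdot_{s,\iy}^{**}$ to extract a uniform-in-$\beta$ modulus of continuity for the $v_\beta$ (uniform Lipschitz when $s>1$, uniform H\"older when $s<1$, uniform Zygmund modulus when $s=1$), conclude that $u(x):=(v_\beta(x))_\beta$ is uniformly continuous with values in $\ell_\iy(\mf{F})$, hence in $\BUC(\BX,\ell_\iy(\mf{F}))$, and then read off from the finite intrinsic norm that $u\in B_\iy^s(\BX,\ell_\iy(\mf{F}))$.

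The main obstacle is the surjectivity step in \eqref{PH.fBC2}: ensuring that a uniform-in-$\beta$ bound in the individual spaces $B_\iy^s(\BX,F_\beta)$ not only yields finite $\Vsdot_{s,\iy}^*$-seminorm of the assembled $u$ but actually places $u$ in $B_\iy^s(\BX,\ell_\iy(\mf{F}))$. This reduces to confirming $u\in\BUC(\BX,\ell_\iy(\mf{F}))$, which requires that the modulus-of-continuity control encoded in a uniform $B_\iy^s$-bound be extractable with constants independent of the target Banach space~$F_\beta$. This independence is implicit in the proof of Theorem~\ref{thm-PH.B}(i): both the intrinsic norm equivalences and the embedding $B_\iy^s\hr\BUC$ are vector-valued statements whose constants depend only on $m$, $k$, and $s$.
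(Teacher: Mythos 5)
Your treatment of \Eqref{PH.fBUC} matches the paper's: the forward direction is immediate from the norm identities, and the inverse direction uses the mean value theorem together with the uniform-in-$\beta$ uniform continuity of the $\pl_jv_\beta$ to upgrade, by induction on the order of the derivative, to Fr\'echet differentiability of $\mf{f}^{-1}\mf{u}$ in the $\ell_\iy(\mf{F})$-topology. The identity $[u]_{s,\iy;\ell_\iy(\mf{F})}=\sup_\beta[u_\beta]_{s,\iy;F_\beta}$ (the paper's \Eqref{PH.HSl}) is also the mechanism both you and the paper use to get boundedness in \Eqref{PH.fBC2}.

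The gap is in the surjectivity of \Eqref{PH.fBC2}. You reduce the problem to confirming $u\in\BUC\bigl(\BX,\ell_\iy(\mf{F})\bigr)$ and then propose to "read off from the finite intrinsic norm" that $u\in B_\iy^s\bigl(\BX,\ell_\iy(\mf{F})\bigr)$. For $s>1$ this is not a reduction: with $k:=[s]_-\geq1$, the space $B_\iy^s$ is the interpolation space $(\BUC^k,\BUC^{k+\ell})_{\ta,\iy}$, so membership requires $u$ to have $k$ bounded, uniformly continuous Fr\'echet derivatives with values in $\ell_\iy(\mf{F})$, and finiteness of a difference seminorm plus $u\in\BUC$ does not by itself deliver these derivatives; their existence is exactly the nontrivial point. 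The paper closes this by observing that $\ell_\iy\bigl(\mf{B}_\iy^s\BXBF\bigr)\hr\ell_{\iy,\unif}\bigl(\mf{bc}^k\BXBF\bigr)$ and then invoking the already-established bijectivity \Eqref{PH.fBUC} to place $u=\mf{f}^{-1}\mf{v}$ in $bc^k\bigl(\BX,\ell_\iy(\mf{F})\bigr)$, after which \Eqref{PH.HSl} applied to $\pa_xu$ with $|\al|=k$ gives $u\in B_\iy^s$. Your route implicitly reproves the same differentiability step under the guise of an intrinsic-norm characterization of Banach-valued $B_\iy^s$ which the paper never states in the form you need; invoking \Eqref{PH.fBUC} explicitly both closes the gap and avoids that dependence.
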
 
%=====================================================================
\begin{proof} 
(1) 
Suppose 
\hb{u\in bc^k\bigl(\BX,\ell_\iy(\mf{F})\bigr)}. Then, by the above, it is 
obvious that 
\hb{\mf{f}u\in\ell_{\iy,\unif}\bigl(\mf{bc}^k\BXBF\bigr)}.\\ 
Conversely, assume 
\hb{\mf{u}=(u_\ba)\in\ell_{\iy,\unif}\bigl(\mf{bc}^k\BXBF\bigr)}. Set 
\hb{u:=\mf{f}^{-1}\mf{u}}, which is defined due to \Eqref{PH.fLis}. Then 
$$ 
\|u(x)\|_{\ell_\iy(\mf{F})} 
=\sup_\ba\|u_\ba(x)\|_{F_\ba} 
\qa x\in\BX, 
$$ 
and 
$$ 
\|u(x)-u(y)\|_{\ell_\iy(\mf{F})} 
=\sup_\ba\|u_\ba(x)-u_\ba(y)\|_{F_\ba} 
\qa x,y\in\BX, 
\npb 
$$ 
show 
\hb{u\in bc\bigl(\BX,\ell_\iy(\mf{F})\bigr)}. Hence we infer from 
\Eqref{PH.fdj} that 
\hb{\pa_xu\in bc\bigl(\BX,\ell_\iy(\mf{F})\bigr)} for 
\hb{|\al|\leq k}. 

\smallskip 
(2) 
Let 
\hb{k\geq1} and 
\hb{1\leq j\leq m}. Then, by the mean value theorem, 
$$ 
t^{-1}\bigl(u_\ba(x+te_j)-u_\ba(x)\bigr)-\pl_ju_\ba(x) 
=\int_0^1\big(\pl_ju_\ba(x+ste_j)-\pl_ju_\ba(x)\big)\,ds, 
$$ 
where 
\hb{t>0} if 
\hb{j=1} and 
\hb{\BX=\BH^m}. Hence 
$$ 
\bal 
\big\|t^{-1}\bigl(u_\ba(x+te_j)-u_\ba(x)\bigr)-\pl_ju_\ba(x)\big\|_{F_\ba} 
&\leq\sup_{|t|\leq\da}\sup_{x\in\BX}\|\pl_ju_\ba(x+te_j)-\pl_ju_\ba(x)\| 
 _{F_\ba}\\ 
&\leq\sup_{|t|\leq\da}\|\pl_j\mf{u}(\cdot+te_j)-\pl_j\mf{u}\| 
 _{\ell_\iy(\mf{BC}\BXBF)}  
\eal 
$$ 
for 
\hb{|t|\leq\da}, 
\ \hb{x\in\BX}, and 
\hb{\ba\in\sB}. Thus 
$$ 
\big\|t^{-1}\bigl(u(\cdot+te_j)-u\bigr)-\pl_ju 
\big\|_{B(\BX,\ell_\iy(\mf{F}))} 
\leq\sup_{|t|\leq\da}\|\pl_j\mf{u}(\cdot+te_j)-\pl_j\mf{u} 
\| _{\ell_\iy(\mf{BC}\BXBF)}  
$$ 
for 
\hb{|t|\leq\da}. This implies that $u$~is differentiable in the topology of 
$BC\bigl(\BX,\ell_\iy(\mf{F})\bigr)$. From this, step~(1), and by 
induction we infer 
$$ 
\mf{f}^{-1}\in\cL\bigl(\ell_{\iy,\unif}\bigl(\mf{bc}^k\BXBF\bigr), 
bc^k\bigl(\BX,\ell_\iy(\mf{F})\bigr)\bigr). 
\npb   
$$ 
This proves \Eqref{PH.fBUC}. 

\smallskip 
(3) 
Suppose 
\hb{0<s\leq1} and set 
\hb{i:=[s]_-}. It is convenient to write 
\hb{h\gg0} iff 
\hb{h\in(0,\iy)^m}. Given $u$ belonging to 
$B_\iy^s\bigl(\BX,\ell_\iy(\mf{F})\bigr)$, we deduce from 
\hb{\tri_hu_\ba=\pro_\ba(\tri_hu)} that 
\beq\Label{PH.HSl} 
\bal 
\sup_\ba\bigl[\pro_\ba(\mf{f}u)\bigr]_{s,\iy;F_\ba} 
=\sup_\ba[u_\ba]_{s,\iy;F_\ba} 
&=\sup_\ba\sup_{h\gg0}\sup_x\frac{\|\tri_h^{i+1}u_\ba(x)\|_{F_\ba}}{|h|^s}\\ 
&=\sup_{h\gg0}\sup_x\frac{\|\tri_h^{i+1}u(x)\|_{\ell_\iy(\mf{F})}}{|h|^s} 
 =\sup_{h\gg0}\frac{\|\tri_h^{i+1}u\|_{\iy;\ell_\iy(\mf{F})}}{|h|^s}\\ 
&=[u]_{s,\iy;\ell_\iy(\mf{F})}. 
\eal 
\eeq 
From \Eqref{PH.fBC1} and \Eqref{PH.HSl} we infer 
\beq\Label{PH.fBCs} 
\mf{f}\in\cL\bigl(B_\iy^s\bigl(\BX,\ell_\iy(\mf{F})\bigr), 
\ell_\iy\bigl(\mf{B}_\iy^s\BXBF\bigr)\bigr).  
\npb 
\eeq 
Now it follows from \Eqref{PH.fdj} that \Eqref{PH.fBCs} holds for any 
\hb{s>0}. 

\smallskip 
It is obvious from \Eqref{PH.eqsn} and \Eqref{PH.fdj} that, given 
\hb{k<s\leq k+1}, 
$$ 
\ell_\iy\bigl(\mf{B}_\iy^s\BXBF\bigr) 
\hr\ell_{\iy,\unif}\bigl(\mf{bc}^k\BXBF\bigr). 
$$ 
From this, \Eqref{PH.HSl}, and~\Eqref{PH.fBUC} we get that $\mf{f}$~is 
onto $\ell_\iy\bigl(\mf{B}_\iy^s\BXBF\bigr)$. Due to \Eqref{PH.fLis} this 
proves \Eqref{PH.fBC2}. 
\end{proof} 
We denote for 
\hb{k<s\leq k+1} by 
$$ 
\ell_{\iy,\unif}\bigl(\mf{b}_\iy^s\BXBF\bigr)
$$ 
the linear subspace of $\ell_{\iy,\unif}\bigl(\mf{bc}^k\BXBF\bigr)$ 
of all 
\hb{\mf{v}=(v_\ba)} such that 
\hb{\lim_{\da\ra0}\max_{|\al|=k}[\pa_xv_\ba]_{s-k,\iy;F_\ba}^\da=0}, 
uniformly with respect to 
\hb{\ba\in\sB}. 
%---------------------------------------------------------------------
\begin{lemma}\LabelT{lem-PH.fbc} 
\hb{\mf{f}\in\Lis\bigl(b_\iy^s\bigl(\BX,\ell_\iy(\mf{F})\bigr),
   \ell_{\iy,\unif}\bigl(\mf{b}_\iy^s\BXBF\bigr)\bigr)}. 
\end{lemma}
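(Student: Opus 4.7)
The plan is to deduce Lemma~\ref{lem-PH.fbc} by combining two tools already in hand: the topological isomorphism \Eqref{PH.fBC2} of Lemma~\ref{lem-PH.f}, which identifies $B_\iy^s\bigl(\BX,\ell_\iy(\mf{F})\bigr)$ with $\ell_\iy\bigl(\mf{B}_\iy^s\BXBF\bigr)$, together with the intrinsic little-$o$ characterization of $b_\iy^s$ furnished by Theorem~\ref{thm-PH.bc}(iii). Since $\mf{f}$ is already a topological linear isomorphism between the ambient Besov-H\"older spaces, the task will reduce to a purely set-theoretic matching of the subspaces $b_\iy^s\bigl(\BX,\ell_\iy(\mf{F})\bigr)$ and $\ell_{\iy,\unif}\bigl(\mf{b}_\iy^s\BXBF\bigr)$; the bicontinuity will then follow automatically by restriction of~\Eqref{PH.fBC2}.

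Fix $k\in\BN$ with $k<s\leq k+1$. First I would record, at every scale $\da>0$, the analogue of the commutation identity used in~\Eqref{PH.HSl}. Indeed, by~\Eqref{PH.fdj} applied with an iterated difference in place of $\pa_x$, one has $\tri_h^{j}\pa_x u_\ba=\pro_\ba\circ\tri_h^{j}\pa_x u$ for every $j\in\BN^\times$ and $|\al|=k$; consequently, since $\|\cdot\|_{\ell_\iy(\mf{F})}=\sup_\ba\|\pro_\ba(\cdot)\|_{F_\ba}$ and suprema commute,
\[
\sup_{\ba\in\sB}\,[\pa_x u_\ba]_{s-k,\iy;F_\ba}^\da
=[\pa_x u]_{s-k,\iy;\ell_\iy(\mf{F})}^\da
\qquad |\al|=k.
\]
By Theorem~\ref{thm-PH.bc}(iii), an element $u\in B_\iy^s\bigl(\BX,\ell_\iy(\mf{F})\bigr)$ lies in $b_\iy^s\bigl(\BX,\ell_\iy(\mf{F})\bigr)$ iff the right-hand side tends to zero as $\da\ra0$ for each $|\al|=k$; by the displayed equality this is precisely the uniform-in-$\ba$ little-$o$ condition in the definition of $\ell_{\iy,\unif}\bigl(\mf{b}_\iy^s\BXBF\bigr)$ applied to $\mf{f}u$. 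The subordinate requirement that $\mf{f}u$ already lie in $\ell_{\iy,\unif}\bigl(\mf{bc}^k\BXBF\bigr)$ will be supplied by~\Eqref{PH.fBUC}, using the embedding $b_\iy^s\hr bc^k$ from Theorem~\ref{thm-PH.bc}(iv).

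This will establish the set-theoretic equality $\mf{f}\bigl(b_\iy^s\bigl(\BX,\ell_\iy(\mf{F})\bigr)\bigr)=\ell_{\iy,\unif}\bigl(\mf{b}_\iy^s\BXBF\bigr)$, and the topological isomorphism statement will then follow at once by restricting~\Eqref{PH.fBC2}. I do not anticipate any genuine analytic obstacle: the only real step is the displayed commutation of $\sup_\ba$ with the H\"older seminorm at scale $\da$, which is formal because the operations $\tri_h^j$ and $\pa_x$ act pointwise while the $\ell_\iy(\mf{F})$-norm is itself a supremum over $\ba$. The main conceptual care will be simply to keep track of the fact that the uniformity-in-$\ba$ built into the definition of $\ell_{\iy,\unif}(\mf{b}_\iy^s)$ corresponds exactly to the absence of any $\ba$-dependence on the $\ell_\iy(\mf{F})$-side of $\mf{f}$.
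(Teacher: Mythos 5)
Your proposal matches the paper's proof in essence: both rest on the $\da$-parameterized version of the commutation identity (the proof of \Eqref{PH.HSl} with superscript $\da$), on the little-$o$ characterization in Theorem~\ref{thm-PH.bc}(iii), and on the mean-value-theorem argument from step~(2) of the proof of Lemma~\ref{lem-PH.f} (which you invoke through \Eqref{PH.fBUC} and \Eqref{PH.fBC2}). You merely make explicit the set-theoretic reduction that the paper leaves implicit; the ingredients and the logical structure coincide.
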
 
%=====================================================================
\begin{proof} 
The proof of \Eqref{PH.HSl} shows that, given 
\hb{k<s\leq k+1}, 
$$ 
\sup_\ba\bigl[\pro_\ba(\mf{f}u)\bigr]_{s,\iy;F_\ba}^\da 
=[u]_{s,\iy;\ell_\iy(\mf{F})}^\da 
\qa \da>0.  
\npb 
$$ 
Thus the claim follows by the arguments of step~(2) of the proof 
of Lemma~\ref{lem-PH.f} and from Theorem~\ref{thm-PH.bc}. 
\end{proof} 
Now we extend~$\mf{f}$ point-wise over~$J$: 
$$ 
\wt{\mf{f}}\sco\mf{F}^{\BX\times J}\ra\prod_\ba F_\ba^{\BX\times J} 
\qb u\mt\wt{\mf{f}}u:=\bigl(t\mt\mf{f}u(\cdot,t)\bigr). 
\npb 
$$ 
As above, 
\hb{\mf{B}_\iy^{s/\vec r}(\BX\times J,\BF) 
   :=\prod_\ba B_\iy^{s/\vec r}(\BX\times J,F_\ba)} for 
\hb{s>0}. Analogous definitions apply to 
\hb{\mf{b}_\iy^{s/\vec r}(\BX\times J,\BF)}. 

\smallskip 
Clearly, 
$$ 
\ell_{\iy,\unif}\bigl(\mf{bc}^{kr/\vec r}(\BX\times J,\BF)\bigr)
$$ 
is the closed subspace of 
$\ell_\iy\bigl(\mf{BC}^{kr/\vec r}(\BX\times J,\BF)\bigr)$ of all 
\hb{\mf{u}=(u_\ba)} for which 
\hb{\pa_x\pl^ju_\ba\in\BUC(\BX\times J,F_\ba)} for 
\hb{|\al|+jr\leq kr}, uniformly with respect to 
\hb{\ba\in\sB}. 

\smallskip 
Suppose 
\hb{kr<s\leq(k+1)r}. We denote by 
$$ 
\ell_{\iy,\unif}\bigl(\mf{b}_\iy^{s/\vec r}(\BX\times J,\BF)\bigr)
$$ 
the set of all 
\hb{\mf{u}=(u_\ba)\in\ell_\iy\bigl(\mf{B}_\iy^{s/\vec r} 
   (\BX\times J,\BF)\bigr)} satisfying 
$$ 
\sup_\ba\sup_t\max_{|\al|=[s]_-} 
\bigl[\pa_xu_\ba(\cdot,t)\bigr]_{s-[s]_-,\iy;F_\ba}^\da 
+\sup_\ba\sup_x  
\bigl[\pl^{[s/r]_-}u_\ba(x,\cdot)\bigr]_{s/r-[s/r]_-,\iy;F_\ba}^\da\ra0  
\npb 
$$ 
as 
\hb{\da\ra0}. 

\smallskip 
Now we can prove the following anisotropic analogue of Lemmas 
\ref{lem-PH.f} and \ref{lem-PH.fbc}. 
%---------------------------------------------------------------------
\begin{lemma}\LabelT{lem-PH.fJ} 
$\wt{\mf{f}}$~is an isomorphism 
$$ 
\text{from } 
bc^{kr/\vec r}\bigl(\BX\times J,\ell_\iy(\mf{F})\bigr) 
\text{ onto } 
\ell_{\iy,\unif}\bigl(\mf{bc}^{kr/\vec r}(\BX\times J,\BF)\bigr) 
$$ 
and 
$$ 
\text{from } 
B_\iy^{s/\vec r}\bigl(\BX\times J,\ell_\iy(\mf{F})\bigr) 
\text{ onto } 
\ell_\iy\bigl(\mf{B}_\iy^{s/\vec r}(\BX\times J,\BF)\bigr) 
$$ 
as well as 
$$ 
\text{from } 
b_\iy^{s/\vec r}\bigl(\BX\times J,\ell_\iy(\mf{F})\bigr) 
\text{ onto } 
\ell_{\iy,\unif}\bigl(\mf{b}_\iy^{s/\vec r}(\BX\times J,\BF)\bigr). 
$$ 
\end{lemma}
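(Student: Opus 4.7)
\begin{proofc}{Proposal for Lemma \ref{lem-PH.fJ}}
The plan is to mimic the two isotropic lemmas (Lemma~\ref{lem-PH.f} and Lemma~\ref{lem-PH.fbc}) by reducing each of the three anisotropic assertions to their isotropic counterparts applied separately in the space variable and in the time variable, and then combining the resulting identifications. The basic observation is that $\wt{\mf{f}}$ is defined precisely so that $\pro_\ba\circ(\wt{\mf{f}}u)(x,t)=u_\ba(x,t)$, hence
$$
\wt{\mf{f}}\circ\pa_x\pl^j=\pa_x\pl^j\circ\wt{\mf{f}}
\qa\al\in\BN^m,\ j\in\BN,
$$
and $\wt{\mf{f}}$ interchanges suprema over $\ba$ with suprema over $(x,t)$ and over finite-difference increments, exactly as in the computation leading to~\Eqref{PH.HSl}.

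First I would treat the $bc^{kr/\vec r}$ assertion. By Proposition~\ref{pro-PH.BUC} applied with $\cX=\ell_\iy(\mf{F})$ and, componentwise, with $\cX=F_\ba$, membership in $\BUC^{kr/\vec r}(\BX\times J,\cX)$ is equivalent to $u\in\bigcap_{j=0}^k\BUC^j\bigl(J,\BUC^{(k-j)r}(\BX,\cX)\bigr)$. Applying the isotropic identification~\Eqref{PH.fBUC} from Lemma~\ref{lem-PH.f} pointwise in $t$ and reproducing the difference-quotient estimate of step~(2) of that proof (now with the sup over $\ba\in\sB$ playing the role of the sup over $\BX$) shows that $\wt{\mf f}$ maps the intersection isomorphically onto
$$
\bigcap_{j=0}^k\ell_{\iy,\unif}\Bigl(\mf{\BUC}^j\bigl(J,\BUC^{(k-j)r}(\BX,\BF)\bigr)\Bigr),
$$
which, by the same Proposition~\ref{pro-PH.BUC}, equals $\ell_{\iy,\unif}(\mf{bc}^{kr/\vec r}(\BX\times J,\BF))$. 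This settles the first isomorphism.

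Next I would establish the $B_\iy^{s/\vec r}$ assertion using Corollary~\ref{cor-PH.BJ}(i), which gives
$$
B_\iy^{s/\vec r}(\BX\times J,\cX)\doteq B\bigl(J,B_\iy^s(\BX,\cX)\bigr)\cap B_\iy^{s/r}\bigl(J,B(\BX,\cX)\bigr).
$$
For the first factor, the pointwise-in-$t$ application of~\Eqref{PH.fBC2} from Lemma~\ref{lem-PH.f} combined with the elementary identity $\|\mf{f}v\|_{\ell_\iy(\mf{B}_\iy^s)}=\|v\|_{B_\iy^s(\BX,\ell_\iy(\mf{F}))}$ yields the isomorphism onto $\ell_\iy(B(J,\mf{B}_\iy^s(\BX,\BF)))$. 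For the second factor, I would use the intrinsic norm $\Vsdot_{s/r,\iy}^{*}$ on $B_\iy^{s/r}(J,\cX)$ from Theorem~\ref{thm-PH.B}(i) and compute, as in~\Eqref{PH.HSl}, that the sup over temporal finite-difference increments commutes with the sup over $\ba$, giving $\wt{\mf{f}}\in\Lis\bigl(B_\iy^{s/r}(J,B(\BX,\ell_\iy(\mf{F}))),\ell_\iy(\mf{B}_\iy^{s/r}(J,B(\BX,\BF)))\bigr)$. Intersecting the two identifications and invoking Corollary~\ref{cor-PH.BJ}(i) once more (now for each $F_\ba$ separately, uniformly in $\ba$ because the norm equivalence constants depend only on $s,r,m$) yields the second isomorphism.

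For the third assertion I would use Theorem~\ref{thm-PH.bJ}(iii): $u\in b_\iy^{s/\vec r}$ iff $u\in B_\iy^{s/\vec r}$ and the two $\da$-seminorms in~\Eqref{PH.Sdx} tend to $0$ as $\da\ra0$. The second isomorphism already identifies the underlying $B_\iy^{s/\vec r}$-spaces, so it suffices to observe that for $\mf u=\wt{\mf f}u$,
$$
\sup_{t}\max_{|\al|=[s]_-}\bigl[\pa_xu(\cdot,t)\bigr]^{\da}_{s-[s]_-,\iy;\ell_\iy(\mf F)}
=\sup_{\ba}\sup_{t}\max_{|\al|=[s]_-}\bigl[\pa_xu_\ba(\cdot,t)\bigr]^{\da}_{s-[s]_-,\iy;F_\ba},
$$
and the analogous identity for the temporal seminorm; both follow exactly as in~\Eqref{PH.HSl}. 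Hence the condition~\Eqref{PH.Sdx} for $u$ is equivalent to the uniform-in-$\ba$ version of the same condition for the components $u_\ba$, which is exactly the definition of $\ell_{\iy,\unif}(\mf{b}_\iy^{s/\vec r}(\BX\times J,\BF))$.

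The step I expect to be the main technical obstacle is the second one: verifying that the intersection of the two identifications is still an isomorphism with equivalent norms. The delicate point is showing that the two componentwise norm equivalences produced by Corollary~\ref{cor-PH.BJ}(i) (one for each $F_\ba$) have constants independent of~$\ba$. This should however follow from inspection of the proof of Corollary~\ref{cor-PH.BJ}, which relies only on the characterizations in Theorem~\ref{thm-PH.BJ}(i) whose constants depend only on $s$, $r$, and $m$, not on the target Banach space.
\end{proofc}
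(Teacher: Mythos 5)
Your proposal is correct and follows essentially the same route as the paper: commute $\pa_x$ and $\pl^j$ through $\wt{\mf{f}}$, adapt the mean-value-theorem estimate of Lemma~\ref{lem-PH.f}(2), use the intersection/equivalent-norm characterization of $B_\iy^{s/\vec r}$ (Corollary~\ref{cor-PH.BJ}), and invoke the intrinsic characterization of $b_\iy^{s/\vec r}$ from Theorem~\ref{thm-PH.bJ}(iii). The small variations (routing the first assertion through Proposition~\ref{pro-PH.BUC} rather than applying the isotropic argument directly, and using Corollary~\ref{cor-PH.BJ}(i) instead of (ii)) are cosmetic, and your uniformity-in-$\ba$ concern is real but, as you note, resolved by inspection of the relevant norm-equivalence constants.
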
 
%=====================================================================
\begin{proof} 
Note 
\hb{\pl^j\circ\wt{\mf{f}}=\wt{\mf{f}}\circ\pl^j}. Hence the first assertion 
follows from \Eqref{PH.fBUC}. The remaining statements are verified by 
obvious modifications of the relevant parts of the proofs of Lemmas 
\ref{lem-PH.f} and~\ref{lem-PH.fbc}, taking Corollary~\ref{cor-PH.BJ}(ii) 
and Theorem~\ref{thm-PH.bJ} into account. 
\end{proof} 
%---------------------------------------------------------------------
%---------------------------------------------------------------------
\section{Weighted H\"older Spaces }\LabelT{sec-H}% 
Having investigated H\"older spaces on $\BR^m$ and~$\BH^m$ in the 
preceding section we now return to the setting of singular manifolds. First 
we introduce isotropic weighted H\"older spaces and study some of their 
properties. Afterwards we study to anisotropic H\"older spaces of 
time-dependent \hbox{$W$-valued} 
\hb{(\sa,\tau)}-tensor fields on~$M$. Making use of the results of 
Section~\ref{sec-PH} we can give coordinate-free invariant definitions 
of these spaces. 

\smallskip 
By 
\hb{B^{0,\lda}=B^{0,\lda}(V)} we mean the weighted 
Banach space of all sections~$u$ of~$V$ satisfying 
$$ 
\|u\|_{\iy;\lda}=\|u\|_{0,\iy;\lda}  
:=\big\|\rho^{\lda+\tau-\sa}\,|u|_h\,\big\|_\iy<\iy, 
\npb 
$$ 
endowed with the norm~% 
\hb{\Vsdot_{\iy;\lda}}, and 
\hb{B:=B^{0,0}}. 

\smallskip 
For 
\hb{k\in\BN} 
$$
BC^{k,\lda}=BC^{k,\lda}(V)
:=\bigl(\bigl\{\,u\in C^k\MV\ ;\ \|u\|_{k,\iy;\lda}<\iy\,\bigr\},
\ \Vsdot_{k,\iy;\lda}\bigr),
$$
where
$$
\|u\|_{k,\iy;\lda}
:=\max_{0\leq i\leq k}\big\|\rho^{\lda+\tau-\sa+i}\,|\na^iu|_h\big\|_\iy.
$$
The topologies of $B^{0,\lda}$ and $BC^{k,\lda}$ are independent of the
particular choice of
\hb{\rho\in\gT(M)}. Consequently, this is also true for all other spaces of 
this section as follows from their definition which involves the topology 
of~$BC^{k,\lda}$ for 
\hb{k\in\BN} only. It is a consequence of Theorem~\ref{thm-H.Ri} below that 
$BC^{k,\lda}$~is a Banach space. 

\smallskip 
We set 
$$ 
BC^{\iy,\lda}=BC^{\iy,\lda}(V) 
:={\textstyle\bigcap_k}BC^{k,\lda}, 
$$ 
endowed with the obvious projective topology. Then 
$$ 
bc^{k,\lda}=bc^{k,\lda}(V)\text{ is the closure of 
$BC^{\iy,\lda}$ in }BC^{k,\lda} 
\qa k\in\BN. 
$$ 
The \hh{weighted Besov-H\"older space scale} 
\hb{[\,B_\iy^{s,\lda}\ ;\ s>0\,]} is defined by 
\beq\Label{H.Bdef1} 
B_\iy^{s,\lda}=B_\iy^{s,\lda}(V) 
:=\left\{
\bal
{}      %do not remove!
&(bc^{k,\lda},bc^{k+1,\lda})_{s-k,\iy},
    &\quad  k<s &<k+1,\\ 
&(bc^{k,\lda},bc^{k+2,\lda})_{1/2,\iy}, 
    &\quad  s   &=k+1. 
\eal 
\right. 
\npb 
\eeq 
It is a scale of Banach spaces. 

\smallskip 
The following fundamental retraction theorem allows to characterize 
Besov-H\"older spaces locally. 
%---------------------------------------------------------------------
\begin{theorem}\LabelT{thm-H.Ri} 
Suppose 
\hb{k\in\BN} and 
\hb{s>0}. Then $\psi_\iy^\lda$~is a retraction from~$\ell_\iy(\mf{BC}^k)$  
onto~$BC^{k,\lda}$  and from $\ell_\iy(\mf{B}_\iy^s)$  
onto~$B_\iy^{s,\lda}$, and $\vp_\iy^\lda$~is a coretraction. 
\end{theorem}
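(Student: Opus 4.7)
\begin{proofc}{proposal}
The plan is to follow the pattern of Theorems~\ref{thm-R.R} and~\ref{thm-RA.R}: first establish the retraction in the integer case $s=k\in\BN$ by explicit local estimates, and then pass to the Besov-H\"older case by real interpolation.

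For the integer case, I would derive the coretraction bound
$$
\sup_{\ka\in\gK}\|\vp_{\iy,\ka}^\lda u\|_{k,\iy;\BX_\ka,E}\leq c\,\|u\|_{k,\iy;\lda}
$$
and the retraction bound
$$
\|\psi_\iy^\lda\mf{v}\|_{k,\iy;\lda}\leq c\,\sup_\ka\|v_\ka\|_{k,\iy;\BX_\ka,E}
$$
by reducing to local coordinates via the equivalences \Eqref{L.Nh} and \Eqref{L.Nd}, and then invoking the uniform bounds \Eqref{S.sd}(iii)--(iv), \Eqref{R.LS}(iii), \Eqref{L.kgk}, together with the uniform regularity of the Christoffel symbols of $\na_{\cona g}$ and~$D$ (which control the coefficients $a_\ell$ in \Eqref{L.Dr}). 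The finite multiplicity of $\gK$ plays here the role of the $\ell_p$-triangle inequality of the scalar case, turning the sum defining $\psi_\iy^\lda$ into a pointwise supremum. The identity $\psi_\iy^\lda\circ\vp_\iy^\lda=\id$ is immediate from $\sum_\ka\pi_\ka^2=\mf{1}$ and $\chi_\ka\pi_\ka=\pi_\ka$. This is the $\ell_\iy$-analogue of Theorem~6.1 of~\cite{Ama12b}, adapted to $W$-valued tensor fields by the routine extensions \Eqref{R.ext}.

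Next, I would observe that the integer-order retraction restricts to the `little' subspaces. Indeed, for $u\in BC^{\iy,\lda}$ each $\vp_{\iy,\ka}^\lda u$ lies in $BC^\iy(\BX_\ka,E)$ with all derivative bounds uniform in~$\ka$, so $\vp_\iy^\lda(BC^{\iy,\lda})\subset\ell_{\iy,\unif}(\mf{bc}^k)$ in the sense of Section~\ref{sec-PH}, where we identify $bc^k=\BUC^k$ by Theorem~\ref{thm-PH.bc}(i). Since $\psi_\iy^\lda$ maps $\mf{\cD}$ into $\cD\subset BC^{\iy,\lda}$, density yields retractions
$$
\psi_\iy^\lda\sco\ell_{\iy,\unif}(\mf{bc}^k)\to bc^{k,\lda}
\qa k\in\BN,
$$
with coretraction $\vp_\iy^\lda$. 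Applying the real interpolation functor $(\cdot,\cdot)_{\theta,\iy}$ in accordance with definition~\Eqref{H.Bdef1} (with $\theta=s-k_0$ for $k_0<s<k_0+1$ and the endpoint clause handled analogously for $s=k+1$) produces retractions
$$
\psi_\iy^\lda\sco\bigl(\ell_{\iy,\unif}(\mf{bc}^{k_0}),\ell_{\iy,\unif}(\mf{bc}^{k_1})\bigr)_{\theta,\iy}\to B_\iy^{s,\lda}
$$
with the same coretraction $\vp_\iy^\lda$, by the standard arguments already used in step~(5) of the proof of Theorem~\ref{thm-RA.R}.

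The principal obstacle is the identification of the interpolation space on the left with $\ell_\iy(\mf{B}_\iy^s)$. The elementary identity
$K\bigl(t,(x_\ka);\ell_\iy(\mf{E}_0),\ell_\iy(\mf{E}_1)\bigr)=\sup_\ka K(t,x_\ka;E_{0,\ka},E_{1,\ka})$
for the $K$-functional, together with the fact that $\sup_t$ commutes with $\sup_\ka$, gives the isometric equality $(\ell_\iy(\mf{E}_0),\ell_\iy(\mf{E}_1))_{\theta,\iy}=\ell_\iy((\mf{E}_0,\mf{E}_1)_{\theta,\iy})$; this remains valid for the `unif'-subspaces, thanks to Lemmas~\ref{lem-PH.f} and~\ref{lem-PH.fbc} applied to the countable family $\mf{F}$ indexed by $\gK$. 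Combined with Theorem~\ref{thm-PH.B}(ii) on the local spaces $\BX_\ka$ (which take only the two values $\BR^m$ and $\BH^m$, so the equivalence constants are uniform in $\ka$), this identifies the interpolation source as a subspace of $\ell_\iy(\mf{B}_\iy^s)$ on which $\psi_\iy^\lda$ is already known to be a retraction and $\vp_\iy^\lda$ takes its values. To extend $\psi_\iy^\lda$ to \emph{all} of $\ell_\iy(\mf{B}_\iy^s)$, I would revert to a direct estimate using the explicit local H\"older seminorm characterization provided by Theorem~\ref{thm-PH.B}(i): the finite multiplicity of $\gK$ gives $\|\psi_\iy^\lda\mf{v}\|_{s,\iy;\lda}^\ast\leq c\sup_\ka\|v_\ka\|_{s,\iy}^\ast$ directly by summing local H\"older seminorms against the cut-offs $\pi_\ka$, which together with $\psi_\iy^\lda\vp_\iy^\lda=\id$ completes the proof.
\end{proofc}
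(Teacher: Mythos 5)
Your outline tracks the paper's strategy through the integer case and the restriction to the little-H\"older subspaces, but the fractional step---which is the heart of the proof---misses the key fact, and the proposed repair is circular. The fact your argument does not exploit is the asymmetry built into Lemma~\ref{lem-PH.f}: $\mf{f}$ carries $bc^k\bigl(\BX,\ell_\iy(\mf{F})\bigr)$ onto the \emph{proper subspace} $\ell_{\iy,\unif}(\mf{bc}^k)$, yet carries $B_\iy^s\bigl(\BX,\ell_\iy(\mf{F})\bigr)$ onto the \emph{full} space $\ell_\iy(\mf{B}_\iy^s)$, with no uniformity condition surviving at the fractional level (see~\Eqref{PH.fBC2}). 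This asymmetry is what closes the argument: transport the $\ell_{\iy,\unif}(\mf{bc}^{k_i})$-endpoints through $\mf{f}^{-1}$ to $bc^{k_i}\bigl(\BM,\ell_\iy(\mf{E})\bigr)$, interpolate there using Theorem~\ref{thm-PH.B}(ii) and the invariance of interpolation functors under Banach-couple isomorphisms, obtaining $B_\iy^s\bigl(\BM,\ell_\iy(\mf{E})\bigr)$, and transport back with $\mf{f}$; the interpolation source is then identified with the \emph{entire} space $\ell_\iy(\mf{B}_\iy^s)$, and the retraction statement follows at once with no further estimate. Describing the interpolation source as merely ``a subspace of $\ell_\iy(\mf{B}_\iy^s)$'' and appending an extension step is where the argument goes wrong. (Your $K$-functional claim, moreover, is only a two-sided equivalence with constant~$2$, not an isometry, and as written it applies to $\ell_\iy(\mf{bc}^{k_i})$ rather than to the $\ell_{\iy,\unif}$-subspaces that are your actual endpoints.)

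The fallback direct estimate is circular. You propose to bound the explicit local H\"older expression $\vd\psi_\iy^\lda\mf{v}\vd_{s,\iy;\lda}^\ast$ by $c\sup_\ka\|v_\ka\|_{s,\iy;E}^\ast$ and conclude that $\psi_\iy^\lda\mf{v}\in B_\iy^{s,\lda}$. But $B_\iy^{s,\lda}$ is at this stage defined only by the abstract interpolation formula~\Eqref{H.Bdef1}, and the equivalence of its norm with the explicit expression $\vd\cdot\vd_{s,\iy;\lda}^\ast$ is precisely Corollary~\ref{cor-H.Ri}(ii), which is deduced \emph{from} Theorem~\ref{thm-H.Ri}. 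Without that corollary, controlling an explicit seminorm of $\psi_\iy^\lda\mf{v}$ tells you nothing about membership in $B_\iy^{s,\lda}$. The $\mf{f}$-transport avoids the issue entirely; the direct estimate cannot substitute for it.
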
 
%=====================================================================
\begin{proof} 
(1) 
The first claim is settled by Theorem~6.3 of~\cite{Ama12b}. 

\smallskip 
(2) 
Suppose 
\hb{k\in\BN}. It is obvious by the definition of~$\mf{bc}^k$, step~(1), 
\ \Eqref{S.sd}, and \Eqref{R.LS} 
that 
\beq\Label{H.psi} 
\psi_\iy^\lda\text{ is a retraction from $\ell_{\iy,\unif}(\mf{bc}^k)$ 
onto $bc^{k,\lda}$, and $\vp_\iy^\lda$ is a coretraction.} 
\eeq 

\smallskip 
(3) 
If 
\hb{\pl M=\es}, then we put 
\hb{\BM:=\BR^m}, 
\ \hb{\sB:=\gK}, and 
\hb{F_\ka:=E_\ka:=E} for 
\hb{\ka\in\gK}. Then, defining~$\mf{f}$ by \Eqref{PH.f} with this choice 
of~$F_\ba$ and 
\hb{\BX:=\BR^m}, Lemma~\ref{lem-PH.f} implies 
\beq\Label{H.fis} 
\mf{f}\in\Lis\bigl(bc^k\bigl(\BM,\ell_\iy(\mf{E})\bigr), 
\ell_{\iy,\unif}(\mf{bc}^k)\bigr). 
\eeq 

\smallskip 
(4) 
Suppose 
\hb{\pl M\neq\es}. Then we set 
\hb{\gK_0:=\gK\ssm\gK_{\pl M}} and 
\hb{\gK_1:=\gK_{\pl M}}. With 
\hb{E_\ka:=E} for 
\hb{\ka\in\gK} we put 
\hb{\mf{E}_i:=\prod_{\ka\in\gK_i}E_\ka} and define~$\mf{f}_{\cof i}$ by 
setting 
\hb{\sB=\gK_i} and 
\hb{F_\ka=E_\ka}. Then, letting 
\hb{\BX_0:=\BR^m} and 
\hb{\BX_1:=\BH^m}, we infer from Lemma~\ref{lem-PH.f} 
\beq\Label{H.fi} 
\mf{f}_{\cof i}\in\Lis\bigl(bc^k\bigl(\BX_i,\ell_\iy(\mf{E}_i)\bigr), 
\ell_{\iy,\unif}\bigl(\mf{bc}^k(\BX_i,\BE_i)\bigr)\bigr),  
\npb 
\eeq 
with 
\hb{\mf{bc}^k\BXiBEi:=\prod_{\ka\in\gK_i}bc^k\BXiEka}. 

\smallskip 
For 
\hb{\mf{bc}^k=\prod_{\ka\in\gK}bc_\ka^k} we use the natural identification 
\hb{\mf{bc}^k=\mf{bc}^k\BXnBEn\oplus\mf{bc}^k\BXeBEe}. It induces a 
topological direct sum decomposition 
\beq\Label{H.ls} 
\ell_{\iy,\unif}(\mf{bc}^k) 
=\ell_{\iy,\unif}\bigl(\mf{bc}^k\BXnBEn\bigr) 
\oplus\ell_{\iy,\unif}\bigl(\mf{bc}^k\BXeBEe\bigr), 
\npb 
\eeq 
where on the right side we use the maximum of the norms of the two summands. 

\smallskip 
Denoting by~% 
\hb{{}\sqcup{}} the disjoint union, we set 
\hb{\BM:=\BR^m\sqcup\BH^m} and 
$$ 
bc^k\bigl(\BM,\ell_\iy(\mf{E})\bigr) 
:=bc^k\bigl(\BX_0,\ell_\iy(\mf{E}_0)\bigr) 
\oplus bc^k\bigl(\BX_1,\ell_\iy(\mf{E}_1)\bigr). 
$$ 
It follows from \Eqref{H.fi} and \Eqref{H.ls} that 
\beq\Label{H.ffis} 
\mf{f}:=f_0\circ\pro_0+f_1\circ\pro_1  
\in\Lis\bigl(bc^k\bigl(\BM,\ell_\iy(\mf{E})\bigr), 
\ell_{\iy,\unif}(\mf{bc}^k)\bigr).   
\eeq 

\smallskip 
(5) 
Returning to the general case, where $\pl M$~may or may not be empty, 
we set 
$$  
\Phi_\iy^\lda:=\mf{f}^{-1}\circ\vp_\iy^\lda 
\qb \Psi_\iy^\lda:=\psi_\iy^\lda\circ\mf{f}. 
$$  
We deduce from \Eqref{H.psi}, \,\Eqref{H.fis}, and \Eqref{H.ffis} that 
\beq\Label{H.Psi} 
\Psi_\iy^\lda\text{ is a retraction from 
$bc^k\bigl(\BM,\ell_\iy(\mf{E})\bigr)$ onto $bc^{k,\lda}$, and 
$\Phi_\iy^\lda$ is a coretraction.} 
\eeq 
As a consequence of this, Theorem~\ref{thm-PH.B}(ii), 
definition~\Eqref{H.Bdef1}, and general properties of interpolation functors 
(cf.~\cite{Ama95a}, Proposition~I.2.3.3) we find 
\beq\Label{H.Psis} 
\Psi_\iy^\lda\text{ is a retraction from 
$B_\iy^s\bigl(\BM,\ell_\iy(\mf{E})\bigr)$ onto $B_\iy^{s,\lda}$, and 
$\Phi_\iy^\lda$ is a coretraction.} 
\eeq 
Since 
\beq\Label{H.PPh} 
\psi_\iy^\lda=\Psi_\iy^\lda\circ\mf{f}^{-1} 
\qb \vp_\iy^\lda=\mf{f}\circ\Phi_\iy^\lda 
\npb 
\eeq 
we get the second assertion from \Eqref{H.Psis} and 
Lemma~\ref{lem-PH.f}.\po  
\end{proof} 
%---------------------------------------------------------------------
{\samepage 
\begin{corollary}\LabelT{cor-H.Ri} 
\begin{itemize} 
\item[{\rm(i)}] 
${}$%                   do not remove!!!! 
\hb{u\mt\vd u\vd_{k,\iy;\lda}
   :=\sup_{\ka\slt\vp}\rho_\ka^\lda 
   \,\|(\ka\slt\vp)_*(\pi_\ka u)\|_{k,\iy;E}} is a norm 
for~$BC^{k,\lda}$.\po 
\item[{\rm(ii)}] 
Suppose 
\hb{s>0}. Then 
$$ 
u\mt\vd u\vd_{s,\iy;\lda}^* 
:=\sup_{\ka\slt\vp}\rho_\ka^\lda 
\,\|(\ka\slt\vp)_*(\pi_\ka u)\|_{s,\iy;E}^* 
$$ 
and 
$$ 
u\mt\vd u\vd_{s,\iy;\lda}^{**} 
:=\sup_{\ka\slt\vp}\rho_\ka^\lda 
\,\|(\ka\slt\vp)_*(\pi_\ka u)\|_{s,\iy;E}^{**} 
\npb 
$$ 
are norms for~$B_\iy^{s,\lda}$.\po 
\item[{\rm(iii)}] 
Assume 
\hb{k_0<s<k_1} with 
\hb{k_0,k_1\in\BN}. Then 
\hb{(bc^{k_0,\lda},bc^{k_1,\lda})_{(s-k_0)/(k_1-k_0),\iy}
   \doteq B_\iy^{s,\lda}}.  
\item[{\rm(iv)}] 
If\/ 
\hb{0<s_0<s_1} and 
\hb{0<\ta<1}, then 
\hb{(B_\iy^{s_0,\lda},B_\iy^{s_1,\lda})_{\ta,\iy}
   \doteq B_\iy^{s_\ta,\lda} 
   \doteq[B_\iy^{s_0,\lda},B_\iy^{s_1,\lda}]_\ta}.\po  
\end{itemize} 
\end{corollary}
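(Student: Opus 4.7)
The plan is to derive all four assertions directly from the retraction structure established in Theorem~\ref{thm-H.Ri}, together with the Euclidean identifications of Section~\ref{sec-PH}, using the elementary norm-transfer principle~\Eqref{R.rN}.

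First I would observe that, since $m/\iy = 0$, the definition of $\vp_\iy^\lda$ collapses to
$$
\vp_{\iy,\ka}^\lda u = \rho_\ka^\lda\,(\ka\slt\vp)_*(\pi_\ka u),
$$
so that $\|\vp_\iy^\lda u\|_{\ell_\iy(\mf{BC}^k)} = \vd u \vd_{k,\iy;\lda}$, while Theorem~\ref{thm-PH.B}(i) applied fiber-wise over $\ka$ (with $\cX = E$) yields
$$
\|\vp_\iy^\lda u\|_{\ell_\iy(\mf{B}_\iy^s)} \sim \vd u\vd_{s,\iy;\lda}^* \sim \vd u\vd_{s,\iy;\lda}^{**}.
$$
Combining these identifications with Theorem~\ref{thm-H.Ri} and~\Eqref{R.rN} gives (i) and (ii) at once.

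For (iii) I would invoke the refined retractions $\Psi_\iy^\lda$ and $\Phi_\iy^\lda$ constructed in the proof of Theorem~\ref{thm-H.Ri} (see~\Eqref{H.Psi} and~\Eqref{H.Psis}). These realize $bc^{k,\lda}$ as a retract of $bc^k\bigl(\BM, \ell_\iy(\mf{E})\bigr)$ for every $k \in \BN$, and $B_\iy^{s,\lda}$ as a retract of $B_\iy^s\bigl(\BM, \ell_\iy(\mf{E})\bigr)$ for every $s > 0$, with coretraction $\Phi_\iy^\lda$ defined independently of the smoothness parameter. The standard transfer principle for retractions (Proposition~I.2.3.3 of~\cite{Ama95a}) then pushes every interpolation identity from the source couple onto the target couple; applying this to the identification
$$
\bigl(bc^{k_0}\bigl(\BM, \ell_\iy(\mf{E})\bigr),\, bc^{k_1}\bigl(\BM, \ell_\iy(\mf{E})\bigr)\bigr)_{(s-k_0)/(k_1-k_0),\iy} \doteq B_\iy^s\bigl(\BM, \ell_\iy(\mf{E})\bigr)
$$
supplied by Theorem~\ref{thm-PH.B}(ii) with $\cX = \ell_\iy(\mf{E})$ yields (iii). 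Assertion (iv) is obtained by the same mechanism from Theorem~\ref{thm-PH.B}(iii) with $\cX = \ell_\iy(\mf{E})$: both the real-interpolation and the complex-interpolation identities on the vector-valued Besov--Hölder couple transfer verbatim through $\Psi_\iy^\lda$ and $\Phi_\iy^\lda$.

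The only genuinely subtle point, which is the place I expect the argument to require the most care, is verifying that the complex interpolation functor indeed commutes with the particular retraction built out of $\mf{f}$ and the weighted push-forwards; this reduces to checking that $\Psi_\iy^\lda$ and $\Phi_\iy^\lda$ act continuously on every endpoint of the couple simultaneously, which holds because neither map depends on the smoothness parameter. Once this compatibility is acknowledged, all four assertions of the corollary follow from material that is already in place.
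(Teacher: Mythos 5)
Your proposal is correct and follows the paper's own route essentially verbatim: parts~(i) and~(ii) are read off from the retraction in Theorem~\ref{thm-H.Ri} (equivalently \Eqref{R.peq}) together with Theorem~\ref{thm-PH.B}(i) applied fibre-wise, and parts~(iii) and~(iv) are obtained by transferring the interpolation identities of Theorem~\ref{thm-PH.B}(ii),(iii) through $\Psi_\iy^\lda$ and $\Phi_\iy^\lda$ (\Eqref{H.Psi}, \Eqref{H.Psis}). Your closing remark about the retraction pair acting simultaneously on all endpoints of the couple is precisely the (implicit) justification the paper relies on.
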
 }
%=====================================================================
\begin{proof} 
(i) and~(ii) are implied by \Eqref{R.peq} and Theorem~\ref{thm-PH.B}(i). 
Assertions (iii) and~(iv) follow from \Eqref{H.Psi} and \Eqref{H.Psis} 
and parts (ii) and~(iii) of Theorem~\ref{thm-PH.B}, respectively,  and 
\Eqref{H.PPh} and Lemma~\ref{lem-PH.f}. 
\end{proof} 
\hh{Weighted H\"older spaces} are defined by 
\hb{BC^{s,\lda}:=B_\iy^{s,\lda}} for 
\hb{s\in\BR^+\ssm\BN}. This is in agreement with 
Theorem~\ref{thm-PH.B}(ii).

\smallskip 
Parts (i) and~(ii) of Corollary~\ref{cor-H.Ri} show that the present 
definition of weighted H\"older spaces is equivalent to the one used 
in~\cite{Ama12b}. It should be noted that Corollary~\ref{cor-H.Ri}(iii) 
gives a positive answer to the conjecture of Remark~8.2 of~\cite{Ama12b}, 
provided $BC^{k,\lda}$ and~$BC^{k+1,\lda}$ are replaced by 
$bc^{k,\lda}$ and~$bc^{k+1,\lda}$, respectively. 

\smallskip 
We define \hh{weighted little H\"older spaces} by 
$$ 
bc^{s,\lda}\text{ is the closure of $BC^{\iy,\lda}$ in }BC^{s,\lda} 
\qa s\geq0. 
$$ 
Similarly, the \hh{weighted little Besov-H\"older space scale} 
\hb{[\,b_\iy^{s,\lda}\ ;\ s>0\,]} is obtained by 
\beq\Label{H.bldef} 
b_\iy^{s,\lda}\text{ is the closure of $BC^{\iy,\lda}$ in }B_\iy^{s,\lda}. 
\eeq 
%---------------------------------------------------------------------
\begin{theorem}\LabelT{thm-H.Rbci} 
$\psi_\iy^\lda$~is a retraction from $\ell_{\iy,\unif}(\mf{b}_\iy^s)$ 
onto~$b_\iy^{s,\lda}$, and $\vp_\iy^\lda$~is a coretraction. 
\end{theorem}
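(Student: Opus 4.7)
\smallskip
The plan is to follow the pattern of the proof of Theorem~\ref{thm-H.Ri}, exploiting the auxiliary pair $\Psi_\iy^\lda := \psi_\iy^\lda \circ \mf{f}$ and $\Phi_\iy^\lda := \mf{f}^{-1} \circ \vp_\iy^\lda$ introduced in step~(5) of that proof. By the identities~\Eqref{H.PPh} one has $\psi_\iy^\lda = \Psi_\iy^\lda \circ \mf{f}^{-1}$ and $\vp_\iy^\lda = \mf{f} \circ \Phi_\iy^\lda$, so it suffices to show first that $\Psi_\iy^\lda$ restricts to a retraction from $b_\iy^s\bigl(\BM, \ell_\iy(\mf{E})\bigr)$ onto $b_\iy^{s,\lda}$ with coretraction $\Phi_\iy^\lda$, and then to invoke Lemma~\ref{lem-PH.fbc}, combined if $\pl M \neq \es$ with the direct sum decomposition in~\Eqref{H.ls}, to obtain the topological isomorphism $\mf{f} \in \Lis\bigl(b_\iy^s(\BM, \ell_\iy(\mf{E})),\, \ell_{\iy,\unif}(\mf{b}_\iy^s)\bigr)$.

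\smallskip
Continuity of $\Psi_\iy^\lda$ and $\Phi_\iy^\lda$ between the full Besov-H\"older spaces is already supplied by~\Eqref{H.Psis}. Since $b_\iy^{s,\lda}$ and $b_\iy^s\bigl(\BM, \ell_\iy(\mf{E})\bigr)$ are by definition the closures of $BC^{\iy,\lda}$ and $BC^\iy\bigl(\BM, \ell_\iy(\mf{E})\bigr)$ in the respective full spaces, a standard continuity-and-density argument reduces the desired restriction property to verifying the two smooth-space inclusions $\Psi_\iy^\lda\bigl(BC^\iy(\BM, \ell_\iy(\mf{E}))\bigr) \subset BC^{\iy,\lda}$ and $\Phi_\iy^\lda(BC^{\iy,\lda}) \subset BC^\iy\bigl(\BM, \ell_\iy(\mf{E})\bigr)$. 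The first I would verify by observing that, for $\mf{w} \in BC^\iy\bigl(\BM, \ell_\iy(\mf{E})\bigr)$, one has $\|\mf{f}\mf{w}\|_{\ell_\iy(\mf{BC}^k)} \leq \|\mf{w}\|_{k,\iy;\ell_\iy(\mf{E})} < \iy$ for every $k$, so the integer-order statement of Theorem~\ref{thm-H.Ri} produces $\Psi_\iy^\lda \mf{w} = \psi_\iy^\lda(\mf{f}\mf{w}) \in BC^{k,\lda}$ for all $k$; smoothness then follows from the locally finite representation $\psi_\iy^\lda(\mf{f}\mf{w}) = \sum_\ka \rho_\ka^{-\lda} \pi_\ka (\ka\slt\vp)^*(\pro_\ka \circ \mf{w})$ together with the finite multiplicity of~$\gK$ and~\Eqref{R.LS}. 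For the second, given $u \in BC^{\iy,\lda}$, the uniform bounds $\vd u \vd_{k,\iy;\lda} < \iy$ combined with the mean value theorem place $\vp_\iy^\lda u \in \ell_{\iy,\unif}(\mf{bc}^k)$ for every $k$, and Lemma~\ref{lem-PH.f} then pulls this back to $\Phi_\iy^\lda u = \mf{f}^{-1}\vp_\iy^\lda u \in bc^k\bigl(\BM, \ell_\iy(\mf{E})\bigr) = \BUC^k$ for every $k$, whence $\Phi_\iy^\lda u \in \bigcap_k \BUC^k = BC^\iy\bigl(\BM, \ell_\iy(\mf{E})\bigr)$ by~\Eqref{PH.Binft}.

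\smallskip
The main obstacle is the bookkeeping of the uniform-in-$\ka$ estimates required in the two smooth-space inclusions: Lemma~\ref{lem-PH.f} is the device that converts a uniformly controlled family $(\rho_\ka^\lda(\ka\slt\vp)_*(\pi_\ka u))_\ka$ into a genuinely smooth $\ell_\iy(\mf{E})$-valued map on~$\BM$, and the full strength of the definition of $BC^{\iy,\lda}$ (with its uniform $\ka$-bounds at every differentiability order) is needed to feed the lemma at arbitrary~$k$. Once the restricted pair $(\Psi_\iy^\lda, \Phi_\iy^\lda)$ is in place, the composition identities~\Eqref{H.PPh} finish the argument.
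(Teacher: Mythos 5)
Your proposal is correct and follows essentially the same route as the paper's own proof: both pass through the auxiliary pair $\Psi_\iy^\lda=\psi_\iy^\lda\circ\mf{f}$, $\Phi_\iy^\lda=\mf{f}^{-1}\circ\vp_\iy^\lda$, first observe (from \Eqref{H.Psi} and $BC^\iy=\bigcap_k bc^k$) that $\Psi_\iy^\lda$ is a retraction from $BC^\iy\bigl(\BM,\ell_\iy(\mf{E})\bigr)$ onto $BC^{\iy,\lda}$, then pass to the closures via the bounded continuity of \Eqref{H.Psis} and the definitions of the little spaces, and finally invoke Lemma~\ref{lem-PH.fbc} together with \Eqref{H.PPh}. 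Your verification of the two smooth-space inclusions is more explicit than the paper's one-sentence appeal to $BC^\iy=\bigcap_k bc^k$, but it is the same argument; note only that once you know $\Psi_\iy^\lda\mf{w}\in\bigcap_kBC^{k,\lda}=BC^{\iy,\lda}$, the separate remark about smoothness via the locally finite representation is redundant.
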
 
%=====================================================================
\begin{proof} 
We infer from \Eqref{PH.Binf} that 
\hb{BC^\iy\bigl(\BM,\ell_\iy(\mf{E})\bigr) 
   =\bigcap_kbc^k\bigl(\BM,\ell_\iy(\mf{E})\bigr)}. Hence we get 
from \Eqref{H.Psi} that 
$\Psi_\iy^\lda$~is a retraction from 
$BC^\iy\bigl(\BM,\ell_\iy(\mf{E})\bigr)$ onto $BC^{\iy,\lda}$, and 
$\Phi_\iy^\lda$~is a coretraction. Due to this and definitions 
\Eqref{PH.bdef} and \Eqref{H.bldef} we deduce from \Eqref{H.Psi} and 
\Eqref{H.Psis} that 
\beq\Label{H.Psbci} 
\Psi_\iy^\lda\text{ is a retraction from 
$b_\iy^s\bigl(\BM,\ell_\iy(\mf{E})\bigr)$ onto $b_\iy^{s,\lda}$, and 
$\Phi_\iy^\lda$ is a coretraction}.  
\npb 
\eeq 
Now the assertion follows from Lemma~\ref{lem-PH.fbc} and \Eqref{H.PPh}. 
\end{proof}\po  
%---------------------------------------------------------------------
{\samepage 
\begin{corollary}\LabelT{cor-H.Rbci} 
\begin{itemize} 
\item[{\rm(i)}] 
Suppose 
\hb{k_0<s<k_1} with 
\hb{k_0,k_1\in\BN}. Then 
\hb{(bc^{k_0,\lda},bc^{k_1,\lda})_{(s-k_0)/(k_1-k_0),\iy}^0 
   \doteq b_\iy^{s,\lda}}. 
\item[{\rm(ii)}] 
If\/ 
\hb{0<s_0<s_1} and\/ 
\hb{0<\ta<1}, then 
\hb{(b_\iy^{s_0,\lda},b_\iy^{s_1,\lda})_{\ta,\iy}^0 
   \doteq b_\iy^{s_\ta,\lda} 
   \doteq[b_\iy^{s_0,\lda},b_\iy^{s_1,\lda}]_\ta}.\po  
\end{itemize} 
\end{corollary}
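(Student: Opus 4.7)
The plan is to derive both assertions from the Euclidean (Banach-space-valued) statements of Theorem~\ref{thm-PH.bcint} by pulling back through the retraction--coretraction pair $(\Psi_\iy^\lda,\Phi_\iy^\lda)$ constructed in the proof of Theorem~\ref{thm-H.Rbci}. The decisive observation is that this single pair was shown there to simultaneously realise $bc^{k,\lda}$ as a retract of $bc^k\bigl(\BM,\ell_\iy(\mf{E})\bigr)$ in~\Eqref{H.Psi} and $b_\iy^{s,\lda}$ as a retract of $b_\iy^s\bigl(\BM,\ell_\iy(\mf{E})\bigr)$ in~\Eqref{H.Psbci}. Since the retraction--coretraction principle is respected by the continuous, real, and complex interpolation functors (cf.\ \cite[Section~I.2]{Ama95a}), every interpolation identity established on the Euclidean side will descend to the weighted manifold side.

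For~(i), I would first split, in the case $\pl M\neq\es$, the source space into the topological direct sum $bc^{k_i}\bigl(\BX_0,\ell_\iy(\mf{E}_0)\bigr)\oplus bc^{k_i}\bigl(\BX_1,\ell_\iy(\mf{E}_1)\bigr)$ with $\BX_0=\BR^m$ and $\BX_1=\BH^m$, exploiting that interpolation distributes over finite direct sums. This reduces the task to the Euclidean identity
$$
\bigl(bc^{k_0}(\BX_i,\ell_\iy(\mf{E}_i)),bc^{k_1}(\BX_i,\ell_\iy(\mf{E}_i))\bigr)^0_{(s-k_0)/(k_1-k_0),\iy}\doteq b_\iy^s\bigl(\BX_i,\ell_\iy(\mf{E}_i)\bigr),
$$
which is Theorem~\ref{thm-PH.bcint}(i) applied with target Banach space $\cX=\ell_\iy(\mf{E}_i)$. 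Transferring back via $\Psi_\iy^\lda$ then yields~(i). For an integer value $s\in(k_0,k_1)$ one needs the brief supplementary remark that the closure of $bc^{k_1}$ in $B_\iy^s$ coincides with the closure of $BC^\iy$ in $B_\iy^s$, which is immediate from the density of $BC^\iy$ in $\BUC^{k_1}=bc^{k_1}$ together with the embedding $\BUC^{k_1}\hr B_\iy^s$ furnished by Theorem~\ref{thm-PH.B}(ii).

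For~(ii), the same retraction scheme will apply to both halves of Theorem~\ref{thm-PH.bcint}(ii). First I would invoke the real interpolation identity $(b_\iy^{s_0}(\BX_i,\ell_\iy(\mf{E}_i)),b_\iy^{s_1}(\BX_i,\ell_\iy(\mf{E}_i)))^0_{\ta,\iy}\doteq b_\iy^{s_\ta}(\BX_i,\ell_\iy(\mf{E}_i))$ on each Euclidean summand, sum the direct sums, and pull back by $\Psi_\iy^\lda$ using~\Eqref{H.Psbci} to obtain the first equivalence in~(ii). The second (complex interpolation) equivalence is obtained in exactly the same way, with $(\cdot,\cdot)^0_{\ta,\iy}$ replaced throughout by $[\cdot,\cdot]_\ta$ and with the complex interpolation half of Theorem~\ref{thm-PH.bcint}(ii) invoked at the Euclidean level.

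The principal obstacle is justifying that Theorem~\ref{thm-PH.bcint} genuinely extends to the vector-valued setting with target $\ell_\iy(\mf{E}_i)$. For the real/continuous interpolation part this is essentially automatic, since Section~\ref{sec-PH} is written for an arbitrary Banach space target $\cX$ throughout, and the Fourier multiplier input from~\cite[Theorem~3.4.1]{Ama09a} is valid in that setting. For the complex interpolation part, the Seeley-type construction in part~(2) of the proof of Theorem~\ref{thm-PH.B} must be shown to furnish bounded imaginary powers of $-\mf{\Lda}^{\coLda s_1-s_0}$ (respectively $-\wt{\mf{\Lda}}\vph{\Lda}^{\coLda s_1-s_0}$ in the anisotropic setting) on the vector-valued little Besov--H\"older space; this will go through because all estimates ultimately rest on the Fourier multiplier Theorem~\cite[Theorem~3.4.2]{Ama09a}, which holds for $L_\iy$-based Besov spaces without any UMD-type assumption on the target $\cX$.
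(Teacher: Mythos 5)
Your proposal is correct and follows essentially the same retraction--coretraction argument the paper uses: realise $bc^{k,\lda}$ and $b_\iy^{s,\lda}$ as retracts of the corresponding Euclidean vector-valued spaces via the pair $(\Psi_\iy^\lda,\Phi_\iy^\lda)$ from \Eqref{H.Psi} and \Eqref{H.Psbci}, and transfer the interpolation identities of Theorem~\ref{thm-PH.bcint} through the retraction, using the direct-sum decomposition over $\BR^m$ and $\BH^m$ when $\pl M\neq\es$. You also correctly supply the short supplementary argument for integer $s$ in part~(i), a case that Theorem~\ref{thm-PH.bcint}(i) formally excludes but the corollary's hypotheses allow.
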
 }
%=====================================================================
\begin{proof} 
These predications are derived from \Eqref{H.Psbci} and 
Theorem~\ref{thm-PH.bcint}. 
\end{proof} 
Now we turn to weighted anisotropic spaces. We set 
\beq\Label{H.krN0} 
BC^{0/\vec r,\vec\om}=BC^{0/\vec r,(\lda,0)} 
:=\bigl(\bigl\{\,u\in C\bigl(J,C(V)\bigr) 
\ ;\ \|u\|_{\iy;B^{0,\lda}}<\iy\,\bigr\}, 
\ \Vsdot_{\iy;B^{0,\lda}}\bigr) 
\eeq 
and, for 
\hb{k\in\BN^\times}, 
\beq\Label{H.krN1} 
BC^{kr/\vec r,\vec\om} 
:=\bigl\{\,u\in C\bigl(J,C(V)\bigr) 
\ ;\ \na^i\pl^ju\in BC^{0/\vec r,(\lda+i+j\mu,0)}, 
\ i+jr\leq kr\,\bigr\}, 
\eeq 
endowed with the norm 
\beq\Label{H.krN} 
u\mt\|u\|_{kr/\vec r,\iy;\vec\om} 
:=\max_{i+jr\leq kr}\|\na^i\pl^ju\|_{\iy;\lda+i+j\mu}. 
\npb 
\eeq 
It is a consequence of Theorem~\ref{thm-H.Rk} below that 
$BC^{kr/\vec r,\vec\om}$ is a Banach space. 

\smallskip 
Similarly as in the isotropic case, anisotropic H\"older spaces can be 
characterized by means of local coordinates. For this we prepare the 
following analogue of Lemma~\ref{lem-R.S}. 
%---------------------------------------------------------------------
\begin{lemma}\LabelT{lem-H.S} 
Suppose 
\hb{k\in\BN} and  
\ \hb{s>0}. Then 
$$ 
S_{\wt{\ka}\ka}\in\cL(BC_{\wt{\ka}}^k,BC_\ka^k) 
\cap\cL(bc_{\wt{\ka}}^k,bc_\ka^k) 
\cap\cL(B_{\iy,\wt{\ka}}^s,B_{\iy,\ka}^s) 
\cap\cL(b_{\iy,\wt{\ka}}^s,b_{\iy,\ka}^s) 
\npb 
$$ 
and 
\hb{\|S_{\wt{\ka}\ka}\|\leq c} for 
\hb{\wt{\ka}\in\gN(\ka)} and 
\hb{\ka\in\gK}. 
\end{lemma}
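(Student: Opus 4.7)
The plan is to follow closely the strategy of Lemma~\ref{lem-R.S} and reduce the four claims to two base cases together with interpolation. Recall from the proof of Lemma~\ref{lem-R.S} the explicit formula
$$
S_{\wt{\ka}\ka}v=(\vp_\tau^\sa)_{\wt{\ka}\ka}
\bigl((\chi v)\circ(\ka\circ\wt{\ka}^{-1})\bigr),
$$
so $S_{\wt{\ka}\ka}$ is the composition of three operations: multiplication by the cut-off~$\chi$, pull-back by the diffeomorphism $\ka\circ\wt{\ka}^{-1}$ restricted to its natural domain, and fiber-wise multiplication by the matrix-valued coefficient $(\vp_\tau^\sa)_{\wt{\ka}\ka}$. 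By \Eqref{U.K}, \Eqref{U.Ph}, \Eqref{L.st}, \Eqref{L.AB}, and \Eqref{R.LS}, the diffeomorphism, the cut-off, and all components of $(\vp_\tau^\sa)_{\wt{\ka}\ka}$ have $BC^k$-norms bounded by constants $c(k)$ independent of $\ka\in\gK$ and $\wt{\ka}\in\gN(\ka)$.

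First I would handle the $BC^k$-case exactly as in Lemma~\ref{lem-R.S}: a direct application of the chain rule and Leibniz' formula, combined with the above uniform bounds, gives $\|S_{\wt{\ka}\ka}\|_{\cL(BC_{\wt{\ka}}^k,BC_\ka^k)}\leq c(k)$. Next, to obtain the bound in $\cL(bc_{\wt{\ka}}^k,bc_\ka^k)$, by Theorem~\ref{thm-PH.bc}(i) it suffices to work in $\BUC^k$. Since composition with a $BC^{k+1}$-diffeomorphism having uniformly bounded inverse preserves uniform continuity of derivatives up to order~$k$, and since multiplication by a $BC^{k+1}$ function with bounded derivatives likewise preserves uniform continuity, the same chain-rule/Leibniz computation shows $S_{\wt{\ka}\ka}(\BUC^k_{\wt{\ka}})\subset\BUC^k_\ka$ with a uniform bound.

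Having both base cases, I would deduce the $B_\iy^s$-statement by real interpolation. By Theorem~\ref{thm-PH.B}(ii), we may represent $B_{\iy,\ka}^s$ and $B_{\iy,\wt{\ka}}^s$ as $(\BUC^{k},\BUC^{k+1})_{s-k,\iy}$ for $k<s<k+1$ (resp.\ with $k+2$ when $s=k+1$). Interpolating the previous uniform bounds yields $\|S_{\wt{\ka}\ka}\|_{\cL(B_{\iy,\wt{\ka}}^s,B_{\iy,\ka}^s)}\leq c(s)$ with a bound independent of $\ka$ and $\wt\ka$. Finally, for the little Besov-H\"older class, I would invoke Theorem~\ref{thm-PH.bcint}(i), which represents $b_{\iy,\ka}^s$ as a continuous interpolation space of the $bc^k$-scale; the same interpolation argument applied to the uniform bounds in the $bc^k$-case produces the desired estimate on $b_{\iy,\wt{\ka}}^s\to b_{\iy,\ka}^s$.

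The main obstacle, modest as it is, lies in verifying that the constants really stay uniform in $\ka,\wt\ka$ at every stage; this hinges on the fact that \Eqref{U.K}--\Eqref{U.Ph} and \Eqref{R.LS} supply the required uniform $BC^k$-bounds on all the building blocks, and that the interpolation functors produce norm estimates depending only on the norms of the endpoint operators. With that book-keeping in place the four assertions follow from the two base cases by the scheme above.
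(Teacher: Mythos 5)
Your proof is correct and follows the same two-stage strategy as the paper: establish the $BC^k$ and $bc^k$ bounds from the explicit factorization of $S_{\wt\ka\ka}$ via chain rule and Leibniz, then pass to $B_\iy^s$ and $b_\iy^s$ by (continuous) real interpolation using Theorems~\ref{thm-PH.B}(ii) and~\ref{thm-PH.bcint}(i). The paper states these steps tersely by reference to Lemma~\ref{lem-R.S}; your write-up supplies the same argument in slightly more detail.
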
 
%=====================================================================
\begin{proof} 
As in the proof of Lemma~\ref{lem-R.S} we see that the statement applies 
for the spaces $BC^k$ and~$bc^k$. Now we get the remaining assertions by 
interpolation, due to Theorems \ref{thm-PH.B}(ii) and 
\ref{thm-PH.bcint}(i). 
\end{proof} 
%---------------------------------------------------------------------
\begin{theorem}\LabelT{thm-H.Rk} 
$\psi_\iy^{\vec\om}$~is a retraction from~$\ell_\iy(\mf{BC}^{kr/\vec r})$ 
onto~$BC^{kr/\vec r,\vec\om}$, and $\vp_\iy^{\vec\om}$~is a coretraction.  
\end{theorem}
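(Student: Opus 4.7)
\noindent
Plan. The strategy is to reduce to the already-established isotropic retraction theorem (Theorem~\ref{thm-H.Ri}) by ``freezing'' the time variable. The central algebraic observation, obtained from \Eqref{RA.tk}, the time-independence of the weight $\rho_\ka^\lda$ in $\vp_{\iy,\ka}^\lda$, and the commutation of $\Ta_{\iy,\ka}^{\pm\mu}$ with all spatial operations, is the pair of identities
\[
\pl^j\circ\vp_{\iy,\ka}^{\vec\om}=\Ta_{\iy,\ka}^\mu\circ\vp_{\iy,\ka}^{\lda+j\mu}\circ\pl^j,\qquad
\pl^j\circ\psi_{\iy,\ka}^{\vec\om}=\Ta_{\iy,\ka}^{-\mu}\circ\psi_{\iy,\ka}^{\lda+j\mu}\circ\pl^j.
\]
Combined with the elementary decompositions
\[
\|u\|_{kr/\vec r,\iy;\vec\om}\sim\max_{0\leq j\leq k}\sup_{t\in J}\|\pl^ju(\cdot,t)\|_{BC^{(k-j)r,\lda+j\mu}},\quad \|\mf v\|_{\ell_\iy(\mf{BC}^{kr/\vec r})}\sim\max_{0\leq j\leq k}\sup_\ka\sup_{t\in J}\|\pl^jv_\ka(\cdot,t)\|_{BC^{(k-j)r}(\BX_\ka,E)},
\]
and the invariance of these expressions under the time-dilations $\Ta_{\iy,\ka}^{\pm\mu}$, this reduces the theorem to $(k+1)$ applications of Theorem~\ref{thm-H.Ri}, one per value of $j$.

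For the coretraction direction, given $u\in BC^{kr/\vec r,\vec\om}$, I would apply, at every fixed $t\in J$ and every $0\leq j\leq k$, Theorem~\ref{thm-H.Ri} with weight $\lda+j\mu$ and order $(k-j)r$ to the time slice $\pl^ju(\cdot,t)\in BC^{(k-j)r,\lda+j\mu}$. Using the first commutation identity and the fact that the sup-norm is unchanged by $\Ta_{\iy,\ka}^\mu$, this yields
\[
\sup_\ka\|\pl^j\vp_{\iy,\ka}^{\vec\om}u\|_{BC^{(k-j)r}(\BY_\ka,E)}\leq c\sup_{t\in J}\|\pl^ju(\cdot,t)\|_{BC^{(k-j)r,\lda+j\mu}}\leq c\|u\|_{kr/\vec r,\iy;\vec\om},
\]
and taking the maximum over $j$ establishes $\vp_\iy^{\vec\om}\in\cL(BC^{kr/\vec r,\vec\om},\ell_\iy(\mf{BC}^{kr/\vec r}))$.

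For the retraction direction I exploit that the second commutation identity gives, at each $t\in J$,
\[
\pl^j\psi_\iy^{\vec\om}\mf v(\cdot,t)=\sum_\ka\psi_{\iy,\ka}^{\lda+j\mu}\bigl[\pl^jv_\ka(\cdot,\rho_\ka^{-\mu}t)\bigr]=\psi_\iy^{\lda+j\mu}\mf w_j(t),
\]
where $\mf w_j(t):=\bigl(\pl^jv_\ka(\cdot,\rho_\ka^{-\mu}t)\bigr)_\ka$ obeys $\|\mf w_j(t)\|_{\ell_\iy(\mf{BC}^{(k-j)r})}\leq\|\mf v\|_{\ell_\iy(\mf{BC}^{kr/\vec r})}$ uniformly in~$t$. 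Theorem~\ref{thm-H.Ri} in the retraction direction then bounds $\|\rho^{\lda+i+j\mu+\tau-\sa}|\na^i\pl^j\psi_\iy^{\vec\om}\mf v|_h(\cdot,t)\|_\iy$ pointwise in $t$ by $c\|\mf v\|_{\ell_\iy(\mf{BC}^{kr/\vec r})}$ for all $(i,j)$ with $i+jr\leq kr$; taking $\sup_t$ and the maximum yields the anisotropic estimate. The retraction identity $\psi_\iy^{\vec\om}\vp_\iy^{\vec\om}=\id$ is then immediate: the dilations $\Ta_{\iy,\ka}^\mu$ and $\Ta_{\iy,\ka}^{-\mu}$ cancel for each $\ka$, reducing the claim at each $t$ to the isotropic identity $\sum_\ka\pi_\ka(\ka\slt\vp)^*(\ka\slt\vp)_*(\pi_\ka u)=\sum_\ka\pi_\ka^2u=u$.

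The main subtlety, and the only genuinely anisotropic feature of the proof, is the $\ka$-dependence of the time-dilation $\rho_\ka^\mu$: when synthesizing via $\sum_\ka$ in the $\psi$-direction the distinct dilations obstruct writing $\pl^j\psi_\iy^{\vec\om}\mf v$ as a single $\Ta^{-\mu}$ applied to a $\ka$-independent isotropic object. The resolution is the observation that the time-slicewise majorization $\|\mf w_j(t)\|_{\ell_\iy(\mf{BC}^{(k-j)r})}\leq\|\mf v\|_{\ell_\iy(\mf{BC}^{kr/\vec r})}$ absorbs all per-$\ka$ dilations uniformly, allowing Theorem~\ref{thm-H.Ri} to be invoked slice-by-slice without loss.
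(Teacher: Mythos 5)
Your coretraction direction essentially coincides with the paper's (both hinge on the commutation $\pl^j\circ\Ta_{\iy,\ka}^{\pm\mu}=\rho_\ka^{\pm j\mu}\Ta_{\iy,\ka}^{\pm\mu}\circ\pl^j$ from \Eqref{RA.tk} and on the invariance of the sup-norm under $\Ta_{\iy,\ka}^\mu$, reducing to Theorem~\ref{thm-H.Ri}). The retraction direction, however, is a genuinely different argument. The paper never invokes the retraction half of Theorem~\ref{thm-H.Ri} directly: it composes $\vp_{\iy,\ka}^\lda$ with $\psi_{\iy,\wt\ka}^{\vec\om}$, uses the structural identity $\vp_{\iy,\ka}^\lda\circ\psi_{\iy,\wt\ka}^\lda=a_{\wt\ka\ka}S_{\wt\ka\ka}$ together with Lemma~\ref{lem-H.S}, the $BC^k$-multiplier algebra property, and \Eqref{S.err}/\Eqref{R.LS}, and only then passes back to the global $\Vsdot_{kr/\vec r,\iy;\vec\om}$-norm through the local norm characterization of Corollary~\ref{cor-H.Ri}(i). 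You instead apply Theorem~\ref{thm-H.Ri} in the retraction direction time-slice by time-slice, via the identity $\pl^j\psi_\iy^{\vec\om}\mf v(\cdot,t)=\psi_\iy^{\lda+j\mu}\mf w_j(t)$ with $\mf w_j(t)=\bigl(\pl^jv_\ka(\cdot,\rho_\ka^{-\mu}t)\bigr)_\ka$ and the uniform-in-$t$ domination $\|\mf w_j(t)\|_{\ell_\iy(\mf{BC}^{(k-j)r})}\leq\|\mf v\|_{\ell_\iy(\mf{BC}^{kr/\vec r})}$. Both routes are correct, and the verification of $\psi_\iy^{\vec\om}\vp_\iy^{\vec\om}=\id$ is the same in both. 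What each buys: your argument is shorter and more transparent, but it is specific to $q=\iy$ because the $\ell_\iy$-norm and the $B(J,\cdot)$-norm factor slice-wise with no Fubini-type issues; the paper's composition-formula route is the one that also carries over to the $L_p$ retraction theorem~\ref{thm-RA.R}, so the authors reuse the same mechanism for uniformity across $q\in(1,\iy]$. One point you should state explicitly rather than leave implicit is that $\psi_\iy^{\vec\om}\mf v$ indeed lies in $C\bigl(J,C(V)\bigr)$ with $\na^i\pl^j$-regularity for $i+jr\leq kr$; this follows from the local finiteness of $\sum_\ka$ (finite multiplicity of $\gK$), which lets you differentiate term-by-term and pass continuity in $t$ through the sum, but it is a step your proof uses silently.
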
 
%=====================================================================
\begin{proof} 
(1) 
From \Eqref{RA.tLq} and Theorem~\ref{thm-H.Ri} we get 
$$ 
\|\vp_{\iy,\ka}^{\vec\om}u\|_{\iy;BC_\ka^{kr}} 
=\|\vp_{\iy,\ka}^\lda u\|_{\iy;BC_\ka^{kr}} 
\leq c\,\|u\|_{\iy;BC^{kr,\lda}} 
\qa \ka\slt\vp\in\gK\slt\Phi.  
$$  
Similarly, by invoking \Eqref{RA.tk} as well, 
$$ 
\|\pl^j\vp_{\iy,\ka}^{\vec\om}u\|_{\iy;BC_\ka^{(k-j)r}} 
=\|\vp_{\iy,\ka}^{\lda+jr}\pl^ju\|_{\iy;BC_\ka^{(k-j)r}} 
\leq c\,\|\pl^ju\|_{\iy;BC^{(k-j)r,\lda+jr}} 
$$  
for 
\hb{0\leq j\leq k}  and 
\hb{\ka\slt\vp\in\gK\slt\Phi}. From this and definition~\Eqref{H.krN} we 
infer 
$$ 
\|\vp_\iy^{\vec\om}u\|_{\ell_\iy(\mf{BC}^{kr/\vec r})} 
\leq c\,\|u\|_{kr/\vec r,\iy;\vec\om}.  
$$ 

\smallskip 
(2) 
Given 
\hb{\ka\in\gK} and 
\hb{\wt{\ka}\in\gN(\ka)}, 
\beq\Label{H.rr} 
\vp_{\iy,\ka}^\lda\circ\psi_{\iy,\wt{\ka}}^\lda 
=a_{\wt{\ka}\ka}S_{\wt{\ka}\ka} 
\eeq 
with 
$$ 
a_{\wt{\ka}\ka} 
:=(\rho_\ka/\rho_{\wt{\ka}})^\lda(\ka_*\pi_\ka) 
S_{\wt{\ka}\ka}(\wt{\ka}_*\pi_{\wt{\ka}}).  
$$ 
It is obvious that the scalar-valued \hbox{$BC^k$-spaces} form 
continuous multiplication algebras. Hence \Eqref{S.err}, \,\Eqref{R.LS}, 
and Lemma~\ref{lem-H.S} imply  
\beq\Label{H.a} 
\|a_{\wt{\ka}\ka}\|_{BC^{kr}}\leq c 
\qa \wt{\ka}\in\gN(\ka) 
\qa \ka\in\gK. 
\eeq 
Thus we deduce from \Eqref{H.rr}, \,\Eqref{H.a}, and Lemma~\ref{lem-H.S} 
that 
$$ 
\|\vp_{\iy,\ka}^\lda 
\circ\psi_{\iy,\wt{\ka}}^{\vec\om}v_{\wt{\ka}}\|_{\iy;BC_\ka^{kr}} 
=\|\vp_{\iy,\ka}^\lda 
\circ\psi_{\iy,\wt{\ka}}^\lda v_{\wt{\ka}}\|_{\iy;BC_\ka^{kr}} 
\leq c\,\|v_{\wt{\ka}}\|_{\iy;BC_{\wt{\ka}}^{kr}} 
$$   
for 
\hb{\wt{\ka}\in\gN(\ka)} and 
\hb{\ka\slt\vp,\,\wt{\ka}\slt\wt{\vp}\in\gK\slt\Phi}. 
By this and the finite multiplicity of~$\gK$ we obtain 
$$ 
\bal 
\|\vp_{\iy,\ka}^\lda\circ\psi_\iy^{\vec\om}\mf{v}\|_{\iy;BC_\ka^{kr}} 
& =\Big\|\sum_{\wt{\ka}\in\gN(\ka)}\vp_{\iy,\ka}^\lda 
 \circ\psi_{\iy,\wt{\ka}}^{\vec\om}v_{\wt{\ka}}\Big\|_{\iy;BC_\ka^{kr}}\\ 
&\leq c\,\max_{\wt{\ka}\in\gN(\ka)} 
 \|\vp_{\iy,\ka}^\lda 
 \circ\psi_{\iy,\wt{\ka}}^{\vec\om}v_{\wt{\ka}}\|_{\iy;BC_\ka^{kr}}
 \leq c\,\|\mf{v}\|_{\ell_\iy(B(J,\mf{BC}^{kr}))} 
\eal 
\npb 
$$  
for 
\hb{\ka\slt\vp\in\gK\slt\Phi}. 
   
\smallskip 
Note 
$$ 
\|\vp_{\iy,\ka}^{\lda+j\mu} 
\circ\pl^j  
\circ\psi_{\iy,\wt{\ka}}^{\vec\om}v_{\wt{\ka}}\|_{\iy;BC_\ka^{(k-j)r}} 
=\|\vp_{\iy,\ka}^{\lda+j\mu} 
\circ\psi_{\iy,\wt{\ka}}^{\lda+j\mu}(\pl^jv_{\wt{\ka}})\| 
_{\iy;BC_\ka^{(k-j)r}} 
$$ 
for 
\hb{0\leq j\leq k} and 
\hb{\ka\slt\vp,\,\wt{\ka}\slt\wt{\vp}\in\gK\slt\Phi}. Thus, as above, 
$$  
\|\vp_\iy^{\lda+j\mu}\circ\pl^j\circ\psi_\iy^{\vec\om}\mf{v}\| 
_{\ell_\iy(\mf{BC}^{(k-j)r})} 
\leq c\,\|\pl^j\mf{v}\|_{\ell_\iy(B(J,\mf{BC}^{(k-j)r}))} 
\qa 0\leq j\leq k.  
$$ 
Now we deduce from Corollary~\ref{cor-H.Ri}(i) 
$$ 
\|\psi_\iy^{\vec\om}\mf{v}\|_{kr/\vec r,\iy;\vec\om} 
\leq c\,\|\mf{v}\|_{\ell_\iy(\mf{BC}^{kr/\vec r})}. 
\npb 
$$ 
Since $\vp_\iy^{\vec\om}$~is a right inverse for~$\psi_\iy^{\vec\om}$ the 
theorem is proved. 
\end{proof} 
Next we introduce a linear subspace of $BC^{kr/\vec r,\vec\om}$ by 
$$ 
bc^{kr/\vec r,\vec\om}\text{ is the set of all $u$ in 
$BC^{kr/\vec r,\vec\om}$ with } 
\vp_\iy^{\vec\om}u\in\ell_{\iy,\unif}(\mf{bc}^{kr/\vec r}). 
$$ 
Due to the fact that $\ell_{\iy,\unif}(\mf{bc}^{kr/\vec r})$ is a closed 
linear subspace of $\ell_\iy(\mf{BC}^{kr/\vec r})$ it follows from the 
continuity of~$\vp_\iy^{\vec\om}$ that $bc^{kr/\vec r,\vec\om}$ is a closed 
linear subspace of $BC^{kr/\vec r,\vec\om}$. 

\smallskip 
The next theorem shows, in particular, that $bc^{kr/\vec r,\vec\om}$ is 
independent of the particular choice of 
\hb{\gK\slt\Phi} and the localization system used in the 
preceding definition. For this we set 
$$ 
BC^{\iy/\vec r,\vec\om} 
:={\textstyle\bigcap_k}BC^{kr/\vec r,\vec\om}, 
\npb 
$$ 
equipped with the natural projective topology.\po 
%---------------------------------------------------------------------
{\samepage 
\begin{theorem}\LabelT{thm-H.Rbck} 
\begin{itemize} 
\item[{\rm(i)}] 
$\psi_\iy^{\vec\om}$~is a retraction from 
$\ell_{\iy,\unif}(\mf{bc}^{kr/\vec r})$ onto $bc^{kr/\vec r,\vec\om}$, 
and $\vp_\iy^{\vec\om}$~is a coretraction. 
\item[{\rm(ii)}] 
 $bc^{kr/\vec r,\vec\om}$ is the closure of $BC^{\iy/\vec r,\vec\om}$ in 
 $BC^{kr/\vec r,\vec\om}$.\po 
\end{itemize} 
\end{theorem}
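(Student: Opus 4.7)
The plan is to mirror the strategy of Theorem~\ref{thm-H.Rbci} in the anisotropic setting, using Theorem~\ref{thm-H.Rk} as the raw retraction and controlling uniform continuity by hand. Since the definition of $bc^{kr/\vec r,\vec\om}$ demands precisely that $\vp_\iy^{\vec\om}u\in\ell_{\iy,\unif}(\mf{bc}^{kr/\vec r})$, the map $\vp_\iy^{\vec\om}$ automatically sends $bc^{kr/\vec r,\vec\om}$ into $\ell_{\iy,\unif}(\mf{bc}^{kr/\vec r})$, and the identity $\psi_\iy^{\vec\om}\circ\vp_\iy^{\vec\om}=\id$ on $bc^{kr/\vec r,\vec\om}$ is inherited from Theorem~\ref{thm-H.Rk}. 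Hence (i) reduces to showing that $\psi_\iy^{\vec\om}$ maps $\ell_{\iy,\unif}(\mf{bc}^{kr/\vec r})$ into $bc^{kr/\vec r,\vec\om}$.

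For this I would fix $\mf{v}=(v_{\wt\ka})\in\ell_{\iy,\unif}(\mf{bc}^{kr/\vec r})$. Theorem~\ref{thm-H.Rk} already gives $\psi_\iy^{\vec\om}\mf{v}\in BC^{kr/\vec r,\vec\om}$, so it remains to verify $\vp_\iy^{\vec\om}\psi_\iy^{\vec\om}\mf{v}\in\ell_{\iy,\unif}(\mf{bc}^{kr/\vec r})$. Using finite multiplicity of $\gK$, one writes
$$
\vp_{\iy,\ka}^{\vec\om}\psi_\iy^{\vec\om}\mf{v}
=\sum_{\wt\ka\in\gN(\ka)}\vp_{\iy,\ka}^{\vec\om}\psi_{\iy,\wt\ka}^{\vec\om}v_{\wt\ka},
$$
and each summand, as in the computation leading to \Eqref{H.rr} combined with \Eqref{RA.tLq} and \Eqref{RA.tk}, factors as a change-of-chart operator built from $S_{\wt\ka\ka}$ acting in the spatial variables pointwise in time, followed by multiplication by a cut-off in $BC^\iy$ bounded uniformly in $(\wt\ka,\ka)$. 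The anisotropic analogue of Lemma~\ref{lem-H.S}, which follows from the isotropic statement together with the characterization $bc^{kr/\vec r}=\bigcap_{j\leq k}\BUC^j(J,\BUC^{(k-j)r})$ of Proposition~\ref{pro-PH.BUC}, guarantees that these operators preserve $bc^{kr/\vec r}$ with uniform bound and transport the common modulus of uniform continuity of $\mf{v}$ to one for $\vp_\iy^{\vec\om}\psi_\iy^{\vec\om}\mf{v}$. This yields~(i).

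For (ii), the inclusion $BC^{\iy/\vec r,\vec\om}\subset bc^{kr/\vec r,\vec\om}$ follows from \Eqref{PH.BCk} applied in local coordinates: if $u\in BC^{\iy/\vec r,\vec\om}$ then the components $v_\ka:=\vp_{\iy,\ka}^{\vec\om}u$ have bounds of all orders uniformly in $\ka$, so by the mean value theorem each $\pa_x\pl^jv_\ka$ with $|\al|+jr\leq kr$ is uniformly Lipschitz with constant independent of $\ka$. For density, I would approximate $\mf{v}:=\vp_\iy^{\vec\om}u$ componentwise by $\mf{w}_\eta=(w_\eta\ast\wt v_\ka)_\ka$ using a common mollifier $w_\eta$ on $\BR^{m+1}$ and, whenever $\BY_\ka\neq\BR^{m+1}$, a fixed extension $\wt{(\cdot)}$ of the type (4.1.7) in \cite{Ama09a}; only finitely many shapes of $\BY_\ka$ occur, so only finitely many extension operators enter. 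The uniform-in-$\ka$ modulus of uniform continuity of $\pa_x\pl^jv_\ka$ then forces $\mf{w}_\eta\to\mf{v}$ in $\ell_\iy(\mf{BC}^{kr/\vec r})$ as $\eta\to0$. Since each $\mf{w}_\eta$ lies in $\ell_\iy(\mf{BC}^{k'r/\vec r})$ for every $k'$, Theorem~\ref{thm-H.Rk} at order $k'$ shows $\psi_\iy^{\vec\om}\mf{w}_\eta\in BC^{k'r/\vec r,\vec\om}$ for every $k'$, hence $\psi_\iy^{\vec\om}\mf{w}_\eta\in BC^{\iy/\vec r,\vec\om}$; continuity of $\psi_\iy^{\vec\om}$ and $\psi_\iy^{\vec\om}\circ\vp_\iy^{\vec\om}=\id$ give $\psi_\iy^{\vec\om}\mf{w}_\eta\to u$ in $BC^{kr/\vec r,\vec\om}$.

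The main obstacle is ensuring that the extension and mollification in the density step converge uniformly across $\ka\in\gK$, rather than merely for each fixed $\ka$. This hinges on the fact that membership in $\ell_{\iy,\unif}(\mf{bc}^{kr/\vec r})$ provides a common modulus of continuity for the family $\{\pa_x\pl^jv_\ka\}_\ka$ (for $|\al|+jr\leq kr$), which produces a common rate of mollifier convergence; the fact that only finitely many shapes of $\BY_\ka$ occur, each with an extension operator of uniform $\BUC^{kr/\vec r}\to\BUC^{kr/\vec r}$ norm, then preserves this uniformity across all $\ka$. A parallel uniformity assertion is needed for the anisotropic analogue of Lemma~\ref{lem-H.S}, which is why we reduce that lemma via Proposition~\ref{pro-PH.BUC} to the isotropic statement already established.
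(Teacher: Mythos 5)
Your proof is correct, and for part (i) it takes essentially the same route as the paper, only phrased more directly: where you argue that $\psi_\iy^{\vec\om}$ carries $\ell_{\iy,\unif}(\mf{bc}^{kr/\vec r})$ into $bc^{kr/\vec r,\vec\om}$ by verifying that $\vp_\iy^{\vec\om}\psi_\iy^{\vec\om}$ preserves $\ell_{\iy,\unif}$, the paper packages the identical key step inside the direct-sum decomposition $\ell_\iy(\mf{BC}^{kr/\vec r})=\cY\oplus\ker\psi_\iy^{\vec\om}$ from~\Eqref{H.lS}, where it appears as the claim that the $\cY$-component $\mf{u}$ of $\mf{w}\in\ell_{\iy,\unif}$ lies in $\cX=\cY\cap\ell_{\iy,\unif}$. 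Both versions ultimately rest on the local formula $\vp_{\iy,\ka}^\lda\psi_{\iy,\wt\ka}^\lda=a_{\wt\ka\ka}S_{\wt\ka\ka}$ together with the uniform bounds coming from \Eqref{U.K}, \Eqref{S.sd}, and Lemma~\ref{lem-H.S}/\ref{lem-P.T}; you make the ``preserves the common modulus'' content of that explicit, the paper leaves it to the reader. One small citation slip: the characterization you use is really $\BUC^{kr/\vec r}=\bigcap_j\BUC^j(J,\BUC^{(k-j)r})$ from Proposition~\ref{pro-PH.BUC} \emph{combined with} $bc^{kr/\vec r}=\BUC^{kr/\vec r}$ from Theorem~\ref{thm-PH.bJ}(i).

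For part (ii), your route genuinely departs from the paper's. The paper transports the whole statement to a single Banach-space-valued function space via the isomorphism $\wt{\mf{f}}$ of Lemma~\ref{lem-PH.fJ}, so that $\Psi_\iy^{\vec\om}=\psi_\iy^{\vec\om}\circ\wt{\mf{f}}$ becomes a retraction from $bc^{kr/\vec r}\bigl(\BM\times J,\ell_\iy(\mf{E})\bigr)$ onto $bc^{kr/\vec r,\vec\om}$; density of $BC^{\iy/\vec r,\vec\om}$ is then read off from the already established density $BC^\iy\bigl(\BM\times J,\ell_\iy(\mf{E})\bigr)\sdh bc^{kr/\vec r}\bigl(\BM\times J,\ell_\iy(\mf{E})\bigr)$ (definition~\Eqref{PH.defbc}) together with the abstract Lemma~4.1.6 of~\cite{Ama09a}. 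You instead mollify componentwise with a common mollifier (after a uniform extension for the finitely many shapes of $\BY_\ka$) and use the common modulus of continuity to get $\ell_\iy(\mf{BC}^{kr/\vec r})$-convergence uniformly across $\ka$, then apply $\psi_\iy^{\vec\om}$. That is sound; the only points that need care are exactly the ones you flag: the extension operator must preserve common moduli uniformly (it does, since the same operator of type \cite[(4.1.7)]{Ama09a} serves all charts of a given shape), and the mollified family must lie in $\ell_\iy(\mf{BC}^{k'r/\vec r})$ for all $k'$ so that $\psi_\iy^{\vec\om}\mf{w}_\eta\in BC^{\iy/\vec r,\vec\om}$ by Theorem~\ref{thm-H.Rk} at each order. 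What the paper's route buys is reuse of the already-proved density statement for a single scalar base; what yours buys is elementarity and no appeal to $\wt{\mf{f}}$ or to Lemma 4.1.6.
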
 }
%=====================================================================
\begin{proof} 
(1) 
Suppose 
\hb{\vp_\iy^{\vec\om}u=0} for some 
\hb{u\in BC^{kr/\vec r,\vec\om}}. Then it follows from \Eqref{RA.tab} that 
\hb{(\ka\slt\vp)_*(\pi_\ka u)=0} for 
\hb{\ka\slt\vp\in\gK\slt\Phi}. Hence 
\hb{\pi_\ka u=0} for 
\hb{\ka\in\gK}, and consequently 
\hb{\pi_\ka^2u=0} for 
\hb{\ka\in\gK}. This implies 
\hb{u=\sum_\ka\pi_\ka^2u=0}. Thus $\vp_\iy^{\vec\om}$~is injective. 

\smallskip 
(2) 
We denote by~$\cY$ the image space of $BC^{kr/\vec r,\vec\om}$ 
under~$\vp_\iy^{\vec\om}$. Theorem~\ref{thm-H.Rk} and 
\cite[Lemma~4.1.5]{Ama09a} imply 
\beq\Label{H.lS} 
\ell_\iy(\mf{BC}^{kr/\vec r}) 
=\cY\oplus\ker(\psi_\iy^{\vec\om}) 
\qa \psi_\iy^{\vec\om}\in\Lis(\cY,BC^{kr/\vec r,\vec\om}).  
\eeq 
Thus, by step~(1) (see Remarks~2.2.1 of~\cite{Ama09a}), 
$$ 
\vp_\iy^{\vec\om}\in\Lis(BC^{kr/\vec r,\vec\om},\cY) 
\qb (\vp_\iy^{\vec\om})^{-1}=\psi_\iy^{\vec\om}\sn\cY. 
$$  
Since 
\hb{\cX:=\cY\cap\ell_{\iy,\unif}(\mf{bc}^{kr/\vec r})} is a closed linear 
subspace of~$\cY$ we thus get 
\beq\Label{H.pfis} 
\vp_\iy^{\vec\om}\in\Lis(bc^{kr/\vec r,\vec\om},\cX)  
\qb (\vp_\iy^{\vec\om}\sn bc^{kr/\vec r,\vec\om})^{-1} 
=\psi_\iy^{\vec\om}\sn\cX. 
\eeq 
Due to \Eqref{H.lS} we can write 
\hb{\mf{w}\in\ell_{\iy,\unif}(\mf{bc}^{kr/\vec r})} in the form 
\hb{\mf{w}=\mf{u}+\mf{v}} with 
\hb{\mf{u}\in\cX} and 
\hb{\mf{v}\in\ker(\psi_\iy^{\vec\om})}. From this and \Eqref{H.pfis} it 
follows 
\hb{\psi_\iy^{\vec\om}\bigl(\ell_{\iy,\unif}(\mf{bc}^{kr/\vec r})\bigr) 
   \is bc^{kr/\vec r,\vec\om}}. Hence 
\hb{\psi_\iy^{\vec\om}\in\cL\bigl(\ell_{\iy,\unif}(\mf{bc}^{kr/\vec r}), 
   bc^{kr/\vec r,\vec\om}\bigr)} and 
\hb{\psi_\iy^{\vec\om}\circ\vp_\iy^{\vec\om}u=u} for 
\hb{u\in bc^{kr/\vec r,\vec\om}}. This proves~(i). 

\smallskip 
(3) 
Using obvious adaptions of the notations of the proof of 
Theorem~\ref{thm-H.Ri} we deduce from Lemma~\ref{lem-PH.fJ} 
\beq\Label{H.fJ} 
\wt{\mf{f}} 
\in\Lis\bigl(bc^{kr/\vec r}\bigl(\BM\times J,\ell_\iy(\mf{E})\bigr), 
\ell_{\iy,\unif}(\mf{bc}^{kr/\vec r})\bigr). 
\eeq 
We set 
$$ 
\Phi_\iy^{\vec\om}:=\wt{\mf{f}}\vph{f}^{-1}\circ\vp_\iy^{\vec\om} 
\qb \Psi_\iy^{\vec\om}:=\psi_\iy^{\vec\om}\circ\wt{\mf{f}}. 
$$  
Then we infer from~(i) and \Eqref{H.fJ} that 
\beq\Label{H.PsJ} 
\Psi_\iy^{\vec\om}\text{ is a retraction from 
$bc^{kr/\vec r}\bigl(\BM\times J,\ell_\iy(\mf{E})\bigr)$ onto 
$bc^{kr/\vec r,\vec\om}$, and $\Phi_\iy^{\vec\om}$ is a coretraction.} 
\eeq 
Definition~\Eqref{PH.defbc} guarantees 
$$ 
BC^\iy\bigl(\BM\times J,\ell_\iy(\mf{E})\bigr) 
\sdh bc^{kr/\vec r}\bigl(\BM\times J,\ell_\iy(\mf{E})\bigr). 
$$ 
It is an easy consequence of the mean value theorem that 
\hb{\ell_\iy(\mf{BC}^{(k+1)r/\vec r}) 
   \hr\ell_{\iy,\unif}(\mf{bc}^{kr/\vec r})}. From these embeddings, 
Theorem~\ref{thm-H.Rk}, and~(i) we infer that the first of the injections 
$$ 
BC^{(k+1)r/\vec r,\vec\om} 
\hr bc^{kr/\vec r,\vec\om} 
\hr BC^{kr/\vec r,\vec\om} 
$$ 
is valid. Thus 
$$ 
BC^{\iy/\vec r,\vec\om} 
={\textstyle\bigcap_k}BC^{kr/\vec r,\vec\om} 
={\textstyle\bigcap_k}bc^{kr/\vec r,\vec\om}. 
$$ 
Now it follows from \Eqref{PH.Binf} and \Eqref{H.PsJ} that 
$$ 
\Psi_\iy^{\vec\om}\text{ is a retraction from 
$BC^\iy\bigl(\BM\times J,\ell_\iy(\mf{E})\bigr)$ 
onto }BC^{\iy/\vec r,\vec\om}. 
\npb 
$$  
Assertion~(ii) is implied by \Eqref{H.PsJ} and~\cite[Lemma~4.1.6]{Ama09a}. 
\end{proof} 
We define the \hh{weighted anisotropic Besov-H\"older space scale} 
\hb{[\,B_\iy^{s/\vec r,\vec\om}\ ;\ s>0\,]} by 
\beq\Label{H.Bdef2} 
B_\iy^{s/\vec r,\vec\om}=B_\iy^{s/\vec r,\vec\om}\JV 
:=\left\{
\bal
{}      %do not remove!
&(bc^{kr/\vec r,\vec\om},bc^{(k+1)r/\vec r,\vec\om})_{(s-kr)/r,\iy},
    &\quad  kr<s &<(k+1)r,\\ 
&(bc^{kr/\vec r,\vec\om},bc^{(k+2)r/\vec r,\vec\om})_{1/2,\iy}, 
    &\quad     s &=(k+1)r. 
\eal 
\right. 
\eeq 
These spaces allow for a retraction-coretraction theorem as well which 
provides representations via local coordinates. 
%---------------------------------------------------------------------
\begin{theorem}\LabelT{thm-H.R} 
$\psi_\iy^{\vec\om}$~is a retraction from 
$\ell_\iy(\mf{B}_\iy^{s/\vec r})$ onto 
$B_\iy^{s/\vec r,\vec\om}$, and $\vp_\iy^{\vec\om}$ is a coretraction. 
\end{theorem}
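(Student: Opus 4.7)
\begin{proofc}{of Theorem~\ref{thm-H.R} (proposal)}
The plan is to mimic the strategy of Theorem~\ref{thm-H.Ri}: first promote Theorem~\ref{thm-H.Rbck}(i) to a retraction/coretraction statement over $\ell_\iy$-valued H\"older--Besov spaces on a disjoint union of half-spaces via the isomorphism $\wt{\mf{f}}$ of Lemma~\ref{lem-PH.fJ}, then interpolate between two such integer-order statements, and finally translate the resulting $\ell_\iy(\mf{E})$-valued Besov-H\"older space back to a sequence space by applying Lemma~\ref{lem-PH.fJ} once more.

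More precisely, set $\BM := \BR^m$ if $\pl M=\es$ and $\BM := \BR^m\sqcup\BH^m$ otherwise, splitting $\gK$ as $\gK_0\sqcup\gK_1$ exactly as in steps~(3)--(4) of the proof of Theorem~\ref{thm-H.Ri}. The isomorphism $\wt{\mf{f}}$ of Lemma~\ref{lem-PH.fJ}, applied componentwise and assembled as in~\Eqref{H.ffis}, yields
\[
\wt{\mf{f}}\in\Lis\bigl(bc^{kr/\vec r}(\BM\times J,\ell_\iy(\mf{E})),\,
\ell_{\iy,\unif}(\mf{bc}^{kr/\vec r})\bigr)
\]
and, for any $s>0$, an analogous isomorphism onto $\ell_\iy(\mf{B}_\iy^{s/\vec r})$. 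Define $\Psi_\iy^{\vec\om}:=\psi_\iy^{\vec\om}\circ\wt{\mf{f}}$ and $\Phi_\iy^{\vec\om}:=\wt{\mf{f}}^{-1}\circ\vp_\iy^{\vec\om}$. Theorem~\ref{thm-H.Rbck}(i) then asserts that, for every $k\in\BN$, $\Psi_\iy^{\vec\om}$ is a retraction from $bc^{kr/\vec r}(\BM\times J,\ell_\iy(\mf{E}))$ onto $bc^{kr/\vec r,\vec\om}$ with coretraction $\Phi_\iy^{\vec\om}$.

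Next I would interpolate this statement with the real functor $(\cdot,\cdot)_{\ta,\iy}$ for the appropriate $\ta$ dictated by~\Eqref{H.Bdef2}. By Proposition~I.2.3.3 of~\cite{Ama95a}, a retraction-coretraction pair survives interpolation. On the target side, \Eqref{H.Bdef2} delivers $B_\iy^{s/\vec r,\vec\om}$ directly. On the source side, Theorem~\ref{thm-PH.bJ}(i) identifies $bc^{kr/\vec r}=\BUC^{kr/\vec r}$ (valued in $\ell_\iy(\mf{E})$, which is permissible since all arguments in Section~\ref{sec-PH} apply to arbitrary Banach targets), after which Theorem~\ref{thm-PH.BJ}(ii) yields
\[
\bigl(bc^{k_0r/\vec r}(\BM\times J,\ell_\iy(\mf{E})),\,
bc^{k_1r/\vec r}(\BM\times J,\ell_\iy(\mf{E}))\bigr)_{(s-k_0r)/(k_1-k_0)r,\iy}
\doteq B_\iy^{s/\vec r}(\BM\times J,\ell_\iy(\mf{E})).
\]
Consequently $\Psi_\iy^{\vec\om}$ is a retraction from $B_\iy^{s/\vec r}(\BM\times J,\ell_\iy(\mf{E}))$ onto $B_\iy^{s/\vec r,\vec\om}$, and $\Phi_\iy^{\vec\om}$ a coretraction. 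Finally, composing with the Besov-H\"older version of $\wt{\mf{f}}$ from Lemma~\ref{lem-PH.fJ} and using
\[
\psi_\iy^{\vec\om}=\Psi_\iy^{\vec\om}\circ\wt{\mf{f}}^{-1},\qquad
\vp_\iy^{\vec\om}=\wt{\mf{f}}\circ\Phi_\iy^{\vec\om},
\]
transfers the statement to the sequence space $\ell_\iy(\mf{B}_\iy^{s/\vec r})$, which is exactly the assertion.

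The one step that requires genuine care is the source-side interpolation identity. Theorem~\ref{thm-PH.BJ}(ii) is formulated for a single half-space $\BX$ and a fixed Banach space $\cX$, so two extensions are needed: replacing $\BX$ by the disjoint union $\BM$ (which is harmless since both $bc^{kr/\vec r}$ and $B_\iy^{s/\vec r}$ split as topological direct sums over the components, and interpolation distributes over finite direct sums), and allowing the infinite-dimensional target $\ell_\iy(\mf{E})$ (which is legitimate because the Fourier-multiplier proofs of Section~\ref{sec-PH} and the reiteration/Seeley machinery cited there hold verbatim for Banach-space-valued distributions). Once this verification is made explicit, the rest of the argument is formal.
\end{proofc}
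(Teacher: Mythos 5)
Your proposal is correct and follows essentially the same route as the paper's proof. The paper's argument is exactly: use the intermediate fact (recorded there as \Eqref{H.PsJ}, established in the proof of Theorem~\ref{thm-H.Rbck} by composing \ref{thm-H.Rbck}(i) with $\wt{\mf f}$ from Lemma~\ref{lem-PH.fJ}) that $\Psi_\iy^{\vec\om}=\psi_\iy^{\vec\om}\circ\wt{\mf f}$ is a retraction from $bc^{kr/\vec r}(\BM\times J,\ell_\iy(\mf E))$ onto $bc^{kr/\vec r,\vec\om}$, interpolate via Theorem~\ref{thm-PH.BJ}(ii) and definition~\Eqref{H.Bdef2} to obtain~\Eqref{H.PsB}, then unwind with~\Eqref{H.phpsP} and the Besov-H\"older part of Lemma~\ref{lem-PH.fJ}. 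Your closing caveats (disjoint-union source space, Banach-valued target) are precisely the adaptations the paper makes implicitly via the direct-sum conventions set up in the proof of Theorem~\ref{thm-H.Ri} and the fact that Section~\ref{sec-PH} is written for arbitrary Banach $\cX$ from the start; spelling them out is a reasonable addition but not a new idea.
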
 
%=====================================================================
\begin{proof} 
We infer from \Eqref{H.PsJ}, Theorem~\ref{thm-PH.BJ}(ii), and 
definition~\Eqref{H.Bdef2} that 
\beq\Label{H.PsB} 
\Psi_\iy^{\vec\om}\text{ is a retraction from 
$B_\iy^{s/\vec r}\bigl(\BM\times J,\ell_\iy(\mf{E})\bigr)$ onto 
$B_\iy^{s/\vec r,\vec\om}$, and $\Phi_\iy^{\vec\om}$ is a coretraction.}
\eeq 
Thus the assertion follows from 
\beq\Label{H.phpsP} 
\vp_\iy^{\vec\om}=\wt{\mf{f}}\circ\Phi_\iy^{\vec\om} 
\qb \psi_\iy^{\vec\om}=\Psi_\iy^{\vec\om}\circ\wt{\mf{f}}\vph{f}^{-1}, 
\npb 
\eeq 
and Lemma~\ref{lem-PH.fJ}. 
\end{proof}\po  
%--------------------------------------------------------------------- 
{\samepage 
\begin{corollary}\LabelT{cor-H.R} 
\begin{itemize} 
\item[{\rm(i)}] 
Suppose 
\hb{k_0r<s<k_1r} with 
\hb{k_0,k_1\in\BN}. Then 
\hb{(bc^{k_0r/\vec r,\vec\om},bc^{k_1r/\vec r,\vec\om}) 
   _{(s-k_0)/(k_1-k_0),\iy} 
   \doteq B_\iy^{s/\vec r,\vec\om}}. 
\item[{\rm(ii)}] 
If\/ 
\hb{0<s_0<s_1} and\/ 
\hb{0<\ta<1}, then 
\hb{(B_\iy^{s_0/\vec r,\vec\om},B_\iy^{s_1/\vec r,\vec\om})_{\ta,\iy} 
   \doteq B_\iy^{s_\ta/\vec r,\vec\om} 
   \doteq[B_\iy^{s_0/\vec r,\vec\om},B_\iy^{s_1/\vec r,\vec\om}]_\ta}.\po 
\end{itemize} 
\end{corollary}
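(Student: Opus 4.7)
The plan is to deduce both parts from the corresponding interpolation identities for the Euclidean-model spaces with values in the Banach space $\ell_\iy(\mf E)$, by transferring through the retraction--coretraction pair $(\Psi_\iy^{\vec\om},\Phi_\iy^{\vec\om})$ already constructed in the proofs of Theorems~\ref{thm-H.Rbck} and~\ref{thm-H.R}. The general mechanism is this: if $R\in\cL(X,Y)$ admits a coretraction $R^c\in\cL(Y,X)$, and if $R$ simultaneously maps $X_0$ onto $Y_0$ and $X_1$ onto $Y_1$ with the \emph{same} $R^c$ serving as coretraction for both levels, then for any interpolation functor $\cF$ of exponent~$\ta$ one has $\cF(Y_0,Y_1)\doteq R\bigl(\cF(X_0,X_1)\bigr)$, with norm equivalence given by~\Eqref{R.rN} (cf.~\cite[Section~I.2]{Ama95a}). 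The operators $\Psi_\iy^{\vec\om}$ and $\Phi_\iy^{\vec\om}$ are defined once and for all and are therefore well-suited to this procedure.

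For~(i), \Eqref{H.PsJ} asserts that $\Psi_\iy^{\vec\om}$ retracts $bc^{kr/\vec r}(\BM\times J,\ell_\iy(\mf E))$ onto $bc^{kr/\vec r,\vec\om}$ for every $k\in\BN$, with common coretraction $\Phi_\iy^{\vec\om}$. Hence
\begin{equation*}
\bigl(bc^{k_0r/\vec r,\vec\om},bc^{k_1r/\vec r,\vec\om}\bigr)_{\ta,\iy}
\doteq\Psi_\iy^{\vec\om}\Bigl(\bigl(bc^{k_0r/\vec r}(\BM\times J,\ell_\iy(\mf E)),\,bc^{k_1r/\vec r}(\BM\times J,\ell_\iy(\mf E))\bigr)_{\ta,\iy}\Bigr).
\end{equation*}
Invoking Theorem~\ref{thm-PH.bJ}(i) to rewrite $bc^{kr/\vec r}=\BUC^{kr/\vec r}$ and then Theorem~\ref{thm-PH.BJ}(ii), applied with $\cX=\ell_\iy(\mf E)$, identifies the inner real interpolation space with $B_\iy^{s/\vec r}(\BM\times J,\ell_\iy(\mf E))$ for $\ta=(s-k_0r)/(k_1-k_0)r$. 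Finally, \Eqref{H.PsB} transports this model space onto $B_\iy^{s/\vec r,\vec\om}$, proving~(i).

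Assertion~(ii) is obtained by the same procedure, one level higher. By~\Eqref{H.PsB}, $\Psi_\iy^{\vec\om}$ is, for every $s>0$, a retraction from $B_\iy^{s/\vec r}(\BM\times J,\ell_\iy(\mf E))$ onto $B_\iy^{s/\vec r,\vec\om}$ with $\Phi_\iy^{\vec\om}$ as coretraction. Applying $(\cdot,\cdot)_{\ta,\iy}$ and $[\cdot,\cdot]_\ta$ to the couple $(B_\iy^{s_0/\vec r,\vec\om},B_\iy^{s_1/\vec r,\vec\om})$, commuting them past the retraction, and then invoking Theorem~\ref{thm-PH.BJ}(iii) (with $\cX=\ell_\iy(\mf E)$) yields both desired equivalences. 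The only genuine obstacle is the need to verify that parts~(ii) and~(iii) of Theorem~\ref{thm-PH.BJ} indeed apply with the Banach space $\ell_\iy(\mf E)$ as the range; this is routine, because the Fourier-analytic arguments from~\cite{Ama09a} underlying that theorem are carried out in the setting of arbitrary Banach-space-valued function spaces, so no extra work beyond recording the value space is required. Once this is noted, the rest of the proof is purely mechanical.
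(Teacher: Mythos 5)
Your proof is correct and takes essentially the same route as the paper: the paper's one-line proof cites exactly \Eqref{H.PsJ}, \Eqref{H.PsB}, and Theorem~\ref{thm-PH.BJ}, and you have simply unpacked how those ingredients combine via the standard fact that interpolation commutes with a retraction–coretraction pair valid at both endpoints (Proposition~I.2.3.3 in~\cite{Ama95a}). The only additions beyond what the paper names are the identification $bc^{kr/\vec r}=\BUC^{kr/\vec r}$ from Theorem~\ref{thm-PH.bJ}(i) and the remark that Theorem~\ref{thm-PH.BJ} holds for the Banach space $\cX=\ell_\iy(\mf E)$, both of which are implicitly assumed in the paper's citation.
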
 }
%=====================================================================
\begin{proof} 
This is implied by \Eqref{H.PsJ}, \,\Eqref{H.PsB}, and 
Theorem~\ref{thm-PH.BJ}.
\end{proof} 
\hh{Weighted anisotropic H\"older spaces} are defined by setting 
\hb{BC^{s/\vec r,\vec\om}:=B_\iy^{s/\vec r,\vec\om}} for 
\hb{s\in\BR\ssm\BN}. Then we introduce \hh{weighted anisotropic little 
H\"older spaces} by  
$$ 
bc^{s/\vec r,\vec\om}=bc^{s/\vec r,\vec\om}\JV\text{ is the closure of 
$BC^{\iy/\vec r,\vec\om}$ in }BC^{s/\vec r,\vec\om} 
\npb 
$$ 
for 
\hb{s\geq0}. Note that this is consistent with 
Theorem~\ref{thm-H.Rbck}(ii). 

\smallskip 
Lastly, we get the \hh{weighted anisotropic little Besov-H\"older space 
scale} 
\hb{[\,b_\iy^{s/\vec r,\vec\om}\ ;\ s>0\,]} by 
\beq\Label{H.bdef} 
b_\iy^{s/\vec r,\vec\om}\text{ is the closure of 
$BC^{\iy/\vec r,\vec\om}$ in }B_\iy^{s/\vec r,\vec\om}. 
\eeq 
%---------------------------------------------------------------------
\begin{theorem}\LabelT{thm-H.Rbc} 
{\rm(i)} 
$\psi_\iy^{\vec\om}$~is a retraction from 
$\ell_{\iy,\unif}(\mf{b}_\iy^{s/\vec r})$ onto 
$b_\iy^{s/\vec r,\vec\om}$, and $\vp_\iy^{\vec\om}$ is a coretraction. 
 
\smallskip 
{\rm(ii)} 
Suppose 
\hb{k_0r<s<k_1r} with 
\hb{k_0,k_1\in\BN}. Then 
$$ 
(bc^{k_0r/\vec r,\vec\om},bc^{k_1r/\vec r,\vec\om}) 
_{(s/r-k_0)/(k_1-k_0),\iy}^0 
\doteq bc^{s/\vec r,\vec\om}. 
$$ 
 
\smallskip 
{\rm(iii)} 
If\/  
\hb{0<s_0<s_1} and 
\hb{0<\ta<1}, then 
$$ 
(bc^{s_0/\vec r,\vec\om},bc^{s_1/\vec r,\vec\om})_{\ta,\iy}^0 
\doteq bc^{s_\ta/\vec r,\vec\om} 
\doteq[bc^{s_0/\vec r,\vec\om},bc^{s_1/\vec r,\vec\om}]_\ta.   
$$ 
\end{theorem}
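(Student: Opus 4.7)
The plan is to proceed in parallel with the three preceding retraction arguments in Section~\ref{sec-H}: Theorem~\ref{thm-H.Rbci} (isotropic little scale), Theorem~\ref{thm-H.R} (anisotropic `large' scale), and the proofs of Corollaries~\ref{cor-H.Rbci} and~\ref{cor-H.R}. The auxiliary retraction pair $(\Psi_\iy^{\vec\om},\Phi_\iy^{\vec\om})$ from~\Eqref{H.phpsP}, together with the trivialization $\wt{\mf{f}}$ supplied by Lemma~\ref{lem-PH.fJ}, will carry all the weight.

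For part~(i), I would first establish the intermediate claim that $\Psi_\iy^{\vec\om}$ is a retraction from $b_\iy^{s/\vec r}\bigl(\BM\times J,\ell_\iy(\mf{E})\bigr)$ onto $b_\iy^{s/\vec r,\vec\om}$, with coretraction $\Phi_\iy^{\vec\om}$. Two ingredients are already available: from step~(3) of the proof of Theorem~\ref{thm-H.Rbck}, the pair is a retraction--coretraction from $BC^\iy\bigl(\BM\times J,\ell_\iy(\mf{E})\bigr)$ onto $BC^{\iy/\vec r,\vec\om}$; from~\Eqref{H.PsB}, the same pair is a retraction--coretraction between the $B_\iy^{s/\vec r}$-level spaces. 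Using definitions~\Eqref{PH.bdef} and~\Eqref{H.bdef} of the little spaces as closures of the smooth subspaces, a standard continuity-plus-density argument promotes the pair to the little-space level (this is the verbatim analogue of the passage from~\Eqref{H.Psi} and~\Eqref{H.Psis} to~\Eqref{H.Psbci} in the proof of Theorem~\ref{thm-H.Rbci}). Combining this intermediate retraction with the third isomorphism of Lemma~\ref{lem-PH.fJ} and the identities~\Eqref{H.phpsP} yields part~(i).

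Parts~(ii) and~(iii) are then obtained by repeating, in the little category, the functorial arguments used for Corollaries~\ref{cor-H.Rbci} and~\ref{cor-H.R}. Concretely, I would apply the continuous real interpolation functor $(\cdot,\cdot)^0_{\ta,\iy}$ (for~(ii)) and the complex functor $[\cdot,\cdot]_\ta$ (for the second identity of~(iii)) to the retraction pair obtained in part~(i), reducing each claim to the corresponding identity for the model space $b_\iy^{s/\vec r}\bigl(\BM\times J,\ell_\iy(\mf{E})\bigr)$, which is supplied by Theorem~\ref{thm-PH.bcintJ}. The first identity of~(iii) then follows from~(ii) by reiteration, which the continuous interpolation functor does possess.

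The technical heart, and the step I expect to be the most delicate, is the density-closure argument in the intermediate retraction of part~(i): one must verify that $\Phi_\iy^{\vec\om}$ actually sends $BC^{\iy/\vec r,\vec\om}$ into $BC^\iy\bigl(\BM\times J,\ell_\iy(\mf{E})\bigr)$ and then use its continuity as a map $B_\iy^{s/\vec r,\vec\om}\ra B_\iy^{s/\vec r}\bigl(\BM\times J,\ell_\iy(\mf{E})\bigr)$ to propagate this inclusion to the closures on both sides. The potential alternative of interpolating directly in the `local' space $\ell_{\iy,\unif}(\mf{b}_\iy^{s/\vec r})$ would force one to confront the non-commutativity of $(\cdot,\cdot)^0_{\ta,\iy}$ with $\ell_{\iy,\unif}$; routing the interpolation through the `global' space $b_\iy^{s/\vec r}\bigl(\BM\times J,\ell_\iy(\mf{E})\bigr)$ and translating back via Lemma~\ref{lem-PH.fJ} only at the very end neatly sidesteps this obstacle.
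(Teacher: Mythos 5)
Your proposal is correct and relies on the same basic machinery as the paper (the pair $(\Psi_\iy^{\vec\om},\Phi_\iy^{\vec\om})$, Lemma~\ref{lem-PH.fJ}, \Eqref{H.phpsP}, \Eqref{H.PsJ}, \Eqref{H.PsB}, and Theorem~\ref{thm-PH.bcintJ}), but it reverses the logical order of the argument. The paper does \emph{not} prove the intermediate retraction~\Eqref{H.Psbc} first: it begins by deriving assertion~(ii) and the first half of~(iii) directly from Corollary~\ref{cor-H.R}(i) together with the definition~\Eqref{H.bdef} (the point being that $(\cdot,\cdot)^0_{\ta,\iy}$ is the closure of the smaller space inside $(\cdot,\cdot)_{\ta,\iy}\doteq B_\iy^{s/\vec r,\vec\om}$, and that closure equals the closure of $BC^{\iy/\vec r,\vec\om}$, i.e.\ $b_\iy^{s/\vec r,\vec\om}$). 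Only then does it obtain~\Eqref{H.Psbc} by interpolating the integer-level retraction pairs~\Eqref{H.PsJ} through~(ii) and Theorem~\ref{thm-PH.bcintJ}(i), and finally deduces (i) and the complex-interpolation half of~(iii). You instead establish the analogue of~\Eqref{H.Psbc} at the outset by the density-closure argument that is the verbatim anisotropic counterpart of the passage to~\Eqref{H.Psbci} in the proof of Theorem~\ref{thm-H.Rbci}: $\Psi_\iy^{\vec\om}$ and $\Phi_\iy^{\vec\om}$ are continuous between the $B_\iy^{s/\vec r}$ levels by~\Eqref{H.PsB} and preserve the $BC^\iy$-subspaces by step~(3) of the proof of Theorem~\ref{thm-H.Rbck}, so they preserve the little spaces defined as closures. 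That step is sound; it buys you a proof of (i) that is visibly parallel to the isotropic case and does not presuppose~(ii). Your derivation of (ii) and (iii) by applying $(\cdot,\cdot)^0_{\ta,\iy}$ and $[\cdot,\cdot]_\ta$ to the retraction pairs and then quoting Theorem~\ref{thm-PH.bcintJ} is correct; just note that for~(ii) the relevant retraction at the integer endpoints $k_0r,\,k_1r$ is the one in~\Eqref{H.PsJ} rather than the fractional-order retraction you labeled ``part~(i),'' since $bc^{k_ir/\vec r,\vec\om}\neq b_\iy^{k_ir/\vec r,\vec\om}$. Your remark about routing the interpolation through the global space $b_\iy^{s/\vec r}\bigl(\BM\times J,\ell_\iy(\mf{E})\bigr)$ to avoid the question of whether $(\cdot,\cdot)^0_{\ta,\iy}$ commutes with $\ell_{\iy,\unif}$ correctly identifies the reason both you and the paper translate back to $\ell_{\iy,\unif}(\mf{b}_\iy^{s/\vec r})$ only at the very end.
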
 
%===================================================================== 
\begin{proof} 
Assertion~(ii) and the first part of~(iii) follow from 
Corollary~\ref{cor-H.R}(i) and definition~\Eqref{H.bdef}. 
From~(ii), Theorem~\ref{thm-PH.bcintJ}(i), and \Eqref{H.PsJ} 
it follows that 
\beq\Label{H.Psbc} 
\Psi_\iy^{\vec\om}\text{ is a retraction from 
$bc^{s/\vec r}\bigl(\BM\times J,\ell_\iy(\mf{E})\bigr)$ onto 
$bc^{s/\vec r,\vec\om}$, and $\Phi_\iy^{\vec\om}$ is a coretraction.} 
\eeq 
Due to this the second part of~(iii) is now implied by 
Theorem~\ref{thm-PH.bcintJ}(ii). Statement~(i) is a consequence of 
\Eqref{H.Psbc}, \,\Eqref{H.phpsP}, and Lemma~\ref{lem-PH.fJ}. 
\end{proof} 
%---------------------------------------------------------------------
%---------------------------------------------------------------------
\section{Point-Wise Multipliers}\LabelT{sec-P}% 
In connection with differential and pseudodifferential operators there occur 
naturally `products' of tensor fields possessing different regularity of 
the factors, so called `point-wise products' or `multiplications'. Although 
there is no problem in establishing mapping properties of differential 
operators say, if the coefficients are smooth, this is a much more difficult 
task if one is interested in operators with little regularity of the 
coefficients. Since such low regularity coefficients are of great importance 
in practice we derive in this and the next section point-wise multiplier 
theorems which are (almost) optimal. 

Let~$\cX_j$, 
\ \hb{j=0,1,2}, be Banach spaces. A~\emph{multiplication 
\hb{\cX_0\times\cX_1\ra\cX_2} from 
\hb{\cX_0\times\cX_1} into}~$\cX_2$ is an element of 
$\cL(\cX_0,\cX_1;\cX_2)$, the Banach space of continuous bilinear maps from 
\hb{\cX_0\times\cX_1} into~$\cX_2$. 

\smallskip 
Before considering multiplications in tensor bundles we first investigate 
point-wise products in Euclidean settings. Let 
\hb{E_i=(E_i,\vsdot_i)}, 
\ \hb{i=0,1,2}, be finite-dimensional Banach spaces, 
\hb{\BX\in\{\BR^m,\BH^m\}}, and 
\hb{\BY:=\BX\times J}.\po 
%--------------------------------------------------------------------- 
{\samepage 
\begin{theorem}\LabelT{thm-P.b} 
Suppose 
\hb{\sfb\in\cL(E_0,E_1;E_2)} and 
$$ 
\sfm\sco E_0^\BY\times E_1^\BY\ra E_2^\BY 
\qb (u_0,u_1)\mt\sfb(u_0,u_1) 
\npb 
$$ 
is its point-wise extension. Then\po  
\begin{itemize} 
\item[{\rm(i)}] 
${}$%                   do not remove!!!! 
\hb{\sfm\in\cL\bigl(\cB^{s/\vec r}\BYEn,\cB^{s/\vec r}\BYEe; 
\cB^{s/\vec r}\BYEz\bigr)} if either 
\hb{s\in r\BN} and 
\hb{\cB\in\{BC,bc\}}, or 
\hb{s>0}\\ 
and 
\hb{\cB\in\{B_\iy,b_\iy\}}.\po 
\item[{\rm(ii)}] 
${}$%                   do not remove!!!! 
\hb{\sfm\in\cL\bigl(BC^{s/\vec r}\BYEn,W_{\coW p}^{s/\vec r}\BYEe; 
W_{\coW p}^{s/\vec r}\BYEz\bigr)}, 
\ \hb{s\in r\BN}.\po  
\item[{\rm(iii)}] 
${}$%                   do not remove!!!! 
\hb{\sfm\in\cL\bigl(B_\iy^{s_0/\vec r}\BYEn,\gF_p^{s/\vec r}\BYEe; 
\gF_p^{s/\vec r}\BYEz\bigr)}, 
\ \hb{0<s<s_0}.\po 
\end{itemize} 
In either case the map 
\hb{\sfb\mt\sfm} is linear and continuous. 
\end{theorem}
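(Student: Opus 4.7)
The plan is to prove all three items by reducing to Euclidean model settings where Fourier-analytic and difference-based machinery is at our disposal. A preliminary reduction uses the extension operator~(4.1.7) of~\cite{Ama09a} together with its restriction to halve each half-space assertion to the full-space case $\BX=\BR^m$, $J=\BR$, since $\sfm$ commutes with restrictions and the extension operator is continuous on all the spaces in play. The continuity of the assignment $\sfb\mt\sfm$ is manifest throughout because every estimate below bounds $\sfm$ in operator norm by $\|\sfb\|_{\cL(E_0,E_1;E_2)}$ times a geometric constant, so I will not mention it separately.

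The integer-order statements, namely (i) for $s=kr\in r\BN$ with $\cB\in\{BC,bc\}$, as well as all of (ii), are handled by Leibniz' rule
\[
\pa_x\pl^j\sfm(u_0,u_1)
=\sum_{\ba\leq\al,\ i\leq j}\binom{\al}{\ba}\binom{j}{i}
\,\sfb\bigl(\pa_x^\ba\pl^iu_0,\,\pa_x^{\al-\ba}\pl^{j-i}u_1\bigr)
\]
applied for $|\al|+jr\leq kr$, followed by H\"older's inequality placing the $BC^{kr/\vec r}$-factor in $L_\iy$ and the other factor in $L_p$ or $L_\iy$ as appropriate. Passage to the little spaces uses that $\sfm$ sends $BC^{\iy/\vec r}\times BC^{\iy/\vec r}$ into $BC^{\iy/\vec r}$, combined with Theorem~\ref{thm-PH.bJ}(i) and the density definition \eqref{PH.defbc}.

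For case (i) with $\cB\in\{B_\iy,b_\iy\}$ at non-integer order, I would invoke the semi-explicit norm $\Vsdot^{*}_{s/\vec r,\iy}$ from Theorem~\ref{thm-PH.BJ}(i) and control differences through the bilinear expansion
\[
\tri_h^{k+1}\sfb(u_0,u_1)
=\sum_{\ell=0}^{k+1}\binom{k+1}{\ell}\,
\sfb\bigl(\tri_h^{\ell}u_0(\cdot+(k+1-\ell)h),\,\tri_h^{k+1-\ell}u_1\bigr),
\]
and its time-variable analogue. Each summand pairs an $L_\iy$-bound on one factor with the H\"older seminorm of the other, yielding the required estimate against $\|u_0\|_{B_\iy^{s/\vec r}}\|u_1\|_{B_\iy^{s/\vec r}}$. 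The little-space variant is obtained via Theorem~\ref{thm-PH.bJ}(iii), since for factors satisfying \eqref{PH.Sdx} the above expansion shows that $\sfb(u_0,u_1)$ inherits the vanishing of the relevant seminorm as $\da\ra0$.

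The main obstacle is case (iii) for non-integer $s$, because the Bessel-potential norm admits no pointwise-difference description. My plan is to establish (iii) first at integer $s$: if $s=kr\in r\BN$ with $s<s_0$, then $B_\iy^{s_0/\vec r}\hr BC^{kr/\vec r}$ by Theorem~\ref{thm-PH.BJ} (iv) and Corollary~\ref{cor-PH.BJ}, so case~(ii) applies. For general $s$, I would interpolate: select integers $k_0,k_1$ with $k_0r\leq s\leq k_1r<s_0$ (shrinking $s_0$ slightly if necessary, harmless because the map is monotone in $s_0$), apply bilinear complex (resp.\ real) interpolation of the two endpoint estimates already obtained, and invoke Theorem~\ref{thm-A.WHB}(iii)--(iv) together with the local identifications of Lemma~\ref{lem-RA.WW} to recover $\gF_p^{s/\vec r}$ on the middle. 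In the borderline regime where no such $k_1$ with $k_1r<s_0$ exists one instead appeals directly to the Fourier-multiplier approach of~\cite[Sections 3.4 and 3.7]{Ama09a}, using a paramultiplication decomposition of $\sfb(u_0,u_1)$ and the fact that $B_\iy^{s_0/\vec r}$ lies in every multiplier algebra of $H_p^{s/\vec r}$ with $|s|<s_0$; this is the step that does the real work, and it is ultimately the paradifferential calculus of~\cite{Ama09a} that I would cite to close the argument.
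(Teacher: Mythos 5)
Your treatment of parts (i) and (ii) is essentially sound and matches the paper: Leibniz' rule for the integer-order $BC/bc$ and $W_p$ cases, difference estimates plus interpolation to fill the integer gaps for $B_\iy/b_\iy$. The paper uses the first-order identity $\tri_\xi\sfm(u_0,u_1)=\sfm(\tri_\xi u_0,u_1(\cdot+\xi))+\sfm(u_0,\tri_\xi u_1)$ applied to $k$-th derivatives (so only first-order H\"older seminorms enter, via $\Vsdot^{**}$); your higher-order Vandermonde-type expansion with $\tri_h^{k+1}$ also works, but the cross terms with $1\leq\ell\leq k$ do \emph{not} simply pair an $L_\iy$-bound with a full H\"older seminorm as you assert --- they require the $\BUC^\ell$-type bounds $\|\tri_h^\ell u_0\|_\iy\leq c\,|h|^\ell$ (from $B_\iy^{s/\vec r}\hr\BUC^{kr/\vec r}$, $\ell\leq k$) so that the product accrues the power $|h|^{k+1}\leq|h|^s$ for small $|h|$. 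This is a fixable imprecision.

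The genuine gap is in part (iii), precisely in the step you flag as ``the real work.'' Your interpolation scheme, as written, needs integers $k_0,k_1$ with $k_0r\leq s\leq k_1r<s_0$, which fails whenever $k^*r\leq s<s_0$ with $k^*r$ the largest integer multiple of~$r$ below~$s_0$; ``shrinking $s_0$'' does not repair this, since it only shrinks the range of $s$ covered. The way out is a \emph{two-slot} interpolation (interpolate $B_\iy^{t_0/\vec r}$ and $B_\iy^{t_1/\vec r}$ simultaneously with $W_p^{k_0r/\vec r}$ and $W_p^{k_1r/\vec r}$, taking $t_i=k_ir+\ve$ and letting $\ve\to0$, using Theorem~\ref{thm-PH.BJ}(iii) for the first slot), and this is presumably what the paper's ``bilinear complex interpolation'' means for the $H_p$-case. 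Your fallback appeal to a ``paramultiplication decomposition'' and to Sections~3.4 and~3.7 of~\cite{Ama09a} does not close the gap: those sections concern Fourier multiplier symbols of $\Lda$-type, not pointwise products, and asserting that ``$B_\iy^{s_0/\vec r}$ lies in every multiplier algebra of $H_p^{s/\vec r}$'' is essentially restating what must be proved. More importantly, you do not treat the Besov case $\gF=B$ at all, whereas the paper handles it by an entirely different route: it quotes the known \emph{stationary} multiplication $B_\iy^{s_0}\BXEn\times B_p^s\BXEe\to B_p^s\BXEz$ (with references to~\cite{Ama91c},~\cite{RuS96a},~\cite{MaS85a},~\cite{Tri83a}), combines it with the intersection characterization $B_p^{s/\vec r}\doteq L_p(J,B_p^s)\cap B_p^{s/r}(J,L_p)$ from Corollary~\ref{cor-N.B}, estimates the time-derivatives by the product rule and the time-seminorm by the first-order difference identity, and then fills the $s\in r\BN$ gaps by complex interpolation. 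This avoids any bilinear real interpolation altogether. You should either reproduce that argument or prove a bilinear real-interpolation lemma of Lions--Peetre type with exponent bookkeeping $1/p\leq1/\iy+1/p$; as it stands the proposal leaves the Besov half of~(iii) unproved.
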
 }
%=====================================================================
\begin{proof} 
(1) 
Assertion~(i) for 
\hb{s\in r\BN} and 
\hb{\cB\in\{BC,bc\}} as well as assertion~(ii) follow from the product 
rule. 

\smallskip 
(2)  
Suppose 
\hb{u_i\in E_i^\BY}, 
\ \hb{i=0,1}, and 
\hb{0<\ta<1}. Then 
\beq\Label{P.del} 
\tri_\xi\bigl(\sfm(u_0,u_1)\bigr) 
=\sfm\bigl(\tri_\xi u_0,u_1(\cdot+\xi)\bigr)+\sfm(u_0,\tri_\xi u_1) 
\qa \xi\in\BY. 
\eeq 
From this we infer, letting 
\hb{\xi=(h,0)} with 
\hb{h\in(0,\da)^m}, 
$$ 
\sup_t\bigl[\sfm(u_0,u_1)(\cdot,t)\bigr]_{\ta,\iy}^\da 
\leq c\bigl(\sup_t\bigl[u_0(\cdot,t)\bigr]_{\ta,\iy}^\da\,\|u_1\|_\iy 
 +\sup_t\bigl[u_1(\cdot,t)\bigr]_{\ta,\iy}^\da\,\|u_0\|_\iy\bigr)  
$$ 
for 
\hb{0<\da\leq\iy}. Similarly, 
$$ 
\sup_x\bigl[\sfm(u_0,u_1)(x,\cdot)\bigr]_{\ta,\iy}^\da 
\leq c\bigl(\sup_x\bigl[u_0(x,\cdot)\bigr]_{\ta,\iy}^\da\,\|u_1\|_\iy 
 +\sup_x\bigl[u_1(x,\cdot)\bigr]_{\ta,\iy}^\da\,\|u_0\|_\iy\bigr).  
$$ 
By step~(1), \,\Eqref{PH.Nsrst}, and \Eqref{PH.Nsr2s} we infer that (i)~is 
true if 
\hb{s\in\BR^+\ssm\BN}. Now we fill in the gaps 
\hb{s\in\BN} by means of Theorems \ref{thm-PH.BJ}(iii) 
and~\ref{thm-PH.bcintJ}(ii) and bilinear complex interpolation 
(cf.~J.~Bergh and J.~L\"ofstr\"om \cite[Theorem~4.4.1]{BeL76a}). This 
proves~(i) for 
\hb{s>0} and 
\hb{\cB\in\{B_\iy,b_\iy\}}. 

\smallskip 
(3) 
Assume 
\hb{s\in r\BN} and 
\hb{s_0>s}. By Theorem~\ref{thm-PH.bJ} 
\hb{B_\iy^{s_0/\vec r}\BYEn\hr b_\iy^{s/\vec r}\BYEn 
   \hr BC_\iy^{s/\vec r}\BYEn}. Hence we deduce from~(ii) 
$$ 
\sfm\in\cL\bigl(B_\iy^{s_0/\vec r}\BYEn,W_{\coW p}^{s/\vec r}\BYEe; 
W_{\coW p}^{s/\vec r}\BYEz\bigr) 
\qa s\in r\BN 
\qb s<s_0. 
$$ 
Using this, Theorem~\ref{thm-A.WHB}(iv), and once more bilinear complex 
interpolation we obtain 
$$ 
\sfm\in\cL\bigl(B_\iy^{s_0/\vec r}\BYEn,H_p^{s/\vec r}\BYEe; 
H_p^{s/\vec r}\BYEz\bigr) 
\qa 0<s<s_0. 
$$ 
   
\smallskip 
(4) 
We assume 
\hb{kr<s<(k+1)r} with 
\hb{k\in\BN}. It is well-known that 
$$ 
B_\iy^{s_0}\BXEn\times B_p^s\BXEe\ra B_p^s\BXEz 
\qb (v_0,v_1)\mt b(v_0,v_1) 
$$ 
is a multiplication (see Remark~4.2(b) in H.~Amann~\cite{Ama91c}, where 
$B_\iy^{s_0}$~is denoted by~$\BUC^{s_0}$, Th.~Runst and W.~Sickel 
\cite[Theorem~4.7.1]{RuS96a}, or V.G. Maz'ya and T.O. 
Shaposhnikova~\cite{MaS85a}, and H.~Triebel~\cite{Tri83a}), depending 
linearly and continuously on~$\sfb$. From this we infer 
\beq\Label{P.s0} 
\|\sfm(u_0,u_1)\|_{p;B_p^s\BXEz} 
\leq c\,\|u_0\|_{\iy;B_\iy^{s_0}\BXEn}\,\|u_1\|_{p;B_p^s\BXEe}. 
\eeq 
By the product rule and~(ii) 
\beq\Label{P.s1} 
\bal 
\big\|\pl^\ell\bigl(\sfm(u_0,u_1)\bigr)\big\|_{p;L_p\BXEz} 
&\leq c\sum_{j=0}^\ell\|\pl^ju_0\|_{\iy;B\BXEn} 
 \,\|\pl^{\ell-j}u_1\|_{p;L_p\BXEe}\\ 
&\leq c\,\|u_0\|_{k,\iy;B\BXEn}\,\|u_1\|_{k,p;L_p\BXEe}\\ 
&\leq c\,\|u_0\|_{s_0/r,\iy;B\BXEn}^{**}\,\|u_1\|_{s/r,p;L_p\BXEe} 
\eal 
\npb 
\eeq 
for 
\hb{0\leq\ell\leq k}. 

\smallskip 
We deduce from \Eqref{P.del} that, given 
\hb{\ta\in(0,1)}, 
$$ 
\bigl[\sfm(u_0,u_1)\bigr]_{\ta,p;L_p\BXEz} 
\leq c\bigl([u_0]_{\ta,\iy;B\BXEn}\,\|u_1\|_{p;L_p\BXEe} 
+\|u_0\|_{\iy;B\BXEn}\,[u_1]_{\ta,p;L_p\BXEe}\bigr). 
$$ 
Hence 
$$ 
\bal 
\bigl[\sfm(\pl^ju_0,\pl^{k-j}u_1)\bigr]_{(s-kr)/r,p;L_p\BXEz} 
&\leq c\,\|\pl^ju_0\|_{(s-kr)/r,\iy;B\BXEn}^*\,\|\pl^{k-j}u_1\| 
 _{(s-kr)/r,p;L_p\BXEz}^*\\ 
&\leq c\,\|u_0\|_{s_0/r,\iy;B\BXEn}^{**}\,\|u_1\|_{s/r,p;L_p\BXEz}^{**}, 
\eal 
$$ 
where we used
\hb{B_\iy^{s_0/r}\bigl(J,B\BXEn\bigr)\hr B_\iy^{s/r}\bigl(J,B\BXEn\bigr)} 
in the last estimate. Thus 
\beq\Label{P.s2} 
\bigl[\pl^k\bigl(\sfm(u_0,u_1)\bigr]_{(s-kr)/r,p;L_p\BXEz} 
\leq c\,\|u_0\|_{s_0/r,\iy;B\BXEn}^{**}\,\|u_1\|_{s/r,p;L_p\BXEe}^{**}. 
\eeq  
By Corollary~\ref{cor-N.B} 
$$ 
B_p^{s/\vec r}\BYEz 
\doteq L_p\bigl(J,B_p^s\BXEz\bigr) 
\cap B_p^{s/r}\bigl(J,L_p\BXEz\bigr). 
$$ 
Thus we infer from \Eqref{N.eq} and 
\hbox{\Eqref{P.s0}--\Eqref{P.s2}} 
%\Eqref{P.s1} 
that 
$$ 
\sfm\in\cL\bigl(B_\iy^{s_0/\vec r}\BYEn,B_p^{s/\vec r}\BYEe; 
B_p^{s/\vec r}\BYEz\bigr) 
\qa s\notin r\BN. 
$$ 
Now we fill in the gaps at 
\hb{s\in r\BN} once more by bilinear complex interpolation, which is 
possible due to Theorems \ref{thm-A.WHB}(iv) and~\ref{thm-PH.BJ}(iii). 

\smallskip 
Since the last part of the statement is obvious from the above 
considerations, the theorem is proved. 
\end{proof} 
It should be remarked that J.~Johnson~\cite{Joh95a} has undertaken a 
detailed study of point-wise multiplication in anisotropic Besov and 
Triebel-Lizorkin spaces on~$\BR^n$. However, it does not seem to be  
possible to derive Theorem~\ref{thm-P.b} from his results. 

\smallskip 
Next we extend the preceding theorem to $(x,t)$-dependent bilinear 
operators. 
%---------------------------------------------------------------------
\begin{theorem}\LabelT{thm-P.bt} 
Suppose 
\hb{\sfb\in\cL(E_0,E_1;E_2)^\BY} and set 
$$ 
\sfm\sco E_0^\BY\times E_1^\BY\ra E_2^\BY 
\qb (u_0,u_1)\mt\bigl((x,t)\mt\sfb(x,t)\bigl(u_0(x,t),u_1(x,t)\bigr)\bigr). 
$$ 
Then assertions \hbox{\rm(i)--(iii)} of the preceding theorem are valid in 
this case also, provided $\sfb$~possesses the same regularity as~$u_0$. 
\end{theorem}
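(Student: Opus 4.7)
The plan is to reduce Theorem~\ref{thm-P.bt} to the constant--coefficient Theorem~\ref{thm-P.b} by factorizing the $(x,t)$--dependent multiplication through the finite--dimensional Banach space $\cX:=\cL(E_0,E_1;E_2)$. More precisely, I would introduce the two \emph{constant} (that is, $(x,t)$--independent) bilinear pairings
\begin{equation*}
\sfb_1\in\cL\bigl(\cX,E_0;\cL(E_1,E_2)\bigr),\qquad \sfb_1(b,v_0):=b(v_0,\cdot),
\end{equation*}
and the evaluation map
\begin{equation*}
\sfb_2\in\cL\bigl(\cL(E_1,E_2),E_1;E_2\bigr),\qquad \sfb_2(A,v_1):=Av_1,
\end{equation*}
and denote by $\sfm_1$, $\sfm_2$ their point--wise extensions. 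A direct verification gives the factorization
\begin{equation*}
\sfm(u_0,u_1)=\sfm_2\bigl(\sfm_1(\sfb,u_0),u_1\bigr).
\end{equation*}

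From this identity each of the three claims follows by two successive applications of Theorem~\ref{thm-P.b}. For~(i), the hypothesis that $\sfb$ has the same regularity as $u_0$ means $\sfb\in\cB^{s/\vec r}\bigl(\BY,\cX\bigr)$; then Theorem~\ref{thm-P.b}(i) applied to $\sfb_1$ yields $\sfm_1(\sfb,u_0)\in\cB^{s/\vec r}\bigl(\BY,\cL(E_1,E_2)\bigr)$, and a second application to $\sfb_2$ (again in case~(i)) produces $\sfm(u_0,u_1)\in\cB^{s/\vec r}\bigl(\BY,E_2\bigr)$. For~(ii), take $\sfb\in BC^{s/\vec r}\bigl(\BY,\cX\bigr)$: the $BC^{s/\vec r}$--part of Theorem~\ref{thm-P.b}(i) yields $\sfm_1(\sfb,u_0)\in BC^{s/\vec r}\bigl(\BY,\cL(E_1,E_2)\bigr)$, after which Theorem~\ref{thm-P.b}(ii) handles the multiplication with $u_1\in W_p^{s/\vec r}\bigl(\BY,E_1\bigr)$. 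For~(iii), the assumption $\sfb\in B_\iy^{s_0/\vec r}\bigl(\BY,\cX\bigr)$ together with $u_0\in B_\iy^{s_0/\vec r}\bigl(\BY,E_0\bigr)$ gives, by Theorem~\ref{thm-P.b}(i) with $\cB=B_\iy$, that $\sfm_1(\sfb,u_0)\in B_\iy^{s_0/\vec r}\bigl(\BY,\cL(E_1,E_2)\bigr)$; the final step is then a direct application of Theorem~\ref{thm-P.b}(iii) to $\sfb_2$ and the pair $\bigl(\sfm_1(\sfb,u_0),u_1\bigr)$.

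The continuous dependence of $\sfm$ on~$\sfb$ is built into the construction, since at each of the two stages the relevant map is (bi)linear and bounded in the norms recorded by Theorem~\ref{thm-P.b}. There is no genuine analytical obstacle here beyond recognizing the factorization; the only mild subtlety is to observe that $\sfb_1$ and $\sfb_2$ are bona fide constant bilinear maps between finite--dimensional Banach spaces, so that Theorem~\ref{thm-P.b} applies verbatim in each intermediate step and the compositions match up in the appropriate target spaces.
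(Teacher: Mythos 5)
Your proposal is correct and follows exactly the paper's own argument: the paper also factorizes $\sfm$ through the finite-dimensional space $\cL(E_0,E_1;E_2)$ via a first constant pairing $(b,v_0)\mt b(v_0,\cdot)$ into $\cL(E_1,E_2)$ and a second evaluation pairing $(A,v_1)\mt Av_1$, and then applies Theorem~\ref{thm-P.b} twice. The only difference is cosmetic naming ($\sfb_1,\sfb_2,\sfm_1,\sfm_2$ instead of the paper's $\sfb_0,\sfm_0,\sfm_1$).
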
 
%=====================================================================
\begin{proof} 
Consider the multiplication 
$$ 
\sfb_0\sco\cL(E_0,E_1;E_2)\times E_0\ra\cL\EeEz 
\qb (\sfb,e_0)\mt\sfb(e_0,\cdot) 
$$ 
and let $\sfm_0$ be its point-wise extension. By applying 
Theorem~\ref{thm-P.b}(i) we obtain 
\beq\Label{P.m0} 
\sfm_0 
\in\cL\bigl(\cB^{s/\vec r}\bigl(\BY,\cL(E_0,E_1;E_2)\bigr), 
\cB^{s/\vec r}\BYEn;\cB^{s/\vec r}\bigl(\BY,\cL\EeEz\bigr)\bigr),  
\npb 
\eeq 
where either 
\hb{s\in r\BN} and 
\hb{\cB\in\{BC,bc\}}, or 
\hb{s>0} and 
\hb{\cB\in\{B_\iy,b_\iy\}}. 

\smallskip 
Next we introduce the multiplication 
$$ 
\cL\EeEz\times E_1\ra E_2 
\qb (A,e_1)\mt Ae_1 
$$ 
and its point-wise extension~$\sfm_1$. Then we infer from 
Theorem~\ref{thm-P.b} 
\beq\Label{P.m1} 
\sfm_1 
\in\cL\bigl(\cB^{s/\vec r}\bigl(\BY,\cL\EeEz\bigr), 
\cB^{s/\vec r}\BYEe;\cB^{s/\vec r}\BYEz\bigr),  
\npb 
\eeq 
if either 
\hb{s\in r\BN} and 
\hb{\cB\in\{BC,bc\}}, or 
\hb{s>0} and 
\hb{\cB\in\{B_\iy,b_\iy\}}, 
\beq\Label{P.m2} 
\sfm_1 
\in\cL\bigl(BC^{s/\vec r}\bigl(\BY,\cL\EeEz\bigr), 
W_{\coW p}^{s/\vec r}\BYEe;W_{\coW p}^{s/\vec r}\BYEz\bigr) 
\qa s\in r\BN,   
\eeq 
and 
\beq\Label{P.m3} 
\sfm_1 
\in\cL\bigl(B_\iy^{s_0/\vec r}\bigl(\BY,\cL\EeEz\bigr), 
\gF_p^{s/\vec r}\BYEe;\gF_p^{s/\vec r}\BYEz\bigr) 
\qa 0<s<s_0.    
\eeq 
Note 
$$ 
\sfm(u_0,u_1)=\sfm_1\bigl(\sfm_0(\sfb,u_0),u_1\bigr) 
\qa (u_0,u_1)\in E_0^\BY\times E_1^\BY. 
\npb 
$$ 
Thus the statement is a consequence of \hbox{\Eqref{P.m0}--\Eqref{P.m3}}. 
%\Eqref{P.m1}\Eqref{P.m2}
\end{proof} 
In order to study point-wise multiplications on manifolds we prepare a 
technical lemma which is a relative of Lemma~\ref{lem-H.S}. For this 
we set 
\beq\Label{P.RTS} 
T_{\wt{\ka}\ka}u(t):=u\bigl((\rho_\ka/\rho_{\wt{\ka}})^\mu t\bigr) 
\qb t\in J  
\qa R_{\wt{\ka}\ka}:=T_{\wt{\ka}\ka}\circ S_{\wt{\ka}\ka} 
\qb \ka,\wt{\ka}\in\gK. 
\eeq 
Note 
\beq\Label{P.thta}  
\Ta_{q,\ka}^\mu 
=(\rho_\ka/\rho_{\wt{\ka}})^{\mu/q}T_{\wt{\ka}\ka}\Ta_{q,\wt{\ka}}^\mu  
\qa \ka,\wt{\ka}\in\gK 
\qb 1\leq q\leq\iy. 
\eeq  
We also put 
$$ 
\wh{\vp}_{q,\ka}^{\kern1pt\vec\om} 
:=\rho_\ka^{\lda+m/q}\Ta_{q,\ka}^\mu (\ka\slt\vp)_*(\chi_\ka\cdot) 
\qa \ka\slt\vp\in\gK\slt\Phi. 
$$  
Then, using 
\hb{u=\sum_{\wt{\ka}}\pi_{\wt{\ka}}^2u}, 
\beq\Label{P.TS} 
\wh{\vp}_{q,\ka}^{\kern1pt\vec\om} 
=\sum_{\wt{\ka}\in\gN(\ka)}a_{\wt{\ka}\ka} 
R_{\wt{\ka}\ka}\vp_{q,\wt{\ka}}^{\vec\om}, 
\eeq 
where 
\beq\Label{P.a} 
a_{\wt{\ka}\ka} 
:=(\rho_\ka/\rho_{\wt{\ka}})^{\lda+(m+\mu)/q} 
\chi S_{\wt{\ka}\ka}(\wt{\ka}_*\pi_{\wt{\ka}}).  
\eeq 
Hence, given 
\hb{q\in[1,\iy]}, we deduce from \Eqref{S.err}, Lemma~\ref{lem-H.S}, and 
\Eqref{R.LS}(iii) that 
\beq\Label{P.abd} 
a_{\wt{\ka}\ka}\in BC^k(\BX_\ka) 
\qb \|a_{\wt{\ka}\ka}\|_{k,\iy}\leq c(k) 
\qa \wt{\ka}\in\gN(\ka) 
\qb \ka\in\gK 
\qb k\in\BN. 
\eeq 
%---------------------------------------------------------------------
\begin{lemma}\LabelT{lem-P.T} 
Suppose 
\hb{k\in\BN} and 
\hb{s>0}. Let 
\hb{\gG_\ka\in\{W_{\coW p,\ka}^{kr/\vec r},BC_\ka^{kr/\vec r}, 
   bc_\ka^{kr/\vec r},\gF_{p,\ka}^{s/\vec r},B_{\iy,\ka}^{s/\vec r}, 
   b_{\iy,\ka}^{s/\vec r}\}}. Then 
\beq\Label{P.Rkk} 
R_{\wt{\ka}\ka}\in\cL(\gG_{\wt{\ka}},\gG_\ka) 
\qb \|R_{\wt{\ka}\ka}\|\leq c 
\qa \wt{\ka}\in\gN(\ka) 
\qb \ka\slt\vp,\,\wt{\ka}\slt\wt{\vp}\in\gK\slt\Phi. 
\eeq 
\end{lemma}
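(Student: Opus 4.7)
\emph{Proof plan.} The natural approach is to estimate the two factors of $R_{\wt{\ka}\ka}=T_{\wt{\ka}\ka}\circ S_{\wt{\ka}\ka}$ separately and then compose. The key observation is that, by~\Eqref{S.err}, the dilation factor $\tau:=(\rho_\ka/\rho_{\wt{\ka}})^\mu$ satisfies $1/c\leq\tau\leq c$ uniformly for $\wt{\ka}\in\gN(\ka)$ and $\ka\in\gK$, so $T_{\wt{\ka}\ka}$ is a harmless time-rescaling. Since $S_{\wt{\ka}\ka}$ acts only in the spatial variables (and commutes with $\pl^j$), its mapping properties follow from the isotropic chart-change lemmas already established.

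For $S_{\wt{\ka}\ka}$ I would proceed as follows. Apply Lemma~\ref{lem-R.S} pointwise in $t$ to obtain uniform boundedness $\gF^s_{p,\wt{\ka}}\to\gF^s_{p,\ka}$ for every $s\geq0$, and Lemma~\ref{lem-H.S} for $BC^k$, $bc^k$, $B^s_\iy$, $b^s_\iy$. For the anisotropic Sobolev spaces, combine Lemma~\ref{lem-RA.WW}(i), which gives $W^{kr/\vec r}_{p,\ka}\doteq L_p(J,W^{kr}_{p,\ka})\cap W^k_p(J,L_{p,\ka})$, with Fubini: on each factor $S_{\wt{\ka}\ka}$ acts as a bounded spatial operator commuting with $\pl^j$. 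The Bessel potential/Besov case at non-integer $s$ follows from definition~\Eqref{A.def} by interpolation. For the H\"older-type scales, use Corollary~\ref{cor-PH.BJ}(i), namely $B^{s/\vec r}_\iy\doteq B(J,B^s_\iy)\cap B^{s/r}_\iy(J,B)$; the pointwise-in-$t$ estimate controls the first factor, and boundedness of $S_{\wt{\ka}\ka}$ on $B=BC$ (again Lemma~\ref{lem-H.S}) controls the second. Boundedness on $bc^{kr/\vec r}$ and $b^{s/\vec r}_\iy$ is then inherited from the big spaces by combining boundedness on $BC$ with the density characterizations in Theorems \ref{thm-PH.bJ} and~\ref{thm-H.Rbck}.

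For $T_{\wt{\ka}\ka}$ the change of variables $s=\tau t$ yields the identities
$$
\|T_{\wt{\ka}\ka}u\|_{L_p(J,X)}=\tau^{-1/p}\|u\|_{L_p(J,X)},\qquad
\pl^j T_{\wt{\ka}\ka}=\tau^j T_{\wt{\ka}\ka}\pl^j,
$$
and, for any Banach space $X$ and any $\sa\in(0,1)$,
$$
[T_{\wt{\ka}\ka}u]_{\sa,q;X}^\da=\tau^{\sa-1/q}[u]_{\sa,q;X}^{\tau\da}
\qquad(q\in\{p,\iy\}).
$$
Since $\tau\sim 1$ uniformly, all of these factors are bounded by a universal constant. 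This gives uniform boundedness of $T_{\wt{\ka}\ka}$ on $L_p(J,X)$, $W^k_p(J,X)$, $B^{s/r}_\iy(J,X)$, and on the little-H\"older variants (which is preserved because the intrinsic characterization of Theorem~\ref{thm-PH.bc}(iii) involves $[\cdot]_{\sa,\iy}^\da$, and sending $\da\to 0$ corresponds to sending $\tau\da\to 0$). Combining this through the intersection characterizations just invoked (Lemma~\ref{lem-RA.WW}(i), Corollary~\ref{cor-PH.BJ}(i), Corollary~\ref{cor-N.B}) yields uniform boundedness of $T_{\wt{\ka}\ka}$ on each of the listed local spaces $\gG_\ka$, with the integer-$s$ cases first and the fractional ones obtained by interpolation.

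Composing the two bounds gives~\Eqref{P.Rkk}. The main technical point to watch is the Besov-type scale, where the norm genuinely mixes spatial and temporal smoothness; I expect the smoothest route there is to use the \emph{equivalent} intrinsic norms supplied by Corollary~\ref{cor-PH.BJ}(i) (resp.\ Corollary~\ref{cor-N.B}) so that space and time variables decouple, at which point the pointwise-in-$t$ spatial estimate and the scalar time rescaling estimate apply independently. All bounds depend only on $c$, the finite-multiplicity constant of $\gK$, the uniform-regularity constants of $\gK\slt\Phi$ and of the localization system, and the constants governing $\rho$; hence they are uniform in $\wt{\ka}\in\gN(\ka)$ and $\ka\in\gK$, as required.
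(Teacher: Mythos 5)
Your proposal is correct and follows essentially the same route as the paper's proof: factorize $R_{\wt{\ka}\ka}=T_{\wt{\ka}\ka}\circ S_{\wt{\ka}\ka}$, control $S_{\wt{\ka}\ka}$ by the isotropic Lemmas \ref{lem-R.S} and~\ref{lem-H.S} applied in the spatial slot, control the time-dilation $T_{\wt{\ka}\ka}$ using the commutation and scaling identities together with the uniform bound $\rho_\ka/\rho_{\wt{\ka}}\sim1$ from~\Eqref{S.err}, establish the integer-order Sobolev and H\"older cases via the intersection characterizations, and obtain the fractional scales by interpolation. The paper compresses all of this into a single citation of \Eqref{RA.tk}, \Eqref{RA.tLq}, \Eqref{S.err}, and Lemma~\ref{lem-H.S} followed by interpolation; your write-up simply makes the factorization, the seminorm transformation under $T_{\wt{\ka}\ka}$, and the little-H\"older characterizations explicit, with no difference in substance.
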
 
%=====================================================================
\begin{proof} 
It is immediate from \Eqref{RA.tk}, \,\Eqref{RA.tLq}, \,\Eqref{S.err}, and 
Lemma~\ref{lem-H.S} that 
$$ 
R_{\wt{\ka}\ka}\in\cL(W_{\coW p,\wt{\ka}}^{kr/\vec r}, 
W_{\coW p,\ka}^{kr/\vec r}) 
\cap\cL(BC_{\wt{\ka}}^{kr/\vec r},BC_\ka^{kr/\vec r}) 
\cap\cL(bc_{\wt{\ka}}^{kr/\vec r},bc_\ka^{kr/\vec r})  
\npb 
$$ 
and that the uniform estimates of \Eqref{P.Rkk} are satisfied. Now the 
remaining statements follow by interpolation. 
\end{proof} 
Assume 
\hb{V_{\coV j}=(V_{\coV j},h_j)}, 
\,\hb{j=0,1,2}, are metric vector bundles. By a \emph{bundle multiplication} 
from 
\hb{V_0\oplus V_1} into~$V_2$, denoted by 
$$ 
\sfm\sco V_0\oplus V_1\ra V_2 
\qb (v_0,v_1)\mt\sfm(v_0,v_1), 
$$ 
we mean a smooth section~$\gsm$ of 
\hb{\Hom(V_0\otimes V_1,V_2)} such that 
\hb{\sfm(v_0,v_1):=\gsm(v_0\otimes v_1)} and  
$$ 
|\sfm(v_0,v_1)|_{h_2}\leq c\,|v_0|_{h_0}\,|v_1|_{h_1} 
\qa v_i\in\Ga\MVi 
\qb i=0,1. 
$$ 
%---------------------------------------------------------------------
\begin{examples}\LabelT{exa-P.ex} 
\hh{(a)} 
The duality pairing 
$$ 
\pw_{V_1}\sco V_1^*\oplus V_1\ra M\times\BK 
\qb (v^*,v)\mt\dl v^*,v\dr_{V_1} 
\npb 
$$ 
is a bundle multiplication. 

\smallskip 
\hh{(b)}\quad 
Assume 
\hb{\sa_i,\tau_i\in\BN} for 
\hb{i=0,1}. Then the tensor product 
$$ 
\otimes\sco T_{\tau_0}^{\sa_0}M\oplus T_{\tau_1}^{\sa_1}M  
\ra T_{\tau_0+\tau_1}^{\sa_0+\sa_1}M 
\qb (a,b)\mt a\otimes b 
$$ 
is a bundle multiplication where 
\hb{(X^{\otimes\sa_0}\otimes X^{*\otimes\tau_0}) 
   \otimes(X^{\otimes\sa_1}\otimes X^{*\otimes\tau_1})
   :=X^{\otimes(\sa_0+\sa_1)}\otimes X^{*(\tau_0+\tau_1)}}, where we set 
\hb{X^{\otimes\sa}=X^1\otimes\cdots\otimes X^\sa} etc.  

\smallskip 
\hh{(c)}\quad 
Suppose 
\hb{1\leq i\leq\sa} and 
\hb{1\leq j\leq\tau}. We denote by 
\hb{\sC_j^i\sco T_\tau^\sa M\ra T_{\tau-1}^{\sa-1}M} the contraction with 
respect to positions $i$ and~$j$, defined by 
$$ 
\sC_j^i\Bigl(
\bigotimes_{k=1}^\sa X^k 
\otimes\bigotimes_{\ell=1}^\tau X_\ell^*\Bigr) 
:=\dl X_j^*,X^i\dr 
\bigotimes_{\dlim{k=1}{k\neq i}}^\sa X^k 
\otimes\bigotimes_{\dlim{\ell=1}{\ell\neq j}}^\tau X_\ell^* 
\qa X^k\in\Ga(M,TM) 
\qb X_\ell^*\in\Ga(M,T^*M). 
$$ 
It follows from (a) and~(b) that 
$$ 
\sC_j^i\sco T_{\tau_1}^{\sa_1}M\oplus T_{\tau_2}^{\sa_2}M 
\ra T_{\tau_1+\tau_2-1}^{\sa_1+\sa_2-1}M 
\qa (a,b)\mt\sC_j^i(a\otimes b) 
\npb 
$$ 
is a bundle multiplication, where 
\hb{1\leq i\leq\sa_1+\sa_2} and 
\hb{1\leq j\leq\tau_1+\tau_2}.  

\smallskip 
\hh{(d)}\quad 
Let 
\hb{W_{\coW j}=(W_{\coW j},h_{W_{\coW j}})}, 
\ \hb{j=0,1,2}, be metric vector bundles and 
\hb{\sa_j,\tau_j\in\BN}. Suppose 
$$ 
\sfw\sco W_0\oplus W_1\ra W_2 
\qb \sft\sco T_{\tau_0}^{\sa_0}M\oplus T_{\tau_1}^{\sa_1}M 
    \ra T_{\tau_2}^{\sa_2}M 
$$ 
are bundle multiplications. Set 
\hb{T_{\tau_j}^{\sa_j}(M,W_{\coW j}) 
   :=(T_{\tau_j}^{\sa_j}M\otimes W_{\coW j},h_j)} with 
\hb{h_j:=\pr_{\sa_j}^{\tau_j}\otimes h_{W_{\coW j}}}. Then 
$$ 
\sft\otimes\sfw 
\sco T_{\tau_0}^{\sa_0}(M,W_0)\oplus T_{\tau_1}^{\sa_1}(M,W_1) 
\ra T_{\tau_2}^{\sa_2}(M,W_2), 
\npb 
$$ 
defined by 
\hb{\sft\otimes\sfw (a_0\otimes u_0,\ a_1\otimes u_1)
   :=\sft(a_0,a_1)\otimes \sfw(u_0,u_1)}, is a bundle 
multiplication.\hfill$\Box$ 
\end{examples} 
Let $\sfm$ be a bundle multiplication from 
\hb{V_0\oplus V_1} into~$V_2$. Then 
$$ 
\Ga(M,V_0\oplus V_1)^J\ra\Ga(M,V_2)^J 
\qb \bigl(v_0(t),v_1(t)\bigr)\mt\sfm\bigl(v_0(t),v_1(t)\bigr) 
\qa t\in J, 
\npb 
$$ 
is the \emph{point-wise extension} of~$\sfm$, denoted by~$\sfm$ also. 

\smallskip 
After these preparations we can prove the following point-wise multiplier 
theorem which is the basis of the more specific results of the next section. 
%---------------------------------------------------------------------
\begin{theorem}\LabelT{thm-P.M} 
Let 
\hb{W_{\coW j}=(W_{\coW j},h_{W_{\coW j}},D_j)}, 
\ \hb{j=0,1,2}, be fully uniformly regular vector bundles over~$M$. Assume 
\hb{\sa_j,\tau_j\in\BN} satisfy 
\beq\Label{P.st} 
\sa_2-\tau_2=\sa_0+\sa_1-\tau_0-\tau_1. 
\eeq 
Set 
$$ 
V_{\coV j}=(V_{\coV j},h_j,\na_{\cona j})  
:=\bigl(T_{\tau_j}^{\sa_j}(M,W_{\coW j}), 
\pr_{\sa_j}^{\tau_j}\otimes h_{W_{\coW j}},\na(\na_{\cona g},D_j)\bigr) 
\po 
$$ 
and suppose 
\hb{\sfm\sco V_0\oplus V_1\ra V_2} is a bundle multiplication, 
\hb{\lda_0,\lda_1\in\BR}, 
\ \hb{\lda_2:=\lda_0+\lda_1}, and 
\hb{\vec\om_j:=(\lda_j,\mu)}. Then 
{\samepage 
\begin{itemize} 
\item[{\rm(i)}] 
${}$%                   do not remove!!!! 
\hb{\sfm\in\cL\bigl(\cB^{s/\vec r,\vec\om_0}\JVn, 
\cB^{s/\vec r,\vec\om_1}\JVe;\cB^{s/\vec r,\vec\om_2}\JVz\bigr)}, where 
either 
\hb{s\in r\BN} and 
\hb{\cB\in\{BC,bc\}}, or 
\hb{s>0} and 
\hb{\cB\in\{B_\iy,b_\iy\}}.\po 
\item[{\rm(ii)}] 
${}$%                   do not remove!!!! 
\hb{\sfm\in\cL\bigl(BC^{s/\vec r,\vec\om_0}\JVn, 
W_{\coW p}^{s/\vec r,\vec\om_1}\JVe; 
W_{\coW p}^{s/\vec r,\vec\om_2}\JVz\bigr)}, 
\ \hb{s\in r\BN}. 
\item[{\rm(iii)}] 
${}$%                   do not remove!!!! 
\hb{\sfm\in\cL\bigl(B_\iy^{s_0/\vec r,\vec\om_0}\JVn, 
\gF_p^{s/\vec r,\vec\om_1}\JVe; 
\gF_p^{s/\vec r,\vec\om_2}\JVz\bigr)}, 
\ \hb{0<s<s_0}.\po  
\end{itemize} } 
\end{theorem}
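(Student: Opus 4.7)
My plan is to reduce the statement to the Euclidean point-wise multiplier results of Theorems~\ref{thm-P.b} and~\ref{thm-P.bt} via the retraction theorems of Sections~\ref{sec-RA} and~\ref{sec-H}. Using $\pi_\ka\chi_\ka=\pi_\ka$ and the $C^\iy(M)$-bilinearity of $\sfm$, one has $\pi_\ka\sfm(u_0,u_1)=\sfm(\pi_\ka u_0,\chi_\ka u_1)$, which pushes forward via the coordinate frames induced by $\ka\slt\vp_j$ to
$$
(\ka\slt\vp_2)_*\bigl(\pi_\ka\sfm(u_0,u_1)\bigr)
=\sfb_\ka\bigl((\ka\slt\vp_0)_*(\pi_\ka u_0),\,(\ka\slt\vp_1)_*(\chi_\ka u_1)\bigr),
$$
where $\sfb_\ka\sco Q^m_\ka\to\cL(E_0,E_1;E_2)$ is the local coefficient of~$\sfm$ with respect to these frames. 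Applying $\Theta^\mu_{p,\ka}$ and the weight factors from \Eqref{RA.ph} and \Eqref{P.TS}, and invoking $\lambda_2=\lambda_0+\lambda_1$ to rewrite $\rho_\ka^{\lambda_2+m/p}=\rho_\ka^{\lambda_0+m/p}\cdot\rho_\ka^{\lambda_1}$, I obtain the key identity
$$
\vp^{\vec\om_2}_{p,\ka}\bigl(\sfm(u_0,u_1)\bigr)
=\sfb_\ka\bigl(\vp^{\vec\om_0}_{p,\ka}u_0,\;\wh\vp^{\vec\om_1}_{\iy,\ka}u_1\bigr),
$$
with the analogous identity for $q=\iy$ in place of~$p$; the $\ell_\iy$-component of weight is absorbed into $\wh\vp^{\vec\om_1}_{\iy,\ka}$ because only the $\ell_p$-factor contributes an $m/p$.

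Next I verify that $\sfb_\ka\in BC^\iy(Q_\ka^m,\cL(E_0,E_1;E_2))$ uniformly in $\ka\in\gK$. From $|\sfm(v_0,v_1)|_{h_2}\leq c|v_0|_{h_0}|v_1|_{h_1}$ and \Eqref{L.Nh} applied to each factor, one sees that the three bundle-metric vs.\ Euclidean-metric exponents combine to contribute a factor $\rho_\ka^{\sigma_0+\sigma_1-\tau_0-\tau_1-(\sigma_2-\tau_2)}$ in front of the Euclidean estimate, and it is exactly the constraint $\sigma_2-\tau_2=\sigma_0+\sigma_1-\tau_0-\tau_1$ that makes this exponent zero, yielding $|\sfb_\ka(x)(\xi_0,\xi_1)|_{E_2}\leq c|\xi_0|_{E_0}|\xi_1|_{E_1}$ uniformly. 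The higher-derivative estimates follow by combining \Eqref{L.kgg}--\Eqref{L.kgk}, the uniform regularity of~$D_j$, $\na_{\cona g}$, and the transition matrices $A^{i}_{\wt\imath},B^{\wt\jmath}_j$ from~\Eqref{L.AB}, together with the fully uniform regularity of each~$W_j$; since $\sfb_\ka$ is $t$-independent, this also provides uniform control in the anisotropic spaces $BC^{s/\vec r}$ and hence in $B_\iy^{s_0/\vec r}$ for every $s_0>0$.

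The final step is to apply Theorem~\ref{thm-P.bt} in each local chart. For case~(iii), one treats $(\xi_0,\xi_1)\mt\sfb_\ka(x)(\xi_0,\xi_1)$ as an $(x,t)$-dependent bilinear multiplication in the Euclidean setting and obtains
$$
\|\vp^{\vec\om_2}_{p,\ka}\sfm(u_0,u_1)\|_{\gF^{s/\vec r}_{p,\ka}}
\leq c\,\|\wh\vp^{\vec\om_1}_{\iy,\ka}u_1\|_{B^{s_0/\vec r}_{\iy,\ka}}\,
\|\vp^{\vec\om_0}_{p,\ka}u_0\|_{\gF^{s/\vec r}_{p,\ka}}
$$
uniformly in $\ka$; cases~(i) and~(ii) are handled analogously by invoking parts~(i) and~(ii) of Theorem~\ref{thm-P.bt}. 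Taking the $\ell_p$ (respectively $\ell_\iy$) norm over $\ka$, pulling the $\sup_\ka$ of the $B_\iy^{s_0/\vec r}$-factor outside, and using the decomposition \Eqref{P.TS} together with Lemma~\ref{lem-P.T} and \Eqref{P.abd} to replace $\wh\vp^{\vec\om_1}_\iy$ by $\vp^{\vec\om_1}_\iy$ at bounded cost, I arrive at the required estimate after invoking Theorems~\ref{thm-RA.R} and~\ref{thm-H.R}. The main obstacle is step~two: bookkeeping the way the weights $\rho_\ka^{\lambda_j}$, the scaling exponents $\sigma_j-\tau_j$ inherent in the tensor-bundle metric \Eqref{S.h}, and the chart-change matrices combine; this is precisely where both algebraic constraints of the hypothesis are forced on us.
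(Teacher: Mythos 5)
Your proposal follows the paper's strategy very closely: push forward $\pi_\ka\sfm(u_0,u_1)=\sfm(\pi_\ka u_0,\chi_\ka u_1)$ to a chart, verify that \Eqref{P.st} makes the local coefficients $\sfb_\ka$ uniformly bounded in $BC^\iy$, apply the Euclidean multiplier Theorem~\ref{thm-P.bt} chart-by-chart, and then combine \Eqref{P.TS}, Lemma~\ref{lem-P.T}, and the retraction theorems to assemble the global estimate. The observation that \Eqref{P.st} is exactly what annihilates the $\rho_\ka$-exponent coming from the three applications of \Eqref{L.Nh} is the same computation the paper does in step~(2).

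There is, however, a bookkeeping error in the key identity that, taken literally, makes the argument prove something other than part~(iii). You factor $\rho_\ka^{\lambda_2+m/p}=\rho_\ka^{\lambda_0+m/p}\cdot\rho_\ka^{\lambda_1}$ and accordingly attach the volume-normalization $m/p$ to $u_0$, writing
$\vp^{\vec\om_2}_{p,\ka}(\sfm(u_0,u_1))=\sfb_\ka(\vp^{\vec\om_0}_{p,\ka}u_0,\ \wh\vp^{\vec\om_1}_{\iy,\ka}u_1)$.
But in part~(iii) the hypothesis is $u_0\in B_\iy^{s_0/\vec r,\vec\om_0}$ (the H\"older factor) and $u_1\in\gF_p^{s/\vec r,\vec\om_1}$ (the $L_p$-factor), so the $m/p$ weight must go with $u_1$: the correct factorization is $\rho_\ka^{\lambda_2+m/p}=\rho_\ka^{\lambda_0}\cdot\rho_\ka^{\lambda_1+m/p}$, yielding $\sfb_\ka(\vp^{\vec\om_0}_{\iy,\ka}u_0,\ \wh\vp^{\vec\om_1}_{p,\ka}u_1)$. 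Your final local estimate
$\|\vp^{\vec\om_2}_{p,\ka}\sfm(u_0,u_1)\|_{\gF^{s/\vec r}_{p,\ka}}\le c\,\|\wh\vp^{\vec\om_1}_{\iy,\ka}u_1\|_{B^{s_0/\vec r}_{\iy,\ka}}\|\vp^{\vec\om_0}_{p,\ka}u_0\|_{\gF^{s/\vec r}_{p,\ka}}$
has the roles of $u_0$ and $u_1$ swapped relative to~(iii); after summing in $\ell_p$ it gives $\sfm\in\cL\bigl(\gF_p^{s/\vec r,\vec\om_0}\JVn,B_\iy^{s_0/\vec r,\vec\om_1}\JVe;\gF_p^{s/\vec r,\vec\om_2}\JVz\bigr)$, not the stated result. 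The two statements are equivalent because the hypotheses \Eqref{P.st} and $\lambda_2=\lambda_0+\lambda_1$ are symmetric under $(V_0,\lambda_0,\sigma_0,\tau_0)\leftrightarrow(V_1,\lambda_1,\sigma_1,\tau_1)$ applied to $\sfm'(v_1,v_0):=\sfm(v_0,v_1)$, but that swap is never invoked in your write-up; without it or without the corrected factorization the chain of displayed identities does not reach~(iii). Once the weight is attached to $u_1$ instead (which is also what matches the paper's identity \Eqref{P.phk}), your argument goes through as written; you should also note, as in the paper, that for $\cB\in\{bc,b_\iy\}$ the sums must be read in $\ell_{\iy,\unif}$ rather than $\ell_\iy$ so that Theorems~\ref{thm-H.Rbck}(i) and~\ref{thm-H.Rbc}(i) apply.
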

%=====================================================================
\begin{proof} 
(1) 
Suppose 
$$  
M=\BX\in\{\BR^m,\BH^m\} 
\qb g=g_m 
\qb \rho\sim\mf{1} 
\qb W_{\coW j}=\bigl(M\times F_j,\pr_{F_j},d_{W_{\coW j}}\bigr), 
$$ 
where $d_{W_{\coW j}}$~is the \hbox{$F_j$-valued} differential. Set 
$$ 
E_j:=\bigl(E_{\tau_j}^{\sa_j}(F_j),\prsn_{HS}\bigr) 
\qb V_{\coV j}=\bigl(\BX\times E_j,\pr_j,d_{E_j}\bigr), 
\npb 
$$ 
where 
\hb{\prsn_j:=\prsn_{E_j}}. 

\smallskip 
Introducing bases, we define isomorphisms 
\hb{E_j\simeq\BK^{N_j}}. By means of them $\sfm$~is transported onto 
an element of $\cL(\BK^{N_0},\BK^{N_1};\BK^{N_2})^\BX$ which has the 
`matrix representation'
$$  
\BK^{N_0}\times\BK^{N_1}\ni(\xi,\eta) 
\mt(\sfm_{\nu_0\nu_1}^{\nu_2}(x)\xi^{\nu_0}\eta^{\nu_1}) 
_{1\leq\nu_2\leq N_2}\in\BK^{N_2}. 
\npb 
$$ 
Assume 
\hb{\sfm\in BC^\iy\bigl(\BX,\cL(E_0,E_1;E_2)\bigr)}. Then the assertion 
follows from Theorem~\ref{thm-P.bt}. 

\smallskip 
(2) 
Now we consider the general case. We choose uniformly regular atlases 
\hb{\gK\slt\Phi_j} for~$W_{\coW j}$ over~$\gK$ with model fiber~$F_j$. 
Given 
\hb{\ka\slt\vp_j\in\gK\slt\Phi_j} we define, recalling \Eqref{L.EstF}, 
\ \hb{\sfm_\ka\in\cD\bigl(\BX_\ka,\cL(E_0,E_1;E_2)\bigr)} by 
$$ 
\sfm_\ka(\eta_0,\eta_1) 
:=(\ka\slt\vp_2)_*\bigl(\chi_\ka\sfm 
\bigl((\ka\slt\vp_0)^*\eta_0,(\ka\slt\vp_1)^*\eta_1\bigr)\bigr) 
$$ 
for 
\hb{\eta_j\in E_j^{\BX_\ka}}. It follows from \Eqref{L.Nh} and the fact 
that $\sfm$~is a bundle multiplication that 
$$ 
|\sfm_\ka(\eta_0,\eta_1)|_2 
\leq c\,\rho_\ka^{\tau_2-\sa_2}\rho_\ka^{\sa_0-\tau_0}
 \rho_\ka^{\sa_1-\tau_1}\,|\eta_0|_0\,|\eta_1|_1 
\qa \eta_j\in E_j^{\BX_\ka}. 
$$ 
Hence we infer from \Eqref{P.st} 
$$ 
\sfm_\ka\in BC^k\bigl(\BX_\ka,\cL(E_0,E_1;E_2)\bigr) 
\qb \|\sfm_\ka\|_{k,\iy}\leq c(k) 
\qa \ka\slt\vp_j\in\gK\slt\Phi_j 
\qb k\in\BN. 
$$ 

\smallskip 
(3) 
In the following, it is understood that $\vp_q^{\vec\om_j}$~is defined by 
means of 
\hb{\gK\slt\Phi_j} for
\hb{1\leq q\leq\iy}. Then, given 
\hb{v_j\in\Ga\MVj^J},  
$$ 
\vp_{q,\ka}^{\vec\om_2}\bigl(\sfm(v_0,v_1)\bigr) 
=\rho_\ka^{\lda_0}\rho_\ka^{\lda_1+m/q}\Ta_{q,\ka}^\mu 
 (\ka\slt\vp_2)_*\bigl(\pi_\ka\sfm(v_0,v_1)\bigr) 
=\sfm_\ka(\vp_{\iy,\ka}^{\vec\om_0}v_0, 
 \wh{\vp}_{q,\ka}^{\kern1pt\vec\om_1}v_1). 
$$ 
Consequently, we get from \Eqref{P.TS} 
\beq\Label{P.phk} 
\vp_{q,\ka}^{\vec\om_2}\bigl(\sfm(v_0,v_1)\bigr) 
=\sum_{\wt{\ka}\in\gN(\ka)}a_{\wt{\ka}\ka}\sfm_\ka
 (\vp_{\iy,\ka}^{\vec\om_0}v_0,R_{\wt{\ka}\ka}\vp_{q,\ka}^{\vec\om_1}v_1). 
\eeq 

(4) 
Suppose 
either 
\hb{s\in\BN} and 
\hb{\cB\in\{BC,bc\}}, or 
\hb{s>0} and 
\hb{\cB\in\{B_\iy,b_\iy\}}. Then we infer from \Eqref{P.abd}, 
Lemma~\ref{lem-P.T}, Theorem~\ref{thm-P.bt}, and steps (1) and~(3) that 
$$ 
\|a_{\wt{\ka}\ka}\sfm_\ka(\eta_0,\eta_1)\|_{\cB^{s/\vec r}\BYkEz} 
\leq c\,\|\eta_0\|_{\cB^{s/\vec r}\BYkEn} 
\,\|\eta_1\|_{\cB^{s/\vec r}\BYkEe}, 
$$ 
uniformly with respect to 
\hb{\ka\slt\vp,\,\wt{\ka}\slt\wt{\vp}\in\gK\slt\Phi_2}. Hence we get from 
\Eqref{P.phk} and the finite multiplicity of~$\gK$ 
\beq\Label{P.linf}  
\big\|\vp_\iy^{\vec\om_2}\bigl(\sfm(v_0,v_1)\bigr)\big\| 
 _{\ell_\iy(\mf{\cB}^{s/\vec r}\BYEz)} 
\leq c\|v_0\|_{\ell_\iy(\mf{\cB}^{s/\vec r}\BYEn)} 
\,\|v_1\|_{\ell_\iy(\mf{\cB}^{s/\vec r}\BYEe)}.  
\eeq  
Thus Theorems \ref{thm-H.Rk} and~\ref{thm-H.R} imply, due to \Eqref{R.rN}, 
$$ 
\|\sfm(v_0,v_1)\| _{\cB^{s/\vec r,\vec\om_2}\JVz} 
\leq c\,\|v_0\|_{\cB^{s/\vec r,\vec\om_0}\JVn} 
\,\|v_1\|_{\cB^{s/\vec r,\vec\om_1}\JVe},  
\npb 
$$ 
provided either 
\hb{s\in r\BN} and 
\hb{\cB=BC}, or 
\hb{s>0} and 
\hb{\cB=B_\iy}. 

\smallskip 
If 
\hb{s\in r\BN} and 
\hb{\cB=bc}, or 
\hb{s>0} and 
\hb{\cB=b_\iy}, then \Eqref{P.linf} holds with $\ell_\iy$~replaced 
by~$\ell_{\iy,\unif}$ everywhere. Thanks to Theorems \ref{thm-H.Rbck}(i) 
and~\ref{thm-H.Rbc}(i) this proves assertion~(i). The proofs for 
(ii) and (iii) are similar. 
\end{proof} 
It is clear that obvious analogues of the results of this section hold in 
the case of time-independent isotropic spaces. This generalizes and 
improves \cite[Theorem~9.2]{Ama12b}. 
%---------------------------------------------------------------------
%---------------------------------------------------------------------
\section{Contractions}\LabelT{sec-C}% 
In practice, most pointwise multiplications in tensor bundles occur through 
contractions of tensor fields. For this reason we specialize in this section 
the general multiplier Theorem~\ref{thm-P.M} to this setting and study the 
problem of right invertibility of multiplier operators induced by 
contraction. 

\smallskip 
Let 
\hb{V_{\coV i}=(V_{\coV i},h_i)}, 
\ \hb{i=1,2}, be uniformly regular metric vector bundles of rank~$n_i$ 
over~$M$ with model fiber~$E_i$. Set 
\hb{V_0=(V_0,h_0):=\bigl(\Hom\VeVz,h_{12}\bigr)}. By 
Example~\ref{exa-U.ex}(f), \ $V_0$~is a uniformly regular vector bundle of 
rank~$n_1n_2$ over~$M$ with model fiber $\cL\EeEz$. The 
\emph{evaluation map} 
$$ 
\ev\sco\Ga(M,V_0\otimes V_1)\ra\Ga\MVz 
\qb (a,v)\mt av 
\npb 
$$ 
is defined by 
\hb{av(p):=a(p)v(p)} for 
\hb{p\in M}. 
%---------------------------------------------------------------------
\begin{lemma}\LabelT{lem-C.E} 
The evaluation map is a bundle multiplication.
\end{lemma}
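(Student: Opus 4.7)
My plan is to verify the two defining properties of a bundle multiplication separately: smoothness of the associated bundle morphism, and the uniform fiberwise norm estimate.

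First I would define the candidate section $\gsm$ of $\Hom(V_0\otimes V_1,V_2)$ fiberwise by the canonical linear map $(V_0)_p\otimes (V_1)_p = \Hom((V_1)_p,(V_2)_p)\otimes (V_1)_p \to (V_2)_p$ sending $a_p\otimes v_p$ to $a_p(v_p)$, and verify that this is smooth. Smoothness is a local matter: on a coordinate patch $U_{\coU\ka}$ pick local frames $(b_\nu^i)$ for $V_{\coV i}$ ($i=1,2$) associated with charts $\ka\slt\vp_i$, and form the induced coordinate frame $\{b^2_{\nu_2}\otimes\ba_1^{\nu_1}\}$ of $V_0$ as in \Eqref{U.f12}. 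If $a=a_{\nu_1}^{\nu_2}b^2_{\nu_2}\otimes\ba_1^{\nu_1}\in\cT_0^0\MV_0$ and $v=v^{\nu_1}b_{\nu_1}^1\in\cT_0^0\MV_1$, then $\ev(a,v)=a_{\nu_1}^{\nu_2}v^{\nu_1}b_{\nu_2}^2$, whose coefficients are polynomials in smooth functions, so $\gsm\in C^\iy\bigl(M,\Hom(V_0\otimes V_1,V_2)\bigr)$.

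Next I would establish the pointwise estimate. At each $p\in M$, $\bigl((V_1)_p,h_1(p)\bigr)$ and $\bigl((V_2)_p,h_2(p)\bigr)$ are finite-dimensional Hilbert spaces, and $h_0(p)=h_{12}(p)$ is by construction the Hilbert-Schmidt inner product on $\Hom((V_1)_p,(V_2)_p)$ (see Example~\ref{exa-U.ex}(f)). It is elementary that for any linear map $A$ between finite-dimensional Hilbert spaces one has $|Av|\leq\|A\|_{\mathrm{op}}\,|v|\leq|A|_{HS}\,|v|$; applying this fiberwise yields
\[
|\ev(a,v)(p)|_{h_2}=|a(p)v(p)|_{h_2(p)}\leq|a(p)|_{h_{12}(p)}\,|v(p)|_{h_1(p)}
\qa p\in M,
\]
so the required estimate holds with constant $c=1$, uniformly on~$M$.

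The two steps together show that $\ev$ is indeed a bundle multiplication in the sense defined just before Example~\ref{exa-P.ex}. No step is genuinely hard; the only point requiring some care is making sure that the Hilbert-Schmidt metric chosen on $V_0=\Hom(V_1,V_2)$ is precisely what forces the estimate to hold with a dimension-free constant, so that the bound is uniform over~$M$ even in the absence of compactness.
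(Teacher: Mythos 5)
Your proof is correct, and it takes a genuinely different — and cleaner — route than the paper's own argument. The paper proves the estimate by fixing a uniformly regular atlas $\gK\slt\Phi_i$ for each $V_{\coV i}$, pushing everything forward to the model fibers, using the equivalences $(\ka\slt\vp_i)_*h_i\sim\prsn_{E_i}$ of~\Eqref{U.h}, invoking formula~\Eqref{V.Aad} for the adjoint, establishing $\ka_*(|a|_{h_0}^2)\sim\tr\bigl([\ka_*a]^*[\ka_*a]\bigr)$ uniformly in $\ka$, then bounding $\big|\bigl((\ka\slt\vp_{12})_*a\bigr)(\ka\slt\vp_1)_*u\big|_{E_2}$ by the operator norm, and finally replacing the operator norm on $\cL\EeEz$ by the Hilbert-Schmidt norm up to a dimensional constant before pulling everything back. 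The constant this produces depends on the ranks and on the equivalence constants of the uniformly regular structure. Your observation short-circuits all of this: since $h_{12}(p)$ is, by its very definition in Example~\ref{exa-U.ex}(f), the Hilbert-Schmidt inner product on $\cL\bigl((V_1)_p,(V_2)_p\bigr)$ computed from $h_1(p)$ and $h_2(p)$, the fiberwise inequality $\|A\|_{\mathrm{op}}\leq\|A\|_{HS}$ between operator and Hilbert-Schmidt norms of a linear map of Hilbert spaces immediately gives $|av|_{h_2}\leq|a|_{h_0}\,|v|_{h_1}$ pointwise with $c=1$, with no reference to charts, push-forwards, or the singularity function. What the intrinsic argument buys is a dimension-free constant and no dependence on the choice of atlas; what the paper's local-coordinate computation buys is consistency with the pervasive reduction-to-model-fiber methodology used elsewhere in the article, which is perhaps why the author chose it, but it is not logically necessary here. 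Your smoothness check via the coordinate frame of \Eqref{U.f12} is also fine (and the paper leaves this part implicit). One minor presentational remark: it would be worth stating explicitly that $h_{12}(p)$ uses the adjoint taken with respect to $h_1(p)$ and $h_2(p)$ — this is what makes the fiberwise inequality a genuine statement about the bundle metrics rather than about some fixed reference inner products — but you do gesture at this, so the argument is complete.
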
 
%=====================================================================
\begin{proof} 
We fix uniformly regular atlases 
\hb{\gK\slt\Phi_i}, 
\ \hb{i=1,2}, for~$V_{\coV i}$ over~$\gK$. Then, using the notation of 
Section~\ref{sec-V}, it follows from \Eqref{V.Aad} 
$$ 
(a^*a)_{\wt{\nu}_1}^{\nu_1} 
=h_1^{*\nu_1\wh{\nu}_1} 
\,\ol{a_{\wh{\nu}_1}^{\wh{\nu}_2}} 
\,\,\ol{h_{2,\wh{\nu}_2\wt{\nu}_2}} 
\,a_{\wt{\nu}_1}^{\wt{\nu}_2}. 
$$ 
Hence we infer from \Eqref{U.h} 
$$ 
\ka_*(|a|_{h_0}^2) 
=\ka_*\bigl(\tr(a^*a)\bigr) 
=\ka_*h_1^{*\nu_1\wh{\nu}_1} 
\,\ka_*a_{\nu_1}^{\wt{\nu}_2} 
\,\ka_*h_{2,\wt{\nu}_2\wh{\nu}_2} 
\,\ol{\ka_*a_{\wh{\nu}_1}^{\wh{\nu}_2}}  
\sim\sum_{\nu_1,\nu_2}|\ka_*a_{\nu_1}^{\nu_2}|^2 
=\tr\bigl([\ka_*a]^*[\ka_*a]\bigr), 
$$   
uniformly with respect to 
\hb{\ka\in\gK}. Furthermore, \Eqref{V.pN} and \Eqref{U.h} imply 
\beq\Label{C.kLb} 
\bal 
\ka_*(|au|_{h_2}) 
&=\big|\bigl((\ka\slt\vp_{12})_*a\bigr)(\ka\slt\vp_1)_*u\big| 
 _{(\ka\slt\vp_2)_*h_2}\\ 
&\sim\big|\bigl((\ka\slt\vp_{12})_*a\bigr)(\ka\slt\vp_1)_*u\big|_{E_2} 
 \leq|(\ka\slt\vp_{12})_*a|_{\cL\EeEz}\,|(\ka\slt\vp_1)_*u|_{E_1} 
\eal 
\eeq 
for 
\hb{u\in\Ga\MVe} and 
\hb{\ka\slt\vp_i\in\gK\slt\Phi_i}. Since $\cL\EeEz$ is finite-dimensional 
the operator norm~% 
\hb{\vsdot_{\cL\EeEz}} is equivalent to the trace norm. Hence, using 
\hb{\cL\EeEz\simeq\BK^{n_2\times n_1}} and \Eqref{V.pN} and \Eqref{U.h} once 
more, we deduce from \Eqref{C.kLb} that 
\hb{\ka_*(|au|_{h_2})\leq c\ka_*(|a|_{h_0})\ka_*(|u|_{h_1})} for 
\hb{\ka\in\gK}. Consequently, 
$$ 
|au|_{h_2}\leq c\,|a|_{h_0}\,|u|_{h_1} 
\qa (a,u)\in\Ga(M,V_0\oplus V_1). 
\npb 
$$ 
This proves the lemma. 
\end{proof} 
Suppose 
\hb{\sa,\sa_i,\tau,\tau_i\in\BN} for 
\hb{i=1,2} with 
\hb{\sa+\tau>0}. We define the \emph{center contraction} of order 
\hb{\sa+\tau}, 
\beq\Label{C.CC} 
\sC=\sC_{[\tau]}^{[\sa]} 
\sco\Ga(M,T_{\tau_2+\sa}^{\sa_2+\tau}M\oplus T_{\tau+\tau_1}^{\sa+\sa_1}M) 
\ra\Ga(M,T_{\tau_1+\tau_2}^{\sa_1+\sa_2}M), 
\eeq 
as follows: Given 
\hb{(i_k)\in\BJ_{\sa_k}}, 
\ \hb{(j_k)\in\BJ_{\tau_k}} for 
\hb{k=1,2}, and 
\hb{\sa\in\BJ_\sa}, 
\ \hb{\tau\in\BJ_\tau} we set 
$$ 
(i_2;j):=(i_{2,1},\ldots,i_{2,\sa_2},j_1,\ldots,j_\tau) 
\in\BJ_{\sa_2+\tau} 
$$ 
etc. Assume 
\hb{a\in\Ga(M,T_{\tau_2+\sa}^{\sa_2+\tau}M)} is locally represented 
on~$U_{\coU\ka}$ by 
$$ 
a=a_{(j_2;i)}^{(i_2;j)}\,\frac\pl{\pl x^{(i_2)}} 
\otimes\frac\pl{\pl x^{(j)}} 
\otimes dx^{(j_2)}\otimes dx^{(i)} 
$$ 
and $b$~has a corresponding representation. Then the local representation 
of~$\sC(a,b)$ on~$U_{\coU\ka}$ is given by 
$$ 
a_{(j_2;i)}^{(i_2;j)}\,b_{(j;j_1)}^{(i;i_1)}\,\frac\pl{\pl x^{(i_2)}} 
\otimes\frac\pl{\pl x^{(i_1)}} 
\otimes dx^{(j_2)}\otimes dx^{(j_1)}. 
$$ 
A~center contraction~\Eqref{C.CC} is~a \emph{complete contraction} (on the 
right) if 
\hb{\sa_1=\tau_1=0}. If $\sC$~is a complete contraction, then we usually 
simply write 
\hb{a\cdot u} for~$\sC(a,u)$. 
%---------------------------------------------------------------------
\begin{lemma}\LabelT{lem-C.CE} 
The \emph{center contraction} associated with the evaluation map~$\ev$, 
$$ 
\sC\otimes\ev 
\sco\Ga\bigl(M,T_{\tau_2+\sa}^{\sa_2+\tau}\MVn 
\oplus T_{\tau+\tau_1}^{\sa+\sa_1}\MVe\bigr) 
\ra\Ga\bigl(M,T_{\tau_1+\tau_2}^{\sa_1+\sa_2}\MVz\bigr), 
\npb 
$$ 
is a bundle multiplication. 
\end{lemma}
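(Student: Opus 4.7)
The plan is to exhibit $\sC\otimes\ev$ as an instance of the construction in Example~\ref{exa-P.ex}(d), thereby reducing the claim to two independent sub-statements: that the pure tensor-bundle center contraction $\sC=\sC_{[\tau]}^{[\sa]}$ is a bundle multiplication, and that the evaluation map $\ev$ is a bundle multiplication. The second of these is already recorded as Lemma~\ref{lem-C.E}, so the real work is concentrated in the first.

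To handle the first point, I would check smoothness of the relevant section of $\Hom\bigl(T^{\sa_2+\tau}_{\tau_2+\sa}M\otimes T^{\sa+\sa_1}_{\tau+\tau_1}M,\ T^{\sa_1+\sa_2}_{\tau_1+\tau_2}M\bigr)$ directly from its coordinate formula, which is polynomial (in fact, just index-rearranging) and therefore smooth. For the pointwise norm estimate, the cleanest route is to decompose $\sC_{[\tau]}^{[\sa]}$ into $\sa+\tau$ iterated single-position contractions of the form $(a,b)\mapsto\sC_j^i(a\otimes b)$, each of which has already been verified to be a bundle multiplication in Example~\ref{exa-P.ex}(c). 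Composing these, one obtains
$$
|\sC_{[\tau]}^{[\sa]}(a,b)|_g\leq c\,|a|_g\,|b|_g,
$$
where the constant depends only on $m$, $\sa$, $\tau$. Equivalently, one may argue directly in a local orthonormal frame: the summations arising from the contracted index pairs are controlled by Cauchy--Schwarz applied to the frame representations of $a$ and $b$.

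With both ingredients established, I would conclude as follows. Under the canonical identifications
$$
T^{\sa_2+\tau}_{\tau_2+\sa}\MVn=T^{\sa_2+\tau}_{\tau_2+\sa}M\otimes V_0,\qquad T^{\sa+\sa_1}_{\tau+\tau_1}\MVe=T^{\sa+\sa_1}_{\tau+\tau_1}M\otimes V_1,
$$
and the corresponding splitting of the target, the operation $\sC\otimes\ev$ is literally the construction $\sft\otimes\sfw$ of Example~\ref{exa-P.ex}(d) applied with $\sft:=\sC_{[\tau]}^{[\sa]}$ and $\sfw:=\ev$. That example guarantees that the tensor product of two bundle multiplications is a bundle multiplication, completing the proof.

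The only genuine bookkeeping issue, which I expect to be the mildest of obstacles rather than a real one, is to verify that the chosen metric on $T_\tau^\sa\MW$ in \Eqref{S.h} splits as a tensor product of the metric on $T_\tau^\sa M$ and the metric $h_W$ on $W$, so that the metric estimate for $\sC\otimes\ev$ factors as the product of the estimates for $\sC$ and $\ev$. This is built into the definition, so the factorization is compatible with the hypothesis of Example~\ref{exa-P.ex}(d), and no further work is required.
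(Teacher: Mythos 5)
Your proposal is correct and follows essentially the same route as the paper: decompose $\sC_{[\tau]}^{[\sa]}$ into $\sa+\tau$ simple contractions $\sC_j^i$ covered by Example~\ref{exa-P.ex}(c), invoke Lemma~\ref{lem-C.E} for $\ev$, and combine via the tensor-product construction of Example~\ref{exa-P.ex}(d). The extra remarks (orthonormal-frame Cauchy--Schwarz, metric splitting) are harmless but unnecessary, since these are already built into the cited examples and definition~\Eqref{S.h}.
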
 
%=====================================================================
\begin{proof} 
Note that $\sC$~is a composition of 
\hb{\sa+\tau} simple contractions of type~$\sC_j^i$. Hence the assertion 
follows from Lemma~\ref{lem-C.E} and Examples~\ref{exa-P.ex}(c) and~(d). 
\end{proof} 
Henceforth, we write again~$\sC$ for 
\hb{\sC\otimes\ev}, if no confusion seems likely. Furthermore, we use the 
same symbol for point-wise extensions to time-dependent tensor fields. 
In addition, we do not indicate notationally the tensor bundles on which 
$\sC$~is operating. This will always be clear from the context. 

\smallskip 
Throughout the rest of this section we presuppose 
$$ 
\bal 
\bt\quad 
&W_{\coW i}=(W_{\coW i},h_i,D_i),\ \ i=1,2,3,
 \text{ are fully uniformly regular vector bundles}\\ 
\noalign{\vskip-.5\jot}  
&\text{of rank $n_i$ over $M$ with model fiber }F_i.  
\eal 
$$ 
For 
\hb{i,j\in\{1,2,3\}} we set 
$$ 
W_{\coW ij}=(W_{\coW ij},h_{W_{\coW ij}},D_{ij}) 
:=\bigl(\Hom\WiWj,\prsn_{HS},\na\DiDj\bigr). 
\npb 
$$ 
Example~\ref{exa-U.ex}(f) guarantees that $W_{\coW ij}$~is a fully uniformly 
regular vector bundle over~$M$. 

\smallskip 
We also assume for 
\hb{i,j\in\{1,2,3\}} 
$$ 
\bal 
\bt\quad 
&\sa_i,\tau_i,\sa_{ij},\tau_{ij}\in\BN;\\ 
\bt\quad 
&V_{\coV i}=(V_{\coV i},h_i,\na_{\cona i}) 
 :=\bigl(T_{\tau_i}^{\sa_i}\MWi, 
 \prsn_{\sa_i}^{\tau_i}\otimes h_{W_{\coW i}}, 
 \na(\na_{\cona g},D_i)\bigr);\\ 
\bt\quad 
&V_{\coV ij}=(V_{\coV ij},h_{ij},\na_{\cona ij}) 
 :=\bigl(T_{\tau_{ij}}^{\sa_{ij}}\MWij, 
 \prsn_{\sa_{ij}}^{\tau_{ij}}\otimes h_{W_{\coW ij}}, 
 \na(\na_{\cona g},D_{ij})\bigr);\\ 
\bt\quad 
&\lda_i,\lda_{ij}\in\BR,\ \ \vec\om_i=(\lda_i,\mu),  
 \ \ \vec\om_{ij}=(\lda_{ij},\mu). 
\eal 
$$ 

\smallskip 
Due to Lemma~\ref{lem-C.CE} we can apply Theorem~\ref{thm-P.M} and its 
corollary with 
\hb{\sfm=\sC}. For simplicity and for their importance in the theory of 
differential and pseudodifferential operators, \emph{we restrict ourselves 
in the following to complete contractions}. It should be observed that 
condition~\Eqref{C.su} below is void if 
\hb{\pl M=\es} and 
\hb{J=\BR}.\po 
%---------------------------------------------------------------------
{\samepage 
\begin{theorem}\LabelT{thm-C.CC} 
\begin{itemize} 
\item[{\rm(i)}] 
Suppose 
\beq\Label{C.l12} 
\lda_2=\lda_{12}+\lda_1 
\qb \sa_2=\sa_{12}-\tau_1 
\qb \tau_2=\tau_{12}-\sa_1, 
\po 
\eeq 
and 
\beq\Label{C.su} 
s> 
\left\{
{}      %do not remove!
\bal
-1+1/p  &     &&\quad \text{if \ }\pl M\neq\es,\\
r(-1+1/p&)    &&\quad \text{if \ }\pl M=\es\text{ and }J=\BR^+. 
\eal
\right. 
\po 
\eeq 
Let one of the following additional conditions be satisfied: 
$$ 
\bal
\text{\rm($\al$)}  \qquad  &\ph{|}s\ph{|}=t\in r\BN, \ \  
        &&q:=\iy,           &&\ \ \cB=\gG\in\{BC,bc\}\text{;}\\
\text{\rm($\ba$)}  \qquad  &\ph{|}s\ph{|}=t\in r\BN, \ \   
        &&q:=p,             &&\ \ \cB=BC, \ \gG=W\text{;}\\
\text{\rm($\ga$)}  \qquad  &\ph{|}s\ph{|}=t>0, \ \ 
        &&q:=\iy,           &&\ \ \cB=\gG\in\{B_\iy,bc_\iy\}\text{;}\\
\text{\rm($\da$)}  \qquad  &|s|<t, \ \ 
        &&q:=p,             &&\ \ \cB=B_\iy, \ \gG=\gF\text{.}
\eal 
\po 
$$ 
Assume 
\hb{a\in\cB^{t/\vec r,\vec\om_{12}}\JVez}. Then 
$$ 
A:=(u\mt a\cdot u) 
\in\cL\bigl(\gG_q^{s/\vec r,\vec\om_1}\JVe, 
\gG_q^{s/\vec r,\vec\om_2}\JVz\bigr),  
\npb 
$$ 
where 
\hb{BC_\iy:=BC} and 
\hb{bc_\iy:=bc} if\/ \text{\rm($\al$)}~applies. The map 
\hb{a\mt A} is linear and continuous.\po 
\item[{\rm(ii)}] 
Assume, in addition, 
$$ 
\lda_3=\lda_{23}+\lda_2 
\qb \sa_3=\sa_{23}-\tau_2 
\qb \tau_3=\tau_{23}-\sa_2 
\po   
$$ 
and 
\hb{b\in\cB^{t/\vec r,\vec\om_{23}}\JVzd}. Set 
\hb{B:=(v\mt b\cdot v)}. Then 
$$ 
BA=\Bigl(u\mt\sC_{[\tau_2]}^{[\sa_2]}(b,a)\cdot u\Bigr) 
\in\cL\bigl(\gG_q^{s/\vec r,\vec\om_1}\JVe, 
\gG_q^{s/\vec r,\vec\om_3}\JVd\bigr). 
\po 
$$
\end{itemize} 
\end{theorem}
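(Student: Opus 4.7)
\textbf{Proof plan for Theorem~\ref{thm-C.CC}.}
The plan is to reduce part~(i) to Theorem~\ref{thm-P.M}, applied to the bundle multiplication provided by Lemma~\ref{lem-C.CE}. First I would verify that the index constraints \Eqref{C.l12} force the dimension balance \Eqref{P.st}: with $(V_0,V_1,V_2)$ of Theorem~\ref{thm-P.M} played by $(V_{12},V_1,V_2)$, the identities $\sa_2=\sa_{12}-\tau_1$ and $\tau_2=\tau_{12}-\sa_1$ yield
\hb{\sa_2-\tau_2=(\sa_{12}+\sa_1)-(\tau_{12}+\tau_1)}, which is exactly \Eqref{P.st}. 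The weight additivity $\lda_2=\lda_{12}+\lda_1$ matches the additivity hypothesis there. With these identifications, cases~($\al$), ($\ba$), ($\ga$) and the subcase $0<s<t$ of~($\da$) follow directly from parts \hbox{(i)--(iii)} of Theorem~\ref{thm-P.M}; the case $s=0$ of~($\da$) is covered by~($\ba$) via Theorem~\ref{thm-A.WHB}(i).

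For $-t<s<0$ in case~($\da$) I would argue by duality. Working in local coordinate frames one checks that there is a complete contraction $\sC^\top$ on the dual bundles, together with a (conjugate linear) bundle map $a\mt a^*$ built from the Riesz isomorphism $h_{W_{\coW12},\flat}$ and a reshuffling of tensor indices, such that
\beq
\dl\sC(a,u),v\dr_V=\dl u,\sC^\top(a^*,v)\dr_V
\qa u\in\Ga\MVe,\ v\in\Ga\MVzs,
\eeq
and such that the triple $(V_2',V_1',\text{image bundle of }a^*)$ satisfies \Eqref{C.l12} with weight triple $(-\vec\om_2,-\vec\om_1,\vec\om_{12})$. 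Because $W_{12}$ is fully uniformly regular (Example~\ref{exa-U.ex}(f)) the map $a\mt a^*$ is a topological isomorphism onto the corresponding space over the dual bundles. Condition~\Eqref{C.su} combined with Theorem~\ref{thm-A.HBH}(ii) yields
\hb{\ci\gF_{p'}^{-s/\vec r,-\vec\om_i}=\gF_{p'}^{-s/\vec r,-\vec\om_i}} for $i=1,2$, so by definition~\Eqref{A.def1} and Corollary~\ref{cor-A.FF} the multiplication operator by $a^*$ on the positive-order side, already treated, has as its Banach-space adjoint exactly the asserted operator~$A$, with linear continuous dependence on $a$ preserved.

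Part~(ii) is a point-wise algebraic identity promoted by part~(i). The center contraction
\hb{\sC_{[\tau_2]}^{[\sa_2]}} is itself a bundle multiplication by Examples~\ref{exa-P.ex}(c)--(d) together with Lemma~\ref{lem-C.E}, and carrying out the definition in local frames yields
\beq
b\cdot(a\cdot u)=\sC_{[\tau_2]}^{[\sa_2]}(b,a)\cdot u
\qa u\in\Ga\MVe.
\eeq
Theorem~\ref{thm-P.M}(i) applied to \hb{\sC_{[\tau_2]}^{[\sa_2]}} gives
\hb{\sC_{[\tau_2]}^{[\sa_2]}(b,a)\in\cB^{t/\vec r,\vec\om_{13}}\JVed} with $\vec\om_{13}=\vec\om_{23}+\vec\om_{12}$; adding the weight relations gives $\lda_3=\lda_{13}+\lda_1$, and the index balance \Eqref{C.l12} for $(V_1,V_3,V_{13})$ is inherited from the two given ones, so part~(i) supplies the continuity asserted.

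The main obstacle I expect is the rigorous bookkeeping in the duality step: identifying $a^*$ explicitly as a section of the correct tensor--homomorphism bundle, tracking how transposition of indices interacts with the bundle metric $h_{12}$ (which involves the singular metric~$g$ through \Eqref{S.h}), and confirming the claimed norm equivalence
\hb{\|a^*\|_{\cB^{t/\vec r,-(-\vec\om_{12})}}\sim\|a\|_{\cB^{t/\vec r,\vec\om_{12}}}}, since the weight scaling on the dual side must match precisely to make the Banach-space adjoint produce $A$ rather than a weight-shifted variant. This bookkeeping rests on the full uniform regularity built into the standing hypotheses together with the local representation formulas of Section~\ref{sec-L}.
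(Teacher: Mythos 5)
Your reduction of parts $(\al)$, $(\ba)$, $(\ga)$ and the $0<s<t$ subcase of $(\da)$ to Theorem~\ref{thm-P.M} via Lemma~\ref{lem-C.CE}, together with the verification that \Eqref{C.l12} forces \Eqref{P.st}, is exactly the paper's step~(1), and the treatment of part~(ii) via the associativity identity
\hb{b\cdot(a\cdot u)=\sC_{[\tau_2]}^{[\sa_2]}(b,a)\cdot u}
plus a second application of Theorem~\ref{thm-P.M} is also the paper's argument. The duality strategy for $-t<s<0$ in case~$(\da)$ is again the right idea, but the specific object you propose is wrong, and this is the substantive gap.

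You work with the \emph{metric adjoint}~$a^*$, built from the Riesz isomorphism $h_{W_{12},\flat}$, and tentatively assert
\hb{\|a^*\|_{\cB^{t/\vec r,\vec\om_{12}}}\sim\|a\|_{\cB^{t/\vec r,\vec\om_{12}}}}. That equivalence fails in general. The metric adjoint involves the singular bundle metric~$g$ through \Eqref{S.h}, and passing to it shifts the weight: the paper records precisely this in \Eqref{C.adu2}--\Eqref{C.lst}, where $a^*$ lands in $B_\iy^{t/\vec r,\vec\om_{21}^*}\JVze$ with \hb{\lda_{21}^*=\lda_{12}+\sa_{21}-\tau_{21}}, not~$\lda_{12}$, so your norm equivalence holds only if $\sa_{12}=\tau_{12}$. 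The duality pairing $\pw_V$ in \Eqref{A.def1} is the $\BK$-bilinear pairing, not the Hermitian form~$h$, so the object you actually need is not the metric adjoint but the pure index-transpose $a'$ introduced in step~(2) of the paper's proof (defined by the permutation $a'{}_{(i_1;j_2),\nu_1}^{(j_1;i_2),\nu_2}:=a_{(j_2;i_1),\nu_1}^{(i_2;j_1),\nu_2}$, with no Riesz isomorphism). Because $a'$ involves no $g$, it preserves the weight $\vec\om_{12}$, the map $a\mt a'$ is $\BK$-linear (as required for the assertion that $a\mt A$ is linear), and one has the clean identity $\dl v,a\cdot u\dr_{V_2}=\dl a'\cdot v,u\dr_{V_1}$, which yields $\sC(a')=\sC(a)'$ directly. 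You flag "weight bookkeeping" as an obstacle; the resolution is simply to replace $a^*$ by $a'$ throughout the duality step.

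A second, smaller gap: you claim the $s=0$ subcase of $(\da)$ is covered by $(\ba)$ via Theorem~\ref{thm-A.WHB}(i). That identification $H_p^{0/\vec r,\vec\om}\doteq W_{\coW p}^{0/\vec r,\vec\om}$ covers $\gF=H$, but $(\da)$ also includes $\gF=B$, and $B_p^{0/\vec r,\vec\om}$ is defined in \Eqref{A.Bdef} as an interpolation space, not a Sobolev space. The paper closes this case separately by interpolating between the already-proved results for $\pm s(p)$; you need to add that step.
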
 }
%=====================================================================
\begin{proof} 
(1) 
Suppose 
\hb{s\geq0} with 
\hb{s>0} if 
\hb{\gF=B}. Then, due to Lemma~\ref{lem-C.CE}, assertion~(i) is immediate 
from Theorem~\ref{thm-P.M}. 

\smallskip 
(2) 
Choose uniformly regular atlases 
\hb{\gK\slt\Phi_i}, 
\ \hb{i=1,2}, for~$W_{\coW i}$ over~$\gK$. Let 
\beq\Label{C.aloc1} 
a=a_{(j_{12}),\nu_1}^{(i_{12}),\nu_2}(t) 
\,\frac\pl{\pl x^{(i_{12})}} 
\otimes dx^{(j_{12})} 
\otimes b_{\nu_2}^2 
\otimes\ba_1^{\nu_1} 
\qa t\in J, 
\eeq 
be the local representation of~$a$ in the local coordinate frame 
for~$V_{12}$ over~$U_{\coU\ka}$ associated with 
\hb{\ka\slt\vp_{12}\in\gK\slt\Phi_{12}}, where 
\hb{(b_1^i,\ldots,b_{n_i}^i)} is the local coordinate frame 
for~$W_{\coW i}$  over~$U_{\coU\ka}$ associated with 
\hb{\ka\slt\Phi_i}, and 
\hb{(\ba_i^1,\ldots,\ba_i^{n_i})} is its dual frame 
(cf.~Example~\ref{exa-V.ex}(b) and \Eqref{L.bas}). Write 
\hb{(i_{k\ell})=(i_\ell;j_k)\in\BJ_{\sa_{k\ell}}} and 
\hb{(j_{k\ell})=(j_\ell;i_k)\in\BJ_{\tau_{k\ell}}} for 
\hb{k,\ell\in\{1,2\}} with 
\hb{k\neq\ell}, where 
\hb{(i_k)\in\BJ_{\sa_k}} and 
\hb{(j_k)\in\BJ_{\tau_k}}. 

\smallskip 
We define 
\hb{a'\in\Ga 
   \bigl(M,T_{\tau_{12}}^{\sa_{12}}\bigl(M,\Hom(W_2',W_1')\bigr)\bigr)^J} by 
$$ 
a'\vph{a}_{(i_1;j_2),\nu_1}^{(j_1;i_2),\nu_2}(t) 
\,\frac\pl{\pl x^{(j_1)}} 
\otimes\frac\pl{\pl x^{(i_2)}} 
\otimes dx^{(i_1)}\otimes dx^{(j_2)} 
\otimes\ba_1^{\nu_1} 
\otimes b_{\nu_2}^2 
\qa t\in J, 
\npb 
$$ 
where 
\hb{a'\vph{a}_{(i_1;j_2),\nu_1}^{(j_1;i_2),\nu_2}
   :=a_{(j_2;i_1),\nu_1}^{(i_2;j_1),\nu_2}}. 

\smallskip 
\smallskip 
It is obvious that 
\beq\Label{C.adu1} 
a'\in\cB^{t/\vec r,\vec\om_{12}}
   \bigl(J,T_{\tau_{12}}^{\sa_{12}}\bigl(M,\Hom(W_2',W_1')\bigr)\bigr), 
\eeq 
the map 
\hb{a\mt a'} is linear and continuous, and \hb{(a')'=a}. Furthermore, 
since 
\hb{V_{\coV i}'=T_{\sa_i}^{\tau_i}(M,W_{\coW i}')}, 
\beq\Label{C.vau} 
\dl v,a\cdot u\dr_{V_2}=\dl a'\cdot v,u\dr_{V_1} 
\qa (v,u)\in\Ga(M,V_2'\oplus V_1)^J. 
\eeq 

\smallskip 
(3) 
Suppose condition~($\da$) is satisfied and 
\hb{s<0}. It follows from \Eqref{C.l12}, step~(1), and \Eqref{C.adu1} 
\beq\Label{C.Cad} 
\sC(a'):=(v\mt a'\cdot v)  
\in\cL\bigl(\gF_{p'}^{-s/\vec r,-\vec\om_2}\JVzs, 
\gF_{p'}^{-s/\vec r,-\vec\om_1}\JVes\bigr). 
\eeq 
From Theorem~\ref{thm-A.HBH}(ii) and assumption~\Eqref{C.su} we infer 
$$ 
\gF_{p'}^{-s/\vec r,-\vec\om_i}\JVis 
=\ci\gF_{p'}^{-s/\vec r,-\vec\om_i}\JVis 
\qa i=1,2. 
$$ 
Thus we deduce from \Eqref{A.def1}, \,\Eqref{C.vau}, and \Eqref{C.Cad} that 
\hb{\sC(a')=\sC(a)'}. Hence, using 
\hb{(a')'=a}, we get the remaining part of assertion~(i), provided 
\hb{s\neq0} if 
\hb{\gF=B}. Now this gap is closed by interpolation. 

\smallskip 
(4) 
It is clear that 
\hb{\sC(b)\sC(a)\sco T_{\tau_1}^{\sa_1}\MVe\ra T_{\tau_3}^{\sa_3}\MVd} is 
given by 
\beq\Label{C.Cba} 
v\mt\sC\bigl(b,\sC(a,v)\bigr) 
=\sC\Bigl(\sC_{[\tau_2]}^{[\sa_2]}(b,a),v\Bigr) 
=\Bigl(v\mt\sC_{[\tau_2]}^{[\sa_2]}(b,a)v)\Bigr). 
\eeq 
Set 
\hb{\sfm=\sC_{[\tau_2]}^{[\sa_2]}} in Theorem~\ref{thm-P.M}. Also set 
\hb{V_0:=W_{23}}, 
\ \hb{V_1:=W_{12}}, and 
\hb{V_2:=W_{13}} in Lemma~\ref{lem-C.CE}. Then it follows from that lemma 
and Theorem~\ref{thm-P.M} that 
$$ 
\sC_{[\tau_2]}^{[\sa_2]}(b,a) 
\in\cB^{t/\vec r,(\lda_3-\lda_1,\mu)} 
\bigl(J,T_{\tau_3+\sa_1}^{\sa_3+\tau_1}\MWed\bigr).  
\npb 
$$ 
Thus claim~(ii) is a consequence of \Eqref{C.Cba} and assertion~(i). 
\end{proof} 
Next we study the invertibility of the linear map~$A$. We introduce 
the following definition: Suppose 
\hb{t>0} and 
\beq\Label{C.ast} 
a\in B_\iy^{t/\vec r,\vec\om_{11}}\JVee 
\qb \sa_{11}=\tau_{11}=\sa_1+\tau_1. 
\eeq 
Then $a$~ia said to be \emph{\hbox{$\lda_{11}$-uniformly} contraction 
invertible} if there exists 
\hb{a^{-1}\in\Ga\MVee^J} satisfying 
\beq\Label{C.Cin} 
a^{-1}\cdot(a\cdot u)=u 
\qb a\cdot(a^{-1}\cdot u)=u 
\qa u\in\Ga\MVe^J, 
\eeq 
and 
\beq\Label{C.Crho} 
\rho^{-\lda_{11}}\,|a^{-1}(t)|_{h_{11}}\leq c 
\qa t\in J. 
\eeq 
Note that the second part of \Eqref{C.ast} guarantees that the complete 
contractions in \Eqref{C.Cin} are well-defined. Also note that there 
exists at most one~$a^{-1}$ satisfying \Eqref{C.Cin}, the 
\emph{contraction inverse} of~$a$. For abbreviation, we put 
$$ 
B_{\iy,\inv}^{t/\vec r,\vec\om_{11}}\JVee 
:=\bigl\{\,a\in B_\iy^{t/\vec r,\vec\om_{11}}\JVee  
\ ;\ a\text{ is $\lda_{11}$-uniformly contraction invertible}\,\bigr\}. 
$$ 

\smallskip 
Let $\cX$ and~$\cY$ be Banach spaces and let $U$ be open in~$\cX$. Then 
\hb{f\sco U\ra\cY} is \emph{analytic} if each 
\hb{x_0\in U} has a neighborhood in which $f$~can be represented by a 
convergent series of continuous monomials. If $f$~is analytic, then $f$~is 
smooth and it can be locally represented by its Taylor series. If 
\hb{\BK=\BC}, and $f$~is (Fr\'echet) differentiable, then it is analytic. 
For this and further details we refer to E.~Hille and 
R.S.~Phillips~\cite{HiP57a}. 

\smallskip 
To simplify the presentation we restrict ourselves now to the most important 
cases in which 
\hb{\cB=B_\iy}. We leave it to the reader to carry out the obvious 
modifications in the following considerations needed to cover the remaining 
instances as well. 
%---------------------------------------------------------------------
\begin{proposition}\LabelT{pro-C.in} 
Suppose 
\hb{\sa_{11}=\tau_{11}=\sa_1+\tau_1}. Then 
$B_{\iy,\inv}^{t/\vec r,\vec\om_{11}}\JVee$ is open in 
$B_\iy^{t/\vec r,\vec\om_{11}}\JVee$. If 
\hb{a\in B_{\iy,\inv}^{t/\vec r,\vec\om_{11}}\JVee}, then 
\hb{a^{-1}\in B_{\iy,\inv}^{t/\vec r,(-\lda_{11},\mu)}\JVee}. The map 
$$ 
B_{\iy,\inv}^{t/\vec r,\vec\om_{11}}\JVee 
\ra B_\iy^{t/\vec r,(-\lda_{11},\mu)}\JVee 
\qb a\mt a^{-1} 
\npb 
$$ 
is analytic. 
\end{proposition}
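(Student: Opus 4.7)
The plan is to reduce the assertion to a uniform local claim on the model cylinders $\BY_\ka$ via the retraction theorem of Section~\ref{sec-H}, and then to invoke the Banach-algebra structure of anisotropic Besov-H\"older spaces from Theorem~\ref{thm-P.b}(i). The hypothesis $\sa_{11}=\tau_{11}=\sa_1+\tau_1$ says exactly that pointwise complete contraction by $a$ acts as a $\BK$-linear endomorphism of each fibre~$V_{1,p}$. Fixing a uniformly regular atlas $\gK\slt\Phi_{11}$ for~$W_{11}$, the push-forward $a_\ka:=(\vp_\iy^{\vec\om_{11}}a)_\ka$ is a function on~$\BY_\ka$ with values in~$\mathrm{End}\,E_{\tau_1}^{\sa_1}(F_1)$. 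Because $\tau_{11}=\sa_{11}$, the Riemannian weight factor $\rho_\ka^{\tau_{11}-\sa_{11}}$ in \Eqref{L.Nh} is trivial, whence $|a(p,t)|_{h_{11}}\sim\rho_\ka^{-\lda_{11}}|a_\ka(\ka(p),t)|$ uniformly in $\ka$ for $p\in U_{\coU\ka}$, and analogously for the fibrewise matrix inverse. Consequently, membership in $B_{\iy,\inv}^{t/\vec r,\vec\om_{11}}\JVee$ is equivalent to $(a_\ka)\in\ell_\iy(\mf B_\iy^{t/\vec r})$ together with pointwise invertibility of each $a_\ka(x,t)$ and $\sup_\ka\|a_\ka^{-1}\|_\iy<\iy$; the contraction inverse of~$a$ is then represented in charts by the fibrewise matrix inverses of the~$a_\ka$.

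For a fixed $a_0\in B_{\iy,\inv}^{t/\vec r,\vec\om_{11}}\JVee$ I would next establish that $a_{0,\ka}^{-1}$ lies in $B_\iy^{t/\vec r}(\BY_\ka,\mathrm{End})$ with norm bound uniform in~$\ka$. Writing $a_{0,\ka}^{-1}=\mathrm{adj}(a_{0,\ka})/\det(a_{0,\ka})$, the Banach-algebra property of Theorem~\ref{thm-P.b}(i) places the polynomials $\mathrm{adj}(a_{0,\ka})$ and $\det(a_{0,\ka})$ in the target space with bounds uniform in~$\ka$. The lower bound $|\det a_{0,\ka}|\geq\da>0$ is uniform in~$\ka$ because it is controlled fibrewise by $\|a_0^{-1}\|_\iy$ via the identifications of the preceding paragraph. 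Composition with the scalar analytic map $z\mapsto 1/z$ on a fixed compact subset of $\BK\setminus\{0\}$ preserves $B_\iy^{t/\vec r}$-regularity with a uniform constant, which is verified by an intrinsic seminorm computation based on the characterization in Theorem~\ref{thm-PH.BJ}(i).

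The argument is then closed by a simultaneous Neumann-series expansion. For $b\in B_\iy^{t/\vec r,\vec\om_{11}}\JVee$ of sufficiently small norm, the series
\begin{equation*}
(a_0+b)_\ka^{-1}=\sum_{n=0}^{\iy}(-a_{0,\ka}^{-1}b_\ka)^n\,a_{0,\ka}^{-1}
\end{equation*}
converges in $B_\iy^{t/\vec r}(\BY_\ka,\mathrm{End})$ uniformly in $\ka$ by the Banach-algebra estimate. The resulting family lies in $\ell_\iy(\mf B_\iy^{t/\vec r})$, so applying the retraction $\psi_\iy^{-\vec\om_{11}}$ of Theorem~\ref{thm-H.R} produces a section in $B_\iy^{t/\vec r,(-\lda_{11},\mu)}\JVee$, which a fibrewise check (using the partition $\sum_\ka\pi_\ka^2=\mf 1$ and the uniqueness of matrix inverses) identifies with the contraction inverse of $a_0+b$. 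This gives the openness of $B_{\iy,\inv}^{t/\vec r,\vec\om_{11}}\JVee$ and simultaneously exhibits $b\mapsto(a_0+b)^{-1}$ as a convergent power series in $b$ with continuous multilinear coefficients, hence as an analytic map into $B_\iy^{t/\vec r,(-\lda_{11},\mu)}\JVee$.

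The main obstacle will be the $\ka$-uniform control of the $B_\iy^{t/\vec r}$-norm of $a_{0,\ka}^{-1}$ starting only from the fibrewise bound $\rho^{-\lda_{11}}|a_0^{-1}|_{h_{11}}\leq c$; most delicately, the step passing from a fibrewise lower bound on $\det(a_{0,\ka})$ to a $B_\iy^{t/\vec r}$-bound on $1/\det(a_{0,\ka})$ requires a composition-with-analytic-functions lemma for anisotropic Besov-H\"older spaces, with constants depending only on the range of the argument and on $\|a_0\|_{B_\iy^{t/\vec r,\vec\om_{11}}}$, and not on the chart. With this estimate in hand, the retraction theorem and the Banach-algebra property reduce the entire proposition to routine manipulations.
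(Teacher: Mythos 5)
Your overall strategy matches the paper's: reduce to the model cylinders via the retraction theorem, invert locally with the adjugate/determinant formula and a Banach-algebra estimate, run a simultaneous Neumann series to get openness and analyticity, and push the local inverses back through $\psi_\iy$. However, there is a genuine gap in your first reduction step.

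You set $a_\ka:=(\vp_\iy^{\vec\om_{11}}a)_\ka$, which by definition equals $\rho_\ka^{\lda_{11}}\Ta_{\iy,\ka}^\mu(\ka\slt\vp_{11})_*(\pi_\ka a)$; the cutoff $\pi_\ka$ is built in. Consequently $a_\ka(x,t)$ vanishes identically outside $\supp(\ka_*\pi_\ka)$, so it is never a pointwise-invertible endomorphism-valued function on $\BY_\ka$. Your claimed equivalence between contraction invertibility of $a$ and ``$(a_\ka)\in\ell_\iy(\mf{B}_\iy^{t/\vec r})$ with each $a_\ka(x,t)$ invertible and $\sup_\ka\|a_\ka^{-1}\|_\iy<\iy$'' is therefore false as stated, and the Neumann series for $(a_0+b)_\ka^{-1}$ cannot converge in $B_\iy^{t/\vec r}(\BY_\ka,\cdot)$ because the object to be inverted is degenerate off the support of the cutoff. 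What the paper introduces precisely to bridge this is the uncutoff chart push-forward $\chi_\ka^{\vec\om_{11}}a:=\rho_\ka^{\lda_{11}}\Ta_{\iy,\ka}^\mu(\ka\slt\vp_{11})_*a$ (without $\pi_\ka$): this local representative $b_\ka$ \emph{is} pointwise invertible, with $b_\ka^{-1}=\chi_\ka^{(-\lda_{11},\mu)}a^{-1}$, and its $B_\iy^{t/\vec r}$-norm is controlled uniformly in $\ka$ only after writing $b_\ka=\sum_{\wt\ka\in\gN(\ka)}S_{\wt\ka\ka}(\wt\ka_*\pi_{\wt\ka})R_{\wt\ka\ka}\vp_{\iy,\wt\ka}^{\vec\om_{11}}a$ (the identity \Eqref{C.S}) and invoking Lemma~\ref{lem-H.S}, Lemma~\ref{lem-P.T}, and the finite multiplicity of $\gK$. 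After inverting $b_\ka$ locally one multiplies by $\ka_*\pi_\ka$ to recover $\vp_{\iy,\ka}^{(-\lda_{11},\mu)}a^{-1}=(\ka_*\pi_\ka)b_\ka^{-1}$, which is the quantity $\psi_\iy^{(-\lda_{11},\mu)}$ acts on. Your argument is missing both the replacement of $\vp_\ka$ by $\chi_\ka$ and the re-insertion of $\ka_*\pi_\ka$ afterwards; once these are inserted, together with the uniform control of $\|b_\ka\|$ via \Eqref{C.S}, the remainder of your outline (adjugate/determinant, composition with $z\mt 1/z$ via the chain rule, Neumann series, retraction, power-series analyticity) is essentially the paper's proof.
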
 
%=====================================================================
\begin{proof} 
(1) 
Without loss of generality we let 
\hb{F_1=\BK^n} and set 
\hb{\sa:=\sa_{11}}. Note that 
\hb{E:=\cL(\BK^n)^{m^\sa\times m^\sa}} is a Banach algebra with 
unit of dimension 
\hb{N^2:=(nm^\sa)^2}. It is obvious that we can fix an algebra isomorphism 
from~$E$ onto~$\BK^{N\times N}$ by which we identify~$E$ 
with~$\BK^{N\times N}$. 

\smallskip 
For 
\hb{b\in\BK^{N\times N}} we denote by~$b^\nat$ the 
\hb{(N\times N)}-matrix of cofactors of~$b$. Thus 
\hb{b^\nat=[b_{ij}^\nat]} with 
\beq\Label{C.bij} 
b_{ij}^\nat:=\det[b_1,\ldots,b_{i-1},e_j,b_{i+1},\ldots,b_N], 
\eeq 
where $b_1,\ldots,b_N$ are the columns of~$b$ and $e_j$~is the \hbox{$j$-th} 
standard basis vector of~$\BK^N$. Then, if $b$~is invertible, 
\beq\Label{C.bin} 
b^{-1}=\bigl(\det(b)\bigr)^{-1}b^\nat. 
\eeq 

\smallskip 
(2) 
Suppose either 
\hb{X:=(Q^m,g_m)} or 
\hb{X:=(Q^m\cap\BH^m,g_m)}, and 
\hb{Y=X\times J}. Set 
\beq\Label{C.XY} 
\cX^{t/\vec r}\YE 
:=B\bigl(J,B_\iy^t\XE\bigr)\cap B_\iy^{t/r}\bigl(J,B\XE\bigr), 
\eeq 
where $B_\iy^t\XE$ is obtained from $B_\iy^t\RmE$ by restriction, of 
course. Note 
\beq\Label{C.XBC} 
\cX^{t/\vec r}\YE\hr B_\iy\YE. 
\eeq 
It follows from Theorem~\ref{thm-P.M} that $\cX^{t/\vec r}\YE$ is a Banach 
algebra with respect to the point-wise extension of the (matrix) product 
of~$E$. 

\smallskip 
Assume 
\hb{b\in\cX^{t/\vec r}\YE} and $b(y)$~is invertible for 
\hb{y\in Y} such that 
\beq\Label{C.bi0} 
|b^{-1}(y)|_E\leq c_0 
\qa y\in Y. 
\eeq 
Then the spectrum~$\sa\bigl(b(y)\bigr)$ of~$b(y)$ is bounded and has a 
positive distance from 
\hb{0\in\BC}, uniformly with respect to 
\hb{y\in Y}. Hence 
\beq\Label{C.det} 
1/c(c_0)\leq\big|\det\big(b(y)\big)\big|\leq c(c_0) 
\qa y\in Y, 
\eeq 
due to the fact that $\det\big(b(y)\big)$ can be represented as the product 
of the eigenvalues of~$b(y)$, counted with multiplicities. 

\smallskip 
Since $\det\big(b(y)\big)$ is a polynomial in the entries of~$b(y)$ and 
\hb{\cX^{t/\vec r}(Y):=\cX^{t/\vec r}\YBK} is a multiplication algebra we 
infer 
\beq\Label{C.detr} 
\det(b)\in\cX^{t/\vec r}(Y). 
\eeq 
Using the chain rule if 
\hb{t\geq1} (cf.~Lemma~1.4.2 of~\cite{Ama09a}), we get 
\hb{\big(\det(b)\big)^{-1}\in\cX^{t/\vec r}(Y)} from \Eqref{C.det} 
and \Eqref{C.detr}. Now we deduce from 
\Eqref{C.bij}, \,\Eqref{C.bin}, and the fact that $\cX^{t/\vec r}(Y)$ is a 
multiplication algebra, that 
$$ 
b^{-1}\in\cX^{t/\vec r}\YE 
\qb \|b^{-1}\|_{\cX^{t/\vec r}\YE}\leq c(c_0), 
\npb 
$$ 
whenever 
\hb{b\in\cX^{t/\vec r}\YE} satisfies \Eqref{C.bi0}. 

\smallskip 
By \Eqref{C.XBC} it is obvious that the set of all invertible elements of 
$\cX^{t/\vec r}\YE$ satisfying \Eqref{C.bi0} for some 
\hb{c_0=c_0(b)\geq1} is open in $\cX^{t/\vec r}\YE$. 

\smallskip 
(3) 
Assume 
\hb{\gK\slt\Phi_1} is a uniformly regular atlas for~$W_1$ over~$\gK$. Given 
\hb{\ka\slt\vp_1\in\gK\slt\Phi_1}, put 
$$ 
\chi_\ka^{\vec\om_1}v 
:=\rho_\ka^{\lda_1}\Ta_{\iy,\ka}^\mu(\ka\slt\vp_1)_*v 
\qb \chi_\ka^{\vec\om_{11}}a
:=\rho_\ka^{\lda_{11}}\Ta_{\iy,\ka}^\mu(\ka\slt\vp_{11})_*a  
\npb 
$$ 
for 
\hb{v\in\Ga\MVe^J} and 
\hb{a\in\Ga\MVee^J}, respectively, and 
\hb{Y_\ka:=Q_\ka^m\times J}. 

\smallskip 
Suppose 
\hb{a\in B_{\iy,\inv}^{t/\vec r,\vec\om_{11}}\JVee}. Then we deduce from 
\Eqref{C.Cin} (see Example~\ref{exa-U.ex}(f)) and
\beq\Label{C.thk} 
\chi_\ka^{\vec\om_1}v 
=\chi_\ka^{\vec\om_1}\bigl(a^{-1}\cdot(a\cdot v)\bigr) 
=(\chi_\ka^{(-\lda_{11},\mu)}a^{-1}) 
(\chi_\ka^{\vec\om_{11}}a)\chi_\ka^{\vec\om_1}v 
\eeq 
for 
\hb{\ka\slt\vp_1\in\gK\slt\Phi_1} and 
\hb{v\in\Ga(U_{\coU\ka},V_1)^J}. Note that $\chi_\ka^{\vec\om_1}$~is a 
bijection from  $\Ga(U_{\coU\ka},V_1)^J$ onto 
$(E_{\tau_1}^{\sa_1})^{Y_\ka}$. Thus it follows from \Eqref{C.thk} that 
$\chi_\ka^{(-\lda_{11},\mu)}a^{-1}$ is a left inverse for 
$\chi_\ka^{\vec\om_{11}}a$ in~$B_\iy\YkE$. Similarly, we see that it is also 
a right inverse. Hence 
\hb{b_\ka:=\chi_\ka^{\vec\om_{11}}a} is invertible in $B_\iy\YkE$ and 
\beq\Label{C.bia} 
b_\ka^{-1}=\chi_\ka^{(-\lda_{11},\mu)}a^{-1}. 
\eeq 

\smallskip 
We infer from \Eqref{S.sd}(iv), \,\Eqref{L.Nh}, \,\Eqref{U.h}, 
\,\Eqref{U.h12}, \,\Eqref{C.ast}, and \Eqref{C.Crho} that 
\beq\Label{C.rtc} 
|b_\ka^{-1}|_E 
\leq c\,\Ta_{\iy,\ka}^\mu\ka_*(\rho^{-\lda_{11}}\ |a^{-1}|_{h_{11}})\leq c 
\qa \ka\slt\vp_1\in\gK\slt\Phi_1. 
\eeq 
Recalling \Eqref{S.err}, \,\Eqref{R.S}, and \Eqref{P.thta} we find 
\beq\Label{C.S} 
b_\ka 
=\chi_\ka^{\vec\om_{11}} 
 \Bigl(\sum_{\wt{\ka}\in\gN(\ka)}\pi_{\wt{\ka}}^2a\Bigr)  
=\sum_{\wt{\ka}\in\gN(\ka)}S_{\wt{\ka}\ka}(\wt{\ka}_*\pi_{\wt{\ka}}) 
 R_{\wt{\ka}\ka}\vp_{\iy,\wt{\ka}}^{\vec\om_{11}}a. 
\eeq 
Since 
\hb{a\in B_\iy^{t/\vec r,\vec\om_{11}}\JVee} implies 
\hb{\vp_\iy^{\vec\om_{11}}a\in\ell_\iy(\mf{B}_\iy^{t/\vec r})} we deduce 
from \Eqref{C.S}, \,\Eqref{R.LS}(iii), Lemmas \ref{lem-H.S} 
and \ref{lem-P.T}, Theorem~\ref{thm-P.M}, and definition~\Eqref{C.XY} 
\beq\Label{C.bbd} 
\|b_\ka\|_{\cX^{t/\vec r}\YkE}\leq c\,\|a\|_{t/\vec r,\iy;\vec\om_{11}}  
\qa \ka\slt\vp_1\in\gK\slt\Phi_1. 
\eeq 

\smallskip 
Set 
\hb{a_\ka:=\rho_\ka^{-\lda_{11}}b_\ka}. Then it follows from \Eqref{C.rtc} 
and \Eqref{C.bbd} that 
\beq\Label{C.bib} 
\rho_\ka^{-\lda_{11}}a_\ka^{-1}\in\cX^{t/\vec r}\YkE 
\qb \|\rho_\ka^{-\lda_{11}}a_\ka^{-1}\|_{\cX^{t/\vec r}\YkE}\leq c 
\qa \ka\slt\vp_1\in\gK\slt\Phi_1. 
\eeq 
Employing \Eqref{R.LS}(iii) and Theorem~\ref{thm-P.M} once more we 
derive from \Eqref{C.bib} 
\beq\Label{C.php} 
\vp_{\iy,\ka}^{(-\lda_{11},\mu)}a^{-1} 
=\chi_\ka^{(-\lda_{11},\mu)}(\pi_\ka a^{-1}) 
=(\ka_*\pi_\ka)b_\ka^{-1} 
\in B_\iy^{t/\vec r}\YkE 
=B_{\iy,\ka}^{t/\vec r}  
\eeq 
and 
\hb{\vp_\iy^{(-\lda_{11},\mu)}a^{-1} 
   \in\ell_\iy(\mf{B}_\iy^{t/\vec r})}. Hence 
Theorem~\ref{thm-H.R} implies 
\beq\Label{C.aps} 
a^{-1}=\psi_\iy^{(-\lda_{11},\mu)}(\vp_\iy^{(-\lda_{11},\mu)}a^{-1}) 
\in B_\iy^{t/\vec r,(-\lda_{11},\mu)}\JVee. 
\eeq 

\smallskip 
(4) 
Let $\cX$ be a Banach algebra with unit~$e$. Denote by~$\cG$ the 
group of invertible elements of~$\cX$. For 
\hb{b_0\in\cX} and 
\hb{\da>0} let $\cX(b_0,\da)$ be the open ball in~$\cX$ of radius~$\da$, 
centered at~$b_0$. Suppose 
\hb{b_0\in\cG}. Then 
\hb{b=b_0-(b_0-b)=\bigl(e-(b_0-b)b_0^{-1}\bigr)b_0} and 
$$ 
\|(b_0-b)b_0^{-1}\|\leq\|b_0-b\|\,\|b_0^{-1}\|<1/2 
\qa b\in\cX(b_0,\|b_0^{-1}\|/2), 
$$ 
imply that 
\hb{b\in\cX(b_0,\|b_0^{-1}\|/2)} is invertible and 
\beq\Label{C.bNS} 
b^{-1}=b_0^{-1}\bigl(e-(b_0-b)b_0^{-1}\bigr)^{-1} 
=b_0^{-1}\sum_{i=0}^\iy\bigl((b_0-b)b_0^{-1}\bigr)^i. 
\eeq 
In fact, this Neumann series has the convergent majorant~$\sum_i2^{-i}$. 
Note that 
\hb{p_i(x):=(-1)^ib_0^{-1}(xb_0^{-1})^i} is a continuous homogenous 
polynomial in 
\hb{x\in\cX}. Hence it follows from \Eqref{C.bNS} 
$$ 
b^{-1}=\sum_{i=0}^\iy p_i(b-b_0) 
\qa b\in\cX(b_0,\|b_0^{-1}\|/2), 
$$ 
and this series converges uniformly on $\cX(b_0,\|b_0^{-1}\|/2)$. Thus 
$\cG$~is open and the inversion map 
\hb{\inv\sco\cG\ra\cX}, 
\ \hb{b\mt b^{-1}} is analytic. 

\smallskip 
(5) 
We set 
\hb{\cX:=B\bigl(J,B\MVee\bigr)} and define a multiplication by 
\hb{(a,b)\mt\sC_{[\tau_1]}^{[\sa_1]}(a,b)}. Then $\cX$~is a Banach 
algebra with unit 
\hb{e:=\bigl((p,t)\mt\id_{\cL((V_1)_p)})}. 

\smallskip 
Consider the continuous linear map 
$$ 
f\sco B_\iy^{t/\vec r,\vec\om_{11}}\JVee\ra\cX 
\qb a\mt\rho^{\lda_{11}}a. 
$$ 
Then 
\hb{G:=f^{-1}(\cG)} is open in $B_\iy^{t/\vec r,\vec\om_{11}}\JVee$. 
Consequently, 
$$ 
f_0:=\inv\circ(f\sn G)\sco G\ra\cX 
\qb a\mt(\rho^{\lda_{11}}a)^{-1} 
$$ 
is continuous (in fact, analytic) by step~(4). Note that 
\hb{a^{-1}=\rho^{\lda_{11}}f_0(a)} is the contraction inverse of~$a$. 
Furthermore, 
\hb{f_0(a)\in\cX} implies 
$$ 
\rho^{-\lda_{11}}\,|a^{-1}(t)|_{h_{11}}=|f_0(a)(t)|_{h_{11}}\leq c 
\qa t\in J. 
$$ 
Hence each 
\hb{a\in G} is \hbox{$\lda_{11}$-uniformly} contraction invertible. 
Conversely, if 
\hb{a\in B_\iy^{t/\vec r,\vec\om_{11}}\JVee} is \hbox{$\lda_{11}$-uniformly} 
contraction invertible, then $a$~belongs to~$G$. Thus 
\hb{G=B_{\iy,\inv}^{t/\vec r,\vec\om_{11}}\JVee} which shows that 
$B_{\iy,\inv}^{t/\vec r,\vec\om_{11}}\JVee$ is open. 

\smallskip 
(6) 
We denote by~$\cG_\ka$ the group of invertible elements 
of~$B_{\iy,\ka}^{t/\vec r}$. Suppose 
\hb{a_0\in G}. Then step~(3) (see \Eqref{C.bbd} and \Eqref{C.bib}) 
guarantees that 
\hb{b_{0,\ka}:=\chi_\ka^{\vec\om_{11}}a_0\in\cG_\ka} and 
$$ 
\|b_{0,\ka}\|_{B_{\iy,\ka}^{t/\vec r}} 
+\|b_{0,\ka}^{-1}\|_{B_{\iy,\ka}^{t/\vec r}}\leq c 
\qa \ka\slt\vp_1\in\gK\slt\Phi_1. 
$$ 
Hence we infer from step~(4) that there exists 
\hb{\da>0} such that the open 
ball~$B_{\iy,\ka}^{t/\vec r}(b_{0,\ka},\da)$ belongs to~$\cG_\ka$ for 
\hb{\ka\in\gK} and the inversion map 
\hb{\inv_\ka\sco\cG_\ka\ra B_{\iy,\ka}^{t/\vec r}} is analytic on 
$B_{\iy,\ka}^{t/\vec r}(b_{0,\ka},\da)$, uniformly with respect to 
\hb{\ka\in\gK} in the sense that the series 
$$ 
\sum_ib_{0,\ka}^{-1}\bigl((b_{0,\ka}-b_\ka)b_{0,\ka}^{-1}\bigr)^i 
\npb 
$$ 
converges in~$B_{\iy,\ka}^{t/\vec r}$, uniformly with respect to 
\hb{b_\ka\in B_{\iy,\ka}^{t/\vec r}(b_{0,\ka},\da)} and 
\hb{\ka\in\gK}. 

\smallskip 
Note that 
$$ 
\mf{B}_\iy^{t/\vec r}(\mf{b}_0,\da) 
:=\prod_\ka B_{\iy,\ka}^{t/\vec r}(b_{0,\ka},\da)
$$ 
is open in~$\ell_\iy(\mf{B}_\iy^{t/\vec r})$. The above 
considerations show that 
\beq\Label{C.inv} 
\mf{\inv}\sco\mf{B}_\iy^{t/\vec r}(\mf{b}_0,\da) 
\ra\ell_\iy(\mf{B}_\iy^{t/\vec r}) 
\qb \mf{b}\mt\bigl(\inv_\ka(b_\ka)\bigr) 
\eeq 
is analytic. It follows from \Eqref{C.bbd} that the linear map 
\beq\Label{C.chi} 
\chi^{\vec\om_{11}} 
\sco B_\iy^{t/\vec r,\vec\om_{11}}\JVee\ra\ell_\iy(\mf{B}_\iy^{t/\vec r}) 
\qb v\mt(\chi_\ka^{\vec\om_{11}}v) 
\eeq 
is continuous. Hence 
\hb{G_0:=(\chi^{\vec\om_{11}})^{-1} 
   \bigl(\mf{B}_\iy^{t/\vec r}(\mf{b}_0,\da)\bigr)\cap G} 
is an open neighborhood of~$a_0$ in~$G$. It is a consequence of 
\Eqref{C.inv} and \Eqref{C.chi} that 
\hb{\mf{\inv}\circ\chi^{\vec\om_{11}}} is an analytic map from~$G_0$ into 
$\ell_\iy(\mf{B}_\iy^{t/\vec r})$.  

\smallskip 
Consider the point-wise multiplication operator 
$$ 
\mf{\pi}\sco\ell_\iy(\mf{B}_\iy^{t/\vec r}) 
\ra \ell_\iy(\mf{B}_\iy^{t/\vec r}) 
\qb \mf{b}\mt\bigl((\ka_*\pi_\ka)b_\ka\bigr). 
\npb 
$$ 
It follows from \Eqref{R.LS} and Theorem~\ref{thm-P.M} that it is a 
well-defined continuous linear map. 

\smallskip 
If 
\hb{a\in G_0}, then we know from \Eqref{C.bia} and \Eqref{C.bbd} that 
$$ 
\mf{\inv}\circ\chi^{\vec\om_{11}}(a) 
=(\chi_\ka^{(-\lda_{11},\mu)}a^{-1}\bigr) 
\in\ell_\iy(\mf{B}_\iy^{t/\vec r}). 
$$ 
Hence we see by \Eqref{C.php} and \Eqref{C.aps} that 
\hb{a^{-1}=\psi_\iy^{(-\lda_{11},\mu)}\circ\mf{\pi}\circ\mf{\inv} 
   \circ\chi^{\vec\om_{11}}a}. Thus 
$$ 
(a\mt a^{-1}) 
=\psi_\iy^{(-\lda_{11},\mu)}\circ\mf{\pi}\circ\mf{\inv} 
\circ\chi^{\vec\om_{11}} 
\sco G_0\ra B_\iy^{t/\vec r,(-\lda_{11},\mu)}\JVee 
\npb 
$$ 
is analytic, being a composition of analytic maps. This proves the 
proposition.
\end{proof} 
Henceforth, we set 
\hb{\gF_\iy:=B_\iy} so that $\gF_q$~is defined for 
\hb{1<q\leq\iy}. 
%---------------------------------------------------------------------
\begin{theorem}\LabelT{thm-C.in} 
Suppose 
\hb{1<q\leq\iy} and 
$$ 
\bal 
t>0\text{ and  $s$ satisfies \Eqref{C.su} with 
$|s|<t$ if\/ $q=p$, and $s=t$ if\/ }q=\iy. 
\eal 
$$ 
Assume 
\hb{\sa_{11}=\tau_{11}=\sa_1+\tau_1} and 
\hb{\lda_2=\lda_{11}+\lda_1}. If 
\hb{a\in B_{\iy,\inv}^{t/\vec r,\vec\om_{11}}\JVee}, then 
\beq\Label{C.A} 
A=\sC(a) 
\in\Lis\bigl(\gF_q^{s/\vec r,\vec\om_1}\JVe, 
\gF_q^{s/\vec r,\vec\om_2}\JVe\bigr) 
\npb 
\eeq 
and 
\hb{A^{-1}=\sC(a^{-1})}. The map 
\hb{a\mt A^{-1}} is analytic. 
\end{theorem}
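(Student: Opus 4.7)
The plan is to assemble Theorem~\ref{thm-C.in} as a direct corollary of Theorem~\ref{thm-C.CC} (continuity of contraction operators), Theorem~\ref{thm-C.CC}(ii) (the composition-of-contractions formula), and Proposition~\ref{pro-C.in} (openness and analyticity of the inversion map). First I would check that the weight/index hypotheses are compatible with Theorem~\ref{thm-C.CC}(i): since $\sa_{11}=\tau_{11}=\sa_1+\tau_1$, setting $\vec\om_{12}:=\vec\om_{11}$, we indeed obtain $\sa_2=\sa_{11}-\tau_1=\sa_1$ and $\tau_2=\tau_{11}-\sa_1=\tau_1$, together with the weight relation $\lda_2=\lda_{11}+\lda_1$. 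Thus Theorem~\ref{thm-C.CC}(i)---applied in case~($\ga$) when $s=t$ and $q=\iy$, and in case~($\da$) when $|s|<t$ and $q=p$---yields
$$
A=\sC(a)\in\cL\bigl(\gF_q^{s/\vec r,\vec\om_1}\JVe,\gF_q^{s/\vec r,\vec\om_2}\JVe\bigr).
$$

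Next, by Proposition~\ref{pro-C.in} the contraction inverse $a^{-1}$ lies in $B_\iy^{t/\vec r,(-\lda_{11},\mu)}\JVee$. Applying Theorem~\ref{thm-C.CC}(i) once more, now with coefficient tensor $a^{-1}$, domain weight~$\vec\om_2$, and target weight $\vec\om_2+(-\lda_{11},\mu)=\vec\om_1$, I obtain
$$
A^\natural:=\sC(a^{-1})\in\cL\bigl(\gF_q^{s/\vec r,\vec\om_2}\JVe,\gF_q^{s/\vec r,\vec\om_1}\JVe\bigr).
$$
To identify $A^\natural$ as a two-sided inverse of~$A$, I would invoke Theorem~\ref{thm-C.CC}(ii) to compute
$$
A^\natural\circ A=\Bigl(u\mt\sC_{[\tau_1]}^{[\sa_1]}(a^{-1},a)\cdot u\Bigr),
\qquad A\circ A^\natural=\Bigl(u\mt\sC_{[\tau_1]}^{[\sa_1]}(a,a^{-1})\cdot u\Bigr).
$$
The defining relations \Eqref{C.Cin} of the contraction inverse say precisely that both of these point\-wise center contractions act as the identity on every $u\in\Ga\MVe^J$, hence equal the identity section of $\Hom(V_1,V_1)$. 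This gives $A^\natural\circ A=\id$ and $A\circ A^\natural=\id$, establishing \Eqref{C.A} with $A^{-1}=\sC(a^{-1})$.

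Finally, for the analyticity of $a\mt A^{-1}$, I would factor this map as the composition
$$
a\ \mt\ a^{-1}\ \mt\ \sC(a^{-1}).
$$
The first arrow is analytic from $B_{\iy,\inv}^{t/\vec r,\vec\om_{11}}\JVee$ into $B_\iy^{t/\vec r,(-\lda_{11},\mu)}\JVee$ by Proposition~\ref{pro-C.in}, while the second arrow is linear and continuous (hence analytic) into $\cL\bigl(\gF_q^{s/\vec r,\vec\om_2}\JVe,\gF_q^{s/\vec r,\vec\om_1}\JVe\bigr)$ by the final clause of Theorem~\ref{thm-C.CC}(i). The composition of analytic maps between Banach spaces is analytic, completing the argument. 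The only genuine step that is not bookkeeping is verifying that the point-wise identities in \Eqref{C.Cin} transfer to operator identities on the function spaces $\gF_q^{s/\vec r,\vec\om_i}\JVe$; but this is immediate because $A$ and $A^\natural$ are defined as point-wise contraction operators and the spaces embed continuously into $L_{1,\loc}$, so that equality of sections on $M\times J$ forces equality in the ambient space.
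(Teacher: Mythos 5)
Your proposal is correct and follows essentially the same route as the paper's (very terse) proof: the paper also reduces \Eqref{C.A} and the analyticity of $a\mt\sC(a^{-1})$ to Theorem~\ref{thm-C.CC}(i) together with Proposition~\ref{pro-C.in}, and identifies $A^{-1}=\sC(a^{-1})$ via Theorem~\ref{thm-C.CC}(ii). You have merely spelled out the bookkeeping (the identification $\vec\om_{12}:=\vec\om_{11}$, the choice of case ($\ga$) vs.\ ($\da$), and the pointwise-to-operator upgrade) that the paper leaves implicit.
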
 
%=====================================================================
\begin{proof} 
It follows from Theorem~\ref{thm-C.CC}(i) and Proposition~\ref{pro-C.in} 
that \Eqref{C.A} applies and 
\hb{a\mt\sC(a^{-1})} is analytic. Part~(ii) of that theorem implies 
\hb{A^{-1}=\sC(a^{-1})}. 
\end{proof} 
Next we study the problem of the right invertibility of the operator~$A$ 
of Theorem~\ref{thm-C.CC}. This is of particular importance in 
connection with boundary value problems. First we need some preparation. 

\smallskip 
We assume 
\beq\Label{C.stst} 
\sa_{12}=\sa_2+\tau_1 
\qb \tau_{12}=\tau_2+\sa_1 
\qb \sa_{21}=\tau_{12} 
\qb \tau_{21}=\sa_{12}. 
\eeq 
Then, given 
\hb{a\in\Ga\MVez^J}, there exists a unique 
\hb{a^*\in\Ga\MVze^J}, the \emph{complete contraction adjoint of}~$a$, 
such that 
\beq\Label{C.hh} 
h_2(a\cdot u,v)=h_1(u,a^*\cdot v) 
\qa (u,v)\in\Ga(M,V_1\oplus V_2)^J. 
\eeq 
Indeed, recalling \Eqref{C.aloc1} set 
\beq\Label{C.adu2} 
(a^*)_{(j_{21}),\nu_2} 
     ^{(i_{21}),\nu_1} 
:=g_{(j_1)(\wt{\jmath}_1)} 
   ^{(i_1)(\wt{\imath}_1)}  
\,h_{W_1}^{*\nu_1\wt{\nu}_1} 
\,\ol{a_{(\wt{\jmath}_2;\wt{\imath}_1),\wt{\nu}_1} 
       ^{(\wt{\imath}_2;\wt{\jmath}_1),\wt{\nu}_2}} 
\,g_{(\wt{\imath}_2)(i_2)} 
   ^{(\wt{\jmath}_2)(j_2)}   
\,\ol{h_{W_2,\wt{\nu}_2\nu_2}}.  
\eeq 
Then it follows from \Eqref{V.Aad} and 
\hb{h_{k\ell}=\prsn_{\sa_{k\ell}}^{\tau_{k\ell}}\otimes h_{W_{k\ell}}} that 
\beq\Label{C.aduk} 
(a^*)_{(j_{21}),\nu_2} 
     ^{(i_{21}),\nu_1} 
\,\frac\pl{\pl x^{(i_{21})}} 
\otimes dx^{(j_{21})} 
\otimes b_{\nu_1}^1 
\otimes\ba_2^{\nu_2} 
\eeq 
is the local representation of~$a^*$ over~$U_{\coU\ka}$ with respect to the 
coordinate frame for~$V_{21}$ over~$U_{\coU\ka}$ associated with 
\hb{\ka\slt\vp_{21}}, 

\smallskip 
We set 
\beq\Label{C.lst} 
\lda_{21}^*:=\lda_{12}+\sa_{21}-\tau_{21} 
\qb \vec\om_{21}^*:=(\lda_{21}^*,\mu) 
\eeq 
and suppose 
\hb{a\in B_\iy^{t/\vec r,\vec\om_{12}}\JVez}. Then it is a consequence of 
\Eqref{U.h}, \,\Eqref{L.kgg}, \,\Eqref{L.kgk}, \,\Eqref{C.adu2}, and 
\Eqref{C.aduk} that 
$$ 
\|\vp_{\iy,\ka}^{\vec\om_{21}^*}a^*\| 
_{B_\iy^{t/\vec r}(\BY_\ka,E_{\tau_{21}}^{\sa_{21}})} 
\sim 
\|\vp_{\iy,\ka}^{\vec\om_{12}}a\| 
_{B_\iy^{t/\vec r}(\BY_\ka,E_{\tau_{12}}^{\sa_{12}})} 
\qa \ka\slt\vp_i\in\gK\slt\Phi_i 
\qb i=1,2. 
$$ 
From this, Theorem~\ref{thm-H.R}, \ \Eqref{R.rN}, and \Eqref{C.adu2} we 
infer 
\beq\Label{C.asta} 
(a\mt a^*) 
\in\cL\bigl(B_\iy^{t/\vec r,\vec\om_{12}}\JVez, 
B_\iy^{t/\vec r,\vec\om_{21}^*}\JVze\bigr). 
\eeq 

\smallskip 
Assume 
\hb{a^*(p,t)\in\cL\bigl((V_2)_p,(V_1)_p\bigr)} is injective for 
\hb{(p,t)\in M\times J}. Then 
\hb{a(p,t)\in\cL\bigl((V_1)_p,(V_2)_p\bigr)} is surjective. This motivates 
the following definition: 
\beq\Label{C.us} 
\bal 
{}      %do not remove!
&a\in B_\iy^{t/\vec r,\vec\om_{12}}\JVez 
 \text{ is }\lda_{12}\text{\emph{-uniformly contraction surjective} if}\\ 
&\rho^{\lda_{12}+(\tau_{12}-\sa_{12})/2}\,|a^*(t)\cdot u|_{h_1} 
 \geq|u|_{h_2}/c 
 \qa u\in\Ga\MVz 
 \qb t\in J. 
\eal 
\eeq 
The reason for the specific choice of the exponent of~$\rho$ will become 
apparent below. We set 
$$ 
B_{\iy,\surj}^{t/\vec r,\vec\om_{12}}\JVez 
:=\bigl\{\,a\in B_\iy^{t/\vec r,\vec\om_{12}}\JVez 
\ ;\ a\text{ is $\lda_{12}$-uniformly contraction surjective}\,\bigr\}. 
$$ 

\smallskip 
For abbreviation, we put 
$$ 
a\odot a^*:=\sC_{[\tau_1]}^{[\sa_1]}(a,a^*) 
\qb \sa_{22}:=\tau_{22}:=\sa_2+\tau_2 
\qb \lda_{22}:=2\lda_{12}+\tau_{12}-\sa_{12}.  
$$ 
It follows from \Eqref{C.asta} and Theorem~\ref{thm-P.M} that 
\beq\Label{C.a2} 
B_\iy^{t/\vec r,\vec\om_{12}}\JVez\ra B_\iy^{t/\vec r,\vec\om_{22}}\JVzz 
\qb a\mt a\odot a^* 
\npb 
\eeq 
is a well-defined continuous quadratic map. Hence it is analytic. 
%---------------------------------------------------------------------
\begin{lemma}\LabelT{lem-C.CS} 
\hb{a\in B_{\iy,\surj}^{t/\vec r,\vec\om_{12}}\JVez} iff\/ 
\hb{a\odot a^*\in B_{\iy,\inv}^{t/\vec r,\vec\om_{22}}\JVzz}. 
\end{lemma}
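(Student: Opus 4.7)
The plan is to reduce the equivalence to a fibrewise linear-algebra statement via the key identity
\[
h_2\bigl((a\odot a^*)\cdot v,w\bigr)=h_1(a^*\cdot v,\,a^*\cdot w)\qquad v,w\in\Ga(M,V_2)^J,
\]
which is an immediate consequence of the defining property \Eqref{C.hh} of the complete-contraction adjoint (apply \Eqref{C.hh} with $u=a^*\cdot v$). In particular, at each $(p,t)\in M\times J$ the endomorphism $(a\odot a^*)(p,t)$ of $(V_2)_p$ is self-adjoint and non-negative, with
\[
h_2\bigl((a\odot a^*)(p,t)v,v\bigr)=|a^*(p,t)\cdot v|_{h_1}^2.
\]

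For the direction ``$\Rightarrow$'', I would observe that the surjectivity hypothesis \Eqref{C.us} reads, after squaring,
\[
|a^*\cdot v|_{h_1}^2\;\geq\;c^{-2}\rho^{-2\lda_{12}-(\tau_{12}-\sa_{12})}|v|_{h_2}^2
\;=\;c^{-2}\rho^{-\lda_{22}}|v|_{h_2}^2,
\]
by the very definitions $\lda_{22}=2\lda_{12}+\tau_{12}-\sa_{12}$ and $\sa_{22}=\tau_{22}=\sa_2+\tau_2$. Combined with the identity above this shows that the smallest eigenvalue of the self-adjoint positive operator $(a\odot a^*)(p,t)$ is bounded below by $c^{-2}\rho^{-\lda_{22}}(p)$. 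Hence $(a\odot a^*)(p,t)$ is invertible in each fibre, and its operator-norm inverse satisfies $\|(a\odot a^*)^{-1}(p,t)\|_{\op}\leq c^2\rho^{\lda_{22}}(p)$. Smoothness of $(a\odot a^*)^{-1}$ as a section of $V_{22}$ follows from the fact that matrix inversion is analytic on the open set of invertible endomorphisms and $a\odot a^*$ is smooth; the verification that $(a\odot a^*)^{-1}\cdot\bigl((a\odot a^*)\cdot v\bigr)=v$ (and the symmetric identity) is automatic from the fibrewise inverse property. Finally, on the finite-dimensional fibre $(V_2)_p$ of rank bounded by the fixed integer $n_2 m^{\sa_2+\tau_2}$, the Hilbert--Schmidt norm $|\cdot|_{h_{22}}$ is equivalent to the operator norm with constants depending only on this dimension, so the operator-norm bound yields $\rho^{-\lda_{22}}|(a\odot a^*)^{-1}(t)|_{h_{22}}\leq C$, which is precisely $\lda_{22}$-uniform contraction invertibility.

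The direction ``$\Leftarrow$'' runs in reverse. If $a\odot a^*$ is $\lda_{22}$-uniformly contraction invertible, the uniform bound $\rho^{-\lda_{22}}|(a\odot a^*)^{-1}|_{h_{22}}\leq c$, together with the equivalence of Hilbert--Schmidt and operator norms on the fibre, gives $\|(a\odot a^*)^{-1}(p,t)\|_{\op}\leq c'\rho^{\lda_{22}}(p)$. Since $(a\odot a^*)(p,t)$ is self-adjoint and positive (established once, as above), its smallest eigenvalue is at least $(c')^{-1}\rho^{-\lda_{22}}(p)$, so
\[
|a^*(p,t)\cdot v|_{h_1}^2
=h_2\bigl((a\odot a^*)(p,t)v,v\bigr)
\geq(c')^{-1}\rho^{-\lda_{22}}(p)\,|v|_{h_2}^2.
\]
Taking square roots and recalling $\lda_{22}/2=\lda_{12}+(\tau_{12}-\sa_{12})/2$ recovers \Eqref{C.us} and thus $\lda_{12}$-uniform contraction surjectivity of $a$.

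The only subtlety is to make the two norm equivalences truly uniform in $(p,t)$. Both are controlled by the fibre dimensions, which are determined by the fixed data $n_1,n_2,m,\sa_i,\tau_i$ and are therefore independent of the base point; this is where the finite-dimensionality of the model fibres and the fact that $V_{22}$ was built by the functorial constructions of Section~\ref{sec-V} enter. No regularity in $t$ beyond what is already carried by $a$ is used in the pointwise argument, since the Besov--H\"older hypothesis on $a$ is invoked only to make sense of $a\odot a^*$ (through the analytic quadratic map \Eqref{C.a2}) and of $(a\odot a^*)^{-1}$ (through Proposition~\ref{pro-C.in}), while the equivalence itself is an assertion about the two pointwise inequalities \Eqref{C.us} and \Eqref{C.Crho}.
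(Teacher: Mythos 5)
Your proof is correct and follows essentially the same route as the paper's: establish the identity $h_2((a\odot a^*)\cdot u,v)=h_1(a^*\cdot u,a^*\cdot v)$ from \Eqref{C.hh}, conclude that $\sC(a\odot a^*)$ is self-adjoint and positive semi-definite, and observe that \Eqref{C.us} is equivalent to a quadratic-form lower bound $\rho^{\lda_{22}}h_2((a\odot a^*)\cdot u,u)\geq|u|_{h_2}^2/c$, which by symmetry is in turn equivalent to $\lda_{22}$-uniform contraction invertibility. The paper compresses the last equivalence to the single phrase ``by symmetry''; you expand it into the eigenvalue/operator-norm argument and add the (correct) observation that passing between the operator norm on the fibre and the bundle norm $|\cdot|_{h_{22}}$ is uniform because the fibre dimensions are fixed by the rank data. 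One small imprecision: contraction invertibility only requires $a^{-1}\in\Ga(M,V_{22})^J$ (arbitrary sections, no smoothness), so your remark about smoothness of $(a\odot a^*)^{-1}$ via analyticity of matrix inversion is unnecessary — and $a\odot a^*$ itself need not be smooth, only Besov--H\"older — but this does not affect the argument.
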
 
%=====================================================================
\begin{proof} 
It follows from \Eqref{C.hh} that 
\beq\Label{C.haa} 
h_2\bigl((a\odot a^*)\cdot u,v\bigr) 
=h_2\bigl(a\cdot(a^*\cdot u),v\bigr) 
=h_1(a^*\cdot u,a^*\cdot v) 
\qa (u,v)\in\Ga(M,V_2\oplus V_2)^J. 
\eeq 
Hence 
\hb{\sC(a\odot a^*)} is symmetric and positive semi-definite. We see from 
\Eqref{C.haa} that \Eqref{C.us} is equivalent to 
$$ 
\rho^{\lda_{22}}h_2\bigl((a\odot a^*)(t)\cdot u,u\bigr) 
\geq|u|_{h_2}^2/c 
\qa u\in\Ga\MVz 
\qb t\in J. 
\npb 
$$ 
By symmetry this inequality is equivalent to the 
\hbox{$\lda_{22}$-uniform} contraction invertibility of 
\hb{a\odot a^*}. 
\end{proof} 
In the next proposition we give a local criterion for checking 
\hbox{$\lda_{12}$-uniform} surjectivity. 
%---------------------------------------------------------------------
\begin{proposition}\LabelT{pro-C.S} 
Suppose 
\hb{a\in B_\iy^{t/\vec r,\vec\om_{12}}\JVez}. Let 
\hb{\gK\slt\Phi_i}, 
\ \hb{i=1,2}, be uniformly regular atlases for~$V_{\coV i}$ over~$\gK$. Set 
$$ 
\sfa_\ka(t)(\za,\za) 
:=
\sum_{\dlim{(i_1)\in\BJ_{\sa_1},\,(j_1)\in\BJ_{\tau_1}}{1\leq\nu_1\leq n_1}}  
\Big|\ka_*a_{(j_2;i_1),\nu_1}^{(i_2;j_1),\nu_2}(t) 
\,\za_{(i_2),\nu_2}^{(j_2)}\Big|^2 
$$ 
for 
\hb{\za\in E_{\sa_2}^{\tau_2}(F_2^*)^{Q_\ka^m}} and 
\hb{t\in J}. Then $a$~is \hbox{$\lda_{12}$-uniformly} contraction surjective 
iff 
$$ 
\rho_\ka^{2\lda_{12}}\sfa_\ka(t)(\za,\za)\sim|\za|^2 
\qa \za\in E_{\sa_2}^{\tau_2}(F_2^*)^{Q_\ka^m} 
\qb \ka\in\gK 
\qb t\in J. 
$$ 
\end{proposition}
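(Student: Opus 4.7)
The strategy is to localize the bundle-metric condition \eqref{C.us} onto each coordinate patch, using the pushforwards $(\ka\slt\vp_i)_*$ and the local representation formula \eqref{C.adu2} for $a^*$, and then to match the resulting Euclidean quadratic form with $\sfa_\ka$.

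First, since \eqref{C.us} is quantified over all smooth sections $u\in\Ga\MVz$, and any vector at $p\in M$ extends to such a section, the condition is equivalent to the pointwise inequality $\rho(p)^{2\lda_{12}+\tau_{12}-\sa_{12}}|a^*(p,t)\cdot v|_{h_1}^2\geq c^{-2}|v|_{h_2}^2$ for all $(p,t,v)\in M\times J\times(V_2)_p$, with constant $c$ uniform in all three variables. By \eqref{S.sd}(iv), $\rho(p)\sim\rho_\ka$ on $U_\ka$, so (together with the finite multiplicity of $\gK$) uniformity over $M$ is equivalent to uniformity over each $U_\ka$ with constants independent of $\ka$. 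Fixing $\ka\in\gK$, $p\in U_\ka$, and $x:=\ka(p)$, I would set $\eta:=(\ka\slt\vp_2)_*v\in E_{\tau_2}^{\sa_2}(F_2)$ and apply \eqref{L.Nh} twice, to $v\in(V_2)_p$ and to $a^*\cdot v\in(V_1)_p$, obtaining $|v|_{h_2}\sim\rho_\ka^{\sa_2-\tau_2}|\eta(x)|_E$ and $|a^*\cdot v|_{h_1}\sim\rho_\ka^{\sa_1-\tau_1}\bigl|\bigl((\ka\slt\vp_1)_*(a^*\cdot v)\bigr)(x,t)\bigr|_E$.

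Next I would expand the Euclidean norm of $(\ka\slt\vp_1)_*(a^*\cdot v)$ using \eqref{C.adu2}. The components of $\ka_*(a^*)$ carry four distinct metric blocks: two pairs of $g$-factors of type $g^{(i_1)(\wt{\imath}_1)}_{(j_1)(\wt{\jmath}_1)}$ and $g^{(\wt{\jmath}_2)(j_2)}_{(\wt{\imath}_2)(i_2)}$, one $h_{W_1}^*$-factor, and one $h_{W_2}$-factor. By \eqref{L.kgg}, $\ka_*g\sim\rho_\ka^2 g_m$ and $\ka_*g^*\sim\rho_\ka^{-2}g_m$ as quadratic forms; by \eqref{U.h} and Example~\ref{exa-U.ex}(f), $\ka_*h_{W_i,\nu\mu}\sim\delta_{\nu\mu}$ and $\ka_*h_{W_i}^{*\nu\mu}\sim\delta^{\nu\mu}$. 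Upon squaring the linear form and summing over the outer indices $(i_1),(j_1),\nu_1$, each matched pair of $g$- and $h_{W_i}$-factors collapses, uniformly in $\ka$, to a Kronecker delta multiplied by a fixed power of $\rho_\ka^{\pm 2}$. Introducing the reindexed test tensor $\za^{(j_2)}_{(i_2),\nu_2}:=\ol{\eta^{(i_2),\nu_2}_{(j_2)}}\in E_{\sa_2}^{\tau_2}(F_2^*)$, which satisfies $|\za|=|\eta|_E$, the remaining Euclidean sum is exactly $\sfa_\ka(t)(\za,\za)$ up to an explicit power of $\rho_\ka$ accumulated from the metric reductions.

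The final step is to combine all the $\rho_\ka$-exponents: the factor $\rho_\ka^{2\lda_{12}+\tau_{12}-\sa_{12}}$ from \eqref{C.us}, the factors $\rho_\ka^{2(\sa_1-\tau_1)}$ and $\rho_\ka^{2(\sa_2-\tau_2)}$ from the two applications of \eqref{L.Nh}, and the cumulative power of $\rho_\ka^{\pm2}$ produced in the metric reduction of the preceding paragraph. Using \eqref{C.stst} to evaluate $\tau_{12}-\sa_{12}=(\sa_1-\tau_1)-(\sa_2-\tau_2)$, all dependence on $\sa_i,\tau_i,\sa_{12},\tau_{12}$ cancels and exactly $\rho_\ka^{2\lda_{12}}$ remains as the multiplier of $\sfa_\ka(t)(\za,\za)$. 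This gives the $\gtrsim$ direction of the claimed $\sim$. For the matching upper bound $\rho_\ka^{2\lda_{12}}\sfa_\ka(t)(\za,\za)\lesssim|\za|^2$, I would observe that $a\in B_\iy^{t/\vec r,\vec\om_{12}}\JVez\hr B^{0,\vec\om_{12}}\JVez$ and repeat the same local expansion, now using the uniform upper bound $\rho^{\lda_{12}+\tau_{12}-\sa_{12}}|a(t)|_{h_{12}}\leq c$ in place of \eqref{C.us}, to obtain the companion inequality with the same exponent. The main obstacle is the exponent bookkeeping: one must track the sign and magnitude of the accumulated power of $\rho_\ka$ through roughly eight separate metric contributions and verify that the particular weight $\lda_{12}+(\tau_{12}-\sa_{12})/2$ chosen in \eqref{C.us} — calibrated by the dual weight \eqref{C.lst} for $a^*$ — is precisely what makes all non-$\lda_{12}$ powers cancel.
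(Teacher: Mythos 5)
Your overall localization template is the same as the paper's: reduce \eqref{C.us} to a pointwise statement, apply \eqref{L.Nh} to both sides, expand $a^*$ via \eqref{C.adu2}, and compare the result with $\sfa_\ka$. There is, however, one genuine gap, and it is exactly the step you flag as ``the main obstacle'' but dismiss as mere bookkeeping.

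Your test tensor $\za_{(i_2),\nu_2}^{(j_2)}:=\ol{\eta_{(j_2)}^{(i_2),\nu_2}}$ (plain conjugate pushforward of $v$) is not the right variable, and the resulting sum is \emph{not} $\sfa_\ka(\za,\za)$ up to a power of $\rho_\ka$. Expanding $(a^*\cdot v)$ by \eqref{C.adu2} produces two blocks of metric factors: an ``outer'' block $g_{(j_1)(\wt\jmath_1)}^{(i_1)(\wt\imath_1)}h_{W_1}^{*\nu_1\wt\nu_1}$ acting on the free $V_1$-indices, and an ``inner'' block $g_{(\wt\imath_2)(i_2)}^{(\wt\jmath_2)(j_2)}\ol{h_{W_2,\wt\nu_2\nu_2}}$ that multiplies $v$ \emph{before} contraction with $a$. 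The inner block is precisely what turns $\ol{\eta}$ into the pushforward $(\ka\slt\vp_2)_*\bigl((h_2)_\flat v\bigr)$ of the Riesz dual; that is the $\za$ the paper uses. Once that identification is made, the quantity surviving the contraction is literally $\sum\bigl|\ka_*a\cdot\za\bigr|^2 = \sfa_\ka(\za,\za)$, and only the outer block remains to be estimated. With your $\za=\ol\eta$, the inner block is still sitting inside the $a$-contraction, and there is no way to ``collapse'' it: it is an invertible linear map $M$ with $\|M\|,\|M^{-1}\|$ controlled by powers of $\rho_\ka$, not a scalar, and $\sfa_\ka(M\ol\eta,M\ol\eta)$ is not a power of $\rho_\ka$ times $\sfa_\ka(\ol\eta,\ol\eta)$ for a general (possibly degenerate) form $\sfa_\ka$ --- that comparison would presuppose exactly the uniform well-conditioning of $\sfa_\ka$ that the proposition asserts. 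Your parenthetical ``which satisfies $|\za|=|\eta|_E$'' is a symptom of the same confusion: the paper's $\za$ has $|\za|\sim\rho_\ka^{2(\sa_2-\tau_2)}|\eta|$, which is what makes the $\rho_\ka$-exponents close up.

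A second, more structural difference is that the paper does not estimate $|a^*\cdot v|_{h_1}$ directly. It first uses \eqref{C.haa}, i.e.\ $|a^*\cdot v|_{h_1}^2 = h_2\bigl((a\odot a^*)\cdot v,\, v\bigr)$, together with the Riesz map $w:=(h_2)_\flat v$, so that after expansion the expression becomes a genuine quadratic form $B\, g\, h_{W_1}^*\,\ol{B}$ in $B=a\cdot w$, whose ``metric'' is the outer block only. The inner block and the $h_2$-pairing are absorbed into $w$; they never appear explicitly. You can recover this structure in your direct expansion, but you must set $\za = (\ka\slt\vp_2)_*\bigl((h_2)_\flat v\bigr)$ and not $\ol\eta$, after which both approaches are essentially the same. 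As written, the ``collapse to a Kronecker delta'' step needs to be localised to the outer block --- where it is a legitimate quadratic-form equivalence by \eqref{L.kgg} and \eqref{U.h} --- and replaced by a change of test variable for the inner block.
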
 
%=====================================================================
\begin{proof} 
Assume 
\hb{v\in\Ga\MVz^J} and put 
\hb{w:=(h_2)_\flat v\in\Ga\MVzs^J}, where 
\hb{V_2'=T_{\sa_2}^{\tau_2}\MWzs}.  
Then, locally on~$U_{\coU\ka}$, 
$$ 
v=v_{(j_2)}^{(i_2),\nu_2}\frac\pl{\pl x^{(i_2)}}\otimes dx^{(j_2)} 
\otimes b_{\nu_2}^2 
\qb w=w_{(i_2),\nu_2}^{(j_2)}\,\frac\pl{\pl x^{(j_2)}}\otimes dx^{(i_2)} 
\otimes\ba_2^{\nu_2}, 
$$ 
where 
$$ 
w_{(i_2),\nu_2}^{(j_2)}=g_{(i_2)(\wt{\imath}_2)}^{(j_2)(\wt{\jmath}_2)} 
\,h_{W_2,\nu_2\wt{\nu}_2} 
\,\ol{v_{(\wt{\jmath}_2)}^{(\wt{\imath}_2),\wt{\nu}_2}}, 
$$ 
due to 
\hb{h_2=\prsn_{\sa_2}^{\tau_2}\otimes h_{W_2}}. Thus it follows from 
\Eqref{C.adu2} that, locally on~$U_{\coU\ka}$, 
$$ 
\bigl((a\odot a^*)\cdot v\bigr)_{(j_2)}^{(i_2),\nu_2} 
=a_{(j_2;i_1),\nu_1}^{(i_2;j_1),\nu_2} 
\,g_{(j_1)(\wt{\jmath}_1)}
   ^{(i_1)(\wt{\imath}_1)}  
\,h_{W_1}^{*\nu_1\wt{\nu}_1} 
\,\ol{a_{(\wt{\jmath}_2;\wt{\imath}_1),\wt{\nu}_1} 
       ^{(\wt{\imath}_2;\wt{\jmath}_1),\wt{\nu}_2}} 
\,\,\ol{w_{(\wt{\imath}_2),\wt{\nu}_2} 
         ^{(\wt{\jmath}_2)}}.   
$$ 
Hence 
$$ 
\bal 
h_2\bigl((a\odot a^*)\cdot v,v\bigr)
&=g_{(i_2)(\wh{\imath}_2)}^{(j_2)(\wh{\jmath}_2)} 
 \,h_{W_2,\nu_2\wh{\nu}_2} 
 \bigl((a\odot a^*)\cdot v\bigr)_{(j_2)}^{(i_2),\nu_2} 
 \,\ol{v_{(\wh{\jmath}_2)}^{(\wh{\imath}_2),\wh{\nu}_2}}\\  
&=\bigl((a\odot a^*)\cdot v\bigr)_{(j_2)}^{(i_2),\nu_2} 
 \,w_{(i_2),\nu_2}^{(j_2)}\\  
&=a_{(j_2;i_1),\nu_1}^{(i_2;j_1),\nu_2} 
 \,w_{(i_2),\nu_2}^{(j_2)} 
 \,g_{(j_1)(\wt{\jmath}_1)}^{(i_1)(\wt{\imath}_1)} 
 \,h_{W_1}^{*\nu_1\wt{\nu}_1} 
 \,\ol{a_{(\wt{\jmath}_2;\wt{\imath}_1),\wt{\nu}_1} 
       ^{(\wt{\imath}_2;\wt{\jmath}_1),\wt{\nu}_2}} 
 \,\,\ol{w_{(\wt{\imath}_2),\wt{\nu}_2}^{(\wt{\jmath}_2)}}.  
\eal 
$$ 
Thus we deduce from \Eqref{U.h}, \,\Eqref{S.err}, \,\Eqref{L.hl}, 
\,\Eqref{L.kgg} \ (applied to~$W_{\coW i}$) 
\beq\Label{C.h2z} 
\ka_*\bigl(\rho^{\lda_{22}}h_2\bigl((a\odot a^*)\cdot v,v\bigr)\bigr) 
\sim\rho_\ka^{\lda_{22}+2(\tau_1-\sa_1)}\sfa_\ka(\za,\za) 
\eeq 
for 
\hb{\ka\slt\vp_i\in\gK\slt\Phi_i},  
\ \hb{i=1,2}, and  
\hb{v\in\Ga\MVz^J}, where 
\beq\Label{C.z} 
\za:=(\ka\slt\vp_2)_*\bigl((h_2)_\flat v\bigr) 
\in\bigl(E_{\sa_2}^{\tau_2}(F_2^*)\bigr)^{Q_\ka^m\times J}. 
\eeq 
Since $(h_2)_\flat$~is an isometry and $h_2^*$~is the bundle 
metric of~$V_2^*$ we get from \Eqref{L.Nh} 
\beq\Label{C.v} 
\ka_*(|v|_{h_2}^2)=\ka_*(|w|_{h_2^*}^2) 
\sim\rho_\ka^{2(\tau_2-\sa_2)}\,|\za|_{E_{\sa_2}^{\tau_2}(F_2^*)}^2 
\qa \ka\in\gK, 
\npb 
\eeq 
with $v$ and~$\za$ being related by \Eqref{C.z}. Now the assertion 
follows from 
\Eqref{C.stst}, \,\Eqref{C.lst}, \,\Eqref{C.h2z}, and \Eqref{C.v}. 
\end{proof} 
Suppose 
\hb{a\in B_\iy^{t/\vec r,\vec\om_{12}}\JVez} and 
\hb{a^c\in B_\iy^{t/\vec r,(-\lda_{12},\mu)}\JVze} are such that 
\hb{a\cdot(a^c\cdot v)=v} for $v$ belonging to $\Ga\MVz^J$. Then 
$a^c$~is a \emph{right contraction inverse of}~$a$. 
%---------------------------------------------------------------------
\begin{proposition}\LabelT{pro-C.R} 
Let conditions \Eqref{C.stst} be satisfied. Then 
$B_{\iy,\surj}^{t/\vec r,\vec\om_{12}}\JVez$ is open in
$B_\iy^{t/\vec r,\vec\om_{12}}\JVez$ and there exists an analytic map 
$$ 
I^c\sco B_{\iy,\surj}^{t/\vec r,\vec\om_{12}}\JVez 
\ra B_\iy^{t/\vec r,(-\lda_{12},\mu)}\JVze 
\npb 
$$ 
such that $I^c(a)$~is a right contraction inverse for~$a$. 
\end{proposition}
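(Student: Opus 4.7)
The plan is to reduce right invertibility to the two-sided invertibility of the self-adjoint quadratic expression $a\odot a^*$ already treated in Lemma~\ref{lem-C.CS}, and then apply Proposition~\ref{pro-C.in}. Concretely, I would define the candidate right inverse by
$$
I^c(a):=\sC_{[\tau_2]}^{[\sa_2]}\bigl(a^*,(a\odot a^*)^{-1}\bigr),
$$
i.e.\ the complete contraction of $a^*$ with the contraction inverse of $a\odot a^*$. The verification that this is a right contraction inverse then reads, using associativity of contractions (Theorem~\ref{thm-C.CC}(ii)): for $v\in\Ga\MVz^J$,
$$
a\cdot\bigl(I^c(a)\cdot v\bigr)
=a\cdot\Bigl(a^*\cdot\bigl((a\odot a^*)^{-1}\cdot v\bigr)\Bigr)
=(a\odot a^*)\cdot\bigl((a\odot a^*)^{-1}\cdot v\bigr)=v,
$$
where the middle equality is the composition formula from Theorem~\ref{thm-C.CC}(ii) applied with $\sC_{[\tau_1]}^{[\sa_1]}(a,a^*)=a\odot a^*$.

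For the openness of $B_{\iy,\surj}^{t/\vec r,\vec\om_{12}}\JVez$, I would argue as follows. By \Eqref{C.asta} the adjoint map $a\mt a^*$ is continuous from $B_\iy^{t/\vec r,\vec\om_{12}}\JVez$ into $B_\iy^{t/\vec r,\vec\om_{21}^*}\JVze$; composed with the continuous bilinear contraction $(a,b)\mt\sC_{[\tau_1]}^{[\sa_1]}(a,b)$ (Theorem~\ref{thm-P.M}), this makes $a\mt a\odot a^*$ a continuous polynomial of degree two, hence analytic, from $B_\iy^{t/\vec r,\vec\om_{12}}\JVez$ into $B_\iy^{t/\vec r,\vec\om_{22}}\JVzz$ (cf.~\Eqref{C.a2}). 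Lemma~\ref{lem-C.CS} identifies $B_{\iy,\surj}^{t/\vec r,\vec\om_{12}}$ with the preimage of the open set $B_{\iy,\inv}^{t/\vec r,\vec\om_{22}}$ (openness by Proposition~\ref{pro-C.in}) under this continuous map, so it is open.

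Analyticity of $I^c$ then follows by composing analytic maps: (i) $a\mt a^*$ is real-analytic (continuous $\BR$-linear, being merely conjugate-linear over $\BC$); (ii) $a\mt a\odot a^*$ is analytic as noted; (iii) by Proposition~\ref{pro-C.in}, the contraction inversion $b\mt b^{-1}$ is analytic on $B_{\iy,\inv}^{t/\vec r,\vec\om_{22}}\JVzz$ into $B_\iy^{t/\vec r,(-\lda_{22},\mu)}\JVzz$; (iv) the contraction $\sC_{[\tau_2]}^{[\sa_2]}$ is continuous bilinear by Theorem~\ref{thm-P.M} (Lemma~\ref{lem-C.CE}), hence analytic. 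The weights track as $\lda_{21}^*+(-\lda_{22})=\lda_{12}+\sa_{21}-\tau_{21}-2\lda_{12}-\tau_{12}+\sa_{12}=-\lda_{12}$, using $\sa_{21}=\tau_{12}$ and $\tau_{21}=\sa_{12}$ from \Eqref{C.stst}; and the tensor types $(\sa_{21},\tau_{21})$ of $a^*$ contracted against $(\sa_{22},\tau_{22})=(\sa_2+\tau_2,\sa_2+\tau_2)$ of $(a\odot a^*)^{-1}$ yield precisely $V_{21}$, the bundle of $W_{21}$-valued $(\sa_{21},\tau_{21})$-tensors underlying $\Ga(M,\Hom\VzVe)$.

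The main obstacle I foresee is the bookkeeping in step (iv): one must confirm that the parameters in the application of Theorem~\ref{thm-P.M} to the contraction $\sC_{[\tau_2]}^{[\sa_2]}(a^*,(a\odot a^*)^{-1})$ indeed meet condition~\Eqref{P.st} of that theorem (both the tensor-rank balance $\sa-\tau=\sa_0+\sa_1-\tau_0-\tau_1$ and the compatible weight exponents). This is routine but requires spelling out the local representation \Eqref{C.aduk} of $a^*$ and the index pattern of the contraction, in parallel with the computation leading to Lemma~\ref{lem-C.CS}. Once this is in place, the proof is complete; every remaining step is a direct quotation of already-established results (Theorems \ref{thm-P.M}, \ref{thm-C.CC}, Proposition~\ref{pro-C.in} and Lemma~\ref{lem-C.CS}).
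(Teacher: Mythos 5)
Your proof is correct and takes essentially the same approach as the paper: define $I^c(a)=\sC_{[\tau_2]}^{[\sa_2]}\bigl(a^*,(a\odot a^*)^{-1}\bigr)$, verify it is a right inverse via the associativity of contractions, obtain openness of the surjective locus from Lemma~\ref{lem-C.CS}, Proposition~\ref{pro-C.in}, and the continuity of $a\mt a\odot a^*$ in \Eqref{C.a2}, and deduce analyticity by composing the analytic maps $a\mt a^*$, $a\mt a\odot a^*$, contraction inversion, and the bilinear contraction. The paper simply cites \Eqref{C.asta}, \Eqref{C.a2}, and Theorem~\ref{thm-P.M} for the analyticity step without spelling out the weight bookkeeping, whereas you carry out the computation $\lda_{21}^*-\lda_{22}=-\lda_{12}$ explicitly; both are fine, and no step is missing.
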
 
%=====================================================================
\begin{proof} 
It follows from \Eqref{C.a2}, Proposition~\ref{pro-C.in}, and 
Lemma~\ref{lem-C.CS} that 
\hb{S:=B_{\iy,\surj}^{t/\vec r,\vec\om_{12}}\JVez} is open in 
$B_\iy^{t/\vec r,\vec\om_{12}}\JVez$. Set 
$$ 
I^c(a):=\sC_{[\tau_2]}^{[\sa_2]}\bigl(a^*,(a\odot a^*)^{-1}\bigr) 
\qa a\in S, 
$$ 
where 
\hb{(a\odot a^*)^{-1}} is the contraction inverse of 
\hb{a\odot a^*\in B_\iy^{t/\vec r,\vec\om_{22}}\JVzz}. Then 
\Eqref{C.asta}, \,\Eqref{C.a2}, and Theorem~\ref{thm-P.M} imply that 
$I^c$~is an analytic map from~$S$ into 
\hb{B_\iy^{t/\vec r,(-\lda_{12},\mu)}\JVze}. Since 
$$ 
a\cdot\bigl(I^c(a)\cdot v\bigr) 
=a\cdot\bigl(a^*\cdot\bigl((a\odot a^*)^{-1}\cdot v\bigr)\bigr) 
=(a\odot a^*)\cdot\bigl((a\odot a^*)^{-1}\cdot v\bigr) 
=v 
\qa v\in\Ga\MVz, 
\npb 
$$ 
the assertion follows. 
\end{proof} 
After these preparations it is easy to prove the second main theorem of 
this section. For this it should be noted that definition~\Eqref{C.us} 
applies equally well if 
\hb{a\in\cB^{t/\vec r,\vec\om_{12}}\JVez} where either 
\hb{\cB=b_\iy}, or 
\hb{t\in r\BN} and 
\hb{\cB\in\{BC,bc\}}. Hence $\cB_\surj^{t/\vec r,\vec\om_{12}}\JVez$ is 
defined in these cases also.\po 
%--------------------------------------------------------------------- 
{\samepage 
\begin{theorem}\LabelT{thm-C.R} 
Let assumptions \Eqref{C.l12} and \Eqref{C.su} be satisfied and 
\hb{1<q\leq\iy}.\po  
\begin{itemize} 
\item[{\rm(i)}] 
Assume 
\hb{|s|<t} if\/ 
\hb{q=p}, and 
\hb{s=t>0} if\/ 
\hb{q=\iy}. Then there exists an analytic map 
$$ 
A^c\sco B_{\iy,\surj}^{t/\vec r,\vec\om_{12}}\JVez 
\ra\cL\bigl(\gF_q^{s/\vec r,\vec\om_2}\JVz, 
\gF_q^{s/\vec r,\vec\om_1}\JVe\bigr) 
\npb 
$$ 
such that $A^c(a)$~is a right inverse for 
\hb{A(a)=(v\mt a\cdot v)}.\po 
\item[{\rm(ii)}] 
There exists an analytic map 
$$ 
A^c\sco\cB_\surj^{s/\vec r,\vec\om_{12}}\JVez 
\ra\cL\bigl(\cB^{s/\vec r,\vec\om_2}\JVz, 
\cB^{s/\vec r,\vec\om_1}\JVe\bigr) 
\npb 
$$ 
such that $A^c(a)$~is a right inverse for~$A(a)$ if either 
\hb{s\in r\BN} and 
\hb{\cB\in\{BC,bc\}}, or 
\hb{s>0} and 
\hb{\cB=b_\iy}. 
\end{itemize} 
\end{theorem}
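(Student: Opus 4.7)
\begin{proofc}{Proposal}
The plan is to obtain $A^c(a)$ simply as the contraction operator associated with the right contraction inverse $I^c(a)$ supplied by Proposition~\ref{pro-C.R}, and then to invoke Theorem~\ref{thm-C.CC} to promote this to a bounded right inverse on the appropriate function spaces. Analyticity of $A^c$ will drop out from analyticity of $I^c$ composed with the linear, continuous assignment $b\mapsto\sC(b)$ furnished by Theorem~\ref{thm-C.CC}.

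More precisely, for (i) I would argue as follows. Given $a\in B_{\iy,\surj}^{t/\vec r,\vec\om_{12}}\JVez$, Proposition~\ref{pro-C.R} produces
\[
I^c(a)\in B_\iy^{t/\vec r,(-\lda_{12},\mu)}\JVze
\]
with $a\cdot(I^c(a)\cdot v)=v$ for every $v\in\Ga\MVz^J$, and $a\mapsto I^c(a)$ is analytic. Set $A^c(a)v:=I^c(a)\cdot v$. The crux is to check that Theorem~\ref{thm-C.CC}(i) applies to the complete contraction induced by $I^c(a)$, mapping sections of $V_2$ to sections of $V_1$. The weight relation to verify is $\lda_1=(-\lda_{12})+\lda_2$, which is exactly \Eqref{C.l12}; the tensor-order relations $\sa_1=\sa_{21}-\tau_2$ and $\tau_1=\tau_{21}-\sa_2$ follow from \Eqref{C.stst}, since $\sa_{21}=\tau_{12}=\tau_2+\sa_1$ and $\tau_{21}=\sa_{12}=\sa_2+\tau_1$. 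Hence Theorem~\ref{thm-C.CC}(i), applied under condition~\Eqref{C.su} with $t$ replaced by the same regularity parameter and with either $(q,\cB,\gG)=(p,B_\iy,\gF)$ or $(q,\cB,\gG)=(\iy,B_\iy,B_\iy)$, yields
\[
A^c(a)\in\cL\bigl(\gF_q^{s/\vec r,\vec\om_2}\JVz,\gF_q^{s/\vec r,\vec\om_1}\JVe\bigr),
\]
and furthermore the linear map $I^c(a)\mapsto A^c(a)=\sC(I^c(a))$ is continuous between the corresponding spaces. The right-inverse identity $A(a)\,A^c(a)v=a\cdot(I^c(a)\cdot v)=v$ is the defining property of $I^c(a)$. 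Analyticity of the composite $a\mapsto I^c(a)\mapsto\sC(I^c(a))=A^c(a)$ then follows from the analyticity of $I^c$ combined with the linearity (hence analyticity) of $b\mapsto\sC(b)$.

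For part (ii), the same strategy works once the ingredients of Propositions \ref{pro-C.in} and \ref{pro-C.R} are established for the scales $\cB\in\{BC,bc,b_\iy\}$ in place of $B_\iy$. The proof of Proposition~\ref{pro-C.in} rests on two facts: $\cX^{t/\vec r}\YE$ is a Banach algebra under pointwise matrix multiplication, and the retraction-coretraction pair $(\vp_\iy^{\vec\om},\psi_\iy^{\vec\om})$ provides a uniform local description. Both ingredients are available for $BC$, $bc$, and $b_\iy$ via Theorems \ref{thm-P.M}, \ref{thm-H.Rk}, \ref{thm-H.Rbck}, \ref{thm-H.R}, and \ref{thm-H.Rbc}. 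Consequently $\cB_\inv^{s/\vec r,\vec\om_{11}}\JVee$ is open in $\cB^{s/\vec r,\vec\om_{11}}\JVee$ and the contraction inversion map is analytic. The adjoint map $a\mapsto a^*$ and the quadratic map $a\mapsto a\odot a^*$ transfer without change, being built from pointwise algebraic operations and the multiplier Theorem~\ref{thm-P.M}. The identity $I^c(a)=\sC_{[\tau_2]}^{[\sa_2]}\bigl(a^*,(a\odot a^*)^{-1}\bigr)$ then defines an analytic map on $\cB_\surj^{s/\vec r,\vec\om_{12}}\JVez$ with values in $\cB^{s/\vec r,(-\lda_{12},\mu)}\JVze$, and the argument of part~(i) (with Theorem~\ref{thm-C.CC}(i) applied under condition~($\al$) or~($\ga$) instead of~($\da$)) produces the required $A^c$.

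The principal obstacle I foresee is purely bookkeeping: in the passage from $a\in\Ga\MVez$ to $a^*\in\Ga\MVze$ the weight shifts from $\lda_{12}$ to $\lda_{21}^*=\lda_{12}+\sa_{21}-\tau_{21}$, and in forming $a\odot a^*$ there is a further shift to $\lda_{22}=2\lda_{12}+\tau_{12}-\sa_{12}$ with tensor bidegree $\sa_{22}=\tau_{22}=\sa_2+\tau_2$. One must verify that these shifts are consistent with Theorem~\ref{thm-C.CC} at every composition step, and in particular that the hypothesis $\sa_{11}=\tau_{11}=\sa_1+\tau_1$ of the scalar invertibility proposition is mirrored by $\sa_{22}=\tau_{22}=\sa_2+\tau_2$ here. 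Once this bookkeeping is in place the proof becomes essentially a two-line composition of the preceding results.
\end{proofc}
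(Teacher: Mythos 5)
Your proposal is correct and follows exactly the route the paper has in mind: the paper's proof of Theorem~\ref{thm-C.R} is a one-line pointer to Theorem~\ref{thm-C.CC} and Proposition~\ref{pro-C.R} for part~(i) and "apparent modifications" for part~(ii), and you have simply spelled out what that consequence is (set $A^c(a)=\sC(I^c(a))$, verify that \Eqref{C.l12} and \Eqref{C.stst} furnish the dual weight and tensor-order relations so that Theorem~\ref{thm-C.CC}(i) applies in case~($\da$) for $q=p$ and case~($\ga$) for $q=\iy$, and use linearity of $b\mapsto\sC(b)$ to propagate analyticity of $I^c$). Your account of part~(ii), reproducing Propositions~\ref{pro-C.in} and~\ref{pro-C.R} on the $BC$, $bc$, $b_\iy$ scales via the retraction theorems, matches the modifications the paper leaves to the reader, so there is no gap.
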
 }
%=====================================================================
\begin{proof} 
The first assertion is an obvious consequence of Theorem~\ref{thm-C.CC} and 
Proposition~\ref{pro-C.R}. The second claim is obtained by modifying the 
above arguments in the apparent way.  
\end{proof} 
As in the preceding section, the above results possess obvious analogues 
applying in the isotropic case. 
%---------------------------------------------------------------------
%---------------------------------------------------------------------
\section{Embeddings}\LabelT{sec-E}% 
Now we complement the embedding theorems of Section~\ref{sec-A} by 
establishing further inclusions between anisotropic weighted spaces. 
%---------------------------------------------------------------------
\begin{theorem}\LabelT{thm-E.p} 
Suppose 
\hb{\lda_0<\lda_1} and put 
\hb{\vec\om_i:=(\lda_i,\mu)} for 
\hb{i=0,1}. Then 
\hb{\gF_p^{s/\vec r,\vec\om_0}\sdh\gF_p^{s/\vec r,\vec\om_1}} if 
\hb{\rho\leq1}, whereas 
\hb{\rho\geq1} implies 
\hb{\gF_p^{s/\vec r,\vec\om_1}\sdh\gF_p^{s/\vec r,\vec\om_0}} for 
\hb{s\in\BR}. 

\smallskip 
Similarly, 
\hb{\cB^{s/\vec r,\vec\om_0}\hr\cB^{s/\vec r,\vec\om_1}} if 
\hb{\rho\leq1}, and 
\hb{\cB^{s/\vec r,\vec\om_1}\hr\cB^{s/\vec r,\vec\om_0}} for 
\hb{\rho\geq1}, if either 
\hb{s>0} and 
\hb{\cB\in\{B_\iy,b_\iy\}}, or 
\hb{s\in r\BN} and 
\hb{\cB\in\{BC,bc\}}. 
\end{theorem}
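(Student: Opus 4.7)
\begin{proofc}{Proof Proposal for Theorem~\ref{thm-E.p}}
My plan is to bootstrap from the Sobolev case by pointwise comparison of weighted norms, and then to climb to fractional and negative orders by interpolation and duality. The key observation, common to all cases, is that when $\lambda_0<\lambda_1$ the two weight factors appearing in the norms differ only by the power $\rho^{\lambda_0-\lambda_1}$, which is $\geq 1$ if $\rho\leq 1$ and $\leq 1$ if $\rho\geq 1$.

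First I would treat $W_{\coW p}^{kr/\vec r,\vec\om_i}$ for $k\in\BN$ directly. Using the equivalent norm from Theorem~\ref{thm-A.B}(ii) in the explicit form \Eqref{A.Nkr}, each summand in $\|u\|_{kr/\vec r,p;\vec\om_1}^\sim$ has integrand $|\rho^{\lambda_1+i+j\mu+\tau-\sa}\,|\na^i\pl^ju|_h|^p$, which for $\rho\leq 1$ is pointwise majorized by the corresponding integrand with $\lambda_1$ replaced by $\lambda_0$. This gives $\|u\|_{kr/\vec r,p;\vec\om_1}\leq\|u\|_{kr/\vec r,p;\vec\om_0}$, hence a continuous embedding; reversing the comparison handles $\rho\geq 1$. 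Density of the embedding follows from Theorem~\ref{thm-A.B}(iii), since $\cD(J,\cD)$ is dense in both spaces.

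Next I would extend to positive non-integer orders $s>0$. Using definition~\Eqref{A.def} together with Theorem~\ref{thm-A.WHB}(iii),(iv), the spaces $\gF_p^{s/\vec r,\vec\om}$ are recovered by real or complex interpolation of consecutive Sobolev spaces at the same weight $\vec\om$. Since the interpolation functors are \emph{exact} on morphisms, the embedding established at integer orders persists to all $s>0$. The case $s=0$ is then covered: for $\gF=H$ it is already contained above, and for $\gF=B$ it follows by interpolation through the definition~\Eqref{A.Bdef}. The case $s<0$ is reduced to the positive case by duality: setting $\vec\om_i^{\sharp}:=(-\lambda_i,-\mu)$ one has $-\lambda_1<-\lambda_0$, so the positive-order result applied to these weights (with the direction dictated by $\rho\leq 1$ or $\rho\geq 1$) yields an embedding of the primed spaces in reverse order, and dualizing via \Eqref{A.def1} restores the correct direction; density in the negative-order case is then obtained by reflexivity (Corollary~\ref{cor-A.FF}(i)).

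For the H\"older-type scales the same weight-comparison argument applies, but I would use local representations in place of direct integration. For $\cB=BC$ the definition~\Eqref{H.krN1}--\Eqref{H.krN} already has the form of a weighted supremum, and the pointwise $\rho$-comparison gives the embedding at integer orders immediately. For $\cB\in\{bc\}$ one passes to the closure of $BC^{\iy/\vec r,\vec\om}$ using Theorem~\ref{thm-H.Rbck}(ii). For $\cB=B_\iy$ one uses Corollary~\ref{cor-H.R}(ii) to interpolate between two integer-order little-H\"older endpoints, and for $\cB=b_\iy$ one uses Theorem~\ref{thm-H.Rbc} to pass to closures. Alternatively, one can argue locally through the retraction diagrams of Section~\ref{sec-H}: the coretractions $\vp_\iy^{\vec\om_i}$ differ by a scalar factor $\rho_\ka^{\lambda_1-\lambda_0}$, which is uniformly bounded on $\gK$ in the direction dictated by $\rho\leq 1$ versus $\rho\geq 1$ thanks to \Eqref{S.sd}(iv), yielding the norm inequality on each local fiber and hence on the product $\ell_\iy$ side.

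The main obstacle I anticipate is the negative-order case, where one must keep track of the sign reversal of $\mu$ under duality and ensure that the density assumptions required by \Eqref{A.def1} (i.e.\ that $\cD(J,\cD)$ is dense in the relevant $\ci\gF_{p'}^{\ldots}$) are available; these are supplied by Theorem~\ref{thm-A.HBH}(ii) under~\Eqref{C.su}-type restrictions, but here we are in the unrestricted setting, and so one must instead invoke the density of $\cD(J,\cD)$ in $\gF_p^{s/\vec r,\vec\om}$ for all $s\in\BR$ that follows from reflexivity plus Theorem~\ref{thm-A.B}(iii). All remaining steps are routine bookkeeping.
\end{proofc}
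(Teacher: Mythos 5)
Your proposal is correct and follows essentially the same route as the paper: pointwise comparison of the weight factors at integer Sobolev orders (for $W$, $\ci W$, and the $BC$ scale), duality to reach negative orders, and interpolation to fill in the rest. Two small remarks on bookkeeping. First, your ordering places the $s=0$ case for $\gF=B$ before $s<0$; but since $B_p^{0/\vec r,\vec\om}$ is defined in \Eqref{A.Bdef} through $H_p^{\pm s(p)/\vec r,\vec\om}$, the negative-order result for $H$ must already be in hand, so $s<0$ (for $H$, via duality) should come first. Second, your worry about ``keeping track of the sign reversal of $\mu$ under duality'' is moot: $\mu$ is the same in $\vec\om_0$ and $\vec\om_1$, so it enters both weight exponents identically and drops out of the comparison $\rho^{\lambda_0-\lambda_1}\gtrless 1$; only the $\lambda$-component matters. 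Likewise, the density concern you raise is not an issue because $\ci\gF$ is by definition the closure of $\cD(\ci J,\ci\cD)$, and the duality step requires only the dense embedding of the little spaces $\ci W^{kr}$ at integer order, which you already establish at the Sobolev level.
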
 
%=====================================================================
\begin{proof} 
If 
\hb{\rho\leq1}, then it is obvious that 
$$ 
W_{\coW p}^{kr/\vec r,\vec\om_0}\sdh W_{\coW p}^{kr/\vec r,\vec\om_1} 
\qb \ci W_{\coW p}^{kr/\vec r,\vec\om_0} 
    \sdh \ci W_{\coW p}^{kr/\vec r,\vec\om_1} 
\qb BC^{kr/\vec r,\vec\om_0}\hr BC^{kr/\vec r,\vec\om_1} 
$$ 
for 
\hb{k\in\BN}. Thus, by duality, 
$$ 
W_{\coW p}^{kr/\vec r,\vec\om_0}\sdh W_{\coW p}^{kr/\vec r,\vec\om_1} 
\qa k\in-\BN^\times. 
\npb 
$$ 
By interpolation 
\hb{\gF_p^{s/\vec r,\vec\om_0}\sdh\gF_p^{s/\vec r,\vec\om_1}} follows. 
The proof of the other embeddings is similar. 
\end{proof} 
The next theorem contains Sobolev-type embedding results. In the 
anisotropic case they involve the weight exponents as well as the 
regularity parameters.\po 
%---------------------------------------------------------------------
{\samepage 
\begin{theorem}\LabelT{thm-E.Sob} 
\begin{itemize} 
\item[{\rm(i)}] 
Suppose 
\hb{s_0<s_1} and 
\hb{p_0,p_1\in(1,\iy)} satisfy 
\beq\Label{E.ss} 
s_1-(m+r)/p_1=s_0-(m+r)/p_0. 
\npb  
\eeq  
Set 
\hb{\vec\om_0:=\bigl(\lda+(m+\mu)(1/p_1-1/p_0),\mu\bigr)}. Then 
\hb{\gF_{p_1}^{s_1/\vec r,\vec\om} 
   \sdh\gF_{p_0}^{s_0/\vec r,\vec\om_0}}.\po  
\item[{\rm(ii)}] 
Assume 
\hb{t>0} and 
\hb{s\geq t+(m+r)/p}. Set 
\hb{\vec\om_\iy:=\bigl(\lda+(m+\mu)/p,\mu\bigr)}. Then 
\hb{\gF_p^{s/\vec r,\vec\om}\hr b_\iy^{t/\vec r,\vec\om_\iy}}.\po 
\end{itemize} 
\end{theorem}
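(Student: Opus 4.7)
My plan is to reduce both statements to the local Euclidean setting via the retraction theorems of Sections~\ref{sec-RA} and~\ref{sec-H}, and then invoke the classical anisotropic Sobolev-type embeddings of H.~Amann~\cite{Ama09a} on the model spaces $\BY_\ka=\BX_\ka\times J$. The central point is that the scaling exponent in $\vec\om_0$ (respectively $\vec\om_\iy$) is chosen precisely so that $\vp_{p_0,\ka}^{\vec\om_0}u$ and $\vp_{p_1,\ka}^{\vec\om}u$ (respectively $\vp_{\iy,\ka}^{\vec\om_\iy}u$ and $\vp_{p,\ka}^{\vec\om}u$) differ only by the local Sobolev embedding applied to the same scaled local section $A_\ka(t):=(\ka\slt\vp)_*(\pi_\ka u)(\rho_\ka^\mu t)$. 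Indeed, unwinding the definitions in \Eqref{RA.tdef}--\Eqref{RA.ps} yields $\vp_{q,\ka}^{(\lda',\mu)}u=\rho_\ka^{\lda'+(m+\mu)/q}A_\ka$, so both exponents collapse to $\lda+(m+\mu)/p_1$ in part~(i) and to $\lda+(m+\mu)/p$ in part~(ii), making the $\rho_\ka$ factors cancel exactly.

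For part~(i), the scaling hypothesis $s_1-(m+r)/p_1=s_0-(m+r)/p_0$ forces $p_0>p_1$. The anisotropic Sobolev embeddings in \cite{Ama09a} (taking $m+r$ as the anisotropic dimension with weight vector $\mf{\nu}=(1,r)$) provide, via Lemma~\ref{lem-RA.WW}, a uniform continuous embedding $\gF_{p_1,\ka}^{s_1/\vec r}\hr\gF_{p_0,\ka}^{s_0/\vec r}$ for each $\ka\in\gK$ with constants independent of $\ka$ (since only the two model manifolds $\BR^{m+1}$ and the corner/half-space cases occur). The above scaling computation then gives
$$
\|\vp_{p_0,\ka}^{\vec\om_0}u\|_{\gF_{p_0,\ka}^{s_0/\vec r}}
\leq c\,\|\vp_{p_1,\ka}^{\vec\om}u\|_{\gF_{p_1,\ka}^{s_1/\vec r}}
\qa \ka\in\gK.
$$
Taking $\ell_{p_0}$- and $\ell_{p_1}$-norms on the two sides respectively and invoking \Eqref{R.ll} (note $p_1<p_0$, so $\ell_{p_1}\hr\ell_{p_0}$ is in the right direction), then applying Theorem~\ref{thm-RA.R} twice (once for the coretraction $\vp_{p_1}^{\vec\om}$, once for the retraction $\psi_{p_0}^{\vec\om_0}$) completes the embedding. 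Density of $\cD(J,\cD)$ in both spaces, guaranteed by Theorem~\ref{thm-A.B}(iii), yields the dense embedding.

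For part~(ii), the scaling analogue of the local Sobolev embedding from \cite{Ama09a} provides $\gF_{p,\ka}^{s/\vec r}\hr B_{\iy,\ka}^{t/\vec r}$ uniformly in $\ka$ under the condition $s-(m+r)/p\geq t$. The same prefactor cancellation gives $\|\vp_{\iy,\ka}^{\vec\om_\iy}u\|_{B_{\iy,\ka}^{t/\vec r}}\leq c\,\|\vp_{p,\ka}^{\vec\om}u\|_{\gF_{p,\ka}^{s/\vec r}}$. Passing to $\ell_\iy$ on the left and using the trivial inequality $\sup_\ka a_\ka\leq\|(a_\ka)\|_{\ell_p}$ on the right, then combining Theorems~\ref{thm-RA.R} and~\ref{thm-H.R}, yields $\gF_p^{s/\vec r,\vec\om}\hr B_\iy^{t/\vec r,\vec\om_\iy}$. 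To upgrade to the little space $b_\iy^{t/\vec r,\vec\om_\iy}$, I would observe that $\cD(J,\cD)\sdh\gF_p^{s/\vec r,\vec\om}$ (Theorem~\ref{thm-A.HBH}(i) combined with reflexivity) and $\cD(J,\cD)\is BC^{\iy/\vec r,\vec\om_\iy}$, so by definition \Eqref{H.bdef} the image of the embedding actually lies in the closure $b_\iy^{t/\vec r,\vec\om_\iy}$.

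The main obstacle I anticipate is verifying the uniformity (in $\ka$) of the local Sobolev embeddings $\gF_{p_1,\ka}^{s_1/\vec r}\hr\gF_{p_0,\ka}^{s_0/\vec r}$ and $\gF_{p,\ka}^{s/\vec r}\hr B_{\iy,\ka}^{t/\vec r}$. Although only two model geometries occur (Euclidean space and a half-space or corner, cf.\ step~(1) of the proof of Lemma~\ref{lem-RA.WW}), one must confirm that the Fourier-analytic constants in \cite{Ama09a} do not degenerate under restriction/extension between $\BR^{m+1}$ and $\BH^{m+1}$, and that the reiteration involved in defining the anisotropic spaces is compatible with the scaling argument. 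Once this uniformity is in hand, the remaining arguments are bookkeeping with the retraction-coretraction machinery already developed.
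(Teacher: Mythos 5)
Your proposal is correct and takes essentially the same approach as the paper: reduce to the Euclidean model via the retraction theorems, apply the anisotropic Sobolev embeddings of~\cite{Ama09a} locally, and exploit the fact that the exponent in $\vec\om_0$ (resp.\ $\vec\om_\iy$) makes $\vp_{p_0}^{\vec\om_0}=\vp_{p_1}^{\vec\om}$ (resp.\ $\vp_\iy^{\vec\om_\iy}=\vp_p^{\vec\om}$) hold \emph{exactly}, together with the sequence-space embedding $\ell_{p_1}\hr\ell_{p_0}$ from~\Eqref{R.ll}. Your explicit computation verifying the prefactor identity $\vp_{q,\ka}^{(\lda',\mu)}u=\rho_\ka^{\lda'+(m+\mu)/q}A_\ka$ is exactly the content the paper summarizes as ``$\psi_{p_1}^{\vec\om}=\psi_{p_0}^{\vec\om_0}$''. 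The only (minor) divergence is in part~(ii): the paper first establishes the local embedding $\gF_{p,\ka}^{s/\vec r}\hr b_{\iy,\ka}^{t/\vec r}$ by a density argument \emph{inside each chart}, then upgrades $\ell_\iy(\mf{b}_\iy^{t/\vec r})$ to $\ell_{\iy,\unif}(\mf{b}_\iy^{t/\vec r})$ via density of $\mf{\cD}\BYE$ so that Theorem~\ref{thm-H.Rbc} applies, whereas you first land in $B_\iy^{t/\vec r,\vec\om_\iy}$ via Theorem~\ref{thm-H.R} and only afterwards invoke density of $\cD\JcD$ \emph{globally} to pass into the closed subspace $b_\iy^{t/\vec r,\vec\om_\iy}$; both routes are sound and nearly equivalent in effort. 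Your concern about uniformity of the local constants is handled precisely as you anticipate: only the model manifolds $\BR^{m+1}$ and the half-space/corner occur, and the citations in the paper's proof (Theorems 3.3.2, 3.7.1, 3.7.5, 4.4.1 of~\cite{Ama09a}) are exactly the results that control the extension/restriction constants.
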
 }
%=====================================================================
\begin{proof} 
(1) 
Note that 
\hb{s_1>s_0} and \Eqref{E.ss} imply 
\hb{p_0>p_1}. Hence it follows from \Eqref{E.ss} and Theorems 3.3.2, 
3.7.5, and 4.4.1 of~\cite{Ama09a} that 
$$ 
\ell_{p_1}(\mf{\gF}_{p_1}^{s_1/\vec r}) 
\sdh\ell_{p_1}(\mf{\gF}_{p_0}^{s_0/\vec r}) 
\sdh\ell_{p_0}(\mf{\gF}_{p_0}^{s_0/\vec r}). 
$$ 
Also note that we get 
\hb{\psi_{p_1}^{\vec\om}=\psi_{p_0}^{\vec\om_0}} from \Eqref{E.ss}. Thus we 
infer from Theorem~\ref{thm-RA.R} that 
 \bspezeq 
 \bal 
 \begin{picture}(121,67)(-55,-6)        %teil2 8
% \put(-55,61){\line(1,0){121}}
% \put(-55,-6){\line(1,0){121}}
% \put(-55,-6){\line(0,1){67}}
% \put(66,-6){\line(0,1){67}}
 \put(0,50){\makebox(0,0)[b]{\small{$\vp_{p_1}^{\vec\om}$}}}
 \put(0,5){\makebox(0,0)[b]{\small{$\psi_{p_0}^{\vec\om_0}$}}}
 \put(-39,45){\makebox(0,0)[c]{\small{$\gF_{p_1}^{s_1/\vec r,\vec\om}$}}}
 \put(-39,0){\makebox(0,0)[c]{\small{$\gF_{p_0}^{s_0/\vec r,\vec\om_0}$}}}
 \put(46,45){\makebox(0,0)[c]{\small{$\ell_{p_1}(\mf{\gF}_{p_1}^{s_1/ 
 \vec r})$}}}
 \put(46,0){\makebox(0,0)[c]{\small{$\ell_{p_0}(\mf{\gF}_{p_0}^{s_0/ 
 \vec r})$}}}
 \put(51,22.5){\makebox(0,0)[l]{\small{$d$}}}
 \put(-20,45){\vector(1,0){40}}
 \put(20,0){\vector(-1,0){40}}
 \put(-39,33){\vector(0,-1){25}}
 \put(-37,33){\oval(4,4)[t]}
 \put(46,33){\vector(0,-1){25}}
 \put(48,33){\oval(4,4)[t]}
 \end{picture} 
 \eal 
 \npb 
 \espezeq 
is commuting. From this we obtain assertion~(i). 

\smallskip 
(2) 
We infer from Lemma~\ref{lem-RA.WW} and \cite[Theorem~3.3.2]{Ama09a} that 
$$ 
B_{p,\ka}^{s/\vec r}=B_{p,\ka}^{s/\mf{\nu}}\hr B_{p,\iy,\ka}^{s/\mf{\nu}} 
\hr B_{\iy,\iy,\ka}^{t/\mf{\nu}}=B_{\iy,\ka}^{t/\vec r} 
$$ 
and from \cite[Theorem~3.7.1]{Ama09a} that 
\hb{H_{p,\ka}^{s/\vec r}\hr B_{p,\iy,\ka}^{s/\mf{\nu}}}. Consequently, 
\hb{\gF_{p,\ka}^{s/\vec r}\hr B_{\iy,\ka}^{t/\vec r}}. From this and the 
density of $\cD\BYkE$ in~$\gF_{p,\ka}^{s/\vec r}$ it follows, due to 
\hb{\cD\BYkE\hr b_{\iy,\ka}^{t/\vec r}}, that 
\hb{\gF_{p,\ka}^{s/\vec r}\hr b_{\iy,\ka}^{t/\vec r}}. Thus, 
by \Eqref{R.ll}, 
\beq\Label{E.lpi} 
\ell_p(\mf{\gF}_p^{s/\vec r})\hr\ell_\iy(\mf{b}_\iy^{t/\vec r}). 
\eeq 
It is obvious that 
\hb{\mf{\cD}\BYE\hr\ell_{\iy,\unif}(\mf{b}_\iy^{t/\vec r})}. By 
Theorem~\ref{thm-RA.R} we know that $\mf{\cD}\BYE$ is dense 
in~$\ell_p(\mf{\gF}_p^{s/\vec r})$. From this and \Eqref{E.lpi} we deduce 
\hb{\ell_p(\mf{\gF}_p^{s/\vec r}) 
   \hr\ell_{\iy,\unif}(\mf{b}_\iy^{t/\vec r})}. Observing 
\hb{\psi_p^{\vec\om}=\psi_\iy^{\vec\om_\iy}}, we infer from 
Theorems \ref{thm-RA.R} and \ref{thm-H.Rbc} that the diagram 
 \bspezeq 
 \bal 
 \begin{picture}(131,67)(-55,-6)        %teil2 9
% \put(-55,61){\line(1,0){131}}
% \put(-55,-6){\line(1,0){131}}
% \put(-55,-6){\line(0,1){67}}
% \put(76,-6){\line(0,1){67}}
 \put(0,50){\makebox(0,0)[b]{\small{$\vp_p^{\vec\om}$}}}
 \put(0,5){\makebox(0,0)[b]{\small{$\psi_\iy^{\vec\om_\iy}$}}}
 \put(-38,45){\makebox(0,0)[c]{\small{$\gF_p^{s/\vec r,\vec\om}$}}}
 \put(-38,0){\makebox(0,0)[c]{\small{$b_\iy^{t/\vec r,\vec\om_\iy}$}}}
 \put(52,45){\makebox(0,0)[c]{\small{$\ell_p(\mf{\gF}_p^{s/\vec r})$}}}
 \put(52,0){\makebox(0,0)[c]{\small{$\ell_{\iy,\unif}(\mf{b}_\iy^{t/ 
 \vec r})$}}} \put(-20,45){\vector(1,0){40}}
 \put(20,0){\vector(-1,0){40}}
 \put(-38,33){\vector(0,-1){25}}
 \put(-36,33){\oval(4,4)[t]}
 \put(52,33){\vector(0,-1){25}}
 \put(54,33){\oval(4,4)[t]}
 \end{picture} 
 \eal 
 \npb 
 \espezeq 
is commuting. This proves~(ii). 
\end{proof} 
%---------------------------------------------------------------------
\begin{remark}\LabelT{rem-E.ss} 
Define the \emph{anisotropic small H\"older space} 
\hb{C_0^{s/\vec r,\vec\om}=C_0^{s/\vec r,\vec\om}\JV} to be the closure 
of~$\cD\JcD$ in~$B_\iy^{s/\vec r,\vec\om}$ for 
\hb{s>0}. Then the above proof shows 
\hb{\gF_p^{s/\vec r,\vec\om}\hr C_0^{t/\vec r,\vec\om_\iy}} if the 
hypotheses of~(ii) are satisfied. ${}$\hfill$\Box$ 
\end{remark} 
%---------------------------------------------------------------------
%---------------------------------------------------------------------
\section{Differential Operators}\LabelT{sec-M}% 
First we establish the mapping properties of $\na$ and~$\pl$ in anisotropic 
weighted Bessel potential and Besov spaces. They are, of course, of 
fundamental importance for the theory of differential equations. 
%---------------------------------------------------------------------
\begin{theorem}\LabelT{thm-M.D} 
Suppose either 
\hb{s\geq0} and 
\hb{\gG=\gF_p}, or 
\hb{s>0} and 
\hb{\gG\in\{B_\iy,b_\iy\}}. Then 
$$ 
\na\in\cL\bigl(\gG^{s+1,\lda},\gG^{s,\lda}(V_{\tau+1}^\sa)\bigr) 
\cap\cL\bigl(\gG^{(s+1)/\vec r,\vec\om}, 
\gG^{s/\vec r,\vec\om}(J,V_{\tau+1}^\sa)\bigr) 
\npb 
$$ 
and 
\hb{\pl\in\cL\bigl(\gG^{(s+r)/\vec r,\vec\om}, 
   \gG^{s/\vec r,(\lda+\mu,\mu)}\bigr)}. 
\end{theorem}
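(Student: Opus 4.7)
The plan is to derive all three mapping properties by the standard strategy of this paper: reduce to the Euclidean model spaces via the appropriate retraction theorem and then invoke the local results collected in~\cite{Ama09a}. For the first, isotropic claim $\na\in\cL(\gG^{s+1,\lda},\gG^{s,\lda}(V_{\tau+1}^\sa))$, I would use Theorem~\ref{thm-R.R} when $\gG=\gF_p$ and Theorems \ref{thm-H.Ri}, \ref{thm-H.Rbci} when $\gG\in\{B_\iy,b_\iy\}$, so that it suffices to produce, uniformly in $\ka\in\gK$, a bounded map from the local space $\gG_\ka^{s+1}$ into the corresponding local space of sections with image in the enlarged tensor bundle. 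The local representation \Eqref{L.Dr}, together with the uniform bound \Eqref{L.abd} on the coefficients $a_\ell$, expresses $(\ka\slt\vp)_*\na$ as $\pl_x$ plus a uniformly bounded perturbation of strictly lower order, both of which are handled by the classical mapping properties of $\pl_x$ on the scales $[\gG_\ka^t\,;\,t\in\BR]$ from \cite{Ama09a}. Note that the change of target from $V_\tau^\sa$ to $V_{\tau+1}^\sa$ is exactly what absorbs the extra factor of $\rho$ produced by $\na$ in the intrinsic norm~\Eqref{L.Nh}.

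For the second, anisotropic claim on $\na$, the same argument applies with Theorem~\ref{thm-RA.R} in place of Theorem~\ref{thm-R.R} (and Theorems \ref{thm-H.R}, \ref{thm-H.Rbc} for the H\"older-type scales). The decisive point is that $\na$ acts only in the spatial variables and therefore commutes with the time dilations $\Ta_{q,\ka}^\mu$ of~\Eqref{RA.tdef}; consequently the anisotropic localization $\vp_p^{\vec\om}$ of \Eqref{RA.ph} behaves with respect to $\na$ precisely as the isotropic $\vp_p^\lda$ does, and the previous step carries over verbatim after invoking the bounded action of $\pl_x$ on the local anisotropic spaces $\gG_\ka^{s/\vec r}$ (Lemma~\ref{lem-RA.WW} plus \cite{Ama09a}).

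For the third claim on $\pl$, I would again work through the anisotropic retractions, this time exploiting the commutation identity \Eqref{RA.tk}, $\pl^j\circ\Ta_{q,\ka}^\mu=\rho_\ka^{j\mu}\Ta_{q,\ka}^\mu\circ\pl^j$. A short calculation combining this with \Eqref{RA.ph} and \Eqref{RA.ps} yields the intertwining relation $\vp_{p,\ka}^{(\lda+\mu,\mu)}\circ\pl=\rho_\ka^{\mu}\rho_\ka^{-\mu}\pl\circ\vp_{p,\ka}^{\vec\om}=\pl\circ\vp_{p,\ka}^{\vec\om}$ (after accounting for the $\rho_\ka^\mu$ shift absorbed in the exponent $\lda+\mu$), so that under the retraction $\pl$ corresponds to the ordinary time derivative lowering the anisotropic order by~$r$; the weight shift $\lda\mapsto\lda+\mu$ is exactly the bookkeeping demanded by~\Eqref{RA.tk}. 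The required local statement $\pl\in\cL(\gG_\ka^{(s+r)/\vec r},\gG_\ka^{s/\vec r})$ is then supplied by Lemma~\ref{lem-RA.WW} and \cite[Sections 3.5, 3.7, 4.4]{Ama09a}.

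The main obstacle will be a careful handling of the little-H\"older case $\gG=b_\iy$: one must verify that the image under $\na$ or $\pl$ lands in $\ell_{\iy,\unif}$ rather than merely $\ell_\iy$, so that the retraction in Theorems \ref{thm-H.Rbci} and \ref{thm-H.Rbc} applies. This can be done either by reading off the required uniform continuity from the intrinsic characterization in Theorems \ref{thm-PH.bc}(iii) and \ref{thm-PH.bJ}(iii), or, more slickly, by first establishing the $B_\iy$-case and then restricting to the closure of $BC^{\iy,\lda}$, respectively $BC^{\iy/\vec r,\vec\om}$, and using that $\na$ and $\pl$ preserve these dense subspaces. All estimates transfer uniformly from the local model spaces $\BX_\ka$ and $\BY_\ka$ to the whole manifold thanks to the finite multiplicity of $\gK$, the uniform bounds \Eqref{L.abd} and \Eqref{L.adb}, and the uniform regularity of the Christoffel symbols of $\na_{\cona g}$ and $D$ built into the standing assumption~\Eqref{S.ass}.
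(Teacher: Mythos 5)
Your overall strategy is the same as the paper's: localize via the retraction-coretraction theorems, use the local representation \Eqref{L.Dr} together with the uniform bounds \Eqref{L.abd}, exploit the commutation \Eqref{RA.tk} for $\pl$, and close the $b_\iy$-case by density. The observation that the target shift $V_\tau^\sa\mapsto V_{\tau+1}^\sa$ is what balances the weight factor coming from \Eqref{L.Nh} is exactly right, and your intertwining calculation for $\pl$ (showing $\vp_{q,\ka}^{(\lda+\mu,\mu)}\circ\pl=\pl\circ\vp_{q,\ka}^{\vec\om}$) agrees with the paper's step~(3).

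Where your write-up is too loose is in the reduction step for $\na$. You claim it "suffices to produce, uniformly in $\ka$, a bounded map from $\gG_\ka^{s+1}$ into the corresponding local space", but the retraction framework actually requires bounding $\vp_q^{\vec\om}\circ\na\circ\psi_q^{\vec\om}$ on sequence spaces, and this operator is \emph{not} chart-diagonal. The source of the coupling is that $\na$ does not commute with multiplication by the cutoff $\pi_\ka$: one has $\vp_{q,\ka}^{\vec\om}(\na u)=\rho_\ka^{\lda+m/q}\Ta_{q,\ka}^\mu(\ka\slt\vp)_*(\pi_\ka\na u)$, and $\pi_\ka\na u\ne\na(\pi_\ka u)$. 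The paper's step~(2) handles this by inserting $\chi_\ka$ (so $\pi_\ka\na u=\pi_\ka\na(\chi_\ka u)$), which yields $\vp_{q,\ka}^{\vec\om}(\na u)=(\ka_*\pi_\ka)\bigl((\ka\slt\vp)_*\na\bigr)(\wh{\vp}_{q,\ka}^{\kern1pt\vec\om}u)$, and then expanding $\wh{\vp}_{q,\ka}^{\kern1pt\vec\om}$ via \Eqref{P.TS} into a finite sum over $\wt\ka\in\gN(\ka)$ with the transfer operators $R_{\wt\ka\ka}$ of Lemma~\ref{lem-P.T}. The uniform estimate on the sequence space then relies on \Eqref{P.abd}, Theorem~\ref{thm-P.M}, Lemma~\ref{lem-P.T}, and the finite multiplicity of $\gK$. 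You should make this explicit; as stated, your reduction would not go through literally. Note that for $\pl$ this issue does not arise, since $\pi_\ka$ is time-independent and hence commutes with $\pl$, which is why your treatment of the third claim is in order.
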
 
%=====================================================================
\begin{proof} 
We consider the time-dependent case. The proof in the stationary setting 
is similar. 

\smallskip 
(1) 
From \Eqref{L.Dr} and \Eqref{L.abd} we know that 
$$ 
(\ka\slt\vp)_*\na v=\pl_xv+a_\ka v 
\qa v\in C\bigl(J,C^1\BXkE\bigr), 
$$ 
where 
\hb{a_\ka\in C^\iy\bigl(Q_\ka^m,\cL(E_\tau^\sa,E_{\tau+1}^\sa)\bigr)} 
satisfies 
\hb{\|a_\ka\|\leq c(k)} for 
\hb{\ka\slt\vp\in\gK\slt\Phi}. Hence it follows from Theorem \ref{thm-P.b} 
and 
\hb{\gG_\ka^{(s+1)/\vec r}\hr\gG_\ka^{s/\vec r}} that 
$$ 
A_\ka:=(v\mt a_\ka\chi v) 
\in\cL\bigl(\gG_\ka^{(s+1)/\vec r}, 
\gG^{s/\vec r}(\BY_\ka,E_{\tau+1}^\sa)\bigr) 
\qa \|A_\ka\|\leq c 
\qb \ka\slt\vp\in\gK\slt\Phi. 
$$ 
By \cite[Theorem~4.4.2]{Ama09a} and Theorems \ref{thm-PH.BJ} and 
\ref{thm-PH.bcintJ} we get 
\beq\Label{M.D} 
\pl_x\in\cL\bigl(\gG_\ka^{(s+1)/\vec r}, 
\gG^{s/\vec r}(\BY_\ka,E_{\tau+1}^\sa)\bigr) 
\qb \pl\in\cL(\gG_\ka^{(s+r)/\vec r},\gG_\ka^{s/\vec r}).  
\eeq  

\smallskip 
(2) 
Set 
\hb{q:=p} if 
\hb{\gG=\gF_p}, and 
\hb{q:=\iy} otherwise. Then, given 
\hb{u\in\gG_\ka^{(s+1)/\vec r,\vec\om}}, 
$$ 
\vp_{q,\ka}^{\vec\om}(\na u) 
=\rho_\ka^{\lda+m/q}\Ta_{q,\ka}^\mu(\ka\slt\vp)_*(\pi_\ka\na u) 
=(\ka_*\pi_\ka)\bigl((\ka\slt\vp)_*\na\bigr) 
 (\wh{\vp}_{q,\ka}^{\kern1pt\vec\om}u).
$$ 
Hence we get from \Eqref{P.TS} 
$$ 
\vp_{q,\ka}^{\vec\om}(\na u) 
=\sum_{\wt{\ka}\in\gK}b_{\wt{\ka}\ka}(\ka\slt\vp)_*\na 
(R_{\wt{\ka}\ka}\vp_{q,\wt{\ka}}^{\vec\om}u), 
$$ 
where 
\hb{b_{\wt{\ka}\ka}=(\ka_*\pi_\ka)a_{\wt{\ka}\ka}} and 
$a_{\wt{\ka}\ka}$~is defined by \Eqref{P.a}. From this, 
\Eqref{R.LS}, \,\Eqref{P.abd}, Lemma~\ref{lem-P.T}, step~(1), 
Theorem~\ref{thm-P.M}, and the finite multiplicity of~$\gK$ we infer 
$$ 
\|\vp_q^{\vec\om}(\na u)\| 
_{\ell_q(\mf{\gG}^{s/\vec r}(\BY,E_{\tau+1}^\sa))} 
\leq c\,\|\vp_q^{\vec\om}u\|_{\ell_q(\mf{\gG}^{(s+1)/\vec r})} 
$$ 
for 
\hb{u\in\gG^{(s+1)/\vec r,\vec\om}}. Using Theorems \ref{thm-RA.R}, 
\,\ref{thm-H.R}, and \ref{thm-H.Rbc} we thus obtain 
$$ 
\|\vp_q^{\vec\om}(\na u)\| 
_{\ell_q(\mf{\gG}^{s/\vec r}(\BY,E_{\tau+1}^\sa))} 
\leq c\,\|u\|_{\gG^{(s+1)/\vec r,\vec\om}}  
\qa u\in\gG^{(s+1)/\vec r,\vec\om}.  
\npb 
$$ 
Thus the first assertion follows from 
\hb{\na u=\psi_q^{\vec\om}\bigl(\vp_q^{\vec\om}(\na u)\bigr)} by invoking 
these theorems once more. 
 
\smallskip 
(3) 
Since (see \Eqref{RA.tk}) 
$$ 
\vp_{q,\ka}^{(\lda+\mu,\mu)}(\pl u) 
=\rho_\ka^\mu\vp_{q,\ka}^{\vec\om}\pl u 
=\pl(\vp_{q,\ka}^{\vec\om}u), 
\npb 
$$ 
the second assertion is implied by the second part of \Eqref{M.D} and 
the arguments of step~(2). 
\end{proof} 
By combining this result with Theorem~\ref{thm-C.CC} and embedding theorems 
of the preceding section we can derive mapping properties of differential 
operators. To be more precise, for 
\hb{k\in\BN^\times} we consider operators of the form 
$$ 
\cA=\sum_{i+jr\leq kr}a_{ij}\cdot\na^i\pl^j 
$$ 
where $a_{ij}$~are suitably regular time-dependent 
vector-bundle-valued tensor field homomorphisms and 
\hb{a_{ij}\cdot\na^i\pl^j} equals 
\hb{\bigl(u\mt a_{ij}\cdot(\na^i\pl^ju)\bigr)}, 
of course. Recall that 
\hb{\gF_\iy=B_\iy}. 
%---------------------------------------------------------------------
\begin{theorem}\LabelT{thm-M.D0} 
Let\/ 
\hb{\bar W=(\bar W,h_{\bar W},D_{\bar W})} be a fully uniformly regular 
vector bundle over~$M$. Suppose $k$, $\bar\sa$,~$\bar\tau$ 
belong to~$\BN$ and 
\hb{\bar\lda\in\BR}. For 
\hb{0\leq i\leq k} set 
$$ 
\sa_i:=\bar\sa+\tau+i 
\qb \tau_i:=\bar\tau+\sa 
\qb \vec{\bar\om}:=(\bar\lda,\mu). 
\po 
$$ 
{\samepage 
\begin{itemize} 
\item[{\rm(i)}] 
Given 
\hb{i,j\in\BN} with 
\hb{i+jr\leq k}, put 
$$ 
\lda_{ij}:=\bar\lda-\lda-j\mu 
\qb \vec\om_{ij}:=(\lda_{ij},\mu). 
\po  
$$ 
Let condition~\Eqref{C.su} be satisfied. Suppose 
\hb{\wh{s}>|s|} if\/ 
\hb{q=p}, and 
\hb{\wh{s}=|s|>0} if\/ 
\hb{q=\iy}, and 
\beq\Label{M.Ba} 
a_{ij}\in B_\iy^{\wh{s}/\vec r,\vec\om_{ij}} 
\bigl(J,T_{\tau_i}^{\sa_i}\bigl(M,\Hom(W,\bar W)\bigr)\bigr) 
\qa i+jr\leq k. 
\po 
\eeq 
Then 
$$ 
\cA\in\cL\bigl(\gF_q^{(s+kr)/\vec r,\vec\om}, 
\gF_q^{s/\vec r,\vec{\bar\om}} 
\bigl(J,V_{\bar\tau}^{\bar\sa}(\bar W)\bigr)\bigr) 
\qa 1<q\leq\iy. 
\po 
$$ 
If $B_\iy^{\wh{s}/\vec r,\vec\om_{ij}}$ in \Eqref{M.Ba} is replaced by 
$b_\iy^{\wh{s}/\vec r,\vec\om_{ij}}$, then 
$$ 
\cA\in\cL\bigl(b_\iy^{(s+kr)/\vec r,\vec\om}, 
b_\iy^{s/\vec r,\vec{\bar\om}} 
\bigl(J,V_{\bar\tau}^{\bar\sa}(\bar W)\bigr)\bigr). 
\po 
$$ 
\item[{\rm(ii)}] 
Fix 
$$ 
p_{ij}
\left\{
\bal
{}      %do not remove!
&=(m+r)/(kr-i-jr),   &&\quad i+jr>kr-(m+r)/p,\\
&>p,                 &&\quad i+jr=kr-(m+r)/p,\\
&=p,                 &&\quad i+jr<kr-(m+r)/p, 
\eal
\right. 
\po 
$$ 
and set 
$$ 
\lda_{ij}:=\bar\lda-\lda-j\mu-(m+\mu)/p_{ij} 
\qb \vec\om_{ij}:=(\lda_{ij},\mu) 
\po 
$$ 
for 
\hb{i+jr\leq kr}. Suppose 
$$ 
a_{ij}\in L_{p_{ij}}
\bigl(J,L_{p_{ij}}^{\lda_{ij}}\bigl(T_{\tau_i}^{\sa_i} 
(M,\Hom(W,\bar W))\bigr)\bigr). 
\po 
$$ 
Then 
$$ 
\cA\in\cL\bigl(W_{\coW p}^{kr/\vec r,\vec\om}, 
L_p\bigl(J,L_p^{\bar\lda}(V_{\bar\tau}^{\bar\sa}(\bar W))\bigr)\bigr). 
$$ 
\item[{\rm(iii)}] 
In either case the map 
\hb{(a_{ij}\mt\cA)} is linear and continuous.\po 
\end{itemize} }  
\end{theorem}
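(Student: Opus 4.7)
\emph{Overall strategy.} The plan is to decompose $\cA u=\sum_{i+jr\leq kr}a_{ij}\cdot(\na^i\pl^j u)$ and bound each of the finitely many summands separately. The map $(a_{ij})\mapsto\cA$ is linear in each $a_{ij}$, so assertion~(iii) will follow once the estimates in~(i) and~(ii) are seen to be continuous in each $a_{ij}$. The algebraic choices in the hypotheses enforce both the weight balance $\lda_{ij}+(\lda+j\mu)=\bar\lda$ and the bundle-index balance: setting $V_1:=T_{\tau+i}^{\sa}(M,W)$, $V_{12}:=T_{\tau_i}^{\sa_i}(M,\Hom(W,\bar W))$, $V_2:=T_{\bar\tau}^{\bar\sa}(M,\bar W)$, the identities $\sa_2=\sa_{12}-\tau_1=\bar\sa$ and $\tau_2=\tau_{12}-\sa_1=\bar\tau$ from~\Eqref{C.l12} are tautologies, so Theorem~\ref{thm-C.CC} applies without adjustment.

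\emph{Part (i).} First iterate Theorem~\ref{thm-M.D}: $j$ applications of $\pl$ take $\gF_q^{(s+kr)/\vec r,\vec\om}$ into $\gF_q^{(s+kr-jr)/\vec r,(\lda+j\mu,\mu)}$, and a further $i$ applications of $\na$ produce $\na^i\pl^j u\in\gF_q^{(s+kr-i-jr)/\vec r,(\lda+j\mu,\mu)}(J,V_1)$. Since $s+kr-i-jr\geq s$, Corollary~\ref{cor-A.FF}(iii) embeds this into $\gF_q^{s/\vec r,(\lda+j\mu,\mu)}(J,V_1)$. Invoking Theorem~\ref{thm-C.CC}(i) on the complete contraction $v\mapsto a_{ij}\cdot v$ then places $a_{ij}\cdot(\na^i\pl^j u)$ in $\gF_q^{s/\vec r,\vec{\bar\om}}(J,V_{\bar\tau}^{\bar\sa}(\bar W))$: case~$(\delta)$ handles $q=p$ (since the hypothesis gives $|s|<\wh s$), while case~$(\gamma)$ handles $q=\iy$ with $\cB=\gG=B_\iy$ (since then $\wh s=s>0$). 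Summing over the finite index set yields $\cA$, and the $b_\iy$-coefficient variant is handled identically through the $b_\iy$ branch of the same theorem.

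\emph{Part (ii).} Using $W_{\coW p}^{kr/\vec r,\vec\om}\doteq H_p^{kr/\vec r,\vec\om}$ from Theorem~\ref{thm-A.WHB}(i) together with Theorem~\ref{thm-M.D}, the tensor field $\na^i\pl^j u$ lies in $H_p^{(kr-i-jr)/\vec r,(\lda+j\mu,\mu)}(J,V_1)$, and the prescribed exponents $p_{ij}$ realise the Sobolev scaling. In the sub-critical regime $i+jr<kr-(m+r)/p$, Theorem~\ref{thm-E.Sob}(ii) embeds this into $b_\iy^{t/\vec r,(\lda+j\mu+(m+\mu)/p,\mu)}$ for some $t>0$, and integrating the pointwise contraction bound $|a_{ij}\cdot v|_{\bar h}\leq c\,|a_{ij}|_{h_{12}}|v|_h$ against $a_{ij}\in L_p(J,L_p^{\lda_{ij}})$ delivers the required $L_p(J,L_p^{\bar\lda})$-estimate, the identity $\lda_{ij}+\lda+j\mu+(m+\mu)/p=\bar\lda$ closing the weight bookkeeping. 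In the super-critical regime $i+jr>kr-(m+r)/p$, Theorem~\ref{thm-E.Sob}(i) instead produces an embedding into $L_{q_{ij}}(J,L_{q_{ij}}^{\lda+j\mu+(m+\mu)/p_{ij}})$ with $1/q_{ij}=1/p-1/p_{ij}$, after which H\"older's inequality against $a_{ij}\in L_{p_{ij}}(J,L_{p_{ij}}^{\lda_{ij}})$ furnishes the bound. The borderline case $i+jr=kr-(m+r)/p$ is reduced to the super-critical one by first descending from $H_p^{(kr-i-jr)/\vec r}$ to the strictly lower regularity $(m+r)/p_{ij}<kr-i-jr$ via Corollary~\ref{cor-A.FF}(iii), which is legitimate because the hypothesis permits $p_{ij}>p$ to be chosen arbitrarily.

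\emph{Main obstacle.} The delicate point is the borderline regime in part~(ii): the Sobolev embedding at the endpoint exponent $s=(m+r)/p$ is not directly furnished by Theorem~\ref{thm-E.Sob}, so one must exploit the freedom to take $p_{ij}>p$ in order to slide off the endpoint before invoking Sobolev with matching scaling. Everything else---iteration of $\na$ and $\pl$, embeddings within the weighted Bessel potential scales, and the contraction multiplier bounds---reduces to direct applications of Theorems \ref{thm-M.D}, \ref{thm-C.CC}, and~\ref{thm-E.Sob}, with assertion~(iii) reading off from the fact that every estimate produced above is linear and continuous in the coefficient tensor $a_{ij}$.
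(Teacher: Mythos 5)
Your proof follows essentially the same path as the paper's. Part (i) iterates Theorem~\ref{thm-M.D}, embeds down to regularity~$s$, and applies Theorem~\ref{thm-C.CC}; part (ii) splits on the position of $i+jr$ relative to the critical exponent $kr-(m+r)/p$ and closes the estimate via the pointwise contraction bound and H\"older's inequality; part (iii) is immediate by linearity and continuity of each single-term estimate — all exactly as in the paper. The one place you diverge, and in fact improve precision, is the sub-critical regime of part (ii): the paper cites Theorem~\ref{thm-E.Sob}(i) to land in $L_\iy(J,L_\iy^{\cdot})$, but that item is stated only for $p_0,p_1\in(1,\iy)$ and does not literally produce an $L_\iy$ target; you instead invoke Theorem~\ref{thm-E.Sob}(ii), which gives the $b_\iy^{t/\vec r,\cdot}\hr L_\iy(J,L_\iy^{\cdot})$ embedding cleanly, then conclude by H\"older with $p_{ij}=p$, $q_{ij}=\iy$ — this is the correct citation and matches what the computation actually requires. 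Your treatment of the borderline case, descending to regularity $(m+r)/p_{ij}<kr-i-jr$ via $p_{ij}>p$ before applying the scaling embedding, reproduces the paper's step verbatim in content.
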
 
%=====================================================================
\begin{proof} 
(1) 
Theorem~\ref{thm-M.D} implies 
\beq\Label{M.ij} 
\na^i\pl^j 
\in\cL\bigl(\gF_q^{(s+kr)/\vec r,\vec\om}, 
\gF_q^{(s-i+(k-j)r)/\vec r,(\lda+j\mu,\mu)}(J,V_{\tau+i}^\sa)\bigr) 
\eeq 
and this is also true if $\gF_\iy$~is replaced by~$b_\iy$. Since 
$$ 
\gF_q^{(s-i+(k-j)r)/\vec r,(\lda+j\mu,\mu)}(J,V_{\tau+i}^\sa) 
\hr\gF_q^{s/\vec r,(\lda+j\mu,\mu)}(J,V_{\tau+i}^\sa) 
\npb 
$$ 
assertion~(i) follows from Theorem~\ref{thm-C.CC}. 

\smallskip 
(2) 
If 
\hb{i+jr>kr-(m+r)/p}, then we get from Theorem~\ref{thm-E.Sob}(i) 
$$ 
H_p^{(kr-i-jr)/\vec r,(\lda+j\mu,\mu)}(J,V_{\tau+i}^\sa) 
\hr L_{q_{ij}}\bigl(J,L_{q_{ij}}^{\bar\lda-\lda_{ij}}(V_{\tau+i}^\sa)\bigr), 
\npb 
$$ 
where 
\hb{1/q_{ij}:=1/p-1/p_{ij}}. 

\smallskip 
Suppose 
\hb{i+jr=kr-(m+r)/p}. Then 
\hb{p_{ij}>p} implies 
\hb{s:=i+jr+(m+r)/p_{ij}<kr}. Thus, invoking Theorem~\ref{thm-E.Sob}(i) 
once more, 
$$ 
H_p^{(kr-i-jr)/\vec r,(\lda+j\mu,\mu)}(J,V_{\tau+i}^\sa) 
\hr H_p^{(s-i-jr)/\vec r,(\lda+j\mu,\mu)}(J,V_{\tau+i}^\sa) 
\hr L_{q_{ij}}\bigl(J,L_{q_{ij}}^{\bar\lda-\lda_{ij}}(V_{\tau+i}^\sa)\bigr). 
$$ 

\smallskip 
If 
\hb{i+jr<kr-(m+r)/p}, then we deduce from Theorem~\ref{thm-E.Sob}(i) 
$$ 
H_p^{(kr-i-jr)/\vec r,(\lda+j\mu,\mu)}(J,V_{\tau+i}^\sa) 
\hr L_\iy\bigl(J,L_\iy^{\bar\lda-\lda_{ij}}(V_{\tau+i}^\sa)\bigr). 
$$ 

\smallskip 
Since 
\hb{q_{ij}=\iy} if 
\hb{p_{ij}=p} we get in either case from \Eqref{M.ij} 
$$ 
\na^i\pl^ju 
\in L_{q_{ij}}\bigl(J,L_{q_{ij}}^{\bar\lda-\lda_{ij}}(V_{\tau+i}^\sa)\bigr) 
=:L_{q_{ij}}(J,X_{ij}) 
\qa u\in H_p^{kr/\vec r,\vec\om}. 
$$ 
Note 
\hb{\bar\lda+\bar\tau-\bar\sa 
   =\lda_{ij}+\tau_i-\sa_i+\bar\lda-\lda_{ij}+\tau+i-\sa} implies, due to 
Lemma~\ref{lem-C.CE}, 
$$ 
\rho^{\bar\lda+\bar\tau-\bar\sa}\,|a_{ij}\cdot\na^i\pl^ju|_{\bar h} 
\leq c\rho^{\lda_{ij}+\tau_i-\sa_i}\,|a_{ij}|_{h_{ij}} 
\,\rho^{\bar\lda-\lda_{ij}+\tau+i-\sa}\,|\na^i\pl^ju|_{h_i}, 
$$ 
where 
\hb{\bar h:=\pr_{\bar\sa}^{\bar\tau}\otimes h_{\bar W}}, 
\ \hb{h_{ij}:=\pr_{\sa_i}^{\tau_i}\otimes h_{W\bar W}}, and 
\hb{h_i:=\pr_\sa^{\tau+i}\otimes h_W}. Hence, by H\"older's inequality, 
$$ 
\|a_{ij}\cdot\na^i\pl^ju\| 
_{L_p(J,L_p^{\bar\lda}(V_{\bar\tau}^{\bar\sa}(\bar W)))} 
\leq\|a_{ij}\|_{L_{p_{ij}}(J,Y_{ij})} 
\,\|\na^i\pl^ju\|_{L_{q_{ij}}(J,X_{ij})}, 
$$ 
where 
\hb{Y_{ij}:=L_{p_{ij}}^{\lda_{ij}} 
   \bigl(T_{\tau_i}^{\sa_i}\bigl(M,\Hom(W,\bar W)\bigr)\bigr)}. By combining 
this with \Eqref{M.ij} and using 
\hb{W_{\coW p}^{kr/\vec r,\vec\om}\doteq\gF_p^{kr/\vec r,\vec\om}} we get 
assertion~(ii). 

\smallskip 
(3) 
The last claim is obvious. 
\end{proof} 
It is clear which changes have to be made to get analogous results for 
`stationary' differential operators in the time-independent isotropic case. 
Details are left to the reader. 
%---------------------------------------------------------------------
%---------------------------------------------------------------------
\section{Extensions and Restrictions}\LabelT{sec-ER}% 
In many situations it is easier to consider anisotropic function spaces on 
the whole line rather than on the half-line. Therefore we investigate in 
this section the possibility of extending half-line spaces to spaces on 
all of~$\BR$. 

\smallskip 
We fix 
\hb{h\in C^\iy\bigl((0,\iy),\BR\bigr)} satisfying 
\beq\Label{ER.h} 
\int_0^\iy t^s\,|h(t)|\,dt<\iy 
\qb s\in\BR 
\qa (-1)^k\int_0^\iy t^kh(t)\,dt=1 
\qb k\in\BZ, 
\eeq 
and 
\hb{h(1/t)=-th(t)} for 
\hb{t>0}. Lemma~4.1.1 of~\cite{Ama09a}, which is taken from~\cite{Ham75a}, 
guarantees the existence of such a function. 

\smallskip 
Let $\cX$ be a locally convex space. Then the \emph{point-wise restriction}, 
\beq\Label{ER.r} 
r^+\sco C\BRcX\ra C\BRpcX 
\qb u\mt u\sn\BR^+, 
\eeq 
is a continuous linear map. For 
\hb{v\in C\BRpcX} we set 
\beq\Label{ER.eps} 
\ve v(t):=\int_0^\iy h(s)v(-st)\,ds 
\qa t<0, 
\eeq 
and 
\beq\Label{ER.e} 
e^+v:=
\left\{
\bal
{}      %do not remove!
&&v      &\quad \text{on }\BR^+,\\
&&\ve v  &\quad \text{on }(-\iy,0).
\eal
\right.
\eeq 
It follows from \Eqref{ER.h} that $e^+$~is a 
continuous linear map from $C\BRpcX$ into $C\BRcX$, and 
\hb{r^+e^+=\id}. Thus point-wise restriction~\Eqref{ER.r} is a retraction, 
and $e^+$~is a coretraction. 

\smallskip 
By replacing~$\BR^+$ in \Eqref{ER.r} by~$-\BR^+$ and using obvious 
modifications we get the point-wise restriction~$r^-$ `to the negative 
half-line' and a corresponding extension operator~$e^-$. The \emph{trivial 
extension operator} 
$$ 
e_0^+\sco C_{(0)}\BRpcX:=\bigl\{\,u\in C\BRpcX\ ;\ u(0)=0\,\bigr\} 
\ra C\BRcX 
$$ 
is defined by 
\hb{e_0^+v:=v} on~$\BR^+$ and 
\hb{e_0^+v:=0} on~$(-\iy,0)$. Then 
\beq\Label{ER.r0} 
r_0^+:=r^+(1-e^-r^-)\sco C\BRcX\ra C_{(0)}\BRpcX 
\npb 
\eeq 
is a retraction, and $e_0^+$~is a coretraction. 

\smallskip 
We define: 
$$ 
\gF_p^{s/\vec r,\vec\om}\bigl((0,\iy),V\bigr) 
\text{ is the closure of $\cD\bigl((0,\iy),\cD\bigr)$ in } 
\gF_p^{s/\vec r,\vec\om}\BRpV. 
$$ 
Thus 
$$ 
\ci\gF_p^{s/\vec r,\vec\om}\BRpV 
\hr\gF_p^{s/\vec r,\vec\om}\bigl((0,\iy),V\bigr) 
\hr\gF_p^{s/\vec r,\vec\om}\BRpV . 
\npb 
$$ 
Now we can prove an extension theorem `from the half-cylinder 
\hb{M\times\BR^+} to the full cylinder 
\hb{M\times\BR}.'\po 
%---------------------------------------------------------------------
{\samepage 
\begin{theorem}\LabelT{thm-ER.ER} 
\begin{itemize} 
\item[{\rm(i)}] 
Suppose 
\hb{s\in\BR} where 
\hb{s>-1+1/p} if\/ 
\hb{\pl M\neq\es}. Then the diagram 
 \bspezeq 
  \bal 
 \begin{picture}(248,88)(-110,-41)        %teil2 10
% \put(-110,47){\line(1,0){248}}
% \put(-110,-41){\line(1,0){248}}
% \put(-110,-41){\line(0,1){88}}
% \put(138,-41){\line(0,1){88}}
 \put(0,40){\makebox(0,0)[b]{\small{$d$}}}
 \put(0,5){\makebox(0,0)[b]{\small{$d$}}}
 \put(0,-30){\makebox(0,0)[b]{\small{$d$}}}
 \put(-90,35){\makebox(0,0)[c]{\small{$\cD\BRpcD$}}}
 \put(-90,-35){\makebox(0,0)[c]{\small{$\cD\BRpcD$}}}
 \put(-40,0){\makebox(0,0)[c]{\small{$\cD\BRcD$}}}
 \put(53,0){\makebox(0,0)[c]{\small{$\gF_p^{s/\vec r,\vec\om}\BRV$}}}
 \put(111,35){\makebox(0,0)[c]{\small{$\gF_p^{s/\vec r,\vec\om}\BRpV$}}}
 \put(111,-35){\makebox(0,0)[c]{\small{$\gF_p^{s/\vec r,\vec\om}\BRpV$}}}
 \put(-53,17.5){\makebox(0,0)[r]{\small{$e^+$}}}
 \put(75,17.5){\makebox(0,0)[l]{\small{$e^+$}}}
 \put(-54,-17.5){\makebox(0,0)[r]{\small{$r^+$}}}
 \put(75,-17.5){\makebox(0,0)[l]{\small{$r^+$}}}
 \put(-95,0){\makebox(0,0)[r]{\small{$\id$}}}
 \put(116,0){\makebox(0,0)[l]{\small{$\id$}}}
 \put(-64,35){\vector(1,0){141}}
 \put(-64,37){\oval(4,4)[l]}
 \put(-18,0){\vector(1,0){40}}
 \put(-18,2){\oval(4,4)[l]}
 \put(-64,-35){\vector(1,0){141}}
 \put(-64,-33){\oval(4,4)[l]}
 \put(-90,25){\vector(0,-1){50}}
 \put(111,25){\vector(0,-1){50}}
 \put(-80,25){\vector(1,-1){20}}
 \put(-60,-5){\vector(-1,-1){20}}
 \put(101,25){\vector(-1,-1){20}}
 \put(81,-5){\vector(1,-1){20}}
 \end{picture} 
 \eal 
 \npb 
\espezeq 
is commuting.\po 
\item[{\rm(ii)}] 
If 
\hb{s>0}, then 
 \bspezeq 
 \bal 
 \begin{picture}(261,88)(-117,-41)        %teil2 11
% \put(-117,47){\line(1,0){261}}
% \put(-117,-41){\line(1,0){261}}
% \put(-117,-41){\line(0,1){88}}
% \put(144,-41){\line(0,1){88}}
 \put(0,40){\makebox(0,0)[b]{\small{$d$}}}
 \put(0,5){\makebox(0,0)[b]{\small{$d$}}}
 \put(0,-30){\makebox(0,0)[b]{\small{$d$}}}
 \put(-90,35){\makebox(0,0)[c]{\small{$\cD\bigl((0,\iy),\cD\bigr)$}}}
 \put(-90,-35){\makebox(0,0)[c]{\small{$\cD\bigl((0,\iy),\cD\bigr)$}}}
 \put(-40,0){\makebox(0,0)[c]{\small{$\cD\BRcD$}}}
 \put(53,0){\makebox(0,0)[c]{\small{$\gF_p^{s/\vec r,\vec\om}\BRV$}}}
 \put(111,35){\makebox(0,0)[c]{\small{$\gF_p^{s/\vec r,\vec\om} 
 \bigl((0,\iy),V\bigr)$}}}
 \put(111,-35){\makebox(0,0)[c]{\small{$\gF_p^{s/\vec r,\vec\om} 
 \bigl((0,\iy),V\bigr)$}}}
 \put(-53,17.5){\makebox(0,0)[r]{\small{$e_0^+$}}}
 \put(75,17.5){\makebox(0,0)[l]{\small{$e_0^+$}}}
 \put(-54,-17.5){\makebox(0,0)[r]{\small{$r_0^+$}}}
 \put(75,-17.5){\makebox(0,0)[l]{\small{$r_0^+$}}}
 \put(-95,0){\makebox(0,0)[r]{\small{$\id$}}}
 \put(116,0){\makebox(0,0)[l]{\small{$\id$}}}
 \put(-58,35){\vector(1,0){129}}
 \put(-58,37){\oval(4,4)[l]}
 \put(-18,0){\vector(1,0){40}}
 \put(-18,2){\oval(4,4)[l]}
 \put(-58,-35){\vector(1,0){129}}
 \put(-58,-33){\oval(4,4)[l]}
 \put(-90,25){\vector(0,-1){50}}
 \put(111,25){\vector(0,-1){50}}
 \put(-80,25){\vector(1,-1){20}}
 \put(-60,-5){\vector(-1,-1){20}}
 \put(101,25){\vector(-1,-1){20}}
 \put(81,-5){\vector(1,-1){20}}
 \end{picture} 
 \eal 
 \npb 
 \espezeq 
is a commuting diagram as well.\po 
\end{itemize} 
\end{theorem}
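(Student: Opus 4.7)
The plan is to reduce, via the anisotropic retraction Theorem~\ref{thm-RA.R}, to the corresponding extension--restriction results for the local Euclidean model spaces on $\BX_\ka\times\BR^+$ and $\BX_\ka\times\BR$, which are available via Lemma~\ref{lem-RA.WW} from~\cite[Section~4]{Ama09a}. The cornerstone observation is that $e^+$, $r^+$, $e_0^+$, and $r_0^+$ act only on the time variable and therefore commute pointwise with the spatial operations $(\ka\slt\vp)_*$, $(\ka\slt\vp)^*$ and with multiplication by~$\pi_\ka$. Moreover, since $\rho_\ka^\mu>0$ preserves $\BR^+$ and $(-\iy,0)$, the definitions \Eqref{RA.tdef} and \Eqref{ER.eps} give directly $\Ta_{q,\ka}^\mu\circ e^+=e^+\circ\Ta_{q,\ka}^\mu$, with analogous identities for the other three operators. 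Consequently, writing $\mf{e}^+$ and $\mf{r}^+$ for the diagonal actions on $\ell_p$-products of the local spaces, one has $\vp_p^{\vec\om}\circ e^+=\mf{e}^+\circ\vp_p^{\vec\om}$ and $\psi_p^{\vec\om}\circ\mf{r}^+=r^+\circ\psi_p^{\vec\om}$ on test sections.

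Next I would invoke the Euclidean theory: the universal extension~$e^+$ built from the fixed~$h$ of~\Eqref{ER.h} is bounded between the relevant local Bessel potential, respectively Besov, spaces on $\BX_\ka\times\BR^+$ and $\BX_\ka\times\BR$, uniformly in $\ka\in\gK$, for the stated range of~$s$. When $\pl M\neq\es$ and $\ka\in\gK_{\pl M}$, $\BY_\ka=\BH^m\times\BR^+$ is a $2$-corner and the extension acts from this corner into the half-space $\BH^m\times\BR$; the threshold $s>-1+1/p$ in~(i) arises precisely at these charts from the corner extension results of~\cite[Section~4.4]{Ama09a}, whereas for interior charts the extension is valid for every $s\in\BR$. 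The restriction $r^+$ is uniformly bounded in the opposite direction, and passing to the diagonal yields continuous $\mf{e}^+$, $\mf{r}^+$ between the $\ell_p$-products. Setting $e^+:=\psi_p^{\vec\om}\circ\mf{e}^+\circ\vp_p^{\vec\om}$ and $r^+:=\psi_p^{\vec\om}\circ\mf{r}^+\circ\vp_p^{\vec\om}$ with the appropriate source and target cylinders, Theorem~\ref{thm-RA.R} together with~\Eqref{R.rN} delivers the required continuity on the manifold level, while $r^+e^+=\id$ follows componentwise from $r^+e^+=\id$ in the Euclidean factors and the retraction identity $\psi_p^{\vec\om}\vp_p^{\vec\om}=\id$. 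The commutation relations ensure that this manifold-level definition agrees with the pointwise formulas~\Eqref{ER.r}, \Eqref{ER.e} on $\cD(\BR^+,\cD)$ and $\cD(\BR,\cD)$. The horizontal density arrows in both diagrams follow from Theorem~\ref{thm-A.B}(iii) together with the interpolation and duality definitions \Eqref{A.def}--\Eqref{A.Bdef}.

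For part~(ii), by definition $\gF_p^{s/\vec r,\vec\om}\bigl((0,\iy),V\bigr)$ is the closure of $\cD\bigl((0,\iy),\cD\bigr)$ in $\gF_p^{s/\vec r,\vec\om}\BRpV$, so~$e_0^+$ sends test sections on $(0,\iy)\times M$ to test sections on $\BR\times M$. The identity $r_0^+=r^+(1-e^-r^-)$ from~\Eqref{ER.r0}, combined with~(i) applied both to $(e^+,r^+)$ and to the analogously defined $(e^-,r^-)$ on $-\BR^+$, yields continuity of both maps and $r_0^+e_0^+=\id$ on test sections, which extends by density. The principal technical obstacle is the uniform-in-$\ka$ boundedness of the Euclidean extension for the full range of~$s$ at the corner charts, which rests on Fourier-analytic work already carried out in~\cite{Ama09a}; a secondary point is handling negative orders in~(i) via the duality definition~\Eqref{A.def1}, where one identifies the formal adjoint of~$e^+$ with~$r^+$ acting on the dual spaces (up to pairing conventions) to transfer the boundedness statements to $s<0$.
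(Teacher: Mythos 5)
Your overall architecture---reduce via Theorem~\ref{thm-RA.R} to the Euclidean model, exploiting that $e^+$, $r^+$, $e_0^+$, $r_0^+$ commute with $\Ta_{q,\ka}^\mu$ and with the spatial localization operators---is correct and matches the paper's step~(4). The commutation observation is the right key. But the Euclidean-level work, which you defer to ``Fourier-analytic work already carried out in~\cite{Ama09a},'' is not entirely offloadable, and the logical ordering you propose has a circularity.

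Concretely, two gaps. First, your duality claim is wrong: you say one ``identifies the formal adjoint of $e^+$ with $r^+$,'' but the correct dualities (Section~4.2 of~\cite{Ama09a}) are $(e_0^+)'=r^+$ and $(r_0^+)'=e^+$. This is not cosmetic: to get part~(i) for $s<0$ by duality via~\Eqref{A.def1}, you must already know that $e_0^+$ and $r_0^+$ are bounded on the positive-order vanishing-trace spaces---that is, you need the content of part~(ii) \emph{before} you can finish part~(i). Your proposal runs in the opposite direction, deriving~(ii) from~(i), and so cannot close the loop at negative~$s$. Second, the formula $r_0^+=r^+(1-e^-r^-)$ combined with~(i) gives boundedness of $r_0^+$ into $\gF_p^{s/\vec r,\vec\om}\BRpV$, but says nothing about $e_0^+$. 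Trivial extension by zero does \emph{not} factor through $e^+$ and $r^+$, and its boundedness from $\gF_p^{s/\vec r,\vec\om}\bigl((0,\iy),V\bigr)$ into $\gF_p^{s/\vec r,\vec\om}\BRV$ requires the intersection-space characterizations~\Eqref{ER.FLF}--\Eqref{ER.FLF0} together with the UMD property of~$L_p(V)$ and~\cite[Lemma~4.1.4]{Ama09a}; your write-up omits this entirely. (A smaller point: the threshold $s>-1+1/p$ does not come from a corner-extension bound in~\cite[Section~4.4]{Ama09a}, but from the identification $\gF_{p'}^{-s}(V')=\ci\gF_{p'}^{-s}(V')$ on the spatial half-space $\BH^m$, which is what makes the duality definition of the negative-order spaces compatible with the intersection characterization.)
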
 }
%=====================================================================
\begin{proof} 
(1)
Suppose 
\beq\Label{ER.M} 
M=(\BX,g_m)\text{ with }\BX\in\{\BR^m,\BH^m\} 
\qb \rho=\mf{1} 
\qb W=\BX\times F 
\qb D=d_F. 
\eeq 
If 
\hb{k\in\BN}, then it is not difficult to see that $r^+$~is a retraction 
from 
\hb{W_{\coW p}^{kr/\vec r}(\BX\times\BR,E)} onto 
\hb{W_{\coW p}^{kr/\vec r}(\BX\times\BR^+,E)}, and $e^+$~is a coretraction. 
(cf.~steps (1) and~(2) of the proof of Theorem~4.4.3 of~\cite{Ama09a}). 
Thus, if 
\hb{s>0}, the first assertion follows by interpolation.  

\smallskip 
(2) 
Let \Eqref{ER.M} be satisfied. Suppose 
\hb{s>0} and 
\hb{J=\BR^+}. It is an easy consequence of 
\beq\Label{ER.FLF} 
\gF_p^{s/\vec r}\JV 
=L_p\bigl(J,\gF_p^s(V)\bigr)\cap\gF_p^{s/r}\bigl(J,L_p(V)\bigr)\ph{.} 
\eeq 
that 
\beq\Label{ER.FLF0} 
\gF_p^{s/\vec r}\ciJV 
=L_p\bigl(J,\gF_p^s(V)\bigr)\cap\gF_p^{s/r}\bigl(\ci J,L_p(V)\bigr). 
\eeq 
From this it is obvious that 
$$ 
e_0^+\in\cL\bigl(\gF_p^{s/\vec r}\ciJV,\gF_p^{s/\vec r}\BRV\bigr). 
$$ 
Note that 
\hb{L_p(V)=L_p\BXE} is a UMD space 
(e.g.;~\cite[Theorem~III.4.5.2]{Ama95a}). Hence 
\cite[Lemma~4.1.4]{Ama09a}, definition~\Eqref{ER.r0}, and the 
arguments of step~(1) show 
$$ 
r_0^+\in\cL\bigl( \gF_p^{s/r}\bigl(\BR,L_p(V)\bigr), 
\gF_p^{s/r}\bigl(\ci J,L_p(V)\bigr)\bigr). 
\npb 
$$ 
From this, \Eqref{ER.FLF}, and \Eqref{ER.FLF0} we deduce assertion~(ii) 
in this setting. 

\smallskip 
(3) 
Assume \Eqref{ER.M} and 
\hb{s<0} with 
\hb{s>-1+1/p} if 
\hb{\BX=\BH^m}. Then 
\hb{\gF_{p'}^{-s}(V')=\ci\gF_{p'}^{-s}(V')} by 
Theorem~4.7.1(ii) of~\cite{Ama09a}. Hence 
$$ 
\bal 
\gF_{p'}^{-s/\vec r}\ciJVs 
&=L_{p'}\bigl(J,\gF_{p'}^{-s}(V')\bigr) 
 \cap\gF_{p'}^{-s/r}\bigl(\ci J,L_{p'}(V')\bigr)  
 =L_{p'}\bigl(J,\ci\gF_{p'}^{-s}(V')\bigr) 
 \cap\gF_{p'}^{-s/r}\bigl(\ci J,L_{p'}(V')\bigr)\\ 
&=\ci\gF_{p'}^{s/\vec r}\JVs. 
\eal 
$$ 
Thus, by \Eqref{A.def1}, 
$$ 
\gF_p^{s/\vec r}\JV\doteq\bigl(\gF_{p'}^{-s/\vec r}\ciJVs\bigr)'. 
$$ 
The results of Section~4.2 of~\cite{Ama09a} imply~$r^+$, 
respectively~$e^+$, is the dual of~$e_0^+$, respectively~$r_0^+$. From this 
and step~(2) it follows (see \cite[(4.2.3)]{Ama09a} that assertion~(i) holds 
in the present setting if 
\hb{s<0}, provided 
\hb{s>-1+1/p} if 
\hb{\BX=\BH^m}. 
 
\smallskip 
(4) 
It follows from \Eqref{RA.tdef} and \hbox{\Eqref{ER.r}--\Eqref{ER.e}} 
%\Eqref{ER.eps} 
that 
$$ 
r^+\circ\Ta_{q,\ka}^\mu=\Ta_{q,\ka}^\mu\circ r^+ 
\qb e^+\circ\Ta_{q,\ka}^\mu=\Ta_{q,\ka}^\mu\circ e^+  
$$ 
for 
\hb{1\leq q\leq\iy}. Hence 
\beq\Label{ER.rot} 
r_0^+\circ\Ta_{q,\ka}^\mu=\Ta_{q,\ka}^\mu\circ r_0^+ 
\qb e_0^+\circ\Ta_{q,\ka}^\mu=\Ta_{q,\ka}^\mu\circ e_0^+.  
\eeq 
Thus 
$$ 
\vp_{q,\ka}^{\vec\om}(r^+u) 
=\rho_\ka^{\lda+m/q}\Ta_{q,\ka}^\mu(\ka\slt\vp)_*(\pi_\ka r^+u) 
=r^+(\vp_{q,\ka}^{\vec\om}u) 
$$ 
and, similarly, 
$$ 
\psi_{q,\ka}^{\vec\om}(r^+v_\ka) 
=r^+(\psi_{q,\ka}^{\vec\om}v_\ka). 
$$ 
This implies that $\vp_q^{\vec\om}$ and~$\psi_q^{\vec\om}$ commute with 
$r^+$, $r_0^+$, $e^+$, and~$e_0^+$. Hence the statements follow from 
steps \hbox{(1)--(3)} and 
Theorem~\ref{thm-RA.R}. 
\end{proof} 
The next theorem concerns the extension of Besov-H\"older spaces from 
half- to full cylinders. 
%---------------------------------------------------------------------
\begin{theorem}\LabelT{thm-ER.B} 
Suppose either 
\hb{s\in r\BN} and 
\hb{\cB\in\{BC,bc\}}, or 
\hb{s>0} and 
\hb{\cB\in\{B_\iy,b_\iy\}}. Then $r^+$~is a retraction from 
$\cB^{s/\vec r,\vec\om}\BRV$ onto 
$\cB^{s/\vec r,\vec\om}\BRpV$, and $e^+$~is a coretraction. 
\end{theorem}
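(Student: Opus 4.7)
\begin{proofc}{proposal for Theorem~\ref{thm-ER.B}}
The plan is to mimic the strategy of Theorem~\ref{thm-ER.ER}: first establish the assertion in the Euclidean model setting, then transfer it to the singular manifold by means of the retraction-coretraction machinery of Section~\ref{sec-H}, using the fact that $r^+$ and $e^+$ commute with the dilations~$\Ta_{q,\ka}^\mu$ and hence with $\vp_\iy^{\vec\om}$ and $\psi_\iy^{\vec\om}$, exactly as in step~(4) of the proof of Theorem~\ref{thm-ER.ER}.

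\emph{Step~1 (Euclidean model).} I would first show that for
\hb{\BX\in\{\BR^m,\BH^m\}} and
\hb{k\in\BN}, the operator $r^+$ is a retraction from
$BC^{kr/\vec r}\bigl(\BX\times\BR,\ell_\iy(\mf E)\bigr)$
onto $BC^{kr/\vec r}\bigl(\BX\times\BR^+,\ell_\iy(\mf E)\bigr)$ with $e^+$ as coretraction, and likewise with $bc$ in place of $BC$. For $BC^{kr/\vec r}$ this follows by direct differentiation under the integral sign in \Eqref{ER.eps}, using the moment bounds \Eqref{ER.h}: the integrability of $t^s\,|h(t)|$ for all
\hb{s\in\BR} makes differentiation in $t$ and $x$ across the interface smooth and uniformly bounded. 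For $bc^{kr/\vec r}$ one uses Theorem~\ref{thm-H.Rbck}(ii), approximating by smooth $BC^{\iy/\vec r}$~functions and observing that $e^+$ sends $BC^{\iy/\vec r}(\BX\times\BR^+)$ into $BC^{\iy/\vec r}(\BX\times\BR)$ by the same differentiation-under-the-integral argument.

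\emph{Step~2 (Besov-H\"older case).} For
\hb{s>0} and
\hb{\cB=B_\iy}, I would invoke Theorem~\ref{thm-PH.BJ}(ii): picking
\hb{k_0r<s<k_1r} and interpolating the retraction property from step~1 yields
$$
r^+\in\cL\bigl(B_\iy^{s/\vec r}(\BX\times\BR,E),
  B_\iy^{s/\vec r}(\BX\times\BR^+,E)\bigr),\qquad
e^+\in\cL\bigl(B_\iy^{s/\vec r}(\BX\times\BR^+,E),
  B_\iy^{s/\vec r}(\BX\times\BR,E)\bigr),
$$
with \hb{r^+e^+=\id}. For
\hb{\cB=b_\iy}, definition~\Eqref{PH.bdef} and the fact that $e^+$ maps
$BC^{\iy/\vec r}(\BX\times\BR^+,E)$ into $BC^{\iy/\vec r}(\BX\times\BR,E)$ give $e^+\bigl(b_\iy^{s/\vec r}(\BX\times\BR^+,E)\bigr)\subset b_\iy^{s/\vec r}(\BX\times\BR,E)$ by density; $r^+$ obviously preserves the smooth subspace, so the retraction property descends.

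\emph{Step~3 (transfer to $M$).} As in step~(4) of the proof of Theorem~\ref{thm-ER.ER}, one verifies
$$
\vp_{\iy,\ka}^{\vec\om}\circ r^+=r^+\circ\vp_{\iy,\ka}^{\vec\om},\qquad
\psi_{\iy,\ka}^{\vec\om}\circ e^+=e^+\circ\psi_{\iy,\ka}^{\vec\om},
$$
and similarly with $r^+$ and $e^+$ interchanged, because $\Ta_{q,\ka}^\mu$ and spatial localization commute with restriction/extension in the $t$-variable. Combined with Theorems \ref{thm-H.Rk}, \ref{thm-H.Rbck}(i), \ref{thm-H.R}, and~\ref{thm-H.Rbc}(i), steps 1--2 lifted through $\ell_\iy$- and $\ell_{\iy,\unif}$-products of local spaces (using Lemmas \ref{lem-PH.f}--\ref{lem-PH.fJ}) yield the global assertion on $M\times\BR^+$ and $M\times\BR$.

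\emph{Main obstacle.} The only delicate point is the little-space case $\cB\in\{bc,b_\iy\}$: I must ensure that $e^+$ preserves the vanishing modulus condition \Eqref{PH.Sdx} across the gluing hyperplane
\hb{\{t=0\}}. The formula \Eqref{ER.eps} expresses $\ve v(t)$ for
\hb{t<0} as a scaling average of $v$ over $\BR^+$, so the H\"older modulus of $e^+v$ at points near
\hb{t=0} is controlled by that of~$v$ near
\hb{t=0}; combined with the moment identity \Eqref{ER.h} this gives vanishing Besov modulus at small~$\da$, preserving membership in $bc^{kr/\vec r}$ and $b_\iy^{s/\vec r}$. Once this is verified in the Euclidean setting, everything else is routine transfer.
\end{proofc}
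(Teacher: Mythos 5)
Your proposal is correct in substance, but it takes a genuinely different — and noticeably longer — route than the paper does. You localize to the Euclidean model via the $\vp_\iy^{\vec\om}$/$\psi_\iy^{\vec\om}$ machinery, prove the retraction property there, then transfer back using Theorems~\ref{thm-H.Rk}, \ref{thm-H.Rbck}, \ref{thm-H.R}, \ref{thm-H.Rbc} together with the commutation of $r^+$, $e^+$ with $\Ta_{q,\ka}^\mu$ (as in step (4) of the proof of Theorem~\ref{thm-ER.ER}). The paper, by contrast, works \emph{directly} on $M$: because $r^+$, $\ve$, and $e^+$ act only on the $t$-variable, they commute with every spatial object appearing in the intrinsic norm \Eqref{H.krN} — the covariant derivative $\na^i$, multiplication by $\rho^{\lda+i+j\mu+\tau-\sa}$, and the fiber norm $\vsdot_h$. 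The continuity of $\ve\sco\cB^{kr/\vec r,\vec\om}\BRpV\ra\cB^{kr/\vec r,\vec\om}\mBRpV$ then reduces to the one-dimensional estimate coming from the moment bounds \Eqref{ER.h}, and $e^+$ follows by gluing. For $bc$, the paper appeals to the density characterization of Theorem~\ref{thm-H.Rbck}(ii) — $e^+$ and $r^+$ preserve $BC^{\iy/\vec r,\vec\om}$, hence by continuity the closures — which is cleaner than your attempt in the ``Main obstacle'' paragraph to verify the vanishing-modulus condition \Eqref{PH.Sdx} by hand. Finally, the paper gets the Besov–H\"older case by a single interpolation via Corollary~\ref{cor-H.R}(ii), rather than interpolating in the Euclidean model and then lifting. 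Both routes arrive at the same place; yours is the ``default'' strategy used for the $L_p$-scales (where no intrinsic norm is available), but for the H\"older scales the intrinsic definition \Eqref{H.krN0}–\Eqref{H.krN} makes the direct argument available and much shorter. The extra cost of your approach is carrying the $\ell_\iy(\mf E)$-valued and $\ell_{\iy,\unif}$ identifications of Lemmas~\ref{lem-PH.f}–\ref{lem-PH.fJ} through the argument, which is not needed if one never leaves $M$.
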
 
%=====================================================================
\begin{proof} 
(1) 
Let 
\hb{k\in\BN} and 
\hb{\cB\in\{BC,bc\}}. It is obvious that 
$$ 
r^+\in\cL\bigl(\cB^{kr/\vec r,\vec\om}\BRV, 
\cB^{kr/\vec r,\vec\om}\BRpV\bigr). 
$$ 
It follows from \Eqref{ER.h} that 
\hb{\ve\in\cL\bigl(\cB^{kr/\vec r,\vec\om}\BRpV, 
   \cB^{kr/\vec r,\vec\om}\mBRpV\bigr)}. Thus, by the second part of 
\Eqref{ER.h} and \Eqref{ER.e}, 
$$ 
e^+\in\cL\bigl(\cB^{kr/\vec r,\vec\om}\BRpV, 
\cB^{kr/\vec r,\vec\om}\BRV\bigr).  
\npb 
$$  
From this we get the assertion in this case. 

\smallskip 
(2) 
If 
\hb{s>0} and 
\hb{\cB\in\{B_\iy,b_\iy\}}, then, due to Corollary~\ref{cor-H.R}, we obtain 
the statement by interpolation from the results of step~(1). 
\end{proof} 
Lastly, we consider little Besov-H\"older spaces `with vanishing initial 
values'. They are defined as follows: If 
\hb{k\in\BN}, then 
\beq\Label{ER.bc} 
\bal 
{}      %do not remove!
&u\in bc^{kr/\vec r,\vec\om}\bigl((0,\iy),V\bigr)\text{ iff}\\ 
&u\in bc^{kr/\vec r,\vec\om}\BRpV\text{ and $\pl^ju(0)=0$ for } 
 0\leq j\leq k. 
\eal 
\eeq 
Furthermore, 
\hb{b_\iy^{s/\vec r,\vec\om}\bigl((0,\iy),V\bigr)} is defined by 
\beq\Label{ER.bc1} 
\left\{ 
\bal 
{}          %do not remove!!!!!!!!!!
&\bigl(bc^{kr/\vec r,\vec\om}((0,\iy),V), 
 bc^{(k+1)r/\vec r,\vec\om}((0,\iy),V)\bigr)_{(s-kr)/r,\iy}^0,
    &&\quad  &kr<s &<(k+1)r,\\ 
&\bigl(bc^{kr/\vec r,\vec\om}((0,\iy),V), 
 bc^{(k+2)r/\vec r,\vec\om}((0,\iy),V)\bigr)_{1/2,\iy}^0, 
    &&\quad  &  s  &=(k+1)r, 
\eal 
\right. 
\npb 
\eeq 
where 
\hb{k\in\BN}. 
%---------------------------------------------------------------------
\begin{theorem}\LabelT{thm-ER.b} 
Let 
\hb{k\in\BN} and 
\hb{s>0}. Then $r_0^+$~is a retraction from 
$bc^{kr/\vec r,\vec\om}\BRV$ onto 
$bc^{kr/\vec r,\vec\om}\bigl((0,\iy),V\bigr)$ and from 
$b_\iy^{s/\vec r,\vec\om}\BRV$ onto 
$b_\iy^{s/\vec r,\vec\om}\bigl((0,\iy),V\bigr)$, and $e_0^+$~is a 
coretraction.
\end{theorem}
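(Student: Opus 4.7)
The overall strategy is to first establish the retraction-coretraction property for the integer-order little H\"older scale $bc^{kr/\vec r,\vec\om}$, and then pass to the fractional Besov-H\"older scale $b_\iy^{s/\vec r,\vec\om}$ by continuous interpolation. The definitions \Eqref{ER.bc} and \Eqref{ER.bc1} are crafted precisely to make this reduction transparent.

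For the $bc^{kr/\vec r,\vec\om}$ case I would proceed in three sub-steps. First, the operator $r^-$ is a retraction from $bc^{kr/\vec r,\vec\om}\BRV$ onto $bc^{kr/\vec r,\vec\om}\mBRpV$ with coretraction $e^-$, by the evident analogue of Theorem~\ref{thm-ER.B} obtained through the reflection $t\mapsto-t$; hence $r_0^+ = r^+(1-e^-r^-)$ is continuous into $bc^{kr/\vec r,\vec\om}\BRpV$. Second, the moment conditions \Eqref{ER.h} imposed on $h$ are tailored so that $\pl^j(r_0^+u)(0) = 0$ for $0 \leq j \leq k$ (by differentiating the integral defining $e^-r^-u$ under the integral and using $\int_0^\iy s^j h(s)\,ds = (-1)^j$), and hence by \Eqref{ER.bc} the image of $r_0^+$ lies in $bc^{kr/\vec r,\vec\om}\bigl((0,\iy),V\bigr)$. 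Third, I would verify that $e_0^+$ maps $bc^{kr/\vec r,\vec\om}\bigl((0,\iy),V\bigr)$ continuously into $bc^{kr/\vec r,\vec\om}\BRV$: since $e_0^+$ commutes with $\vp_\iy^{\vec\om}$ and $\psi_\iy^{\vec\om}$ by \Eqref{ER.rot}, Theorem~\ref{thm-H.Rbck}(i) reduces this claim to the Euclidean model $bc^{kr/\vec r}(\BY_\ka,E) \to bc^{kr/\vec r}(\BX_\ka\times\BR,E)$, which by Proposition~\ref{pro-PH.BUC} amounts to showing that the classical derivatives $\pa_x\pl^j(e_0^+v) = e_0^+(\pa_x\pl^j v)$, well-defined by virtue of the matching conditions $\pa_x\pl^j v(\cdot,0) = 0$ for $|\al|+jr\leq kr$, are uniformly continuous on $\BX_\ka\times\BR$. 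The identity $r_0^+e_0^+ = \id$ is then immediate, since the vanishing of $e_0^+v$ on $-\BR^+$ forces $r^-e_0^+v = 0$ and therefore $r_0^+e_0^+v = r^+e_0^+v = v$.

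For the Besov-H\"older case, fix $kr < s \leq (k+1)r$ and set $(\ell,\ta) := \bigl(k+1, (s-kr)/r\bigr)$ if $s < (k+1)r$, and $(\ell,\ta) := (k+2, 1/2)$ otherwise. The previous step shows that $(r_0^+, e_0^+)$ forms a retraction-coretraction pair between the full-line and half-line $bc^{jr/\vec r,\vec\om}$-spaces simultaneously for both $j = k$ and $j = \ell$. Applying the continuous interpolation functor $(\cdot,\cdot)^0_{\ta,\iy}$ propagates this property to the interpolated spaces. By Theorem~\ref{thm-H.Rbc}(ii) the full-line interpolant coincides with $b_\iy^{s/\vec r,\vec\om}\BRV$, while by definition \Eqref{ER.bc1} the half-line interpolant is $b_\iy^{s/\vec r,\vec\om}\bigl((0,\iy),V\bigr)$, and the assertion follows.

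The principal difficulty is the third sub-step above: confirming that the trivial zero extension across the hyperplane $\{t=0\}$ belongs to the anisotropic little H\"older space on the full cylinder. The vanishing of $\pa_x\pl^j v$ at $t=0$ for $|\al|+jr\leq kr$ is exactly what is needed for the distributional and classical interpretations of $\pa_x\pl^j(e_0^+v)$ to coincide, but the uniform continuity of these derivatives \emph{across} $t=0$ still demands separate verification. The cleanest route is the reduction to the Euclidean model supplied by Theorem~\ref{thm-H.Rbck}(i), combined with the characterization of $\BUC^{kr/\vec r}$ furnished by Proposition~\ref{pro-PH.BUC}; all other pieces of the argument are routine consequences of the already-proved retraction theorems and the standard functorial properties of continuous interpolation.
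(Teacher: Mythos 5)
Your proof is correct and takes essentially the same route as the paper, which is dispatched in two lines: the integer case from \Eqref{ER.r0} plus Theorem~\ref{thm-ER.B}, and the fractional case by interpolation. You have faithfully filled in the content the paper declares ``easily seen,'' in particular correctly locating and handling the genuinely nontrivial point (that the trivial extension $e_0^+$ maps $bc^{kr/\vec r,\vec\om}\bigl((0,\iy),V\bigr)$ into $bc^{kr/\vec r,\vec\om}\BRV$, via the reduction through \Eqref{ER.rot}, Theorem~\ref{thm-H.Rbck}(i), and Proposition~\ref{pro-PH.BUC}, together with the moment conditions~\Eqref{ER.h}), and your passage to the fractional scale via continuous interpolation is exactly what the definitions \Eqref{ER.bc1} and \Eqref{H.bdef} are designed to support.
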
 
%=====================================================================
\begin{proof} 
It is easily seen by \Eqref{ER.r0} and the preceding theorem that the 
assertion is true for $bc^{kr/\vec r,\vec\om}$ spaces. The stated 
results in the remaining cases now follow by interpolation.
\end{proof} 
%---------------------------------------------------------------------
%---------------------------------------------------------------------
\section{Trace Theorems}\LabelT{sec-T}% 
Suppose $\Ga$~is a union of connected components of~$\pl M$.  We denote by 
\hb{\thia\sco\Ga\hr M} the natural injection and endow~$\Ga$ with 
the induced Riemannian metric 
\hb{\thg:=\thia\,^*g}. Let $\rgK$ be a singularity datum for~$M$. For 
\hb{\ka\in\gK_\Ga} we put 
\hb{U_{\coU\ithka}:=\pl U_{\coU\ka}=U_{\coU\ka}\cap\Ga} and 
\hb{\thka:=\ia_0\circ(\thia\,^*\ka)\sco U_{\coU\ithka}\ra\BR^{m-1}}, where 
\hb{\ia_0\sco\{0\}\times\BR^{m-1}\ra\BR^{m-1}}, 
\ \hb{(0,x')\mt x'}. Then 
\hb{\thgK:=\{\,\thka\ ;\ \ka\in\gK_\Ga\,\}} is 
a normalized atlas for~$\Ga$, the one induced by~$\gK$. We set 
\hb{\thrho:=\thia\,^*\rho=\rho\sn\Ga}. It follows that $(\thrho,\thgK)$ 
is a singularity datum for~$\Ga$, so that $\Ga$~is 
singular of type~$[\![\thrho]\!]$. Henceforth, it is understood that 
$\Ga$~is given this singularity structure \emph{induced by}~$\gT(M)$. 

\smallskip 
We denote by 
\hb{\thW=W_\Ga} the restriction of~$W$ to~$\Ga$ and by 
\hb{h_\ithW:=\thia\,^*h_W} the bundle metric on~$\Ga$ induced by~$h_W$. 
Furthermore, the connection~$D_\ithW$ for~$\thW$, induced by~$D$, is defined 
by restricting 
$$ 
D\sco\cT M\times C^\iy\MW\ra C^\iy\MW 
\quad\text{to}\quad  
\cT\Ga\times C^\iy\GathW, 
$$ 
considered as a map into $C^\iy\GathW$. Then 
\hb{\thW=(\thW,h_\ithW,D_\ithW)} is a fully  uniformly regular vector 
bundle over~$\Ga$. 

\smallskip 
We set 
\hb{\thV:=T_\tau^\sa\GathW} and endow it with the bundle metric 
\hb{\thh:=\prsn_{T_\tau^\sa\Ga}\otimes h_\ithW}, where 
\hb{\prsn_{T_\tau^\sa\Ga}} is the bundle metric on~$T_\tau^\sa\Ga$ 
induced by~$\thg$. Then we equip~$\thV$ with the metric connection 
\hb{\thna:=\na(\na_{\cona\cona\ithg},D_\ithW)}. Hence 
\hb{\thV=(\thV,\thh,\thna)}. It follows that 
$\gF_p^{s/\vec r,\vec\om}\JthV$ is a well-defined anisotropic 
weighted space with respect to the boundary weight function~$\thrho$. 

\smallskip 
We write 
\hb{\mf{n}=\mf{n}(\Ga)} for the inward pointing unit normal 
on~$\Ga$. In local coordinates, 
\hb{\ka=(x^1,\ldots,x^m)},
\beq\Label{T.n} 
\mf{n} 
=\bigl(\sqrt{g_{11}\sn\pl U_{\coU\ka}}\,\bigr)^{-1}\,\frac\pl{\pl x^1}.
\eeq 

\smallskip 
Let
\hb{u\in\cD=\cD\MV} and
\hb{k\in\BN}. The \emph{trace of order}~$k$ \emph{of}~$u$ \emph{on}~$\Ga$, 
\ \hb{\pl_{\mf{n}}^ku=\pl_{\mf{n}(\Ga)}^ku\in\cD\GathV}, is defined by 
\beq\Label{T.kn} 
\dl\pl_{\mf{n}}^ku,a\dr_{\ithV\vph{V}^*} 
:=\bigl\dl\na^ku\sn\Ga,a\otimes\mf{n}^{\otimes k}\bigr\dr_{\ithV\vph{V}^*} 
\qa a\in\cD(\Ga,\thV\vph{V}^*). 
\eeq 
We also set 
\hb{\ga_\Ga:=\pl_{\mf{n}(\Ga)}^0} and call it \emph{trace operator 
on}~$\Ga$. We write again 
\hb{\pl_{\mf{n}}^k=\pl_{\mf{n}(\Ga)}^k} for the point-wise extension 
of~$\pl_{\mf{n}(\Ga)}^k$ over~$J$, that is, 
\hb{(\pl_{\mf{n}}^ku)(t):=\pl_{\mf{n}}^k\bigl(u(t)\bigr)} for 
\hb{t\in J} and 
\hb{u\in\cD\JcD}, and call it \emph{lateral trace operator of order}~$k$ 
\emph{on} 
\hb{\Ga\times J}. Correspondingly, the \emph{lateral trace operator on} 
\hb{\Ga\times J} is the point-wise extension of~$\ga_\Ga$, denoted 
by~$\ga_\Ga$ as well. Moreover,  
$$ 
\pl_{\mf{n},0}^k 
\sco\cD\bigl((0,\iy),\cD\bigr)\ra\cD\bigl((0,\iy),\cD\GathV\bigr) 
\qb u\mt\pl_{\mf{n}}^ku 
\npb 
$$ 
is the restriction of~$\pl_{\mf{n}}^k$ to $\cD\bigl((0,\iy),\cD\bigr)$. 

\smallskip 
Assume 
\hb{J=\BR^+}. Then 
\hb{M_0:=M\times\{0\}} is the \emph{initial boundary} of the space-time 
\hbox{(half-)}cylinder 
\hb{M\times\BR^+}. The \emph{initial trace operator} is the linear map 
$$ 
\ga_{M_0}\sco\cD\BRpcD\ra\cD 
\qb u\mt u(0), 
$$ 
where $M_0$~is identified with~$M$. Furthermore, 
$$ 
\pl_{t=0}^k:=\ga_{M_0}\circ\pl^k 
\sco\cD\BRpcD\ra\cD 
\qb u\mt(\pl^ku)(0) 
\npb 
$$ 
is the \emph{initial trace operator of order}~$k$. 

\smallskip 
Suppose 
\hb{s_0>1/p}. The following theorem shows, in particular, that there exists 
a unique 
$$ 
(\ga_\Ga)_{s_0} 
\in\cL\bigl(\gF_p^{s_0/\vec r,\vec\om}, 
B_p^{(s_0-1/p)/\vec r,(\lda+1/p,\mu)}\JthV\bigr) 
$$ 
extending~$\ga_\Ga$ and being a retraction. Furthermore, there exists a 
coretraction~$(\ga_\Ga^c)_{s_0}$ such that, for each 
\hb{s\in\BR}, there is 
$$ 
(\ga_\Ga^c)_s  
\in\cL\bigl(B_p^{(s-1/p)/\vec r,(\lda+1/p,\mu)}\JthV, 
\gF_p^{s/\vec r,\vec\om}\bigr) 
$$ 
such that 
\beq\Label{T.ucr} 
\bal 
{\rm(i)} 
&\quad (\ga_\Ga^c)_s\bsn\cD\bigl(J,\cD\GathV\bigr) 
      =(\ga_\Ga^c)_{s_0}\bsn\cD\bigl(J,\cD\GathV\bigr),\\ 
{\rm(ii)} 
&\quad (\ga_\Ga^c)_s\text{ is for each $s>1/p$ a coretraction for } 
 (\ga_\Ga)_s. 
\eal 
\eeq 
Thus $(\ga_\Ga)_{s_0}$~is for each 
\hb{s_0>1/p} uniquely determined by~$\ga_\Ga$ and $(\ga_\Ga^c)_s$ can be 
obtained for any 
\hb{s\in\BR} by unique continuous extension or 
restriction of~$(\ga_\Ga^c)_{s_0}$ for any 
\hb{s_0>1/p}. Hence we simply write $\ga_\Ga$ and~$\ga_\Ga^c$ for 
$(\ga_\Ga)_s$ and~$(\ga_\Ga^c)_s$, respectively, without fearing 
confusion. So we can say $\ga_\ga^c$~is a \emph{universal coretraction} 
for the retraction 
$$ 
\ga_\Ga 
\in\cL\bigl(\gF_p^{s/\vec r,\vec\om}, 
B_p^{(s-1/p)/\vec r,(\lda+1/p,\mu)}\JthV\bigr) 
\qa s>1/p, 
$$ 
herewith expressing properties~\Eqref{T.ucr}. Similar conventions hold for 
higher order trace operators and traces occurring below.\po 
%--------------------------------------------------------------------- 
{\samepage 
\begin{theorem}\LabelT{thm-T.T} 
Suppose 
\hb{k\in\BN}.  
\begin{itemize} 
\item[{\rm(i)}] 
Assume 
\hb{\Ga\neq\es} and 
\hb{s>k+1/p}. Then $\pl_{\mf{n}}^k$~is a retraction 
$$ 
\text{from $\gF_p^{s/\vec r,(\lda,\mu)}\JV$ 
onto }B_p^{(s-k-1/p)/\vec r,(\lda+k+1/p,\mu)}\JthV. 
\npb 
$$ 
It possesses a universal coretraction~$(\ga_{\mf{n}}^k)^c$ satisfying 
\hb{\pl_{\mf{n}}^i\circ(\ga_{\mf{n}}^k)^c=0} for 
\hb{0\leq i\leq k-1}.\po 
\item[{\rm(ii)}] 
Suppose 
\hb{s>r(k+1/p)}. Then $\pl_{t=0}^k$~is a retraction 
$$ 
\text{from $\gF_p^{s/\vec r,(\lda,\mu)}\BRpV$ 
onto }B_p^{s-r(k+1/p),\lda+\mu(k+1/p)}(V). 
\npb 
$$ 
There exists a universal coretraction~$(\ga_{t=0}^k)^c$ such that 
\hb{\pl_{t=0}^i\circ(\ga_{t=0}^k)^c=0} for 
\hb{0\leq i\leq k-1}.\po 
\item[{\rm(iii)}] 
Let 
\hb{\Ga\neq\es} and 
\hb{s>k+1/p}. Then $\pl_{\mf{n},0}^k$~is a retraction 
\beq\Label{T.F0} 
\text{from $\gF_p^{s/\vec r,(\lda,\mu)}\bigl((0,\iy),V\bigr)$ 
onto }B_p^{(s-k-1/p)/\vec r,(\lda+k+1/p,\mu)}\bigl((0,\iy),\thV\bigr). 
\npb 
\eeq  
\end{itemize} 
The restriction of~$(\ga_{\mf{n}}^k)^c$ to the space on the right side of\/ 
\Eqref{T.F0} is a universal coretraction.\po 
\end{theorem}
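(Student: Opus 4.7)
The plan is to deduce Theorem~\ref{thm-T.T} from the local trace theorems for anisotropic Bessel potential and Besov spaces on $\BH^m\times J$ and $\BR^m\times\BR^+$ proved in~\cite{Ama09a} by transporting them to the singular manifold via the retractions $\psi_p^{\vec\om}$ of Theorem~\ref{thm-RA.R} (and the analogous retraction on $\Ga\times J$ built from the induced singularity datum $(\thrho,\thgK)$ and the induced bundle $\thV$). The essential computation is the weight accounting: on the interior one localizes by $\rho_\ka^{\lda+m/p}$, whereas on the boundary, where $\thrho_{\thka}=\rho_\ka$ for $\ka\in\gK_\Ga$, the localization factor is $\rho_\ka^{\lda+k+1/p+(m-1)/p}$. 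The difference $m/p-(m-1)/p=1/p$ accounts for the classical loss of $1/p$ in regularity across the trace, while the extra shift $k$ cancels against $k$ applications of the factor $\rho_\ka^{-1}$ produced by $\sqrt{g_{11}}\sim\rho_\ka$ in~\Eqref{T.n} (cf.~\cite[Lemma~3.1]{Ama12b}); with this bookkeeping, $\rho_\ka^{\lda+k+m/p}\thka_*(\pi_{\thka}\pl_{\mf n}^k u)$ corresponds exactly to $\pl_1^k(\vp_{p,\ka}^\lda u)|_{\{x^1=0\}}$ modulo lower-order corrections.

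For part~(i) I would first verify, via the expansion~\Eqref{L.Nu} together with the uniform bounds \Eqref{U.Ch} and \Eqref{L.kgk}, that in a boundary chart $\ka\in\gK_\Ga$ the push-forward of $\pl_{\mf n}^k u$ equals $(\sqrt{g_{11}})^{-k}\pl_1^k$ applied to $(\ka\slt\vp_\tau^\sa)_*u$ restricted to $\{x^1=0\}$, modulo a differential operator of order $\leq k-1$ with uniformly bounded coefficients coming from the Christoffel symbols of $\na_{\cona g}$ and $D$. The model trace theorem from~\cite[Sections~4.5,~4.8]{Ama09a} then gives that $\pl_1^k$ is, uniformly in $\ka$, a retraction from $\gF_p^{s/\vec r}(\BH^m\times J,E)$ onto $B_p^{(s-k-1/p)/\vec r}(\BR^{m-1}\times J,E)$ with a coretraction annihilating all lower order normal traces; composing with $\psi_p^{\vec\om}$ on $M\times J$ and its boundary counterpart yields the global retraction and universal coretraction, the lower order corrections being handled by induction on $k$ (using the theorem for smaller indices together with Theorem~\ref{thm-M.D}). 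Part~(ii) proceeds analogously but without the spatial boundary subtlety: the identity $\Ta_{p,\ka}^\mu\circ\pl^j=\rho_\ka^{-j\mu}\pl^j\circ\Ta_{p,\ka}^\mu$ from~\Eqref{RA.tk} together with the model initial trace theorem for $\gF_p^{s/\vec r}(\BX_\ka\times\BR^+,E)$ produces, after passage through $\vp_p^{\vec\om}$, a retraction onto $B_p^{s-r(k+1/p),\lda+\mu(k+1/p)}(V)$, each time derivative costing $r$ in regularity and $\mu$ in the spatial weight, with an additional $r/p$ regularity loss and $\mu/p$ weight gain from the point evaluation. For~(iii), the extension $e_0^+$ of Theorem~\ref{thm-ER.ER}(ii) embeds $\gF_p^{s/\vec r,\vec\om}((0,\iy),V)$ into $\gF_p^{s/\vec r,\vec\om}(\BR,V)$, so applying~(i) on the whole line and restricting back gives the assertion, the restriction of $(\ga_{\mf n}^k)^c$ serving as universal coretraction because the local coretractions are manifestly compatible with vanishing of all initial traces.

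The main obstacle will be the construction of the universal coretraction satisfying the annihilation condition $\pl_{\mf n}^i\circ(\ga_{\mf n}^k)^c=0$ for $0\leq i\leq k-1$ (and its time analogue in~(ii)). The local coretractions in~\cite{Ama09a} are defined by explicit Taylor-type ans\"atze in the normal (resp.\ time) variable with cutoffs, which can be arranged chart-by-chart to enforce the compatibility condition; one must then verify that the glued operator, obtained by composing the boundary coretraction $\vp_p^{\vec\om}$ on $\Ga\times J$, the local coretractions, and $\psi_p^{\vec\om}$ on $M\times J$, is independent of $s$ and continuous into every target space. This follows because the family of local coretractions is constructed on the dense subspace $\cD(J,\cD(\Ga,\thV))$ by a formula independent of $s$, extends by continuity uniquely to each relevant Besov space, and respects the transition maps controlled by~\Eqref{U.Ph} and~\Eqref{S.sd}. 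Carrying the weight exponents with full precision through the rescaling $\pl_{\mf n}\mapsto(\sqrt{g_{11}})^{-1}\pl_1$, and checking that the lower-order Christoffel corrections land in the correct target space at each step of the induction on~$k$, is the one calculation that cannot be abbreviated.
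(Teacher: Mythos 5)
Your overall strategy --- transport the model trace theorems from \cite{Ama09a} to the singular manifold via $\psi_p^{\vec\om}$ and its boundary counterpart $\thpsi_p^{\vec\om}$, with the weight bookkeeping $\rho_\ka^{\lda+m/p}\to\rho_\ka^{\lda+k+1/p+(m-1)/p}$ and the factor $\rho_\ka^{-k}$ absorbed by $\sqrt{\ka_*g_{11}}\sim\rho_\ka$ --- coincides with steps (1)--(6) of the paper's proof. Two points diverge. First, the paper needs no induction on $k$: having established the local expansion $C_{k,\ka}=\ga_{k,\ka}+\sum_{\ell<k}a_{\ell,\ka}\ga_{\ell,\ka}$ (see \Eqref{T.Cck}), it chooses a local coretraction $\ga_{k,\ka}^c$ annihilating $\ga_{i,\ka}$ for all $i<k$ (see \Eqref{T.gik}), so the lower-order Christoffel corrections vanish identically on its range; your phrase ``handled by induction on $k$'' is superfluous once this observation is made, and the induction would in any case need the annihilation property to close. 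Second, for part~(iii) the paper invokes the $\ci\gF$ version of Theorem~\ref{thm-RA.R} directly, whereas you extend to $\BR$ via $e_0^+$ and restrict back; either route works. The one substantive detail your sketch passes over is the Leibniz-type cross-chart terms produced by the partition of unity: the identity $\thpi_\ithka\pl_{\mf{n}}^ku=\pl_{\mf{n}}^k(\pi_\ka u)-\sum_{j<k}\bid kj(\pl_{\mf{n}}^{k-j}\pi_\ka)\pl_{\mf{n}}^j(\chi_\ka u)$ (the paper's \Eqref{T.pDk}) spawns the operators $A_{k-1,\wt{\ka}\ka}$ of \Eqref{T.phC}, which must be shown to lie in $\cL(\gF_{p,\wt{\ka}}^{s/\vec r},\thB_{p,\ka}^{(s-k-1/p)/\vec r})$ uniformly in $\ka$ and $\wt{\ka}\in\gN(\ka)$, and to vanish on the range of the glued coretraction. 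Both facts do follow --- because $A_{k-1,\wt{\ka}\ka}$ involves only normal traces of order $<k$ composed with the transition operators $R_{\wt{\ka}\ka}$ of Lemma~\ref{lem-P.T} --- but this is precisely the calculation that, contrary to your closing remark, cannot be subsumed under the Christoffel bookkeeping and must be carried out separately.
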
 }
%=====================================================================
\begin{proof} 
(1) 
Suppose 
\hb{\BX\in\{\BR^m,\BH^m\}}, 
\ \hb{M=(\BX,g_m)}, 
\ \hb{\rho=\mf{1}}, 
\ \hb{W=\BX\times F}, and 
\hb{D=d_F} so that 
\hb{V=\BX\times E}. Put 
\hb{\BY:=\BX\times J}. Assume either 
\hb{\Ga\neq\es} or 
\hb{J=\BR^+}. If 
\hb{J=\BR}, then 
\hb{M\times J=\BY=\BH^{m+1}}. If 
\hb{J=\BR^+} and 
\hb{\Ga=\es}, then 
\hb{M\times J=\BR^m\times\BR^+\simeq\BH^{m+1}}. Finally, if 
\hb{J=\BR^+} and 
\hb{\Ga\neq\es}, then 
$$ 
M\times J=\BH^m\times\BR^+\simeq\BR^+\times\BR^+\times\BR^{m-1}, 
$$ 
that is, 
\hb{M\times J} is a closed \hbox{$2$-corner} in the sense of Section~4.3 
of~\cite{Ama09a}. In each case 
\hb{{}\simeq{}}~is simply a permutation diffeomorphism. 

\smallskip 
If either 
\hb{J=\BR} or 
\hb{\Ga=\es}, then assertions (i) and~(ii) follow from Theorem~4.6.3 
of~\cite{Ama09a}. If 
\hb{J=\BR^+} and 
\hb{\Ga\neq\es}, then assertion~(i) follows from Theorem~4.6.3 and the 
definition of the trace operator for a face of 
\hb{\BR^+\times\BR^+\times\BR^{m-1}}, that is, formula~(4.10.12) 
of~\cite{Ama09a}. Claim~(iii) is a consequence of Theorem~4.10.3 
of~\cite{Ama09a} \ (choose any~$\ka$ therein with 
\hb{\ka>s+1}). 

\smallskip 
(2) 
Now we consider the general case. Suppose 
\hb{\Ga\neq\es}. For 
\hb{t>1/p} we set 
$$ 
\thB_{p,\ka}^{(t-1/p)/\vec r}:= 
\left\{ 
\bal 
{}      %do not remove!
&B_p^{(t-1/p)/\vec r}(\pl\BY_\ka,E)&&\quad\text{if }\ka\in\gK_\Ga,\\ 
&\{0\}                             &&\quad \text{otherwise}. 
\eal 
\right. 
$$ 
Let $\ga_\ka$ be the trace operator on 
\hb{\pl\BY_\ka=\{0\}\times\BR^{m-1}\times J} if 
\hb{\ka\in\gK_\Ga}, 
and 
\hb{\ga_\ka:=0} otherwise. Set 
$$ 
\ga_{k,\ka}:=\rho_\ka^k\bigl(\sqrt{\ga_\ka(\ka_*g_{11})}\,\bigr)^{-k} 
\ga_\ka\circ\pl_1^k 
\qa \ka\in\gK. 
$$ 
It follows from step~(1), \,\Eqref{T.n}, and \Eqref{T.kn} that  
\hb{\ga_\ka\circ\pl_1^k} is a retraction from~$\gF_{p,\ka}^{s/\vec r}$ 
onto~$\thB_{p,\ka}^{(s-1/p)/\vec r}$ and that there exists a universal 
coretraction~$\wt{\ga}_{k,\ka}^c$  satisfying 
\beq\Label{T.gig} 
(\ga_\ka\circ\pl_1^i)\circ\wt{\ga}_{k,\ka}^c=0 
\qa 0\leq i\leq k-1, 
\eeq 
(setting 
\hb{\wt{\ga}_{k,\ka}^c:=0} if 
\hb{\ka\in\gK\ssm\gK_\Ga}). We put 
$$ 
\ga_{k,\ka}^c:=\rho_\ka^{-k}\bigl(\sqrt{\ga_\ka(\ka_*g_{11})}\,\bigr)^k 
\,\wt{\ga}_{k,\ka}^c 
\qa \ka\in\gK. 
$$ 
Then \Eqref{U.Rr} and \Eqref{S.sd} imply 
$$ 
\ga_{k,\ka}\in\cL(\gF_{p,\ka}^{s/\vec r},\thB_{p,\ka}^{(s-k-1/p)/\vec r}) 
\qb \ga_{k,\ka}^c\in\cL(\thB_{p,\ka}^{(s-k-1/p)/\vec r}, 
\gF_{p,\ka}^{s/\vec r}) 
$$ 
and 
$$ 
\|\ga_{k,\ka}\|+\|\ga_{k,\ka}^c\|\leq c 
\qa \ka\in\gK. 
$$ 
From \Eqref{T.gig} and Leibniz' rule we thus infer 
\beq\Label{T.gik} 
\ga_{i,\ka}\circ\ga_{k,\ka}^c=\da_{ik}\id 
\qa 0\leq i\leq k. 
\eeq 

\smallskip 
(3) 
We set 
\hb{(\thpi_{\ithka},\thchi_{\ithka}):=(\pi_\ka,\chi_\ka)\sn U_{\coU\ithka}} 
for 
\hb{\thka\in\thgK}. Then it is verified that 
\hb{\bigl\{\,(\thpi_{\ithka},\thchi_{\ithka})\ ;\ \thka\in\thgK\,\bigr\}} 
is a localization system subordinate to~$\thgK$. We denote by 
$$ 
\thpsi_p^{\vec\om} 
\sco\ell_p(\thmfB_p^{(s-k-1/p)/\vec r}) 
\ra B_p^{(s-k-1/p)/\vec r,\vec\om}\JthV 
\npb 
$$
the `boundary retraction' defined analogously to~$\psi_p^{\vec\om}$. 
Correspondingly, $\thvp_p^{\vec\om}$~is the `boundary coretraction'. 

\smallskip 
We write 
\hb{\thka\slt\thvp} for the restriction of 
\hb{\ka\slt\vp\in\gK\slt\Phi} to~$\Ga$ and put 
$$ 
C_{k,\ka} 
:=\thrho_{\ithka}^k(\thka\slt\thvp)_*\circ\pl_{\mf{n}}^k\circ(\ka\slt\vp)^* 
\qa \ka\slt\vp\in\gK_\Ga\slt\Phi, 
$$ 
and 
\hb{C_{k,\ka}:=0} otherwise. Note 
\hb{\thrho_{\ithka}=\rho_\ka} for 
\hb{\ka\in\gK_\Ga}. It follows from \Eqref{L.Dr}, \,\Eqref{T.n}, and 
\Eqref{T.kn} that 
\beq\Label{T.Cck} 
C_{k,\ka}v=\ga_{k,\ka}v+\sum_{\ell=0}^{k-1}a_{\ell,\ka}\ga_{\ell,\ka}v 
\qa v\in\cD\bigl(J,\cD(\pl\BX_\ka,E)\bigr), 
\eeq 
and \Eqref{L.abd} implies 
\hb{\|a_{\ell,k}\|_{k-1,\iy}\leq c} for 
\hb{0\leq\ell\leq k-1} and 
\hb{\ka\in\gK}. Hence, using 
\hb{\gF_{p,\ka}^{s/\vec r}\hr\gF_{p,\ka}^{(s-k+\ell)/\vec r}} and 
Theorem~\ref{thm-P.M}, we find 
\beq\Label{T.Cc} 
C_{k,\ka}\in\cL(\gF_{p,\ka}^{s/\vec r},\thB_{p,\ka}^{(s-k-1/p)/\vec r}) 
\qb \|C_{k,\ka}\|\leq c 
\qa \ka\in\gK. 
\eeq 

\smallskip 
(4) 
For 
\hb{u\in\cD\JcD} 
\beq\Label{T.pDk}
\thpi_{\ithka}\pl_{\mf{n}}^ku 
=\pl_{\mf{n}}^k(\pi_\ka u)-\sum_{j=0}^{k-1} 
\bid kj(\pl_{\mf{n}}^{k-j}\pi_\ka)\pl_{\mf{n}}^j(\chi_\ka u) 
\qa \ka\in\gK, 
\eeq 
setting 
\hb{\pl_{\mf{n}}^kv:=0} if 
\hb{\supp(v)\cap\Ga=\es}. Note 
\beq\Label{T.Cph} 
\bal 
{}      %do not remove!
&\thrho_{\ithka}^{\lda+k+1/p+(m-1)/p}\Ta_{p,\ithka}^\mu(\thka\slt\thvp)_* 
 \bigl(\pl_{\mf{n}}^k(\pi_\ka u)\bigr)\\  
&\qquad\qquad={} 
 \thrho_\ka^k(\thka\slt\thvp)_*\circ\pl_{\mf{n}}^k\circ(\ka\slt\vp)^* 
 \bigl(\rho_\ka^{\lda+m/p}\Ta_{p,\ka}^\mu(\ka\slt\vp)_*(\pi_\ka u)\bigr)
 =C_{k,\ka}(\vp_{p,\ka}^{\vec\om}u), 
\eal 
\eeq 
since 
\hb{\Ta_{p,\ithka}^\mu=\Ta_{p,\ka}^\mu} for 
\hb{\ka\in\gK_\Ga}. Similarly, using \Eqref{P.TS} and \Eqref{P.a} also, 
$$ 
\bal 
{}      %do not remove!
&\thrho_{\ithka}^{\lda+k+1/p+(m-1)/p}\Ta_{p,\ithka}^\mu(\thka\slt\thvp)_* 
 \bigl((\pl_{\mf{n}}^{k-j}\pi_\ka)\pl_{\mf{n}}^j(\chi_\ka u)\bigr)\\ 
&\qquad\qquad={} 
 C_{k-j,\ka}(\ka_*\pi_\ka)C_{j,\ka}(\wh{\vp}_{p,\ka}^{\kern1pt\vec\om}u) 
 =\sum_{\wt{\ka}\in\gN(\ka)} 
 C_{k-j,\ka}(\ka_*\pi_\ka)C_{j,\ka}(a_{\wt{\ka}\ka}R_{\wt{\ka}\ka} 
 \vp_{p,\wt{\ka}}^{\kern1pt\vec\om}u). 
\eal 
$$ 
From this, \Eqref{T.pDk}, and \Eqref{T.Cph} we get 
\beq\Label{T.phC} 
\thvp_{p,\ithka}^{(\lda+k+1/p,\mu)}(\pl_{\mf{n}}^ku) 
=C_{k,\ka}(\vp_{p,\ka}^{\vec\om}u) 
+\sum_{\wt{\ka}\in\gN(\ka)}A_{k-1,\wt{\ka}\ka}(\vp_{p,\wt{\ka}}^{\vec\om}u), 
\eeq 
where 
$$ 
A_{k-1,\wt{\ka}\ka}
:=\sum_{i=0}^{k-1}b_{i,\wt{\ka}\ka}C_{i,\ka}\circ R_{\wt{\ka}\ka} 
\qb b_{i,\wt{\ka}\ka} 
:=-\sum_{j=i}^{k-1}\bid kj\bid jiC_{k-j,\ka} 
(\ka_*\pi_\ka)C_{j-i}a_{\wt{\ka}\ka}. 
$$ 

\smallskip 
It is obvious that 
$$ 
C_{\ell,\ka} 
\in\cL\bigl(BC_\ka^{n+\ell},BC^n(\pl\BX_\ka,E)\bigr) 
\qb \|C_{\ell,\ka}\|\leq c(n) 
\qa \ka\in\gK 
\qb 0\leq\ell\leq k 
\qb n\in\BN.  
$$ 
From this, \Eqref{R.LS}, \,\Eqref{P.abd}, and Theorem~\ref{thm-P.M} we 
obtain 
$$ 
b_{i,\wt{\ka}\ka}\in BC^\iy(\pl\BX_\ka,E) 
\qb \|b_{i,\wt{\ka}\ka}\|_{n,\iy}\in c 
\qa \wt{\ka}\in\gN(\ka) 
\qb \ka\in\gK 
\qb 0\leq i\leq k 
\qb n\in\BN. 
$$ 
Hence, using Theorem~\ref{thm-P.M} once more, we get from \Eqref{T.Cc} 
and Lemma~\ref{lem-P.T} 
\beq\Label{T.Akk} 
A_{k-1,\wt{\ka}\ka} 
\in\cL(\gF_{p,\wt{\ka}}^{s/\vec r},\thB_{p,\ka}^{(s-k-1/p)/\vec r}) 
\qb \|A_{k-1,\wt{\ka}\ka}\|\leq c  
\qa \wt{\ka}\in\gN(\ka) 
\qb \ka\in\gK. 
\eeq 

\smallskip 
(5) 
We define~$\mf{C}_k$ by 
$$ 
\mf{C}_k\mf{v}:=\Bigl(C_{k,\ka}v_\ka 
+\sum_{\wt{\ka}\in\gN(\ka)}A_{k-1,\wt{\ka}\ka}v_{\wt{\ka}}\Bigr)_{\ka\in\gK}  
\qa \mf{v}=(v_\ka). 
$$ 
Then we deduce from \Eqref{T.Cc}, \,\Eqref{T.Akk}, and the finite 
multiplicity of~$\gK$ 
\beq\Label{T.C} 
\mf{C}_k\in\cL\bigl(\ell_p(\mf{\gF}_p^{s/\vec r}), 
\ell_p(\thmfB_p^{(s-k-1/p)/\vec r})\bigr). 
\eeq 
Employing \Eqref{T.gik} and \Eqref{T.Cck} we infer 
\hb{C_{k,\ka}\circ\ga_{k,\ka}^c=\id}. Furthermore, recalling 
\Eqref{P.RTS} and using 
\hb{\thrho_\ka=\rho_\ka} for 
\hb{\ka\in\gK_\Ga}, 
$$ 
\bal 
C_{i,\ka}\circ R_{\wt{\ka}\ka} 
&=\thrho_\ka^k(\thka\slt\thvp)_*\circ\pl_{\mf{n}}^i\circ(\ka\slt\vp)^* 
 \circ T_{\wt{\ka}\ka}\circ(\ka\slt\vp)_*(\wt{\ka}\slt\wt{\vp})^*  
 (\chi\cdot)\\ 
&=(\rho_\ka/\rho_{\wt{\ka}})^k 
 T_{\ithwtka\ithka}S_{\ithwtka\ithka}C_{i,\wt{\ka}} 
 =(\rho_\ka/\rho_{\wt{\ka}})^kR_{\ithwtka\ithka}C_{i,\wt{\ka}}. 
\eal 
$$ 
By this, \Eqref{T.gik}, and \Eqref{T.Cck} it follows 
\hb{C_{i,\ka}\circ R_{\wt{\ka}\ka}\circ\ga_{k,\wt{\ka}}^c=0} for 
\hb{0\leq i\leq k-1}. Thus, setting 
\hb{\mfga_k^c\mf{v}:=(\ga_{k,\ka}^cv_\ka)}, 
\beq\Label{T.gc} 
\mfga_k^c\in\cL\bigl(\ell_p(\thmfB_p^{(s-k-1/p)/\vec r}), 
\ell_p(\mf{\gF}_p^{s/\vec r})\bigr) 
\qb \mf{C}_i\circ\mfga_k^c=\da_{ik}\id 
\qa 0\leq i\leq k. 
\eeq 
From \Eqref{T.phC}, \,\Eqref{T.C}, and the first claim of 
Theorem~\ref{thm-RA.R} we get 
$$ 
\pl_{\mf{n}}^k 
=\thpsi_p^{(\lda+k+1/p,\mu)}\circ\mf{C}_k\circ\vp_p^{\vec\om} 
\in\cL\bigl(\gF_p^{s/\vec r,\vec\om}, 
B_p^{(s-k-1/p)/\vec r,(\lda+k+1/p,\mu)}\JthV\bigr).  
$$ 

\smallskip 
(6) 
Given 
\hb{\mf{v}\in\thmfB_p^{(s-k-1/p)/\vec r}}, 
$$ 
\bal 
\pl_{\mf{n}}^i(\psi_p^{\vec\om}\mf{v}) 
&=\sum_\ka\rho_\ka^{-(\lda+m/p)}\pl_{\mf{n}}^i 
 \bigl(\Ta_{p,\ka}^{-\mu}\pi_\ka(\ka\slt\vp)^*v_\ka\bigr)\\ 
&=\sum_{\ithka}\thrho_\ithka^{-(\lda+i+m/p)}\Ta_{p,\ithka}^{-\mu} 
 \Bigl(\thpi_\ithka(\thka\slt\thvp)^*C_{i,\ka}v_\ka+\thrho_\ithka^i 
 \sum_{j=0}^{i-1}\bid ij(\pl_{\mf{n}}^{i-j}\pi_\ka)\pl_{\mf{n}}^j 
 \bigl((\ka\slt\vp)^*v_\ka\bigr)\Bigr)\\ 
&=\thpsi_p^{(\lda+i+1/p,\mu)}C_{i,\ka}v_\ka 
 +\sum_\ka\thrho_\ithka^{-(\lda+i+m/p)} 
 \Ta_{p,\ithka}^{-\mu}(\thka\slt\thvp)^* 
 \sum_{j=0}^{i-1}\bid ijC_{i-j,\ka}(\pi_\ka)C_{j,\ka}v_\ka. 
\eal 
$$ 
Thus we infer from \Eqref{T.gik}, \,\Eqref{T.Cck}, and \Eqref{T.gc} 
$$ 
\pl_{\mf{n}}^i(\psi_p^{\vec\om}\mfga_k^c\mf{w}) 
=\da_{ik}\thpsi_p^{(\lda+i+1/p,\mu)}\mf{w} 
\qa \mf{w}\in\thmfB_p^{(s-k-1/p)/\vec r} 
\qb 0\leq i\leq k. 
$$ 
Now \Eqref{T.gc} and the first part of Theorem~\ref{thm-RA.R} imply 
$$ 
(\ga_{\mf{n}}^k)^c 
:=\psi_p^{\vec\om}\circ\mfga_k^c\circ\thvp_p^{(\lda+k+1/p,\mu)} 
\in\cL\bigl(B_p^{(s-k-1/p)/\vec r,(\lda+k+1/p,\mu)}\JthV, 
\gF_p^{s/\vec r,\vec\om}\bigr)  
\npb 
$$ 
and 
\hb{\pl_{\mf{n}}^i(\ga_{\mf{n}}^k)^c=\da_{ik}\id} for 
\hb{0\leq i\leq k}. This proves assertion~(i). 

\smallskip 
(7) 
By invoking in the preceding argumentation the second statement of 
Theorem~\ref{thm-RA.R} we see that assertion~(iii) is true. 

\smallskip 
(8) 
We denote by~$\pl_{t=0,\ka}^k$ the initial trace operator of order~$k$ for 
\hb{\BY_\ka=\BX_\ka\times\BR^+}. It follows from step~(1) that 
$$ 
\mf{\pl}_{t=0}^k 
\sco\ell_p(\mf{\gF}_p^{s/\vec r})\ra\ell_p(\mf{B}_p^{s-r(k+1/p)}) 
\qb \mf{v}\mt(\pl_{t=0,\ka}^kv_\ka) 
$$ 
is a retraction and there exists a universal coretraction 
$$ 
(\mf{\pl}_{t=0}^k)^c 
\sco\ell_p(\mf{B}_p^{s-r(k+1/p)})\ra\ell_p(\mf{\gF}_p^{s/\vec r})
\qb \mf{w}\mt\bigl((\pl_{t=0,\ka}^k)^cw_\ka\bigr) 
$$ 
such that 
\beq\Label{T.dd} 
\mf{\pl}_{t=0}^j\circ(\mf{\pl}_{t=0}^k)^c=\da_{jk}\id 
\qa 0\leq j\leq k. 
\eeq 

\smallskip 
(9) 
We deduce from \Eqref{RA.tk} and step~(1)  
\beq\Label{T.dkt} 
\pl_{t=0,\ka}^k\circ\vp_{p,\ka}^{\vec\om} 
=\vp_{p,\ka}^{\lda+\mu(k+1/p)}\circ\pl_{t=0}^k 
\qa \ka\in\gK.  
\eeq 
Hence 
$$ 
\mf{\pl}_{t=0}^k\circ\vp_p^{\vec\om} 
=\vp_p^{\lda+\mu(k+1/p)}\circ\pl_{t=0}^k. 
$$ 
From this and Theorems \ref{thm-R.R} and \ref{thm-RA.R} we infer 
$$ 
\pl_{t=0}^k 
=\psi_p^{\lda+\mu(k+1/p)}\circ\mf{\pl}_{t=0}^k\circ\vp_p^{\vec\om} 
\in\cL\bigl(\gF_p^{s/\vec r,\vec\om}\BRpV, 
B_p^{s-r(k+1/p),\lda+\mu(k+1/p)}(V)\bigr).  
$$ 

\smallskip 
(10) 
Set 
$$ 
(\ga_{t=0}^k)^c:=\psi_p^{\vec\om}\circ(\mf{\pl}_{t=0}^k)^c 
\circ\vp_p^{\lda+\mu(k+1/p)}. 
$$ 
Then, similarly as above, 
$$ 
(\ga_{t=0}^k)^c
\in\cL\bigl(B_p^{s-r(k+1/p),\lda+\mu(k+1/p)}(V),  
\gF_p^{s/\vec r,\vec\om}\BRpV\bigr). 
$$ 
For 
\hb{0\leq j\leq k} we get from \Eqref{RA.tk} and \Eqref{T.dd} 
$$ 
\bal 
\pl_{t=0}^j(\ga_{t=0}^k)^cw 
&=\pl_{t=0}^j\Bigl(\sum_\ka\psi_{p,\ka}^{\vec\om}\circ(\pl_{t=0,\ka}^k)^c 
 \circ\vp_{p,\ka}^{\lda+\mu(k+1/p)}w\Bigr)\\ 
&=\sum_\ka\psi_{p,\ka}^{\lda+\mu(j+1/p)}\circ\pl_{t=0,\ka}^j 
 \circ(\pl_{t=0,\ka}^k)^c\circ\vp_{p,\ka}^{\lda+\mu(k+1/p)}w\\ 
&=\da_{jk}\psi_p^{\lda+\mu(j+1/p)}\circ\vp_{p,\ka}^{\lda+\mu(k+1/p)}w 
 =\da_{jk}w 
\eal 
\npb 
$$ 
for 
\hb{w\in\cD}. Since $\cD$~is dense in~$B_p^{s-r(k+1/p),\lda+\mu(k+1/p)}$, 
assertion~(ii) follows. 
\end{proof} 

\smallskip 
Suppose $M$~is a compact \hbox{$m$-dimensional} submanifold of~$\BR^m$. 
In this setting and if 
\hb{s=r\in2\BN^\times} assertions (i) and~(ii) reduce to the trace theorems 
for anisotropic Sobolev spaces  due to P.~Grisvard~\cite{Gri66a}; also see 
O.A. Ladyzhenskaya, V.A. Solonnikov, and N.N. Ural'ceva~\cite{LSU68a} and 
R.~Denk, M.~Hieber, and J.~Pr{\"u}ss~\cite{DHP07a}. (In the latter paper 
the authors consider vector-valued spaces.) The much simpler Hilbertian case 
\hb{p=2} has been presented by J.-L. Lions and E.~Magenes 
in~\cite[Chapter~4, Section~2]{LiM72a} following the approach 
by P.~Grisvard~\cite{Gri67a}. 
%---------------------------------------------------------------------
%---------------------------------------------------------------------
\section{Spaces With Vanishing Traces}\LabelT{sec-VT}% 
In this section we characterize $\ci\gF_p^{s/\vec r,\vec\om}$ and 
$\gF_p^{s/\vec r,\vec\om}\ciJV$ by the vanishing of certain traces. In fact, 
we need to characterize those subspaces of $\gF_p^{s/\vec r,\vec\om}\JV$ 
whose traces vanish on~$\Ga$ even if 
\hb{\Ga\neq\pl M}. More precisely, we denote by 
\beq\Label{VT.F0} 
\ci\gF_{p,\Ga}^{s/\vec r,\vec\om} 
=\ci\gF_{p,\Ga}^{s/\vec r,\vec\om}\JV\text{ the closure of 
$\cD\bigl(\ci J,\cD(M\ssm\Ga,V)\bigr)$ in }\gF_p^{s/\vec r,\vec\om}\JV. 
\eeq 
Note that 
\hb{\ci\gF_{p,\pl M}^{s/\vec r,\vec\om}=\ci\gF_p^{s/\vec r,\vec\om}}. 
By Theorem~\ref{thm-A.HBH}(ii) we know already 
\beq\Label{VT.F0F} 
\ci\gF_p^{s/\vec r,\vec\om}=\gF_p^{s/\vec r,\vec\om} 
\qa s<1/p, 
\eeq 
and, trivially, 
\hb{\ci\gF_p^{s/\vec r,\vec\om}=\gF_p^{s/\vec r,\vec\om}} if 
\hb{\pl M=\es} and 
\hb{J=\BR}. The following theorem concerns the case 
\hb{s>1/p} and 
\hb{\GaJ\neq(\es,\BR)}.\po  
%--------------------------------------------------------------------- 
{\samepage 
\begin{theorem}\LabelT{thm-VT.V} 
\begin{itemize} 
\item[{\rm(i)}] 
If\/ 
\hb{\Ga\neq\es} and 
\hb{k+1/p<s<k+1+1/p} with 
\hb{k\in\BN}, then 
\beq\Label{VT.Fdu} 
\ci\gF_{p,\Ga}^{s/\vec r,\vec\om} 
=\{\,u\in\gF_p^{s/\vec r,\vec\om}\ ;\ \pl_{\mf{n}}^iu=0, 
\ i\leq k\,\}. 
\po 
\eeq 
\item[{\rm(ii)}] 
Assume 
\hb{r(\ell+1/p)<s<r(\ell+1+1/p)} with 
\hb{\ell\in\BN}. Then 
\beq\Label{VT.Fdt} 
\gF_p^{s/\vec r,\vec\om}\bigl((0,\iy),V\bigr) 
=\bigl\{\,u\in\gF_p^{s/\vec r,\vec\om}\BRpV\ ;\ \pl_{t=0}^ju=0, 
\ j\leq\ell\,\bigr\}. 
\npb 
\eeq 
Suppose 
\hb{s<r/p} with 
\hb{s>r(-1+1/p)} if\/ 
\hb{\Ga\neq\es}. Then 
\hb{\gF_p^{s/\vec r,\vec\om}\bigl((0,\iy),V\bigr) 
=\gF_p^{s/\vec r,\vec\om}\BRpV}.\po 
\end{itemize} 
\end{theorem}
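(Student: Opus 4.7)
The plan is to mirror the proof of Theorem~\ref{thm-T.T}: localize via the retraction pair $(\psi_p^{\vec\om},\vp_p^{\vec\om})$ of Theorem~\ref{thm-RA.R} (and, for part~(ii), the additional pair $(r_0^+,e_0^+)$ of Theorem~\ref{thm-ER.ER}) and then invoke the corresponding characterizations of vanishing-trace spaces in the Euclidean model settings, which are contained in \cite[Theorems~4.7.1, 4.9.1, and~4.10.3]{Ama09a}. The key bridge is the intertwining of global trace operators with their local counterparts that was implicit in the proof of Theorem~\ref{thm-T.T}.

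The easy containment $\is$ in \Eqref{VT.Fdu} and \Eqref{VT.Fdt} follows from Theorem~\ref{thm-T.T}: the operators $\pl_{\mf{n}}^i$, resp.\ $\pl_{t=0}^j$, are continuous into the appropriate boundary Besov spaces, they vanish identically on $\cD\bigl(\ci J,\cD(M\ssm\Ga,V)\bigr)$, resp.\ on $\cD\bigl((0,\iy),\cD\bigr)$, and these subspaces are dense in $\ci\gF_{p,\Ga}^{s/\vec r,\vec\om}$, resp.\ $\gF_p^{s/\vec r,\vec\om}\bigl((0,\iy),V\bigr)$, by definition. The last claim of~(ii) for $s<r/p$ is immediate once Theorem~\ref{thm-ER.ER}(ii) is combined with the model-space identity $\gF_{p,\ka}^{s/\vec r}\bigl((0,\iy),\BX_\ka\bigr)\doteq\gF_{p,\ka}^{s/\vec r}\bigl(\BR^+,\BX_\ka\bigr)$ from~\cite[Theorems~4.7.1(ii) and~4.10.3]{Ama09a}, applying $\psi_p^{\vec\om}\circ(\cdot)\circ\vp_p^{\vec\om}$ with the restriction~$s>r(-1+1/p)$ guaranteeing that the dualities required for~$s<0$ are well-posed.

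For the reverse inclusion in~(i), the main task is to show that the conditions $\pl_{\mf{n}}^iu=0$, $i\leq k$, on~$u\in\gF_p^{s/\vec r,\vec\om}$ are equivalent, via the coretraction, to the local conditions $\ga_{i,\ka}v_\ka=0$, $i\leq k$, $\ka\in\gK_\Ga$, where $v_\ka:=\vp_{p,\ka}^{\vec\om}u$. This is extracted from formula~\Eqref{T.phC} together with \Eqref{T.gik}: an upper-triangular linear system in the variables $(\ga_{i,\ka}v_\ka)_{i=0}^k$, with coefficients of $C^\iy_b$-type and identity diagonal, shows that simultaneous vanishing on one side is equivalent to simultaneous vanishing on the other. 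Granted this, given $u$ in the right-hand side of \Eqref{VT.Fdu}, each local component $v_\ka$ satisfies $\ga_{i,\ka}v_\ka=0$ for $i\leq k$; by \cite[Theorems~4.7.1(i) and~4.10.3]{Ama09a} it is approximable in $\gF_{p,\ka}^{s/\vec r}$ by elements of $\cD\bigl(\ci\BY_\ka\ssm\pl\BX_\ka,E_\tau^\sa\bigr)$ when $\ka\in\gK_\Ga$, and by arbitrary test functions otherwise. Applying $\psi_p^{\vec\om}$ to the approximants and using $\chi_\ka\sn\supp(\ka_*\pi_\ka)=\mf{1}$ from \Eqref{R.LS}(ii), the resulting sections lie in $\cD\bigl(\ci J,\cD(M\ssm\Ga,V)\bigr)$, and by $\psi_p^{\vec\om}\vp_p^{\vec\om}=\id$ they converge to~$u$. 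Part~(ii) of the theorem is obtained in an identical manner, using the intertwining~\Eqref{T.dkt} in place of \Eqref{T.phC} and replacing the lateral model characterization by the corresponding one for initial traces on $\BY_\ka=\BX_\ka\times\BR^+$.

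The main obstacle is the careful handling of the intertwining relation for higher-order traces when $\Ga$ is a proper subset of $\pl M$: one must (a)~justify that $\ga_{i,\ka}v_\ka$ is well-defined and continuous on $\gF_{p,\ka}^{s/\vec r}$ precisely in the range $s>i+1/p$, which follows from the model trace theorem, (b)~verify that solving the triangular system does not move the conditions out of the scale of regularities admissible for traces, and (c)~ensure that the approximating sections produced in each chart $\ka\in\gK_\Ga$ are supported away from the boundary component~$\Ga$, not merely away from $\pl M\cap U_{\coU\ka}$; here one uses that the localization data only see $\Ga\cap U_{\coU\ka}$ and that the trivial extension across other boundary components is harmless since the corresponding trace conditions are not being enforced.
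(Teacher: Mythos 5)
Your overall route---localize via the pair $(\vp_p^{\vec\om},\psi_p^{\vec\om})$, translate the trace conditions chart by chart, and invoke the Euclidean characterizations in \cite{Ama09a}---is the one the paper follows, but the step that is supposed to close the argument in~(i) has a gap as written, and the device the paper uses to handle it is different. You propose to approximate each $v_\ka:=\vp_{p,\ka}^{\vec\om}u$ by test functions and then apply $\psi_p^{\vec\om}$; however $\psi_p^{\vec\om}\mf{w}=\sum_\ka\psi_{p,\ka}^{\vec\om}w_\ka$ ranges over the (in general infinite) atlas $\gK$, so the resulting sections need not lie in $\cD\bigl(\ci J,\cD(M\ssm\Ga,V)\bigr)$: they are in general not compactly supported in $M\ssm\Ga$. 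You would first have to pass to finitely supported sequences via $c_c(\cdot)\sdh\ell_p(\cdot)$, cf.~\Eqref{R.cl}, and this is not mentioned. The paper avoids the approximation construction entirely: it introduces a cutoff $\al\in\cD\bigl(\ci M\cup\Ga,[0,1]\bigr)$ with $\al=1$ near~$\Ga$, sets $v:=\al u$ so that $v$ vanishes near $\pl M\ssm\Ga$, verifies $\vp_p^{\vec\om}v\in\ell_p(\cimfgF_p^{s/\vec r})$, and then invokes the second diagram of Theorem~\ref{thm-RA.R} as a retraction onto $\ci\gF_p^{s/\vec r,\vec\om}\is\ci\gF_{p,\Ga}^{s/\vec r,\vec\om}$; no explicit density argument is built.

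Second, the equations you cite for the global-to-local passage are not the ones doing the work. The paper deduces $\ga_\ka\circ\pl_1^i(\vp_{p,\ka}^{\vec\om}v)=0$ from $\pl_{\mf{n}}^iv=0$, $i\leq k$, inductively, combining \Eqref{T.pDk} (the Leibniz expansion of $\thpi_{\ithka}\pl_{\mf{n}}^ku$ into $\pl_{\mf{n}}^k(\pi_\ka u)$ and lower-order traces), \Eqref{T.Cph} (the push-forward identity for $\pl_{\mf{n}}^k(\pi_\ka u)$), and the truly triangular relation \Eqref{T.Cck}. It does not use \Eqref{T.gik}, which concerns the coretraction $\ga_{k,\ka}^c$, nor \Eqref{T.phC}, whose cross-chart terms $A_{k-1,\wt{\ka}\ka}$ would force a further induction over neighboring charts. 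Finally, calling part~(ii) "identical" understates what changes there: for $\ka\in\gK_{\pl M}$ the local base $\BY_\ka=\BH^m\times\BR^+$ is a $2$-corner, so the paper first extends the local piece across~$\pl\BX_\ka$ to $\BR^m\times\BR^+$, applies \cite[Theorem~4.7.1]{Ama09a}, restricts back, and then re-assembles globally via $u=r_0^+\circ\psi_p^{\vec\om}\circ e_0^+\circ\vp_p^{\vec\om}u$, using the commutativity of $e_0^+$ with $\psi_p^{\vec\om}$ from~\Eqref{ER.rot} and Theorem~\ref{thm-ER.ER}; none of these corner-handling or extension steps is visible in your sketch.
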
 }
%=====================================================================
\begin{proof} 
(1) 
Let the assumptions of~(i) be satisfied. Since $\pl_{\mf{n}}^i$~is 
continuous and vanishes on the dense subset 
$\cD\bigl(\ci J,\cD(M\ssm\Ga)\bigr)$ of 
$\ci\gF_{p,\Ga}^{s/\vec r,\vec\om}$ it follows that the latter space is 
contained in the one on the right side of \Eqref{VT.Fdu}. 

\smallskip 
Conversely, let 
\hb{u\in\gF_p^{s/\vec r,\vec\om}} satisfy 
\hb{\pl_{\mf{n}}^iu=0} for 
\hb{i\leq k}. Suppose 
\hb{\al\in\cD\bigl(\ci M\cup\Ga,[0,1]\bigr)} and 
\hb{\al=1} in a neighborhood of~$\Ga$. Then 
\hb{v:=\al u\in\gF_p^{s/\vec r,\vec\om}} and 
\hb{\pl_{\mf{n}}^iv=0} for 
\hb{i\leq k}. We infer from \Eqref{T.Cck}, \,\Eqref{T.pDk}, and 
\Eqref{T.Cph} that 
\hb{\ga_\ka\circ\pl_1^i(\vp_{p,\ka}^{\vec\om}v)=0} for 
\hb{i\leq k} and 
\hb{\ka\slt\vp\in\gK\slt\Phi}. Since 
\hb{\ga_\ka=0} for 
\hb{\ka\notin\gK_\Ga} it follows from \cite[Theorem~4.7.1]{Ama09a} that 
\hb{\vp_{p,\ka}^{\vec\om}v\in\ci\gF_{p,\ka}^{s/\vec r}} for 
\hb{\ka\slt\vp\in\gK_\Ga\slt\Phi}. If 
\hb{\ka\in\gK\ssm\gK_\Ga}, then $\vp_{p,\ka}^{\vec\om}v$~belongs to 
$\ci\gF_{p,\ka}^{s/\vec r}$ as well. Moreover, $v$~vanishes near 
\hb{\pl M\ssm\Ga} and 
\hb{\ci\gF_{p,\ka}^{s/\vec r}=\gF_{p,\ka}^{s/\vec r}} for 
\hb{\ka\in\gK\ssm\gK_{\pl M}}. Hence we deduce from 
Theorem~\ref{thm-RA.R} that 
\hb{\vp_p^{\vec\om}v\in\ell_p(\cimfgF_p^{s/\vec r})}. Now part~(ii) of that 
theorem guarantees 
\hb{v=\psi_p^{\vec\om}(\vp_p^{\vec\om}v)\in\ci\gF_p^{s/\vec r,\vec\om}}. 
Consequently, 
\hb{u\in\ci\gF_{p,\Ga}^{s/\vec r,\vec\om}}. This proves claim~(i). 

\smallskip 
(2) 
Assume 
\hb{J=\BR^+} and 
\hb{r(\ell+1/p)<s<r(\ell+1+1/p)}. As above, we see that 
\hb{\gF_p^{s/\vec r,\vec\om}\bigl((0,\iy),V\bigr)} is contained in the 
space on the right side of \Eqref{VT.Fdt}. 

\smallskip 
Let 
\hb{u\in\gF_p^{s/\vec r,\vec\om}\BRpV} satisfy 
\hb{\pl_{t=0}^ju=0} for 
\hb{0\leq j\leq\ell}. We get from \Eqref{T.dkt} that 
\hb{\pl_{t=0,\ka}^j(\vp_{p,\ka}^{\vec\om}u)=0} for 
\hb{j\leq\ell} and 
\hb{\ka\slt\vp\in\gK\slt\Phi}. 

\smallskip 
Suppose 
\hb{\ka\in\gK\ssm\gK_{\pl M}}. Then \cite[Theorem~4.7.1]{Ama09a} implies 
\hb{\vp_{p,\ka}^{\vec\om}u 
   \in\gF_p^{s/\vec r}\bigl(\BX_\ka\times(0,\iy),E\bigr)}. If 
\hb{\ka\in\gK_{\pl M}}, then we obtain the latter result by extending 
\hb{v_\ka:=\vp_{p,\ka}^{\vec\om}u} first from 
\hb{\BH^m\times\BR^+} to 
\hb{\BR^m\times\BR^+} (as in Section~4.1 of~\cite{Ama09a}), then applying 
\cite[Theorem~4.7.1]{Ama09a}, and restricting afterwards to 
\hb{\BH^m\times\BR^+}. From this and 
Theorems \ref{thm-RA.R}  and~\ref{thm-ER.ER} we obtain 
\beq\Label{VT.e} 
e_0^+(\vp_p^{\vec\om}u) 
\in\ell_p\bigl(\mf{\gF}^{s/\vec r}(\BX\times\BR,E)\bigr). 
\eeq 
Thus, using these theorems once more and the fact that, by~\Eqref{ER.rot}, 
\ $e_0^+$~commutes with~$\psi_p^{\vec\om}$, we find 
\beq\Label{VT.re} 
u=r_0^+\circ e_0^+\circ\psi_p^{\vec\om}\circ\vp_p^{\vec\om}u 
=r_0^+\circ\psi_p^{\vec\om}\circ e_0^+\circ\vp_p^{\vec\om}u 
\in\gF_p^{s/\vec r,\vec\om}\bigl((0,\iy),V\bigr). 
\npb 
\eeq 
This implies the first part of claim~(ii). 

\smallskip 
Assume 
\hb{s<r/p}. If 
\hb{\pl M=\es}, then 
\hb{\cD\bigl((0,\iy),M\bigr)=\cD\ciJciM}. Hence 
\hb{\gF_p^{s/\vec r,\vec\om}\bigl((0,\iy),V\bigr) 
   =\ci\gF_p^{s/\vec r,\vec\om}\BRpV}. Thus, by \Eqref{VT.F0F}, 
\ \hb{\gF_p^{s/\vec r,\vec\om}\bigl((0,\iy),V\bigr) 
   =\gF_p^{s/\vec r,\vec\om}\BRpV} for 
\hb{s<1/p} and 
\hb{\pl M=\es}. This shows that in either case
\hb{s>r(-1+1/p)}. Consequently, as above, we deduce 
\hb{\gF_p^{s/\vec r}(\BX_\ka\times\ci J,E)=\gF_{p,\ka}^{s/\vec r}} from  
\cite[Theorem~4.7.1(ii)]{Ama09a}. 
Now the second part of assertion~(ii) is implied by \Eqref{VT.e} and 
\Eqref{VT.re}. 
\end{proof} 
%---------------------------------------------------------------------
%---------------------------------------------------------------------
\section{Boundary Operators}\LabelT{sec-B}% 
Throughout this section we suppose 
\hb{\Ga\neq\es}. 

\smallskip 
For 
\hb{k\in\BN} we consider differential operators on~$\Ga$ of the form 
$$ 
\sum_{i=0}^kb_i(\thna)\circ\pl_{\mf{n}}^i 
\qb b_i(\thna):=\sum_{j=0}^{k-i}b_{ij}\cdot\thna^j, 
$$ 
where 
\hb{b_{ij}\cdot\thna^j:=(u\mt b_{ij}\cdot\thna^ju)}, of course. Thus 
$b_i(\thna)$~is a tangential differential operator of order at most 
\hb{k-i}. 

\smallskip 
In fact, we consider systems of such operators. Thus we assume 
\beq\Label{B.ass} 
\bal
\bt\quad
&k,r_i\in\BN\text{ with }r_0<\cdots<r_k,\\ 
\bt\quad
&\sa_i,\tau_i\in\BN\text{ and }\lda_i\in\BR,\\ 
\bt\quad
&G_i=(G_i,h_{G_i},D_{G_i})\text{ is a fully uniformly regular vector 
 bundle over }\Ga 
\eal
\eeq 
for 
\hb{0\leq i\leq k}. For abbreviation, 
$$ 
\nu_i:=(r_i,\sa_i,\tau_i,\lda_i) 
\qa 0\leq i\leq k 
\qb \mf{\nu}_k:=(\nu_0,\ldots,\nu_k). 
$$ 
Then we define \emph{boundary operators} on~$\Ga$ of order at most~$r_i$ by 
$$ 
\cB_i(\mf{b}_i):=\sum_{j=0}^{r_i}\cB_{ij}(\mf{b}_{ij})\circ\pl_{\mf{n}}^j 
\qb \cB_{ij}(\mf{b}_{ij}):=\sum_{\ell=0}^{r_i-j}b_{ij,\ell}\cdot\thna^\ell,  
$$ 
where 
\hb{\mf{b}_i:=(\mf{b}_{i0},\ldots,\mf{b}_{ir_i})} and 
\hb{\mf{b}_{ij}:=(\mf{b}_{ij,0},\ldots,\mf{b}_{ij,r_i-j})} with 
$b_{ij,\ell}$ being time-dependent $\Hom\thWGi$-valued tensor fields 
on~$\Ga$. To be more precise, we introduce data spaces for 
\hb{s>r_i} by 
$$ 
\gB_{ij}^s\GaGi 
=\gB_{ij}^s(\Ga,G_i,\nu_i,\mu) 
:=\prod_{\ell=0}^{r_i-j}B_\iy^{(s-r_i)/\vec r,(\lda_i+r_i-j,\mu)} 
\bigl(\BR,T_{\tau_i+\sa}^{\sa_i+\tau+\ell} 
\bigl(\Ga,\Hom\thWGi\bigr)\bigr) 
$$ 
with general point~$(\mf{b}_{ij})$, and 
$$ 
\gB_i^s\GaGi 
=\gB_i^s(\Ga,G_i,\nu_i,\mu) 
:=\prod_{j=0}^{r_i}\gB_{ij}^s\GaGi 
\npb 
$$ 
whose general point is~$\mf{b}_i$. 
%---------------------------------------------------------------------
\begin{remarks}\LabelT{rem-B.B} 
\hh{(a)}\quad  
For the ease of writing we assume that these data spaces are defined on 
the whole line~$\BR$. In the following treatment, when studying 
function spaces on~$\BR^+$ or~$(0,\iy)$ it suffices, of course, to consider 
data defined on~$\BR^+$ only. It follows from Theorem~\ref{thm-ER.B} that 
this is no restriction of generality to assume that the data are given 
on all of~$\BR$. 

\smallskip 
\hh{(b)}\quad  
It should be observed that everything which follows below remains valid 
if we replace the data space $B_\iy^{(s-r_i)/\vec r,(\lda_i+r_i-j,\mu)}$ 
by $B_\iy^{\bar s_i/\vec r,(\lda_i+r_i-j,\mu)}$ with 
\hb{\bar s>s-r_i-1/p}. The selected choice has the advantage that 
$\gB_i^s\GaGi$ is independent of~$p$.\hfill$\Box$ 
\end{remarks} 
Henceforth, 
\hb{I\in\bigl\{\,J,(0,\iy)\,\bigr\}}. Given 
\hb{\mf{b}_i\in\gB_i^s\GaGi}, it follows from Theorem~\ref{thm-M.D0}, by 
taking also Theorem~\ref{thm-VT.V}(ii) into consideration if 
\hb{I=(0,\iy)}, that 
\beq\Label{B.Bij} 
\cB_{ij}(\mf{b}_{ij}) 
\in\cL\bigl(B_p^{(s-j-1/p)/\vec r,(\lda+j+1/p,\mu)}\IthV, 
B_p^{(s-r_i-1/p)/\vec r,(\lda+\lda_i+r_i+1/p,\mu)} 
\bigl(I,T_{\tau_i}^{\sa_i}\GaGi\bigr)\bigr). 
\eeq 
Hence, by Theorem~\ref{thm-T.T}, 
\beq\Label{B.BiL} 
\cB_i(\mf{b}_i) 
\in\cL\bigl(\gF_p^{s/\vec r,\vec\om}\IV, 
B_p^{(s-r_i-1/p)/\vec r,(\lda+\lda_i+r_i+1/p,\mu)} 
\bigl(I,T_{\tau_i}^{\sa_i}\GaGi\bigr)\bigr). 
\eeq 
Finally, we set 
\hb{G:=G_0\oplus\cdots\oplus G_k}, 
$$ 
\gB^s\GaG=\gB^s(\Ga,G,\mf{\nu},\mu) 
:=\prod_{i=0}^k\gB_i^s\GaGi 
$$ 
and  
$$  
\cB(\mf{b}):=\bigl(\cB_0(\mf{b}_0),\ldots,\cB_k(\mf{b}_k)\bigr) 
\qb \mf{b}:=(\mf{b}_0,\ldots,\mf{b}_k)\in\gB^s\GaG. 
$$ 
The boundary operator~$\cB_i(\mf{b}_i)$ is \emph{normal\/} if 
\hb{b_{ir_i}:=b_{ir_i,0}} is \hbox{$\lda_i$-uniformly} contraction 
surjective, and $\cB(\mf{b})$ is \emph{normal\/} if each~$\cB_i(\mf{b}_i)$, 
\ \hb{0\leq i\leq k}, has this property. Then 
$$ 
\gB_\norm^s\GaG:=\bigl\{\,\mf{b}\in\gB^s\GaG 
\ ;\ \cB(\mf{b})\text{ is normal}\,\bigr\}. 
$$ 

\smallskip 
It should be observed that 
\hb{\Ga\neq\pl M}, in general. This will allow us to consider boundary 
value problems where the order of the boundary operators may be different 
on different parts of~$\pl M$. 

\smallskip 
Lastly, we introduce the `boundary space' 
$$ 
\pl_{\Ga\times I}\gF_p^{s/\vec r,\vec\om}(G) 
=\pl_{\Ga\times I}\gF_p^{s/\vec r,\vec\om}(G,\mf{\nu},\mu) 
:=\prod_{i=0}^kB_p^{(s-r_i-1/p)/\vec r,(\lda+\lda_i+r_i+1/p,\mu)} 
\bigl(I,T_{\tau_i}^{\sa_i}\GaGi\bigr). 
\npb 
$$ 
The following lemma shows that it is an image space for the boundary 
operators under consideration. 
%---------------------------------------------------------------------
\begin{lemma}\LabelT{lem-B.C} 
If 
\hb{s>r_k+1/p} and 
\hb{\mf{b}\in\gB^s\GaG}, then 
$$ 
\cB(\mf{b})\in\cL\bigl(\gF_p^{s/\vec r,\vec\om}\IV, 
\pl_{\Ga\times I}\gF_p^{s/\vec r,\vec\om}(G)\bigr). 
\npb 
$$ 
The map 
\hb{\cB(\cdot)=\bigl(\mf{b}\mt\cB(\mf{b})\bigr)} is linear and continuous, 
and $\gB_\norm^s\GaG$ is open in~$\gB^s\GaG$. 
\end{lemma}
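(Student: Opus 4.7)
The plan is to assemble the three claims from results already on the shelf: the trace theorem, the contraction continuity theorem of Section~\ref{sec-C}, and the openness of uniform contraction surjectivity.

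First, for the continuity of $\cB(\mf{b})$, I would proceed component-wise. For fixed $i,j$ with $0\le j\le r_i$, Theorem~\ref{thm-T.T}(i) gives that $\pl_{\mf{n}}^j$ maps $\gF_p^{s/\vec r,\vec\om}\IV$ continuously into $B_p^{(s-j-1/p)/\vec r,(\lda+j+1/p,\mu)}\IthV$ (using Theorem~\ref{thm-T.T}(iii) when $I=(0,\iy)$). Then \Eqref{B.Bij}, which is a direct consequence of the multiplier/differential-operator Theorem~\ref{thm-M.D0} applied on the boundary manifold $\Ga$ equipped with its induced singularity structure, pushes this image continuously into $B_p^{(s-r_i-1/p)/\vec r,(\lda+\lda_i+\lda+r_i+1/p,\mu)}\bigl(I,T_{\tau_i}^{\sa_i}\GaGi\bigr)$. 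Summing over $j$ and taking the product over $i$ gives the asserted continuity of $\cB(\mf{b})$, which is just \Eqref{B.BiL} rephrased and collected.

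Next, the linearity of $\mf{b}\mt\cB(\mf{b})$ is immediate from the defining formula, and its continuity follows because the map $b_{ij,\ell}\mt\bigl(u\mt b_{ij,\ell}\cdot\thna^\ell u\bigr)$ is itself continuous and linear in the coefficient tensor by the last sentence of Theorem~\ref{thm-C.CC}(i) (or equivalently by the last claim of Theorem~\ref{thm-M.D0}). Post-composing with the bounded $\pl_{\mf{n}}^j$ and summing preserves this property.

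For the openness of $\gB_\norm^s\GaG$, I would argue factor by factor. The projection
\[
\pi_i\sco\gB^s\GaG\ra B_\iy^{(s-r_i)/\vec r,(\lda_i,\mu)}\bigl(\BR,T_{\tau_i+\sa}^{\sa_i+\tau}\bigl(\Ga,\Hom\thWGi\bigr)\bigr)
\qa \mf{b}\mt b_{ir_i,0}=b_{ir_i}
\]
is a coordinate projection, hence continuous and linear. Now Proposition~\ref{pro-C.R}, applied on the base manifold $\Ga$ (which is itself singular of type $[\![\thrho]\!]$) to the bundles $\thW$ and $G_i$ with $t:=s-r_i$, $\lda_{12}:=\lda_i$, asserts that the set $B_{\iy,\surj}^{t/\vec r,(\lda_i,\mu)}$ of $\lda_i$-uniformly contraction surjective sections is open in $B_\iy^{t/\vec r,(\lda_i,\mu)}$. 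Consequently each preimage $\pi_i^{-1}(B_{\iy,\surj}^{t/\vec r,(\lda_i,\mu)})$ is open in $\gB^s\GaG$, and
\[
\gB_\norm^s\GaG=\bigcap_{i=0}^k\pi_i^{-1}\bigl(B_{\iy,\surj}^{(s-r_i)/\vec r,(\lda_i,\mu)}\bigr)
\]
is open as a finite intersection of open sets.

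The only point requiring care will be the bookkeeping of weight exponents and tensor orders when reducing the openness claim to Proposition~\ref{pro-C.R}: one must verify that the $\lda_i$-uniform contraction surjectivity as defined in \Eqref{C.us} matches the normality hypothesis placed on $b_{ir_i}=b_{ir_i,0}$, so that the exponent $\lda_{12}+(\tau_{12}-\sa_{12})/2$ in \Eqref{C.us} reduces precisely to~$\lda_i$ in the present indexing; apart from this indexing check the argument is a routine concatenation of cited theorems.
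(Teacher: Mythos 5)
Your proposal matches the paper's argument, which simply cites \Eqref{B.BiL}, calls the second claim obvious, and invokes Proposition~\ref{pro-C.R}; you fill in the routine details (composing trace with \Eqref{B.Bij}, coordinate projections to reduce openness to Proposition~\ref{pro-C.R}) that the paper leaves implicit. One small typo: the target weight exponent should be $\lda+\lda_i+r_i+1/p$ (you wrote $\lda+\lda_i+\lda+r_i+1/p$), and the "indexing check" you flag is really just the identification $\lda_{12}=\lda_i$ in the definition of normality, not a reduction of the full exponent $\lda_{12}+(\tau_{12}-\sa_{12})/2$ to $\lda_i$.
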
 
%=====================================================================
\begin{proof} 
The first assertion is immediate from \Eqref{B.BiL}. The second one is 
obvious, and the last one is a consequence of Proposition~\ref{pro-C.R}. 
\end{proof} 
%---------------------------------------------------------------------
\begin{theorem}\LabelT{thm-B.R} 
Suppose assumption\/~\Eqref{B.ass} applies. Let 
\hb{s>r_k+1/p} and 
\hb{\mf{b}\in\gB_\norm^s\GaG}. Then $\cB(\mf{b})$~is a retraction from 
$\gF_p^{s/\vec r,\vec\om}\JV$ onto 
$\pl_{\Ga\times J}\gF_p^{s/\vec r,\vec\om}(G)$. There exists an analytic 
map 
$$ 
\cB^c(\cdot)\sco\gB_\norm^s\GaG 
\ra\cL\bigl(\pl_{\Ga\times J}\gF_p^{s/\vec r,\vec\om}(G), 
\gF_p^{s/\vec r,\vec\om}\JV\bigr) 
$$ 
such that 
\[ 
\bal 
{\rm(i)} 
&\quad \cB^c(\mf{b})\text{ is a coretraction for }\cB(\mf{b}),\\ 
{\rm(ii)} 
&\quad \pl_{\mf{n}}^j\circ\cB^c(\mf{b})=0\kern1pt 
 \text{ for\/ $0\leq j<s-1/p$ with }j\notin\{r_0,\ldots,r_k\}. 
\eal 
\npb 
\] 
If 
\hb{J=\BR^+}, then 
\hb{\cB^c(\mf{b})g\in\gF_p^{s/\vec r,\vec\om}\bigl((0,\iy),V\bigr)} 
whenever 
\hb{g\in\pl_{\Ga\times(0,\iy)}\gF_p^{s/\vec r,\vec\om}(G)}. 
\end{theorem}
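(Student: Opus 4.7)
\begin{proofc}{proposal}
The plan is to construct $\cB^c(\mf{b})$ by recursion on the order index $i = 0, 1, \ldots, k$, exploiting the fact that $\cB_{i,r_i}(\mf{b}_{i,r_i}) = b_{ir_i}\cdot$ is a pure complete contraction (no tangential derivatives) whose symbol is $\lda_i$-uniformly contraction surjective, combined with the universal trace coretractions $(\ga_{\mf{n}}^{r_i})^c$ from Theorem~\ref{thm-T.T}(i) that kill all lower order normal derivatives.

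The key observation is that if one sets $u_i := (\ga_{\mf{n}}^{r_i})^c v_i$ for some trace datum $v_i$ on $\Ga \times J$, then by the cokernel property of the trace coretraction we have $\pl_{\mf{n}}^j u_i = 0$ for $j \neq r_i$ and $\pl_{\mf{n}}^{r_i} u_i = v_i$. Consequently, if we attempt $u := \sum_{i=0}^k u_i$, then for each $\ell$ we get
\[
\cB_\ell(\mf{b}_\ell) u
= \sum_{j=0}^{r_\ell} \cB_{\ell j}(\mf{b}_{\ell j}) \pl_{\mf{n}}^j u
= b_{\ell r_\ell} \cdot v_\ell + \sum_{i<\ell} \cB_{\ell, r_i}(\mf{b}_{\ell, r_i}) v_i,
\]
the sum on the right involving only indices $i$ with $r_i \leq r_\ell$, i.e., $i \leq \ell - 1$. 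Hence the system $\cB_\ell(\mf{b}_\ell) u = g_\ell$ is lower triangular in the unknowns $v_0, \ldots, v_k$. Thus I can solve recursively: having obtained $v_0, \ldots, v_{\ell-1}$, I set
\[
v_\ell := I^c(b_{\ell r_\ell})\Bigl(g_\ell - \sum_{i<\ell} \cB_{\ell, r_i}(\mf{b}_{\ell, r_i}) v_i\Bigr),
\]
where $I^c$ is the analytic right contraction inverse furnished by Proposition~\ref{pro-C.R}. Then $\cB^c(\mf{b}) g := \sum_{i=0}^k (\ga_{\mf{n}}^{r_i})^c v_i$ is the desired coretraction, and properties (i) and~(ii) are built into the construction.

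The mapping properties are then a bookkeeping exercise. The trace coretraction $(\ga_{\mf{n}}^{r_i})^c$ sends the $i$-th boundary factor into $\gF_p^{s/\vec r,\vec\om}(J,V)$; the tangential differential operators $\cB_{\ell, r_i}(\mf{b}_{\ell, r_i})$ have mapping properties \Eqref{B.Bij}; and $I^c$ lives in the right weighted Besov class on $\Ga \times \BR$, acts by complete contraction according to Theorem~\ref{thm-C.R}, and produces a datum belonging to the $\ell$-th boundary factor. Chaining these maps yields $\cB^c(\mf{b}) \in \cL(\pl_{\Ga\times J}\gF_p^{s/\vec r,\vec\om}(G), \gF_p^{s/\vec r,\vec\om}(J,V))$. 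The analyticity of $\cB^c(\cdot)\sco\gB_\norm^s\ra\cL(\cdots)$ follows because it is built from the analytic map $\mf{b}\mt(I^c(b_{i r_i}))_i$ (Proposition~\ref{pro-C.R}), the bilinear maps $(\mf{b}_{\ell, r_i}, v_i) \mt \cB_{\ell, r_i}(\mf{b}_{\ell, r_i}) v_i$ which are jointly continuous (linear in $\mf{b}$, linear in $v$), and the fixed continuous linear maps $(\ga_{\mf{n}}^{r_i})^c$; a finite composition of analytic operations is analytic.

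The main obstacle I anticipate is checking the last assertion, namely that $\cB^c(\mf{b})$ restricts correctly to the half-line: if $g \in \pl_{\Ga\times(0,\iy)}\gF_p^{s/\vec r,\vec\om}(G)$, then $\cB^c(\mf{b}) g \in \gF_p^{s/\vec r,\vec\om}((0,\iy),V)$. The delicate point is that the tangential operators $\cB_{\ell, r_i}(\mf{b}_{\ell, r_i})$ acting on the intermediate $v_i$ must preserve membership in the appropriate vanishing-initial-trace subspace; for this I would use Theorem~\ref{thm-VT.V}(ii) to recast the relevant Besov spaces on $(0,\iy)$ as subspaces of the spaces on $\BR^+$ satisfying a vanishing finite family of initial traces, and then appeal to the fact that the recursive construction commutes with trivial extension $e_0^+$ across the initial boundary (Theorem~\ref{thm-ER.ER}(ii)), provided the trace coretraction $(\ga_{\mf{n}}^{r_i})^c$ is chosen from the statement of Theorem~\ref{thm-T.T}(iii), which gives a universal coretraction already taking values in $\gF_p^{s/\vec r,\vec\om}((0,\iy),V)$ when applied to data with vanishing initial traces. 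Once this consistency is established, all inductive steps in the recursion remain within the smaller space, and the claim follows.
\end{proofc}
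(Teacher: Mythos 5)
Your plan is right in spirit — the lower-triangular structure of the normal boundary system and the use of the contraction right-inverse $I^c$ from Proposition~\ref{pro-C.R} are exactly what drive the paper's proof — but there is a genuine gap in the way you control the normal traces. You set $u_i := (\ga_{\mf{n}}^{r_i})^c v_i$ and claim $\pl_{\mf{n}}^j u_i = 0$ for all $j \neq r_i$. Theorem~\ref{thm-T.T}(i), however, only guarantees $\pl_{\mf{n}}^j\circ(\ga_{\mf{n}}^{r_i})^c = 0$ for $0 \le j \le r_i - 1$ (together with $\pl_{\mf{n}}^{r_i}\circ(\ga_{\mf{n}}^{r_i})^c = \id$); for $j > r_i$ nothing is asserted, and in general $\pl_{\mf{n}}^j(\ga_{\mf{n}}^{r_i})^c v_i \neq 0$. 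Consequently, in the expansion
\[
\cB_\ell(\mf{b}_\ell)u
=\sum_{j=0}^{r_\ell}\cB_{\ell j}(\mf{b}_{\ell j})\pl_{\mf{n}}^j
 \Bigl(\sum_{i=0}^k(\ga_{\mf{n}}^{r_i})^cv_i\Bigr),
\]
the summands with $i<\ell$ and $r_i<j\le r_\ell$ do not drop out, so the clean identity $\cB_\ell(\mf{b}_\ell)u = b_{\ell r_\ell}\cdot v_\ell + \sum_{i<\ell}\cB_{\ell,r_i}(\mf{b}_{\ell,r_i})v_i$ fails, and the recursion built on it is not available as stated.

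This is precisely the difficulty the paper's iteration $u_j := u_{j-1} + (\ga_{\mf{n}}^j)^c(h_j + \cC_j u_{j-1} - \pl_{\mf{n}}^j u_{j-1})$, run over \emph{every} integer $j$ from $0$ to $N := [s-1/p]_-$ (not just the $r_i$'s), is designed to overcome. Because $(\ga_{\mf{n}}^j)^c$ kills all traces of order $<j$, the pass at step $j$ leaves $\pl_{\mf{n}}^\ell u_j = \pl_{\mf{n}}^\ell u_{j-1}$ for $\ell<j$ and forces $\pl_{\mf{n}}^j u_j = h_j + \cC_j u_{j-1}$; the subtraction $-\pl_{\mf{n}}^j u_{j-1}$ is exactly the mechanism that absorbs the uncontrolled higher-order traces you are implicitly assuming away. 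Once the chain is complete, applying $A_i(b_{ir_i})$ to $\pl_{\mf{n}}^{r_i}u_N - \cC_{r_i}u_N = h_{r_i}$ recovers $\cB_i u_N = g_i$, which is your triangular system in normalized form. In short, your construction would be correct if Theorem~\ref{thm-T.T} gave you a coretraction prescribing \emph{all} traces up to order $N$ simultaneously; the paper's iterative scheme is how one manufactures such a full coretraction from the partial ones the theorem actually provides. Your treatment of the half-line assertion via Theorem~\ref{thm-T.T}(iii) is, by contrast, the same observation the paper makes and is fine.
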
 
%=====================================================================
\begin{proof} 
(1) 
We deduce from Theorem~\ref{thm-C.R} for 
\hb{0\leq i\leq k} the existence of an analytic map~$A_i^c(\cdot)$ from 
$$ 
B_{\iy,\surj}^{(s-r_i)/\vec r,(\lda_i,\mu)} 
\bigl(J,T_{\tau_i+\sa}^{\sa_i+\tau}\bigl(\Ga,\Hom\thWGi\bigr)\bigr) 
$$ 
into 
$$ 
\cL\bigl(B_p^{(s-r_i-1/p)/\vec r,(\lda+\lda_i+r_i+1/p,\mu)} 
\bigl(J,T_{\tau_i}^{\sa_i}\GaGi\bigr), 
B_p^{(s-r_i-1/p)/\vec r,(\lda+r_i+1/p,\mu)}\JthV\bigr) 
\npb 
$$ 
such that $A_i^c(a)$~is a right inverse for 
\hb{A_i(a):=(u\mt a\cdot u)}.

\smallskip 
(2) 
Suppose 
\hb{\mf{b}\in\gB_\norm^s\GaG}. For 
\hb{0\leq i\leq k} we set 
$$ 
\cC_{r_i}(\mf{b}_i) 
:=-\sum_{j=0}^{r_i-1}A_i^c(b_{ir_i}) 
\cB_{ij}(\mf{b}_{ij})\circ\pl_{\mf{n}}^j. 
$$ 
It follows from \Eqref{B.Bij}, step~(1), and Theorem~\ref{thm-T.T} that 
\beq\Label{B.Ci} 
\cC_{r_i}(\mf{b}_i) 
\in\cL\bigl(\gF_p^{s/\vec r,\vec\om}\JV, 
B_p^{(s-j-1/p)/\vec r,(\lda+r_i+1/p,\mu)}\JthV\bigr) 
\npb 
\eeq 
and the map 
\hb{\mf{b}_i\ra\cC_{r_i}(\mf{b}_i)} is analytic. 

\smallskip 
Let 
\hb{N:=[s-1/p]_-} and define 
$$ 
\cC=(\cC_0,\ldots,\cC_N) 
\in\cL\Bigl(\gF_p^{s/\vec r,\vec\om}\JV,\prod_{\ell=0}^N 
B_p^{(s-\ell-1/p)/\vec r,(\lda+\ell+1/p,\mu)}\JthV\Bigr)   
\npb 
$$ 
by setting 
\hb{\cC_\ell:=0} for 
\hb{0\leq\ell\leq N} with 
\hb{\ell\notin\{r_0,\ldots,r_k\}}. 

\smallskip 
(3) 
Assume 
\hb{g=(g_0,\ldots,g_k)\in\pl_{\Ga\times J}\gF_p^{s/\vec r}(G)}. Define 
$$ 
h=(h_0,\ldots,h_N) 
\in\prod_{\ell=0}^NB_p^{(s-\ell-1/p)/\vec r,(\lda+\ell+1/p,\mu)}\JthV  
\npb 
$$ 
by 
\hb{h_{r_i}:=A_i^c(b_{ir_i})g_i} for 
\hb{0\leq i\leq k}, and 
\hb{h_\ell:=0} otherwise. 

\smallskip 
By Theorem~\ref{thm-T.T} there exists for 
\hb{j\in\{0,\ldots,N\}} a universal coretraction~$(\ga_{\mf{n}}^j)^c$ 
for~$\pl_{\mf{n}}^j$ satisfying 
\beq\Label{B.ljc} 
\pl_{\mf{n}}^\ell\circ(\ga_{\mf{n}}^j)^c=\da^{\ell j}\id 
\qa 0\leq\ell\leq j. 
\eeq 
We put 
\hb{u_0:=(\ga_{\mf{n}}^0)^ch_0\in\gF_p^{s/\vec r,\vec\om}\JV}. Suppose 
\hb{1\leq j\leq N} and 
\hb{u_0,u_1,\ldots,u_{j-1}} have already been defined. Set 
\beq\Label{B.u} 
u_j:=u_{j-1}+(\ga_{\mf{n}}^j)^c\,(h_j+\cC_ju_{j-1}-\pl_{\mf{n}}^ju_{j-1}). 
\eeq 
This defines 
\hb{u_0,u_1,\ldots,u_N\in\gF_p^{s/\vec r,\vec\om}\JV}. It follows from 
\Eqref{B.ljc} and \Eqref{B.u} 
\beq\Label{B.dj} 
\pl_{\mf{n}}^ju_j=h_j+\cC_ju_{j-1} 
\qa 0\leq j\leq N, 
\eeq 
and 
$$ 
\pl_{\mf{n}}^\ell u_j=\pl_{\mf{n}}^\ell u_{j-1} 
\qa 0\leq\ell\leq j-1 
\qb 1\leq j\leq N. 
$$ 
The latter relation implies 
$$ 
\pl_{\mf{n}}^\ell u_j=\pl_{\mf{n}}^\ell u_n 
\qa 0\leq\ell<j<n\leq N. 
$$ 
Hence, since $\cC_j$~involves 
\hb{\pl_{\mf{n}}^0,\ldots,\pl_{\mf{n}}^{j-1}} only, we deduce from 
\Eqref{B.dj} 
$$ 
\pl_{\mf{n}}^ju_n=h_j+\cC_ju_n 
\qa 0\leq j\leq n\leq N. 
$$ 
If 
\hb{j=r_i}, then we apply $A_i(b_{ir_i})$ to this equation to find 
\beq\Label{B.Big} 
\cB_iu_n=g_i 
\qa r_i\leq n\leq N. 
\eeq 
For 
\hb{0\leq i\leq k} we set 
\hb{G^i:=G_0\oplus\cdots\oplus G_i} and 
\hb{\mf{\nu}^i:=(\nu_1,\ldots,\nu_i)} as well as 
\hb{\mf{b}^i:=(\mf{b}_0,\ldots,\mf{b}_i)}. Then it follows from 
\Eqref{B.BiL} that 
\beq\Label{B.B0} 
\cB^i(\mf{b}^i):=\bigl(\cB_0(\mf{b}_0),\ldots,\cB_i(\mf{b}_i)\bigr) 
\in\cL\bigl(\gF_p^{t/\vec r,\vec\om}\JV, 
\pl_{\Ga\times J}\gF_p^{t/\vec r,\vec\om}(G^i,\mf{\nu}^i,\mu)\bigr) 
\eeq 
for 
\hb{r_i+1/p<t\leq s}. We define~$\cB^{ic}(\mf{b}^i)$ by 
$$ 
\cB^{ic}(\mf{b}^i)\,(g_0,\ldots,g_i):=u_{r_i}. 
$$ 
It follows from \Eqref{B.Ci}, \,\Eqref{B.u}, and Theorem~\ref{thm-T.T} 
that 
\beq\Label{B.B1} 
\cB^{ic}(\mf{b}^i) 
\in\cL\bigl(\pl_{\Ga\times J}\gF_p^{t/\vec r,\vec\om}(G^i,\mf{\nu}^i,\mu), 
\gF_p^{t/\vec r,\vec\om}\JV\bigr) 
\qa r_i+1/p<t\leq s. 
\eeq 
Furthermore, \Eqref{B.Big} and the definition of~$h$ imply 
\beq\Label{B.B2} 
\cB_\al(\mf{b}_\al)\cB^{ic}(\mf{b}^i)(g_0,\ldots,g_i)
=\cB_\al(\mf{b}_\al)\cB^{jc}(\mf{b}^j)(g_0,\ldots,g_j) 
=g_\al 
\qa 0\leq\al\leq i\leq j\leq k, 
\npb 
\eeq 
and 
\hb{\pl_{\mf{n}}^j\cB^{ic}(\mf{b}^i)=0} for 
\hb{0\leq j<t-1/p} with 
\hb{j\notin\{r_0,\ldots,r_k\}}. 

\smallskip 
Now we set 
\hb{\cB^c(\mf{b}):=\cB^{kc}(\mf{b})}. Then \Eqref{B.B0} and \Eqref{B.B2} 
show that it is a right inverse for~$\cB(\mf{b})$. It is a consequence of 
step~(2) and \Eqref{B.u} that $\cB^c(\cdot)$~is analytic. Due to 
Theorem~\ref{thm-T.T} it is easy to see that the last assertion applies as 
well. 
\end{proof} 
There is a similar, though much simpler result concerning the `extension of 
initial conditions'. 
%---------------------------------------------------------------------
\begin{theorem}\LabelT{thm-B.Rt} 
Suppose 
\hb{0\leq j_0<\cdots<j_\ell} and 
\hb{s>r(j_\ell+1/p)}. Set 
\hb{\cC:=(\pl_{t=0}^{j_0},\ldots,\pl_{t=0}^{j_\ell})} and  
\beq\Label{B.V} 
B_p^{s-r(\mf{j}_\ell+1/p),\lda+\mu(\mf{j}_\ell+1/p)}(V) 
:=\prod_{i=0}^\ell B_p^{s-r(j_i+1/p),\lda+\mu(j_i+1/p)}(V). 
\eeq 
Then $\cC$~is a retraction from\/ $\gF_p^{s/\vec r,\vec\om}\BRpV$ onto 
$B_p^{s-\vec r(\mf{j}_\ell+1/p),\lda+\mu(\mf{j}_\ell+1/p)}(V)$, and there 
exists a coretraction~$\cC^c$ satisfying 
\hb{\pl_{t=0}^j\circ\cC^c=0} for 
\hb{0\leq j<s/r-1/p} with 
\hb{j\notin\{j_0,\ldots,j_\ell\}}. 
\end{theorem}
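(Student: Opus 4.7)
The plan is to adapt the construction used in step~(3) of the proof of Theorem~\ref{thm-B.R} to the present initial-trace setting. The situation here is considerably simpler, because each~$\pl_{t=0}^k$ already comes with its own universal coretraction $(\ga_{t=0}^k)^c$ supplied by Theorem~\ref{thm-T.T}(ii), and no $\lda$-uniformly contraction surjective bundle morphism needs to be inverted. Continuity and surjectivity of~$\cC$ itself follow component-wise from Theorem~\ref{thm-T.T}(ii), so the entire task reduces to producing a coretraction~$\cC^c$ that, in addition, kills the `auxiliary' initial traces at the orders $j\notin\{j_0,\ldots,j_\ell\}$ below~$s/r-1/p$.

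Set $N:=[s/r-1/p]_-$, the largest integer strictly less than~$s/r-1/p$. The hypothesis $s>r(j_\ell+1/p)$ forces $N\geq j_\ell$, hence $\{j_0,\ldots,j_\ell\}\is\{0,\ldots,N\}$. For data $(g_0,\ldots,g_\ell)$ in the product space~\Eqref{B.V} I would fill the missing slots by defining $h=(h_0,\ldots,h_N)$ via $h_{j_i}:=g_i$ for $0\leq i\leq\ell$ and $h_k:=0$ otherwise; each $h_k$ lies in $B_p^{s-r(k+1/p),\lda+\mu(k+1/p)}(V)$ and therefore in the domain of $(\ga_{t=0}^k)^c$. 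I would then construct $u_N$ inductively by $u_0:=(\ga_{t=0}^0)^ch_0$ and
$$
u_{k+1}:=u_k+(\ga_{t=0}^{k+1})^c\bigl(h_{k+1}-\pl_{t=0}^{k+1}u_k\bigr)
\qa 0\leq k\leq N-1.
$$
By Theorem~\ref{thm-T.T}(ii) each~$u_k$ belongs to $\gF_p^{s/\vec r,\vec\om}\BRpV$ and depends linearly and continuously on~$h$, hence on $(g_0,\ldots,g_\ell)$.

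The defining property $\pl_{t=0}^i\circ(\ga_{t=0}^{k+1})^c=0$ for $0\leq i\leq k$ ensures that the correction term at step~$k+1$ does not disturb the traces of orders~$\leq k$, so a straightforward induction yields $\pl_{t=0}^iu_N=h_i$ for every $0\leq i\leq N$. Setting $\cC^c(g_0,\ldots,g_\ell):=u_N$ therefore produces the required coretraction: $\pl_{t=0}^{j_i}u_N=h_{j_i}=g_i$, while for $0\leq j<s/r-1/p$ (equivalently $0\leq j\leq N$) with $j\notin\{j_0,\ldots,j_\ell\}$ one has $\pl_{t=0}^ju_N=h_j=0$. The only mildly delicate point is the index bookkeeping, in particular the verification $N\geq j_\ell$ that guarantees the induction reaches all prescribed traces; no estimate beyond those already encoded in Theorem~\ref{thm-T.T}(ii) is required, so the theorem is effectively a corollary of the single-order initial trace result.
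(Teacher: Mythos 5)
Your proof is correct and follows essentially the same route as the paper: the paper also defines $h$ by inserting the given data at the slots $j_0,\ldots,j_\ell$ and zeros elsewhere, then builds the extension iteratively via $u_j:=u_{j-1}+(\ga_{t=0}^j)^c(h_j-\pl_{t=0}^j u_{j-1})$ with $u_{-1}:=0$, which is exactly your recursion with the base case unwound one step. The only difference is presentational (the paper phrases it as ``apply step~(3) of the proof of Theorem~\ref{thm-B.R} with these modifications''), so there is nothing substantive to add.
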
 
%=====================================================================
\begin{proof} 
Theorem~\ref{thm-T.T}(ii) guarantees that $\cC$~is a continuous linear map 
from $\gF_p^{s/\vec r,\vec\om}\BRpV$ into~\Eqref{B.V}. Due to that theorem 
the assertion follows from step~(3) of the proof of Theorem~\ref{thm-B.R} 
using the following modifications: 
\hb{h_{j_i}:=g_i} for 
\hb{0\leq i\leq\ell} and  
$$ 
u_j:=u_{j-1}+(\ga_{t=0}^j)^c(h_j-\pl_{t=0}^ju_{j-1}) 
\npb 
$$ 
with 
\hb{u_{-1}:=0}. 
\end{proof} 
Now we suppose 
\hb{\Ga\neq\es} and 
\hb{J=\BR^+}. We write 
\hb{\Sa:=\Ga\times\BR^+} for the \emph{lateral boundary over}~$\Ga$ and 
recall that 
\hb{M_0:=M\times\{0\}} is the initial boundary. Then 
\hb{\Sa\cap M_0=\Ga\times\{0\}=:\Ga_0} is the \emph{corner manifold 
over}~$\Ga$. We suppose 
\beq\Label{B.assl} 
\bal
\bt\quad
&\text{assumption \Eqref{B.ass} is satisfied},\\ 
\bt\quad
&\ell\in\BN\text{ and }s>\max\bigl\{r_k+1/p,\ r(\ell+1/p)\bigr\}. 
\eal
\eeq 
We set 
\hb{\cC:=\ora{\pl_{t=0}^\ell}:=(\pl_{t=0}^0,\ldots,\pl_{t=0}^\ell)}. Then, 
by Theorem~\ref{thm-B.Rt}, 
\ $\cC$~is a retraction from $\gF_p^{s/\vec r,\vec\om}\BRpV$ onto 
$$ 
B_p^{s-r(\mf{\ell}+1/p),\lda+\mu(\mf{\ell}+1/p)}(V) 
:=\prod_{j=0}^\ell B_p^{s-r(j+1/p),\lda+\mu(j+1/p)}(V). 
$$ 
By Theorem~\ref{thm-B.R} 
\ $\cB(\mf{b})$~is for 
\hb{\mf{b}\in\gB_\norm^s\GaG} a~retraction from 
$\gF_p^{s/\vec r,\vec\om}\BRpV$ onto $\pl_\Sa\gF_p^{s/\vec r,\vec\om}(G)$. 
We put 
$$ 
\pl_{\Sa\cup M_0}\gF_p^{s/\vec r,\vec\om}(G) 
:=\pl_\Sa\gF_p^{s/\vec r,\vec\om}(G) 
\times B_p^{s-r(\mf{\ell}+1/p),\lda+\mu(\mf{\ell}+1/p)}(V)
$$ 
and 
\hb{\vec\cB(\cdot):=\bigl(\cB(\cdot),\cC\bigr)}. Then 
$$ 
\vec\cB(\cdot) 
\sco\gB_\norm^s\GaG 
\ra\cL\bigl(\gF_p^{s/\vec r,\vec\om}\BRpV, 
\pl_{\Sa\cup M_0}\gF_p^{s/\vec r,\vec\om}(G)\bigr) 
\npb 
$$ 
is the restriction of a continuous linear map to the open subset 
$\gB_\norm^s\GaG$ of~$\gB^s\GaG$, hence analytic. 

\smallskip 
However, $\vec\cB(\mf{b})$~is not surjective, in general. Indeed, suppose 
$$ 
0\leq i\leq k 
\qb 0\leq j\leq\ell 
\qb s>r_i+1/p+r(j+1/p)=:r_{ij}. 
$$ 
Then we deduce from \Eqref{B.BiL} and Theorem~\ref{thm-T.T}(ii) 
$$ 
\pl_{t=0}^j\circ\cB_i(\mf{b}) 
\in\cL\bigl(\gF_p^{s/\vec r,\vec\om}\BRpV, 
B_p^{s-r_{ij},\lda+\lda_i+r_{ij}}\bigl(T_{\tau_i}^{\sa_i}\GanGi\bigr)\bigr).  
$$ 
Furthermore, 
\hb{\pl^j\bigl(\cB_i(\mf{b})u\bigr)=\cB_i^{(j)}(\mf{b})u}, where 
$$ 
\cB_i^{(j)}(\mf{b})u 
=\sum_{\al=0}^j\bid j\al\cB_i(\pl^{j-\al}\mf{b}_i)\circ\pa. 
$$ 
Theorem~\ref{thm-M.D} implies 
$$ 
\pl^{j-\al}\mf{b}_i 
\in\gB_i^s\bigl(\Ga,G_i,\bigl(r_i+r(j-\al),\sa_i,\tau_i, 
\lda_i+\mu(j-\al)\bigr),\mu\bigr). 
$$ 
From this and \Eqref{B.BiL} we infer that $\cB_i^{(j)}(\mf{b})$~possesses 
the same mapping properties as 
\hb{\pl^j\circ\cB_i(\mf{b})}. Set 
$$ 
\cB_i^{(j)}(0)\vec v_j 
=\cB_i^{(j)}(\mf{b},0)\vec v_j 
:=\sum_{\al=0}^j\bid j\al\cB_i(\pl_{t=0}^{j-\al}\mf{b}_i)v_\al 
\qa \vec v_j:=(v_0,\ldots,v_j)  
$$ 
with 
\hb{v_\al\in B_p^{s-r(\al+1/p),\lda+\mu(\al+1/p)}(V)}. Then 
$\cB_i^{(j)}(0)$~is a continuous linear map 
$$ 
\prod_{\al=0}^jB_p^{s-r(\al+1/p),\lda+\mu(\al+1/p)}(V) 
\ra B_p^{s-r_{ij},\lda+\lda_j+r_{ij}}\bigl(T_{\tau_i}^{\sa_i}\GanGi\bigr) 
$$ 
and 
\hb{\mf{b}\mt\cB_i^{(j)}(0)} is the restriction of a linear and continuous 
map to~$\gB_\norm^s\GaG$.  Furthermore, 
\beq\Label{B.cu} 
\pl_{t=0}^j\bigl(\cB_i(\mf{b})u\bigr)=\cB_i^{(j)}(0)\ora{\pl_{t=0}^j}u 
\qa u\in\gF_p^{s/\vec r,\vec\om}\BRpV. 
\eeq 

\smallskip 
We denote for 
\hb{\mf{b}\in\gB_\norm^s\GaG} by 
$$ 
\bal 
{}      %do not remove!
&\pl_{\vec\cB(\mf{b})}^{cc}\gF_p^{s/\vec r,\vec\om}(G)\text{ the set of all 
 $(g,h)\in\pl_{\Sa\cup M_0}\gF_p^{s/\vec r,\vec\om}(G)$ satisfying the 
 \hh{compatibility conditions}}\\ 
&\qquad\qquad\qquad\qquad\qquad\qquad\qquad\qquad 
 \pl_{t=0}^jg_i=\cB_i^{(j)}(0)\vec h_j\\ 
&\text{for $0\leq i\leq k$ and $0\leq j\leq\ell$ with }
 r_i+1/p+r(j+1/p)<s. 
\eal 
$$ 
The linearity and continuity of $\pl_{t=0}^j$ and~$\cB_i^{(j)}(0)$ 
guarantee that 
$\pl_{\vec\cB(\mf{b})}^{cc}\gF_p^{s/\vec r,\vec\om}(G)$ is a closed linear 
subspace of $\pl_{\Sa\cup M_0}\gF_p^{s/\vec r,\vec\om}(G)$. By the 
preceding considerations it contains the range of~$\vec\cB(\mf{b})$. The 
following theorem shows that, in fact, 
\hb{\pl_{\vec\cB(\mf{b})}^{cc}\gF_p^{s/\vec r,\vec\om}(G)=\im(\vec\cB)}, 
provided 
\hb{\mf{b}\in\gB_\norm^s\GaG}. 
%---------------------------------------------------------------------
\begin{theorem}\LabelT{thm-B.Rcc} 
Let assumption\/~\Eqref{B.assl} be 
satisfied and suppose 
$$ 
s\notin\bigl\{\,r_i+1/p+r(j+1/p) 
\ ;\ 0\leq i\leq k,\ 0\leq j\leq\ell\,\bigr\}. 
$$ 
Then $\vec\cB(\mf{b})$~is for 
\hb{\mf{b}\in\gB_\norm^s\GaG} a~retraction from 
$\gF_p^{s/\vec r,\vec\om}\BRpV$ onto 
$\pl_{\vec\cB(\mf{b})}^{cc}\gF_p^{s/\vec r,\vec\om}(G)$. There exists 
an analytic map 
\beq\Label{B.Bc} 
\vec\cB^c(\cdot)\sco\gB_\norm^s\GaG 
\ra\cL\bigl(\pl_{\Sa\cup M_0}\gF_p^{s/\vec r,\vec\om}(G), 
\gF_p^{s/\vec r,\vec\om}\BRpV\bigr) 
\npb 
\eeq 
such that 
\hb{\vec\cB^c(\mf{b}) 
   \sn\pl_{\vec\cB(\mf{b})}^{cc}\gF_p^{s/\vec r,\vec\om}(G)} 
is a coretraction for~$\vec\cB(\mf{b})$. 
\end{theorem}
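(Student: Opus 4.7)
The strategy is to combine the coretractions from Theorems~\ref{thm-B.R} and~\ref{thm-B.Rt}. Define the candidate
$$
\vec\cB^c(\mf{b})(g,h):=\cC^c h+\cB^c(\mf{b})\bigl(g-\cB(\mf{b})\cC^c h\bigr),
$$
where $\cC^c$ is the coretraction associated by Theorem~\ref{thm-B.Rt} to $\cC=\ora{\pl_{t=0}^\ell}$, and $\cB^c(\mf{b})$ is the analytic lateral coretraction of Theorem~\ref{thm-B.R}. Intuitively, $\cC^c h$ installs the initial data, and then $\cB^c(\mf{b})$ corrects the residual lateral data; compatibility of $(g,h)$ forces this correction to leave the installed initial data undisturbed.

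The operator $\cC^c$ is independent of $\mf{b}$; by Lemma~\ref{lem-B.C}, $\mf{b}\mapsto\cB(\mf{b})$ is continuous and linear; and by Theorem~\ref{thm-B.R}, $\mf{b}\mapsto\cB^c(\mf{b})$ is analytic. Operator composition being continuous and bilinear, the map $\vec\cB^c(\cdot)$ displayed in~\Eqref{B.Bc} is well-defined and analytic into the prescribed operator space. Applying $\cB(\mf{b})$ and using $\cB(\mf{b})\cB^c(\mf{b})=\id$ yields $\cB(\mf{b})\vec\cB^c(\mf{b})(g,h)=g$ unconditionally, so only the identity $\cC\vec\cB^c(\mf{b})(g,h)=h$ on the compatibility subspace remains.

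Set $v:=\cC^c h$ and $g':=g-\cB(\mf{b})v$. For $0\leq i\leq k$ and $0\leq j\leq\ell$ with $r_{ij}:=r_i+1/p+r(j+1/p)<s$, formula~\Eqref{B.cu} combined with $\ora{\pl_{t=0}^j}v=\vec h_j$ gives $\pl_{t=0}^j\cB_i(\mf{b})v=\cB_i^{(j)}(0)\vec h_j$, which equals $\pl_{t=0}^j g_i$ by compatibility; hence $\pl_{t=0}^j g'_i=0$ for every $(i,j)$ at which the initial trace is defined on the corresponding component Besov space. The hypothesis $s\notin\{r_{ij}\}$ places each of those component spaces cleanly within the regime of Theorem~\ref{thm-VT.V}(ii), and the vanishing of the relevant initial traces gives $g'\in\pl_{\Ga\times(0,\iy)}\gF_p^{s/\vec r,\vec\om}(G)$. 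The final assertion of Theorem~\ref{thm-B.R} then yields $\cB^c(\mf{b})g'\in\gF_p^{s/\vec r,\vec\om}\bigl((0,\iy),V\bigr)$, which by Theorem~\ref{thm-VT.V}(ii) applied in the ambient space has $\pl_{t=0}^j=0$ for all $0\leq j\leq\ell$. Combining with $\cC v=h$ from Theorem~\ref{thm-B.Rt} gives $\cC\vec\cB^c(\mf{b})(g,h)=h$, completing the coretraction property.

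The main obstacle is the trace-matching bookkeeping above: one must verify that the compatibility conditions exactly control every initial trace of $g-\cB(\mf{b})\cC^c h$ that is actually defined in the corresponding component boundary Besov space, so that Theorem~\ref{thm-VT.V}(ii) can be invoked component-wise. This requires pairing the compatibility index range $\{(i,j):0\leq i\leq k,\ 0\leq j\leq\ell,\ r_{ij}<s\}$ with the exact range of admissible traces for each factor $B_p^{(s-r_i-1/p)/\vec r,\cdot}(\BR^+,\cdot)$; the non-coincidence hypothesis $s\notin\{r_{ij}\}$ is precisely what rules out the borderline trace indices where the characterization of Theorem~\ref{thm-VT.V}(ii) is inapplicable.
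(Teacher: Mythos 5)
Your proof is correct and follows essentially the same route as the paper: you define $\vec\cB^c(\mf{b})(g,h)=\cC^c h+\cB^c(\mf{b})\bigl(g-\cB(\mf{b})\cC^c h\bigr)$, verify analyticity and the lateral identity unconditionally, and then use \Eqref{B.cu} together with the compatibility conditions and Theorem~\ref{thm-VT.V}(ii) to place $g-\cB(\mf{b})\cC^c h$ in $\pl_{\Ga\times(0,\iy)}\gF_p^{s/\vec r,\vec\om}(G)$, so that the last assertion of Theorem~\ref{thm-B.R} gives vanishing of the initial traces of $\cB^c(\mf{b})\bigl(g-\cB(\mf{b})\cC^c h\bigr)$. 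The paper additionally spells out the component-wise case split according to the largest $j_i$ with $r_i+1/p+r(j_i+1/p)<s$, including the degenerate case where no such $j_i$ exists and $s-r_i-1/p<r/p$, and explicitly invokes the hypothesis $s>r(\ell+1/p)$ from \Eqref{B.assl} when killing the initial traces of $\cB^c(\mf{b})g'$; these are minor bookkeeping details you compress, but your argument is sound and matches the paper's proof.
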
 
%=====================================================================
\begin{proof} 
By the preceding remarks it suffices to construct~$\vec\cB^c(\cdot)$ 
satisfying \Eqref{B.Bc} such that its restriction to 
\hb{\pl_{\vec\cB(\mf{b})}^{cc}\gF_p^{s/\vec r,\vec\om}(G)} is a right 
inverse for~$\vec\cB(\mf{b})$. 

\smallskip 
By Theorem~\ref{thm-B.R} there exists an analytic map 
$$ 
\cB^c(\cdot)\sco\gB_\norm^s\GaG 
\ra\cL\bigl(\pl_\Sa\gF_p^{s/\vec r,\vec\om}(G), 
\gF_p^{s/\vec r,\vec\om}\BRpV\bigr) 
$$ 
such that 
\beq\Label{B.v0} 
v\in\pl_{\Ga\times(0,\iy)}\gF_p^{s/\vec r,\vec\om}(G) 
\quad\Ra\quad 
\cB^c(\mf{b})v\in\gF_p^{s/\vec r,\vec\om}\bigl((0,\iy),V\bigr) 
\eeq 
\npb 
for 
\hb{\mf{b}\in\gB_\norm^s\GaG}.  

\smallskip 
Let $\cC^c$ be a coretraction for~$\cC$. Its existence is guaranteed by 
Theorem~\ref{thm-B.Rt}. Given 
\hb{(g,h)\in\pl_{\Sa\cup M_0}\gF_p^{s/\vec r,\vec\om}(G)} and 
\hb{\mf{b}\in\gB_\norm^s\GaG}, set 
$$ 
\vec\cB^c(\mf{b})(g,h) 
:=\cC^ch+\cB^c(\mf{b})\bigl(g-\cB(\mf{b})\cC^ch\bigr). 
$$ 
Then $\cB^c(\cdot)$~satisfies \Eqref{B.Bc} and is analytic. Furthermore, 
\beq\Label{B.g} 
\cB(\mf{b})\bigl(\vec\cB^c(\mf{b})(g,h)\bigr)=g. 
\eeq 

\smallskip 
We fix 
\hb{\mf{b}\in\gB_\norm^s\GaG} and write 
\hb{\cB=\cB(\mf{b})} and 
\hb{\cB^c=\cB^c(\mf{b})}. For 
\hb{(g,h)\in\pl_{\vec\cB(\mf{b})}^{cc}\gF_p^{s/\vec r,\vec\om}(G)} we set 
$$ 
v:=g-\cB\cC^ch\in\pl_\Sa\gF_p^{s/\vec r,\vec\om}(G). 
$$ 
Suppose 
\hb{0\leq i\leq k}. Then 
$$ 
v_i=g_i-\cB_i\cC^ch 
\in B_p^{(s-r_i-1/p)/\vec r,(\lda+\lda_i+r_i+1/p,\mu)}\BRpVi, 
$$ 
where 
\hb{V_{\coV i}:=T_{\tau_i}^{\sa_i}(G_i)}. Let 
\hb{j_i\in\{0,\ldots,\ell\}} be the largest integer satisfying 
\hb{r_i+1/p+r(j_i+1/p)<s}. Then, by \Eqref{B.cu}, 
$$ 
\pl_{t=0}^jv_i 
=\pl_{t=0}^jg_i-\pl_{t=0}^j(\cB_i\cC^ch) 
=\pl_{t=0}^jg_i-\cB_i^{(j)}(0)\ora{\pl_{t=0}^j}\cC^ch  
=\pl_{t=0}^jg_i-\cB_i^{(j)}(0)\vec h_j 
=0 
$$ 
for 
\hb{0\leq j\leq j_i}. Hence 
\hb{r(j_i+1/p)<s-r_i-1/p<r(j_i+1+1/p)} and Theorem~\ref{thm-VT.V}(ii) 
imply 
\beq\Label{B.v} 
v_i\in B_p^{(s-r_i-1/p)/\vec r,(\lda+\lda_i+r_i+1/p,\mu)} 
\bigl((0,\iy),V_{\coV i}\bigr). 
\eeq 
If there is no such~$j_i$, then 
\hb{s-r_i-1/p<r/p}. In this case that theorem guarantees \Eqref{B.v} also. 
This shows that 
\hb{v\in\pl_{\Ga\times(0,\iy)}\gF_p^{s/\vec r,\vec\om}(G)}. Hence 
\hb{\cC\cB^cv=0} by \Eqref{B.v0} and Theorem~\ref{thm-VT.V}(ii) and since 
\hb{s>r(j+1/p)} for 
\hb{0\leq j\leq\ell}. Consequently, 
\hb{\cC\bigl(\vec\cB^c(\mf{b})(g,h)\bigr)=h} for 
\hb{(g,h)\in\pl_{\vec\cB(\mf{b})}^{cc}\gF_p^{s/\vec r,\vec\om}(G)}. 
Together with \Eqref{B.g} this proves the theorem. 
\end{proof} 
%---------------------------------------------------------------------
\begin{remark}\LabelT{rem-B.Rcc} 
Let assumption~\Eqref{B.ass} be satisfied. Suppose 
$$ 
r_0+1/p<s<r/p 
\qa s\notin\{\,r_i+1/p\ ;\ 1\leq i\leq k\,\}. 
$$ 
Then there is a lateral boundary operator~$\cB$~ only, since there is no 
initial trace. Thus this case is covered by Theorem~\ref{thm-B.R}. 

\smallskip 
Assume 
$$ 
r/p<s<r_0+1/p 
\qa s\notin\bigl\{\,r(j+1/p)\ ;\ 1\leq j\leq\ell\,\bigr\}. 
\npb 
$$ 
Then there is no lateral trace operator and we are in a situation to which 
Theorem~\ref{thm-B.Rt} applies. 

\smallskip 
Lastly, if 
\hb{-1+1/p<s<\min\{r_0+1/p,\ r/p\}}, then there is neither a lateral nor an 
initial trace operator and 
\hb{\gF_p^{s/\vec r,\vec\om}\BRpV 
   =\ci\gF_p^{s/\vec r,\vec\om}\BRpV}.\hfill$\Box$ 
\end{remark} 
The theorems on the `extension of boundary values' proved in this section 
are of great importance in the theory of nonhomogeneous time-dependent 
boundary value problems. The only results of this type available in the 
literature concern the case where $M$~is an \hbox{$m$-dimensional} 
compact submanifold of~$\BR^m$. In this situation an anisotropic 
extension theorem involving compatibility conditions has been proved by 
P.~Grisvard in~\cite{Gri66a} for the case where 
\hb{s\in r\BN^\times}, and in~\cite{Gri67a} if 
\hb{p=2} (also see J.-L. Lions and E.~Magenes 
\cite[Chapter~4, Section~2]{LiM72a} for the Hilbertian case) by means of 
functional analytical techniques. If 
\hb{s=r=2}, then corresponding results are derived in 
O.A. Ladyzhenskaya, V.A. Solonnikov, and N.N. Ural'ceva~\cite{LSU68a} 
by studying heat potentials. In contrast to our work, in all these 
publications the exceptional values 
\hb{r_i+1/p+r(j+1/p)} for~$s$ are considered also. 
%---------------------------------------------------------------------
%---------------------------------------------------------------------
\section{Interpolation}\LabelT{sec-IP}% 
%---------------------------------------------------------------------
In Section~\ref{sec-A} the anisotropic spaces~$\gF_p^{s/\vec r,\vec\om}$ 
have been defined for 
\hb{s>0} by interpolating between anisotropic Sobolev spaces. From this we 
could derive some interpolation properties by means of reiteration 
theorems. However, such results would be restricted to spaces with one 
and the same value of~$\lda$. In this section we prove general interpolation 
theorems for anisotropic Bessel potential, Besov, and Besov-H\"older spaces 
involving different values of $s$ and~$\lda$. 

\smallskip 
Reminding that 
\hb{\xi_\ta=(1-\ta)\xi_0+\ta\xi_1} for 
\hb{\xi_0,\xi_1\in\BR} and 
\hb{0\leq\ta\leq1}, we set 
\hb{\vec\om_\ta:=(\lda_\ta,\mu)} for 
\hb{\lda_0,\lda_1\in\BR}. We also recall that 
\hb{\pr_\ta=\pe_\ta} if 
\hb{\gF=H}, and 
\hb{\pr_\ta=\pr_{\ta,p}} if 
\hb{\gF=B}.\po 
%---------------------------------------------------------------------
{\samepage 
\begin{theorem}\LabelT{thm-IP.I} 
Suppose 
\hb{-\iy<s_0<s_1<\iy}, 
\ \hb{\lda_0,\lda_1\in\BR}, and 
\hb{0<\ta<1}.\po  
\begin{itemize} 
\item[{\rm(i)}] 
Assume 
\hb{s_0>-1+1/p} if\/ 
\hb{\pl M\neq\es}. Then 
\beq\Label{IP.F} 
(\gF_p^{s_0/\vec r,\vec\om_0},\gF_p^{s_1/\vec r,\vec\om_1})_\ta 
\doteq\gF_p^{s_\ta/\vec r,\vec\om_\ta} 
\doteq[\gF_p^{s_0/\vec r,\vec\om_0},\gF_p^{s_1/\vec r,\vec\om_1}]_\ta 
\po 
\eeq 
and 
\beq\Label{IP.HB} 
(H_p^{s_0/\vec r,\vec\om_0},H_p^{s_1/\vec r,\vec\om_1})_{\ta,p} 
\doteq B_p^{s_\ta/\vec r,\vec\om_\ta}. 
\po  
\eeq 
\item[{\rm(ii)}] 
If\/ 
\hb{s_0>0}, then 
$$ 
[B_\iy^{s_0/\vec r,\vec\om_0},B_\iy^{s_1/\vec r,\vec\om_1}]_\ta 
\doteq B_\iy^{s_\ta/\vec r,\vec\om_\ta} 
\po 
$$ 
and 
$$ 
[b_\iy^{s_0/\vec r,\vec\om_0},b_\iy^{s_1/\vec r,\vec\om_1}]_\ta 
\doteq b_\iy^{s_\ta/\vec r,\vec\om_\ta}. 
\po 
$$ 
\end{itemize} 
\end{theorem}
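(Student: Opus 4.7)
The plan is to derive both statements from the retraction-coretraction Theorems \ref{thm-RA.R}, \ref{thm-H.R}, and \ref{thm-H.Rbc} by reducing to component-wise interpolation of the model spaces $\gF_{p,\ka}^{s/\vec r}$, $B_{\iy,\ka}^{s/\vec r}$, $b_{\iy,\ka}^{s/\vec r}$, for which the required identities are already available from \cite{Ama09a} (as used in step~(5) of the proof of Theorem \ref{thm-RA.R}) and from Theorems \ref{thm-PH.BJ}(iii) and \ref{thm-PH.bcintJ}(ii). Since the natural retraction $\psi_p^{\vec\om}$ depends on $\vec\om$, the genuinely new ingredient compared with Theorem \ref{thm-A.WHB}(iii)(iv) is the construction of a \emph{common} retraction-coretraction pair that simultaneously handles both $\vec\om_0$ and $\vec\om_1$, absorbing the weight mismatch $\lda_0 \neq \lda_1$ into a weighted sequence-space norm.

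Accordingly, I would fix $\vec\om := \vec\om_0$ and, for $i \in \{0,1\}$, introduce the weighted sequence space
\[
\ell_p^{\alpha_i}(\mf{\gF}_p^{s_i/\vec r})
:= \Bigl\{\,(u_\ka)\ ;\ \sum_\ka \rho_\ka^{p\alpha_i}\,\|u_\ka\|_{\gF_{p,\ka}^{s_i/\vec r}}^p < \iy\,\Bigr\},
\qquad \alpha_i := \lda_i - \lda_0.
\]
The elementary identity $\vp_{p,\ka}^{\vec\om_i} = \rho_\ka^{\alpha_i}\vp_{p,\ka}^{\vec\om_0}$, combined with Theorem \ref{thm-RA.R}, shows that $\psi_p^{\vec\om_0}$ is a retraction from $\ell_p^{\alpha_i}(\mf{\gF}_p^{s_i/\vec r})$ onto $\gF_p^{s_i/\vec r,\vec\om_i}$ for both $i=0$ and $i=1$, with $\vp_p^{\vec\om_0}$ serving as a \emph{common} coretraction. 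The abstract interpolation principle for retractions (see \cite[Proposition~I.2.3.3]{Ama95a}) then yields, for any interpolation functor of exponent $\ta$,
\[
\bigl(\gF_p^{s_0/\vec r,\vec\om_0},\gF_p^{s_1/\vec r,\vec\om_1}\bigr)_\ta
\doteq \psi_p^{\vec\om_0}\bigl(\ell_p^{\alpha_0}(\mf{\gF}_p^{s_0/\vec r}),\,\ell_p^{\alpha_1}(\mf{\gF}_p^{s_1/\vec r})\bigr)_\ta,
\]
and likewise for the mixed pair $(H_p^{s_0/\vec r,\vec\om_0},H_p^{s_1/\vec r,\vec\om_1})_{\ta,p}$.

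The right-hand side is then identified by chaining three ingredients: (a)~the Stein--Weiss interpolation theorem for weighted $\ell_p$-spaces, giving $(\ell_p^{\alpha_0}(\mf{X}_0),\ell_p^{\alpha_1}(\mf{X}_1))_\ta \doteq \ell_p^{\alpha_\ta}((\mf{X}_0,\mf{X}_1)_\ta)$ for both $\pe_\ta$ and $\pr_{\ta,p}$; (b)~Triebel's \cite[Theorem~1.18.1]{Tri78a} on interpolation of product spaces, which delivers component-wise interpolation $(\mf{\gF}_p^{s_0/\vec r},\mf{\gF}_p^{s_1/\vec r})_\ta \doteq \prod_\ka (\gF_{p,\ka}^{s_0/\vec r},\gF_{p,\ka}^{s_1/\vec r})_\ta$; and (c)~the local identities $(\gF_{p,\ka}^{s_0/\vec r},\gF_{p,\ka}^{s_1/\vec r})_\ta \doteq \gF_{p,\ka}^{s_\ta/\vec r}$ and $(H_{p,\ka}^{s_0/\vec r},H_{p,\ka}^{s_1/\vec r})_{\ta,p} \doteq B_{p,\ka}^{s_\ta/\vec r}$, provided by Lemma \ref{lem-RA.WW} and \cite[Theorems 3.7.1(iv), 4.4.1, 4.4.4]{Ama09a} (supplemented at $s=0$ by \cite[Theorem 4.7.1(ii), Corollary 4.11.2]{Ama09a}). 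Concatenating (a)--(c) produces $\ell_p^{\alpha_\ta}(\mf{\gF}_p^{s_\ta/\vec r})$ on the right; since $\alpha_\ta = \lda_\ta - \lda_0$, one final application of Theorem \ref{thm-RA.R} (in the form just established) identifies its image under $\psi_p^{\vec\om_0}$ with $\gF_p^{s_\ta/\vec r,\vec\om_\ta}$, yielding \Eqref{IP.F} and \Eqref{IP.HB}.

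The principal obstacle is the negative-order regime: when $s_0 < 0$ the interpolation couple straddles the duality \Eqref{A.def1}, and the hypothesis $s_0 > -1+1/p$ for $\pl M \neq \es$ is imposed precisely so that $\ci\gF_{p,\ka}^{s/\vec r} = \gF_{p,\ka}^{s/\vec r}$ for $|s|<1/p$ (from \cite[Theorem 4.7.1(ii)]{Ama09a}) can be invoked to keep the retraction picture intact across the $s=0$ threshold, exactly as in step~(7) of the proof of Theorem \ref{thm-RA.R}; the real-interpolation claim \Eqref{IP.HB} additionally requires the definition \Eqref{A.Bdef} at $s_i=0$. Part~(ii) follows by the same scheme with only cosmetic changes: replace $\psi_p^{\vec\om_0}$ by $\psi_\iy^{\vec\om_0}$ from Theorem \ref{thm-H.R} (respectively its $b_\iy$-analogue from Theorem \ref{thm-H.Rbc}), replace $\ell_p^{\alpha_i}$ by the weighted $\ell_\iy^{\alpha_i}$ (respectively $\ell_{\iy,\unif}^{\alpha_i}$), and substitute the $L_p$-based local interpolation of \cite{Ama09a} by the $B_\iy$-identity Theorem \ref{thm-PH.BJ}(iii) (respectively the $b_\iy$-identity Theorem \ref{thm-PH.bcintJ}(ii)); since $s_0>0$ throughout part~(ii), no duality issues intervene.
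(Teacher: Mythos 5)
Your argument for part~(i) is essentially the paper's own: the same weighted sequence spaces (your $\ell_p^{\alpha_i}(\mf{\gF}_p^{s_i/\vec r})$ is the paper's $\ell_p(\mf{\rho}^{-\xi}\mf{\gF}_p^{s_1/\vec r})$), the same push-forward identity, the same combination of Triebel's Theorem~1.18.1 with the weighted-interpolation formula $\{\da_0X_0,\da_1X_1\}_\ta\doteq\da_0^{1-\ta}\da_1^\ta\{X_0,X_1\}_\ta$, and the same local identities from \cite{Ama09a}, used in conjunction with Theorem~\ref{thm-RA.R}. You omit the separate treatment of $J=\BR^+$, which the paper dispatches via Theorem~\ref{thm-ER.ER}(i), and the explicit role of the hypothesis $s_0>-1+1/p$ in guaranteeing $\ci\gF_{p,\ka}^{s/\vec r}=\gF_{p,\ka}^{s/\vec r}$ so that the local identities continue to hold across $s=0$, but you acknowledge both and these are minor omissions.

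Part~(ii) contains a genuine gap. You claim the $B_\iy$ and $b_\iy$ cases follow "with only cosmetic changes" by substituting $\ell_\iy^{\alpha_i}$ (respectively $\ell_{\iy,\unif}^{\alpha_i}$) for $\ell_p^{\alpha_i}$, but the central step of your part~(i) argument breaks precisely here. The component-wise interpolation formula underlying Triebel's Theorem~1.18.1, as well as the Stein--Weiss identity, concern $\ell_p$ with $1\leq p<\iy$; for complex interpolation of $\ell_\iy$-valued sequence spaces one has only the inclusion $[\ell_\iy(X_\ka^0),\ell_\iy(X_\ka^1)]_\ta\is\ell_\iy([X_\ka^0,X_\ka^1]_\ta)$, and equality fails in general (this goes back to Calder\'on). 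The same obstruction holds for $\ell_{\iy,\unif}$. Consequently, the chain (a)--(c) you propose does not close in the $q=\iy$ case, and your argument reduces part~(ii) to an interpolation identity for weighted $\ell_\iy$ (respectively $\ell_{\iy,\unif}$) products of $B_{\iy,\ka}^{s/\vec r}$ that you have not established.

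The paper's proof handles this with a distinct, more elaborate mechanism that your proposal does not anticipate. After applying the retraction $\psi_\iy^{\vec\om_0}$ the problem is lifted, via the isomorphism $\wt{\mf{f}}$ of Lemma~\ref{lem-PH.fJ}, into Banach-space-valued Besov--H\"older spaces of the form $B_\iy^{s/\vec r}\bigl(\BM\times J,\ell_\iy(\mf{\rho}^{-\eta}\mf{E})\bigr)$, where the weight now sits inside the finite-dimensional target fibers rather than outside as a sequence-space weight. The index set is then split into $\gK_0:=\{\ka:\rho_\ka\leq1\}$ and $\gK_1:=\gK\ssm\gK_0$, producing a direct-sum decomposition; on each summand the weighted space is realized as the domain of a fractional power of an explicit closed multiplication operator $A_j$ with the requisite sectorial estimates, and Seeley's theorem is invoked to identify the complex interpolation space with $\dom(A_j^{1-\ta})$. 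Only after this does the component-wise interpolation identity for the target fibers (Theorem~\ref{thm-PH.BJ}(iii), respectively Theorem~\ref{thm-PH.bcintJ}(ii)) enter. This Seeley-type argument is not a cosmetic variant of the $\ell_p$ computation, and without it your proof of part~(ii) is incomplete.
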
 }
%=====================================================================
\begin{proof} 
(1) 
Let $X$ be a Banach space and 
\hb{\da>0}. Then 
\hb{\da X:=(X,\Vsdot_{\da X})}, where 
\hb{\|x\|_{\da X}:=\|\da^{-1}x\|_X} for 
\hb{x\in X}. Thus $\da X$~is the image space of~$X$ under the map 
\hb{x\mt\da x} so that this function is an isometric isomorphism 
from~$X$ onto~$\da X$. 

\smallskip 
Assume $X_\ba$~is a Banach space and 
\hb{\da_\ba>0} for each~$\ba$ in a countable index set~$\sB$. Then we set 
\hb{\mf{\da}\mf{X}:=\prod_\ba\da_\ba X_\ba} and 
\hb{\mf{\da}\mf{x}:=(\da_\ba x_\ba)}. Hence 
\hb{\mf{\da}:=(\mf{x}\mt\mf{\da}\mf{x})\in\Lis(\mf{X},\mf{\da}\mf{X})}.  

\smallskip 
Let $\XnXe$ be a pair of Banach spaces such that $X_j$~is continuously 
injected in some locally convex space for 
\hb{j=0,1}, that is, $\XnXe$ is an interpolation couple. Suppose 
\hb{\pg_\ta\in\bigl\{\pe_\ta,\pr_{\ta,p}\bigr\}}. Then interpolation theory 
guarantees 
\beq\Label{IP.d} 
\{\da_0X_0,\da_1X_1\}_\ta=\da_0^{1-\ta}\da_1^\ta\{X_0,X_1\}_\ta 
\qa \da_0,\da_1>0, 
\npb 
\eeq 
(e.g.,~\cite[formula~(7) in Section~3.4.1]{Tri78a}). 

\smallskip 
(2) 
Let 
\hb{J=\BR} and \hb{s>-1+1/p} if 
\hb{\pl M\neq\es}. Put 
\hb{\xi:=\lda-\lda_0}. Then 
\hb{\vp_{p,\ka}^{\vec\om}=\rho_\ka^\xi\vp_{p,\ka}^{\vec\om_0}} and 
\hb{\psi_{p,\ka}^{\vec\om}=\rho_\ka^{-\xi}\psi_{p,\ka}^{\vec\om_0}} imply, 
due to Theorem~\ref{thm-RA.R}, that the diagram 
 \bspezeq 
 \bal 
 \begin{picture}(184,103)(-93,-56)        %teil2 13
% \put(-93,47){\line(1,0){184}}
% \put(-93,-56){\line(1,0){184}}
% \put(-93,-56){\line(0,1){103}}
% \put(91,-56){\line(0,1){103}}
 \put(0,40){\makebox(0,0)[b]{\small{$\id$}}}
 \put(-80,35){\makebox(0,0)[c]{\small{$\gF_p^{s/\vec r,\vec\om}$}}}
 \put(80,35){\makebox(0,0)[c]{\small{$\gF_p^{s/\vec r,\vec\om}$}}}
 \put(0,0){\makebox(0,0)[c]{\small{$\ell_p(\mf{\gF}_p^{s/\vec r})$}}}
 \put(0,-50){\makebox(0,0)[c]{\small{$\ell_p(\mf{\rho}^{-\xi} 
  \mf{\gF}_p^{s/\vec r})$}}}
 \put(-33,5){\makebox(0,0)[r]{\small{$\vp_p^{\vec\om}$}}}
 \put(31,5){\makebox(0,0)[l]{\small{$\psi_p^{\vec\om}$}}}
 \put(-48,-17.5){\makebox(0,0)[r]{\small{$\vp_p^{\vec\om_0}$}}}
 \put(48,-17.5){\makebox(0,0)[l]{\small{$\psi_p^{\vec\om_0}$}}}
 \put(-5,-28.5){\makebox(0,0)[r]{\small{$\cong$}}}
 \put(5,-28.5){\makebox(0,0)[l]{\small{$\mf{\rho}^\xi$}}}
 \put(-60,35){\vector(1,0){120}}
 \put(-60,35){\vector(1,0){120}}
 \put(0,-40){\vector(0,1){25}}
 \put(-60,27){\vector(2,-1){40}}
 \put(20,7){\vector(2,1){40}}
 \put(-65,25){\vector(1,-2){35}}
 \put(30,-45){\vector(1,2){35}}
 \end{picture} 
 \eal 
 \npb 
 \espezeq 
is commuting. Hence $\psi_p^{\vec\om_0}$~is for each~$s$ a~retraction from 
$\ell_p(\mf{\rho}^{-\xi}\mf{\gF}_p^{s/\vec r})$ onto 
$\gF_p^{s/\vec r,\vec\om}$, and $\vp_p^{\vec\om_0}$~is a coretraction. 

\smallskip 
(3) 
Let 
\hb{\xi:=\lda_1-\lda_0}. By Theorem~\ref{thm-RA.R} and the preceding step 
each of the maps 
$$ 
\psi_p^{\vec\om_0} 
\sco\ell_p(\mf{\gF}_p^{s_0/\vec r}) 
\ra\gF_p^{s_0/\vec r,\vec\om_0}  
\qb \psi_p^{\vec\om_0} 
\sco\ell_p(\mf{\rho}^{-\xi}\mf{\gF}_p^{s_1/\vec r}) 
\ra\gF_p^{s_1/\vec r,\vec\om_1}  
$$ 
is a retraction, and $\vp_p^{\vec\om_0}$~is a coretraction. Thus, by 
interpolation, 
\beq\Label{IP.0} 
\psi_p^{\vec\om_0} 
\sco\bigl\{\ell_p(\mf{\gF}_p^{s_0/\vec r}), 
\ell_p(\mf{\rho}^{-\xi}\mf{\gF}_p^{s_1/\vec r})\bigr\}_\ta 
\ra\bigl\{\gF_p^{s_0/\vec r,\vec\om_0},\gF_p^{s_1/\vec r,\vec\om_1} 
\bigr\}_\ta
\eeq 
is a retraction, and $\vp_p^{\vec\om_0}$~is a coretraction. From 
\cite[Theorem~1.18.1]{Tri78a} and \Eqref{IP.d} we infer 
\beq\Label{IP.1} 
\bigl\{\ell_p(\mf{\gF}_p^{s_0/\vec r}), 
\ell_p(\mf{\rho}^{-\xi}\mf{\gF}_p^{s_1/\vec r})\bigr\}_\ta  
\doteq 
\ell_p\bigl(\mf{\rho}^{-\ta\xi}\{\mf{\gF}_p^{s_0/\vec r}, 
\mf{\gF}_p^{s_1/\vec r}\}_\ta\bigr).  
\eeq 
(Recall the definition of $(\mf{E},\mf{F})_\ta$ after \Eqref{RA.lL}.) 
Suppose  
\hb{\pl M=\es}. Then \cite[formulas (3.3.12) and (3.4.1) and 
Theorem~3.7.1(iv)]{Ama09a} imply 
\beq\Label{IP.2} 
(\gF_{p,\ka}^{s_0/\vec r},\gF_{p,\ka}^{s_1/\vec r})_\ta 
\doteq\gF_{p,\ka}^{s_\ta/\vec r} 
\qb (H_{p,\ka}^{s_0/\vec r},H_{p,\ka}^{s_1/\vec r})_{\ta,p} 
\doteq B_{p,\ka}^{s_\ta/\vec r} 
\doteq[B_{p,\ka}^{s_0/\vec r},B_{p,\ka}^{s_1/\vec r}]_\ta. 
\eeq 
This is due to the fact that, on account of \cite[Corollary~3.3.4 and 
Theorem~3.7.1(i)]{Ama09a}, the definition of~$\gF_{p,\ka}^{s/\vec r}$ for 
\hb{s<0} used in that publication coincides with the definition by duality 
employed in this paper. 

\smallskip 
If 
\hb{\pl M\neq\es}, then it follows from 
\hb{s>-1+1/p} and Theorem~4.7.1(ii) of~\cite{Ama09a} by the same 
arguments that \Eqref{IP.2} holds in this case as well. 

\smallskip 
Thus, in either case, due to 
\hbox{\Eqref{IP.0}--\Eqref{IP.2}} 
%\Eqref{IP.1} 
\ $\psi_p^{\vec\om_0}$~is a~retraction from 
$\ell_p(\mf{\rho}^{-\ta\xi}\mf{\gF}_p^{s_\ta/\vec r})$ onto 
$(\gF_p^{s_0/\vec r,\vec\om_0},\gF_p^{s_1/\vec r,\vec\om_1})_\ta$ and onto 
$[\gF_p^{s_0/\vec r,\vec\om_0},\gF_p^{s_1/\vec r,\vec\om_1}]_\ta$, and 
$\vp_p^{\vec\om_0}$~is a~coretraction. On the other hand, we infer from 
step~(2), setting 
\hb{\xi=\ta(\lda_1-\lda_0)}, that $\psi_p^{\vec\om_0}$~is a~retraction 
from $\ell_p(\mf{\rho}^{-\ta\xi}\mf{\gF}_p^{s_\ta/\vec r})$ onto 
$\gF_p^{s_\ta/\vec r,\vec\om_\ta}$, and $\vp_p^{\vec\om_0}$~is 
a~coretraction. This implies the validity of \Eqref{IP.F} if 
\hb{J=\BR}. The proof for \Eqref{IP.HB} is similar. 

\smallskip 
(4) 
Assume 
\hb{J=\BR^+}. In this case we get assertion~(i) by 
Theorem~\ref{thm-ER.ER}(i) in conjunction with what has just been proved. 

\smallskip 
(5) 
Set 
\hb{\xi=\lda_1-\lda_0}. Then as above, we infer from Theorem~\ref{thm-H.R} 
that $\psi_\iy^{\vec\om_0}$~is a~retraction from 
$\ell_\iy(\mf{B}_\iy^{s_0/\vec r})$ onto~$B_\iy^{s_0/\vec r,\vec\om_0}$ 
and from $\ell_\iy(\mf{\rho}^{-\xi}\mf{B}_\iy^{s_1/\vec r})$ 
onto~$B_\iy^{s_1/\vec r,\vec\om_0}$, and $\vp_\iy^{\vec\om_0}$~is 
a~coretraction. Hence 
\beq\Label{IP.linf} 
\psi_\iy^{\vec\om_0} 
\sco\bigl[\ell_\iy(\mf{B}_\iy^{s_0/\vec r}), 
\ell_\iy(\mf{\rho}^{-\xi}\mf{B}_\iy^{s_1/\vec r})\bigr]_\ta 
\ra[B_\iy^{s_0/\vec r,\vec\om_0},B_\iy^{s_1/\vec r,\vec\om_1}]_\ta 
\npb 
\eeq 
is a~retraction, and $\vp_\iy^{\vec\om_0}$~is a~coretraction. 

\smallskip 
We use the notation of Sections \ref{sec-PH} and~\ref{sec-H}. Then, setting 
\hb{B_\iy^{s/\vec r}(\BM\times J,\mf{\rho}^{-\xi}\mf{E}) 
   :=\prod_\ka B_\iy^{s/\vec r}(\BM\times J,\rho_\ka^{-\xi}E)}, it is  
not difficult to verify (cf.~Lemma~\ref{lem-PH.fJ}) that  
\beq\Label{IP.f} 
\wt{\mf{f}}\in\Lis\bigl(B_\iy^{s/\vec r}\bigl(\BM\times J, 
\ell_\iy(\mf{\rho}^{-\xi}\mf{E})\bigr), 
\ell_\iy(\mf{\rho}^{-\xi}\mf{B}_\iy^{s/\vec r})\bigr) 
\eeq 
for 
\hb{s>0}. Hence we deduce from \Eqref{IP.linf} that the map 
\hb{\Psi_\iy^{\vec\om_0}=\psi_\iy^{\vec\om_0}\circ\wt{\mf{f}}} 
\beq\Label{IP.Bint} 
\bigl[B_\iy^{s/\vec r}\bigl(\BM\times J, 
\ell_\iy(\mf{E})\bigr),B_\iy^{s/\vec r}\bigl(\BM\times J, 
\ell_\iy(\mf{\rho}^{-\xi}\mf{B}_\iy^{s/\vec r})\bigr)\bigr]_\ta 
\ra[B_\iy^{s_0/\vec r,\vec\om_0},B_\iy^{s_1/\vec r,\vec\om_1}]_\ta  
\npb 
\eeq 
is a retraction, and $\Phi_\iy^{\vec\om_0}$~is a~coretraction. 

\smallskip 
(6) 
Set 
\hb{\gK_0:=\{\,\ka\in\gK\ ;\ \rho_\ka\leq1\,\}} and 
\hb{\gK_1:=\gK\ssm\gK_0}. Let $X_\ka$ be a Banach space for 
\hb{\ka\in\gK} and set 
\hb{\mf{X}:=\prod_\ka X_\ka} and 
\hb{\mf{X}_{\coX j}:=\prod_{\ka\in\gK_j}X_\ka} as well as 
\hb{\ell_\iy^j(\mf{X}):=\ell_\iy(\mf{X}_{\coX j})} for 
\hb{j=0,1}. Then 
\hb{\ell_\iy(\mf{X})\doteq\ell_\iy^0(\mf{X})\oplus\ell_\iy^1(\mf{X})}. 
Consequently, 
\beq\Label{IP.BS0} 
B_\iy^{s/\vec r}\bigl(\BM\times J,\ell_\iy(\mf{\rho}^{-\eta}\mf{E})\bigr) 
\doteq 
B_\iy^{s/\vec r}\bigl(\BM\times J,\ell_\iy^0(\mf{\rho}^{-\eta}\mf{E})\bigr) 
\oplus 
B_\iy^{s/\vec r}\bigl(\BM\times J,\ell_\iy^1(\mf{\rho}^{-\eta}\mf{E})\bigr) 
\npb 
\eeq 
for 
\hb{\eta\in\{0,\xi\}}. 

\smallskip 
(7) 
Put 
\hb{Y_0:=B_\iy^{s/\vec r}\bigl(\BM\times J, 
   \ell_\iy^0(\mf{\rho}^{-\xi}\mf{E})\bigr)} and 
\hb{Y_1:=B_\iy^{s/\vec r}\bigl(\BM\times J,\ell_\iy^0(\mf{E})\bigr)}. It 
follows from 
\hb{\rho_\ka\leq1} for 
\hb{\ka\in\gK_0} that 
\hb{Y_1\hr Y_0}. Define a linear operator~$A_0$ in~$Y_0$ with domain~$Y_1$ 
by 
\hb{A_0u=\mf{\rho}^{-\xi}u}. Then $A_0$~is closed, $-A_0$~contains the 
sector~$S_{\pi/4}$ in its resolvent set and satisfies 
\hb{\|(\lda+A_0)^{-1}\|_{\cL(Y_0)}\leq c/|\lda|} for 
\hb{\lda\in S_{\pi/4}}. Furthermore, 
$$ 
\|(-A_0)^z\|_{\cL(Y_0)}\leq\sup_{\ka\in\gK_0}\rho_\ka^{-\xi\Re z}\leq1 
\qa \Re z\leq0. 
$$ 
Hence Seeley's theorem, alluded to in the proof of Theorem~\ref{thm-PH.B}, 
implies 
\beq\Label{IP.BS1} 
\bal 
\big[B_\iy^{s/\vec r}\bigl(\BM\times J,\ell_\iy^0(\mf{E})\bigr), 
B_\iy^{s/\vec r}\bigl(\BM\times J,\ell_\iy^0(\mf{\rho}^{-\xi}\mf{E})\bigr) 
\big]_\ta 
&=[Y_0,Y_1]_{1-\ta}\\ 
&\doteq 
B_\iy^{s/\vec r}\bigl(\BM\times J, 
\ell_\iy^0(\mf{\rho}^{-\ta\xi}\mf{E})\bigr), 
\eal 
\eeq 
due to the fact that the space on the right side equals, except for 
equivalent norms, $\dom(A_0^{1-\ta})$~equipped with the graph norm. 

\smallskip 
(8) 
Set 
\hb{Z_0:=B_\iy^{s/\vec r}\bigl(\BM\times J,\ell_\iy^1(\mf{E})\bigr)} and 
\hb{Z_1:=B_\iy^{s/\vec r}\bigl(\BM\times 
   J,\ell_\iy^1(\mf{\rho}^{-\xi}\mf{E})\bigr)}. Then 
\hb{\rho_\ka>1} for 
\hb{\ka\in\gK_1} implies 
\hb{Z_1\hr Z_0}. Define a linear map~$A_1$ in~$Z_0$ with domain~$Z_1$ by 
\hb{A_1u:=\mf{\rho}^\xi u}. Then $A_1$~is closed and satisfies 
\hb{\|(\lda+A_1)^{-1}\|_{\cL(Z_0)}\leq c/|\lda|} for 
\hb{\lda\in S_{\pi/4}} as well as 
$$ 
\|(-A_1)^z\|_{\cL(Z_0)}\leq\sup_{\ka\in\gK_0}\rho_\ka^{\xi\Re z}\leq1 
\qa \Re z\leq0. 
$$ 
Thus, using Seeley's theorem once more, 
\beq\Label{IP.BS2} 
\big[B_\iy^{s/\vec r}\bigl(\BM\times J,\ell_\iy^1(\mf{E})\bigr), 
B_\iy^{s/\vec r}\bigl(\BM\times J,\ell_\iy^1(\mf{\rho}^{-\xi}\mf{E})\bigr) 
\big]_\ta 
\doteq 
B_\iy^{s/\vec r}\bigl(\BM\times J, 
\ell_\iy^1(\mf{\rho}^{-\ta\xi}\mf{E})\bigr).  
\eeq 
Now we deduce from \hbox{\Eqref{IP.BS0}--\Eqref{IP.BS2}} 
%\Eqref{IP.BS1} 
that 
$$ 
\big[B_\iy^{s/\vec r}\bigl(\BM\times J,\ell_\iy(\mf{E})\bigr), 
B_\iy^{s/\vec r}\bigl(\BM\times J,\ell_\iy(\mf{\rho}^{-\xi}\mf{E})\bigr) 
\big]_\ta 
\doteq B_\iy^{s/\vec r}\bigl(\BM\times J, 
\ell_\iy(\mf{\rho}^{-\ta\xi}\mf{E})\bigr). 
$$ 
Thus \Eqref{IP.Bint} shows that 
$$ 
\Psi_\iy^{\vec\om} 
\sco B_\iy^{s/\vec r}\bigl(\BM\times J, 
\ell_\iy(\mf{\rho}^{-\ta\xi}\mf{E})\bigr) 
\ra[B_\iy^{s_0/\vec r,\vec\om_0},B_\iy^{s_1/\vec r,\vec\om_1}]_\ta 
$$ 
is a retraction, and $\Phi_\iy^{\vec\om}$~is a coretraction. Hence 
\Eqref{H.phpsP} and \Eqref{IP.f} imply that 
$$ 
\psi_\iy^{\vec\om} 
\sco\ell_\iy(\mf{\rho}^{-\ta\xi}\mf{B}_\iy^{s/\vec r}) 
\ra[B_\iy^{s_0/\vec r},B_\iy^{s_1/\vec r}]_\ta 
$$ 
is a retraction, and $\vp_\iy^{\vec\om}$~is a coretraction. From this and 
the observation at the beginning of step~(5) we derive that the first part 
of the second statement is true.  

\smallskip 
(9) 
By replacing~$\ell_\iy$ in the preceding considerations 
by~$\ell_{\iy,\unif}$ and invoking Theorem~\ref{thm-H.Rbc} instead of 
Theorem~\ref{thm-H.R} we see that the second part of claim~(ii) is also 
true. 
\end{proof} 
For completeness and complementing the results of~\cite{Ama12b} we include 
the following interpolation theorem for isotropic Besov-H\"older spaces. 
%---------------------------------------------------------------------
\begin{remark}\LabelT{rem-IP.I} 
Suppose 
\hb{0<s_0<s_1}, 
\ \hb{\lda_0,\lda_1\in\BR}, and 
\hb{0<\ta<1}. Then 
$$ 
[B_\iy^{s_0,\lda_0},B_\iy^{s_1,\lda_1}]_\ta 
\doteq B_\iy^{s_\ta,\lda_\ta} 
\qb [b_\iy^{s_0,\lda_0},b_\iy^{s_1,\lda_1}]_\ta 
\doteq b_\iy^{s_\ta,\lda_\ta}. 
$$ 
\end{remark} 
%=====================================================================
\begin{proof} 
This follows from the above proof by relying on the corresponding 
isotropic results of Sections \ref{sec-PH} and~\ref{sec-H}. 
\end{proof}  
Throughout the rest of this section we suppose 
\beq\Label{IP.assB} 
\bal 
\bt\quad
&\Ga\neq\es.\\ 
\bt\quad
&\text{assumption \Eqref{B.ass} is satisfied}.\\ 
\bt\quad
&\bar s>r_k+1/p\text{ and }\mf{b}\in\gB_\norm^{\bar s}.\\ 
\bt\quad
&\cB=(\cB_0,\ldots,\cB_k):=\cB(\mf{b}). 
\eal
\eeq 
Let 
\hb{I\in\bigl\{J,(0,\iy)\bigr\}}. For 
\hb{-1+1/p<s\leq\bar s} with 
\hb{s\notin\{\,k_i+1/p\ ;\ 0\leq i\leq k\,\}} we set 
$$ 
\gF_{p,\cB}^{s/\vec r,\vec\om}(I) 
:=\bigl\{\,u\in\gF_p^{s/\vec r,\vec\om}(I) 
=\gF_p^{s/\vec r,\vec\om}\IV 
\ ;\ \cB_iu=0\text{ for }r_i<s-1/p\,\bigr\}. 
\npb 
$$ 
Thus 
\hb{\gF_{p,\cB}^{s/\vec r,\vec\om}(I) 
   =\gF_p^{s/\vec r,\vec\om}(I)} if 
\hb{s<k_0+1/p}.  

\smallskip 
Suppose $\mf{b}$~is independent of~$t$. Then we can define stationary 
isotropic spaces with vanishing boundary conditions analogously, that is, 
$$ 
\gF_{p,\cB}^{s,\lda}
:=\bigl\{\,u\in\gF_p^{s,\lda}(V) 
\ ;\ \cB_iu=0\text{ for }r_i<s-1/p\,\bigr\}. 
$$ 
%---------------------------------------------------------------------
\begin{theorem}\LabelT{thm-IP.B} 
Let\/ \Eqref{IP.assB} be satisfied. Suppose 
\hb{-1+1/p<s_0<s_1\leq\bar s} and\/ 
\hb{0<\ta<1} satisfy 
$$ 
s_0,s_1,s_\ta\notin\{\,r_i+1/p\ ;\ 0\leq i\leq k\,\} 
$$ 
and 
\hb{\lda_0,\lda_1\in\BR}. Then 
\beq\Label{IP.FF} 
\bigl(\gF_{p,\cB}^{s_0/\vec r,\vec\om_0}(I), 
\gF_{p,\cB}^{s_1/\vec r,\vec\om_1}(I)\bigr)_\ta 
\doteq\gF_{p,\cB}^{s_\ta/\vec r,\vec\om_\ta}(I)  
\doteq\bigl[\gF_{p,\cB}^{s_0/\vec r,\vec\om_0}(I), 
\gF_{p,\cB}^{s_1/\vec r,\vec\om_1}(I)\bigr]_\ta 
\eeq 
and 
$$ 
\bigl(H_{p,\cB}^{s_0/\vec r,\vec\om_0}(I), 
H_{p,\cB}^{s_1/\vec r,\vec\om_1}(I)\bigr)_{\ta,p} 
\doteq B_{p,\cB}^{s_\ta/\vec r,\vec\om_\ta}(I). 
$$ 
If\/ $\mf{b}$~is independent of 
\hb{t\in\BR}, then 
$$ 
(\gF_{p,\cB}^{s_0,\lda_0},\gF_{p,\cB}^{s_1,\lda_1})_\ta 
\doteq\gF_{p,\cB}^{s_\ta,\lda_\ta}
\doteq[\gF_{p,\cB}^{s_0,\lda_0},\gF_{p,\cB}^{s_1,\lda_1}]_\ta 
$$ 
and 
$$ 
(H_{p,\cB}^{s_0,\lda_0},H_{p,\cB}^{s_1,\lda_1})_{\ta,p} 
\doteq B_{p,\cB}^{s_\ta,\lda_\ta}. 
$$ 
\end{theorem}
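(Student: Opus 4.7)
The strategy is to combine the retraction--coretraction structure of the normal boundary system $\cB(\mf b)$ furnished by Theorem~\ref{thm-B.R} with the unrestricted interpolation identities of Theorem~\ref{thm-IP.I}. Because $s_0, s_1, s_\ta$ all avoid the threshold set $\{r_i + 1/p : 0 \leq i \leq k\}$, each of these three values lies in one of the open regime intervals $J_\ell := (r_{\ell-1} + 1/p,\, r_\ell + 1/p)$ (with the conventions $r_{-1} + 1/p := -1 + 1/p$ and $r_{k+1} + 1/p := +\infty$), on which the set of active boundary indices $I(s) := \{i : r_i < s - 1/p\} = \{0, \ldots, \ell - 1\}$ is constant.

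I first handle the constant-regime case, in which $s_0, s_\ta, s_1$ lie in a common $J_\ell$ with $I(s_0) = I(s_\ta) = I(s_1) =: I$. For $\ell = 0$ the set $I$ is empty, $\gF_{p,\cB}^{s/\vec r, \vec\om}$ coincides with $\gF_p^{s/\vec r, \vec\om}$, and the claim is already Theorem~\ref{thm-IP.I}. For $\ell \geq 1$, the subsystem $\cB_I := (\cB_i)_{i \in I}$ is normal with maximal order $r_{\ell-1}$ satisfying $r_{\ell-1} + 1/p < s_0$, and Theorem~\ref{thm-B.R} applied to $\cB_I$ furnishes a single coretraction $\cB_I^c(\mf b)$ serving simultaneously as a right inverse for $\cB_I$ on $\gF_p^{s/\vec r, \vec\om}$ for every $s \in J_\ell$. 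Since $\gF_{p,\cB}^{s/\vec r, \vec\om} = \ker\bigl(\cB_I|_{\gF_p^{s/\vec r, \vec\om}}\bigr)$ by definition, the abstract principle that kernels of retractions sharing a common coretraction commute with any interpolation functor (Triebel~\cite[Section~1.17.1]{Tri78a}, Amann~\cite[Proposition~I.2.3.2]{Ama95a}) reduces \eqref{IP.FF} and the accompanying real-complex identity to the ambient interpolation identities for $\gF_p^{s_j/\vec r, \vec\om_j}$ (Theorem~\ref{thm-IP.I} on $M \times J$) and for the target boundary-data spaces $\pl_{\Ga \times J}\gF_p^{s_j/\vec r, \vec\om_j}(G_I)$ (Theorem~\ref{thm-IP.I} applied on the singular manifold $\Ga$, which carries empty boundary). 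The stationary isotropic identities are obtained by the same argument with the time factor suppressed, invoking the stationary halves of Theorem~\ref{thm-IP.I} and Theorem~\ref{thm-B.R}.

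For the general case I proceed by induction on the number $N \geq 0$ of thresholds $r_i + 1/p$ lying strictly inside $(s_0, s_1)$; the base $N = 0$ is the previous paragraph. For $N \geq 1$, introduce an auxiliary value $t \in (s_0, s_1)$ avoiding every threshold and lying in the same regime as $s_\ta$, such that at least one of the pairs $(s_0, t), (t, s_1)$ spans strictly fewer thresholds than $(s_0, s_1)$. The induction hypothesis identifies the interpolation spaces on each sub-pair, and the reiteration theorem for $\pe_\ta$ and $\pr_{\ta, p}$ in the form $(X_0, X_1)_{\ta, p} \doteq \bigl((X_0, X_1)_{\eta_0, p}, (X_0, X_1)_{\eta_1, p}\bigr)_{\eta, p}$ glues them into the required identity at $s_\ta$. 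The main obstacle is precisely this reiteration step: the auxiliary point $t$ must be chosen in the regime of $s_\ta$ so that $\gF_{p,\cB}^{t/\vec r, \vec\om_t}$ is defined by exactly the same boundary conditions as $\gF_{p,\cB}^{s_\ta/\vec r, \vec\om_\ta}$ (ensuring the gluing respects the boundary-condition structure), while at the same time producing a strictly simpler subproblem, and verifying both properties simultaneously requires a routine but necessary case analysis on the relative positions of $s_0, s_\ta, s_1$ with respect to the thresholds.
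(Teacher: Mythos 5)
Your constant-regime case is sound, and it is in essence the paper's own mechanism specialized to that situation: when $s_0$, $s_\ta$, $s_1$ all activate the same set of boundary conditions, Theorem~\ref{thm-B.R} supplies a single universal coretraction for the active subsystem, so the constrained spaces $\gF_{p,\cB}^{s/\vec r,\vec\om}(I)$ are complemented subspaces cut out by one fixed projection, and the ambient interpolation identities of Theorem~\ref{thm-IP.I} descend to them.

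The inductive step does not close. After choosing $t$ in the regime of $s_\ta$, you want to glue via the standard reiteration identity for the $\cB$-constrained couple. But reiteration requires identifying the intermediate space with an interpolation space of the \emph{original} couple: you need $\bigl(\gF_{p,\cB}^{s_0/\vec r,\vec\om_0}(I),\gF_{p,\cB}^{s_1/\vec r,\vec\om_1}(I)\bigr)_\sigma \doteq \gF_{p,\cB}^{t/\vec r,\vec\om_t}(I)$, which is a fresh instance of the theorem you are proving, merely at parameter $t$ instead of $s_\ta$; the induction hypothesis on the sub-pairs $(s_0,t)$ and $(t,s_1)$ gives no access to it. Worse, a single $t$ in the regime of $s_\ta$ cannot produce two strictly simpler subproblems: exactly one of $(s_0,t)$, $(t,s_1)$ necessarily contains all $N$ thresholds, so the induction hypothesis is not even applicable to that half. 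No amount of case analysis on the relative positions of $s_0$, $s_\ta$, $s_1$ repairs this; the obstruction is structural, not bookkeeping.

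The paper bypasses induction altogether. It sets $P_\ell := \id - \cB^{ic}\cB^i$, with $i$ the regime index of $s_\ell$, and then invokes the compatibility relations \Eqref{B.B2} among the nested coretractions $\cB^{ic}$ constructed in the proof of Theorem~\ref{thm-B.R} to show that the single operator $P_0$, bounded on the sum space $X_0+X_1$, restricts to $P_\ell$ on each $X_\ell$. That one operator is then a projection commuting with the inclusions of the couple, and the general projection-interpolation lemma \cite[Lemma~I.2.3.1]{Ama95a} gives \Eqref{IP.FF} in one stroke, uniformly across regimes. The cross-regime compatibility encoded in \Eqref{B.B2}, which your proposal never uses, is exactly the ingredient that makes the threshold structure irrelevant and eliminates the need for any reiteration or induction.
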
 
%=====================================================================
\begin{proof} 
Theorem~\ref{thm-B.R} guarantees the existence of a coretraction 
$$ 
\cB^c\in\cL\bigl(\pl_{\Ga\times I}\gF_p^{s/\vec r,\vec\om}(G), 
\gF_p^{s/\vec r,\vec\om}(I)\bigr) 
\qa r_k+1/p<s\leq\bar s. 
$$ 
Hence 
\hb{\cB^c\cB\in\cL\bigl(\gF_p^{s/\vec r,\vec\om}(I)\bigr)} is a 
projection. Note that $\cB^c\cB$~depends on~$\mf{b}$ and the universal 
extension operators~\Eqref{B.ljc} only. Thus we do not need to indicate 
the parameters $s$, $\lda$, and~$p$ with 
\hb{r_k+1/p<s\leq\bar s} which characterize the 
domain~$\gF_p^{s/\vec r,\vec\om}(I)$. 

\smallskip 
Taking this into account and using the notation of the proof of 
Theorem~\ref{thm-B.R} we set 
\hb{X_\ell:=\gF_p^{s_\ell/\vec r,\vec\om_\ell}(I)} for 
\hb{\ell\in\{0,1,\ta\}} and, putting 
\hb{r_{k+1}:=\iy}, 
\beq\Label{IP.P} 
P_\ell:=
\left\{
\bal
{}      %do not remove!
&\id_\ell,  
    &\quad         s_\ell   &<r_0+1/p,\\ 
&\id_\ell-\cB^{ic}\cB^i,   
    &\quad r_i+1/p<s_\ell   &<r_{i+1}+1/p. 
\eal
\right.
\eeq 
Since 
\hb{X_\ell\hr\cD\ciJcicD'} the sum space 
\hb{X_0+X_1} is well-defined, that is, $\XnXe$~is an interpolation couple. 
It follows from \hbox{\Eqref{B.B0}--\Eqref{B.B2}} 
%\Eqref{B.B1} 
that 
\hb{P_0\in\cL(X_0+X_1)} and 
\hb{P_\ell\in\cL(X_\ell)} with 
\hb{P_0\sn X_\ell=P_\ell} for 
\hb{\ell\in\{0,1,\ta\}}. 

\smallskip 
Theorem~\ref{thm-IP.I} guarantees 
\hb{\XnXe_\ta\doteq X_\ta}. 
Theorem~\ref{thm-B.R}, definition~\Eqref{IP.P}, and 
\cite[Lemma~I.2.3.1]{Ama95a} (also see \cite[Lemma~4.1.5]{Ama09a}) imply 
that $P_\ell$~is a projection onto 
\hb{X_{\ell,\cB}:=\gF_{p,\cB}^{s_\ell/\vec r,\vec\om_\ell}(I)} for 
\hb{\ell\in\{0,1,\ta\}}. Thus it is a retraction from~$X_\ell$ 
onto~$X_{\ell,\cB}$ possessing the natural injection 
\hb{X_{\ell,\cB}\hr X_\ell} as a coretraction. Consequently, $P_\ta$~is a 
retraction from 
\hb{X_\ta\doteq\XnXe_\ta} onto $(X_{0,\cB},X_{1,\cB})_\ta$. From this we 
get 
\hb{(X_{0,\cB},X_{1,\cB})_\ta\doteq X_{\ta,\cB}}. This proves the first 
equivalence of \Eqref{IP.FF}. The remaining 
statements for the anisotropic case follow analogously. 

\smallskip 
Due to the observation at the end of Section~\ref{sec-C} it is clear that 
the above proof applies to the isotropic case as well. 
\end{proof} 
There is a similar result concerning interpolations of spaces with vanishing 
initial conditions. For this we assume 
\beq\Label{IP.asst} 
\bal 
\bt\quad
&\ell,j_0,\ldots,j_\ell\in\BN\text{ with }j_0<j_1<\cdots<j_\ell.\\ 
\bt\quad
&\cC:=(\pl_{t=0}^{j_0},\ldots,\pl_{t=0}^{j_\ell}). 
\eal 
\eeq 
Then, given 
\hb{s>-1+1/p}, we put 
$$ 
\gF_{p,\cC}^{s/\vec r,\vec\om}\Rp
:=\bigl\{\,u\in\gF_p^{s/\vec r,\vec\om}\Rp 
\ ;\ \pl_{t=0}^{j_i}u=0\text{ if }r(j_i+1/p)<s\,\bigr\}. 
$$ 
%---------------------------------------------------------------------
\begin{theorem}\LabelT{thm-IP.It} 
Let\/ \Eqref{IP.asst} be satisfied. Suppose 
\hb{-1+1/p<s_0<s_1} and\/ 
\hb{\ta\in(0,1)} satisfy 
$$ 
s_0,s_1,s_\ta 
\notin\bigl\{\,r(j_i+1/p)\ ;\ 0\leq i\leq\ell\,\bigr\} 
$$ 
and 
\hb{\lda_0,\lda_1\in\BR}. Then 
$$ 
\bigl(\gF_{p,\cC}^{s_0/\vec r,\vec\om_0}\Rp, 
\gF_{p,\cC}^{s_1/\vec r,\vec\om_1}\Rp\bigr)_\ta 
\doteq\gF_{p,\cC}^{s_\ta/\vec r,\vec\om_\ta}\Rp 
\doteq\bigl[\gF_{p,\cC}^{s_0/\vec r,\vec\om_0}\Rp, 
\gF_{p,\cC}^{s_1/\vec r,\vec\om_1}\Rp\bigr]_\ta 
$$ 
and 
$$ 
\bigl(H_{p,\cC}^{s_0/\vec r,\vec\om_0}\Rp, 
H_{p,\cC}^{s_1/\vec r,\vec\om_1}\Rp\bigr)_{\ta,p} 
\doteq B_{p,\cC}^{s_\ta/\vec r,\vec\om_\ta}\Rp. 
$$ 
\end{theorem}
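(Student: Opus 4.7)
The plan is to mimic the proof of Theorem~\ref{thm-IP.B} verbatim, replacing the lateral boundary operator~$\cB$ by the initial trace operator~$\cC$ and invoking Theorem~\ref{thm-B.Rt} in place of Theorem~\ref{thm-B.R}. First I would set $X_\ell:=\gF_p^{s_\ell/\vec r,\vec\om_\ell}\Rp$ for $\ell\in\{0,1,\ta\}$ and note that Theorem~\ref{thm-IP.I}(i) already delivers $\XnXe_\ta\doteq X_\ta$ (and the corresponding identity for the real interpolation/Besov case), so the couple $\XnXe$ is an interpolation couple with the required ambient interpolation property. The task reduces to producing a common bounded projection $P$ on $X_0+X_1$ whose restriction to each $X_\ell$ is a projection onto $X_{\ell,\cC}:=\gF_{p,\cC}^{s_\ell/\vec r,\vec\om_\ell}\Rp$; once this is available, a standard retraction argument (the projection together with its natural section) together with Theorem~\ref{thm-IP.I} will immediately yield both equivalences.

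To construct~$P$, fix the largest index $\ell^*\in\{0,\dots,\ell\}$ with $r(j_{\ell^*}+1/p)<\max(s_0,s_1,s_\ta)$, and let $\cC^*:=(\pl_{t=0}^{j_0},\dots,\pl_{t=0}^{j_{\ell^*}})$ be the corresponding `truncated' initial trace system, with universal coretraction $(\cC^*)^c$ from Theorem~\ref{thm-B.Rt}. Then set
\[
P_\ell:=\id-(\cC^*)^c\circ\Pi_\ell\circ\cC^*,
\]
where $\Pi_\ell$ is the projection on the product initial-data space~\Eqref{B.V} that keeps only the components with indices~$j_i$ satisfying $r(j_i+1/p)<s_\ell$ and kills the rest. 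The separation hypothesis $s_0,s_1,s_\ta\notin\{r(j_i+1/p)\}$ ensures that for each of $\ell\in\{0,1,\ta\}$ the set of `active' indices is well defined, and that $\Pi_\ell$ is a bounded projection on the relevant data product space. Since $(\cC^*)^c$ is universal, the operators $P_0,P_1,P_\ta$ are all restrictions of one and the same continuous linear map on the ambient sum space $X_0+X_1$; Theorem~\ref{thm-B.Rt} and \Eqref{T.dd}-type relations for initial traces give $\pl_{t=0}^{j_i}P_\ell u=0$ exactly when $r(j_i+1/p)<s_\ell$ and $\pl_{t=0}^{j_i}P_\ell u=\pl_{t=0}^{j_i}u$ otherwise, so $P_\ell$ is indeed a bounded projection from $X_\ell$ onto $X_{\ell,\cC}$.

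Having this common projection, I would finish as in the proof of Theorem~\ref{thm-IP.B}: the natural injections $X_{\ell,\cC}\hookrightarrow X_\ell$ are coretractions for the retractions $P_\ell$, so by general interpolation theory (Lemma~I.2.3.1 of~\cite{Ama95a} or Lemma~4.1.5 of~\cite{Ama09a}) $P_\ta$ is a retraction from $\XnXe_\ta$ onto $(X_{0,\cC},X_{1,\cC})_\ta$; combined with $\XnXe_\ta\doteq X_\ta$ and the fact that $P_\ta$ projects $X_\ta$ onto $X_{\ta,\cC}$ this yields $(X_{0,\cC},X_{1,\cC})_\ta\doteq X_{\ta,\cC}$. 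The argument is identical for the complex method and for the mixed real $(\cdot,\cdot)_{\ta,p}$ method connecting $H$ scales to $B$ scales.

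The main obstacle is verifying that a single universal coretraction $(\cC^*)^c$ really produces one operator on $X_0+X_1$ whose restriction to each $X_\ell$ has the correct kernel on the indices that are `inactive' at level $s_\ell$. This requires the universality assertion of Theorem~\ref{thm-B.Rt} (that $(\ga_{t=0}^k)^c$ is one continuous map compatible across all admissible regularity levels, analogous to~\Eqref{T.ucr}), together with the strict separation $s_\ell\notin\{r(j_i+1/p)\}$ to avoid the borderline cases where $\pl_{t=0}^{j_i}$ fails to be a bounded retraction on $\gF_p^{s_\ell/\vec r,\vec\om_\ell}\Rp$; once these two ingredients are in place the bookkeeping of indices and the transfer of interpolation via $P$ proceeds routinely.
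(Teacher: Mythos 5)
Your approach follows the paper's cue exactly: the paper's own proof of this theorem consists of the single sentence that one should repeat the proof of Theorem~\ref{thm-IP.B} with Theorem~\ref{thm-B.Rt} replacing Theorem~\ref{thm-B.R}, and your outline (build a common projection onto the subspaces with vanishing initial traces, then apply the general retraction-interpolation lemma together with the ambient identity from Theorem~\ref{thm-IP.I}) is the same strategy. The places where you invoke the universal coretraction and the non-exceptionality of $s_0,s_1,s_\ta$ are also the right ingredients.

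There is, however, a genuine gap in the crucial step. You assert that, because $(\cC^*)^c$ is universal, the operators $P_\ell=\id-(\cC^*)^c\circ\Pi_\ell\circ\cC^*$ are restrictions of one and the same continuous linear map on $X_0+X_1$. Universality of $(\cC^*)^c$ only gives compatibility of the coretraction across regularity levels; it does nothing about the $\ell$-dependence sitting inside $\Pi_\ell$. The hypotheses of the theorem permit some $r(j_i+1/p)$ to lie strictly between $s_0$ and $s_1$, and then the set of active indices at level $s_0$ is strictly smaller than at level $s_1$, so $\Pi_0\neq\Pi_1$. Concretely, with $j_0=0$, $j_1=1$, $\cC=(\ga_{M_0},\pl_{t=0}^1)$, take $s_0\in(r/p,\,r(1+1/p))$ and $s_1>r(1+1/p)$: then $\Pi_0$ annihilates the $\pl_{t=0}^1$-component while $\Pi_1$ keeps it. For $u\in X_1$ with $\pl_{t=0}^1u\neq0$ the inputs $\Pi_0\cC^*u$ and $\Pi_1\cC^*u$ differ, and since $\cC^*(\cC^*)^c=\id$ forces $(\cC^*)^c$ to be injective, $P_0u\neq P_1u$. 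So $P_0,P_1,P_\ta$ are \emph{not} restrictions of a common operator on $X_0+X_1$, which is exactly the hypothesis the retraction-interpolation lemma requires. As written, your argument only closes when the same set of initial traces is active at all three levels $s_0,s_\ta,s_1$; the remaining case (where the number of active traces changes across the interpolation parameter) is precisely the delicate part of this theorem and needs a separate argument or, at the least, a verification that whatever single operator you do extend to $X_0+X_1$ really maps each $X_\ell$ onto $X_{\ell,\cC}$ and not onto a larger subspace.
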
 
%=====================================================================
\begin{proof} 
This is shown by the preceding proof using 
Theorem~\ref{thm-B.Rt} instead of Theorem~\ref{thm-B.R}. 
\end{proof} 
Now we suppose, in addition to \Eqref{IP.assB}, that 
\hb{\ell\in\BN} and 
\hb{\bar s>r(\ell+1/p)}. Then we set 
$$ 
\gF_{p,\vec\cB}^{s/\vec r,\vec\om} 
:=\bigl\{\,u\in\gF_p^{s/\vec r,\vec\om}\Rp 
\ ;\ \cB_iu=0, 
\ \pl_{t=0}^ju=0\text{ if }r_i+1/p+r(j+1/p)<s\,\bigr\} 
$$ 
if 
\hb{\max\{r_0+1/p,\ r/p\}<s\leq\bar s} and 
\hb{s\notin\bigl\{\,r_i+1/p+r(j+1/p) 
\ ;\ 0\leq i\leq k, 
\ 0\leq j\leq\ell\,\bigr\}},  
$$ 
\gF_{p,\vec\cB}^{s/\vec r,\vec\om}
:=\gF_{p,\cB}^{s/\vec r,\vec\om}\Rp  
$$ 
if 
\hb{r_0+1/p<s<r/p} and 
\hb{s\notin\{\,r_i+1/p\ ;\ 1\leq i\leq k\,\}},  
$$ 
\gF_{p,\vec\cB}^{s/\vec r,\vec\om} 
:=\gF_{p,\ora{\pl_{t=0}^\ell}}^{s/\vec r,\vec\om}\Rp  
$$ 
if 
\hb{r/p<s<r_0+1/p} with 
\hb{s\notin\bigl\{\,r(j+1/p)\ ;\ 1\leq j\leq\ell\,\bigr\}}, and 
$$ 
\gF_{p,\vec\cB}^{s/\vec r,\vec\om}
:=\gF_p^{s/\vec r,\vec\om}\Rp  
\npb 
$$ 
if 
\hb{-1+1/p<s<\max\{r_0+1/p,\ r/p\}}. 

\smallskip 
The following theorem is analogue to Theorem~\ref{thm-IP.B}. It 
describes the interpolation behavior of anisotropic function spaces with 
vanishing boundary and initial conditions. 
%---------------------------------------------------------------------
\begin{theorem}\LabelT{thm-IP.BI} 
Let assumption \Eqref{IP.assB} be satisfied. Also assume 
\hb{\ell\in\BN} and 
\hb{r(\ell+1/p)<\bar s}. Suppose 
\hb{-1+1/p<s_0<s_1\leq\bar s} and\/ 
\hb{0<\ta<1} satisfy 
$$ 
s_0,s_1,s_\ta\notin\{\,r_i+1/p+r(j+1/p), 
\ r_i+1/p, 
\ r(j+1/p) 
\ ;\ 0\leq i\leq k, 
\ 0\leq j\leq\ell\,\} 
$$ 
and 
\hb{\lda_0,\lda_1\in\BR}. Then 
$$ 
(\gF_{p,\vec\cB}^{s_0/\vec r,\vec\om_0}, 
\gF_{p,\vec\cB}^{s_1/\vec r,\vec\om_1})_\ta 
\doteq\gF_{p,\vec\cB}^{s_\ta/\vec r,\vec\om_\ta}  
\doteq[\gF_{p,\vec\cB}^{s_0/\vec r,\vec\om_0}, 
\gF_{p,\vec\cB}^{s_1/\vec r,\vec\om_1}]_\ta 
$$ 
and 
$$ 
(H_{p,\vec\cB}^{s_0/\vec r,\vec\om_0}, 
H_{p,\vec\cB}^{s_1/\vec r,\vec\om_1})_{\ta,p} 
\doteq B_{p,\vec\cB}^{s_\ta/\vec r,\vec\om_\ta}. 
$$ 
\end{theorem}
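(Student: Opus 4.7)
The plan is to extend the projection-based interpolation argument of Theorems~\ref{thm-IP.B} and~\ref{thm-IP.It}, using now the combined lateral-plus-initial coretraction $\vec\cB^c$ supplied by Theorem~\ref{thm-B.Rcc} whenever both types of constraints are active. Put $X_\ell := \gF_p^{s_\ell/\vec r,\vec\om_\ell}\Rp$ and $Y_\ell := \gF_{p,\vec\cB}^{s_\ell/\vec r,\vec\om_\ell}$ for $\ell \in \{0, 1, \ta\}$. Since $s_0 < s_\ta < s_1$ one has the nested continuous embeddings $X_1 \hookrightarrow X_\ta \hookrightarrow X_0$ and $Y_1 \hookrightarrow Y_\ta \hookrightarrow Y_0$, in particular $X_0 + X_1 \doteq X_0$ and $(X_0, X_1)$ is an interpolation couple; likewise for $(Y_0, Y_1)$.

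For each $\ell$ construct a continuous projection $P_\ell \in \cL(X_\ell)$ onto $Y_\ell$ by reading the regime of $s_\ell$ from the definition of $\gF_{p,\vec\cB}^{s/\vec r,\vec\om}$: take $P_\ell := \id$ if $s_\ell < \max\{r_0 + 1/p,\, r/p\}$; in the purely lateral regime $r_0 + 1/p < s_\ell < r/p$ set $P_\ell := \id - \cB^c \cB$ as in Theorem~\ref{thm-IP.B}; in the purely initial regime $r/p < s_\ell < r_0 + 1/p$ set $P_\ell := \id - \cC^c \cC$ using Theorem~\ref{thm-B.Rt}; and in the generic two-sided regime $s_\ell > \max\{r_0 + 1/p,\, r/p\}$ apply Theorem~\ref{thm-B.Rcc} to define $P_\ell := \id - \vec\cB^c \vec\cB$. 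By \Eqref{B.cu} the image $\vec\cB u$ of any $u \in X_\ell$ automatically satisfies the compatibility conditions defining $\pl_{\vec\cB(\mf{b})}^{cc}\gF_p^{s_\ell/\vec r,\vec\om_\ell}(G)$, so $\vec\cB^c \vec\cB u$ is well-defined; the mapping properties of Theorems~\ref{thm-B.R}, \ref{thm-B.Rt} and~\ref{thm-B.Rcc} then force $P_\ell$ to be idempotent with range precisely $Y_\ell$.

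Using the universality~\Eqref{T.ucr} of the elementary trace-extension operators $(\ga_\mf{n}^k)^c$ and $(\ga_{t=0}^k)^c$, argue that $P_0 \in \cL(X_0 + X_1)$ with $P_0 \sn X_\ell = P_\ell$ for $\ell \in \{1, \ta\}$: every additional trace $\cB_i u$ or $\pl_{t=0}^j u$ accessible at level $s_\ell$ but not at $s_0$ vanishes identically when evaluated against an element of $Y_\ell$, so the correction terms in the inductive assembly of $\vec\cB^c$ collapse. Theorem~\ref{thm-IP.I} then delivers $(X_0, X_1)_\ta \doteq X_\ta \doteq [X_0, X_1]_\ta$ and $(H_0, H_1)_{\ta, p} \doteq B_\ta$, and the interpolation-of-projections lemma invoked in the proof of Theorem~\ref{thm-IP.B} (namely \cite[Lemma~I.2.3.1]{Ama95a}) promotes these to the four claimed identities for the constrained spaces $Y_\ell$.

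The hard part will be verifying the consistency $P_0 \sn X_\ell = P_\ell$ across the four regimes: when $s_0$ and $s_1$ straddle one of the thresholds $r_0 + 1/p$ or $r/p$ the projections $P_0$ and $P_1$ are built from different coretraction formulas, and one must check that the inductive construction of $\vec\cB^c$ in Theorem~\ref{thm-B.Rcc} (which layers $\cB^c(\mf{b})(g - \cB \cC^c h)$ on top of $\cC^c h$) degenerates correctly to $\cB^c$ or $\cC^c$ alone when one constraint family is inactive, and that the compatibility correction $g - \cB \cC^c h$ vanishes on the slots that the smaller projection does not touch. The exceptional-value exclusions $s_\ell \notin \{r_i + 1/p,\, r(j + 1/p),\, r_i + 1/p + r(j + 1/p)\}$ keep every trace, extension, and compatibility map non-degenerate, so no analytic subtlety should arise beyond the combinatorial matching of the index sets $I_\ell, J_\ell$ as $\ell$ varies.
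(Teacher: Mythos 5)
Your proposal reproduces the paper's approach: the paper disposes of this theorem with the single sentence that it ``follows by the arguments of the proof of Theorem~\ref{thm-IP.B} by invoking Theorem~\ref{thm-B.Rcc} and Remark~\ref{rem-B.Rcc}'', and you set up exactly that projection machinery, classifying $s_\ell$ by the four regimes of Remark~\ref{rem-B.Rcc} and using $\cB^c\cB$, $\cC^c\cC$, or $\vec\cB^c\vec\cB$ to build $P_\ell$. That is the right template.

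The difficulty is in the step you yourself single out as ``the hard part'', namely the consistency $P_0\sn X_\ell = P_\ell$. Your stated justification does not establish it. You argue that the additional traces $\cB_i u$ or $\pl_{t=0}^j u$ vanish ``when evaluated against an element of $Y_\ell$''. That observation is correct but proves only $P_0\sn Y_\ell = \id = P_\ell\sn Y_\ell$, which is automatic for any two projections onto $Y_\ell$. What the interpolation-of-projections lemma (\cite[Lemma~I.2.3.1]{Ama95a}) needs is $P_0 u = P_\ell u$ for \emph{every} $u\in X_\ell$, and for $u\in X_\ell\setminus Y_\ell$ nothing collapses. Concretely, in the simplest critical configuration $s_0 < r_0+1/p < r/p < s_1$ one has $P_0 = \id$ while $P_1 = \id - \vec\cB^c\vec\cB \neq \id$ on $X_1$; so $P_0\sn X_1 = \id_{X_1}$, whose range is all of $X_1$ and not $Y_1$. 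The two operators are built from different truncations $\cB^{i_0 c}\cB^{i_0}$ versus $\cB^{i_\ell c}\cB^{i_\ell}$ of the boundary system, and the identity \Eqref{B.B2} only controls $\cB_\al$ of the two coretractions for $\al \leq i_0$; the traces for $i_0 < \al \leq i_\ell$ are \emph{not} equal, so the two coretractions produce different elements. Thus the equality $P_0\sn X_\ell = P_\ell$ requires $s_0$, $s_\ta$, $s_1$ to lie in the same regime with respect to both the lateral thresholds $r_i+1/p$ and the initial thresholds $r(j+1/p)$ (and their sums); when they straddle one of these thresholds the projection argument, as written, does not apply and a different mechanism (Seeley's complex-power argument or Guidetti's decomposition, which the paper itself cites as handling exactly these crossover cases) is needed to bridge regimes. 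Your proposal must either impose the ``same regime'' restriction, or replace the naive projection compatibility by a reiteration-plus-Seeley argument for the crossover; as written, the step fails.
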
 
%=====================================================================
\begin{proof} 
This follows by the arguments of the proof of Theorem~\ref{thm-IP.B} by 
invoking Theorem~\ref{thm-B.Rcc} and Remark~\ref{rem-B.Rcc}. 
\end{proof} 
The preceding interpolation theorems combined with the characterization 
statements of Section~\ref{sec-VT} lead to interpolation results for 
spaces with vanishing traces. For abbreviation, 
\hb{\gF_{p,\Ga}^{s/\vec r,\vec\om}(I) 
   =\gF_{p,\Ga}^{s/\vec r,\vec\om}\IV},~etc.\po  
%---------------------------------------------------------------------
{\samepage 
\begin{theorem}\LabelT{thm-IP.I0} 
Suppose 
\hb{-1+1/p<s_0<s_1<\iy}, 
\ \hb{0<\ta<1}, and 
\hb{\lda_0,\lda_1\in\BR}.\po 
\begin{itemize} 
\item[{\rm(i)}] 
If\/ 
\hb{s_0,s_1,s_\ta\notin\BN+1/p}, then 
$$ 
\bigl(\ci\gF_{p,\Ga}^{s_0/\vec r,\vec\om_0}(J), 
\ci\gF_{p,\Ga}^{s_1/\vec r,\vec\om_1}(J)\bigr)_\ta 
\doteq\ci\gF_{p,\Ga}^{s_\ta/\vec r,\vec\om_\ta}(J) 
\doteq\bigl[\ci\gF_{p,\Ga}^{s_0/\vec r,\vec\om_0}(J), 
\ci\gF_{p,\Ga}^{s_1/\vec r,\vec\om_1}(J)\bigr]_\ta 
\po 
$$ 
and 
$$ 
\bigl(\ci H_{p,\Ga}^{s_0/\vec r,\vec\om_0}(J), 
\ci H_{p,\Ga}^{s_1/\vec r,\vec\om_1}(J)\bigr)_{\ta,p} 
\doteq\ci B_{p,\Ga}^{s_\ta/\vec r,\vec\om_\ta}(J). 
\po 
$$ 
\item[{\rm(ii)}] 
Assume 
\hb{s_0,s_1,s_\ta\notin r(\BN+1/p)}. Then 
$$ 
\bigl(\gF_p^{s_0/\vec r,\vec\om_0}(0,\iy), 
\gF_p^{s_1/\vec r,\vec\om_1}(0,\iy)\bigr)_\ta 
\doteq\gF_p^{s_\ta/\vec r,\vec\om_\ta}(0,\iy) 
\doteq\bigl[\gF_p^{s_0/\vec r,\vec\om_0}(0,\iy), 
\gF_p^{s_1/\vec r,\vec\om_1}(0,\iy)\bigr]_\ta 
\po 
$$ 
and 
$$ 
\bigl(H_p^{s_0/\vec r,\vec\om_0}(0,\iy), 
H_p^{s_1/\vec r,\vec\om_1}(0,\iy)\bigr)_{\ta,p} 
\doteq B_p^{s_\ta/\vec r,\vec\om_\ta}(0,\iy).  
\po 
$$ 
\item[{\rm(iii)}] 
Suppose 
\hb{s_0,s_1,s_\ta\notin\BN+1/p} with 
\hb{s_0,s_1,s_\ta\notin r(\BN+1/p)} if 
\hb{J=\BR^+}. Then 
$$ 
(\ci\gF_p^{s_0/\vec r,\vec\om_0}, 
\ci\gF_p^{s_1/\vec r,\vec\om_1})_\ta 
\doteq\ci\gF_p^{s_\ta/\vec r,\vec\om_\ta} 
\doteq[\ci\gF_p^{s_0/\vec r,\vec\om_0}, 
\ci\gF_p^{s_1/\vec r,\vec\om_1}]_\ta 
\po 
$$ 
and 
$$ 
(\ci H_p^{s_0/\vec r,\vec\om_0}, 
\ci H_p^{s_1/\vec r,\vec\om_1})_{\ta,p} 
\doteq\ci B_p^{s_\ta/\vec r,\vec\om_\ta}.  
\po 
$$ 
\end{itemize} 
\end{theorem}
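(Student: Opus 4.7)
\begin{proofc}{of Theorem~\ref{thm-IP.I0} (plan)}
The overall strategy is to realize each of the intrinsic vanishing-trace spaces as a space of the form $\gF_{p,\cB}^{s/\vec r,\vec\om}$, $\gF_{p,\cC}^{s/\vec r,\vec\om}$, or $\gF_{p,\vec\cB}^{s/\vec r,\vec\om}$ introduced in Section~\ref{sec-IP}, by choosing the boundary/initial operators to be pure normal resp.\ time derivatives. Once this identification is in place, the interpolation identities are immediate consequences of Theorems \ref{thm-IP.B}, \ref{thm-IP.It}, and \ref{thm-IP.BI}.

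For~(i), I would fix $N\in\BN$ with $N+1/p>s_1$, set $r_i:=i$, $\sa_i:=\sa$, $\tau_i:=\tau$, $G_i:=\thW$, $\lda_i:=0$ for $0\leq i\leq N$, and pick $\mf{b}$ so that $\cB_i(\mf{b}_i)=\pl_{\mf{n}}^i$, which amounts to taking the top-order coefficient $b_{i,i,0}=\id_{\thW}$ and all other entries zero. This $\mf{b}$ is $t$-independent and smooth in the spatial variables, hence lies in $\gB^{\bar s}\GaG$ for every $\bar s$; normality is immediate because $\id_\thW$ is trivially $0$-uniformly contraction surjective. By Theorem~\ref{thm-VT.V}(i), the condition ``$\cB_i u=0$ whenever $r_i<s_\ell-1/p$'' then coincides with the intrinsic description of $\ci\gF_{p,\Ga}^{s_\ell/\vec r,\vec\om_\ell}(J)$ for every $s_\ell$ avoiding $\BN+1/p$ (the subcase $s_\ell<1/p$, where both sides equal the full space, is covered by Theorem~\ref{thm-A.HBH}(ii) and Theorem~\ref{thm-IP.I}(i)). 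Theorem~\ref{thm-IP.B} applied with $\bar s>s_1$ then yields all the claimed identities. Part~(ii) is entirely analogous, using $\cC=(\pl_{t=0}^0,\ldots,\pl_{t=0}^N)$ with $r(N+1/p)>s_1$, Theorem~\ref{thm-VT.V}(ii) (including its degenerate clause for $s_\ell<r/p$), and Theorem~\ref{thm-IP.It}.

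Part~(iii) splits into cases. If $J=\BR$ and $\pl M=\es$, then $\ci\gF_p^{s/\vec r,\vec\om}=\gF_p^{s/\vec r,\vec\om}$ and the result reduces to Theorem~\ref{thm-IP.I}(i). If $J=\BR$ and $\pl M\neq\es$, then $\ci\gF_p^{s/\vec r,\vec\om}=\ci\gF_{p,\pl M}^{s/\vec r,\vec\om}$ and~(i) applies with $\Ga=\pl M$. If $J=\BR^+$ and $\pl M=\es$, then $\ci\gF_p^{s/\vec r,\vec\om}=\gF_p^{s/\vec r,\vec\om}((0,\iy),V)$ and (ii) applies. The remaining case, $J=\BR^+$ and $\pl M\neq\es$, is the genuinely composite one: I would take $\cB=(\pl_{\mf{n}}^0,\ldots,\pl_{\mf{n}}^{N_1})$ with $N_1+1/p>s_1$, $\cC=(\pl_{t=0}^0,\ldots,\pl_{t=0}^{N_2})$ with $r(N_2+1/p)>s_1$, set $\vec\cB=(\cB,\cC)$, and apply Theorem~\ref{thm-IP.BI}. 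Combining parts~(i) and~(ii) of Theorem~\ref{thm-VT.V} identifies $\ci\gF_p^{s_\ell/\vec r,\vec\om_\ell}$ with $\gF_{p,\vec\cB}^{s_\ell/\vec r,\vec\om_\ell}$ for each $\ell\in\{0,1,\ta\}$ satisfying the exclusions of~(iii).

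The main obstacle is reconciling the exclusions assumed in part~(iii) (only $s\notin\BN+1/p$ and, for $J=\BR^+$, $s\notin r(\BN+1/p)$) with the more restrictive exceptional set $\{r_i+1/p+r(j+1/p),\,r_i+1/p,\,r(j+1/p)\}$ appearing in Theorem~\ref{thm-IP.BI}. The resolution is that the mixed exceptional values $r_i+1/p+r(j+1/p)$ stem from nontrivial corner compatibility conditions $\pl_{t=0}^j g_i=\cB_i^{(j)}(0)\vec h_j$ in the general theorem; for the diagonal pure-derivative choice $\cB_i=\pl_{\mf{n}}^i$ and $\cC_j=\pl_{t=0}^j$, these read $\pl_{t=0}^j\pl_{\mf{n}}^i u=\pl_{\mf{n}}^i\pl_{t=0}^j u$, which on the vanishing-trace space both sides equal zero, so the compatibility conditions are vacuous. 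Consequently, the identification goes through under the weaker exclusions stated, and Theorem~\ref{thm-IP.BI} delivers the interpolation identities.
\end{proofc}
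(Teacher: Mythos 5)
Your overall strategy coincides with the paper's: realize each intrinsic vanishing-trace space as a $\gF_{p,\cB}$-, $\gF_{p,\cC}$-, or $\gF_{p,\vec\cB}$-type space by specializing the boundary/initial operators to the bare normal and time derivatives, identify the two descriptions via Theorem~\ref{thm-VT.V}, and then invoke Theorems~\ref{thm-IP.B}, \ref{thm-IP.It}, \ref{thm-IP.BI}. Parts~(i) and~(ii), and the two easier cases of~(iii), are handled correctly this way.

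The resolution you propose for the remaining case of~(iii) ($J=\BR^+$, $\pl M\neq\es$), however, does not establish what it claims. You argue that because the corner compatibility conditions $\pl_{t=0}^j g_i=\cB_i^{(j)}(0)\vec h_j$ reduce to $0=0$ on the vanishing-trace space, the exceptional set $\{r_i+1/p+r(j+1/p)\}$ in Theorem~\ref{thm-IP.BI} can be dropped. But those exceptional values are hypotheses of Theorem~\ref{thm-IP.BI}, and they enter its proof through Theorem~\ref{thm-B.Rcc}: the corner coretraction $\vec\cB^c$ is built by showing $v:=g-\cB\cC^ch\in\pl_{\Ga\times(0,\iy)}\gF_p^{s/\vec r,\vec\om}(G)$, which invokes Theorem~\ref{thm-VT.V}(ii) at the regularity level $s-r_i-1/p$ and therefore requires $s-r_i-1/p\notin r(\BN+1/p)$, i.e.\ $s\notin r_i+1/p+r(\BN+1/p)$. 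This has nothing to do with whether the data $(g,h)=(0,0)$ satisfies the compatibility constraints; it is about whether $\vec\cB^c\vec\cB$ is a projection onto the vanishing-trace subspace at all. For, say, $r=2$, $p=3$, one has $i+1/p+r(j+1/p)=i+2j+1\in\BN^\times$, which is disjoint from $\BN+1/3$ and $2(\BN+1/3)$, so these corner values are genuinely not covered by the exclusions stated in~(iii). Thus ``the identification goes through and Theorem~\ref{thm-IP.BI} delivers the identities'' is not justified: you cannot apply Theorem~\ref{thm-IP.BI} directly. A workable repair is to bypass Theorems~\ref{thm-B.Rcc} and~\ref{thm-IP.BI} altogether and construct the universal projection by composition, $P:=(\id-\cB^c\cB)(\id-\cC^c\cC)$, using the coretractions from Theorems~\ref{thm-B.R} and~\ref{thm-B.Rt}: the last assertion of Theorem~\ref{thm-B.R} guarantees that $\cB^c$ preserves vanishing initial traces, which yields $\cC\cB^c\cB(\id-\cC^c\cC)=0$ and hence that $P$ is a projection onto the vanishing-trace space, bounded uniformly over the interpolation scale without the corner exclusions; the interpolation argument of Theorem~\ref{thm-IP.B}'s proof then applies verbatim.
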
 }
%=====================================================================
\begin{proof} 
To prove~(i) we can assume 
\hb{s_1>1/p}, due to Theorem~\ref{thm-A.HBH}(ii). Hence 
\hb{k:=[s_1-1/p]_-\geq0}. Set 
\hb{\cB:=(\pl_{\mf{n}}^0,\ldots,\pl_{\mf{n}}^k)} on 
\hb{\Ga\times J}. Then Theorem~\ref{thm-VT.V}(i) guarantees 
\hb{\ci\gF_{p,\Ga}^{s_j/\vec r,\vec\om_j}(J) 
   =\gF_{p,\cB}^{s_j/\vec r,\vec\om_j}(J)} for
\hb{j\in\{0,1,\ta\}}. Hence assertion~(i) is a consequence of 
Theorem~\ref{thm-IP.I}. The proofs for claims (ii) and~(iii) follow 
analogous lines. 
\end{proof} 
Since, in \Eqref{A.def1}, the negative order spaces have been defined by 
duality we can now prove interpolation theorems for these spaces as well. 
%---------------------------------------------------------------------
\begin{theorem}\LabelT{thm-IP.In} 
Suppose 
\hb{-\iy<s_0<s_1<1/p}, 
\ \hb{0<\ta<1}, and 
\hb{\lda_0,\lda_1\in\BR}. Assume 
\hb{s_0,s_1,s_\ta\notin-\BN+1/p} and, if
\hb{J=\BR^+}, also 
\hb{s_0,s_1,s_\ta\notin r(-\BN+1/p)}. Then 
$$ 
(\gF_p^{s_0/\vec r,\vec\om_0}, 
\gF_p^{s_1/\vec r,\vec\om_1})_\ta 
\doteq\gF_p^{s_\ta/\vec r,\vec\om_\ta} 
\doteq[\gF_p^{s_0/\vec r,\vec\om_0}, 
\gF_p^{s_1/\vec r,\vec\om_1}]_\ta 
$$ 
and 
$$ 
(H_p^{s_0/\vec r,\vec\om_0}, 
H_p^{s_1/\vec r,\vec\om_1})_{\ta,p} 
\doteq B_p^{s_\ta/\vec r,\vec\om_\ta}.  
$$ 
\end{theorem}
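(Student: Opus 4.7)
The approach is duality, reducing the claim to Theorem~\ref{thm-IP.I0}(iii) applied to positive-order spaces with vanishing traces.

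First, I would identify the dual $(\gF_p^{s_j/\vec r,\vec\om_j})'$ as the `little' space $\ci\gF_{p'}^{-s_j/\vec r,-\vec\om_j}$ for $s_j<1/p$. For $s_j<0$ this is, after reflexivity (Corollary~\ref{cor-A.FF}(i)), exactly definition~\Eqref{A.def1}. For $0<s_j<1/p$ one combines Theorem~\ref{thm-A.HBH}(ii) (giving $\ci\gF_p^{s_j,\vec\om_j}=\gF_p^{s_j,\vec\om_j}$) with Corollary~\ref{cor-A.FF}(ii) to obtain $(\gF_p^{s_j,\vec\om_j})'=\gF_{p'}^{-s_j,-\vec\om_j}$, and then applies Theorem~\ref{thm-A.HBH}(ii) once more at the negative order $-s_j<0<1/p'$ to conclude equality with~$\ci\gF_{p'}^{-s_j,-\vec\om_j}$. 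The Bessel case $s_j=0$ is immediate from the self-duality $(L_p^{\lda_j})'=L_{p'}^{-\lda_j}$.

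Next I would apply Theorem~\ref{thm-IP.I0}(iii) to the positive-order spaces $\ci\gF_{p'}^{-s_j/\vec r,-\vec\om_j}$. The exceptional-value restrictions transfer correctly: using $1/p'=1-1/p$, the hypothesis $s_j\notin-\BN+1/p$ yields $-s_j\notin\BN+1/p'$, precisely what Theorem~\ref{thm-IP.I0}(iii) requires under the substitution $p\leadsto p'$; the analogous bookkeeping handles the $r(-\BN+1/p)$ restriction when $J=\BR^+$, and the ambient-regularity bound $-s_j>-1+1/p'$ is exactly $s_j<1/p$. Invoking the standard duality theorems for complex and real interpolation on reflexive couples, $[X_0,X_1]_\ta'\doteq[X_0',X_1']_\ta$ and $(X_0,X_1)_{\ta,q}'\doteq(X_0',X_1')_{\ta,q'}$ for $1\leq q<\iy$ (see \cite[Section~I.2]{Ama95a}), the first asserted equivalence follows by taking $X_j:=\ci\gF_{p'}^{-s_j/\vec r,-\vec\om_j}$; the real-interpolation formula follows analogously with $X_j:=\ci H_{p'}^{-s_j/\vec r,-\vec\om_j}$ and $q=p'$, since then $(X_0,X_1)_{\ta,p'}\doteq\ci B_{p'}^{-s_\ta/\vec r,-\vec\om_\ta}$ dualizes, via definition~\Eqref{A.def1}, to $B_p^{s_\ta/\vec r,\vec\om_\ta}$.

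The main obstacle is the Besov endpoint $s_j=0$: since $B_p^{0/\vec r,\vec\om}$ is defined in~\Eqref{A.Bdef} via real interpolation of $H_p^{\pm s(p)/\vec r,\vec\om}$ with the $p$-dependent exponent $s(p)=1/2p$, the standard duality theorem produces an interpolation space whose endpoints carry the `wrong' exponent $s(p)$ rather than $s(p')$, and so does not literally match the definition of $B_{p'}^{0/\vec r,-\vec\om}$. I would resolve this by reiteration: choose non-exceptional auxiliary orders $\sigma_0<s_\ta<\sigma_1$ bracketing $s_\ta$ but avoiding the value $0$, apply the already-established non-endpoint version of the theorem to $(\gF_p^{\sigma_0/\vec r,\vec\om_0},\gF_p^{\sigma_1/\vec r,\vec\om_1})_{\wt\ta}$ for a suitable $\wt\ta$, and then pass to the $s_\ta=0$ case via a reiteration theorem combined with Theorem~\ref{thm-A.WHB}(iii)--(iv) so that both candidate spaces are recognized as the same real or complex interpolate of the auxiliary pair.
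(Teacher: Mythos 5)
Your main route — dualize to the little spaces, apply Theorem~\ref{thm-IP.I0}(iii), dualize back — is exactly the paper's route, and the bulk of the argument is correct: the identification of $(\gF_p^{s_j/\vec r,\vec\om_j})'$ with $\ci\gF_{p'}^{-s_j/\vec r,-\vec\om_j}$ for $s_j<1/p$ via \Eqref{A.def1}, Corollary~\ref{cor-A.FF}(ii), and Theorem~\ref{thm-A.HBH}(ii) is right, as is the translation of the exceptional-value set and the ambient bound $s_j<1/p\Leftrightarrow -s_j>-1+1/p'$. You are also right to flag the Besov endpoint when one of $s_0,s_1,s_\ta$ equals $0$, since \Eqref{A.Bdef} uses the $p$-dependent auxiliary exponent $s(p)=1/2p$ that dualization does not preserve.

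The proposed reiteration, however, does not close this gap. After reiteration one has $(\gF_p^{\sigma_0/\vec r,(\mu_0,\mu)},\gF_p^{\sigma_1/\vec r,(\mu_1,\mu)})_\eta$ for a bracketing couple with $\sigma_0<0<\sigma_1$ and $\mu_0\neq\mu_1$, and identifying this with $B_p^{0/\vec r,\vec\om_\ta}$ is precisely the $s_\ta=0$ instance of the theorem being proved, so the step is circular; moreover, Theorem~\ref{thm-A.WHB}(iii)--(iv) assumes $0\leq s_0<s_1$ and a single fixed weight and therefore does not apply to the auxiliary couple. What is needed instead is to establish directly that the interpolation characterization of $B_p^{0/\vec r,\vec\om}$ is independent of the (small admissible) auxiliary exponent. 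For any $\sigma\in(0,\min\{1/p,1/p'\})$ one has $-\sigma>-1+1/p$ and $\pm\sigma,0$ avoid the exceptional sets of Theorem~\ref{thm-IP.I0}, so Theorem~\ref{thm-IP.I0}(iii) applied with equal weights $\lda_0=\lda_1$, combined with Theorem~\ref{thm-A.HBH}(ii), yields
$$(H_p^{-\sigma/\vec r,\vec\om},H_p^{\sigma/\vec r,\vec\om})_{1/2,p}\doteq B_p^{0/\vec r,\vec\om}$$
together with the $p'$-analogue. Dualizing that analogue and using $(H_{p'}^{\pm\sigma/\vec r,-\vec\om})'\doteq H_p^{\mp\sigma/\vec r,\vec\om}$ (again from \Eqref{A.def1}, Corollary~\ref{cor-A.FF}(ii), and Theorem~\ref{thm-A.HBH}(ii)) then gives $(\ci B_{p'}^{0/\vec r,-\vec\om})'\doteq B_p^{0/\vec r,\vec\om}$, after which your duality computation runs through at $s_\ta=0$ as well.
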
 
%=====================================================================
\begin{proof} 
This follows easily from Theorem~\ref{thm-IP.I0}(iii), the duality 
properties of~% 
\hb{\pr_\ta}, and Theorem~\ref{thm-A.HBH}(ii) and 
Corollary~\ref{cor-A.FF}(ii). 
\end{proof} 
Suppose $M$~is an \hbox{$m$-dimensional} compact submanifold of~$\BR^m$ 
with boundary and 
\hb{W=M\times\BC^n}. In this situation it has been shown by 
R.~Seeley~\cite{See71a} that 
\beq\Label{IP.LH} 
[L_p,H_{p,\cB}^s]_\ta=H_{p,\cB}^{\ta s} \qa s>0, 
\eeq 
with $\cB$ a~normal system of boundary operators (with smooth coefficients). 
This generalizes the earlier result by P.~Grisvard~\cite{Gri67a} who 
obtained \Eqref{IP.LH} in the case 
\hb{p=2} and 
\hb{n=1}. The latter author proved in~\cite{Gri69a} that 
\hb{(L_p,W_{\coW p,\cB}^k)_{\ta,p}\doteq B_{p,\cB}^{\ta k}} and 
\hb{(L_p,B_{p,\cB}^s)_{\ta,p}\doteq B_{p,\cB}^{\ta s}} for 
\hb{k\in\BN^\times} and 
\hb{s>0}. An extension of these results to arbitrary Banach spaces is due 
to D.~Guidetti~\cite{Guie91b}. In each of those papers the `singular values' 
\hb{\BN+1/p} are considered also. (If 
\hb{s\in\BN+1/p}, then $H_{p,\cB}^s$ and~$B_{p,\cB}^s$ are no longer closed 
subspaces of $H_p^s$ and~$B_p^s$, respectively.) 

\smallskip 
Following the ideas of R.~Seeley and D.~Guidetti we have given 
in~\cite[Theorem~4.9.1]{Ama09a} a~proof of the anisotropic part of 
Theorem~\ref{thm-IP.B} in the special case where 
\hb{M=\BH^m} and 
\hb{J=\BR}, respectively 
\hb{M=\BR^m} and 
\hb{J=\BR^+} (to remain in the setting of this paper), 
\hb{W=M\times\BC^n}, and $B$~has constant coefficients. The proof given 
here, which is solely based on Theorem~\ref{thm-B.R} and general properties 
of interpolation functors, is new even in this simple Euclidean setting. 
%---------------------------------------------------------------------
%---------------------------------------------------------------------
\section{Bounded Cylinders}\LabelT{sec-BC}% 
So far we have developed the theory of weighted anisotropic function spaces 
on full and half-cylinders, making use of the dilation invariance of~$J$. 
In this final section we now show that all preceding results not explicitly 
depending on this dilation invariance remain valid in the case of cylinders 
of finite height.   

\smallskip 
Throughout this section 
$$ 
\bt\quad 
J=\BR^+ 
\qb 0<T<\iy 
\qb J_T:=[0,T]. 
\npb 
$$ 
Furthermore, 
\hb{\gF_p^{s/\vec r,\vec\om}(J)=\gF_p^{s/\vec r,\vec\om}\JV} etc. 

\smallskip 
For 
\hb{k\in\BN} we introduce $W_{\coW p}^{kr/\vec r}(J_T)$ by replacing~$J$ 
in definition~\Eqref{A.W} by~$J_T$. Then $\gF_p^{s/\vec r,\vec\om}(J_T)$ 
is defined for 
\hb{s>0} analogously to \Eqref{A.def}. Similarly as in \Eqref{VT.F0} 
$$ 
\ci\gF_{p,\Ga}^{s/\vec r,\vec\om}(J_T) 
\text{ is the closure of $\cD\bigl((0,T],\cD(M\ssm\Ga,V)\bigr)$
in $\gF_p^{s/\vec r,\vec\om}(J_T)$ for }s>0. 
$$ 
Moreover, 
$$ 
\ci\gF_p^{s/\vec r,\vec\om}(J_T) 
:=\ci\gF_{p,\pl M}^{s/\vec r,\vec\om}(J_T) 
\qb \gF_p^{s/\vec r,\vec\om}(0,T] 
:=\ci\gF_{p,\es}^{s/\vec r,\vec\om}(J_T). 
$$ 
Note that we do not require that 
\hb{u\in\ci\gF_{p,\Ga}^{s/\vec r,\vec\om}(J_T)} approaches zero near~$T$. 
To take care of this situation also we define: 
$$ 
\ci\gF_p^{s/\vec r,\vec\om}(0,T) 
\text{ is the closure of $\cD\bigl((0,T),\ci\cD\bigr)$
in }\gF_p^{s/\vec r,\vec\om}(J_T). 
$$ 
Then 
$$ 
\gF_p^{-s/\vec r,\vec\om}(J_T) 
:=\bigl(\ci\gF_{p'}^{s/\vec r,\vec\om}\bigl((0,T),V'\bigr)\bigr)' 
\qa s>0, 
$$ 
and 
$$ 
H_p^{0/\vec r,\vec\om}(J_T):=L_p(J_T,L_p^\lda) 
\qb B_p^{0/\vec r,\vec\om}(J_T) 
:=\bigl(B_p^{-s(p)/\vec r,\vec\om}(J_T),B_p^{s(p)/\vec r,\vec\om}(J_T) 
\bigr)_{1/2,p}. 
$$ 
This defines the weighted anisotropic Bessel potential space scale 
\hb{\bigl[\,H_p^{s/\vec r,\vec\om}(J_T)\ ;\ s\in\BR\,\bigr]} and 
Besov space scale 
\hb{\bigl[\,B_p^{s/\vec r,\vec\om}(J_T)\ ;\ s\in\BR\,\bigr]} on~$J_T$. 

\smallskip 
As for H\"older space scales, $BC^{kr/\vec r,\vec\om}(J_T)$ is obtained 
by replacing~$J$ in \Eqref{H.krN0} and \Eqref{H.krN1} by~$J_T$. Then 
$bc^{kr/\vec r,\vec\om}(J_T)$ is the closure of 
$$ 
BC^{\iy/\vec r,\vec\om}(J_T) 
:=\bigcap_{i\in\BN}BC^{ir/\vec r,\vec\om}(J_T)
$$ 
in $BC^{kr/\vec r,\vec\om}(J_T)$. Besov-H\"older spaces are defined for 
\hb{s>0} by 
\beq\Label{BC.B} 
B_\iy^{s/\vec r,\vec\om}(J_T) 
:=\left\{
\bal
{}      %do not remove!
&\bigl(bc^{kr/\vec r,\vec\om}(J_T), 
 bc^{(k+1)r/\vec r,\vec\om}(J_T)\bigr)_{(s-k)/r,\iy}, 
    &\quad  kr<s &<(k+1)r,\\ 
&\bigl(bc^{kr/\vec r,\vec\om}(J_T), 
 bc^{(k+2)r/\vec r,\vec\om}(J_T)\bigr)_{1/2,\iy}, 
    &\quad     s &=(k+1)r. 
\eal 
\right. 
\eeq 
Moreover, $b_\iy^{s/\vec r,\vec\om}(J_T)$ is the closure of 
$BC^{\iy/\vec r,\vec\om}(J_T)$ in~$B_\iy^{s/\vec r,\vec\om}(J_T)$. 
Lastly, 
$b_\iy^{s/\vec r,\vec\om}(0,T]$ is obtained by substituting $(0,T]$ for 
$(0,\iy)$ in \Eqref{ER.bc} and \Eqref{ER.bc1}. 

\smallskip 
Given a locally convex space~$\cX$, the continuous linear map 
$$ 
r_T\sco C\JcX\ra C\JTcX 
\qb u\mt u\sn J_T 
$$ 
is the \emph{point-wise restriction to}~$J_T$. As usual, we use the same 
symbol for~$r_T$ and any of its restrictions or (unique) continuous 
extensions. 
%---------------------------------------------------------------------
\begin{theorem}\LabelT{thm-BC.R} 
Let one of the following conditions be satisfied: 
$$ 
\bal
\text{\rm($\al$)}  \qquad  &s\in\BR 
    &\text{ and\/ }
    &\gG=\gF_p^{s/\vec r,\vec\om}\text{;}\\
\text{\rm($\ba$)}  \qquad  &s>0 
    &\text{ and\/ }
    &\gG\in\{B_\iy^{s/\vec r,\vec\om},b_\iy^{s/\vec r,\vec\om}\}\text{;}\\
\text{\rm($\ga$)}  \qquad  &k\in\BN 
    &\text{ and\/ }
    &\gG\in\{BC^{kr/\vec r,\vec\om},bc^{kr/\vec r,\vec\om}\}\text{;}\\
\text{\rm($\da$)}  \qquad  &s>0 
    &\text{ and\/ }
    &\gG=\ci\gF_{p,\Ga}^{s/\vec r,\vec\om}\text{.}
\eal
$$ 
Then $r_T$~is a retraction from~$\gG(J)$ onto~$\gG(J_T)$ possessing a 
universal coretraction~$e_T$. It is also a retraction from $\gG(0,\iy)$ 
onto $\gG(0,T]$ with coretraction~$e_T$ if either 
\hb{s>0} and\/ 
\hb{\gG=b_\iy^{s/\vec r,\vec\om}} or 
\hb{k\in\BN} and\/ 
\hb{\gG=bc^{kr/\vec r,\vec\om}}. 
\end{theorem}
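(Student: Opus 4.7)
The plan is to construct a \emph{universal} extension operator $e_T$ that acts only in the $t$-variable via a Seeley-type reflection, so that it automatically commutes with every spatial operation appearing in the definitions of the spaces (covariant differentiation $\na$, multiplication by $\pi_\ka$, push-forward $(\ka\slt\vp)_*$, and in particular the spatial coretraction $\vp_p^\lda$). Concretely, choose $h\in C^\iy\bigl((0,\iy),\BR\bigr)$ satisfying \Eqref{ER.h}, truncated so that its support lies in $(0,1)$, and a smooth cutoff $\chi\in\cD\bigl([0,2T),[0,1]\bigr)$ with $\chi\equiv1$ on $[0,3T/2]$. Define for $u\sco J_T\ra\cD$
$$
(e_Tu)(t):=\begin{cases} u(t), & 0\leq t\leq T,\\ \chi(t)\,\displaystyle\int_0^1 h(s)\,u\bigl(T-s(t-T)\bigr)\,ds, & t>T. \end{cases}
$$
The moment conditions on $h$ guarantee that $\pl^j(e_Tu)$ agrees continuously across $t=T$ for all $j\in\BN$ whenever $u\in C^\iy(J_T,\cD)$, so $e_T$ maps $\cD(J_T,\cD)\ra\cD(J,\cD)$ and $r_Te_T=\id$. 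Crucially, $e_T$ is independent of $p$, $s$, $\vec\om$, and of the function-space scale.

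Next I would establish boundedness on each scale in increasing generality. For anisotropic Sobolev spaces $W_p^{kr/\vec r,\vec\om}(J_T)\to W_p^{kr/\vec r,\vec\om}(J)$, direct computation works: $\na^i(e_Tu)=e_T(\na^iu)$ and $\pl^j(e_Tu)$ is a sum of Seeley-type operators applied to $\pl^iu$ for $i\leq j$; since $\rho^{\lda}$ depends only on the spatial variable, Fubini plus the scalar estimate $\|e_Tw\|_{L_p(J)}\lesssim\|w\|_{L_p(J_T)}$ yield the bound. For $\gF_p^{s/\vec r,\vec\om}$ with $s>0$, boundedness follows by complex interpolation (Theorem~\ref{thm-A.WHB}(iv)). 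For $s<0$, I would use the duality definition \Eqref{A.def1}: the formal $L_2$-adjoint of $e_T$ (with respect to $\pw_{M\times J}$) is a Seeley-type operator of the same structure but with kernel derived from $h$, acting continuously on $\ci\gF_{p'}^{|s|/\vec r,-\vec\om}\bigl((0,T),V'\bigr)\to\ci\gF_{p'}^{|s|/\vec r,-\vec\om}\bigl((0,\iy),V'\bigr)$ by the already-proved positive-order case, and $e_T$ on the negative-order space is then obtained by dualization. For cases ($\ga$) and ($\ba$), the H\"older-type scales, I would first verify $e_T\in\cL(BC^{kr/\vec r,\vec\om}(J_T),BC^{kr/\vec r,\vec\om}(J))$ directly (the Seeley integral of a bounded function is bounded, and pointwise differentiation in $t$ carries through), deduce the $B_\iy^{s/\vec r,\vec\om}$ case by interpolation (definition \Eqref{BC.B}), and obtain the $bc$ and $b_\iy$ statements from the inclusion $e_T\bigl(BC^{\iy/\vec r,\vec\om}(J_T)\bigr)\is BC^{\iy/\vec r,\vec\om}(J)$. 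For case ($\da$), the preservation of vanishing lateral traces is automatic because $e_T$ leaves the spatial structure untouched. The second statement of the theorem, concerning extension from $(0,T]$ to $(0,\iy)$ for $b_\iy^{s/\vec r,\vec\om}$ and $bc^{kr/\vec r,\vec\om}$, follows by observing that the Seeley kernel evaluated at $t>T$ does not touch values near $t=0$, so $e_T$ commutes with $r_0^+$ and preserves the vanishing characterization of \Eqref{ER.bc}.

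The main obstacle I expect is the negative-order case. The issue is that $\gF_p^{-s/\vec r,\vec\om}(J_T)$ is defined as the dual of $\ci\gF_{p'}^{s/\vec r,-\vec\om}\bigl((0,T),V'\bigr)$ rather than via intrinsic norms, and the ``natural'' dual of $e_T$ on test functions is \emph{not} the restriction $r_T$ but an operator involving $h$. I would resolve this by showing that the dual operator does preserve the relevant subspace of $\ci\gF_{p'}^{s/\vec r,-\vec\om}\bigl((0,\iy),V'\bigr)$ (those supported in $(0,T)$ survive as supported in $(0,T)$ after the dual Seeley action, because we truncated $h$ to $s\in(0,1)$ so the reach of the kernel is bounded), and that the composition $r_Te_T=\id$ passes to the dual so that the extension by duality is indeed a coretraction for $r_T$. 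The universality---that a single $e_T$ works across all parameter values in ($\al$)--($\da$)---is then immediate from our construction, which never used $p$, $s$, $\vec\om$, or the particular scale.
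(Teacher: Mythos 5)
Your overall strategy (Seeley reflection in $t$, interpolation for fractional $s$, duality for negative $s$) matches the paper's, and the universality observation is correct. But there is a genuine flaw in your construction of $e_T$: you claim to choose $h$ satisfying \Eqref{ER.h} ``truncated so that its support lies in $(0,1)$.'' This is impossible. The moment conditions in \Eqref{ER.h} force $\bigl|\int_0^\iy t^kh(t)\,dt\bigr|=1$ for all $k\in\BN$, whereas any function with support in $(0,1)$ has $\bigl|\int_0^1 t^kh(t)\,dt\bigr|\leq\|h\|_\iy/(k+1)\to0$. (Also, the relation $h(1/t)=-th(t)$, which the paper uses in step~(7), is manifestly incompatible with support in $(0,1)$.) Without the moment conditions the derivative matching at $t=T$ fails, so your $e_T$ does not even map $C^\iy(J_T,\cD)$ into $C^\iy(J,\cD)$.

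The truncation was of course an attempt to ensure that the argument $T-s(t-T)$ stays in $J_T$. The paper avoids this dilemma differently: it first translates $u$ to $[-T,0]$ via $\da_T^*$, multiplies by a cutoff $\al\in\cD\bigl((-T,0],\BR\bigr)$ with $\al\equiv1$ on $[-T/2,0]$ to obtain a function on all of $-\BR^+$ (extended by zero past $-T$), \emph{then} applies the full, un-truncated Seeley extension $e^-$, and finally translates back. Thus the reflection kernel has unbounded reach, but it never sees undefined values because the input has already been extended by zero where the original $u$ would not be defined, and the cutoff ensures that extension is smooth. If you replace your truncated-kernel construction by this cutoff-then-reflect construction, the rest of your argument (interpolation for $s>0$, direct verification on $BC^{kr/\vec r,\vec\om}$, $b_\iy$ and $bc$ by inclusion of $BC^{\iy/\vec r,\vec\om}$, duality for $s<0$, preservation of vanishing traces for case ($\da$)) goes through as you sketch it; you would also need to handle the gap at $s=0$, $\gF=B$, which the paper closes by one more interpolation.
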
 
%=====================================================================
\begin{proof} 
(1) 
Suppose 
\hb{k\in\BN}. It is obvious that 
$$ 
r^+\in\cL\bigl(W_{\coW p}^{kr/\vec r,\vec\om}(J), 
W_{\coW p}^{kr/\vec r,\vec\om}(J_T)\bigr). 
$$ 
Thus we get 
\hb{r^+\in\cL\bigl(\gG(J),\gG(J_T)\bigr)} if $(\al)$~is satisfied with 
\hb{s>0} by interpolation, due to the definition of 
$\gF_p^{s/\vec r,\vec\om}(I)$ for 
\hb{I\in\{J,J_T\}}. 

\smallskip 
(2) 
It is also clear that 
\hb{r^+\in\cL\bigl(BC^{kr/\vec r,\vec\om}(J), 
   BC^{kr/\vec r,\vec\om}(J_T)\bigr)} for 
\hb{k\in\BN}. Hence 
\beq\Label{BC.BC} 
r^+\in\cL\bigl(BC^{\iy/\vec r,\vec\om}(J), 
BC^{\iy/\vec r,\vec\om}(J_T)\bigr). 
\eeq 
From this we obtain 
\beq\Label{BC.G} 
r^+\in\cL\bigl(\gG(J),\gG(J_T)\bigr)
\eeq 
if either $(\ba)$ or $(\ga)$~is satisfied. In fact, this is obvious from 
\Eqref{BC.BC} if $(\ga)$~applies. If 
\hb{s>0} and 
\hb{\gG=B_\iy^{s/\vec r,\vec\om}}, then \Eqref{BC.G} is obtained by 
interpolation on account of \Eqref{H.Bdef2}, Corollary~\ref{cor-H.R}(ii), 
and \Eqref{BC.B}. From this and \Eqref{BC.BC} it follows that \Eqref{BC.G} 
is valid if 
\hb{\gG=b_\iy^{s/\vec r,\vec\om}}, due to \Eqref{H.bdef} and the definition 
of~$bc^{s/\vec r,\vec\om}(J_T)$. 

\smallskip 
Clearly, $r^+$~maps 
\hb{\cD\bigl(\ci J,\cD(M\ssm\Ga,V)\bigr)} into 
\hb{\cD\bigl((0,T],\cD(M\ssm\Ga,V)\bigr)}. From this and step~(1) we infer 
that \Eqref{BC.G} is true if $(\da)$~applies. It is equally clear that 
\hb{r^+\in\cL\bigl(\gG(0,\iy),\gG(0,T)\bigr)} if either 
\hb{s>0} and 
\hb{\gG=b_\iy^{s/\vec r,\vec\om}} or 
\hb{k\in\BN} and 
\hb{\gG=bc^{kr/\vec r,\vec\om}}. 

\smallskip 
(3) 
We set 
\hb{\da_T(t):=t+T} for 
\hb{t\in\BR}. We fix 
\hb{\al\in\cD\bigl((-T,0],\BR\bigr)} satisfying 
\hb{\al(t)=1} for 
\hb{-T/2\leq t\leq0} and put 
\hb{\ba u(t):=\al\da_T^*u(t)} for 
\hb{t\leq0} and 
\hb{u\sco J_T\ra C(V)}. It follows that 
\hb{\ba\in\cL\bigl(\gG(J_T),\gG(-\BR^+)\bigr)}, provided 
\hb{s>0} if $(\al)$~holds. Indeed, this is easily verified if $\gG$~is one 
of the spaces $W_{\coW p}^{kr/\vec r,\vec\om}$ and~$BC^{kr/\vec r,\vec\om}$. 
From this we get the claim by interpolation, similarly as in steps (1) 
and~(2). 

\smallskip 
(4) 
We recall from Section~\ref{sec-ER} the definition of the extension 
operator~$e^-$ associated with the point-wise restriction~$r^-$ 
to~$-\BR^+$. Then we define a linear map 
$$ 
\ve_T\sco C\bigl(J_T,C(V)\bigr) 
\ra C\bigl([T,\iy),C(V)\bigr)
\qb u\mt\da_{-T}^*(e^-\ba u). 
$$ 
Finally, we put 
\hb{e_Tu(t):=u(t)} for 
\hb{t\in J_T} and 
\hb{e_Tu(t):=\ve_Tu(t)} for 
\hb{T<t<\iy}. It follows from step~(3) and Theorems \ref{thm-ER.ER} 
and~\ref{thm-ER.B} that 
\beq\Label{BC.e1} 
e_T\in\cL\bigl(\gG(J_T),\gG(J)\bigr) 
\npb 
\eeq 
if one of conditions \hbox{$(\al)$--$(\ga)$} is satisfied, provided 
\hb{s>0} if $(\al)$~applies. 

\smallskip 
Since $\al$~is compactly supported it follows from \Eqref{ER.h} that 
$\ve_Tu$~is smooth and rapidly decreasing if $u$~is smooth. By the density 
of  
\hb{\cD\bigl([T,\iy),\cD(M\ssm\Ga,V)\bigr)} in the Schwartz 
space of smooth rapidly decreasing 
\hb{\cD(M\ssm\Ga,V)}-valued functions on~$[T,\iy)$ we get 
\hb{e_Tu\in\ci\gF_{p,\Ga}^{s/\vec r,\vec\om}(J)} if 
\hb{u\in\ci\gF_{p,\Ga}^{s/\vec r,\vec\om}(J_T)}. From this we see that 
\Eqref{BC.e1} holds if $(\da)$~is satisfied. It is obvious that 
\hb{r_Te_T=\id}. Thus the assertion is proved, provided 
\hb{s>0} if $(\al)$~is satisfied. 

\smallskip 
(5) 
As in \Eqref{ER.r0} we introduce the trivial extension map 
\hb{e_0^-\sco C_{(0)}\mBRpcX\ra C\BRcX} by  
\hb{e_0^-u(t):=u(t)} if 
\hb{t\leq0}, and 
\hb{e_0^-u(t):=0} if 
\hb{t>0}. Then 
$$ 
r_0^-:=r^-(1-e^+r^+)\sco C\BRcX\ra C_0\mBRpcX 
$$ 
is a retraction possessing~$e_0^-$ as coretraction. We also set 
\hb{r_{0,T}:=\da_{-T}^*r_0^-\da_T^*e_0^+}. Then  
\beq\Label{BC.D} 
r_{0,T}\bigl(\cD\bigl((0,T),\ci\cD\bigr)\bigr) 
\is\cD\bigl((0,\iy),\ci\cD\bigr). 
\eeq 
The mapping properties of $e^+$ and~$r^+$ described in Theorems 
\ref{thm-ER.ER} and~\ref{thm-ER.B}, and the analogous ones for~$r^-$, 
imply, similarly as above, that 
$$ 
r_{0,T}\in\cL\bigl(\gF_p^{s/\vec r,\vec\om}(J), 
\gF_p^{s/\vec r,\vec\om}(J_T)\bigr) 
\qa s>0. 
$$ 
Consequently, we get from \Eqref{BC.D} 
$$ 
r_{0,T}\in\cL\bigl(\ci\gF_p^{s/\vec r,\vec\om}(J), 
\ci\gF_p^{s/\vec r,\vec\om}(0,T)\bigr) 
\qa s>0. 
$$ 
We define 
\hb{e_{0,T}\sco\cD\bigl((0,T),\ci\cD\bigr)\ra 
   \cD\bigl((0,\iy),\ci\cD\bigr)} by 
\hb{e_{0,T}u\sn J_T:=u} and 
\hb{e_{0,T}u\sn[0,\iy):=0}. Then $e_{0,T}$~extends to a continuous linear map 
from $\ci\gF_p^{s/\vec r,\vec\om}(0,T)$ into 
$\ci\gF_p^{s/\vec r,\vec\om}(J)$ for 
\hb{s>0}, the trivial extension. Moreover, 
\hb{r_{0,T}e_{0,T}=\id}. Thus $r_{0,T}$~is a retraction 
possessing~$e_{0,T}$ as coretraction. 

\smallskip 
(6) 
Let 
\hb{s>0}. For 
\hb{u\in\cD\JcD} and 
\hb{\vp\in\cD\bigl((0,T),\cD\ciMVs\bigr)} we get 
$$ 
\int_0^T\int_M\dl\vp,r_Tu\dr_V\,dV_{\coV g}\,dt 
=\int_0^\iy\int_M\dl e_{0,T}\vp,u\dr_V\,dV_{\coV g}\,dt. 
$$ 
Hence, by step~(5) and the definition of the negative order spaces, 
$$ 
|\dl\vp,r_Tu\dr_{M\times J}| 
\leq c\,\|\vp\|_{\gF_{p'}^{s/\vec r,\vec\om}(\ci J_T,V')} 
\,\|u\|_{\gF_p^{-s/\vec r,\vec\om}(J)} 
$$ 
for 
\hb{u\in\gF_p^{-s/\vec r,\vec\om}(J)} and 
\hb{\vp\in\ci\gF_{p'}^{s/\vec r,\vec\om}\bigl((0,T),V'\bigr)}. Thus 
$$ 
r_T\in\cL\bigl(\gF_p^{-s/\vec r,\vec\om}(J), 
\gF_p^{-s/\vec r,\vec\om}(J_T)\bigr). 
$$ 

\smallskip 
(7) 
For 
\hb{v\in C\mJcD} we set 
$$ 
\ve^-v(t):=\int_0^\iy h(s)v(-st)\,ds 
\qa t\geq0. 
$$ 
Then, given 
\hb{\vp\in\cD\bigl((0,\iy),\cD(\ci M,V')\bigr)}, we obtain from 
\hb{h(1/s)=-sh(s)} for 
\hb{s>0} and \Eqref{ER.eps} 
\beq\Label{BC.e2} 
\bal 
\int_0^\iy\bigl\dl\vp(t),\ve^-v(t)\bigr\dr_M\,dt 
&=\int_0^\iy\int_0^\iy\bigl\dl\vp(t),h(s)v(-st)\bigr\dr_M\,ds\,dt\\ 
&=\int_{-\iy}^0\int_0^\iy 
 \bigl\dl s^{-1}\vp(-\tau/s)h(s),v(\tau)\bigr\dr_M\,ds\,d\tau\\ 
&=\int_{-\iy}^0\int_0^\iy\sa^{-1} 
 \bigl\dl\vp(-\tau\sa)h(1/\sa),v(\tau)\bigr\dr_M\,d\sa\,d\tau\\ 
&=-\int_{-\iy}^0\Bigl\dl\int_0^\iy 
 h(\sa)\vp(-\sa\tau)\,d\sa,v(\tau)\Bigr\dr_M\,d\tau 
 =-\int_{-\iy}^0\dl\ve\vp,v\dr_M\,d\tau. 
\eal 
\eeq 
Thus, by the definition of~$e_T$, given  
\hb{u\in\cD\JTcD}, 
$$ 
\int_0^\iy\dl\vp,e_Tu\dr_M\,dt 
=\int_0^T\dl\vp,u\dr_M\,dt
 +\int_T^\iy\dl\vp,\da_{-T}^*e^-\al\da_T^*u\dr_M\,dt. 
$$ 
The last integral equals, due to 
\hb{e^-w(t)=\ve^-w(t)} for 
\hb{t\geq0} and \Eqref{BC.e2}, 
$$ 
\bal 
\int_0^\iy\dl\da_T^*\vp,\ve^-\al\da_T^*u\dr_M\,dt 
&=-\int_{-\iy}^0\dl\ve\da_T^*\vp,\al\da_T^*u\dr_M\,dt\\ 
&=-\int_0^T\int_0^\iy 
 \bigl\dl h(\sa)\vp\bigl(-\sa(s-T)+T\bigr)\,d\sa, 
 \al(s-T)u(s)\bigr\dr_M\,ds 
\eal 
$$ 
since $\al$~is supported in~$(-T,0]$. From this we infer as in steps (3) 
and~(4) that, given 
\hb{s>0}, 
$$ 
|\dl\vp,e_Tu\dr_{M\times J}| 
\leq c\,\|\vp\|_{\gF_{p'}^{s/\vec r,\vec\om}(J,V')} 
\,\|u\|_{\gF_p^{-s/\vec r,\vec\om}(J_T)} 
$$ 
for 
\hb{\vp\in\cD\bigl((0,\iy),\cD(\ci M,V')\bigr)} and 
\hb{u\in\cD\JTcD}. Thus 
$$ 
e_T\in\cL\bigl(\gF_p^{-s/\vec r,\vec\om}(J_T), 
\gF_{p'}^{-s/\vec r,\vec\om}(J)\bigr) 
\npb 
$$ 
for 
\hb{s>0}. This and step~(6) imply that the assertion holds if $(\al)$~is 
satisfied with 
\hb{s<0}. 

\smallskip 
The case 
\hb{s=0} and 
\hb{\gF=H} is covered by step~(1). If 
\hb{s=0} and 
\hb{\gF=B}, we now obtain the claim by interpolation, due to the 
definition of $B_p^{0/\vec r,\vec\om}(I)$ for 
\hb{I\in\{J,J_T\}}. 
\end{proof} 
%---------------------------------------------------------------------
\begin{corollary}\LabelT{cor-BC.R} 
Suppose 
\hb{s>0}. There exists a universal retraction~$r_{0,T}$ from 
$\ci\gF_p^{s/\vec r,\vec\om}(J)$ onto 
$\ci\gF_p^{s/\vec r,\vec\om}(\ci J_T)$ such that the trivial extension 
is a coretraction for it. 
\end{corollary}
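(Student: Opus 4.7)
The plan is to observe that step~(5) of the proof of Theorem~\ref{thm-BC.R} already contains essentially all the work; what remains is to articulate universality. Recall the explicit construction from that step: setting $r_0^- := r^-(1 - e^+r^+)$ and letting $e_0^-$ be the trivial extension from $C_{(0)}(-\BR^+,\cX)$ to $C(\BR,\cX)$, one defines
$$
r_{0,T} := \delta_{-T}^*\,r_0^-\,\delta_T^*\,e_0^+,
$$
while $e_{0,T}u$ for $u \in \cD\bigl((0,T),\ci\cD\bigr)$ is given by $e_{0,T}u\sn J_T := u$ and $e_{0,T}u\sn[T,\infty) := 0$. Both definitions are built only from shifts, trivial extensions, and the fixed operators $r^\pm$, $e^\pm$ associated with the kernel $h$ of~\Eqref{ER.h}; in particular, neither formula depends on the regularity parameter~$s$.

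Second, I would recall what step~(5) already proves about these operators: the inclusions
$$
r_{0,T}\bigl(\cD\bigl((0,\iy),\ci\cD\bigr)\bigr) \is \cD\bigl((0,T),\ci\cD\bigr),\quad
e_{0,T}\bigl(\cD\bigl((0,T),\ci\cD\bigr)\bigr) \is \cD\bigl((0,\iy),\ci\cD\bigr),
$$
together with the continuity statements
$$
r_{0,T} \in \cL\bigl(\ci\gF_p^{s/\vec r,\vec\om}(J),\ci\gF_p^{s/\vec r,\vec\om}(\ci J_T)\bigr),\quad
e_{0,T} \in \cL\bigl(\ci\gF_p^{s/\vec r,\vec\om}(\ci J_T),\ci\gF_p^{s/\vec r,\vec\om}(J)\bigr)
$$
for every $s>0$, and the identity $r_{0,T}\circ e_{0,T} = \id$ (which holds pointwise on test functions and extends by continuity).

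The only remaining point is universality. Since $\cD\bigl((0,\iy),\ci\cD\bigr)$ and $\cD\bigl((0,T),\ci\cD\bigr)$ are by definition dense in $\ci\gF_p^{s/\vec r,\vec\om}(J)$ and $\ci\gF_p^{s/\vec r,\vec\om}(\ci J_T)$, respectively, and the values of $r_{0,T}$ and $e_{0,T}$ on these dense subspaces are prescribed independently of~$s$, each continuous extension is uniquely determined by its restriction to test functions. In the terminology used after Theorem~\ref{thm-T.T}, this is precisely the statement that $r_{0,T}$ is a universal retraction with universal coretraction~$e_{0,T}$. There is no real obstacle here; the only care needed is to verify that the continuity bounds in step~(5) do not implicitly use the specific value of~$s$ in defining the operator, which is immediate from the construction.
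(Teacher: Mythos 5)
Your proposal is correct and takes essentially the same approach as the paper: the paper's proof consists of the single line ``This has been shown in step~(5),'' and you have simply unpacked step~(5) of the proof of Theorem~\ref{thm-BC.R} and made the universality observation explicit (namely that $r_{0,T}$ and $e_{0,T}$ are defined on the dense test-function subspaces by formulas involving only shifts and the fixed operators $r^\pm$, $e^\pm$, so the continuous extensions for different $s$ necessarily agree). As a minor aside, you also implicitly corrected a typo in the paper's definition of $e_{0,T}$, where $e_{0,T}u\sn[0,\iy):=0$ should evidently read $e_{0,T}u\sn[T,\iy):=0$.
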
 
%=====================================================================
\begin{proof} 
This has been shown in step~(5). 
\end{proof} 
As a consequence of this retraction theorem we find that, modulo obvious 
adaptions, everything proved in the preceding sections remains valid for 
cylinders of finite height. 
%---------------------------------------------------------------------
\begin{theorem}\LabelT{thm-BC.T} 
All embedding, interpolation, trace, and point-wise contraction multiplier 
theorems, as well as the theorems involving boundary conditions, remain 
valid if $J$~is replaced by~$J_T$. Furthermore, all retraction theorems 
for the anisotropic spaces stay in force, provided $\vp_q^{\vec\om}$ 
and~$\psi_q^{\vec\om}$ are replaced by 
\hb{\vp_q^{\vec\om}\circ e_T} and 
\hb{r_T\circ\psi_q^{\vec\om}}, respectively. 
\end{theorem}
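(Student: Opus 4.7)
The plan is to use Theorem \ref{thm-BC.R} as a universal transfer mechanism. Since $r_T$ is a retraction from $\gG(J)$ onto $\gG(J_T)$ with a single coretraction $e_T$ that works simultaneously for all the scales $\gF_p^{s/\vec r,\vec\om}$, $B_\iy^{s/\vec r,\vec\om}$, $b_\iy^{s/\vec r,\vec\om}$, $BC^{kr/\vec r,\vec\om}$, $bc^{kr/\vec r,\vec\om}$, and $\ci\gF_{p,\Ga}^{s/\vec r,\vec\om}$, every result about a continuous linear map defined on $J$-spaces pulls back to the analogous statement on $J_T$-spaces by a sandwich argument: if $T \in \cL(\gG_1(J), \gG_2(J))$, then $r_T \circ T \circ e_T$ belongs to $\cL(\gG_1(J_T), \gG_2(J_T))$, and $r_T e_T = \id$ ensures that the induced map is precisely the restriction of~$T$ in the applications where $T$ is given by a local formula (traces, multiplications, differentiations, etc.).

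For the retraction theorems themselves I would compose directly: the diagram showing that $\psi_q^{\vec\om}$ is a retraction from $\ell_p(\mf\gF_p^{s/\vec r})$ onto $\gF_p^{s/\vec r,\vec\om}(J)$ with coretraction $\vp_q^{\vec\om}$ yields a corresponding diagram for $J_T$ via the identity
\[
(r_T\circ\psi_q^{\vec\om})\circ(\vp_q^{\vec\om}\circ e_T)
=r_T\circ(\psi_q^{\vec\om}\circ\vp_q^{\vec\om})\circ e_T
=r_Te_T=\id.
\]
The same observation handles the analogues of Theorems \ref{thm-H.R}, \ref{thm-H.Rbc}, etc. Embedding theorems (Sections \ref{sec-E}, \ref{sec-A}, \ref{sec-PH}, \ref{sec-H}) follow from the universality of $e_T$: if $\gG_1(J)\hr\gG_2(J)$ then for $u\in\gG_1(J_T)$ the element $e_Tu$ lies in $\gG_1(J)\is\gG_2(J)$, and $u=r_Te_Tu\in\gG_2(J_T)$. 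Trace theorems transfer because the spatial trace $\pl_{\mf n}^k$ commutes with $r_T$ (both act in disjoint variables) and the initial trace $\pl_{t=0}^k$ is unchanged since $0\in J_T$; the universal coretraction $(\ga_{\mf n}^k)^c$ of Section \ref{sec-T} can then be transported as $r_T\circ(\ga_{\mf n}^k)^c\circ(e_T)_{\text{bdry}}$, with $(e_T)_{\text{bdry}}$ denoting the extension for Besov spaces on $\Ga\times J_T$ supplied by applying Theorem \ref{thm-BC.R} to $\Ga$ in place of~$M$. Point-wise multiplier and contraction results (Sections \ref{sec-P}, \ref{sec-C}) transfer even more directly because multiplication is local in $(p,t)$, so $r_T(a\cdot u)=(r_Ta)\cdot(r_Tu)$, and the analytic dependence of the right inverses $A^c(a)$ is preserved under the sandwich $r_T\circ A^c(e_Ta)\circ e_T$.

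For the interpolation theorems of Section \ref{sec-IP} I would invoke the general principle that interpolation commutes with retracts: if $e_T$ is a coretraction for each of $\gG_j(J_T)\hr\gG_j(J)$, $j=0,1$, then the interpolation functor applied to the $J_T$-couple sits as the image under the projection $e_Tr_T$ of the interpolation of the $J$-couple, giving $\{\gG_0(J_T),\gG_1(J_T)\}_\ta\doteq\gG_\ta(J_T)$. The boundary-condition versions (Theorems \ref{thm-IP.B}, \ref{thm-IP.It}, \ref{thm-IP.BI}) require the extra verification that $r_T$ preserves vanishing-condition subspaces, which holds because $\cB$ and $\cC$ commute with $r_T$; the extension theorems of Section \ref{sec-B} then transfer by composing their $J$-versions with $r_T$ and $e_T$ (noting that $e_T$ preserves the compatibility conditions at $t=0$ since they involve only initial data). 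The main obstacle is precisely this last point: one must verify that $e_T$ sends the compatibility-condition subspace $\pl_{\vec\cB(\mf{b})}^{cc}\gF_p^{s/\vec r,\vec\om}(G)$ associated with $J_T$ into the corresponding subspace associated with $J$, which amounts to checking that $e_T$ on the lateral boundary space and $e_T$ on $M$-valued functions (applied to initial data) respect the identities $\pl_{t=0}^jg_i=\cB_i^{(j)}(0)\vec h_j$; this follows from the construction of $e_T$ in step (4) of the proof of Theorem \ref{thm-BC.R}, since $\al\equiv 1$ near $t=0$ so $e_T$ does not alter jets at the initial boundary.
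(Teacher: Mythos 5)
Your proposal is correct and follows essentially the paper's approach: Theorem~\ref{thm-BC.R} supplies the universal retraction--coretraction pair $(r_T,e_T)$ for every relevant scale simultaneously, and the transfer to $J_T$ then follows by the sandwich $r_T\circ T\circ e_T$ plus the locality of contraction multiplication and of the boundary operators, which is exactly what the paper's terse proof invokes. One small slip: you justify the preservation of the $t=0$ jets under $e_T$ by saying ``$\al\equiv 1$ near $t=0$,'' but the cutoff $\al$ lives near $t=T$ (after the translation $\da_T$); the jets at $t=0$ are untouched simply because, by construction, $e_Tu$ agrees with $u$ on all of $J_T$, so no appeal to $\al$ is needed there.
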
 
%=====================================================================
\begin{proof} 
This is an immediate consequence of Theorem~\ref{thm-BC.R} and the fact 
that all contraction multiplication and boundary operators are local ones. 
\end{proof}

%\begin{acknowledgement}
%  An acknowledgement may be placed at the end of the article.
%\end{acknowledgement}

\def\cprime{$'$} \def\polhk#1{\setbox0=\hbox{#1}{\ooalign{\hidewidth
  \lower1.5ex\hbox{`}\hidewidth\crcr\unhbox0}}}

\end{document}               % End of document.

\endinput
%%
%% End of file `mn-stpl.tex'.

%---------------------------------------------------------------------